\theoremstyle{plain}
\newtheorem{thm}{Theorem}[section]
\newtheorem{lem}[thm]{Lemma}
\newtheorem{cor}[thm]{Corollary}
\newtheorem{prop}[thm]{Proposition}
\newtheorem{conj}[thm]{Conjecture}
\theoremstyle{definition}
\newtheorem{defi}[thm]{Definition}
\theoremstyle{remark}
\newtheorem{remark}[thm]{Remark}
\newtheorem{notation}[thm]{Notation}
\newtheorem{example}[thm]{Example}
\newcommand{\Ker}{\mathrm{Ker}}
\newcommand{\Gal}{\mathrm{Gal}}
\def \F {\mathbb{F}}
\def \Z {\mathbb{Z}}
\def \lcm {\mathrm{lcm}}
\def \tr {\operatorname{tr}}
\def \Frob {\operatorname{Frob}}
\def \rank {{\mathbf r}}
\def \swan {\mathbf {sw} }
\def \cond {{\mathbf c}}
\def \slope {\operatorname{slope}}
\def \dro {\mathbf{d}}
\def \rad {\operatorname{rad}}
\newcommand*{\defeq}{\mathrel{\rlap{%
                     \raisebox{0.27ex}{$\m@th\cdot$}}%
                     \raisebox{-0.27ex}{$\m@th\cdot$}}%
                     =}
\newcommand*{\eqdef}{=\mathrel{\rlap{%
                     \raisebox{0.27ex}{$\m@th\cdot$}}%
                     \raisebox{-0.27ex}{$\m@th\cdot$}}%
                     }
\numberwithin{equation}{section}
\def\@setcopyright{}
\def\serieslogo@{}
\title[M\"obius on polynomial sequences and quadratic Bateman-Horn]{M\"{o}bius cancellation on polynomial sequences and the quadratic Bateman-Horn conjecture over function fields}
\author{Will Sawin}\thanks{W.S. served as a Clay Research Fellow while working on this paper.}
\author{Mark Shusterman}
\address{Department of Mathematics, Columbia University, New York, NY 10027, USA}
\email{sawin@math.columbia.edu}
\address{Department of Mathematics, Harvard University, 1 Oxford Street, Cambridge, MA 02138, USA}
\email{mshusterman@math.harvard.edu}
\begin{document}

\begin{abstract}

We establish cancellation in short sums of certain special trace functions over $\F_q[u]$ below the P\'{o}lya-Vinogradov range,
with savings approaching square-root cancellation as $q$ grows.
This is used to resolve the $\F_q[u]$-analog of Chowla's conjecture on cancellation in M\"{o}bius sums over polynomial sequences, 
and of the Bateman-Horn conjecture in degree $2$, for some values of $q$.
A final application is to sums of trace functions over primes in $\F_q[u]$.

\end{abstract}

\maketitle

\tableofcontents

\section{Introduction}

%

\subsection{Quadratic Bateman-Horn}



The history of interest in prime values of integral polynomials dates back at least to Euler,
with early conjectural contributions also by Bunyakovsky, Landau, and Schinzel.
Quantifying the existing qualitative predictions, 
Bateman and Horn conjectured that for every irreducible monic polynomial $F(T) \in \mathbb{Z}[T]$,
we have
\begin{equation} \label{BHconjectureEq}
\#\{X \leq n \leq 2X : F(n) \ \text{is prime} \} \sim \mathfrak{S}(F) \cdot \frac{X}{\log X}
\end{equation}
where 
\begin{equation}
\mathfrak{S}(F) = \frac{1}{\deg(F)} \prod_p \frac{1 - \frac{1}{p} \#\{x \in \mathbb{Z}/p\mathbb{Z} : F(x) \equiv 0 \ \mathrm{mod} \ p\} }{1 - \frac{1}{p}}. 
\end{equation}

Even though the only completely resolved case is $\deg(F) = 1$, which is the prime number theorem,
significant progress on this conjecture has been made in other cases as well.
For example, it was shown by Iwaniec in \cite{Iw} that there are $\gg X/\log X$ integers $n \in [X, 2X]$ for which $n^2 + 1$ is a product of at most two primes.
For an exposition of the proof of Iwaniec, a generalization to other quadratic polynomials,
and a discussion of related results with $\deg(F) > 2$, we refer to \cite{LO}.

Building and improving on a succession of previous works, 
Merikoski has shown in \cite{Mer} that there are infinitely many integers $n$ with $n^2 + 1$ having a prime factor exceeding $n^{1.279}$ (or exceeding $n^{1.312}$ if Selberg's eigenvalue conjecture is assumed).
Results in this vein have also been obtained in case $\deg(F) > 2$, see for instance \cite{dlB} and references therein.

Among results on multivariate analogs of the Bateman-Horn conjecture, 
we would like to mention the work \cite{FI} of Friedlander-Iwaniec obtaining an asymptotic for the number of primes of the form $n^2+m^4$,
the paper \cite{HM} by Heath-Brown--Moroz on counting primes represented by bivariate cubic polynomials,
and the article \cite{May} of Maynard on incomplete norm forms.
We also refer to \cite{Yau, BR} and their references for results on the Bateman-Horn conjecture `on average over the polynomial $F$'.

Here we are concerned with the function field analog of the Bateman-Horn conjecture.
We fix throughout an odd prime number $p$ and a power $q$ of $p$. We denote by $\F_q$ the field with $q$ elements.
In our function field analogy, the ring $\mathbb{Z}$ is replaced by the univariate polynomial ring $\F_q[u]$.
Throughout this work, we use $\pi$ to denote a prime (monic irreducible) polynomial in $\F_q[u]$.
We define the norm of a nonzero polynomial $f \in \mathbb F_q[u]$  to be
\begin{equation}
|f| = q^{\deg(f)} = |\mathbb F_q[u]/(f)|,
\end{equation}
where $\deg(f) = \deg_u(f)$ is the degree of $f$, and $(f)$ is the ideal of $\F_q[u]$ generated by $f$.
The degree of the zero polynomial is negative $\infty$, so we set its norm to be $0$. 

\begin{conj} \label{BHFFConj}

Let $F(T) \in \F_q[u][T]$ be an irreducible separable monic polynomial with coefficients in $\F_q[u]$.
Then we have
\begin{equation}
\#\{g \in \F_q[u] : |g| = X, \ g \ \text{is monic,} \ F(g) \ \text{is prime} \} \sim \mathfrak{S}_q(F) \cdot \frac{X}{\log_q X}
\end{equation}
as $X \to \infty$ through powers of $q$, and 
\begin{equation}
\mathfrak{S}_q(F) = \frac{1}{\deg_T(F)} \prod_{\pi} \frac{1 - \frac{1}{|\pi|} \#\{x \in \F_q[u]/(\pi) : F(x) \equiv 0 \ \mathrm{mod} \ \pi\} }{1 - \frac{1}{|\pi|}}. 
\end{equation}

\end{conj}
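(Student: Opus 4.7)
My plan is to follow the Heath-Brown / Friedlander-Iwaniec paradigm, adapted to $\F_q[u]$, aiming at the conjecture with $q$ fixed and $n = \log_q X \to \infty$. First I would express the indicator of primality of $F(g)$ through the von Mangoldt function $\Lambda(F(g))$ and apply a combinatorial identity (e.g.\ Heath-Brown's identity) to write $\Lambda$ as a bounded linear combination of convolutions of divisor-like functions, each factor supported in a prescribed range of degrees. This reduces the conjecture to estimates of two flavors over monic $g$ of degree $n$: Type~I sums of the shape
\begin{equation*}
\sum_{\substack{g \ \mathrm{monic} \\ \deg g = n}} \sum_{\substack{a \mid F(g) \\ \deg a \leq A}} \alpha(a),
\end{equation*}
and Type~II (bilinear) sums of the shape
\begin{equation*}
\sum_{\substack{g \ \mathrm{monic} \\ \deg g = n}} \sum_{\substack{F(g) = a b \\ \deg a \in [A_1,A_2]}} \alpha(a)\beta(b),
\end{equation*}
for bounded coefficients $\alpha,\beta$.

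The Type~I sums unfold, for fixed $a$, to counting monic $g$ of degree $n$ with $a \mid F(g)$; this count is governed by the splitting of $a$ in $\F_q[u][T]/(F(T))$. To sum this over $a$ up to $\deg a = A$ close to $n$ one needs a Bombieri-Vinogradov-type statement on average over moduli for the polynomial sequence $\{F(g)\}$, and here I would invoke the M\"obius-cancellation results on polynomial sequences that the abstract flags as one of the main outputs of the paper. The Type~II range is the crux: after Cauchy-Schwarz one is left with short incomplete sums of trace functions attached to Frobenius on $\F_q[u][T]/(F(T),a)$, taken over windows well below the P\'olya-Vinogradov threshold $\deg(a)/2$. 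Exactly this estimate, with savings approaching square-root cancellation as $q$ grows, is the paper's announced main analytic input, and feeding it into the bilinear step would close the argument.

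The main obstacle is that the required short trace-function cancellation currently falls short of the trivial bound when $q$ is small relative to $F$, so the asymptotic can only be obtained for $q$ above an explicit threshold; for such $q$ the conjecture still requires fixed $q$ and $n \to \infty$, which this strategy can handle since all the trace-function estimates are uniform in $n$. A second, more serious obstacle arises when $\deg_T F \geq 3$: the trace functions produced by $\F_q[u][T]/(F(T),a)$ then have higher-dimensional, typically non-abelian monodromy, and current short-sum techniques do not deliver the square-root savings that the bilinear step demands. For these reasons I expect this plan to settle \Cref{BHFFConj} unconditionally only in the quadratic case $\deg_T F = 2$ and only for sufficiently large $q$, matching the scope advertised in the paper's title.
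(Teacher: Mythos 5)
First, note that the statement you are addressing is a \emph{conjecture}: the paper does not prove \cref{BHFFConj} in general, and neither do you; like the paper, you end up claiming only the quadratic case for large $q$ (\cref{MainRes}, proved as \cref{UniformBHstatement}). So the right comparison is between your plan and the paper's proof of that case, and there the routes diverge in ways that matter.

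The paper does not use Heath-Brown's identity and has no Type~I/Type~II dichotomy and no Cauchy--Schwarz/bilinear step. It uses the single convolution $\Lambda = -\mathbf{1} * (\mu\cdot\deg)$, which splits the problem into three ranges according to the degree $k$ of the M\"obius-carrying divisor $A$ of $F(g)$. For $k$ up to roughly $d=\deg g$ one gets the singular series by elementary manipulation of $\rho(A;F)$ and $L$-function bounds. For $k \geq (1+\epsilon)d$ one is left with genuine Chowla-type sums $\sum_C \mu(BC^2+\cdots)$, and the parity barrier is broken not by bilinear estimates but by a direct attack on $\sum_f \mu(F(f))$: restrict to fixed-derivative subsets $f = r+s^p$, apply Pellet's formula to convert $\mu$ into a quadratic character of a discriminant, rewrite that as a Jacobi symbol of a resultant (\cref{formula-for-Mobius}), and then invoke the short trace-function bound \cref{intro-trace-interval-bound} after using Hilbert irreducibility to produce a Dirichlet component. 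This mechanism --- which has no analogue over $\mathbb{Z}$ and is the reason the function-field problem is tractable at all --- is entirely absent from your proposal; without it, your Type~II/bilinear step runs straight into the parity problem, and the trace functions you would obtain from Cauchy--Schwarz are not the Dirichlet-component sheaves to which the paper's cancellation theorem applies.

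The second gap is the middle range $d < k < (1+\epsilon)d$, which your Type~I/Bombieri--Vinogradov framing does not capture. A level-of-distribution statement for $\{F(g)\}$ with moduli beyond $q^d = X$ is exactly what is open over $\mathbb{Z}$ (this is the content of the Hooley/Merikoski line of work the paper cites). The paper instead completes the congruence count by Poisson summation (\cref{QuadCongCor}), parametrizes the roots of $T^2+D \equiv 0 \bmod A$ by binary quadratic forms over $\F_q[u]$ --- developing that theory from scratch in Section~7 --- and thereby reduces the range to M\"obius sums twisted by Kloosterman fractions $e(h\overline{x}/y)$, which are handled by \cref{MobiusBeatsKloostermanCor}. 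Any correct proof of the quadratic case must do something in this range, and your proposal offers nothing for it. Your closing diagnosis (quadratic only, $q$ large, short trace-function sums as the analytic engine) is accurate, but the decomposition you propose to reach those inputs is not the paper's and, as written, would not close.
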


A polynomial $F$ in the variable $T$ with coefficients from $\mathbb F_q[u]$ is \emph{separable} if it is squarefree over an algebraic closure $\overline{\F_q(u)}$ of $\F_q(u)$.
For an irreducible polynomial $F(T) \in \F_q[u][T]$ to be separable, it is necessary and sufficient that $F$ is not a polynomial in $T^p$. 

\cref{BHFFConj} is a fairly straightforward adaptation of the Bateman-Horn conjecture to function fields,
excluding inseparable polynomials over $\F_q[u]$, a family of polynomials that does not have a counterpart over $\Z$.
The importance of singling out the inseparable case, which we do not study here, was first highlighted in the works of Conrad-Conrad-Gross who also put forth a version of \cref{BHFFConj} for this case in \cite[Conjecture 6.2]{CCG}.

Apart from discussing the prior translation of existing results on the Bateman-Horn conjecture from $\mathbb{Z}$ to $\F_q[u]$, 
see \cite[Introduction]{Pol},
Pollack shows that for certain polynomials $F$ in \cref{BHFFConj} that do not depend on the variable $u$ (namely $F \in \F_q[T]$), 
there exist infinitely many monic $g \in \F_q[u]$ for which $F(g)$ is prime.
The polynomials $g$ that Pollack substitutes into $F$ are all monomials, 
so his method does not provide a lower bound that is comparable to the one in \cref{BHFFConj}. 

The main result of this work is the resolution of the function field quadratic Bateman-Horn conjecture over certain finite fields.

\begin{thm} \label{MainRes}

Let  $p$ be an odd prime number, and let $q$ be a power of $p$ with $q > 2^{10} 3^2 e^2 p^4$. 
Then \cref{BHFFConj} holds in case $\deg_T(F) = 2$.

\end{thm}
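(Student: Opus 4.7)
The plan is to derive the quadratic Bateman--Horn asymptotic from the paper's trace-function cancellation by expanding a von Mangoldt sum through Möbius inversion and reducing everything to divisor sums that fall within the cancellation theorem's scope.

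Since $F$ is irreducible and quadratic in $T$, the set of monic $g \in \F_q[u]$ of degree $n$ for which $F(g)$ is a nontrivial prime power is cut out by the algebraic condition $F(g) = \pi^k$, $k \geq 2$, hence has size $O(q^{n/2})$. Thus the counting problem in \cref{MainRes} reduces, up to an acceptable error, to estimating
\begin{equation*}
\Psi(n) \defeq \sum_{\substack{g \in \F_q[u] \text{ monic} \\ \deg g = n}} \Lambda(F(g))
\end{equation*}
and showing $\Psi(n) = 2n \, \mathfrak{S}_q(F)\, q^n \bigl(1 + o(1)\bigr)$; here the factor $2n$ is $\deg_u F(g)$. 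Using the identity $\Lambda(f) = -\sum_{d \mid f,\, d\text{ monic}} \mu(d) \deg d$ and swapping orders of summation gives
\begin{equation*}
\Psi(n) = -\sum_{\substack{d \text{ monic} \\ \deg d \leq 2n}} \mu(d)\, \deg d \cdot N(d, n), \qquad N(d,n) \defeq \#\{g \text{ monic},\, \deg g = n : d \mid F(g)\}.
\end{equation*}

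Next, I would split the $d$-sum into small ($\deg d \leq M$), medium ($M < \deg d < 2n-M$), and large ($\deg d \geq 2n - M$) ranges, for a parameter $M$ to be optimized. For small $d$, the Chinese Remainder Theorem yields $N(d,n) = \rho(d) \, q^{n - \deg d}$, where $\rho(d)$ counts roots of $F$ modulo $d$; a standard Euler-product manipulation of the truncated Dirichlet series $-\sum_{\deg d \leq M} \mu(d) \rho(d) \deg d / |d|$ then produces the main term $2n\,\mathfrak{S}_q(F)\,q^n$ with controlled error, precisely because the singular series $\mathfrak{S}_q(F)$ is the logarithmic derivative at the edge of the corresponding local $L$-factor. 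For large $d$, the substitution $e = F(g)/d$ turns the contribution into a sum of the shape $\sum_{e: \deg e \leq M}(\dots)\sum_{g: e \mid F(g)} \mu(F(g)/e)$; the inner sums are exactly the trace-function sums over polynomial sequences (in residue classes modulo $e$) controlled by the paper's Möbius cancellation theorem.

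The main obstacle will be the medium range. Here I would apply a Vaughan-- or Heath-Brown--type identity to split the contribution into bilinear forms
\begin{equation*}
\sum_{d_1, d_2} \mu(d_1)\, \mu(d_2) \,(\text{weights}) \cdot \#\{g \text{ monic}, \deg g = n : d_1 d_2 \mid F(g)\},
\end{equation*}
which, after one variable is fixed, reduce again to short sums of the form $\sum_g \mu(F(g)/d_1)$ twisted by indicator data of residue classes modulo $d_2$. These are exactly the special trace-function sums for which the paper establishes near-square-root cancellation below the Pólya--Vinogradov range. The explicit constraint $q > 2^{10} \cdot 3^2 \cdot e^2 \cdot p^4$ in \cref{MainRes} should arise by balancing the $q$-savings from the cancellation theorem against the combinatorial and divisor losses in the Vaughan decomposition: tracking constants carefully through the optimization of $M$ and the bilinear decomposition ought to yield exactly the stated threshold.
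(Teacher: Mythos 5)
Your plan correctly identifies the skeleton of the argument: expand $\Lambda \circ F$ via the convolution identity, extract the singular series main term from the small-divisor range, and flip to the complementary divisor in the large-divisor range so that the Möbius cancellation theorem applies. This matches the paper's treatment of the ranges $k \le d$ and $k \ge (1+\epsilon)d$ in the proof of \cref{UniformBHstatement}.

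The gap is in the medium range, and it is not a minor one. You propose Vaughan-- or Heath-Brown--type identities leading to bilinear forms $\sum_{d_1,d_2}\mu(d_1)\mu(d_2)\,\#\{g : d_1d_2 \mid F(g)\}$, then ``reduce again to short sums of the form $\sum_g\mu(F(g)/d_1)$ twisted by indicator data of residue classes.'' This is where the argument would stall. In the medium range (degrees of $d$ just above $n = \deg g$), neither $d$ nor $F(g)/d$ is small enough for the trace-function machinery to operate on a single factor, and the bilinear decomposition you sketch does not produce sums that fall inside the scope of the paper's trace-function theorems (which require an infinitame sheaf with a Dirichlet component, applied to a subsum with a fixed derivative). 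A Vaughan decomposition is well adapted to rearranging $\Lambda$ itself, not to handling the genuinely incomplete divisor sum $\#\{g : \deg g = n,\ d \mid F(g)\}$ when $\deg d$ is close to $n$, which is the structural difficulty here — essentially the function-field analogue of Hooley's problem on divisors of quadratics near their square root, a problem still open over $\mathbb Z$.

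What the paper actually does in this range is qualitatively different. It applies a Poisson-summation/completion step (\cref{QuadCongCor}) to compare the incomplete count $\rho_d(A;F)$ to the complete one, leaving additive-character error terms $\sum_{f \bmod A : F(f)\equiv 0} e(hf/A)$. It then invokes the theory of binary quadratic forms over $\F_q[u]$ (Section 7: \cref{QuadraticCor}, \cref{IndefQuadraticCor}) to parametrize the pairs $(A,f)$ by representations $(Q,(x,y))$, and \cref{ExponentiationChangeProp} converts the additive character $e(hf/A)$ into a Kloosterman fraction $e(h\overline{x}/y)$. The resulting sums $\sum_y\sum_x\mu(Q(x,y))\,e(h\overline{x}/y)$ are then controlled by \cref{MobiusBeatsKloostermanCor}, a twisted variant of the main Möbius cancellation theorem. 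This Hooley-style reduction through quadratic forms — not a Vaughan decomposition — is the essential new ingredient, and the numerical threshold $q > 2^{10}3^2e^2p^4$ in \cref{MainRes} emerges from optimizing the parameter $\epsilon$ defining the medium range against the cancellation and quadratic-form losses, as carried out explicitly in \cref{UniformBHstatement}.
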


We obtain the asymptotic in \cref{BHFFConj} with a power saving error term.
For somewhat larger (fixed) values of $q$, this power saving is inversely proportional to $p^2$.
We also have uniformity in the quadratic polynomial $F$, 
allowing the norm of its coefficients to grow almost as fast as $X^2$ when $q$ is large, see \cref{UniformBHstatement} for a more detailed statement.

Bateman and Horn also made a conjecture for the `reducible' or `split' case,
predicting simultaneous primality of the values of several irreducible polynomials,
which in the case of linear polynomials specializes to the Hardy-Littlewood conjecture.
For some results in the direction of that conjecture see our previous work \cite{SS} (and references therein) on which this paper builds.
In particular \cref{MainRes} is the nonsplit analog of the twin prime number theorem \cite[Theorem 1.1]{SS}, 
obtained therein under the assumption $q > 685090p^2$.
The values of $q$ satisfying \cref{MainRes} are somewhat smaller than those in \cite[Theorem 1.1]{SS} for some very small primes $p$,
but are otherwise larger.
This is due to a new kind of difficulty appearing in one of the ranges in the proof of \cref{MainRes}, as will be explained later.

One of the difficulties in making progress on the Bateman-Horn conjecture is the parity barrier, or in other words,
producing many integers $n$ with $F(n)$ having an odd number of prime factors. 
This is implicit for example in the aforementioned work \cite{Iw} whose strategy is sieve-theoretic.
We shall now elaborate on this problem and on our resolution of a function field analog.

\subsection{Chowla's conjecture on polynomial sequences}

In \cite[Eq. (341)]{Ch} Chowla conjectured that for every (monic) squarefree polynomial $F \in \mathbb{Z}[T]$ one should have
\begin{equation}
\sum_{n \leq X} \mu(F(n)) = o(X).
\end{equation}
As in the Bateman-Horn conjecture, the only resolved case is the linear one.
For progress with multivariate polynomials $F$, we refer to \cite{Hel, FH, La} and references therein. 
Notable progress has also been made in case $F$ splits as a product of linear factors, see \cite[Introduction]{MRT}.

Recall that the M\"{o}bius function of a polynomial $f \in \F_q[u]$ is $0$ if $f$ is divisible by a square of a nonconstant polynomial,
and is otherwise given by $(-1)^r$ where $r$ is the number of prime factors of $f$.

\begin{thm} \label{NewRes}

Fix an odd prime number $p$, an integer $k \geq 1$, and a power $q$ of $p$ satisfying $q > 4 e^2k^2p^2  $. Let $F(T) \in \F_q[u][T]$ be a separable polynomial of degree $k$ in $T$.
Then
\begin{equation} \label{ResEq}
\sum_{\substack{f \in \mathbb F_q[u] \\ |f| \leq X}} \mu \left( F \left(f \right) \right) = o(X), \quad X \to \infty.
\end{equation}
\end{thm}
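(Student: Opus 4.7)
The plan is to interpret the M\"obius sum as a trace of Frobenius on \'etale cohomology, and then reduce the required cancellation to a big-monodromy statement, in the spirit of the authors' previous work. After clearing the leading $T$-coefficient we may assume $F$ is monic in $T$; also, by partitioning according to the leading coefficient of $f$ and replacing $F(T)$ with $F(cT)$ for $c \in \F_q^*$, we may restrict to monic $f$. Splitting the outer sum by $\deg f = n$ and summing a geometric series, it suffices to prove
\[
\sum_{\substack{f \in \F_q[u] \text{ monic} \\ \deg f = n}} \mu(F(f)) = O\!\left(q^{n(1-\delta)}\right)
\]
for some $\delta > 0$ depending only on $q, k, p$. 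The parameter space $V_n$ of such $f$ is $\mathbb{A}^n_{\F_q}$ (with coordinates the lower coefficients of $f$), and the substitution $\Phi\colon f \mapsto F(f)$ is a morphism of affine spaces $V_n \to V_{kn}$.

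On $V_N = \mathbb{A}^N_{\F_q}$ there is a canonical constructible $\ell$-adic sheaf $\mathcal{S}_N$ whose Frobenius trace function is the M\"obius function: over the squarefree locus $U_N \subset V_N$, the universal polynomial carries an \'etale Galois $S_N$-cover parametrizing orderings of its roots, and $\mathcal{S}_N|_{U_N}$ is the rank-one local system attached to the sign character of $S_N$, extended by zero to $V_N$. The Grothendieck--Lefschetz trace formula rewrites the sum in question as $\sum_i (-1)^i \tr\bigl(\Frob \mid H^i_c(V_n \otimes \overline{\F_q},\, \Phi^*\mathcal{S}_{kn})\bigr)$, and Deligne's Weil II bounds each eigenvalue on $H^i_c$ by $q^{i/2}$. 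It therefore suffices to show (i) that the top cohomology $H^{2n}_c$ vanishes, and (ii) that the total Betti number of $\Phi^*\mathcal{S}_{kn}$ is small compared with $\sqrt{q}$. Part (ii) should follow from Katz-type total Betti bounds in terms of the degree of the discriminant locus $\{f : F(f) \text{ is not squarefree}\} \subset V_n$, which is polynomial in $n$ and $k$.

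Vanishing of $H^{2n}_c$ in (i) is equivalent, by Poincar\'e duality, to $\Phi^*\mathcal{S}_{kn}$ having nontrivial geometric monodromy on every component of $V_n$: concretely, that the action of $\pi_1^{\mathrm{geom}}(V_n)$ on the $kn$ geometric roots of $F(f)$ contains an odd permutation. A clean sufficient condition is that the generic Galois group of $F(f)$, regarded as a polynomial over $\overline{\F_q}(u)$ with the coefficients of $f$ adjoined as independent transcendentals, is the full symmetric group $S_{kn}$ (or, in case $F$ is reducible into $F = \prod F_i$, that the analogous statement holds for each $F_i$, so that the total sign character is nontrivial).

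The main obstacle, and the source of the quantitative threshold $q > 4 e^2 k^2 p^2$, is establishing this big-monodromy statement uniformly over all separable $F$. Separability of $F$ in $T$ guarantees generic separability of $F(f)$, but in characteristic $p$ one must further rule out the generic Galois group being contained in $A_{kn}$ or degenerating on a positive-dimensional family, and these exclusions become delicate as $p$ and $k$ grow: their obstructions are of an inseparable nature and propagate through wild ramification at $u = \infty$. The bound $q > 4 e^2 k^2 p^2$ is essentially what is needed so that the Katz Betti estimate---with a $k^2$ contribution from the degree of the discriminant, a $p^2$ contribution from wild ramification, and a constant $e^2$ absorbing the tail of the Betti sum---still fits below $\sqrt{q}$, so that Weil II yields a power-saving cancellation over the trivial bound.
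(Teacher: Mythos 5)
Your setup (reduction to monic $f$ of fixed degree $n$, the sheaf $\mathcal{S}_N$ on the space of degree-$N$ polynomials whose trace function is $\pm\mu$ via the sign character of the root-ordering cover, and the trace formula) is fine, but the quantitative core of the argument does not close. With Deligne's bounds, each cohomology group contributes up to $q^{i/2}\dim H^i_c$, so knowing only that $H^{2n}_c$ vanishes leaves the term $i=2n-1$, which contributes up to $q^{n-\frac{1}{2}}\dim H^{2n-1}_c$. For fixed $q$ this exceeds even the trivial bound $q^n$ as soon as $\dim H^{2n-1}_c > q^{1/2}$, and your own Betti estimate (``polynomial in $n$ and $k$'') guarantees exactly that for large $n$; the two halves of your criterion are in direct tension. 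To obtain any power saving one needs $H^i_c=0$ for all $i>n+O(1)$, i.e.\ vanishing all the way down to just above the middle degree. That is the actual content of the paper's \cref{geometric-vanishing}, and it is proved by a perverse-sheaf argument controlling the support of a pushforward to a carefully chosen hyperplane at infinity --- a mechanism that requires the coefficient sheaf to be a tensor product of pullbacks of sheaves on $\mathbb{A}^1$ along the evaluation maps $f\mapsto f(x_i)$, with at least one factor a nontrivial tame rank-one sheaf punctured at a single point. The sheaf $\Phi^*\mathcal{S}_{kn}$ as you have constructed it has no such structure, and no big-monodromy statement about the Galois group of $F(f)$ over the generic point will produce vanishing below the top degree.

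This is why the paper does not attack $\sum_f \mu(F(f))$ directly. It first partitions the $f$'s into classes $f=r+s^p$ of fixed derivative; on each class Pellet's formula turns $\mu(F(u,r+s^p))$ into $\chi_2$ of a resultant of $F$ and $F_{[r]}$, which \cref{ccg-like} and \cref{SaSh-like} rewrite as a Jacobi symbol $\left(\frac{W_{F,r}(u,r+s^p)}{M_{F,r}}\right)$ --- a product of Kummer trace functions to which the vanishing machinery applies, \emph{provided} some prime factor of $M_{F,r}$ contributes a genuine Dirichlet trace function. That proviso fails exactly when $R(F,F_{[r]})$ is a square times a controlled factor, and ruling this out for almost all $r$ is where Cohen's quantitative Hilbert irreducibility theorem enters. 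The restriction to the subfamilies $f=r+s^p$, each of length $q^{n/p}$, is also the true source of the $p$-dependence: square-root cancellation on each class yields only a $q^{n/2p}$ saving overall, and the threshold $q>4e^2k^2p^2$ comes from balancing this against the Betti-number growth $\gamma^{-n/p}(1+2\gamma)^{O(kc_1)}$ at the optimal $\gamma=\frac{1}{2kp}$ --- not from the degree of the discriminant locus or from wild ramification as you suggest. Your proposal is missing the fixed-derivative decomposition, the conversion of M\"obius to a character sum, and the Dirichlet-component hypothesis, all of which are essential.
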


The result builds on and complements \cite{CCG} which deals with certain squarefree inseperable polynomials $F$,
for which \cref{ResEq} is shown not to hold. 

In fact, we obtain \cref{ResEq} with a power saving.
This saving approaches $\frac{1}{2p}$ for fixed $p$ and growing $q$.
Moreover we can take the coefficients of $F$ to be as large as any fixed power of $X$, 
by allowing some increase in $q$.
An effective error term and wide uniformity in $F$ are crucial (but not quite sufficient on their own) in our approach to establishing \cref{MainRes}.
We could likely obtain a similar cancellation in case the sum in \cref{ResEq} is restricted to prime polynomials $f \in \F_q[u]$, following \cite[Corollary 6.1]{SS}.


An analog of \cref{BHFFConj} and \cref{NewRes}, not considered in this work, is to fix $X$ and let $q \to \infty$ (thus allowing $F$ to change as well).
Refining many previous works, Entin in \cite{Ent16, Ent18} and then Kowalski in \cite{Kow} resolved the `large finite field' variants of \cref{BHFFConj} and Chowla's conjecture on polynomial sequences,
obtaining an error term of size $O(q^{-\frac{1}{2}})$ with the implied constant depending on $X$.
It is plausible that our arguments can be used to obtain superior error terms for certain special cases of these works.

Our proof of \cref{NewRes} also builds on and refines arguments from the proof of \cite[Theorem 1.3]{SS} where $F$ is assumed to be a product of (distinct) linear factors. 
The power savings and uniformity in $F$ obtained here are similar to those in \cite{SS}.
What follows is an overview of our proof of \cref{NewRes}, which leads to the technical heart of our work - cancellation in short sums of trace functions.   

We start, as in \cite{SS}, by restricting in \cref{SameDerivativeEq} to subsums over polynomials $f \in \F_q[u]$ sharing the same derivative,
and applying Pellet's formula from \cref{PelletEq} to write the value of M\"{o}bius in \cref{MobConvertedToResEq} as a (quadratic) character of the resultant of the values at $f$ of a pair of bivariate polynomials closely related to $F$ and the aforementioned derivative.
This allows us to restate a good deal of the arithmetic problem in terms of the geometry of the two plane curves given by the vanishing loci of our pair of bivariate polynomials, a strategy succesfully employed in previous works on factorization statistics of polynomials over finite fields by Conrad-Conrad-Gross, Entin, Kowalski, and others. 

Adapting a result from \cite{CCG}, we obtain in \cref{ccg-like} an expression for the above resultant in terms of the intersection numbers of our curves.
We can then write in \cref{SaSh-like} a character of our resultant as a Jacobi symbol. 
Our problem becomes that of obtaining cancellation for very short sums in the \'{e}tale $\F_q$-algebra $\F_q[u]/(M)$ of Jacobi symbols of the form 
\begin{equation} \label{ShortCharSumEq}
\sum_s \left( \frac{W(s)}{M}\right)
\end{equation}
with $W(s)$ a polynomial in $s$ with coefficients in $\F_q[u]/(M)$, and $M \in \F_q[u]$ a squarefree polynomial.

The problem of cancellation in short multiplicative character sums with $W$ linear in $s$ has been addressed in \cite[Theorem 1.4]{SS}, going below the Burgess range. 
The vanishing cycles argument used in the proof of that theorem,
reducing the problem to bounds of Weil and Deligne, turns out to be insufficient for controlling \cref{ShortCharSumEq}
in part due to the lack of multiplicativity in $s$ for a nonlinear polynomial $W$.
Indeed, obtaining significant cancellation in \cref{ShortCharSumEq} for general $W$ remains out of our reach.
We refer to \cite[Section 4, 4.3]{Saw20} for a further discussion of vanishing cycles in this context.

Sums as in \cref{ShortCharSumEq} have been studied, over the integers, in several works of Burgess such as \cite{Bur}, 
and for multivariate integral polynomials $W$ in \cite{MC}.
Burgess works with prime $M$, and obtains stronger results under the assumption that $W$ has a linear factor or even splits completely.

Although the arguments of Burgess are probably not directly applicable to getting cancellation in sums as short as ours,
along analogous lines we are able, after making a linear change of variable in the original polynomial $F$,
to show that the vast majority of our fixed derivative subsums give rise to short character sums with a prime factor of $M$ mod which $W$ is a power of a linear polynomial. 
This involves an application of a quantitative form of Hilbert's irreducibility theorem due to S. D. Cohen from \cite{Coh} as adapted to a function field setting by Bary-Soroker and Entin in \cite{BSE}.

The most novel part of our work is in establishing cancellation in sums satisfying this assumption on $W$ and $M$,
and more general short sums of certain special \emph{trace functions} that arise in our approach to proving \cref{MainRes}, 
which we now discuss.

\subsection{Strategy for proving the main result}

To prove \cref{MainRes}, we use a convolution identity expressing the indicator of primes in terms of the M\"{o}bius function.
Roughly speaking, this introduces three different ranges of summation.
In the first range we manipulate with Euler products and use classical bounds for $L$-functions to single out and calculate the singular series main term of \cref{MainRes}.
For the second range, a uniform version of \cref{NewRes} with a power saving cancellation is sufficient.
This part of our approach is similar to arguments from \cite{SS}, one difference is the need of a greater uniformity here. 

A more significant difference is that in \cite{SS} the third range did not present substantial difficulties, 
because a similar problem has already been handled by Fouvry-Michel over $\mathbb{Z}$.
Here however, in the third range we need (roughly speaking) to count (with good savings) the number of values of a quadratic polynomial having a prime factor of size somewhat larger than their square root.
This problem has not yet been resolved over $\mathbb{Z}$, and we refer to the aforementioned work \cite{Mer} for upper bounds and a discussion of the possibility of further progress.

In our solution of the problem over $\F_q[u]$, 
we first follow a strategy similar to some parts of \cite{Mer},
applying Poisson summation, completion, and the theory of binary quadratic forms.
This approach has its roots in the work \cite{Hoo} of Hooley.
Due to the lack of an appropriate reference, and our desire to obtain \cref{MainRes} with significant uniformity,
we develop for that matter the necessary parts of binary quadratic form theory over function fields.
This allows us to reduce the problem in the third range to a version of \cref{NewRes} twisted by a Kloosterman fraction,
see \cref{MobiusVStraceThm} for a more general twisted sum.

Our approach to proving \cref{NewRes} is also helpful for its twisted variants, leading again to short sums of trace functions.
One difference is that in the twisted case the modulus of the resulting exponential sum is not squarefree,
so we use a simple sieve in \cref{MobiusBeatsKloostermanCor} to reduce to squarefree moduli.

\subsection{Trace functions}

In various works, Fouvry, Kowalski, and Michel highlighted the importance of \emph{trace functions} to number theory over the integers, see for instance \cite{FKMS}.
These are functions on the integers modulo a prime $p$, equivalently, functions on the integers that are periodic with period $p$, that arise from the trace of Frobenius on an $\ell$-adic sheaf on the affine line over $\mathbb F_p$. 
Examples include multiplicative characters, additive characters, compositions of multiplicative characters or additive characters with rational functions, Kloosterman sums such as
\begin{equation} \label{KloostermanExampleEq}
t(x) = \frac{1}{\sqrt{p}} \sum_{\substack{a,b \in \F_p^\times \\ ab = x}} e \left(\frac{a + b}{p}\right),
\quad e(y) = e^{2 \pi i y},
\end{equation} 
compositions of Kloosterman sums with rational functions, and products or sums of any of these functions. 
Despite this vast generality, it is possible to obtain nontrivial results for all (or almost all) trace functions. 

More generally, as in \cite{WX}, one can work with periodic functions with squarefree period, which are products of trace functions modulo distinct primes. These behave similarly to trace functions, although most results have not yet been proven at this level of generality.

We define trace functions over $\mathbb F_q[u]$ in an analogous way, 
as functions on $\mathbb F_q[u]/(\pi)$ for a prime $\pi \in \F_q[u]$ arising from sheaves on $\mathbb A^1_{\mathbb F_q[u]/(\pi)}$, or products of these for distinct primes $\pi$.  

\begin{defi} \label{SheafFuncDictionaryDef}

Fix throughout an auxiliary prime number $\ell$ different from $p$ and an embedding $\iota \colon \overline{\mathbb Q_\ell} \hookrightarrow \mathbb C$.
We work with the abelian category of constructible $\overline{\mathbb{Q}_\ell}$-sheaves on a variety in characteristic $p$, see \cite[Part 2, Section 8]{KR},
and call its object simply `sheaves'.
Let $\pi \in \mathbb F_q[u]$ be a prime, and let $\mathcal{F}$ be a sheaf on $\mathbb A^1_{\mathbb F_q[u]/(\pi)}$. 
We can think of any $x \in \F_q[u]/(\pi)$ as a point on $\mathbb A^1_{\mathbb F_q[u]/(\pi)}$ and thus as a map 
\begin{equation}
x \colon \mathrm{Spec}(\F_q[u]/(\pi)) \to \mathbb A^1_{\mathbb F_q[u]/(\pi)}.
\end{equation}

For a geometric point $\overline{x}$ over $x$,
the stalk $\mathcal{F}_{\overline{x}}$ of $\mathcal{F}$ at $\overline{x}$ is the underlying finite-dimensional vector space over $\overline{\mathbb{Q}_\ell}$ of the pullback $x^* \mathcal{F}$ of $\mathcal{F}$ to $\mathrm{Spec}(\F_q[u]/(\pi))$.
This vector space is equipped with a linear action of $\mathrm{Frob}_{q^{\deg(\pi)}}$, so we can define
\begin{equation} \label{DefTraceFuncFrobEq}
t \colon \mathbb F_q[u]/(\pi) \to \mathbb C, \quad t(x) = \iota (\tr(\mathrm{Frob}_{q^{\deg(\pi)}}, \mathcal{F}_{\overline{x}}))
\end{equation}
independently of the choice of $\overline{x}$.

A function $t$ as above is called a \emph{trace function}, 
and is sometimes denoted by $t_{\mathcal{F}}$ in order to emphasize that $t$ arises from $\mathcal{F}$ via \cref{DefTraceFuncFrobEq}.
It is at times convenient to think of $t$ as a $\pi$-periodic function on $\F_q[u]$.
In the sequel, abusing notation we drop $\iota$ from our formulas. 

Note that the construction above suggests an extension of the function $t$ to any finite field extension $\kappa$ of $\F_q[u]/(\pi)$,
by considering the action of $\mathrm{Frob}_{|\kappa|}$ on $\mathcal{F}_{\overline{x}}$ for every $\kappa$-valued point
$x$ of $\mathbb{A}^1_{\F_q[u]/(\pi)}$.
We say that $\mathcal{F}$ is punctually pure of weight $w \in \mathbb{R}$ if for every $\kappa$-valued point $x$ of $\mathbb{A}^1_{\F_q[u]/(\pi)}$, all the eigenvalues of $\mathrm{Frob}_{|\kappa|}$ on $\mathcal{F}_{\overline{x}}$ are of absolute value $|\kappa|^{\frac{w}{2}}$. The sheaf $\mathcal{F}$ is said to be mixed of nonpositive weights if there exist a nonnegative integer $r$, 
nonpositive real numbers $w_1, \dots, w_r$, and a filtration of $\mathcal F$ by subsheaves
\begin{equation}
0 = \mathcal{F}^{(0)} \subseteq \mathcal{F}^{(1)} \subseteq \dots \subseteq \mathcal{F}^{(r)} = \mathcal{F}
\end{equation} 
such that the sheaf $\mathcal{F}^{(i)}/\mathcal{F}^{(i-1)}$ is punctually pure of weight $w_i$ for every $1 \leq i \leq r$.

Set $\kappa = \F_q[u]/(\pi)$, let $\eta$ be a generic point of $\mathbb{A}^1_\kappa$, and let 
\begin{equation}
j \colon \mathrm{Spec}(\kappa(X)) \to \mathrm{Spec}(\kappa[X]) = \mathbb{A}^1_{\kappa}
\end{equation}
be the map arising from the inclusion of $\kappa[X] \hookrightarrow \kappa(X)$.
Then $j^*\mathcal{F}$ equips the stalk $\mathcal{F}_{\overline{\eta}}$ with the structure of a continuous finite-dimensional representation of $\Gal(\kappa(X)^{\text{sep}}/\kappa(X))$ over $\overline{\mathbb{Q}_\ell}$.
We call $\dim_{\overline{\mathbb{Q}_\ell}} \mathcal{F}_{\overline{\eta}}$ the (generic) rank of $\mathcal F$, or the rank of $t$, and denote it by either $\rank(\mathcal F)$ or $r(t)$. 

Every closed point $x \in \mathbb{P}^1_\kappa$ defines a valuation on $\kappa(X)$,
which we can extend (non-uniquely) to a valuation $v_x$ on $\kappa(X)^{\text{sep}}$.
The closed subgroup
\begin{equation}
D_x = \{\sigma \in \Gal(\kappa(X)^{\text{sep}}/\kappa(X)) : v_x \circ \sigma= v_x\}
\end{equation}
fits into the exact sequence of profinite groups
\begin{equation}
1 \to I_x \to D_x \to \Gal(\overline{\kappa(x)}/\kappa(x)) \to 1.
\end{equation}

We call $I_x$ the inertia subgroup of $\Gal(\kappa(X)^{\text{sep}}/\kappa(X))$ at $x$, and note that it is well-defined up to conjugation. 
We let $P_x$ be a (unique) $p$-Sylow subgroup of $I_x$, and call it the wild inertia subgroup at $x$.
We say that $\mathcal{F}$ is unramified (respectively, tamely ramified) at $x$ if $I_x$ (respectively, $P_x$) acts trivially on 
$\mathcal{F}_{\overline{\eta}}$.
For $x \in \mathbb{P}^1_\kappa$, we denote by $\swan_x(\mathcal F)$ the swan conductor of $\mathcal F$ at $x$,
a nonnegative integer associated to the action of $P_x$ on $\mathcal{F}_{\overline{\eta}}$.
In particular, it is zero if and only if the action of $P_x$ is trivial.
For a thorough exposition of this notion see \cite[Section 4]{KR}.

We say that the trace function $t$ (or the sheaf $\mathcal{F}$) is \emph{infinitame} if $\mathcal F$ is tamely ramified at $\infty \in \mathbb{P}^1_\kappa$,
mixed of nonpositive weights, and has no finitely supported sections. 
The latter condition means that for every \'{e}tale map $e \colon U \to \mathbb{A}^1_{\kappa}$, and every section $s$ of $\mathcal{F}$ over $U$, the support of $s$
\begin{equation}
\mathrm{Supp}(s) = \{x \in U : s_{\overline{x}} \neq 0 \ \text{in} \ \mathcal{F}_{\overline{x}} \} 
\end{equation}
is infinite.
This is equivalent to the vanishing of the cohomology group $H^0_c(\mathbb{A}^1_{\overline{\kappa}}, \mathcal F)$, 
since any compactly supported global section can be decomposed into sections supported at individual points.

We define the conductor of an infinitame trace function $t$ (or of the sheaf giving rise to it) to be the nonnegative integer
\begin{equation} \label{InfinitameConductorDefEq}
c(t) = c(\mathcal{F}) = \sum_{ x \in |\mathbb A^1_{\kappa}|} [\kappa(x) : \kappa] (\rank(\mathcal F) - \dim(\mathcal F_{\overline{x}}) + \swan_x(\mathcal F) ) 
\end{equation} 
where the sum is taken over closed points, and the dimension is over $\overline{\mathbb{Q}_\ell}$.

\end{defi}

\begin{remark}

The assumption that $\mathcal{F}$ is mixed of nonpositive weights is merely a normalization condition capable of capturing all of the examples that are of interest. It implies that $|t(x)| \leq r(t)$ for every $x \in \F_q[u]/(\pi)$.
The technical assumption that $\mathcal F$ has no finitely supported sections guarantees that the conductor defined above has certain desirable properties.
This assumption could easily be removed since the finitely supported sections of a sheaf contribute to only finitely many values of the trace function, and these values can be handled separately for most purposes, but it would make the formulas involving the conductor more complicated.

On the other hand, the assumption that $\mathcal{F}$ is tamely ramified at infinity is a substantive restriction necessitated by our methods of proof,
and is (to some extent) suggested by the trace functions arising in the proofs of \cref{MainRes} and \cref{NewRes}.

\end{remark}

\begin{remark}

The definition of the conductor of $t$ almost matches the logarithm to base $|\kappa|$ of the (global) Artin conductor of the Galois representation $\mathcal F_{\overline{\eta}}$, defined as 
\begin{equation}
\prod_{x \in |\mathbb A^1_{\kappa}|} |\kappa(x)|^{ \rank(\mathcal F) - \dim(\mathcal F_{\overline{\eta}}^{I_x} ) + \swan_x(\mathcal F) }.
\end{equation} 
Note that there is a natural map $\mathcal F_{\overline{x}} \to \mathcal F_{\overline{\eta}}^{I_x}$ 
whose injectivity is equivalent to $\mathcal{F}$ having no sections supported at $x$.
Hence, if $\mathcal{F}$ has no finitely supported sections, all these maps are injections.
If $\mathcal{F}$ is moreover a middle extension sheaf, then these maps are isomorphisms.
Hence the conductor of $t$ is an adaptation of the Artin conductor to infinitame trace functions.

\end{remark}

\begin{example} \label{DirichletTraceFuncEx}
 
Let $\pi \in \F_q[u]$ be a prime, let
\begin{equation}
\chi \colon (\F_q[u]/(\pi))^\times \to \mathbb{C}^\times,
\end{equation} 
be a nonprincipal Dirichlet character, let $a \in (\F_q[u]/(\pi))^\times$ be a scalar, 
and let $b \in \F_q[u]/(\pi)$ be a shift.
After constructing the Kummer sheaf 
\begin{equation}
\mathcal{F} = \mathcal{L}_\chi(aT+b)
\end{equation}
on $\mathbb{A}^1_{\F_q[u]/(\pi)}$, we will see that the function 
\begin{equation} \label{DefDirTraceFuncEq}
t \colon \F_q[u]/(\pi) \to \mathbb{C}, \quad t(x) = 
\begin{cases}
\chi(ax + b) & x \neq -ba^{-1} \\
0 & x = -ba^{-1}
\end{cases}
\end{equation}
is an infinitame trace function with $r(t) = c(t) = 1$.
We call $t$ a \emph{Dirichlet trace function}.

\end{example}

\begin{defi} \label{TraceFuncDefi}


For a squarefree polynomial $g \in \F_q[u]$, we say that
\begin{equation}
t \colon \mathbb F_q[u]/(g) \to \mathbb{C}
\end{equation}
is a ($g$-periodic) \emph{trace function} if there exist trace functions 
\begin{equation}
t_\pi \colon \F_q[u]/(\pi) \to \mathbb{C}
\end{equation}
for each prime factor $\pi$ of $g$ such that
\begin{equation} \label{TraceFunctionsProdEq}
t(x) = \prod_{\pi \mid g} t_{\pi} (x \ \mathrm{mod} \ \pi), \quad x \in \F_q[u]/(g). 
\end{equation}
We say that $t$ is \emph{infinitame} if $t_\pi$ is for each $\pi \mid g$, and define 
\begin{equation}
r(t) = \max_{\pi \mid g}\{r(t_\pi)\}, \quad c(t) = \max_{\pi \mid g}\{c(t_\pi)\}.
\end{equation}
We will use the notation $\mathcal{F}_\pi$ for a sheaf giving rise to the trace function $t_\pi$ via \cref{DefTraceFuncFrobEq}.
This means that $\mathcal{F}_\pi$ is a sheaf with $t_{\mathcal{F}_\pi} = t_{\pi}$.

\end{defi}

The following trace functions show up in the proofs of \cref{MainRes} and \cref{NewRes}.



%
%

\begin{example} 

Let $g \in \F_q[u]$ be squarefree, let $\chi \colon (\mathbb F_q[u]/(g))^\times \to \mathbb C^\times$ be a multiplicative character,
and let $\psi \colon \mathbb F_q[u]/(g) \to \mathbb C^\times$ be an additive character. 
Let $a(T)$ be a nonconstant polynomial with coefficients in $\mathbb F_q[u]/(g)$, and define
\begin{equation*}
t \colon \F_q[u]/(g) \to \mathbb{C}, \quad
t(x) = 
\begin{cases} 
0 & a(x) \notin (\F_q[u]/(g))^\times \\ 
0 & x \notin (\F_q[u]/(g))^\times \\  
\chi (a(x)) \psi \left( \frac{1}{x} \right)  & \textrm{otherwise}. 
\end{cases}
\end{equation*}
The function $t$ is an infinitame trace function with
\begin{equation}
r(t) = 1, \quad c(t) \leq \deg(a) + 2.
\end{equation}

\end{example} 

The first problem about trace functions one usually studies is that of obtaining cancellation in the complete sum
\begin{equation}
\sum_{x \in \F_q[u]/(\pi)} t(x)
\end{equation}
for a trace function $t \colon \F_q[u]/(\pi) \to \mathbb{C}$. 
For infinitame trace functions this is carried out, using standard tools, in \cref{CompleteExponentialInfinitameSum}.

The following is our main result on trace functions,
a significant cancellation in very short sums of infinitame trace functions with a `Dirichlet component'.

\begin{thm} \label{intro-trace-interval-bound} 

Let $g \in \mathbb F_q[u]$ be a squarefree polynomial,
and let $t$ be an infinitame $g$-periodic trace function. 
Suppose that there exists a prime $\pi \mid g$ for which $t_\pi$ is a Dirichlet trace function.
Then
\begin{equation}\label{eq-csb-intro} 
\sum_{ \substack {f \in \mathbb F_q[u] \\ |f| < X}} t(f)   \ll      
X^{\frac{1}{2}}  |g|^{ \log_q ( 2r(t)+ c(t)) }, \quad X,|g| \to \infty 
\end{equation}
with the implied constant depending only on $q$.

\end{thm}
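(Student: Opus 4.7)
The plan is to establish the bound by a sheaf-theoretic analogue of Burgess's amplification method over $\Z$, using the Dirichlet component $t_\pi$ to supply the multiplicative structure that drives the argument. By \cref{TraceFuncDefi} and the Chinese Remainder Theorem, factor $t(f) = t_\pi(f \bmod \pi) \cdot t'(f \bmod g/\pi)$, where $t_\pi$ is the trace function of a Kummer sheaf $\mathcal{L}_\chi(aT+b)$ and $t'$ is an infinitame $(g/\pi)$-periodic trace function inheriting the same rank and conductor bounds. After absorbing the linear substitution $x \mapsto a^{-1}(x - b)$ into $t'$, I may assume $t_\pi(x) = \chi(x)$ for a nontrivial multiplicative character $\chi$ modulo $\pi$.

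The core step is a shift-and-amplify maneuver. Introduce parameters $R$ and $H$ with $RH$ of size roughly $X$. Using the near-invariance of $S(X) = \sum_{|f|<X} t(f)$ under translations of polynomial degree below $\log_q X$ and averaging over shifts of the form $rh$ with $|r| = R$, $|h| < H$, $\gcd(r, g) = 1$, one obtains
\begin{equation}
RH \cdot S(X) = \sum_{r,h} \sum_{|f|<X} t(f + rh) + O(RH \cdot r(t)).
\end{equation}
The key Burgess identity $\chi(rf + rh) = \chi(r)\,\chi(f + h)$ on residues modulo $\pi$ allows a substitution $f \mapsto rf$ in the inner sum to decouple the $r$-variable from the shift, isolating a factor $\chi(r)$ while leaving a residual structure depending on $t'$. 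Two applications of Cauchy-Schwarz, first in the $(r,h)$ variables and then in the $f$-variable, reduce the problem to bounding autocorrelation sums
\begin{equation}
\sum_{|f|<X} t(f + h_1) \, \overline{t(f + h_2)}
\end{equation}
on average over short pairs $(h_1,h_2)$.

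Each autocorrelation sum is then completed to a full sum over $\F_q[u]/(g)$ via Pólya-Vinogradov, and the resulting complete exponential sum is estimated by \cref{CompleteExponentialInfinitameSum} applied to the tensor-product sheaf constructed from translates of the $\mathcal{F}_\pi$. For $h_1 \neq h_2$, the Kummer component of the tensor sheaf, namely $\mathcal{L}_\chi \otimes [+h_1 - h_2]^* \mathcal{L}_{\chi^{-1}}$, remains geometrically nontrivial, so its $H^1_c$ supplies square-root cancellation with a loss governed by the conductor of the whole tensor product. Tracking conductors through the CRT factorization and the tensor-pullback operations produces the factor $|g|^{\log_q(2r(t) + c(t))}$, while the diagonal $h_1 = h_2$, together with the optimal balancing $R \approx X^{1/2}$, contributes the dominant $X^{1/2}$ saving.

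The principal obstacle will be verifying that the tensor-and-translate sheaves remain geometrically irreducible and tamely ramified at infinity with conductors controlled as claimed, and showing that the small set of degenerate shifts at which the Kummer twist trivializes contributes negligibly. The infinitame hypothesis is decisive here, as wild ramification in any factor at infinity would inflate local conductors enough to destroy the savings in the short-sum regime below Pólya-Vinogradov, where classical completion alone yields only the weaker bound of order $|g|^{1/2}$. Overcoming this requires a careful sheaf-theoretic analysis of the monodromy of the shifted tensor sheaves, in the spirit of \cite{SS, Saw20}; this geometric input is where the technical heart of the proof lies.
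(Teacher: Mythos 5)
Your proposal takes a Burgess-style amplification route, but this approach cannot reach the strength of the stated bound, and the paper's actual proof is entirely different (a cohomological argument on $\mathbb{A}^n$). There are two concrete obstructions.

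First, the range. The bound $X^{1/2}|g|^{\log_q(2r(t)+c(t))}$ is nontrivial for $X$ as small as $|g|^{\epsilon}$ once $q$ is large with $r(t), c(t)$ fixed — far below both the P\'olya--Vinogradov range $X \geq |g|^{1/2}$ and the Burgess range. Your final step completes each autocorrelation sum $\sum_{|f|<X} t(f+h_1)\overline{t(f+h_2)}$ to a full sum modulo $g$ and invokes square-root cancellation for the complete sum. But completion of an incomplete $g$-periodic sum of length $X \ll |g|$ costs an error of order $|g|^{1/2}$ per sum (this is exactly \cref{PolyaVinogradovLem}), and no choice of $R, H$ lets you unwind the two Cauchy--Schwarz steps to recover a final bound smaller than a positive power of $|g|$ independent of $X$. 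This is why Burgess amplification over $\mathbb{Z}$ is stuck at $X > |g|^{1/4+\epsilon}$ even for pure characters, and the paper explicitly notes that Burgess's arguments are "probably not directly applicable to getting cancellation in sums as short as ours." Your claim that the balancing $R \approx X^{1/2}$ yields the $X^{1/2}$ saving does not survive the accounting: the off-diagonal completion errors dominate whenever $X \ll |g|^{1/2}$.

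Second, the decoupling step fails. The identity $\chi(r f + r h) = \chi(r)\chi(f+h)$ uses full multiplicativity, but here only the single component $t_\pi$ is a Dirichlet character; the complementary factor $t'$ modulo $g/\pi$ is an arbitrary infinitame trace function, and $t'(r(f+h))$ does not factor as (function of $r$) times (function of $f+h$). So the amplification over $r$ — the engine of the Burgess method — produces no gain on the $t'$-part, which carries almost all of the modulus. The paper instead realizes the short sum as $\sum_{x \in \mathbb{A}^n(\F_q)} t_{\mathcal F}(x)$ for a descended tensor-product sheaf $\overline{\mathcal F} = \bigotimes_i e_i^* \mathcal F_i$ on affine $n$-space with $n = \log_q X$, proves that $H^j_c$ vanishes outside degrees $n, n+1$ (\cref{geometric-vanishing}, using the Dirichlet factor to choose a hyperplane at infinity where the pushforward has zero-dimensional support), bounds the two surviving Betti numbers (\cref{betti-number-bound}), and applies Deligne's Riemann Hypothesis. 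That is the mechanism that breaks the P\'olya--Vinogradov barrier; no amount of shifting and Cauchy--Schwarz substitutes for it.
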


In applications, the quantities $r(t), c(t)$ are typically bounded, 
so for large (but fixed) $q$ we get arbitrarily close to square-root cancellation in intervals as short as $X = |g|^\epsilon$, for any fixed $\epsilon > 0$.
The reason for working with this kind of trace functions is that it seems to be the simplest family of functions to which we can reduce \cref{ShortCharSumEq} (and its twisted variants) under the additional assumption on $W$ and $M$ discussed earlier.
Indeed \cref{intro-trace-interval-bound} is a crucial input to our proofs of \cref{MainRes} and \cref{NewRes}. 
It would of course be desirable to treat trace functions of sheaves which are neither tamely ramified at infinity, 
nor necessarily related to Dirichlet characters.

A predecessor of \cref{intro-trace-interval-bound} is \cite[Theorem 2.1]{SS} proven under the assumption that $t_\pi$ is a Dirichlet trace function for every $\pi \mid g$, namely that $t$ is a shifted Dirichlet character.
The vanishing cycles argument used to prove that result produces comparable bounds, 
but its application beyond the (shifted) multiplicative scenario remains challenging. 

Over the integers, bounds for short sums of trace functions are in general not available beyond the P\'{o}lya-Vinogradov range $X \geq |g|^{\frac{1}{2}}$. 
We refer to \cite{FKMRRS} for recent developments in this direction.
For the function field version of the P\'{o}lya-Vinogradov argument see \cref{PolyaVinogradovLem}.


We now give some examples demonstrating that, even though the assumptions in \cref{intro-trace-interval-bound} are perhaps not strictly necessary,
some restrictions on the trace functions are required.

\begin{example}  

The constant function $t(x)=1$ for $x \in \F_q[u]/(\pi)$ is an infinitame trace function of rank $1$ and conductor $0$,
arising from the constant sheaves $\mathcal{F}_\pi = \overline{\mathbb{Q}_\ell}$.
This is not a Dirichlet trace function, and the conclusion of \cref{intro-trace-interval-bound} clearly fails in this case.

\end{example}

\begin{example} 

We use here exponentiation on $\F_q(u)$ as reviewed in \cref{REAC}.
Let $\tau, \pi \in \F_q[u]$ be distinct primes,
and let $\chi \colon (\mathbb F_q[u]/(\tau))^\times \to \mathbb C^\times$ be a nontrivial character. 
Take $\overline{\pi} \in \F_q[u]$ satisfying $\pi \overline{\pi} \equiv 1 \mod \tau$, and define the trace function
\begin{equation}
t \colon \F_q[u]/(\tau \pi) \to \mathbb{C}, \quad 
t(x) =  \chi(x) e \left(\frac{\pi \overline{\pi} - 1}{\tau} \cdot \frac{x}{\pi} \right).
\end{equation}
This trace function satisfies $r(t) = 1, c(t) = 1$, 
and all the assumptions of \cref{intro-trace-interval-bound} except that $\mathcal{F}_\pi$ is not tamely ramified at infinity.

For any $|\tau| \leq X \leq \frac{|\tau \pi|}{q} $ we use properties of the exponential function  compute
\begin{equation*}
\begin{split} 
\sum_{\substack{f \in \F_q[u] \\  |f|< X}} t(f) &= 
\sum_{\substack{f \in \F_q[u] \\  |f|< X}} \chi(f) e \left( \frac{\overline{\pi} f}{\tau} - \frac{f}{\tau \pi} \right) = 
\sum_{\substack{f \in \F_q[u] \\  |f|< X}} \chi(f) e \left( \frac{\overline{\pi} f}{\tau}\right) e \left( - \frac{f}{\tau \pi} \right) \\
&= \sum_{\substack{f \in \F_q[u] \\  |f|< X}} \chi(f) e \left( \frac{\overline{\pi} f}{\tau}\right)
= \frac{X}{|\tau|} \sum_{x \in \F_q[u]/(\tau)} \chi(x) e \left( \frac{\overline{\pi} x}{\tau}\right) .
\end{split} 
\end{equation*}
Since the Gauss sum appearing in the last formula has absolute value $|\tau|^{1/2}$, taking $|\tau|$ to be very small compared to $|\pi|$, 
we get barely any cancellation, so \cref{eq-csb-intro} does not hold. 

\end{example}

\subsubsection{The geometric strategy}

Our proof of \cref{intro-trace-interval-bound} relies on the theory of sheaves and trace functions on higher-dimensional varieties,
see \cite[11.11]{IK} for an exposition covering applications to analytic number theory. 
We view the set of polynomials $f \in \F_q[u]$ with $|f|<X$ as the $\mathbb F_q$-points of an affine space, with one coordinate for each coefficient of the polynomial. We then construct in \cref{trace-interval-prop} a sheaf $\overline{\mathcal F}$ on this space whose trace of Frobenius at each point is $t(f)$. Sheaves on higher-dimensional spaces are potentially much more complicated objects than the individual sheaves $\mathcal F_\pi$ (on $\mathbb{A}^1$) used to define $t$, 
but $\overline{\mathcal F}$ can be constructed from base changes of the $\mathcal F_\pi$ along $\F_q$-embeddings of $\F_{q^{\deg(\pi)}}$ into $\overline{\F_q}$,
as a tensor product of pullbacks along linear (evaluation) maps.
This tensor product decomposition is made possible by the factorizability into distinct linear factors over $\overline{\F_q}$ of the period $g$ of the trace function $t$.

This tensor product construction makes $\overline{\mathcal F}$ a lisse sheaf on $\mathbb{A}^n_{\overline{\F_q}}$ away from the inverse images of the finitely many points where each $\mathcal F_\pi$ fails to be lisse. In other words, $\overline{\mathcal F}$ is lisse away from an arrangement of hyperplanes.

The bound in \cref{intro-trace-interval-bound} follows from a strong cohomology vanishing result for $\overline{\mathcal F}$, which says that its cohomology with compact support vanishes in all degrees except for the middle degree and the next one, together with a bound for the dimensions of the potentially nonvanishing cohomology groups. These are proven by completely separate arguments.

The cohomology vanishing adapts a now-standard strategy to show vanishing for the cohomology of a sheaf on an affine variety by comparing its compactly supported cohomology, its usual cohomology, and the cohomology of a certain (derived) pushforward sheaf on the boundary of a well-chosen compactification of the affine variety. 
By Artin's affine theorem, the cohomology of any sheaf on an affine variety vanishes in high degrees, 
and by duality, the cohomology with compact support of a sufficiently nice sheaf on an affine variety vanishes in low degrees
(The sufficiently nice sheaves are called, perversely, ``perverse"). 
Thus, the more similar we can show the usual and compactly supported cohomologies are, the more vanishing we obtain, 
for both theories. The difference between the usual and compactly supported cohomology is controlled, unsurprisingly from the classical perspective, by the behavior ``near infinity" or, more productively in our setting, by the behavior near the boundary of any given compactification.

The affine space that $\overline{\mathcal F}$ lives on has a natural compactification, a projective space. This compactification is well-behaved but its boundary, the divisor at infinity, is unsuitable for our purposes. The \'{e}tale-local behavior near a point in that divisor depends in a subtle way on our individual sheaves $\mathcal F_\pi$, making it hard to compute the pushforward. Instead we make a change of perspective - in concrete terms, a projective change of coordinate system - where we view one of the hyperplanes where $\overline{\mathcal F}$ fails to be lisse as the boundary, and the projective space with this hyperplane removed as an affine variety. 
We must carefully choose the hyperplane in order to make the pushforward amenable to a local study. 
We choose a hyperplane arising as an inverse image of the singular (non-lisse) point $z$ of the Dirichlet trace function $\chi(x-z)$ that we assumed appears as a $t_\pi$ in \cref{intro-trace-interval-bound}.

The sheaf on $\mathbb{A}^1$ giving rise to the trace function $\chi(x-z)$ has a local monodromy representation around the point $z$ which is one-dimensional and nontrivial. It follows that the pushforward of this sheaf from the affine line with the point $z$ removed, to the whole line, vanishes at the point $z$. We want to use this vanishing to deduce that the pushforward of $\overline{\mathcal F}$ from the projective space minus our specially chosen hyperplane, to the whole projective space, vanishes at all but finitely many points of this hyperplane, i.e. is supported at those finitely many points. Using this general method, the number of cohomology groups that may be nonzero is equal to the dimension of the support of this pushforward plus two, so because we show the support of the pushforward is zero-dimensional, we can have nonzero cohomology only in two specific degrees.

To transfer the vanishing from the line to a higher-dimensional space we must find local coordinates near each point of our chosen hyperplane, except finitely many, in which the sheaf $\overline{\mathcal F}$ splits as a tensor product of our well-understood sheaf with trace function $\chi(x-z)$, depending on one coordinate $x$, and another sheaf, which depends on all the remaining coordinates, and may do so in an arbitrarily complicated way, but does not depend on $x$. This allows us to obtain the desired conclusion from the K\"unneth formula. 

One approach to the local tensor product decomposition would be to take one coordinate for each linear map which we pull back a sheaf on, but the number of linear maps is the degree of $g$, which is greater than the dimension $n = \log_q(X)$ of our variety, so this would be too many coordinates. Instead we must show that some of the sheaves are lisse (essentially, locally constant) and can be ignored in our (local) pushforward calculation. 
For points on our special hyperplane that do not lie on the original divisor at infinity, this requires controlling how many of the non-lisse hyperplanes can intersect at a point, which reduces to some simple algebra performed in \cref{affine-vanishing-lemma}. 
For points on both our chosen hyperplane and the original divisor at infinity, this doesn't quite work, as all the sheaves can have singularities at infinity. 
Instead, we use in \cref{projective-vanishing-lemma} our assumption that the local monodromy of the sheaves at infinity is tame, 
and employ properties of sheaves with tame ramification (ultimately, Abhyankar's Lemma) to separate variables locally.

Combined with an argument in \cref{perversity} to show that $\overline{\mathcal F}$ has the necessary perversity property, which requires a similar separation-of-variables argument but fewer explicit calculations, we have all the local properties needed to complete the global argument in \cref{geometric-vanishing},
which relies on a suitable form of the excision long exact sequence from \cref{excision} and properties of semiperverse sheaves.

The bound for dimensions of cohomology groups (that is, Betti numbers) follows a strategy loosely inspired by the Betti number bounds for cohomological transforms proved by Fouvry, Kowalski, and Michel in \cite{FKM13}. 
The basis of this strategy is to take as much advantage as possible of our understanding of Betti numbers of sheaves on curves. 
This understanding comes from the facts that all but one cohomology group of a sheaf on a curve has a simple global representation-theoretic description, 
and this remaining group can be controlled in terms of the Euler characteristic which can be expressed via local representation-theoretic information using the Grothendieck-Ogg-Shafarevich formula in \cref{EP} and \cref{EP2}.

At the heart of our strategy lies a procedure, introduced in the proof of \cref{betti-number-bound},
that replaces a sheaf $\mathcal{F}_\pi$ in the construction of $\overline{\mathcal{F}}$ with much simpler sheaves - skyscraper sheaves and Artin-Schreier sheaves, whose trace functions are indicators and additive characters.
We are able to bound the change in the sum of Betti numbers caused by such a replacement, in terms of the rank and conductor of $\mathcal{F}_\pi$.
Applying this procedure to $\mathcal{F}_\pi$ for each prime $\pi$ dividing $g$,
we eventually arrive at a sheaf cohomology problem that corresponds to a (possibly shorter) additive character sum.
Such sums can be evaluated explicitly, 
and indeed, we solve the corresponding sheaf cohomology problem by an explicit computation using \cref{linear-vanishing-lem}.

The aforementioned procedure starts by applying the projection formula which expresses the cohomology of the tensor product $\overline{\mathcal{F}}$ of the pullback of $n$ sheaves from $n$ curves as the cohomology of one sheaf (in our case, $\mathcal{F}_\pi$) on one of these curves (in our case, $\mathbb{A}^1$) tensored with the pushforward to that curve of the tensor product (of the pullbacks) of the remaining sheaves.
Our procedure then bounds in \cref{conductor-prod-lemma} the sum of Betti numbers for this tensor product in terms of the Betti numbers of the factors twisted by skyscraper and Artin-Schreier sheaves.  
This is done by calculating the tensor product sheaf cohomology on the curve in degrees $0$ and $2$ from the coinvariants of the global Galois representation associated to the sheaf,
applying the Grothendieck-Ogg-Shafarevich formula,
producing in \cref{SwanTensorProdBoundCor} an upper bound for the Swan conductor of a tensor product in terms of information available from the factors in the product,
and applying the Grothendieck-Ogg-Shafarevich formula once again in \cref{conductor-little-lemmas}(5).
The procedure culminates with invoking the projection formula as in the first step, 
and observing that the entire process is almost involutary in the sense that the final expression is reminiscent of the original one,
with the sheaf $\mathcal{F}_\pi$ replaced by simpler sheaves.

Using this argument, we are able to obtain Betti number bounds that are almost as strong as those obtained by \cite{SS} in a much more specialized situation, namely the one where $t_\pi$ is a Dirichlet trace function for \emph{every} prime $\pi$ dividing $g$.

Thanks to the power and generality of Deligne's Riemann hypothesis and theory of weights, 
the main difficulty left to convert these cohomology vanishing and Betti number bounds into a bound for the exponential sum is to verify that the trace function of the descent of the sheaf $\overline{\mathcal F}$ to $\mathbb{A}^n_{\F_q}$ agrees with the original function $t$. This involves relating the action of Frobenius on the stalk of $\overline{\mathcal F}$ at a point to the action of Frobenius on the stalks of the $\mathcal F_\pi$, the main subtlety of which is that these are not quite the same Frobenii unless all the primes $\pi$ dividing $g$ are of degree $1$. 
Nevertheless, the relation between the Frobenii is not too opaque, 
and gives a relation between their traces using a fact from linear algebra proven in \cref{LinearAlgebraLem}. Our approach is inspired by the construction of the tensor induction via descent in \cite{ARL}.

For the cohomology vanishing part of the argument, 
a strategy similar in some respects was taken by Cohen, Dimca, and Orlik in \cite{Nonresonance}. They gave a general method to show cohomology vanishing for sheaves on projective space, lisse away from an arrangement of hyperplanes in characteristic zero. 
We adopt from them the strategy of choosing one of these hyperplanes to play the role of the hyperplane at infinity, 
and showing vanishing of the pushforward. 

However, for them the greatest interest was to show vanishing of cohomology in every degree except the middle degree. For our purposes, it's just as good to show vanishing of cohomology in every degree except the middle two degrees. We could even allow more degrees, but this would not be helpful for the argument. 
This means that it is sufficient to show that the support of the pushforward is zero-dimensional, rather than empty as in \cite{Nonresonance}. 
The pushforward having empty support is a stronger condition that would not hold in our setting without additional assumptions. 

The second difference is that we work in characteristic $p$, where wild ramification can occur, while \cite{Nonresonance} works in characteristic zero, where it does not. 
This is one reason why it is so helpful for us that the sheaf $\overline{\mathcal F}$ arises from a certain explicit construction with tensor products of sheaves pulled back from curves. This allows us to control what types of wild ramification occur. Unlike in the characteristic zero setting, it would be difficult to come up with a formulation of the cohomology vanishing statement that applies to an arbitrary lisse sheaf on the complement of a hyperplane arrangement and is suitable for our purpose. 

The third, related, difference is that \cite{Nonresonance} uses an explicit resolution of singularities - this is an iterated blow-up of the projective space such that the inverse image of the hyperplane arrangement in question is a simple normal crossings divisor. This enables them to avoid working with perverse sheaves, because lisse sheaves with tame ramification on the complement of a normal crossings divisor have all the good properties of perverse sheaves (because they are, in fact, a special case of perverse sheaves). For sheaves with wild ramification, this description is not available, and so the machinery of perverse sheaves must be used.

We can also compare to the strategy of \cite{SS}, 
where we proved our cohomology vanishing statement in the special case where all the sheaves $\mathcal F_\pi$ are shifts of character sheaves, instead of just one. 
In that work, we considered a family of hyperplane arrangements, and studied the support of the vanishing cycles sheaf, rather than fixing a hyperplane at infinity and studying the support of the pushforward. 
The arguments needed to calculate the vanishing cycles and the pushforward are closely related. In both cases, the problem is entirely local at a given point, and a key strategy to study a sheaf constructed in a certain way, is to find a simpler construction which produces an equivalent sheaf locally (but not globally). 

The advantage of the pushforward over the vanishing cycles for our purposes is that we only need to do this local analysis for points on a single hyperplane. Indeed, vanishing cycles could appear at any singular point, i.e. on any hyperplane, unless dealt with. 
The fact that we only need to do difficult local calculations at a single hyperplane means that we need to make strong assumptions about only a single sheaf $\mathcal F_\pi$.  
However, abandoning the vanishing cycles method requires us to have an alternative strategy for Betti bounds, 
because the same vanishing cycles methods that proved cohomology vanishing statements in \cite{SS} was simultaneously used there to prove Betti number bounds. In this paper, it does not seem possible to derive Betti number bounds directly from considering the pushforward, so we instead obtain them from a separate argument.

\subsubsection{Trace functions vs Arithmetic functions}

Inspired by \cite{FKM14} and other works on orthogonality of trace functions and arithmetic functions over the integers,
we consider here the correlation between trace functions and von Mangoldt/M\"{o}bius over function fields.
We shall use the notation
\begin{equation}
\mathcal{M}_n = \{f \in \F_q[u] : \deg(f) = n, \ f \ \text{is monic} \}
\end{equation}
where $n$ is a nonnegative integer.

\begin{thm} \label{LinearMobiusVStraceFunctionThm}

Let $p$ be an odd prime, and let $q > 4e^2p^2$ be a power of $p$.
For a prime $\pi \in \F_q[u]$, an infinitame trace function $t \colon \F_q[u]/(\pi) \to \mathbb{C}$, 
and a nonnegative integer $n$ we have
\begin{equation}
\sum_{f \in \mathcal{M}_n} t(f)\mu(f) \ll |\mathcal{M}_n|^{1 - \frac{1}{2p} + \frac{\log_q(2ep)}{p}}
|\pi|^{\log_q \left( r(t) \left( 1 + \frac{1}{2p} \right) + \frac{c(t)}{2p} \right)}
\end{equation}
as $n,|\pi| \to \infty$, with the implied constant depending only on $q$.

\end{thm}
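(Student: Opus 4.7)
The plan is to adapt to the twisted setting the strategy used in the proof of \cref{NewRes} specialized to $F(T)=T$, carrying the infinitame twist $t(f)$ throughout, and ultimately reducing the problem to an application of \cref{intro-trace-interval-bound}. After absorbing the non-squarefree $f$'s (whose contribution is $O(r(t)\, q^{n-1})$, well within the stated bound) into the error term, partition the squarefree $f\in\mathcal M_n$ according to the value of the derivative $h\defeq f'\in\F_q[u]$. Because $p$ is odd, two monic polynomials share a derivative iff they differ by a $p$-th power, so each derivative class is a coset $\{f_0+g^p:g\in\F_q[u],\ \deg g\leq\lfloor n/p\rfloor\}$ of size $\approx q^{n/p}$, and there are $\approx q^{n(1-1/p)}$ classes.

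Within each class, Pellet's formula (valid for squarefree $f$ when $p$ is odd) expresses $\mu(f)$ as $\epsilon\cdot\chi_2(\operatorname{Res}(f,h))$, where $\chi_2$ is the quadratic character of $\F_q^\times$ and $\epsilon$ depends only on $n$. Writing the resultant as a norm from $\F_q[u]/(h)$ (where $h=\prod_i h_i$ factors into distinct irreducibles, since $f$ is squarefree), the factor $\chi_2(\operatorname{Res}(f,h))$ becomes a Jacobi symbol $\bigl(\tfrac{f}{h}\bigr)$, giving a multiplicative character of $f\bmod h$. The inner sum thus reads
\begin{equation*}
\sum_{g}\left(\frac{f_0+g^p}{h}\right)t(f_0+g^p),
\end{equation*}
which, combined with the pullback of $t$ along the $\F_q$-affine-Frobenius map $g\mapsto(f_0+g^p)\bmod\pi$, presents as an infinitame trace function of $g$ modulo a squarefree divisor of $h\pi$. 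For most derivatives $h$, at least one irreducible factor of $h$ is linear (or more generally produces a Dirichlet local piece via the Jacobi symbol), supplying the Dirichlet component required by \cref{intro-trace-interval-bound}; the quantitative Hilbert-irreducibility estimate of Bary-Soroker--Entin, valid under $q>4e^2p^2$, guarantees this for a density-$(1-o(1))$ subset of $h$, while exceptional $h$ are handled by the trivial bound and contribute negligibly.

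Applying \cref{intro-trace-interval-bound} to each good inner sum with $X\approx q^{n/p}$ yields a per-class estimate of shape $q^{n/(2p)}\cdot|\pi|^{\log_q(\cdots)}$, with the other factors of $h$ (beyond the one chosen to supply the Dirichlet component) absorbed into the conductor of the composite trace function rather than entering the modulus of \cref{intro-trace-interval-bound}. Summing over the $\approx q^{n(1-1/p)}$ derivative classes produces the main exponent $1-\tfrac{1}{2p}$ on $|\mathcal M_n|$; the $(2ep)^{n/p}$-type inflation reflects accumulated conductor growth under the $p$-th-power substitution (one factor of $2p$-type growth per coefficient of $g$, times an $e^{n/p}$ binomial overhead in the derivative count), while the $|\pi|^{\log_q(r(t)(1+\frac{1}{2p})+\frac{c(t)}{2p})}$ factor comes directly from the conductor formula in \cref{intro-trace-interval-bound} applied to the composite sheaf.

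The main obstacle will be the sheaf-theoretic bookkeeping in the middle step: verifying that the composition of pulling $t$ back along $g\mapsto(f_0+g^p)\bmod\pi$ with the Kummer factor coming from $\chi_2$ of the resultant is genuinely infinitame, carries a Dirichlet local component for most $h$, and has rank and conductor of the sizes implicit above. The $p$-th-power map is Frobenius rather than étale, so the pullback is not literally a pullback of sheaves in the étale sense and must be handled via an appropriate descent through Frobenius twists of the underlying sheaf, analogous to the Frobenius-tracking arguments already developed in the tensor-product sheaf constructions of \cref{intro-trace-interval-bound}; controlling the effective conductor of the resulting composite uniformly in $h$ is precisely what forces the final $1+\tfrac{1}{2p}$ and $\tfrac{1}{2p}$ weights in the $|\pi|$-exponent.
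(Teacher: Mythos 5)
Your architecture is the right one, but note that the paper does not reprove anything here: it simply invokes \cref{MobiusVStraceCor} (hence \cref{MobiusVStraceThm}) with $k=1$, $F(u,T)=T$, $g=\pi$. What you have sketched is essentially the proof of that general theorem specialized to the linear case, and the skeleton --- fixed-derivative decomposition $f=r+s^p$, Pellet's formula, resultant-to-Jacobi-symbol, a composite infinitame trace function with a Dirichlet component for most $r$, exceptional $r$ counted by the Bary-Soroker--Entin form of Cohen's theorem --- matches the paper's \cref{SameDerivativeEq}--\cref{UltimateTraceSum} and \cref{discriminant-definer}, \cref{SquareScarcity}. (Two small slips: squarefreeness of $f$ does not make $h=f'$ squarefree --- the paper passes to the radical/odd-multiplicity part of the resultant; and the relevant condition is not a linear factor of $h$ but an odd-multiplicity prime factor of $r'$ not dividing $\pi$.)

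The genuine gap is quantitative, in the step where you apply \cref{intro-trace-interval-bound}. The composite trace function in the variable $s$ is periodic modulo $g_{F,r}=\lcm(\pi,\rad(r'))$, and the Kummer factors coming from the Jacobi symbol cannot be ``absorbed into the conductor rather than entering the modulus'': they are local components at the primes of $\rad(r')$, which has degree up to $n-1$. Applying \cref{intro-trace-interval-bound} literally then costs a factor $|g_{F,r}|^{\log_q(2r+c)}\ge|\rad(r')|^{\log_q 3}\approx 3^{n}$ per class, and after summing over the $\approx q^{n(1-1/p)}$ classes one needs $3^p\le 2ep$ to recover the claimed exponent, which fails for every $p$. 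The paper avoids this by using the refined generating-function bound of \cref{trace-interval-prop}, extracting the $[Z^n]$ coefficient by evaluating at $Z=\gamma=\tfrac{1}{2p}$ (see \cref{gammaLemma} and the proof of \cref{MobiusVStraceThm}): each auxiliary Kummer prime then contributes only $(1+2\gamma)^{\deg\pi}$, so the whole of $\rad(r')$ costs $(1+1/p)^{O(n)}\le e^{O(n/p)}$, and the $\gamma^{-n/p}=(2p)^{n/p}$ term produces the $\log_q(2ep)/p$ in the exponent together with the stated $|\pi|$-factor. Without replacing \cref{intro-trace-interval-bound} by this coefficient-level bound, your argument does not close under the hypothesis $q>4e^2p^2$.
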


\cref{LinearMobiusVStraceFunctionThm} improves on the savings obtained in \cite[Theorem 1.8]{SS} 
for the Kloosterman fraction $t(f) = e(\overline{f}/\pi)$, in case $p$ is small enough and $q$ is large but fixed.
For larger $p$, the savings here are smaller, 
but apply to lengths of summation as short as $|\mathcal{M}_n| \sim |\pi|^\epsilon$ for any $\epsilon > 0$, once $q$ is chosen appropriately. 
As opposed to \cite{SS}, here we do not pursue the possible applications of a bound as in \cref{LinearMobiusVStraceFunctionThm} to the level of distribution of primes in arithmetic progressions.
Over the integers, different arguments have been given to obtain cancellation for sums longer than $|\pi|^{\frac{1}{2}}$ for more general trace functions, see \cite[Theorem 1.7, Remark 1.9]{FKM14}.
Using \cref{LinearMobiusVStraceFunctionThm} we are able to prove the following.

\begin{cor} \label{InfinitameVSprimesCor}

Let $p$ be an odd prime, let $q > 4e^2p^2$ be a power of $p$, let $\delta > 0$, and set
\begin{equation} \label{TheZetaConstantTraceEq}
\zeta = \frac{2\delta}{1 + 2\delta} \left( 1 + \frac{1}{2p} - \frac{\log_q(2ep)}{p} \right)^{-1}.
\end{equation}
Take a prime $\pi \in \F_q[u]$, an integer $n \geq (\frac{1}{2} + \delta)\deg(\pi)$, 
and an infinitame trace function $t \colon \F_q[u]/(\pi) \to \mathbb{C}$ arising from a sheaf $\mathcal F$ whose geometric monodromy representation does not admit the trivial representation $\overline{\mathbb{Q}_\ell}$ as a quotient.
Then for any $\epsilon > 0$ we have
\begin{equation*}
\sum_{f \in \mathcal{M}_n} t(f) \Lambda(f) = O\left( |\mathcal{M}_n|^{\frac{1}{1 + 2\delta} + \zeta + \epsilon}
|\pi|^{\log_q \left( r(t) \left( 1 + \frac{1}{2p} \right) + \frac{c(t)}{2p} \right)} + 
(r(t) + c(t))\frac{|\mathcal{M}_n|^{1 + \epsilon}}{|\pi|^{\frac{1}{2}}} \right) 
\end{equation*}
with the implied constant depending only on $q$ and $\epsilon$.

\end{cor}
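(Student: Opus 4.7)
The plan is to use the function-field identity $\Lambda = \mu \ast L$ with $L(f) = \deg(f)\log q$ to write
\begin{equation*}
\sum_{f\in\mathcal M_n}\Lambda(f)t(f) = \log q \sum_{k=0}^n k \sum_{\deg b = n-k}\mu(b)\sum_{\deg c = k}t(bc),
\end{equation*}
and to combine \cref{LinearMobiusVStraceFunctionThm} with Deligne's Riemann hypothesis, splitting the range of $k$ (the degree of the $L$-factor) at a parameter $U$ to be optimized.

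For the range $k \leq U$, I would swap the order of summation and apply \cref{LinearMobiusVStraceFunctionThm} to the inner sum $\sum_{\deg b = n-k}\mu(b)t_c(b)$ for each fixed $c$ with $\gcd(c,\pi) = 1$. The key observation is that multiplication by $c$ is an automorphism of $\mathbb A^1_{\F_q[u]/(\pi)}$, so the pullback $t_c(b) := t(bc)$ is an infinitame trace function with the same rank and conductor as $t$ that inherits the no-trivial-geometric-quotient assumption. The theorem then yields an inner bound $\ll q^{(n-k)\alpha}|\pi|^\beta$ with $\alpha = 1 - \tfrac{1}{2p} + \tfrac{\log_q(2ep)}{p}$ and $\beta = \log_q\!\bigl(r(t)(1+\tfrac{1}{2p}) + \tfrac{c(t)}{2p}\bigr)$, uniformly in $c$.

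For the range $k > U$, I would fix $b$ with $\gcd(b,\pi) = 1$ and complete the inner $c$-sum modulo $\pi$. This step requires $U \geq \deg\pi$, and the hypothesis $n \geq (\tfrac12+\delta)\deg\pi$ is precisely what ensures such a $U$ can be chosen with $U \leq n - \deg\pi$ so that the first range above remains informative, since it gives $\deg\pi \leq \tfrac{2n}{1+2\delta}$. Because $\mathcal F$ has no trivial geometric quotient, Deligne's Riemann hypothesis bounds the completed sum $\sum_{x\bmod\pi}t(bx)$ by $O((r(t)+c(t))|\pi|^{1/2})$, producing the Weil-type second term $(r(t)+c(t))|\mathcal M_n|^{1+\epsilon}/|\pi|^{1/2}$.

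The final step is the optimization of $U$. The first-term exponent $\tfrac{1}{1+2\delta} + \zeta$ with the specific $\zeta = \tfrac{2\delta}{(1+2\delta)(2-\alpha)}$ appearing in \cref{TheZetaConstantTraceEq} emerges from the balance between the two contributions, subject to the constraint imposed by the hypothesis on $\deg\pi$. The main obstacle is the precise bookkeeping in this balance: the naive choice $U = \deg\pi$ combined with the bound of the preceding paragraph produces only a weaker first-term exponent of the form $\alpha + (1-2\alpha)/(1+2\delta)$, and to recover the factor $(2-\alpha)^{-1}$ in $\zeta$ one must apply \cref{LinearMobiusVStraceFunctionThm} only on the sub-range of $k$ where it improves on the trivial estimate $r(t)q^{n-k}$, and handle the remaining short range by a secondary Cauchy--Schwarz step or a further completion. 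Tracing the savings through this refined decomposition is the technically subtlest part of the argument.
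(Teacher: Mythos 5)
Your overall decomposition is the paper's: the Vaughan-type identity $\Lambda = (\mu\cdot\deg) * (-1)$, a threshold on the degree of the M\"obius factor, \cref{LinearMobiusVStraceFunctionThm} applied when that degree is large, and a complete-sum Deligne estimate when it is small. But there are two genuine gaps.

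First, you assume that on the range where the $L$-factor $c$ is summed (your $k>U$), one can always complete the $c$-sum modulo $\pi$, which requires $k \geq \deg\pi$, i.e.\ $U \geq \deg\pi$. Simultaneously you want $U \leq n - \deg\pi$ so the M\"obius range is nontrivial. Together these demand $n \geq 2\deg\pi$, but the hypothesis only gives $n \geq (\tfrac12+\delta)\deg\pi$, and for $\delta < \tfrac32$ one may well have $n < 2\deg\pi$ (indeed $n$ can be smaller than $\deg\pi$ when $\delta < \tfrac12$). Your plan has no way to handle the short intervals that then arise. The paper handles precisely this case via \cref{PolyaVinogradovLem}: when the $L$-factor has degree $< \deg\pi$, the P\'olya--Vinogradov bound $\ll c(t)|\pi|^{1/2}$ is used instead of a completed sum, and that is what produces the term $c(t)|\pi|^{1/2}q^{\zeta n}$ which the choice of $\zeta$ balances against the M\"obius-theorem term. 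Without this fallback your argument does not close in the regime of small $\delta$, which is the interesting one.

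Second, your final paragraph is on the wrong track. There is no secondary Cauchy--Schwarz step or further completion in the paper's proof, and no sub-range selection of $k$ by whether \cref{LinearMobiusVStraceFunctionThm} beats the trivial bound. The constant $\zeta$ in \cref{TheZetaConstantTraceEq} is determined by a single clean balance: writing $\alpha = 1 - \tfrac{1}{2p} + \tfrac{\log_q(2ep)}{p}$, the P\'olya--Vinogradov contribution is $\ll |\pi|^{1/2} q^{\zeta n}$, and using $\deg\pi \leq \tfrac{2n}{1+2\delta}$ one checks that this is dominated by the large-M\"obius-range contribution $q^{n(1-\zeta(1-\alpha))}|\pi|^\beta$ exactly when $\zeta(2-\alpha) \leq \tfrac{2\delta}{1+2\delta}$, which gives $\zeta$ as in the statement. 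The factor $(2-\alpha)^{-1}$ you flagged as mysterious is just $(1 + \tfrac{1}{2p} - \tfrac{\log_q(2ep)}{p})^{-1}$, emerging from this one inequality rather than from any refined decomposition.

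One smaller point: you apply \cref{LinearMobiusVStraceFunctionThm} to $b \mapsto t(bc)$ for $c$ coprime to $\pi$ but do not say what to do when $\pi \mid c$. In that case $t(bc) = t(0)$ is constant; this is still an infinitame trace function (of a constant sheaf) with rank $\leq r(t)$ and conductor $0$, so the theorem still applies and gives an even better bound, but you should state this rather than silently drop those $c$.
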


This result gives very modest savings, and applies to fewer trace functions compared to \cite[Theorem 1.5]{FKM14}.
Nevertheless, \cref{InfinitameVSprimesCor} guarantees cancellation in intervals shorter than those treated over the integers,
see for instance \cite{Irv}.  
We obtain savings as long as $\deg (\pi) > \epsilon n$ and   
\begin{equation}
\delta  >  \frac{ (2p+1) \log \left(  r(t) \left( 1 + \frac{1}{2p} \right) + \frac{c(t)}{2p}  \right)  } { \log \left( \frac{q}{ 4 e^2 p^2 } \right) }, 
\end{equation}
so in particular we can take $\delta\to 0$ as $q \to\infty$ with fixed characteristic, rank, and conductor. The results of \cite{FKM14}, \cite{Irv} give savings only when (in our notation) $\delta> \frac{1}{4}$, though \cite{Irv} can handle any $\delta$ with an additional average over the modulus $\pi$.  

We have another application for \cref{LinearMobiusVStraceFunctionThm}, 
concerning very short sums over primes of shifted multiplicative characters.

\begin{cor} \label{ShiftedCharacterVsPrimesCor}

Let $p$ be an odd prime, let $q > 4e^2p^2$ be a power of $p$, set
\begin{equation} \label{TheZetaConstantEq}
\zeta = \left( 1 + \frac{1}{p} - \frac{\log_q(4e^2p^2)}{p} \right)^{-1} < 1,
\end{equation}
and let $\epsilon > 0$.
Then for a prime $\pi \in \F_q[u]$, a nontrivial Dirichlet character $\chi \colon (\F_q[u]/(\pi))^\times \to \mathbb{C}^\times$,
a polynomial $h \in \F_q[u]$, and a nonnegative integer $n$ we have
\begin{equation}
\sum_{f \in \mathcal{M}_n} \chi(f+h) \Lambda(f) = 
O \left( |\mathcal{M}_n|^{\frac{1 + \zeta}{2} + \epsilon} |\pi|^{\log_q(3)} + |\mathcal{M}_n|^{1 + \epsilon}|\pi|^{-1} \right)
\end{equation} 
as $|\pi|,n \to \infty$, with the implied constant depending only on $q$ and $\epsilon$.
We 
\end{cor}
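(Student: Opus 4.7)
The plan is to deduce this bound from \cref{LinearMobiusVStraceFunctionThm} via a combinatorial identity expressing von Mangoldt in terms of M\"obius, and by balancing the resulting Type I and Type II contributions.

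The starting point is the elementary identity $\Lambda(f) = -\sum_{a \mid f}\mu(a)\deg(a)$, valid for $\deg f \geq 1$, which gives
\begin{equation*}
\sum_{f \in \mathcal{M}_n}\chi(f+h)\Lambda(f) = -\sum_{\substack{a,b\ \mathrm{monic}\\ \deg a + \deg b = n}}\mu(a)\deg(a)\chi(ab+h).
\end{equation*}
A richer decomposition such as the function field version of Vaughan's or Heath-Brown's identity can be substituted here if needed. I would split this double sum at a threshold $\deg a = V$ (to be optimized) into a Type I sum ($\deg a \leq V$) and a Type II sum ($\deg a > V$), handling separately the degenerate cases $\pi \mid a$ or $\pi \mid b$.

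For the Type I range I would fix $a$ with $\pi \nmid a$ and use the elementary fact that monic polynomials $b$ of degree at least $\deg \pi$ are equidistributed among residue classes modulo $\pi$, which forces $\sum_{b \in \mathcal{M}_{n-\deg a}}\chi(ab+h) = 0$ whenever $n - \deg a \geq \deg \pi$. The complementary short range $n - \deg a < \deg \pi$ is treated by the function field P\'olya--Vinogradov bound of \cref{PolyaVinogradovLem}, contributing at most $O(|\pi|^{1/2+\epsilon})$ per $a$. For the Type II range I would exchange the order of summation, fix $b$ with $\pi \nmid b$, and recognise that $a \mapsto \chi(ab+h)$ is a Dirichlet trace function of rank and conductor equal to $1$; applying \cref{LinearMobiusVStraceFunctionThm} then bounds the inner sum over $a$ by
\begin{equation*}
|\mathcal{M}_{n-\deg b}|^{\,1 - \frac{1}{2p} + \frac{\log_q(2ep)}{p}} \cdot \bigl(1+\tfrac{1}{p}\bigr)^{\deg \pi}.
\end{equation*}

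After summing both contributions over the respective ranges of $a$ and $b$, I would optimize $V$ to balance Type I and Type II; the balance produces the exponents $(1+\zeta)/2$ and $\log_q(3)$ in the statement. The degenerate contributions with $\pi \mid a$ or $\pi \mid b$ reduce to plain M\"obius sums (since then $\chi(ab+h) = \chi(h)$ is constant in the summation variable), which either vanish outright by M\"obius orthogonality or are bounded by the count of multiples of $\pi$, yielding the secondary error term $|\mathcal{M}_n|^{1+\epsilon}|\pi|^{-1}$.

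The hard part is the optimization step: obtaining the clean constant $3$ in $|\pi|^{\log_q 3}$, rather than a more complicated expression in $p$ and $q$, requires carefully matching the P\'olya--Vinogradov contribution $|\pi|^{1/2}$ in Type I against the $(1+1/p)^{\deg \pi}$ contribution from \cref{LinearMobiusVStraceFunctionThm} in Type II, and may well necessitate passing from Vaughan's to Heath-Brown's identity of higher rank in order to introduce additional bilinear structure. A secondary subtlety is verifying uniformity in the shift $h$, which is inherited from the corresponding uniformity in \cref{LinearMobiusVStraceFunctionThm} but must still be tracked at the endpoints.
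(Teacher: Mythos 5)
Your overall skeleton (the convolution identity $\Lambda = (\mu\cdot\deg)*(-1)$, a split at a threshold, \cref{LinearMobiusVStraceFunctionThm} for the large-$\deg a$ range, and the $\pi\mid a$ terms handled via M\"obius orthogonality) matches the paper's proof, and your Type II and degenerate-case treatments are essentially correct. The gap is in Type I. There you bound the inner sum $\sum_{b}\chi(ab+h)$ by ``complete sum vanishes, else P\'olya--Vinogradov,'' i.e.\ by $0$ when $n-\deg a\ge \deg\pi$ and by $O(|\pi|^{1/2})$ otherwise. This is too weak precisely in the regime where the corollary has content, namely $\deg\pi$ a large multiple of $n$ (the paper emphasizes that the point of the result is that $\deg\pi$ may be an arbitrarily large multiple of $n$). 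When $\deg\pi>n$ every Type I term falls into your P\'olya--Vinogradov case, so your Type I contribution is $\gg q^{V}|\pi|^{1/2}$; with $V=\zeta n$ and $\deg\pi=Cn$ this is $q^{(\zeta+C/2)n}$, which already exceeds the trivial bound $q^{n}$ once $C>2(1-\zeta)$, whereas the claimed bound $|\mathcal M_n|^{(1+\zeta)/2+\epsilon}|\pi|^{\log_q 3}$ remains a power saving for all $C<(1-\zeta)/(2\log_q 3)$. Since $q>4e^2p^2>81$ forces $\log_q 3<1/4$, the window $2(1-\zeta)<C<(1-\zeta)/(2\log_q 3)$ is nonempty, and no choice of the threshold $V$ rescues the argument there.

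The missing ingredient is \cref{intro-trace-interval-bound}, the paper's main theorem on short sums of trace functions below the P\'olya--Vinogradov range: writing $b=T^{n-k}+C$ and applying it to the Dirichlet trace function $C\mapsto\chi(aC+aT^{n-k}+h)$ gives $\ll q^{(n-k)/2}|\pi|^{\log_q(2r+c)}=q^{(n-k)/2}|\pi|^{\log_q 3}$ for each $a$ with $\pi\nmid a$, which after summing over $k\le\zeta n$ and $a\in\mathcal M_k$ produces the first error term directly. In particular, the constant $3$ is simply $2r(t)+c(t)$ for a Dirichlet trace function of rank and conductor $1$; it does not come from an optimization, and no Vaughan or Heath--Brown decomposition is needed --- the plain convolution identity you started from is exactly what the paper uses.
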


As in \cref{LinearMobiusVStraceFunctionThm}, the strength of the result is in the shortness of the range of summation, the power saving being quite small. 
\cref{ShiftedCharacterVsPrimesCor} provides savings as long as 
\begin{equation}
\epsilon n < \deg (\pi) < \frac{   \log_3\left(\frac{q }{ 4 e^2 p^2 }\right)    }{2p +2    } n,
\end{equation} 
which as $q \to \infty$ with fixed $p$ allows us to take $\deg (\pi)$ an arbitrarily large multiple of $n$.  
For the state of the art on the analogous problem over the integers we refer to \cite{Rakh} and references therein.
In this result, and the previous two, we have worked for simplicity with trace functions to prime moduli only.
These results can be extended to trace functions with an arbitrary squarefree period.

\section{Sheaves}

One can speak of sheaves and trace functions not only on $\mathbb{A}^1$, as we did so far, 
but also on other curves and on more general varieties. 
Most of the notions from \cref{SheafFuncDictionaryDef} admit natural generalizations to this setting.
We start here by constructing the sheaves giving rise to the trace functions we have encountered, 
and their high-dimensional counterparts. These constructions are standard \cite[Sommes trig.]{sga4h}, but we provide here a detailed explanation including all the properties we need, for the reader's convenience.

\subsection{Kummer sheaves}

\begin{notation}\label{Kummer-sheaf} 

Let $\kappa$ be a finite field of characteristic $p$, 
let $\chi \colon \kappa^\times \to \overline{\mathbb{Q}_\ell}^\times$ be a multiplicative character (group homomorphism), 
and let $w \in \kappa[T]$ be a nonzero polynomial. 
We extend $\chi$ to a function on $\kappa$ by setting $\chi(0)=0$,
and construct a $\overline{\mathbb{Q}_\ell}$-sheaf $\mathcal L_\chi(w)$, on the affine line $\mathbb A^1_\kappa = \mathrm{Spec} \ \kappa[T]$, whose trace function is $\chi$, as follows.

Denote by $|\kappa|$ the number of elements in $\kappa$. Then the cover of $\mathbb A^1_\kappa$ defined by the equation \begin{equation}\label{Lang}
Y^{|\kappa|-1} = w(T) 
\end{equation} 
is finite \'{e}tale (see \cite[Example 2.5]{Mil}) away from the set
\begin{equation}
S = \{z \in \mathbb{A}^1_{\kappa} : w(z) = 0\}.
\end{equation}

The group $\kappa^\times$ acts on our cover (by automorphisms) via multiplication on $Y$, 
since every $\zeta \in \kappa^\times$ satisfies $\zeta^{|\kappa|-1} = 1$.
As all $\zeta \in \overline{\kappa}$ with $\zeta^{|\kappa|-1}=1$ lie in $\kappa$, 
we get a simply transitive action of $\kappa^\times$ on the (geometric) fiber of any geometric point $\overline{x}$ lying over a (not necessarily closed) point $x$ of
\begin{equation}
U = \mathbb{A}^1_{\kappa} - S.
\end{equation}  

From the definition of the \'{e}tale fundamental group as the automorphism group of the fiber functor (e.g. \cite[Theorem 5.4.2(2)]{Sz}), 
we get a continuous action of $\pi_1^{\text{\'{e}t}}(U,\overline{x})$ on the fiber of $\overline{x}$ in our \'{e}tale cover of $U$, commuting with the action of $\kappa^\times$.
Since the latter acts simply transitively, 
by picking a point $\overline{t}$ in the fiber over $\overline{x}$,
to each $g \in \pi_1^{\text{\'{e}t}}(U,\overline{x})$ we can associate a unique $\lambda \in \kappa^\times$ satisfying 
$\lambda (\overline t) = g(\overline{t})$.
This association is a continuous homomorphism as, if to $g_1,g_2 \in \pi_1^{\text{\'{e}t}}(U,\overline{x})$ we have associated $\lambda_1, \lambda_2 \in \kappa^\times$, then
\begin{equation*}
g_1g_2(\overline{t}) = g_1(g_2(\overline{t})) = g_1(\lambda_2(\overline{t})) = \lambda_2(g_1(\overline{t})) = 
\lambda_2 (\lambda_1 (\overline{t})) = \lambda_2\lambda_1(\overline{t}) = \lambda_1\lambda_2(\overline{t}). 
\end{equation*} 

Therefore, by composition with $\chi$, we get a continuous homomorphism $\pi_1^{\text{\'{e}t}}(U,\overline{x}) \to \overline{\mathbb Q_\ell}^\times$. This gives rise to a continuous one-dimensional representation of $\pi_1^{\text{\'{e}t}}(U,\overline{x})$ over $\overline{\mathbb{Q}_\ell}$, hence a rank one lisse sheaf on $U$ via the equivalence in \cite[2.0.2]{Ka}.
We define $\mathcal L_\chi(w)$ to be the extension by zero of this lisse sheaf from $U$ to $\mathbb A^1_{\kappa}$.
We call $\mathcal{L}_\chi(w)$ a Kummer sheaf.

As suggested by the notation, the construction is independent of the choice of $\overline{t}$. 
Indeed if $\overline{h}$ is another geometric point in the fiber over $\overline{x}$, then by transitivity there exists $\gamma \in \kappa^\times$ with $\gamma (\overline{t}) = \overline{h}$, so we have
\begin{equation*}
g(\overline{h}) = g(\gamma(\overline{t})) = \gamma(g(\overline{t})) = \gamma(\lambda(\overline{t})) = \gamma\lambda(\overline{t}) = \lambda\gamma(\overline{t}) = \lambda(\gamma(\overline{t})) = \lambda(\overline{h}),
\end{equation*}
where $\lambda \in \kappa^\times$ is associated to $g \in \pi_1^{\text{\'{e}t}}(U,\overline{x})$.
Moreover, by \cite[Proposition 5.5.1]{Sz} the fiber functors for different geometric points on the connected curve $U$ are isomorphic, so our construction is also independent of the choice of the point $x \in U$ (or the geometric point above it).

In case $\overline{x}$ is a geometric generic point of $U$, 
its fiber can be identified with the set of all homomorphisms of $\kappa(T)$-algebras from $\kappa(T)[Y]/(Y^{|\kappa| - 1} - w(T))$ to $\kappa(T)^{\mathrm{sep}}$. The group $\pi_1^{\text{\'{e}t}}(\overline{x}) = \mathrm{Gal}(\kappa(T)^{\mathrm{sep}} / \kappa(T))$ acts on this set by postcomposition,
and this action factors through the aforementioned action of $\pi_1^{\text{\'{e}t}}(U,\overline{x})$ on the fiber of $\overline{x}$ (via the map on fundamental groups induced from the inclusion of $\overline{x}$ in $U$).

\end{notation}

In the following lemma, among other things, we will see that the trace function $t_{\mathcal{L}_\chi(w)}$ arising from the sheaf $\mathcal{L}_\chi(w)$ is infinitame, and calculate its invariants.

\begin{lem}\label{Kummer-sheaf-properties} 

The sheaf $\mathcal L_\chi(w)$ on $\mathbb A^1_\kappa$ has the following properties.

\begin{enumerate}

\item For every $x \in \kappa$ we have $t_{\mathcal L_\chi(w) } (x) = \chi(w (x) )$;

\item the sheaf $\mathcal L_\chi(w)$ is lisse on $U$, and vanishes on its complement $S$;

\item the sheaf $\mathcal L_\chi(w)$ has tame local monodromy at every closed point $x \in \mathbb{P}^1_\kappa$, 
or in other words, it is tamely ramified (everywhere);

\item the sheaf $\mathcal L_\chi(w)$ is mixed of nonpositive weights;

\item the sheaf $\mathcal L_\chi(w)$ has no finitely supported sections;

\item the rank and conductor are given by
\begin{equation*}
\rank(\mathcal L_\chi(w))=1, \quad c(\mathcal L_\chi(w)) = \#\{a \in \overline{\kappa} : w(a) = 0\} \leq \deg(w);
\end{equation*} 

\item the sheaf $\mathcal L_\chi(w)$ is the extension by zero to $\mathbb{A}^1_\kappa$ of some one-dimensional representation of the tame arithmetic fundamental group of $\mathbb A^1_{\kappa } \setminus \{z\}$ for some $z \in \kappa$ if and only if there exists $c \in \kappa^\times$ and a positive integer $d$ such that
\begin{equation*}
w(T) = c(T - z)^d.
\end{equation*}
If this is the case, let $v \geq 1$ be the (multiplicative) order of $\chi$. 
Then the representation is trivial on the geometric fundamental group of $\mathbb{A}^1_\kappa \setminus \{z\}$ if and only if $v$ divides $d$. 

\end{enumerate}

\end{lem}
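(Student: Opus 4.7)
The plan is to extract each of the seven claims from the construction of $\mathcal{L}_\chi(w)$ via the cover $Y^{|\kappa|-1} = w(T)$ and its $\kappa^\times$-torsor structure on $U = \mathbb{A}^1_\kappa \setminus S$.

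For (1), at $x \in \kappa$ with $w(x) \neq 0$ the geometric fiber is the set of $(|\kappa|-1)$-th roots of $w(x)$ in $\overline{\kappa}$, and $\Frob_{|\kappa|}$ acts by $y \mapsto y^{|\kappa|} = w(x) \cdot y$, i.e.\ as multiplication by $w(x) \in \kappa^\times$ in the torsor structure; hence the element associated to Frobenius is $\lambda = w(x)$ and the trace of its action via $\chi$ is $\chi(w(x))$. At $x \in S$ the stalk is zero, matching $\chi(0) = 0$. Item (2) is immediate from the definition of $\mathcal{L}_\chi(w)$ as the extension by zero of a lisse sheaf on $U$. For (3), the cover has degree $|\kappa|-1$ coprime to $p$, so the associated Galois representation factors through the tame quotient at every closed point of $\mathbb{P}^1_\kappa$.

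For (4), the Frobenius eigenvalues on $U$-stalks lie in $\mu_\infty \subset \overline{\mathbb{Q}_\ell}^\times$, hence have absolute value one, so $\mathcal{L}_\chi(w)|_U$ is punctually pure of weight zero and its extension by zero is mixed of weights $\leq 0$. For (5), I would use the identity $H^0_c(\mathbb{A}^1_{\overline{\kappa}}, j_!\mathcal{G}) = H^0_c(U_{\overline{\kappa}}, \mathcal{G})$, which vanishes because $U_{\overline{\kappa}}$ is a nonproper connected curve and $\mathcal{G}$ is rank-one lisse (any compactly supported section vanishes on a dense open and hence globally). For (6), the rank is one by construction; in \cref{InfinitameConductorDefEq} the Swan terms vanish by (3), the drop $\rank(\mathcal{F}) - \dim \mathcal{F}_{\overline{x}}$ is one at $x \in S$ and zero on $U$, so the conductor equals $\sum_{x \in S}[\kappa(x):\kappa]$, which by Galois orbit decomposition counts the roots of $w$ in $\overline{\kappa}$.

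For (7) forward, if $w = c(T-z)^d$ with $z \in \kappa$ and $c \in \kappa^\times$ then $S = \{z\}$, so $\mathcal{L}_\chi(w) = j'_! \mathcal{H}$ (for $j' \colon \mathbb{A}^1_\kappa \setminus \{z\} \hookrightarrow \mathbb{A}^1_\kappa$) where $\mathcal{H}$ corresponds to a one-dimensional representation that is tame by (3). Conversely, $j'_! \mathcal{H}$ of a rank-one lisse $\mathcal{H}$ has nonzero stalks precisely on $\mathbb{A}^1 \setminus \{z\}$, so coinciding with $\mathcal{L}_\chi(w)$ forces the zero locus of $w$ in $\overline{\kappa}$ to equal $\{z\}$; Galois invariance forces $z \in \kappa$, and then $w(T) = c(T-z)^d$ with $c \in \kappa^\times$ and $d \geq 1$. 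For the triviality statement, I would use $\mathcal{L}_\chi(c(T-z)^d) \cong \mathcal{L}_\chi(c) \otimes \mathcal{L}_\chi(T-z)^{\otimes d}$ together with $\mathcal{L}_\chi(T-z)^{\otimes d} \cong \mathcal{L}_{\chi^d}(T-z)$ (both being rank-one lisse on $\mathbb{A}^1 \setminus \{z\}$ with the same trace function, hence isomorphic by a Chebotarev argument for rank-one characters); since $\mathcal{L}_\chi(c)$ is geometrically constant, the geometric monodromy is exactly $\chi^d$, which is trivial iff $v \mid d$. The step I expect to take the most care is this last identification, which I would handle via the trace-function characterization of rank-one lisse sheaves rather than a direct computation with generators of tame inertia.
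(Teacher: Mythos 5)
Your proof is correct. Items (1) through (6) follow essentially the same route as the paper's proof: the same Frobenius computation on the Lang cover in (1), immediacy in (2), order-coprime-to-$p$ in (3), weight-zero purity from root-of-unity eigenvalues in (4), and in (6) the same Grothendieck--Ogg--Shafarevich-style bookkeeping with vanishing Swan terms. Your (5) phrases the argument through the vanishing of $H^0_c(\mathbb{A}^1_{\overline\kappa}, j_! \mathcal{G})$ rather than the paper's direct observation (lisse of positive rank on $U$, identically zero on $S$), but these are equivalent via the paper's own remark that the no-finitely-supported-sections condition is the vanishing of $H^0_c$. The genuine difference is in the triviality criterion of (7): the paper passes to the intermediate finite \'{e}tale cover $\widetilde{Y}^v = w(T)$ and characterizes triviality as the existence of a $v$-th root of $w$ in $\overline{\kappa}(T)$, whereas you invoke the multiplicativity of Kummer sheaves to write $\mathcal L_\chi\bigl(c(T-z)^d\bigr) \cong \mathcal L_\chi(c) \otimes \mathcal L_{\chi^d}(T-z)$ and then read off that the geometric monodromy is exactly $\chi^d$, trivial iff $v \mid d$. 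Both routes work; yours is arguably more conceptual and avoids the root-existence discussion. One small simplification worth noting: the identification $\mathcal L_\chi(T-z)^{\otimes d} \cong \mathcal L_{\chi^d}(T-z)$ does not actually require a Chebotarev or trace-function argument, because by the very construction both sheaves are attached to the literal homomorphism $\chi^d$ postcomposed with the same Lang-torsor map $\pi_1^{\mathrm{\acute et}}(\mathbb A^1_\kappa \setminus \{z\}) \to \kappa^\times$, so they are equal on the nose; the Chebotarev detour is valid but not needed.
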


\begin{proof}

Visibly, (2) is immediate from our construction.

To verify (1), first note that because the sheaf is zero on $S$, its trace function is zero, which matches our convention
\begin{equation}
\chi(w(x))=\chi(0)=0, \quad x \in S.
\end{equation} 
For $x\in \kappa \setminus S$, we get from \cref{Lang} that $g = \Frob_{x,\kappa} \in \pi_1^{\text{\'{e}t}}(U,\overline{x})$ acts on the geometric fiber over $x$ by 
\begin{equation}
g(x,y) = (x^{|\kappa|}, y^{|\kappa|}) = (x,y^{|\kappa|}) = (x, w(x)y).
\end{equation}
Hence, by our definition of the representation giving rise to the sheaf $\mathcal{L}_{\chi}(w)$, 
the element $\lambda = w(x) \in \kappa^\times$ is associated to $g$,
so $g$ is mapped to $\chi(w(x))$ as desired.

For (3), note that since the monodromy (i.e. image) of the representation giving rise to $\mathcal L_{\chi}(w)$ is isomorphic to a quotient of $\kappa^\times$,
it has order prime to $p$. Therefore, by Lagrange's theorem, the image of an inertia group of any closed point $x \in \mathbb{P}^1_\kappa$ is of order prime to $p$ as well. It follows that  $\mathcal L_{\chi}(w)$ has tame local monodromy at $x$.

To get (4), note that for a closed point $x \in \mathbb{A}^1_\kappa$,
every eigenvalue of $\text{Frob}_{x, \kappa(x)}$ is a value of the finite order character $\chi$, 
hence a root of unity whose norm is thus $1= |\kappa(x)|^{0/2}$. 
This shows that $\mathcal{L}_\chi(w)$ is punctually pure of weight $0$, so in particular it is mixed of nonpositive weights. 

Observe that (5) is immediate from (2). Indeed, $\mathcal L_\chi(w)$ is lisse on $U$, so it has no finitely supported sections there, 
and it has no sections at all supported on $S$ as all of its stalks vanish there.

To get the first part of (6), recall from $(2)$ that $\mathcal L_\chi(w)$ is lisse on $U$,
hence it is lisse at a geometric generic point $\overline{\eta}$ of $U$ (and of $\mathbb{A}^1_\kappa$).
Hence the dimension of $\mathcal{L}_\chi(w)_\eta$ is the rank of the representation giving rise to it, which is $1$.
For the second part of (6), we get from (3) that $\mathcal L_\chi(w)$  has tame ramification everywhere so all the Swan conductors vanish. By the definition in \cref{InfinitameConductorDefEq} we therefore have
\begin{equation*}
\begin{split}
c(\mathcal{L}_\chi(w)) &= \sum_{x \in |\mathbb{A}^1_\kappa|} [\kappa(x) : \kappa](1 - \dim \mathcal{L}_\chi(w)_{\overline{x}}) \\
&= \sum_{x \in |U|} [\kappa(x) : \kappa](1 - 1) + \sum_{x \in |S|} [\kappa(x) : \kappa](1 - 0) = \#\{a \in \overline{\kappa} : w(a) = 0\}   
\end{split}
\end{equation*}
because the dimension of the stalk at every point where the sheaf is lisse equals the generic rank.


For (7), if $\mathcal L_\chi(w)$ is the extension by zero of a one-dimensional representation of $\pi_1^{\text{\'{e}t}}(\mathbb A^1_\kappa \setminus \{z\})$, then it is lisse away from $z$ and vanishes at $z$, making $z$ the unique root of $w$ by (2).
The uniqueness of the root $z$ allows us to write $w(T) = c (T-z)^d$ for a scalar $c \in \kappa^\times$ and a positive integer $d$. 
Conversely, if $z$ is the unique root of $w$, then by construction $\mathcal L_\chi(w)$ is the extension by zero of a one-dimensional representation, which is tame by (3).

Our representation is geometrically trivial if and only if the image of the map from the geometric fundamental group to $\kappa^\times$ is contained in $\Ker(\chi)$.
Since $\kappa^\times$ is cyclic of order $|\kappa|-1$, and $\chi$ is of order $v$, we see that
\begin{equation}
\Ker(\chi) = \{\zeta^v : \zeta \in \kappa^\times \} = \{\zeta \in \kappa^\times : \zeta^{n} = 1\}, \quad n = \frac{|\kappa|-1}{v}.
\end{equation}

Therefore, the aforementioned image is contained in the kernel above if and only if the geometric fundamental group acts on the (geometric) generic fiber via multiplication by $n$-th roots of unity. 
This is equivalent to the geometric fundamental group acting trivially on the generic fiber of the finite \'{e}tale subcover
\begin{equation}
{\widetilde{Y}}^v = w(T) = c(T-z)^d, \quad \widetilde{Y} = Y^n
\end{equation}
of $\mathbb{A}^1_{\overline{\kappa}}$.
Since the action of the fundamental group on the generic fiber is that of
$\Gal(\overline{\kappa}(T)^{\mathrm{sep}}/\overline{\kappa}(T))$,
the triviality of the action is tantamount to the existence of an $v$-th root for $w(T)$ in $\overline{\kappa}(T)$.
Such a root exists if and only if $d$ is a multiple of $v$, so we have finished the verification of (7).
\end{proof}

\subsection{Change of variable for sheaves}

For future use, we record some simple transformation rules of sheaves and their trace functions.

\begin{prop} \label{LinearCOVtraceFuncProp}

Let $g \in \F_q[u]$ be a squarefree polynomial,
let 
\begin{equation}
t \colon \F_q[u]/(g) \to \mathbb{C}
\end{equation}
be an infinitame trace function, and let $P,c \in \F_q[u]$. 
Then the function defined by
\begin{equation}
t'(x) = t(Px + c)
\end{equation}
is an infinitame trace function with rank and conductor satisfying
\begin{equation}
r(t') \leq r(t), \quad c(t') \leq c(t).
\end{equation}

\end{prop}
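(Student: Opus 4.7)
The plan is to reduce the statement to a prime-by-prime assertion via \cref{TraceFuncDefi}. First I would verify that $t'$ is $g$-periodic: since $Pg \equiv 0 \pmod{g}$, shifting $x$ by $g$ leaves $t(Px+c)$ unchanged. Writing $P_\pi \defeq P \bmod \pi$ and $c_\pi \defeq c \bmod \pi$ for each prime $\pi \mid g$, the function $t'$ factors as
\begin{equation*}
t'(x) = \prod_{\pi \mid g} t'_\pi(x \bmod \pi), \qquad t'_\pi(y) = t_\pi(P_\pi y + c_\pi).
\end{equation*}
So it suffices to produce, for each $\pi$, a sheaf $\mathcal{F}'_\pi$ on $\mathbb{A}^1_{\F_q[u]/(\pi)}$ which is infinitame, has trace function $t'_\pi$, and satisfies $\rank(\mathcal{F}'_\pi) \leq r(t_\pi)$, $c(\mathcal{F}'_\pi) \leq c(t_\pi)$. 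Taking maxima over $\pi \mid g$ then yields the proposition.

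Setting $\kappa = \F_q[u]/(\pi)$ and letting $\mathcal{F}_\pi$ be a sheaf giving rise to $t_\pi$, I would define $\mathcal{F}'_\pi \defeq \phi^* \mathcal{F}_\pi$, where $\phi \colon \mathbb{A}^1_\kappa \to \mathbb{A}^1_\kappa$ is the $\kappa$-morphism sending $y$ to $P_\pi y + c_\pi$. Compatibility of pullback with the trace-of-Frobenius dictionary gives $t_{\mathcal{F}'_\pi}(y) = t_\pi(\phi(y)) = t'_\pi(y)$. The verification of the remaining properties splits into two cases according to whether $P_\pi$ vanishes in $\kappa$.

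If $P_\pi \neq 0$, the map $\phi$ is a $\kappa$-automorphism of $\mathbb{A}^1_\kappa$ that extends to an automorphism of $\mathbb{P}^1_\kappa$ fixing the point at infinity. Pullback along such an automorphism preserves generic rank, the mixed-of-nonpositive-weights filtration, the absence of finitely supported sections, and tameness at $\infty$; it also permutes $|\mathbb{A}^1_\kappa|$ compatibly with residue fields and with the local invariants $\dim(\mathcal{F}_{\bar x})$ and $\swan_x(\mathcal{F})$ appearing in \cref{InfinitameConductorDefEq}. Hence $r(t'_\pi) = r(t_\pi)$ and $c(t'_\pi) = c(t_\pi)$. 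If $P_\pi = 0$, then $\phi$ is the constant map with value $c_\pi$ and factors through $\mathrm{Spec}(\kappa)$, so $\mathcal{F}'_\pi$ is a geometrically constant lisse sheaf on $\mathbb{A}^1_\kappa$ whose stalk is $(\mathcal{F}_\pi)_{\bar c_\pi}$. This sheaf has rank $\dim (\mathcal{F}_\pi)_{\bar c_\pi} \leq r(t_\pi)$, inherits mixed nonpositive weights from the stalk, is unramified everywhere (in particular tame at $\infty$), and has no finitely supported sections by connectedness of $\mathbb{A}^1_\kappa$. Every summand in \cref{InfinitameConductorDefEq} vanishes, giving $c(\mathcal{F}'_\pi) = 0 \leq c(t_\pi)$.

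The argument is essentially mechanical; the only point requiring mild care is the degenerate case $P \equiv 0 \pmod{\pi}$, where one must verify that the geometrically constant sheaf $\phi^*\mathcal{F}_\pi$ still meets all three conditions in the definition of an infinitame trace function and that all local terms in the conductor formula collapse to zero.
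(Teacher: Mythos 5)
Your proof is correct, and the paper itself supplies no proof of this proposition (it gives a one-line proof only of the neighboring \cref{ShiftedFrobPullbackLem}, relying on $x \mapsto r+x^p$ being an \'{e}tale homeomorphism). The prime-by-prime reduction is the right move, and the split into $P_\pi \neq 0$ (affine automorphism of $\mathbb{A}^1_\kappa$, extending to $\mathbb{P}^1_\kappa$ fixing $\infty$) and $P_\pi = 0$ is exactly the necessary care, since unlike $x\mapsto r+x^p$, the map $y \mapsto P_\pi y + c_\pi$ fails to be a homeomorphism when $\pi \mid P$, so the one-line argument used for \cref{ShiftedFrobPullbackLem} does not transfer verbatim.

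Two small points worth tightening. First, the inequality $\dim(\mathcal{F}_\pi)_{\bar{c}_\pi} \le \rank(\mathcal{F}_\pi)$ needed in the degenerate case should be flagged as a consequence of the no-finitely-supported-sections hypothesis, via the injectivity of $\mathcal{F}_{\bar{x}} \hookrightarrow \mathcal{F}_{\bar\eta}^{I_x}$ noted in the remark after \cref{SheafFuncDictionaryDef}; without that hypothesis a stalk could exceed the generic rank. Second, the assertion that the constant sheaf with stalk $(\mathcal{F}_\pi)_{\bar{c}_\pi}$ is mixed of nonpositive weights deserves a sentence: one pulls back the filtration of $\mathcal{F}_\pi$ to a filtration of $\phi^*\mathcal{F}_\pi$ by constant subsheaves, and at a closed point of degree $d$ over $\kappa$ the Frobenius acts on the (constant) stalk as $\mathrm{Frob}_{|\kappa|}^d$, so eigenvalues of absolute value $|\kappa|^{w_i/2}$ become eigenvalues of absolute value $|\kappa|^{d w_i/2} = |\kappa(x)|^{w_i/2}$, preserving punctual purity of the graded pieces. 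With those details filled in, the argument is complete.
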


\begin{notation} \label{ShiftedPexpNotation}

For a finite field $\kappa$ of characteristic $p$ and $r \in \kappa$,
we define the map
\begin{equation} \label{ShiftedPexpFirstEq}
E_{r} \colon \mathbb A^1_{\kappa} \to \mathbb A^1_{\kappa}, \quad E_r(x) = r + x^p.
\end{equation}

\end{notation}

\begin{prop} \label{ShiftedFrobPullbackLem}

Let $\mathcal{F}$ be an infinitame sheaf on $\mathbb{A}^1_\kappa$. 
Then the sheaf $E_r^* \mathcal{F}$ and its trace function enjoy the following properties.

\begin{enumerate}

\item If $\mathcal{F}$ has no finitely supported sections, then neither does $E_r^* \mathcal F$.

\item If $\mathcal F$ is tamely ramified at infinity then so is $E_r^* \mathcal F$.

\item If $\mathcal{F}$ is mixed of nonpositive weights then so is $E_r^* \mathcal{F}$.

\item We have $c(E_r^* \mathcal F) = c(\mathcal{F})$ and $\rank(E_r^* \mathcal F) = \rank(\mathcal{F})$.

\item We have $t_{E_r^* \mathcal{F}}(x) = t_{\mathcal{F}}(r + x^p)$. 

\item If $t_{\mathcal {F}}$ is a Dirichlet trace function then so is $t_{E_r^* \mathcal{F}}$.

\end{enumerate} 

\end{prop}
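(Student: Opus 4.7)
The plan is to factor $E_r = \tau_r \circ F$, where $F \colon \mathbb{A}^1_\kappa \to \mathbb{A}^1_\kappa$ is the $\kappa$-morphism $x \mapsto x^p$ corresponding to the ring map $\kappa[y] \to \kappa[x]$ sending $y \mapsto x^p$, and $\tau_r(y) = r + y$ is translation by $r$. Since $\tau_r$ is an automorphism of $\mathbb{A}^1_\kappa$, its pullback preserves all the properties (1)--(6) trivially, so all the content lies in analyzing $F^*$. The first step is to record that $F$ is a finite purely inseparable morphism and in particular a universal homeomorphism: the perfectness of $\kappa$ implies that every closed point $V(h(y))$ of the target (with $h \in \kappa[y]$ irreducible) has a unique preimage $V(\tilde h(x))$, where $\tilde h$ is obtained by applying the inverse Frobenius to the coefficients of $h$, so $F$ is a bijection on closed points with the same residue field degrees and purely inseparable residue field extensions. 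Consequently $F^*$ is an equivalence of étale topoi on constructible $\overline{\mathbb{Q}_\ell}$-sheaves, which immediately yields (1), preservation of generic rank, and preservation of stalk dimensions at every closed point.

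For (2) and the Swan-conductor part of (4), I would use that the completion of $\mathbb{A}^1_\kappa$ at any closed point $x$ is a purely inseparable extension of the completion at $F(x)$; hence the corresponding absolute Galois groups are canonically identified, together with their inertia and wild inertia subgroups. Under this identification the local Galois representations on the stalks of $F^*\mathcal{F}$ and $\mathcal{F}$ agree, giving $\mathrm{sw}_x(F^*\mathcal{F}) = \mathrm{sw}_{F(x)}(\mathcal{F})$ for every closed $x$ and tameness preservation at $\infty \in \mathbb{P}^1_\kappa$ (where $F$ extends fixing $\infty$ with purely inseparable local extension). Combined with the preservation of residue field degrees and stalk dimensions, this gives the conductor equality in (4). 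For (3), a $\kappa'$-point $x'$ of the source maps to a $\kappa'$-point $F(x')$ of the target with identical residue field, and the canonical identification of stalks is $\mathrm{Frob}_{|\kappa'|}$-equivariant; Frobenius eigenvalues and their absolute values therefore match, so $F^*\mathcal{F}$ is mixed of nonpositive weights whenever $\mathcal{F}$ is. Statement (5) is the specialization of this equivariance to $\kappa$-points.

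The one place needing a small extra computation is (6). If $t_\mathcal{F}(x) = \chi(ax+b)$, then by (5)
\begin{equation*}
t_{E_r^*\mathcal{F}}(x) \;=\; \chi\bigl(a(r + x^p) + b\bigr) \;=\; \chi\bigl(ax^p + (ar + b)\bigr).
\end{equation*}
Since $\F_q[u]/(\pi)$ is perfect, I can write $a = \alpha^p$ and $ar+b = \beta^p$ with $\alpha \in (\F_q[u]/(\pi))^\times$, whence
\begin{equation*}
t_{E_r^*\mathcal{F}}(x) \;=\; \chi\bigl((\alpha x + \beta)^p\bigr) \;=\; \chi^p(\alpha x + \beta).
\end{equation*}
The character $\chi^p$ has the same order as $\chi$, because the order of $\chi$ divides $q^{\deg \pi} - 1$, which is coprime to $p$, so raising to the $p$-th power is a bijection on the character group of $(\F_q[u]/(\pi))^\times$. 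Hence $\chi^p$ is nontrivial and $t_{E_r^*\mathcal{F}}$ is a Dirichlet trace function, as required.

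The main obstacle I anticipate is (4): the conductor aggregates generic-rank data, pointwise stalk dimensions, and local Swan conductors, and one must check that each of these pieces transforms correctly under a morphism that is purely inseparable, hence wild at infinity as a morphism of schemes. However, the universal-homeomorphism equivalence together with the canonical identification of local absolute Galois groups under a purely inseparable local extension makes the correspondence summand-by-summand, and none of the individual terms in the definition \cref{InfinitameConductorDefEq} gets distorted.
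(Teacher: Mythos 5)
Your proposal is correct and takes essentially the same approach as the paper's own proof, which is the single sentence that $E_r$ is a universal homeomorphism (the paper writes ``\'{e}tale homeomorphism''), hence $E_r^*$ preserves all \'{e}tale-topological invariants. You supply the details the paper leaves implicit — factoring through translation and Frobenius, invoking topological invariance of the \'{e}tale site, identifying the local Galois groups under the purely inseparable completion — and you also give an explicit verification of (6), where the paper silently absorbs the Dirichlet-trace-function property into the phrase ``\'{e}tale topological invariants'' (indeed, being the extension by zero of a tame rank-one geometrically nontrivial local system on the complement of one point is such an invariant, so your computation with $\chi^p$ and $\alpha,\beta$ is a concrete and correct cross-check rather than a needed extra step).
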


\begin{proof}

The map $x \mapsto r+ x^p$ is an \'{e}tale homeomorphism and hence pullback under it preserves \'{e}tale topological invariants such as generic rank and conductor.
\end{proof}

\subsection{Local invariants}

Here we take a closer look at the local invariants of a sheaf $\mathcal{F}$ on a curve $C$ over a perfect field $\kappa$ of characteristic $p$. Some of these invariants (and their analogs) were mentioned in passing earlier.

\subsubsection{Drop, Slope, Swan}

\begin{defi} \label{DropDef}
For a sheaf $\mathcal F$ on a curve $C/\kappa$ and a closed point $x$ of  $C$, define the \emph{drop} 
\begin{equation} \label{DropDefEq}
\dro_x(\mathcal F) = \rank(\mathcal F) - \dim (\mathcal F_x).
\end{equation}
\end{defi}

This is the drop in the rank of $\mathcal F$ as we pass from a generic point to $x$. If $\mathcal F$ has no sections supported at $x$, then the $\dro_x(\mathcal F)\geq 0$. If $\mathcal F$ is a middle extension sheaf at $x$ in the sense that $\mathcal F$ is the (non-derived) pushforward from $C \setminus \{x\}$ to $C$ of some sheaf, then $\mathcal F_x$ is equal to the invariants of $\mathcal F_{\eta}$ under the inertia group $I_x$, and then $\dro_x(\mathcal F)$ is the codimension of the inertia invariants.

Next we introduce the `slope' of an irreducible inertia representation,
which is sometimes also called `break' or `jump', see \cite[Chapter 1]{Ka}.
For that we use the upper numbering filtration on an inertia group $I$ indexed by nonnegative real numbers.
That is, for $s \geq 0$ we denote by $I^s$ what is sometimes denoted by $\mathrm{Gal}(L^{\text{sep}}/L)^s$, 
where $L$ is the completion of the function field of $C$ at $x$,
see for instance \cite[Definition 3.54]{KR}.

\begin{defi} \label{SwanDef}

Let $C$ be an open subset of a proper curve $\overline{C}/\kappa$,
let $x$ be a closed point of $\overline{C}$.
For an irreducible (finite-dimensional, continuous) representation $V/\overline{\mathbb{Q}_\ell}$ of $I_x$ define
\begin{equation} \label{DefSlopeEq}
\text{slope}(V) = \inf \{s \geq 0 : I_x^s \ \text{acts trivially on} \ V\}.
\end{equation}

Let $V/\overline{\mathbb{Q}_\ell}$ be a representation of $I_x$,
and let $V_1, \dots, V_n$ be the (irreducible) Jordan-H\"{o}lder facotrs of $V$, listed with multiplicity. 
We define the Swan conductor of $V$ by
\begin{equation}
\swan(V) =  \sum_{i=1}^n \dim(V_i) \ \text{slope}(V_i),
\end{equation} 
and the slopes of $V$ to be
\begin{equation}
\text{slopes}(V) = \{\text{slope}(V_i) : 1 \leq i \leq n\}.
\end{equation}

For a sheaf $\mathcal F$ on $C$, 
we can view the stalk $\mathcal{F}_\eta$ at the generic point as a representation of $I_x$,
and define the Swan conductor of $\mathcal{F}$ at $x$ by 
\begin{equation}
\swan_x(\mathcal F) = \swan(\mathcal F_{\eta}).
\end{equation}
Similarly, if $I_x$ acts irreducibly on $\mathcal{F}_\eta$, we set
\begin{equation}
\text{slope}_x(\mathcal{F}) = \text{slope}(\mathcal{F}_\eta)
\end{equation}
and in general
\begin{equation}
\text{slopes}_x(\mathcal{F}) = \text{slopes}(\mathcal{F}_\eta).
\end{equation}
We further define the local conductor of $\mathcal{F}$ at $x$ as
\begin{equation} \label{LocalCondDefEq}
\cond_x(\mathcal F)=\dro_x(\mathcal F)+ \swan_x(\mathcal F).
\end{equation}

\end{defi}


Note that $\mathcal{F}$ is tamely ramified at $x$ if and only if 
$\text{slopes}_x(\mathcal{F}) = \{0\}$, or equivalently $\swan_x(\mathcal{F}) = 0$.
By our earlier remarks, if $\mathcal F$ is a middle extension sheaf then $\cond_x(\mathcal F)$ is the Swan conductor of the inertia representation of $\mathcal F$ at $x$ plus the codimension of the inertia invariants. By definition, this is the Artin conductor of the inertia representation. Thus, $\cond_x(\mathcal F)$ is an adaptation of the Artin conductor to the setting of sheaves.

For an alternative definition of the Swan conductor see \cite[Definition 4.72, Definition 4.82, Theorem 4.86]{KR}. 
 
\subsubsection{Euler characteristic}

\begin{defi} \label{EulerCharDef}

We define the Euler characteristic of a sheaf $\mathcal{F}$ on a curve $C/\overline{\kappa}$ by
\begin{equation}
\chi(C, \mathcal{F}) = \sum_{i=0}^2 (-1)^i \dim H^i_c(C, \mathcal{F}).
\end{equation}

\end{defi}

For the constant sheaf on a proper curve $C = \overline{C}$ of genus $g$ we have
\begin{equation}
\chi(\overline C) = \chi(\overline C, \overline{\mathbb{Q}_\ell}) 
= \sum_{i=0}^2 (-1)^i \dim H^i_c( \overline C, \overline{\mathbb{Q}_\ell}) = 1 - 2g + 1 = 2-2g,
\end{equation}
while in the affine case $C \subsetneq \overline C$ we have
\begin{equation} \label{AffineEulerChar}
\begin{split}
\chi(C) &= \chi(C, \overline{\mathbb{Q}_\ell}) 
= \dim H^2_c( C, \overline{\mathbb{Q}_\ell}) - \dim H^1_c( C, \overline{\mathbb{Q}_\ell}) \\
&= 1 - (2g  + |\overline{C} - C| - 1) = \chi(\overline{C}) - |\overline{C} - C|.
\end{split}
\end{equation}

\begin{lem}\label{EP} 
For a sheaf $\mathcal F$ on a proper curve $\overline{C}/\overline{\kappa}$, 
we have
\begin{equation}
\chi(\overline{C}, \mathcal F) = \chi(\overline{C}) \rank(\mathcal F) -  \sum_{x \in |\overline{C}|}\cond_x(\mathcal F).
\end{equation}
\end{lem}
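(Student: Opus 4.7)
The plan is to reduce to the classical Grothendieck--Ogg--Shafarevich formula for lisse sheaves on an open curve by an excision argument, using that both sides of the claimed identity are additive along the distinguished triangle separating $\mathcal{F}$ into its lisse part and its failure to be lisse.

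First I would choose a dense open subset $j\colon U \hookrightarrow \overline{C}$ on which $\mathcal{F}$ is lisse, and let $i\colon Z \hookrightarrow \overline{C}$ be the closed complement, which is a finite set of closed points (and geometric points, since $\overline{\kappa}$ is algebraically closed). Applying $R\Gamma_c(\overline{C},-)$ to the standard excision triangle $j_! j^* \mathcal{F} \to \mathcal{F} \to i_* i^* \mathcal{F} \to$ and using additivity of Euler characteristics together with $\chi(\overline{C}, j_! j^* \mathcal{F}) = \chi(U, j^* \mathcal{F})$ and $\chi(\overline{C}, i_* i^* \mathcal{F}) = \sum_{x \in Z} \dim \mathcal{F}_x$ (skyscraper sheaves contribute only to $H^0_c$), yields
\begin{equation*}
\chi(\overline{C}, \mathcal{F}) = \chi(U, j^* \mathcal{F}) + \sum_{x \in Z} \dim \mathcal{F}_x.
\end{equation*}

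Next I would invoke the classical Grothendieck--Ogg--Shafarevich formula for the lisse sheaf $j^*\mathcal{F}$ on the open curve $U$, which states
\begin{equation*}
\chi(U, j^*\mathcal{F}) = \chi(U) \rank(\mathcal{F}) - \sum_{x \in Z} \swan_x(\mathcal{F}),
\end{equation*}
where the Swan conductors of $j^*\mathcal{F}$ and of $\mathcal{F}$ at $x \in Z$ coincide since both are computed from the inertia action on the generic stalk $\mathcal{F}_{\overline{\eta}}$. This is the heart of the theorem and is the principal obstacle, but it is a standard result that I would cite. Using \cref{AffineEulerChar} in the form $\chi(U) = \chi(\overline{C}) - |Z|$, I would then combine the two displays to obtain
\begin{align*}
\chi(\overline{C}, \mathcal{F})
&= \bigl(\chi(\overline{C}) - |Z|\bigr) \rank(\mathcal{F}) - \sum_{x \in Z} \swan_x(\mathcal{F}) + \sum_{x \in Z} \dim \mathcal{F}_x \\
&= \chi(\overline{C}) \rank(\mathcal{F}) - \sum_{x \in Z} \bigl( \rank(\mathcal{F}) - \dim \mathcal{F}_x + \swan_x(\mathcal{F}) \bigr) \\
&= \chi(\overline{C}) \rank(\mathcal{F}) - \sum_{x \in Z} \cond_x(\mathcal{F}),
\end{align*}
by the definitions of $\dro_x$ in \cref{DropDefEq} and $\cond_x$ in \cref{LocalCondDefEq}.

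Finally, to recover the sum over all of $|\overline{C}|$ as in the statement, I would observe that for $x \in U$ the sheaf $\mathcal{F}$ is lisse, so $\dim \mathcal{F}_x = \rank(\mathcal{F})$ and $\swan_x(\mathcal{F}) = 0$, hence $\cond_x(\mathcal{F}) = 0$. Extending the range of summation from $Z$ to $|\overline{C}|$ therefore introduces no new terms and yields the asserted formula. The only nontrivial input here is the classical GOS formula on open curves; the rest is bookkeeping via excision and the affine Euler characteristic formula already recorded in \cref{AffineEulerChar}.
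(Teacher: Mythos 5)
Your proof is correct. The paper disposes of this lemma with a one-line citation: it invokes \cite[X, Theorem 7.1]{sga5}, which is the Grothendieck--Ogg--Shafarevich formula stated directly for constructible sheaves on the proper curve $\overline{C}$, with the local term $\rank(\mathcal F)-\dim\mathcal F_x+\swan_x(\mathcal F)$ already built in; no excision is needed. You instead take as input only the more commonly quoted lisse-sheaf version of GOS on the open locus $U$ where $\mathcal F$ is lisse, and recover the constructible case by the triangle $j_!j^*\mathcal F\to\mathcal F\to i_*i^*\mathcal F$, additivity of $\chi$, the skyscraper contribution $\sum_{x\in Z}\dim\mathcal F_x$, and \cref{AffineEulerChar}. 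All the steps check out: the Swan conductors of $j^*\mathcal F$ and $\mathcal F$ at $x\in Z$ agree because both are read off the inertia action on $\mathcal F_{\overline\eta}$, the skyscraper $i_*i^*\mathcal F$ has cohomology only in degree $0$ over the algebraically closed field $\overline\kappa$, and the final rearrangement reproduces exactly $\cond_x=\dro_x+\swan_x$ from \cref{DropDefEq} and \cref{LocalCondDefEq}, with $\cond_x=0$ at lisse points so the sum extends harmlessly to all of $|\overline{C}|$. What your route buys is a derivation from the weaker, more standard input and a transparent explanation of where the drop term comes from; what it costs is the excision bookkeeping that the paper's stronger citation renders unnecessary.
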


Note that $\swan_x(\mathcal F)$ and $\dro_x(\mathcal F)$ both vanish at every point $x \in |\overline{C}|$ where $\mathcal{F}$ is lisse, so the sum above is finite.

\begin{proof} 

This is the Grothendieck-Ogg-Shafarevich formula \cite[X, Theorem 7.1]{sga5}, specialized to the case of sheaves (instead of complexes of sheaves). 
\end{proof}

\begin{lem}\label{EP2} 

For a sheaf $\mathcal F$ on an open subset $C$ of a compact curve $\overline{C}/\overline{\kappa}$, we have
\begin{equation}
\chi(C, \mathcal F) = \chi(C) \rank(\mathcal F) -  \sum_{x \in |C|}\cond_x(\mathcal F)-\sum_{ x \in \overline{C} - C} \swan_x(\mathcal F) 
\end{equation} 

\end{lem}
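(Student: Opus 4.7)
The plan is to reduce \cref{EP2} to \cref{EP} by extending $\mathcal F$ by zero from $C$ to the proper curve $\overline{C}$, and then keeping careful track of how the local invariants and Euler characteristic change.

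More precisely, let $j \colon C \hookrightarrow \overline{C}$ denote the open immersion, and consider $j_! \mathcal F$, the extension by zero. By the standard identification $H^i_c(C, \mathcal F) = H^i_c(\overline{C}, j_! \mathcal F) = H^i(\overline{C}, j_!\mathcal F)$ (the last equality since $\overline C$ is proper), we have $\chi(C, \mathcal F) = \chi(\overline{C}, j_! \mathcal F)$, so \cref{EP} applied to $j_!\mathcal F$ gives
\begin{equation}
\chi(C, \mathcal F) = \chi(\overline{C}) \rank(j_!\mathcal F) - \sum_{x \in |\overline{C}|} \cond_x(j_! \mathcal F).
\end{equation}

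The key step is to compute each of the three ingredients $\rank(j_!\mathcal F)$, $\cond_x(j_!\mathcal F)$ for $x \in |C|$, and $\cond_x(j_!\mathcal F)$ for $x \in \overline{C} - C$. Since $j$ is an open immersion, the generic stalk of $j_!\mathcal F$ agrees with $\mathcal F_\eta$ as a Galois representation, whence $\rank(j_!\mathcal F) = \rank(\mathcal F)$ and $\swan_x(j_!\mathcal F) = \swan_x(\mathcal F)$ for every $x \in |\overline C|$. For $x \in |C|$ the stalk $(j_!\mathcal F)_x$ equals $\mathcal F_x$, so $\dro_x(j_!\mathcal F) = \dro_x(\mathcal F)$ and therefore $\cond_x(j_!\mathcal F) = \cond_x(\mathcal F)$. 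For $x \in \overline{C} - C$ the stalk $(j_!\mathcal F)_x$ vanishes by definition of extension by zero, so $\dro_x(j_!\mathcal F) = \rank(\mathcal F)$ and $\cond_x(j_! \mathcal F) = \rank(\mathcal F) + \swan_x(\mathcal F)$.

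Plugging these into the displayed formula, the sum over $|\overline{C}|$ splits as
\begin{equation}
\sum_{x \in |\overline{C}|} \cond_x(j_!\mathcal F) = \sum_{x \in |C|} \cond_x(\mathcal F) + \sum_{x \in \overline{C} - C}\bigl(\rank(\mathcal F) + \swan_x(\mathcal F)\bigr),
\end{equation}
and collecting the $\rank(\mathcal F)$ terms one obtains the coefficient $\chi(\overline{C}) - |\overline{C} - C|$, which equals $\chi(C)$ by \cref{AffineEulerChar}. This yields exactly the claimed formula. No step is particularly delicate; the only thing to verify with care is that $j_!$ does not alter the inertia action on the generic stalk, which is a formal consequence of the fact that the generic point of $\overline{C}$ lies in $C$.
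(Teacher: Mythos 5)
Your proposal is correct and follows essentially the same route as the paper: extend by zero along $j$, apply \cref{EP} to $j_!\mathcal F$, observe that the local invariants are unchanged on $|C|$ while $\cond_x(j_!\mathcal F) = \rank(\mathcal F) + \swan_x(\mathcal F)$ on the boundary, and absorb the extra rank terms into $\chi(\overline C) - |\overline C - C| = \chi(C)$. Nothing to add.
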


\begin{proof} 

Let $j \colon C \to \overline{C}$ be the open immersion. By \cref{EulerCharDef} and \cref{EP}, we have
\begin{equation} \label{CompactlySupportingEq}
\chi(C, \mathcal F) = \chi(\overline{C}, j_! \mathcal F) = \chi(\overline{C}) \rank(j_!\mathcal F) -  \sum_{x \in | \overline{C} |}\cond_x(j_!\mathcal F). 
\end{equation}
Extension by zero preserves all local invariants at points of $C$, so we have 
\begin{equation} \label{NoChangeLocInvExtZero}
\rank(j_! \mathcal F) = \rank(\mathcal F), \quad \cond_x(j_! \mathcal F) = \cond_x(\mathcal F), \quad x \in |C|.
\end{equation} 

For $x \in \overline{C}-C$, we have $(j_! \mathcal F)_x=0$ so from \cref{DropDef} we get
\begin{equation}
\dro_x(j_! \mathcal F) =\rank(j_!\mathcal F) - \dim (j_!\mathcal{F})_x =\rank(\mathcal F) 
\end{equation} 
and by \cref{SwanDef} we have 
\begin{equation}
\swan_x( j_! \mathcal F) = \swan (j_!\mathcal{F})_\eta = \swan(\mathcal{F}_\eta) = \swan_x(\mathcal F)
\end{equation} 
so by definition of the local conductor in \cref{LocalCondDefEq}
\begin{equation} \label{RankAndSwanEq}
\cond_x(j_!\mathcal F) = \rank(\mathcal F) + \swan_x(\mathcal F).
\end{equation}

Combining \cref{AffineEulerChar}, \cref{CompactlySupportingEq}, \cref{NoChangeLocInvExtZero}, and \cref{RankAndSwanEq} we get
\begin{equation*}
\begin{split}
\chi(C, \mathcal F) &= \chi(\overline{C}) \rank(j_!\mathcal F) -  \sum_{x \in | \overline{C} |}\cond_x(j_!\mathcal F) \\
&=(\chi(C) + |\overline{C} - C| )\rank(\mathcal F) -  \sum_{x \in | C |}\cond_x(\mathcal F) - 
\sum_{x \in \overline{C} - C} (\rank(\mathcal{F}) + \swan_x(\mathcal{F})) \\ 
&=\chi(C)\rank(\mathcal F) -  \sum_{x \in | C |}\cond_x(\mathcal F) - 
\sum_{x \in \overline{C} - C} \swan_x(\mathcal{F})
\end{split}
\end{equation*}
as desired.
\end{proof}



\subsubsection{Local invariants of tensor products}

\begin{prop}\label{slope-tensor-lemmas} 

For irreducible representations $V_1, V_2$ of an inertia group $I$ we have
\begin{equation} \label{SlopesInEq}
\max \ \textup{slopes}(V_1 \otimes V_2) \leq \max \{\slope(V_1),\slope(V_2) \}.
\end{equation}
Moreover, in case $\dim V_2 = 1$, the representation $V_1 \otimes V_2$ is irreducible, and
\begin{equation} \label{SlopesTensorFormula}
\slope(V_1 \otimes V_2) = \max \{\slope(V_1),\slope(V_2)\}
\end{equation}
unless $\slope(V_1)=\slope(V_2)$ and for every $g \in I^{\slope(V_1)}$ there exists $\lambda \in {\overline{\mathbb{Q}_\ell}}^\times$ such that for every $v_1 \in V_1$ and $v_2 \in V_2$ we have
\begin{equation} \label{ActionByScalarsEq}
g(v_1) = \lambda v_1, \quad g(v_2) = \lambda^{-1}v_2.
\end{equation}

\end{prop}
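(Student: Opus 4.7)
The plan is to handle the inequality \eqref{SlopesInEq} and the irreducibility first, and then to establish the slope formula \eqref{SlopesTensorFormula} by case analysis. For \eqref{SlopesInEq} I will set $s_0 = \max\{\slope(V_1),\slope(V_2)\}$ and use the infimum definition of slope: for every $s > s_0$ the subgroup $I^s$ acts trivially on both $V_1$ and $V_2$, hence on their tensor product, and hence on every Jordan--H\"older factor $W$ of $V_1 \otimes V_2$; passing to the infimum over $s > s_0$ gives $\slope(W) \leq s_0$. When $\dim V_2 = 1$, tensoring with $V_2$ is an equivalence of the category of $I$-representations with quasi-inverse given by tensoring with the dual $V_2^\vee$, so it preserves irreducibility, and $V_1 \otimes V_2$ is irreducible.

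For \eqref{SlopesTensorFormula} I write $s_1 = \slope(V_1)$, $s_2 = \slope(V_2)$; the first part of the proposition already supplies $\slope(V_1 \otimes V_2) \leq \max\{s_1,s_2\}$, so only the reverse inequality requires work. In the first case, $s_1 \neq s_2$, say $s_1 > s_2$, I will pick any $s' \in (s_2,s_1)$; then $I^{s'}$ acts trivially on $V_2$ while, by definition of $s_1$ as an infimum, not trivially on $V_1$, so some $g \in I^{s'}$ acts on $V_1 \otimes V_2$ as $A_g \otimes \mathrm{id}$ with $A_g \neq \mathrm{id}$. This shows $I^{s'}$ is nontrivial on the tensor product, and letting $s'$ approach $s_1$ from below yields $\slope(V_1 \otimes V_2) \geq s_1$, which combines with the upper bound to give equality.

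In the remaining case $s_1 = s_2 = s$, I will argue by contrapositive. Supposing \eqref{SlopesTensorFormula} fails, so $\slope(V_1 \otimes V_2) < s$, there exists some $s' < s$ with $I^{s'}$ acting trivially on $V_1 \otimes V_2$. Because $\dim V_2 = 1$, each $g \in I^{s'}$ acts on $V_2$ by a scalar $\mu_g \in \overline{\mathbb{Q}_\ell}^\times$, and the triviality of the action on $V_1 \otimes V_2$ forces $g$ to act on $V_1$ by the scalar $\mu_g^{-1}$. Since the upper-numbering filtration is decreasing in its parameter, $I^s \subseteq I^{s'}$, and so the opposite-scalar relation persists on $I^s$, establishing the exceptional configuration with $\lambda = \mu_g^{-1}$. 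The main care required throughout is with the direction of the upper-numbering filtration and with the distinction between strict and non-strict inequalities in the infimum defining the slope; no deeper input from ramification theory is needed beyond the monotonicity of the filtration and the fact that $I^s$ acts trivially on a representation whenever $s$ strictly exceeds its slope.
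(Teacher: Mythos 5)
Your proposal is correct and follows essentially the same route as the paper: trivial action of $I^s$ for $s$ above both slopes gives \eqref{SlopesInEq}, nontrivial action on exactly one factor in the intermediate range gives the distinct-slopes case, and the scalar/inverse-scalar analysis on the one-dimensional factor gives the equal-slopes exception. The only differences are cosmetic (category-equivalence phrasing for irreducibility, and passing through an auxiliary $s' < s$ where the paper invokes $I^s$ directly).
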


\begin{proof}

In order to establish \cref{SlopesInEq}, take $s>\max\{\slope(V_1),\slope(V_2)\}$.
By the definition in \cref{DefSlopeEq}, the subgroup $I^s$ acts trivially on both $V_1$ and $V_2$, 
so it acts trivially on $V_1 \otimes V_2$, hence on all its Jordan-H\"{o}lder factors.
It follows that $\max \text{slopes}(V_1 \otimes V_2) \leq s$, therefore \cref{SlopesInEq} holds.

That $V_1 \otimes V_2$ is irreducible if $\dim V_2=1$ is a general fact about representations, because a subspace of $V_1 \otimes V_2$ is invariant if and only if the corresponding subspace of $V_1$ is invariant. 

For the proof of \cref{SlopesTensorFormula}, assume first that $\slope(V_1)\neq\slope(V_2)$. 
For 
\begin{equation}
\min \{\slope(V_1),\slope(V_2) \} < s < \max \{ \slope(V_1),\slope(V_2) \},
\end{equation}
the subgroup $I^s$ of $I$ acts trivially on one of $V_1, V_2$ and nontrivially on the other, so it acts nontrivially on their tensor product. 
Since $I^{s} \subseteq I^{s'}$ if $s>s'$, 
we conclude that $I^s$ acts nontrivially on $V_1 \otimes V_2$ for any 
\begin{equation}
s < \max \{\slope(V_1),\slope(V_2)\},
\end{equation} 
hence $\slope(V_1 \otimes V_2) \geq \max\{\slope(V_1),\slope(V_2)\}$. 
Using \cref{SlopesInEq} and the irreducibility of $V_1 \otimes V_2$, we arrive at \cref{SlopesTensorFormula}.
 
Suppose now that $\slope(V_1)=\slope(V_2)=s$ but $\slope(V_1 \otimes V_2)<s$, so $I^s$ acts trivially on $V_1 \otimes V_2$. 
As $I^s$ acts by scalars on the one-dimensional representation $V_2$, 
it must act by the inverses of these scalars on $V_1$ for the action on $V_1 \otimes V_2$ to be trivial.
In other words, \cref{ActionByScalarsEq} holds. 
\end{proof}
 
\begin{cor} \label{SwanTensorProdBoundCor}

For representations $V_1, V_2$ of an inertia group we have
\begin{equation}
\swan(V_1 \otimes V_2) \leq \swan(V_1) \dim(V_2) + \swan(V_2) \dim(V_1).
\end{equation}

\end{cor}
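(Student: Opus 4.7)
The plan is to reduce the statement to the case of irreducible $V_1, V_2$ via the Jordan--H\"older decomposition, then apply the slope bound from \cref{slope-tensor-lemmas}, and finally sum things up using the definition of $\swan$ in \cref{SwanDef}.

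First I would recall that $\swan$ is additive on short exact sequences of $I$-representations: this is immediate from the definition, since the Jordan--H\"older factors of an extension are the union (with multiplicity) of the factors of the sub and the quotient. Let $V_{1,1}, \dots, V_{1,m}$ and $V_{2,1}, \dots, V_{2,n}$ be the Jordan--H\"older factors of $V_1$ and $V_2$ respectively, listed with multiplicity. By tensoring a composition series of $V_1$ first with $V_2$ and then refining using a composition series of $V_2$, we obtain a filtration of $V_1 \otimes V_2$ whose successive quotients are exactly the $V_{1,i} \otimes V_{2,j}$. Additivity of $\swan$ on extensions then gives
\begin{equation}
\swan(V_1 \otimes V_2) = \sum_{i=1}^m \sum_{j=1}^n \swan(V_{1,i} \otimes V_{2,j}).
\end{equation}

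Next, I would fix irreducible $V_{1,i}$ and $V_{2,j}$ and bound $\swan(V_{1,i} \otimes V_{2,j})$. By \cref{slope-tensor-lemmas}, every element of $\textup{slopes}(V_{1,i} \otimes V_{2,j})$ is at most $\max\{\slope(V_{1,i}), \slope(V_{2,j})\}$; since slopes are nonnegative, this is in turn at most $\slope(V_{1,i}) + \slope(V_{2,j})$. Applying the definition of $\swan$ as the sum of slopes weighted by dimensions of Jordan--H\"older factors, and noting that the total dimension of these factors is $\dim(V_{1,i}) \dim(V_{2,j})$, I obtain
\begin{equation}
\swan(V_{1,i} \otimes V_{2,j}) \leq \bigl(\slope(V_{1,i}) + \slope(V_{2,j})\bigr) \dim(V_{1,i}) \dim(V_{2,j}).
\end{equation}

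Finally I would substitute back and separate the double sum. The contribution of the $\slope(V_{1,i})$ term factors as $\bigl(\sum_i \slope(V_{1,i}) \dim(V_{1,i})\bigr) \bigl(\sum_j \dim(V_{2,j})\bigr) = \swan(V_1) \dim(V_2)$, and symmetrically for the other term, yielding the claimed inequality. There is no serious obstacle here: \cref{slope-tensor-lemmas} already carries the key content (slopes cannot increase under tensor products), and the corollary is really a bookkeeping statement passing from this statement about irreducibles to the dimension-weighted sum defining $\swan$. The one minor point to be careful about is not to confuse $\slope$, which is defined only for irreducibles, with the set $\textup{slopes}$ of an arbitrary representation, which is why the reduction to Jordan--H\"older factors must be carried out before invoking \cref{slope-tensor-lemmas}.
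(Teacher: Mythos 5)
Your proposal is correct and follows essentially the same route as the paper: reduce to irreducible $V_1,V_2$ by additivity of $\swan$ in short exact sequences, bound the slopes of the Jordan--H\"older factors of $V_1\otimes V_2$ via \cref{slope-tensor-lemmas}, and replace the maximum of the two slopes by their sum. The only difference is presentational — you write out the double Jordan--H\"older decomposition explicitly, whereas the paper just says "we are reduced to the irreducible case" — and this changes nothing of substance.
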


\begin{proof}

By \cref{SwanDef}, the Swan conductor is additive in short exact sequences, 
so we are reduced to the case $V_1$ is irreducible, and then also to the case $V_2$ is irreducible.
If $W_1, \dots W_n$ are the Jordan-H\"{o}lder factors of $V_1 \otimes V_2$, then by \cref{slope-tensor-lemmas} we have
\begin{equation*}
\begin{split}
\swan(V_1 \otimes V_2) &= \sum_{i=1}^n \dim(W_i) \slope(W_i) \leq \sum_{i=1}^n \dim(W_i) \max \ \mathrm{slopes}(V_1 \otimes V_2)\\
&\leq \sum_{i=1}^n \dim(W_i) \max\{\slope(V_1), \slope(V_2)\} \\
&\leq (\slope(V_1) + \slope(V_2)) \dim(V_1 \otimes V_2) \\&= \swan(V_1) \dim(V_2) + \swan(V_2) \dim(V_1)
\end{split}
\end{equation*}
as required.
\end{proof}

\begin{lem}\label{general-tensor-product}  

Let $\mathcal F_1$ and $\mathcal F_2$  be sheaves on $\mathbb A^1_\kappa$.  
Then the tensor product $\mathcal F_1 \otimes \mathcal F_2 $ has the following properties.

\begin{enumerate}

\item For every $x \in \kappa$ we have
\[ t_{ \mathcal F_1 \otimes \mathcal F_2 } (x) = t_{ \mathcal F_1}(x) t_{\mathcal F_2} (x) ;\]

\item if $\mathcal F_1 $ and $\mathcal F_2$ have no finitely supported sections, 
then neither does $\mathcal F_1 \otimes \mathcal F_2$;

\item if $\mathcal F_1$ and $\mathcal F_2$ are tamely ramified at $\infty$, then so is $\mathcal F_1 \otimes \mathcal F_2$;

\item if $\mathcal F_1$ and $\mathcal{F}_2$ are mixed of nonpositive weights, then so is $\mathcal F_1 \otimes \mathcal F_2$;

\item the rank of the tensor product is given by $\rank(\mathcal F_1 \otimes \mathcal F_2) = \rank(\mathcal F_1) \rank(\mathcal F_2) ;$

\item if $\mathcal F_1$ and $\mathcal F_2$ are infinitame, then so is their tensor product, and its conductor satisfies 
\[c (\mathcal F_1 \otimes \mathcal F_2) \leq c(\mathcal F_1) \rank(\mathcal F_2) + \rank(\mathcal F_1) c(\mathcal F_2) . \]


\end{enumerate}

\end{lem}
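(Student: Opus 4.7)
I would handle the six items in roughly the order they are stated, reducing most of them to pointwise or stalk-level computations and invoking earlier results where possible.

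For (1), the stalk of a tensor product of sheaves is the tensor product of the stalks, and the action of $\mathrm{Frob}_{|\kappa|}$ on $(\mathcal{F}_1 \otimes \mathcal{F}_2)_{\overline{x}} = (\mathcal{F}_1)_{\overline{x}} \otimes (\mathcal{F}_2)_{\overline{x}}$ is the tensor of the two individual actions. Since trace is multiplicative under tensor of endomorphisms, this gives $t_{\mathcal{F}_1 \otimes \mathcal{F}_2}(x) = t_{\mathcal{F}_1}(x) \cdot t_{\mathcal{F}_2}(x)$. For (5), the same stalk identity applied at a geometric generic point, where both sheaves are lisse, yields $\rank(\mathcal{F}_1 \otimes \mathcal{F}_2) = \rank(\mathcal{F}_1)\rank(\mathcal{F}_2)$.

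For (3) and (4), I would use intrinsic descriptions of the two properties. Tame ramification at $\infty$ means the wild inertia $P_\infty$ acts trivially on the generic stalk; if this holds for each factor, it holds on the tensor, which proves (3). For (4), pick filtrations of $\mathcal{F}_1, \mathcal{F}_2$ whose successive quotients are punctually pure of nonpositive weights; the tensor product of these filtrations is a filtration of $\mathcal{F}_1 \otimes \mathcal{F}_2$ whose successive quotients are tensors of punctually pure factors, hence punctually pure of nonpositive weight (weights add under tensor). For (2), I would use the characterization that ``no finitely supported sections'' is equivalent to the injectivity of $\mathcal{F}_{\overline{x}} \hookrightarrow \mathcal{F}_{\overline{\eta}}^{I_x}$ at every closed point $x$. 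Tensoring the two injections and composing with the natural inclusion $\mathcal{F}_{1,\overline{\eta}}^{I_x} \otimes \mathcal{F}_{2,\overline{\eta}}^{I_x} \hookrightarrow (\mathcal{F}_{1,\overline{\eta}} \otimes \mathcal{F}_{2,\overline{\eta}})^{I_x}$ yields the desired injectivity for $\mathcal{F}_1 \otimes \mathcal{F}_2$.

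For (6), infinitameness is just the conjunction of (2), (3), (4) together with the already-established identity $\rank(\mathcal{F}_1 \otimes \mathcal{F}_2) = \rank(\mathcal{F}_1)\rank(\mathcal{F}_2)$, so it remains to bound the conductor. I would argue pointwise, using the definition $c(\mathcal{F}) = \sum_{x \in |\mathbb{A}^1_\kappa|} [\kappa(x):\kappa]\,\cond_x(\mathcal{F})$ with $\cond_x = \dro_x + \swan_x$. At each $x$, setting $r_i = \rank(\mathcal{F}_i)$ and $d_i = \dro_x(\mathcal{F}_i)$, the stalk identity gives
\begin{equation*}
\dro_x(\mathcal{F}_1 \otimes \mathcal{F}_2) = r_1 r_2 - (r_1 - d_1)(r_2 - d_2) \leq d_1 r_2 + d_2 r_1,
\end{equation*}
while \cref{SwanTensorProdBoundCor} applied to the $I_x$-representation on the generic stalks gives $\swan_x(\mathcal{F}_1 \otimes \mathcal{F}_2) \leq \swan_x(\mathcal{F}_1) r_2 + \swan_x(\mathcal{F}_2) r_1$. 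Adding the two inequalities yields $\cond_x(\mathcal{F}_1 \otimes \mathcal{F}_2) \leq \cond_x(\mathcal{F}_1) r_2 + \cond_x(\mathcal{F}_2) r_1$, and summing over $x$ (weighted by $[\kappa(x):\kappa]$) gives the claimed bound on $c(\mathcal{F}_1 \otimes \mathcal{F}_2)$. The only genuinely nontrivial input is the Swan bound already supplied by \cref{SwanTensorProdBoundCor}; the only mild subtlety to be careful about is the inclusion $V_1^{I_x} \otimes V_2^{I_x} \hookrightarrow (V_1 \otimes V_2)^{I_x}$ used in part (2), which is automatic in the category of $\overline{\mathbb{Q}_\ell}$-vector spaces.
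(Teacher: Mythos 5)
Your proposal is correct and follows essentially the same route as the paper: stalk-level tensor identities for (1), (4), and (5), the generic-stalk argument restricted to $P_\infty$ for (3), the chain of injections through inertia invariants for (2), and for (6) the pointwise combination of the drop inequality with \cref{SwanTensorProdBoundCor}, where your algebraic identity $\dro_x(\mathcal F_1\otimes\mathcal F_2)=d_1r_2+d_2r_1-d_1d_2\le d_1r_2+d_2r_1$ is exactly the paper's observation that the difference is the nonnegative term $(\rank(\mathcal F_1)-\dim\mathcal F_{1,x})(\rank(\mathcal F_2)-\dim\mathcal F_{2,x})$. The only cosmetic difference is in (4), where you tensor the weight filtrations while the paper argues directly with products of Frobenius eigenvalues on stalks; both are standard and your version is, if anything, slightly more careful.
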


\begin{proof} 

To verify (1), note that for every closed point $x \in \mathbb{A}^1_{\kappa}$ we have a $\Frob_{x,\kappa(x)}$-equivariant isomorphism
\begin{equation} \label{TensorProdStalkCOPY}
\left( \mathcal F_1 \otimes \mathcal F_2 \right)_{\overline x} \cong
\mathcal{F}_{1, \overline x} \otimes  \mathcal F_{2,\overline{x} } 
\end{equation}
so in case $x$ is $\kappa$-valued, from \cref{TensorProdStalkCOPY} we get
\begin{equation*}
\begin{split}
t_{ \mathcal F_1 \otimes \mathcal F_2 } (x) &= 
\tr(\Frob_{x,\kappa}, \left( \mathcal F_1 \otimes \mathcal F_2\right)_{\overline x}) = 
\tr(\Frob_{x,\kappa}, \mathcal{F}_{1, \overline x} \otimes  \mathcal F_{2,\overline{x}} ) \\
&= \tr(\Frob_{x,\kappa}, \mathcal{F}_{1, \overline x}) \tr(\Frob_{x,\kappa}, \mathcal{F}_{2, \overline x}) \\
&= t_{ \mathcal F_1}(x) t_{\mathcal F_2 }(x)  
\end{split}
\end{equation*}
so (1) is established.

We further see from \cref{TensorProdStalkCOPY} that the eigenvalues of $\Frob_{x,\kappa(x)}$ on the stalk of the tensor product are products of the eigenvalues on $\mathcal{F}_{1, \overline{x}}$ and $\mathcal{F}_{2, \overline{x}} $. Since the product of complex numbers of norm at most $|\kappa|^{\frac{0}{2}}$ has norm at most $|\kappa|^{\frac{0}{2}}$, 
this verifies (4).

To check (3), let $\eta \in  \mathbb{A}^1_\kappa$ be the generic point, and note that (as in \cref{TensorProdStalkCOPY}) we have an isomorphism
\begin{equation} \label{GenericTensorProdStalk}
\left( \mathcal{F}_1 \otimes \mathcal F_2 \right)_{\overline \eta} \cong
\mathcal{F}_{1, \overline \eta} \otimes \mathcal{F}_{2, \overline \eta} 
\end{equation}
of representations of $\Gal(\kappa(T)^{\text{sep}}/\kappa(T))$.
In particular this is an isomorphism of representations of the wild inertia subgroup $P_\infty$.
By the tameness assumption, the latter subgroup acts trivially on each of the factors in the right hand side of \cref{GenericTensorProdStalk}, so it acts trivially on their tensor product, hence it also acts trivially on the left hand side of \cref{GenericTensorProdStalk}.
This triviality of the action of $P_\infty$ is the desired tameness of the sheaf $\mathcal{F}_1 \otimes \mathcal F_2$ at $\infty$.

Let $i \in \{1,2\}$. If $\mathcal{F}_i$ has no finitely supported sections, 
then the natural map $\mathcal{F}_{i, \overline x} \to \mathcal{F}_{i, \overline \eta}^{I_x}$ is injective for every closed point $x \in \mathbb{A}^1_\kappa$. Since the tensor product of two injective maps of vector spaces is injective, 
we get from \cref{TensorProdStalkCOPY} and \cref{GenericTensorProdStalk} that the mappings
\begin{equation*}
\left( \mathcal{F}_1 \otimes \mathcal{F}_2  \right)_{\overline x} \cong 
\mathcal{F}_{1, \overline x} \otimes\mathcal{F}_{2, \overline x}  \to 
\mathcal{F}_{1, \overline \eta}^{I_x} \otimes\mathcal{F}_{2, \overline \eta}^{I_x}\to
\left( \mathcal{F}_{1, \overline \eta} \otimes  \mathcal{F}_{2 , \overline \eta} \right)^{I_x} \cong
\left( \mathcal{F}_1  \otimes  \mathcal F_2  \right)_{\overline \eta}^{I_x}
\end{equation*}
are all injective, hence $\mathcal{F}_1 \otimes \mathcal F_2 $ has no finitely supported sections, so (2) is established.

For (5), we use \cref{GenericTensorProdStalk}  to conclude that
\begin{equation} \label{TensoredRankCalculatedEq} 
\begin{split}
\rank(\mathcal{F}_1 \otimes \mathcal F_2 ) &= \dim \left( \mathcal{F}_1 \otimes \mathcal F_2 \right)_{\overline \eta} =
\dim (\mathcal{F}_{1, \overline \eta} \otimes \mathcal{F}_{2, \overline \eta}  ) \\
&= \dim(\mathcal{F}_{1, \overline \eta}) \dim(\mathcal{F}_{2, \overline \eta }) = \rank(\mathcal{F}_1 ) \rank(\mathcal F_2).
\end{split}
\end{equation}

Now we check (6).
For any closed point $x \in \mathbb{A}^1_\kappa$, we have by \cref{SwanTensorProdBoundCor}
\begin{equation} \label{SwanIgnoredTensorEq}
\swan_x(\mathcal{F}_1 \otimes \mathcal F_2) \leq \swan_x (\mathcal F_1) \rank(\mathcal F_2) + \rank(\mathcal F_1) \swan_x (\mathcal F_2)  .
\end{equation}
By definition of the conductor in \cref{InfinitameConductorDefEq} we have
\[c(\mathcal{F}_1 \otimes \mathcal F_2 ) = 
\sum_{ x \in |\mathbb A^1_{\kappa}|} [\kappa(x) : \kappa] 
(\rank(\mathcal{F}_1 \otimes \mathcal F_2 ) - \dim (\mathcal{F}_1 \otimes \mathcal F_2 )_{\overline{x}} 
\ + \swan_x(\mathcal{F}_1 \otimes \mathcal{F}_2 ) ) \] 
which in view of \cref{TensorProdStalkCOPY}, \cref{TensoredRankCalculatedEq}, and \cref{SwanIgnoredTensorEq}, is at most
\begin{equation} \label{InfinitameConductorTensorProductIneq}
\sum_{ x \in |\mathbb A^1_{\kappa}|} [\kappa(x) : \kappa] 
(\rank(\mathcal{F}_1 )  \rank(\mathcal F_2)  - \dim \mathcal{F}_{1, \overline{x}}  \dim \mathcal{F}_{2 , \overline{x}}  
\ + \swan_x (\mathcal F_1) \rank(\mathcal F_2) + \rank(\mathcal F_1) \swan_x (\mathcal F_2)   ). 
\end{equation}

On the other hand
\begin{equation*}
\begin{split}
&c(\mathcal F_1) \rank(\mathcal F_2) + \rank(\mathcal F_1) c(\mathcal F_2)  =  \\
&\sum_{ x \in |\mathbb A^1_{\kappa}|} [\kappa(x) : \kappa] \left(   ( \rank(\mathcal F_1) - \dim \mathcal{F}_{1, \overline{x}} + \swan_x (\mathcal F_1) )  \rank(\mathcal F_2) +\rank(\mathcal F_1)  ( \rank(\mathcal F_2) - \dim \mathcal{F}_{2, \overline{x}} + \swan_x (\mathcal F_2) ) \right)  
\end{split}
\end{equation*}
which comparing term-by-term, is larger than \cref{InfinitameConductorTensorProductIneq} by 
\[  \sum_{ x \in |\mathbb A^1_{\kappa}|} [\kappa(x) : \kappa]  (\rank ( \mathcal F_1)- \dim (\mathcal F_{1,x}) )  (\rank ( \mathcal F_2)- \dim (\mathcal F_{2,x}) )  \geq 0\] since $\mathcal F_1$ and $\mathcal F_2$ have no finitely supported sections. \end{proof}

\subsection{Artin-Schreier sheaves}

\subsubsection{Residues, exponentiation, additive characters} \label{REAC}
A variant of some of the material presented here can also be found in \cite{Hay}.

Each rational function $a \in \F_q(u)$ has a unique expansion
\begin{equation}
a(u) = \sum_{i = -\infty}^{\infty} a_i \cdot \left( \frac{1}{u} \right)^i
\end{equation}
as a Laurent series with $a_i \in \F_q$,
such that $a_i = 0$ for all but finitely many negative $i \in \mathbb{Z}$.
Using the $i=1$ coefficient $a_1$, we set
\begin{equation}
e(a) = \exp \left( \frac{2\pi i \cdot \mathrm{Tr}_{\F_q/\F_p}(a_1)}{p} \right)
\end{equation}
where we have identified $\F_p$ with $\{0,1, \dots, p-1\} \subseteq \mathbb{Z}$.
An alternative definition of $a_1$ in terms of the residue at infinity is
\begin{equation}
a_1 = - \mathrm{Res}_\infty(a).
\end{equation}

To get an explicit expression (or yet another equivalent definition) for $a_1$ write $a = M/N$ with $M,N \in \F_q[u]$,
and let $\widetilde M$ be the reduction of $M$ mod $N$ (represented by a unique polynomial of degree less than $\deg(N)$).
Then $a_1$ equals the coefficient of $u^{\deg(N)-1}$ in $\widetilde{M}$ (this is $0$ if there is no such coefficient) divided by the leading coefficient of $N$. 
In particular, for a polynomial $a \in \F_q[u]$ we have $a_1 = 0$ and thus $e(a) = 1$.
One also readily checks that $e(a+b) = e(a)e(b)$ for any $a,b \in \F_q(u)$.

We say that a function $\psi \colon \F_q[u]/(N) \to \mathbb{C}^\times$ is an additive character if 
\begin{equation}
\psi(f+g) = \psi(f)\psi(g), \quad f,g \in \F_q[u]/(N).
\end{equation}
Using the nondegeneracy of the bilinear map $(x,y) \mapsto \mathrm{Tr}_{\F_q/\F_p}(xy)$, we see that the additive characters are given by
\begin{equation} \label{AllAdditiveCharsEq}
\psi_h(M) = e \left( \frac{hM}{N} \right), \quad h \in \F_q[u]/(N).
\end{equation}

\subsubsection{Construction and Properties}

Our construction of Artin-Schreier sheaves will be analogous to that of Kummer sheaves.
Both constructions are special cases of the Lang isogeny construction.

\begin{notation} \label{AS-sheaf}

Let $\kappa$ be a finite field of characteristic $p$, 
let $\psi \colon \kappa \to \overline{\mathbb{Q}_\ell}^\times$ be a nontrivial additive character, 
and let $w \in \kappa(X)$ be a rational function. 
We construct an $\ell$-adic sheaf $\mathcal L_{\psi}(w)$, on the affine line $\mathbb A^1_\kappa = \mathrm{Spec} \ \kappa[X]$, as follows.

Write $w = \frac{a}{b}$ with $a,b \in \kappa[X]$ coprime, and $b \neq 0$.
Let
\begin{equation}
U = \{x \in \mathbb{A}^1_\kappa : b(x) \neq 0\}
\end{equation}
be the complement of the set $S$ of poles of $w$, 
and consider the finite \'{e}tale cover of $U$ defined by the equation
\begin{equation}\label{Lang2}
Y^{|\kappa|} - Y = w(X).
\end{equation}
The additive group of $\kappa$ acts on our cover (by automorphisms) via translation on $Y$, 
since for every $\lambda \in \kappa$ we have $(Y+ \lambda)^{|\kappa|} - (Y+ \lambda) = Y^{|\kappa|} - Y$ in the polynomial ring $\kappa[Y]$.
We thus get a simply transitive action of $\kappa$ on the fiber of any geometric point $\overline{x}$ lying over any point $x \in U$.

Arguing as in the construction of Kummer sheaves, we get a continuous homomorphism $ \pi_1^{\text{\'{e}t}}(U,\overline{x})\to \kappa$, so composing with $\psi$ gives rise to a lisse $\ell$-adic sheaf of rank one on $U$.
We define $\mathcal{L}_{\psi}(w)$ to be the extension by zero of this sheaf from $U$ to $\mathbb{A}^1_\kappa$.

\end{notation}

We shall now establish some properties of Artin-Schreier sheaves.
For the study of local invariants, we will use not only the upper numbering ramification filtration used so far, but also the lower numbering filtration,
as defined for instance in \cite[Definition 3.31]{KR}. 
For a comparison of these filtrations see \cite[Definition 3.52]{KR}

\begin{lem}\label{AS-sheaf-properties} 

The sheaf $\mathcal L_\psi(w)$ on $\mathbb A^1_\kappa$ has the following properties.

\begin{enumerate}

\item For every $x \in \kappa$ with $b(x)\neq 0$, we have $t_{\mathcal L_\psi(w) } (x) = \psi(w (x) )$;

\item The sheaf $\mathcal L_\psi(w)$ is lisse on $U$, and vanishes on $S = \mathbb{A}^1_\kappa - U$.

\item Unless $\deg(a)-\deg(b)$ is a positive multiple of $p$, we have
\begin{equation*}
\textup{slope}_{\infty}(\mathcal L_\psi(w)) = \max\{\deg(a) - \deg(b), 0\}.
\end{equation*}
In particular, if $\deg(a) \leq \deg (b)$, then the sheaf $\mathcal L_\psi(w)$ is tamely ramified at $\infty$. 

\item The sheaf $\mathcal L_\psi(w)$ is mixed of nonpositive weights.

\item The sheaf $\mathcal L_\psi(w)$ has no finitely supported sections;

\item We have $\rank(\mathcal L_{\psi(w)})=1$. 
In case $\deg (a) \leq \deg (b)$ and the multiplicity of every root of $b$ is prime to $p$, we also have
\begin{equation*}
c(\mathcal L_{\psi(w)}) = \#\{x \in \overline{\kappa} : b(x)  = 0\}  +\deg (b).
\end{equation*} 

\item There exists a unique $\alpha \in \kappa$ such that $\psi(x^p) = \psi(\alpha x)$ for every $x \in \kappa$,
and the sheaf $\mathcal{L}_\psi(w)$ is geometrically trivial on $U$ if and only if there exists an $f \in \kappa(X)$ such that 
$w = f^p - \alpha f$.



\end{enumerate}

\end{lem}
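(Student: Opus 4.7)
The plan is to parallel the proof of \cref{Kummer-sheaf-properties}, treating the Artin--Schreier torsor \cref{Lang2} as the additive analogue of the Kummer cover, with additive translation in $\kappa$ replacing multiplication in $\kappa^\times$. Parts (1), (2), (4), (5) then follow by essentially the same arguments. For (1), at any $x\in U$ the Frobenius $\mathrm{Frob}_{x,\kappa}$ acts on the geometric fiber of \cref{Lang2} by $y\mapsto y^{|\kappa|}=y+w(x)$, so by construction the element of $\kappa$ associated to Frobenius is $w(x)$, and composition with $\psi$ gives the trace value $\psi(w(x))$. Part (2) is immediate from the construction. For (4), the Frobenius eigenvalues at $\kappa$-valued points are values of the finite-order character $\psi$, hence roots of unity of absolute value $1=|\kappa|^{0/2}$. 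Part (5) follows from (2), since $\mathcal{L}_\psi(w)$ is lisse on $U$ and vanishes on the complement.

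The main difference from the Kummer case is wild ramification. For (3), I would invoke the classical slope formula: at any closed point where a rational function has a pole of order $n$ with $\gcd(n,p)=1$, the corresponding Artin--Schreier sheaf has slope $n$. After reducing to the standard $\F_p$-Artin--Schreier case described below in (7), this is a standard ramification computation for $Y^p-Y=w$. Specializing to $\infty$, with pole order $\deg(a)-\deg(b)$ when this is positive, and excluding $p\mid\deg(a)-\deg(b)$ (since otherwise one could reduce the pole order by a modification of the form $g^p-g$, invalidating the formula), yields the claimed slope. Part (6) is then a direct accounting: the rank is $1$ by construction, and from \cref{InfinitameConductorDefEq} the conductor sum runs only over closed points of $\mathbb{A}^1_\kappa$, so $\infty$ contributes nothing. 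Each pole $z\in S$ contributes a drop of $1$ (the stalk vanishes by (2)) together with a Swan conductor equal to the multiplicity $n_z$ of $z$ in $b$ (by the same slope computation, using $p\nmid n_z$). Summation gives $\#\{x\in\overline\kappa\colon b(x)=0\}+\deg(b)$.

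The most subtle item is (7). Existence and uniqueness of $\alpha$ follow from the fact that $x\mapsto\psi(x^p)$ is an additive character of $\kappa$ (since Frobenius is $\F_p$-linear), together with nondegeneracy of the pairing $(x,y)\mapsto\psi(xy)$. For the geometric triviality criterion, my plan is to reduce to classical $\F_p$-Artin--Schreier theory by pushing the $\kappa$-torsor forward to an $\F_p$-torsor. Writing $\psi=\psi_0\circ\mathrm{Tr}_{\kappa/\F_p}(\beta\,\cdot\,)$ for a unique nonzero $\beta\in\kappa$ and a nontrivial $\psi_0\colon\F_p\to\overline{\mathbb{Q}_\ell}^\times$, and setting $d=[\kappa:\F_p]$, I define $Z=\sum_{i=0}^{d-1}(\beta Y)^{p^i}$ on the torsor $Y^{|\kappa|}-Y=w$; a short computation using $\beta^{p^d}=\beta$ and $Y^{p^d}=Y+w$ gives $Z^p-Z=\beta w$, while $Z$ translates by $\mathrm{Tr}_{\kappa/\F_p}(\beta a)$ under $Y\mapsto Y+a$. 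Hence the $\F_p$-quotient torsor is $Z^p-Z=\beta w$, and $\mathcal{L}_\psi(w)$ is isomorphic to the $\psi_0$-pushforward of this cover. By classical Artin--Schreier theory, the latter is geometrically trivial iff $\beta w=h^p-h$ for some $h\in\overline\kappa(X)$. Comparing $\psi(x^p)$ and $\psi(\alpha x)$ via the explicit formula for $\psi$ yields the identity $\alpha=\beta^{(1-p)/p}$, and substituting $h=\beta^{1/p}f$ converts the condition $\beta w=h^p-h$ into $w=f^p-\alpha f$. The main technical obstacle will be carrying out the passage from the $\kappa$-torsor to the $\F_p$-torsor cleanly and verifying the algebraic relation between $\alpha$ and $\beta$.
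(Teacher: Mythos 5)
Your proposal is substantially correct and aligns with the paper's proof for parts (1), (2), (4), (5), and the rank half of (6). The notable divergences concern (3), (6), and (7). For (3), the paper does not reduce to the $\F_p$-case: it treats the Lang torsor $Y^{|\kappa|}-Y=w$ directly, handling $\deg(a)\leq\deg(b)$ by the Henselian property of the \'etale local ring at $\infty$ (all roots unramified, slope $0$), and the case $0<\deg(a)-\deg(b)$ prime to $p$ by an explicit lower-numbering computation after the model of Laumon: extend the valuation, find a uniformizer $y^{j_1}X^{j_2}$, and compute $v(\sigma(y^{j_1}X^{j_2})-y^{j_1}X^{j_2})$ to locate the jump of the ramification filtration. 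Your approach of first pushing forward the $\kappa$-torsor to an $\F_p$-torsor $Z^p-Z=\beta w$ and citing the classical $\F_p$-slope formula is also valid (and your algebraic verification $Z^p-Z=\beta w$ is correct), but it makes (3) logically depend on (7), whereas the paper's proof is self-contained. For (6), the paper's computation of the finite-place Swan conductors is closer to yours in spirit but phrased differently: instead of invoking the slope formula at each pole, it moves $\infty$ to each pole $\overline{x}$ via the substitution $X\mapsto\overline{x}+1/X$ and re-applies part (3). Finally, the paper does not prove (7) at all, so your proof here is an addition rather than a comparison; it looks correct, including the relation $\alpha=\beta^{(1-p)/p}$ obtained from $\mathrm{Tr}(\beta x^p)=\mathrm{Tr}(\beta^{1/p}x)$ and the substitution $h=\beta^{1/p}f$.
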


\begin{proof}
Property  (2) is immediate from our construction. As in \cref{Kummer-sheaf-properties}, (5) is immediate from (2). 

To verify (1), note first that $x \in U$.
We get from \cref{Lang2} that the Frobenius element $\Frob_{x,\kappa} \in \pi_1^{\text{\'{e}t}}(U,\overline{x})$ acts on the geometric fiber over $x$ by 
\begin{equation}
\Frob_{x,\kappa}(x,y) = (x^{|\kappa|}, y^{|\kappa|}) = (x,y^{|\kappa|}) = (x,w(x) + y).
\end{equation}
Hence, by our definition of the representation giving rise to the sheaf $\mathcal{L}_{\psi}(w)$, 
the element $w(x) \in \kappa$ is associated to $\Frob_{x,\kappa}$,
so $\Frob_{x,\kappa}$ is mapped to $\psi(w(x))$ as desired.

For (3), first consider the case when $\deg (a) \leq \deg (b)$. Then $w= \frac{a}{b}$ lies in the \'{e}tale local ring of $\mathbb P^1$ at $\infty$, so by the Henselian property all roots of 
\begin{equation}
Y^{|\kappa|}- Y = \frac{a(X)}{b(X)}
\end{equation}
lie in that ring.
Hence, the extension adjoining such a root is unramified, thus invariant under $I^s_{\infty}$ for all $s\geq 0$, and in particular has slope $0$.

Next consider the case when $\deg (a) - \deg (b)$ is positive and prime to $p$,
for which we use the argument of \cite[Example 1.1.7]{La81}.
The completion at $\infty$ of the function field of $\mathbb P^1$ admits a valuation $v$ satisfying
\begin{equation}
v(X)=-1, \quad v \left( \frac{a(X)}{b(X)} \right) = \deg (b) - \deg (a).
\end{equation}
Adjoining a root $y$ of \cref{Lang2}, we can extend our valuation by setting 
\begin{equation}
v(y) = \frac{ \deg (b) - \deg (a)}{|\kappa|}.
\end{equation} 

Since $\deg (b) - \deg (a)$ is prime to $|\kappa|$, there exist integers $j_1,j_2$ such that 
\begin{equation}
j_1   \frac{ \deg (b) - \deg (a)}{|\kappa|} - j_2 = \frac{1}{|\kappa|}. 
\end{equation}
Consequently $v ( y^{j_1} X^{j_2}) = \frac{1}{|\kappa|}$ so $y^{j_1} X^{j_2}$ is a uniformizer. 
Every nontrivial element $\sigma$ of the Galois group $G$ of our local extension sends $y$ to $y+c$ for some 
$c \in \kappa^{\times}$ so 
\begin{equation}
\sigma ( y^{j_1} X^{j_2}) = (y+c) ^{j_1} X^{j_2}= y^{j_1} X^{j_2} 
\left(1 +  \sum_{m=1}^{j_1} \frac{c^m {j_1 \choose m}}{y^m} \right). 
\end{equation}
Therefore 
\begin{equation*}
v (  \sigma ( y^{j_1} X^{j_2})  - y^{j_1} X^{j_2}) = 
v \left( y^{j_1} X^{j_2} \sum_{m=1}^{j_1} \frac{c^m {j_1 \choose m}}{y^m} \right) = 
\frac{1}{|\kappa|} - \frac{ \deg (b) - \deg (a)}{|\kappa|} 
\end{equation*} 
so by the definition of the lower numbering, 
$\sigma$ lies in $G_{ \deg (a) - \deg (b)}$ but not in $G_{\deg (a) -\deg (b) + 1}$. 
It follows that the slope is $\deg (a) - \deg(b)$.



To get (4), note that for a closed point $x \in U$,
every eigenvalue of $\text{Frob}_{x, \kappa(x)}$ is a value of the finite order character $\psi$, 
hence a root of unity whose norm is thus $1= |\kappa(x)|^{0/2}$, and for $x\notin U$, there are no Frobenius eigenvalues at all. 
Hence $\mathcal{L}_\psi(w)$ is punctually pure of weight $0$ and thus mixed of nonpositive weights.

To get the first part of (6), recall from $(2)$ that $\mathcal L_\psi(w)$ is lisse on $U$,
hence it is lisse at a geometric generic point $\overline{\eta}$ of $U$ (and of $\mathbb{A}^1_\kappa$).
Therefore the dimension of $\mathcal{L}_\psi(w)_{\overline{\eta}}$ is the rank of the representation giving rise to it, which is $1$.
For the second part of (6), because $\deg (a) \leq \deg (b)$, 
the sheaf $\mathcal L_\psi(w)$ is infinitame by (3), (4) and (5). 
Because $\mathcal L_\psi(w)$ has rank $1$, its Swan conductor at each point is equal to its slope. 

To  calculate $\text{slope}_x(\mathcal{L}_{\psi}(w))$ for a closed point $x \in \mathbb A^1_\kappa$, 
we choose a $\overline{\kappa}$-point $\overline{x}$ lying over $x$, 
and perform a change of variable that sends $\infty$ to $\overline{x}$, 
replacing $X$ with $\overline{x} + \frac{1}{X}$. 
The degree in $X$ of the rational function 
\begin{equation}
\frac{ a(\overline{x} + \frac{1}{X}) }{ b( \overline{x}+ \frac{1}{X} )}
\end{equation}
is equal to the order of vanishing of $b$ at $\overline{x}$ minus the order of vanishing of $a$ at $\overline{x}$. By (3), the slope of $\mathcal L_\psi(w)$ at $x$ is the maximum of this degree and $0$, which is the order of vanishing of $b$ at $\overline{x}$ since $a$ and $b$ are coprime.

By the definition in \cref{InfinitameConductorDefEq} we therefore have
\begin{equation}
\begin{split}
c(\mathcal{L}_\psi (w)) &= \sum_{x \in |\mathbb{A}^1_\kappa|} [\kappa(x) : \kappa](1 - \dim \mathcal{L}_\chi(w)_{\overline{x}} + \operatorname{ord}_{\overline{x}}(b) ) \\
&=   \sum_{x \in |U|} [\kappa(x) : \kappa](1 - 1) + \sum_{x \in |S|} [\kappa(x) : \kappa](1  + \operatorname{ord}_{\overline{x}}(b) ) \\
&= \#\{\overline{x} \in \overline{\kappa} : b(\overline{x}) = 0\}   + \deg (b)
\end{split}
\end{equation}
where $\operatorname{ord}_{\overline{x}}(b)$ denotes the order of vanishing of $b$ at $\overline{x}$.
\end{proof}

\begin{notation} \label{TheAnalyticBarNotation}

For a nonzero polynomial $g \in \F_q[u]$ and a polynomial $x$ in $\F_q[u]$ coprime to $g$, 
we denote by $\overline{x} \in \F_q[u]$ the unique polynomial of degree less than $\deg(g)$ satisfying
\begin{equation}
x \overline{x} \equiv 1 \mod g.
\end{equation}

\end{notation}

\begin{prop} \label{ExpsToTracesProp}

Let $g \in \F_q[u]$ be squarefree, and let $h \in \F_q[u]$.
Then there exists an infinitame $g$-periodic trace function $t \colon \F_q[u] \to \mathbb{C}$ with
\begin{equation}
r(t) \leq 1, \quad c(t) \leq 2,
\end{equation}
and
\begin{equation}
t(x) = e \left( \frac{h\overline{x}}{g} \right)
\end{equation}
for every $x \in \F_q[u]$ that is coprime to $g$.

\end{prop}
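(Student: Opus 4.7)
The strategy is to decompose $h\overline{x}/g$ via partial fractions and to realize each resulting local factor as the trace function of an Artin-Schreier sheaf from \cref{AS-sheaf}. Because $e$ vanishes on $\F_q[u]$ and is additive on $\F_q(u)$ (see \cref{REAC}), while $g=\prod_{\pi\mid g}\pi$ is squarefree, one has, for each $x$ coprime to $g$,
\[
\frac{h\overline{x}}{g}\equiv\sum_{\pi\mid g}\frac{c_\pi\cdot x^{-1}}{\pi}\pmod{\F_q[u]},\qquad c_\pi\defeq h\cdot\overline{(g/\pi)}\bmod\pi,
\]
where all inverses on the right are taken modulo $\pi$ (and agree with the reduction of $\overline{x}\bmod g$ from \cref{TheAnalyticBarNotation} because $g$ is squarefree). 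Consequently
\[
e\!\left(\tfrac{h\overline{x}}{g}\right)=\prod_{\pi\mid g}\psi_\pi(c_\pi/x),\qquad \psi_\pi(r)\defeq e(r/\pi),
\]
with each $\psi_\pi$ a nontrivial additive character on $\F_q[u]/(\pi)$ in the sense of \cref{AllAdditiveCharsEq}.

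For each $\pi\mid g$ I would then take $t_\pi$ to be the trace function of the Artin-Schreier sheaf $\mathcal{L}_{\psi_\pi}(c_\pi/X)$ on $\mathbb{A}^1_{\F_q[u]/(\pi)}$, setting $a=c_\pi$ and $b=X$ in the notation of \cref{AS-sheaf-properties}. Since $\deg(a)=0\le 1=\deg(b)$ and the unique root $0$ of $b$ is simple (hence of multiplicity coprime to $p$), parts (1)--(6) of \cref{AS-sheaf-properties} immediately yield that $t_\pi$ is infinitame with $r(t_\pi)=1$, $c(t_\pi)\le\#\{0\}+\deg(b)=2$, and $t_\pi(x)=\psi_\pi(c_\pi/x)$ whenever $x\not\equiv 0\pmod{\pi}$. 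In the degenerate case $c_\pi\equiv 0\pmod{\pi}$ the sheaf is the constant $\overline{\mathbb{Q}_\ell}$, still of rank $1$ and with conductor $0$, and $t_\pi\equiv 1$ on units, again matching $\psi_\pi(0/x)=1$. Assembling the $t_\pi$'s via \cref{TraceFuncDefi} produces a $g$-periodic infinitame trace function $t$ with $r(t)\le 1$, $c(t)\le 2$, and
\[
t(x)=\prod_{\pi\mid g}t_\pi(x\bmod\pi)=\prod_{\pi\mid g}\psi_\pi(c_\pi/x)=e\!\left(\tfrac{h\overline{x}}{g}\right)
\]
for every $x$ coprime to $g$, which is the desired conclusion.

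I do not anticipate a real obstacle: the sheaf-theoretic input is a direct invocation of the already-proved \cref{AS-sheaf-properties}, and everything else reduces to the partial fraction identity above. The only point requiring care is distinguishing inversion modulo $g$ from inversion modulo $\pi$ when writing down $c_\pi$; squarefreeness of $g$ is exactly what makes these compatible under reduction mod $\pi$, so the factorization of $e(h\overline{x}/g)$ into local pieces is clean.
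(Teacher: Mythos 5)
Your proof is correct and follows essentially the same route as the paper's: both decompose the Kloosterman fraction $h\overline{x}/g$ into a product of $\pi$-local additive characters (the paper via a two-term CRT step done inductively, you via the full partial fraction expansion at once) and realize each factor as the trace of an Artin-Schreier sheaf $\mathcal{L}_\psi(\ast/X)$, reading off the rank and conductor bounds from \cref{AS-sheaf-properties}(3)--(6). Your explicit handling of the degenerate case $c_\pi\equiv 0\pmod\pi$ is a welcome extra care that the paper leaves implicit.
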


\begin{proof}

We induct on the number of distinct prime factors of $g$.
In the base case, where $g$ is prime, by \cref{AllAdditiveCharsEq} and \cref{AS-sheaf-properties}(1) we have
\begin{equation}
e \left( \frac{h\overline{x}}{g} \right) = \psi_1(hx^{-1}) = t_{\mathcal{L}_{\psi_1}(hX^{-1})}(x).
\end{equation}
The fact that this is an infinitame trace function,
and the requisite bounds on the rank and conductor follow from \cref{AS-sheaf-properties}(3,4,5,6).

Suppose now that $g = g_1g_2$ is a nontrivial factorization. 
Since $g$ is squarefree, the polynomilas $g_1,g_2$ are coprime, so there exist $a,b \in \F_q[u]$ with
\begin{equation}
ag_1 + bg_2 = 1.
\end{equation}
We then have
\begin{equation}
e \left( \frac{h\overline{x}}{g} \right) = e \left( \frac{h\overline{x}}{g_1g_2} \right) = 
e \left( \frac{b h\overline{x}}{g_1} +  \frac{a h \overline{x}}{g_2} \right) =
e \left( \frac{bh\overline{x}}{g_1} \right) e \left( \frac{a h\overline{x}}{g_2} \right). 
\end{equation}
By the induction hypothesis, we have a product of an infinitame $g_1$-periodic trace function and an infinitame $g_2$-periodic trace function with ranks at most $1$ and conductors at most $2$.
By \cref{TraceFuncDefi}, we get an infinitame $g$-periodic trace function satisfying the rank and conductor bounds.
\end{proof}

\subsection{Abhyankar's Lemma}

The following version of Abhyankar's lemma, which follows from \cite{SGA1}[XIII 5.2], will be of use to us.

\begin{lem} \label{AbhLem}

Let $X$ be a smooth scheme over $\overline{\F_q}$, let $D$ be a smooth divisor on $X$, 
let $\mathcal F$ be a lisse sheaf on $X - D$ with tame monodromy around $D$, 
let $f \colon X \to \mathbb{A}^1$ be a map whose zero locus is $D$ with restriction 
$f_0 \colon X - D \to \mathbb{G}_m$, and let $x \in D$. 
Suppose that $f$ vanishes to order one on $D$.

Then there exists a lisse sheaf $\mathcal L$ on $\mathbb{G}_m$ such that $\mathcal{F}$ and $f_0^* \mathcal {L}$ 
become isomorphic upon restriction to some punctured \'{e}tale neighborhood of $x$.

\end{lem}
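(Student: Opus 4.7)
The plan is to work \'{e}tale-locally near $x$ and use Abhyankar's lemma to realise the ramification of $\mathcal{F}$ along $D$ as coming from tame Kummer characters in the variable $f$, then package these characters into a rank-$r$ sheaf on $\mathbb{G}_m$.

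First I would replace $X$ by a small \'{e}tale neighbourhood of $x$. Since $f$ vanishes to order one on $D$, $df$ is nonvanishing along $D$ near $x$, so $f \colon X \to \mathbb{A}^1$ is smooth there; after shrinking I may assume $f$ is smooth and that $D$ is irreducible. Let $\eta$ be the generic point of $D$, with inertia $I_\eta$ acting on the generic stalk $\mathcal{F}_{\bar\eta}$. Because $f$ is a local equation for $D$ with simple zero, pullback under $f$ identifies $I_\eta$ with the inertia of $\mathbb{G}_m \subset \mathbb{A}^1$ at $0$ compatibly with the wild/tame filtrations.

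Next I would analyse the tame monodromy. By the tameness hypothesis $P_\eta$ acts trivially on $\mathcal{F}_{\bar\eta}$, so the action factors through the pro-cyclic, prime-to-$p$ group $I_\eta^{\mathrm{tame}} \cong \hat{\mathbb{Z}}'(1)$. Any continuous finite-dimensional representation of such a group over $\overline{\mathbb{Q}_\ell}$ decomposes as a direct sum of one-dimensional characters, so
\[ \mathcal{F}_{\bar\eta} = \bigoplus_{i=1}^{r} V_{\chi_i} \]
with $\chi_i \colon I_\eta^{\mathrm{tame}} \to \overline{\mathbb{Q}_\ell}^\times$ of finite order prime to $p$. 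Via the identification above, each $\chi_i$ becomes a tame character of the local fundamental group of $\mathbb{G}_m$ at $0$, and I realise it by a rank-one Kummer sheaf $\mathcal{L}_{\chi_i}$ on $\mathbb{G}_m$ (obtained by base-changing the construction in \cref{Kummer-sheaf} from a sufficiently large finite extension of $\F_q$). Set $\mathcal{L} = \bigoplus_{i=1}^{r} \mathcal{L}_{\chi_i}$.

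Finally I would verify the local isomorphism $\mathcal{F} \cong f_0^{*}\mathcal{L}$. Both sheaves are lisse on the strictly Henselian punctured neighbourhood $X^{\mathrm{sh}}_x - D$; by the tameness of $\mathcal{F}$ and of each Kummer summand of $\mathcal{L}$, the corresponding representations of $\pi_1(X^{\mathrm{sh}}_x - D)$ factor through its tame quotient $\hat{\mathbb{Z}}'(1)$, and by construction they yield the same direct sum of characters $\chi_1, \ldots, \chi_r$. Hence they are isomorphic as representations, so $\mathcal{F}$ and $f_0^{*}\mathcal{L}$ are isomorphic on $X^{\mathrm{sh}}_x - D$, and a standard limit argument promotes this to an isomorphism over $U - (U \cap D)$ for some \'{e}tale neighbourhood $U$ of $x$. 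The main obstacle is the identification of $I_\eta^{\mathrm{tame}}$ with the local tame fundamental group of $\mathbb{G}_m$ at $0$ via $f$, i.e.\ verifying that $f$ has ramification index one along $D$ --- this is the content of \cite{SGA1}[XIII 5.2] in this local setting.
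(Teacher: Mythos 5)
Your overall plan is the same as the paper's: pass to the strict (or \'{e}tale) local ring at $x$, observe that the punctured local scheme has tame fundamental group $\prod_{\ell' \neq p} \mathbb{Z}_{\ell'}$ with the topological generator given by prime-to-$p$ roots of $f$, match this with the tame fundamental group of $\mathbb{G}_m$ described by roots of the coordinate, and then transport the monodromy representation. That identification, via \cite{SGA1}[XIII 5.3], is exactly where the assumption that $f$ vanishes to order one enters, and you correctly flag this.

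However, the step where you decompose $\mathcal{F}_{\bar\eta}$ as $\bigoplus_{i=1}^{r} V_{\chi_i}$ with $\chi_i$ of finite order prime to $p$, and then assemble $\mathcal{L}$ as a direct sum of Kummer sheaves, is a genuine gap. A continuous finite-dimensional $\overline{\mathbb{Q}_\ell}$-representation of the pro-cyclic prime-to-$p$ group $\hat{\mathbb{Z}}'(1)$ need not be semisimple (a single topological generator can act by a nontrivial unipotent Jordan block), and even when it is semisimple the characters need not have finite order. So tame local monodromy in general cannot be written as a direct sum of Kummer characters, and your explicit construction of $\mathcal{L}$ would fail for such $\mathcal{F}$. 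The fix is to omit the decomposition entirely: after identifying $\pi_1^{\mathrm{tame}}(\operatorname{Spec} R[f^{-1}])$ with $\pi_1^{\mathrm{tame}}(\mathbb{G}_m)$, simply take $\mathcal{L}$ to be the lisse sheaf on $\mathbb{G}_m$ corresponding (via the abstract dictionary between lisse $\overline{\mathbb{Q}_\ell}$-sheaves and continuous representations of $\pi_1$) to the same representation of $\prod_{\ell' \neq p}\mathbb{Z}_{\ell'}$, regardless of whether it splits into characters. This is what the paper does, and the rest of your argument — the local identification of the representations and then spreading out to a concrete \'{e}tale neighborhood — goes through unchanged.
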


\begin{proof} 

Let $R$ be the \'{e}tale local ring of $X$ at $x$, which contains the function $f$. We can pull $\mathcal F$ back to $\operatorname{Spec} R [ f^{-1}]$, where it becomes a representation of the tame fundamental group of $\operatorname{Spec} R [ f^{-1}]$. By \cite{SGA1}[XIII 5.3], the tame fundamental group is $\prod_{\ell \neq p } \mathbb Z_\ell$, with the isomorphism obtained from the covers taking prime-to-$p$ power roots of $f$.  On the other hand, the tame fundamental group of $\mathbb G_m$ is  also $\prod_{\ell \neq p} \mathbb Z_\ell$, with the isomorphism obtained from the covers taking prime-to-$p$ power roots of the coordinate. So we can view this representation of $\prod_{\ell \neq p} \mathbb Z_\ell$ as a lisse sheaf $\mathcal L$ on $\mathbb G_m$, whose pullback to $\operatorname{Spec} R [ f^{-1}]$ is isomorphic to the pullback of $\mathcal F$. This isomorphism must then be witnessed on some particular \'{e}tale cover.
\end{proof}

\section{Short sums of trace functions}

This section is devoted to proving \cref{intro-trace-interval-bound}.

\subsection{Vanishing of cohomology}

Here we obtain a vanishing of cohomology result, which is a key input to the Grothendieck-Lefschetz trace formula. 


\begin{notation}

For a variety $X$ we will be working with the bounded derived category $D^b_c(X,  \overline{\mathbb Q_\ell})$. 
We use notation such as $f_*, f^*$, for the derived pushforward and pullback, never the operations on individual sheaves. 
This is to avoid continually writing $R f_* , R f^*, Rf_!$, etc. to refer to these operations.

Note that $f^*$ always sends sheaves to sheaves, as does $\otimes$, and $f_!$ sends sheaves to sheaves if $f$ is an open immersion, so when only these operations have been applied, we will be working with usual sheaves (as opposed to complexes).
For brevity of notation, we also occasionally denote the stalk of a sheaf $\mathcal{F}$ at a geometric point $\overline{x}$ lying over a point $x$ by $\mathcal{F}_x$.

\end{notation}

\begin{notation}\label{geometric-setup}

Let $g \in \overline{\mathbb F_q}[u]$ be a squarefree polynomial of degree $m \geq 1$, 
and let $x_1,\dots, x_m \in \overline{\F_q}$ be the roots of $g$. 
For each $1 \leq i \leq m$, let $\mathcal F_i$ be a sheaf on $\mathbb{A}^1 = \mathbb A^1_{\overline{\mathbb F_q}}$. 
Our interest here is in infinitame trace functions, so we assume for all $i$ that

\begin{itemize}

\item the sheaf $\mathcal F_i$ has no finitely supported sections; 

\item the sheaf $\mathcal F_i$ has tame local monodromy at $\infty$, or in other words, it is tamely ramified at infinity.

\end{itemize}

For one of the sheaves, say $\mathcal F_1$, we make a more stringent assumption.
Assume that $\mathcal F_1$ is the extension by zero of some nontrivial (continuous) one-dimensional $\overline{\mathbb{Q}_\ell}$-representation of the tame \'{e}tale fundamental group
\begin{equation}
\pi_1^{\text{tame}} \left( \mathbb A^1  \setminus \{z\} \right) 
\cong \prod_{\ell \neq p} \mathbb Z_\ell
\end{equation}
for some $z \in \overline{\mathbb F_q}$.
This is a geometric form of the assumption in \cref{intro-trace-interval-bound} that for some prime we have a Dirichlet trace function.
The formulation of this assumption is motivated in part by \cref{Kummer-sheaf-properties}(7).

Let $n \leq m$ be a nonnegative integer, 
view $\mathbb A^n = \mathbb A^n_{\overline{\F_q}}$ as the space of polynomials over $\overline{\F_q}$ of degree less than $n$, 
and for every $1 \leq i \leq m$ let 
\begin{equation} \label{EvalEq}
e_i \colon \mathbb A^n \to \mathbb A^1, \ \quad e_i( a_0,\dots, a_{n-1}) = a_0 + a_1 x_i + \dots + a_{n-1} x_i^{n-1}, 
\end{equation}
be the (linear) map that evaluates a polynomial at $x_i$.

Our first goal is to prove (in Corollary \ref{geometric-vanishing}) a vanishing statement for the compactly supported cohomology groups
$H^j_c \Bigl( \mathbb A^n, \bigotimes_{i=1}^m e_i^* \mathcal F_i \Bigr) $.
To do this, view $\mathbb A^n$ as the complement of a hyperplane $H_\infty$ in $\mathbb P^n = \mathbb{P}^n_{\overline{\F_q}}$. 
Let $H_1$ be the hyperplane in $\mathbb P^n$ obtained as the closure of the hyperplane $e_1^{-1}( z)$ in $\mathbb A^n$.  
Let 
\begin{equation}
\begin{split}
&w \colon \mathbb P^n \setminus (H_1 \cup H_\infty) \to \mathbb P^n \setminus H_\infty = \mathbb A^n, 
\quad v \colon \mathbb P^n \setminus H_1 \to  \mathbb P^n \\
&u \colon \mathbb P^n \setminus (H_1 \cup H_\infty)  \to \mathbb P^n \setminus H_1
\end{split}
\end{equation}
be the natural open immersions and
\begin{equation} 
d \colon H_1 \to \mathbb P^n 
\end{equation}
the closed immersion.
\end{notation}

Since $\mathcal{F}_1$ is an extension by zero, 
its stalk at $z$ vanishes, so the stalk of $e_1^* \mathcal F_1$ vanishes for every $x \in e_1^{-1}(z) \subseteq \mathbb{A}^n$.
Hence
\begin{equation}
\bigotimes_{i=1}^m e_i^* \mathcal F_i = w_! w^* \bigotimes_{i=1}^m e_i^* \mathcal F_i.
\end{equation} 
The above implies that for every nonnegative integer $j$ we have 
\begin{equation} \label{letter-switching} 
\begin{split}
H^j_c \Bigl( \mathbb A^n, \bigotimes_{i=1}^m e_i^* \mathcal F_i \Bigr) &= H^j_c \Bigl(\mathbb P^n \setminus (H_1 \cup H_\infty) ,  w^* \bigotimes_{i=1}^m e_i^* \mathcal F_i \Bigr) \\  
&= H^j_c \Bigl(\mathbb P^n \setminus H_1 ,  u_! w^* \bigotimes_{i=1}^m e_i^* \mathcal F_i \Bigr). 
\end{split}
\end{equation}
Our strategy will be focused on the following excision long exact sequence.

\begin{lem} \label{excision}
We have the long exact sequence
\begin{equation*}
\begin{split} 
\dots \to H^*_c \Bigl(\mathbb P^n \setminus H_1 ,  u_! w^* \bigotimes_{i=1}^m e_i^* \mathcal F_i \Bigr) &\to 
H^*  \Bigl(\mathbb P^n \setminus H_1 ,  u_! w^* \bigotimes_{i=1}^m e_i^* \mathcal F_i \Bigr) \\
&\to H^* \Bigl(H_1  ,   d^* v_* u_! w^* \bigotimes_{i=1}^m e_i^* \mathcal F_i \Bigr) \to \dots
\end{split}
\end{equation*}
\end{lem}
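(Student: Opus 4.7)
The plan is to derive the long exact sequence as the standard excision sequence attached to the open-closed decomposition
\[
\mathbb P^n = (\mathbb P^n \setminus H_1) \sqcup H_1
\]
with the open immersion $v$ and closed immersion $d$. Set $\mathcal G \defeq u_! w^* \bigotimes_{i=1}^m e_i^* \mathcal F_i$, viewed (in the derived sense) as an object of $D^b_c(\mathbb P^n \setminus H_1, \overline{\mathbb Q_\ell})$. Apply the canonical recollement triangle associated to $(v,d)$ to the complex $v_* \mathcal G$ on $\mathbb P^n$, namely
\[
v_! v^* (v_* \mathcal G) \to v_* \mathcal G \to d_* d^* (v_* \mathcal G) \xrightarrow{+1}.
\]
Because $v$ is an open immersion, $v^* v_* \mathcal G = \mathcal G$, so the first term becomes $v_! \mathcal G$.

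Next, apply $R\Gamma(\mathbb P^n, -)$ and pass to cohomology; this produces a long exact sequence whose terms I now identify. First, $\mathbb P^n$ is proper, so $R\Gamma(\mathbb P^n, -) = R\Gamma_c(\mathbb P^n, -)$, and compactly supported cohomology is preserved under extension by zero along an open immersion, giving
\[
H^j(\mathbb P^n, v_! \mathcal G) = H^j_c(\mathbb P^n \setminus H_1, \mathcal G).
\]
Second, by Leray (equivalently, the adjunction $(v^*, v_*)$ and properness of $\mathbb P^n$),
\[
H^j(\mathbb P^n, v_* \mathcal G) = H^j(\mathbb P^n \setminus H_1, \mathcal G).
\]
Third, since $d$ is a closed immersion we have $d_* = d_!$ and $R\Gamma(\mathbb P^n, d_* \mathcal H) = R\Gamma(H_1, \mathcal H)$ for any $\mathcal H$, so
\[
H^j(\mathbb P^n, d_* d^* v_* \mathcal G) = H^j(H_1, d^* v_* \mathcal G).
\]
Assembling these identifications yields exactly the long exact sequence asserted in the statement.

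There is no real obstacle here: the content is purely formal, resting on the recollement formalism for the stratification $(\mathbb P^n \setminus H_1, H_1)$, the properness of $\mathbb P^n$, and the compatibility of $R\Gamma$ with pushforward along a closed immersion. The one point that must be respected is the convention stated just before the lemma that $v_*, v_!, u_!, w^*, d^*$ are all understood as derived functors on $D^b_c(-,\overline{\mathbb Q_\ell})$, so that the recollement triangle holds without any exactness hypothesis on $\mathcal G$. No properties of the particular sheaves $\mathcal F_i$, of the Dirichlet sheaf $\mathcal F_1$, or of the hyperplane $H_1$ are needed at this stage; those will enter only in the subsequent analysis of the three terms in the sequence.
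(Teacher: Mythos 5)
Your proof is correct and follows essentially the same route as the paper: both apply the recollement/excision triangle $v_!v^*K \to K \to d_*d^*K$ to $K = v_*\mathcal G$ (the paper cites SGA~4, XVII (5.1.16.2) for the resulting long exact sequence), use $v^*v_* = \mathrm{id}$, and then identify the three terms via the Leray spectral sequences for $v_!$, $v_*$, and $d_*$. The only cosmetic difference is that you spell out the identification $H^j(\mathbb P^n, v_!\mathcal G) = H^j_c(\mathbb P^n\setminus H_1,\mathcal G)$ through properness of $\mathbb P^n$, which the paper leaves implicit.
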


\begin{proof} 
By \cite[XVII, (5.1.16.2)]{sga4-3} we have a long exact sequence 
\begin{equation} 
\begin{split} 
H^* \Bigl(\mathbb P^n  ,   v_! v^* v_*  u_! w^* \bigotimes_{i=1}^m e_i^* \mathcal F_i \Bigr) &\to 
H^*  \Bigl(\mathbb P^n  , v_*  u_! w^* \bigotimes_{i=1}^m e_i^* \mathcal F_i \Bigr) \\
&\to H^* \Bigl(\mathbb P^n  , d_*  d^* v_* u_! w^* \bigotimes_{i=1}^m e_i^* \mathcal F_i \Bigr). 
\end{split}
\end{equation} 
combined with (in the first term) the Leray spectral sequence for $v_!$ and the fact that $v^* v_* = \mathrm{id}$, (in the second term) the Leray spectral sequence for $v_*$, and (in the third term) the Leray spectral sequence for $d_*$.
\end{proof}

To that end, our first order of business will be understanding the complex 
\begin{equation}
d^* v_* u_! w^* \bigotimes_{i=1}^m e_i^* \mathcal F_i
\end{equation}
of sheaves on $H_1$ appearing in the exact sequence above.

Given a sheaf $\mathcal{F}$ on a variety $X$, and a point $x \in X$, we say that $\mathcal{F}$ is lisse at $x$ if there is a neighborhood $N$ of $x$ such that the restriction $\mathcal{F}|_N$ is a lisse sheaf. If $\mathcal{F}$ is not lisse at $x$, we say that $x$ is a singular point of $\mathcal{F}$. 

\begin{lem}\label{affine-vanishing-lemma}

Let $Z \subseteq \mathbb{A}^n$ be the set of those polynomials $f$ for which
\begin{equation}
\#\{ 1 \leq i \leq m : f(x_i) \ \text{is a singular point of} \ \mathcal{F}_i \} > n.
\end{equation}
The set $Z$ is finite, 
and the restriction of the complex 
\begin{equation}
v_* u_! w^*  \bigotimes_{i=1}^m e_i^* \mathcal F_i
\end{equation} 
to $H_1 \setminus ( H_{\infty}  \cap H_1)$ 
vanishes away from $Z$.

\end{lem}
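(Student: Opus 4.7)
For finiteness of $Z$: each $\mathcal F_i$ is lisse away from a finite singular set $S_i \subset \mathbb A^1$, so an element $f \in Z$ satisfies $f(x_i) \in S_i$ for more than $n$ indices $i$. Since $\deg(f) < n$, such $f$ is uniquely determined by its values at any $n$ of the distinct points $x_i$ via Vandermonde interpolation; there are only finitely many admissible choices of an index subset and of values in $\prod_i S_i$, so $Z$ is finite.

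For the vanishing, fix $f \in (H_1 \setminus (H_1 \cap H_\infty)) \setminus Z$ and put $I = \{i : f(x_i) \in S_i\}$, so that $1 \in I$ (since $z \in S_1$) and $|I| \leq n$. Because the $x_i$ are distinct, the Vandermonde determinant does not vanish, so the linear forms $\{e_i - f(x_i)\}_{i \in I}$ have linearly independent differentials at $f$; extend them to a global linear coordinate system $y_1, \dots, y_n$ on $\mathbb A^n$ with $y_1 = e_1 - z$ and $y_{\sigma(i)} = e_i - f(x_i)$ for $i \in I$, for some injection $\sigma \colon I \hookrightarrow \{1, \dots, n\}$ with $\sigma(1) = 1$. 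In these coordinates, $e_i^* \mathcal F_i$ for $i \in I$ is locally the pullback via the single-coordinate projection $y_{\sigma(i)}$ of a translate of $\mathcal F_i$ placing its singularity at $0$, while for $i \notin I$ the sheaf $e_i^* \mathcal F_i$ is lisse in a neighborhood of $f$. Writing $\mathbb A^n = \mathbb A^1_{y_1} \times \mathbb A^{n-1}_{y_2, \dots, y_n}$ with projections $q_1, q_2$, we obtain an \'{e}tale-local isomorphism
\begin{equation*}
\bigotimes_{i=1}^m e_i^* \mathcal F_i \cong q_1^* \mathcal G_1 \otimes q_2^* \mathcal M \otimes \mathcal L_{\mathrm{lisse}}
\end{equation*}
where $\mathcal G_1$ is the translate of $\mathcal F_1$ (an extension by zero of a nontrivial tame rank-one representation at $0$), $\mathcal M$ is some sheaf on $\mathbb A^{n-1}$, and $\mathcal L_{\mathrm{lisse}}$ is a sheaf lisse in a neighborhood of $f$.

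Since $f \notin H_\infty$, the functor $v_* u_! w^*$ agrees \'{e}tale-locally near $f$ with the derived pushforward $R j'_*$ along the open immersion $j' \colon \{y_1 \neq 0\} \hookrightarrow \mathbb A^n$, which factors as $j \times \mathrm{id}$ with $j \colon \mathbb G_m \hookrightarrow \mathbb A^1$. Applying the projection formula to the lisse factor $\mathcal L_{\mathrm{lisse}}$ and the K\"unneth formula to the product decomposition, the stalk at $f$ becomes
\begin{equation*}
(R j_* (\mathcal G_1|_{\mathbb G_m}))_0 \otimes \mathcal M_{q_2(f)} \otimes (\mathcal L_{\mathrm{lisse}})_f.
\end{equation*}
The first tensor factor vanishes: $\mathcal G_1|_{\mathbb G_m}$ is a rank-one lisse sheaf with nontrivial tame monodromy at $0$, so both the inertia invariants $\mathcal G_{1,\overline\eta}^{I_0}$ and coinvariants $(\mathcal G_{1,\overline\eta})_{I_0}$ are zero, whence $(R j_* (\mathcal G_1|_{\mathbb G_m}))_0 = 0$. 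Thus the stalk of $v_* u_! w^* \bigotimes_i e_i^* \mathcal F_i$ at $f$ vanishes, as required.

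The main obstacle is establishing the local tensor product decomposition: one must verify that the Vandermonde-independent linear forms $e_i - f(x_i)$ actually separate variables \'{e}tale-locally, so that the tensor product splits into a K\"unneth-type piece on $\mathbb A^1_{y_1}$ (carrying the monodromy of $\mathcal F_1$) and a factor pulled back from $\mathbb A^{n-1}$; once this is in place, the global pushforward reduces to the one-variable calculation of $Rj_*$ at $0$ for a nontrivial tame rank-one sheaf.
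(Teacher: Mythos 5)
Your proposal is correct and follows essentially the same approach as the paper: finiteness of $Z$ via interpolation at $\geq n$ points, a choice of affine coordinates adapted to the (at most $n$) linearly independent forms $e_i$ indexed by the singular set, an \'{e}tale-local K\"{u}nneth decomposition isolating the $\mathcal F_1$ factor, and vanishing of the pushforward stalk at the singular point because $\mathcal F_1$ has nontrivial rank-one tame monodromy there. The only cosmetic differences are that you translate the singularity to $0$ and invoke the projection formula for the lisse factor separately, whereas the paper keeps the singularity at $z$ and first simplifies $v_* u_! w^*$ to $w_* w^*$ on $\mathbb A^n$ via smooth base change before applying K\"{u}nneth directly.
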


\begin{proof} 

There are only finitely many subsets of $\{1,\dots , m \}$ of size at least $n$, 
and for each subset, only finitely many choices of a singular point of each $\mathcal F_i$.
Since there is at most one polynomial of degree less than $n$ that takes prescribed values for (at least) $n$ given points $x_i \in \overline{\F_q}$, it follows that the set $Z$ is finite.

Let $j \colon \mathbb P^n \setminus H_{\infty} \to \mathbb P^n$ be the natural inclusion. 
The Cartesian square
\begin{equation*}
\begin{tikzpicture}[scale=1.5]
\node (A) at (0,1) {$\mathbb{P}^n \setminus (H_1 \cup H_{\infty})$};
\node (B) at (2.2,1) {$\mathbb{P}^n \setminus H_{\infty}$};
\node (C) at (0,0) {$\mathbb{P}^n \setminus H_1$};
\node (D) at (2.2,0) {$\mathbb{P}^n$};
\path[->,font=\scriptsize,>=angle 90]
(A) edge node[above]{$w$} (B)
(A) edge node[right]{$u$} (C)
(B) edge node[right]{$j$} (D)
(C) edge node[above]{$v$} (D);
\end{tikzpicture}
\end{equation*}
gives $j^* v_* = w_* u^*$ by the smooth base change theorem, 
and $u$ is an open immersion so $u^* u_!$ is the identity, therefore 
\begin{equation}
j^* v_* u_! w^* = w_* u^* u_! w^* = w_* w^*.
\end{equation} 
Thus it suffices to show that the stalk of the complex
\begin{equation} \label{NewwSheaf}
w_* w^*  \bigotimes_{i=1}^m e_i^* \mathcal F_i 
\end{equation} 
vanishes for every point in $H_1$ which is neither in $H_\infty$ nor in $Z$. 

Fix a point $f_0 \in H_1 \setminus (H_1 \cap H_\infty)$, so that $f_0 \in \mathbb A^n = \mathbb{P}^n \setminus H_\infty$ and $f_0(x_1)=z$.
Let 
\begin{equation}
S = \{1 \leq i \leq m : f_0(x_i) \ \text{is a singular point of} \ \mathcal F_i\}
\end{equation}
and assume that $|S| \leq n$, so that $f_0 \notin Z$.
We show that the stalk of the complex in \cref{NewwSheaf} vanishes at $f_0$.  
For each $1 \leq i \leq m$ that is not in $S$, 
the sheaf $\mathcal{F}_i$ is lisse in a neighborhood of the point $f_0(x_i) = e_i(f_0) \in \mathbb{A}^1$,
so the sheaf $e_i^* \mathcal F_i$ is lisse in a neighborhood of $f_0$. 
Because the pushforward and pullback along an open immersion can be computed locally, 
and both commute with tensoring by a lisse sheaf, it suffices to prove that the complex
\begin{equation} \label{DoubleDoubleuSheafEq}
w_* w^* \bigotimes_{i \in S}  e_i^* \mathcal F_i.
\end{equation} 
vanishes at $f_0$.

Since $|S| \leq n$, the maps $\{e_i\}_{i \in S}$ are linearly independent,
so we can identify $\mathbb A^n$ with $(\mathbb A^1 )^{|S|} \times \mathbb A^{n- |S|}$ by using the $\{f(x_i)\}_{i \in S}$ as the coordinates of the first $|S|$ copies of $\mathbb A^1$.
Using this identification we can write
\begin{equation}
\bigotimes_{i \in S}  e_i^* \mathcal F_i \cong  \Bigl( \boxtimes_{i \in S } \mathcal F_i \Bigr ) 
\boxtimes \overline{\mathbb Q_\ell}
\end{equation} 
where $\overline{\mathbb{Q}_\ell}$ stands for a constant rank one sheaf on $\mathbb{A}^{n-|S|}$. 
It follows from our assumptions that $1 \in S$,
so we make a further identification of $\mathbb{A}^n$ with 
$\mathbb A^1 \times (\mathbb A^1)^{|S|-1} \times \mathbb A^{n-|S|}$, giving 
\begin{equation} \label{SheafIdentifEq}
\bigotimes_{i \in S}  e_i^* \mathcal F_i \cong \mathcal F_1\boxtimes  \Bigl( \boxtimes_{i \in S \setminus \{1\}} \mathcal F_i \Bigr ) 
\boxtimes  \overline{\mathbb Q_\ell}.
\end{equation} 

Taking $\overline{w} \colon \mathbb A^1 \setminus \{z\} \to \mathbb A^1$ to be the open immersion, 
our identifications give 
$w= \overline{w} \times \mathrm{id}^{ S \setminus \{1\}} \times \mathrm{id}$,
so by \cref{SheafIdentifEq}, the complex from \cref{DoubleDoubleuSheafEq} becomes
\begin{equation}
\left(  \overline{w} \times \mathrm{id}^{ S \setminus \{1\}} \times \mathrm{id} \right)_* 
\left(  \overline{w} \times \mathrm{id}^{ S \setminus \{1\}} \times \mathrm{id} \right)^* 
\Bigl(   F_1\boxtimes  \Bigl( \boxtimes_{i \in S \setminus \{1\}} \mathcal F_i \Bigr ) 
\boxtimes  \overline{\mathbb Q_\ell}  \Bigr).
\end{equation}
By the K\"{u}nneth formula, the above equals
\begin{equation*}
\begin{split}
\left(  \overline{w} \times \mathrm{id}^{ S \setminus \{1\}} \times \mathrm{id} \right)_* 
\Bigl(  \overline{w}^* \mathcal F_1 \boxtimes   \Bigl( \boxtimes_{i \in S\setminus \{1\} } \mathcal F_i \Bigr ) \boxtimes \overline{\mathbb Q_\ell}  \Bigr) = 
\overline{w}_* \overline{w}^* \mathcal F_1 \boxtimes  \Bigl( \boxtimes_{i \in S\setminus \{1\} } \mathcal F_i \Bigr ) \boxtimes \overline{\mathbb Q_\ell}.
\end{split}
\end{equation*}

As $\mathcal F_1$ has rank one with nontrivial monodromy around $z$, the stalk of $\overline{w}_* \overline{w}^* \mathcal F_1$ vanishes at $z$, so the stalk of our external tensor product vanishes at any $f \in \mathbb{A}^n$ with $f(x_1)=z$. 
In particular, it vanishes at $f_0$.
\end{proof}

\begin{lem}\label{projective-vanishing-lemma}

Keep \cref{geometric-setup}.
The restriction of the complex 
\begin{equation} \label{OurCmPlXeq}
v_* u_! w^*  \bigotimes_{i=1}^m e_i^* \mathcal F_i
\end{equation} 
to $H_{\infty}  \cap H_1$ vanishes.

\end{lem}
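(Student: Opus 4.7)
Fix a point $f_0 \in H_\infty \cap H_1$. The plan is to choose local coordinates adapted to the two hyperplanes, decompose $\bigotimes_{i=1}^m e_i^* \mathcal F_i$ near $f_0$ as an external tensor product in those coordinates, and apply the K\"unneth formula to reduce the stalk vanishing to the trivial vanishing of extension-by-zero at the boundary. Since $H_1$ and $H_\infty$ are distinct smooth hyperplanes meeting transversally at $f_0$, I choose an \'etale neighborhood $V$ of $f_0$ with local coordinates $(\xi, \eta, z_1, \dots, z_{n-2})$ centered at $f_0$ such that $H_\infty \cap V = \{\xi = 0\}$ and $H_1 \cap V = \{\eta = 0\}$. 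The rational function $e_1 - z$ on $\mathbb P^n \setminus H_\infty$ has divisor $H_1 - H_\infty$, so $e_1 - z = \eta/\xi$ up to a unit at $f_0$.

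Because $\mathcal F_1$ is the extension by zero of a nontrivial rank-one tame representation of $\pi_1^{\text{tame}}(\mathbb A^1 \setminus \{z\})$, it is a Kummer sheaf $\mathcal L_\chi(T-z)$, and the multiplicativity of Kummer sheaves yields
\begin{equation*}
e_1^* \mathcal F_1 \cong \mathcal L_\chi(\eta/\xi) \cong \mathcal L_\chi(\eta) \otimes \mathcal L_{\chi^{-1}}(\xi).
\end{equation*}
For $i \geq 2$, the sheaf $\mathcal F_i$ is tame at $\infty \in \mathbb P^1$ by the infinitame assumption. When the rational map extension of $e_i$ to $\mathbb P^n \dashrightarrow \mathbb P^1$ is a morphism at $f_0$, one has $e_i(f_0) = \infty$ and $1/e_i$ is a regular function vanishing to order one on $H_\infty$. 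Applying Abhyankar's Lemma~\cref{AbhLem} with this $1/e_i$ and $D = H_\infty \cap V$, I obtain a lisse sheaf $\mathcal L_i$ on $\mathbb G_m$ with $e_i^* \mathcal F_i \cong \xi^* \mathcal L_i$ on a punctured \'etale neighborhood of $f_0$. For the finitely many indices $i$ at which $e_i$ is indeterminate at $f_0$, I resolve by blowing up the codimension-two indeterminacy locus, apply the Abhyankar argument on the blow-up, and descend the stalk vanishing via proper base change.

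Combining these decompositions and absorbing $\mathcal L_{\chi^{-1}}(\xi) \otimes \bigotimes_{i \geq 2} \xi^* \mathcal L_i$ into a single lisse sheaf $\xi^* \mathcal M$, the total tensor product decomposes on a punctured \'etale neighborhood of $f_0$ as an external tensor product
\begin{equation*}
\bigotimes_{i=1}^m e_i^* \mathcal F_i \cong \xi^* \mathcal M \boxtimes \mathcal L_\chi(\eta) \boxtimes \overline{\mathbb Q_\ell},
\end{equation*}
with the last factor constant in the remaining coordinates $(z_1, \dots, z_{n-2})$. Locally $v_*$ is the pushforward $v_{\eta,*}$ in the $\eta$-factor and the identity elsewhere, while $u_!$ is the extension by zero $u_{\xi,!}$ in the $\xi$-factor and the identity elsewhere. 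By the K\"unneth formula
\begin{equation*}
\Bigl( v_* u_! w^* \bigotimes_{i=1}^m e_i^* \mathcal F_i \Bigr)_{f_0} \cong (u_{\xi,!} \xi^* \mathcal M)_0 \otimes (v_{\eta,*} \mathcal L_\chi(\eta))_0 \otimes \overline{\mathbb Q_\ell},
\end{equation*}
and the first factor vanishes trivially because extension by zero has zero stalk at the boundary, so the entire stalk vanishes. The principal obstacle is the rigorous treatment of indices $i \geq 2$ for which $e_i$ is indeterminate at $f_0$: performing the correct (possibly iterated) blow-up, verifying that the tame-at-infinity structure of $\mathcal F_i$ yields an Abhyankar-type factorization along the strict transform of $H_\infty$ and along the exceptional divisor, and then descending the vanishing to $\mathbb P^n$ through the proper pushforward while preserving the box-product structure.
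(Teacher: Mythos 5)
Your overall mechanism---separate variables locally near $f_0 \in H_\infty \cap H_1$, apply K\"unneth, and kill the stalk using one tensor factor---is the right one, and your treatment of $i=1$ (splitting $\mathcal L_\chi(\eta/\xi)$ multiplicatively) and of the indices with $\widehat{e_i}(f_0)=\infty$ (Abhyankar's Lemma along $H_\infty$) matches the paper. But there is a genuine gap at exactly the point you flag as ``the principal obstacle'': the indices $i \geq 2$ with $f_0(x_i)=0$, where $\widehat{e_i}(f:t)=f(x_i)/t$ is indeterminate at $(f_0:0)$. For such $i$ the sheaf $e_i^*\mathcal F_i$ is singular along every hypersurface $f(x_i)=s\,t$ with $s$ a singular point of $\mathcal F_i$; all of these pass through $f_0$ and are transverse to $H_\infty$. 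Hence $e_i^*\mathcal F_i$ depends jointly on the coordinate $\xi$ cutting out $H_\infty$ and on further coordinates, and no blow-up of the indeterminacy locus will express it as a pullback $\xi^*\mathcal L_i$ from the $\xi$-line, nor as an external product of a sheaf on the $\xi$-line with a sheaf in the remaining variables (the pullback of a general $\mathcal F_i$ along a ratio of coordinates does not split multiplicatively the way a Kummer sheaf does). Your proposed decomposition $\xi^*\mathcal M \boxtimes \mathcal L_\chi(\eta)\boxtimes \overline{\mathbb Q_\ell}$ is therefore false---in particular the last factor cannot be constant---and since your vanishing is routed entirely through the factor $(u_{\xi,!}\,\xi^*\mathcal M)_0$, the argument does not close. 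Note also that without the box structure one cannot conclude anything from ``extension by zero has zero stalk on $H_\infty$'': the stalk of $v_*u_!(\cdot)$ at $f_0$ is computed from a punctured neighborhood that leaves $H_\infty$, so the localization to the $\xi$-line is precisely what needs justification.

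The fix, which is the paper's route, is to isolate the $H_1$-direction instead of the $H_\infty$-direction. One chooses projective coordinates from $n$ of the evaluation maps (this uses the counting $\#\{i: f_0(x_i)=0\}<n\leq m$ to pick an auxiliary index $j$ with $f_0(x_j)\neq 0$, a step your proposal never addresses) and writes the restriction as a two-factor product $\mathcal L_\chi\boxtimes\mathcal N$, where the first factor lives on the punctured line transverse to $H_1$ and $\mathcal N$ absorbs everything else: the $H_\infty$-direction, the Abhyankar factors for $i\notin S$, and the ill-behaved sheaves for the indeterminate indices, about which nothing needs to be known. The vanishing then comes from $(\overline{v}_*\mathcal L_\chi)_0=0$, i.e.\ from the nontrivial local monodromy of $\mathcal F_1$ around $z$; this is where the Dirichlet-character hypothesis actually enters, whereas in your version it would go unused at these points.
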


\begin{proof} 

We view the points of $\mathbb P^n$ as pairs $(f:t)$ of a polynomial $f$ over $\overline{\F_q}$ of degree less than $n$ and a scalar $t \in \overline{\F_q}$, 
not both zero, up to scaling.
Then
\begin{equation}
H_{\infty} = \{(f:t) \in \mathbb{P}^n \ | \ t=0\}, \quad H_1 = \{ (f,t) : f(x_1) - zt = 0\},
\end{equation}
and the map $e_i$ is given by the formula
\begin{equation} \label{NewFormulaForEi}
{e_i}(f:t) = \frac{f(x_i)}{t}. 
\end{equation}
Let $(f_0: 0) \in H_\infty \cap H_1$, 
put 
\begin{equation}
S = \{1 \leq i \leq m \ | \ f_0(x_i)=0 \},
\end{equation}
note that $1 \in S$, and that $|S| < n$ since $f_0 \neq 0$.

Since our goal is to establish the vanishing of the stalk of the complex in \cref{OurCmPlXeq} at $(f_0,0)$, we are free to restrict to an \'{e}tale neighborhood of $(f_0,0)$. We will first restrict to a Zariski open neighborhood with a convenient coordinate system, and then further restrict to an \'{e}tale neighborhood where the sheaves $e_i^* \mathcal F_i$ for $i \notin S$ become simpler.

Since $n \leq m$ there exists a subset 
\begin{equation}
S \subsetneq T \subseteq \{1, \dots, m\}, \quad |T| = n.
\end{equation}
As any polynomial of degree less than $n$ is uniquely determined by its values at $n$ distinct points, 
the set $\{f(x_i)\}_{i \in T} \cup \{t\}$ forms a projective coordinate system for $\mathbb P^n$. 

Fix $j \in T \setminus S$, and define the coordinates
\begin{equation}
c_1 = \frac{ f(x_1) - zt } { f(x_j)}, \ c_j = \frac{t}{ f(x_j)}, \ c_i  = \frac{ f(x_i)}{ f(x_j)}, \ i \in T \setminus \{1,j\}.
\end{equation}
This system of coordinates is obtained from the previous one by dividing all the coordinates by the coordinate $f(x_j)$ 
and then applying the linear translation $c_1 \mapsto c_1 - z c_j$.
Since $f_0(x_j) \neq 0$, it follows that $\{c_i\}_{i\in T}$ forms a coordinate system for the affine neighborhood 
\begin{equation}
U = \{(f : t) \in \mathbb{P}^n \ | \ f(x_j) \neq 0\}
\end{equation} 
of $(f_0,0)$ in $\mathbb P^n$. 
We let $\nu \colon U \to \mathbb P^n$ be the open immersion.

In our new coordinates, 
\begin{equation} \label{OldFunctionsNewCoordinates}
e_1 = \frac{c_1}{c_j} + z, \quad  e_i = \frac{c_i}{c_j}, \ i \in T \setminus \{1,j\}.
\end{equation}
Let $U_{1,j}$ be the locus in $U$ where $c_1$ and $c_j$ are nonzero, and let 
\begin{equation}
\xi \colon U_{1,j} \to \mathbb P^n \setminus ( H_1 \cup H_{\infty})
\end{equation}
be the open immersion, well-defined because $\nu^{-1}(H_1)$ is the vanishing locus of $c_1$ and $\nu^{-1}(H_\infty)$ is the vanishing locus of $c_j$. 
We can write 
\begin{equation} \label{FacturUprimeEq}
U_{1,j} =  (\mathbb A^1 \setminus \{0\}) \times (\mathbb A^{n-1} - \mathbb A^{n-2}) \subseteq U \cong \mathbb{A}^n
\end{equation}
where the coordinate in the first factor is $c_1$, 
and the coordinates in the second factor are $\{c_i\}_{i \in T\setminus \{1\}} $.
We will now express the restriction of the sheaf
\begin{equation}
\xi^* w^* \bigotimes_{i=1}^m e_i^* \mathcal F_i
\end{equation}
to a certain punctured \'{e}tale neighborhood of $(f_0 : 0)$ as the restriction to that neighborhood of the external tensor product of sheaves on each of the two factors of $U_{1,j}$ in \cref{FacturUprimeEq}.
We do this for each $1 \leq i \leq m$ separately, distinguishing between the cases $i \in S \setminus \{1\}$, $i \notin S$, and $i = 1$.

Following \cref{OldFunctionsNewCoordinates}, for $i \in S \setminus \{1\}$
we define the (restricted) map 
\begin{equation}
\overline{e_i} \colon \mathbb A^{n-1} - \mathbb A^{n-2} \to \mathbb A^1, \quad
\overline{e_i} \left( (c_k)_{k \in T\setminus\{1\}} \right) = \frac{c_{i} }{c_j},
\end{equation}
so that we have
\begin{equation} \label{FirstExternalDecomposition}
\xi^* w^* e_i^* \mathcal F_i \cong \overline{\mathbb Q_\ell} \boxtimes \overline{e}_i ^* \mathcal F_i, \quad i \in S \setminus \{1\}.
\end{equation}

Let $1 \leq i \leq m$ which is not in $S$. Let $Z_i$ be the intersection of $H_{\infty}$ with the vanishing locus of $f(x_i)$. 
Then, following \cref{NewFormulaForEi} we can extend $e_i$ to a map
\begin{equation}
\widehat{e_i} \colon \mathbb{P}^n \setminus Z_i \to \mathbb{P}^1, \quad \widehat{e_i}(f:t) = \frac{f(x_i)}{t}. 
\end{equation}
Geometrically, we can see that $H_{\infty}$ is a pole of $\widehat{e_i}$, the vanishing locus of $f(x_i)$ is the zero locus of $\widehat{e_i}$, and the intersection $Z_i$ is the indeterminacy locus. 

By our definition of $S$ and choice of $i$, the point $(f_0:0) \in \mathbb{P}^n \setminus Z_i$ lies in the pole and not in the indeterminacy locus, 
so the map $\widehat{e_i}$ is defined at that point, sending it to $\infty \in \mathbb P^1$. 
On some punctured Zariski neighborhood of $\infty$ in $\mathbb P^1$ the sheaf $\mathcal F_i$ is lisse and tamely ramified around the puncture $\infty$, 
so it follows that on some punctured neighborhood of $(f_0,0)$ in $\mathbb P^n$ the sheaf $e_i^* \mathcal F_i$ is lisse and tamely ramified at $H_\infty$. 
Hence, there exists a punctured neighborhood of $(f_0,0)$ on which the sheaf 
\begin{equation} \label{AnImportantSheafEq}
\mathcal{F}_{\not S} = \bigotimes_{i\notin S} e_i^* \mathcal F_i
\end{equation} 
is lisse and tamely ramified at $H_\infty$. 

More precisely, we puncture neighborhoods of $(f_0,0)$ by removing $H_{\infty}$, which is the inverse image of $\infty$ under $\widehat{e_i}$, so if $U^*$ is the Zariski neighborhood above, then its punctured form is $U' = U^* \setminus ( U^* \cap H_{\infty})$.  We can assume, by puncturing further if necessary, that $U^* \subseteq U$. 
We let $\alpha \colon U' \to \mathbb P^n \setminus H_{\infty}$ be the inclusion,
let $\delta \colon U^* \to \mathbb A^1$ be the restriction of $c_j$ to $U^*$,
and let $\delta_0: U' \to \mathbb G_m$ be the restriction of $c_j$ to $U'$. 

Note that $U^* \cap H_{\infty}$ is a smooth divisor where $\delta$ vanishes to order one.
We now apply Abhyankar's lemma, as stated in \cref{AbhLem}, to the scheme $U^*$, the divisor $U^* \cap H_\infty$, the sheaf $\alpha^*  \mathcal{F}_{\not S}$ on $U'$, and the map $\delta$. 
We conclude that there exists an \'{e}tale neighborhood $V \to U^*$ of $(f_0,0)$, 
giving rise to a map $\beta \colon V \times_{U^*} U' \to U'$, and a lisse sheaf $\mathcal L_{\not S}$ on $\mathbb G_m$, such that 
\begin{equation}
\beta^* \alpha^*  \mathcal{F}_{\not S} \cong  \beta^* \delta_0^* \mathcal L_{\not S}.
\end{equation}

Let $\tau \colon V \to U$ be the map induced by the composition $V \to U^* \to U$,
and let 
\begin{equation}
\zeta \colon  V \times_U U_{1,j} \to U_{1,j}, \quad \gamma \colon V \times_U U_{1,j} \to  V \times_{U} (U \setminus H_{\infty}) =
V \times_{U^*} U'
\end{equation}
so that we have
\begin{equation} \label{AppliedAbhyankarEq}
\gamma^* \beta^* \alpha^*  \mathcal{F}_{\not S} \cong  \gamma^*  \beta^* \delta_0^* \mathcal L_{\not S}
\end{equation}
and 
\begin{equation} \label{TwoSidedTripleCompositionEq}
w \circ \xi \circ \zeta = \alpha \circ \beta \circ \gamma.
\end{equation}

We further define
\begin{equation}
\pi \colon \mathbb A^{n-1} - \mathbb A^{n-2} \to \mathbb G_m = \mathbb{A}^1 \setminus \{0\}, \quad
\pi \left( (c_i)_{i \in T\setminus\{1\}} \right) = c_j,
\end{equation} 
and let $\mathrm{pr}_2 \colon U_{1,j} \to  \mathbb A^{n-1} - \mathbb A^{n-2} $ be the projection on the second factor in \cref{FacturUprimeEq}. 
Then 
\begin{equation} \label{AnotherTwoSidedTripleCompositionEq}
\delta_0 \circ \beta \circ \gamma = \pi \circ \mathrm{pr}_2 \circ \zeta
\end{equation}
because both compositions are given by the coordinate $c_j$. 
It follows from \cref{AnImportantSheafEq}, \cref{TwoSidedTripleCompositionEq}, \cref{AppliedAbhyankarEq}, and \cref{AnotherTwoSidedTripleCompositionEq} that
\begin{equation} \label{SecondExternalDecomposition}
\zeta^* \xi^* w^*  \bigotimes_{i\notin S} e_i^* \mathcal F_i  \cong \gamma^* \beta^* \alpha^*  \mathcal{F}_{\not S} \cong  \gamma^*  \beta^* \delta_0^* \mathcal L_{\not S} \cong \zeta^* \mathrm{pr}_2^* \pi^* \mathcal L_{\not S} \cong  \zeta^*   (  \overline{\mathbb Q_\ell}  \boxtimes \pi ^*\mathcal L_{\not S}  )  .
\end{equation}

We turn to the case $i=1$. 
Let $\mathcal L_\chi$ be the sheaf on $\mathbb G_m$ obtained by translating $\mathcal F_1$ by $z$. 
Then the sheaf $\xi^* w^* e_1^ * \mathcal F_1$ is the pullback of $\mathcal L_\chi$ by the map $\frac{c_1}{c_j}$. 
By the multiplicative properties of tame rank one lisse sheaves on $\mathbb G_m$, 
this is the tensor product of the pullback of $\mathcal L_\chi$ by $c_1$ with the pullback of its dual $\mathcal L_\chi^\vee$ by $c_j$, 
so
\begin{equation} \label{ThirdExternalDecomposition}
\xi^* w^* e_1^* \mathcal F_1 \cong \mathcal L_\chi \boxtimes \pi^* \mathcal L_\chi^\vee .
\end{equation} 
Combining \cref{FirstExternalDecomposition}, \cref{SecondExternalDecomposition}, and \cref{ThirdExternalDecomposition}, 
we see that
\begin{equation}\label{CombinedExternalDecomposition}
\zeta^* \xi^* w^*  \bigotimes_{i=1}^m  e_i^* \mathcal F_i   \cong \zeta^* \left(  \mathcal L_\chi \boxtimes \Bigl( \pi^* \mathcal L_\chi^\vee  \otimes \pi^* \mathcal L_{\not S } \otimes \bigotimes_{i\in S\setminus\{1\} } \overline{e}_i ^* \mathcal F_i \Bigr)  \right) .
\end{equation}

Let
\begin{equation}
\overline{v} \colon \mathbb A^1 \setminus \{0\} \to \mathbb A^1,
\quad \overline{u} \colon \mathbb A^{n-1} - \mathbb A^{n-2} \to \mathbb A^{n-1}
\end{equation}
be the inclusions of the loci where $c_1$ and $c_j$ do not vanish.  
We have the commutative diagram
\begin{equation*}
\begin{tikzpicture}[scale=1.5]
\node (A) at (0,0.8) {$\mathbb{P}^n \setminus (H_1 \cup H_{\infty})$};
\node (B) at (3.5,0.8) {$\mathbb{P}^n \setminus H_{1}$};
\node (C) at (0,0) {$(\mathbb A^1 \setminus \{0\})  \times (\mathbb{A}^{n-1} - \mathbb{A}^{n-2})$};
\node (D) at (3.5,0) {$(\mathbb{A}^1 \setminus \{0\}) \times \mathbb{A}^{n-1}$};
\node (E) at (6,0.8) {$\mathbb{P}^n$};
\node (F) at (6,0) {$ \mathbb{A}^1 \times \mathbb{A}^{n-1}$};
\node (G) at (0,-0.8) {$V \times_U U_{1,j}$};
\node (H) at (3.5,-0.8) {$V \times_U ( U \setminus \nu^{-1} H_1)$};
\node (I) at (6,-0.8) {$V$};
\node (J) at (-2,0.8) {$\mathbb{P}^n \setminus H_{\infty}$};
\path[->,font=\scriptsize,>=angle 90]
(A) edge node[above]{$u$} (B)
(C) edge node[right]{$\xi$} (A)
(D) edge node[right]{$$} (B)
(C) edge node[above]{$\text{id} \times \overline{u}$} (D)
(B) edge node[above]{$v$} (E)
(F) edge node[right]{$\nu$} (E)
(D) edge node[above]{$\overline{v} \times \text{id}$} (F)
(G) edge node[right]{$\zeta$} (C)
(G) edge node[above]{$\tilde{u}$} (H)
(H) edge node[right]{$$} (D)
(H) edge node[above]{$\tilde{v}$} (I)
(A) edge node[above]{$w$} (J)
(I) edge node[right]{$\tau$} (F);
\end{tikzpicture}
\end{equation*}
where all arrows represent \'{e}tale maps and all squares are Cartesian. 

We claim that we can make the following series of identifications
\begin{equation} \label{MultipleBaseChangeEq}
\begin{split} 
&\tau^* \nu^* v_* u_! w^*  \bigotimes_{i=1}^m  e_i^* \mathcal F_i  = 
\tilde{v}_* \tilde{u}_! \zeta^* \xi^* w^*  \bigotimes_{i=1}^m  e_i^* \mathcal F_i  = \\
&\tilde{v}_* \tilde{u}_! \zeta^*  \Bigl( \mathcal L_\chi \boxtimes \Bigl( \pi^* \mathcal L_\chi^\vee  \otimes \pi^* \mathcal L_{\not S } \otimes \bigotimes_{i\in S\setminus\{1\} } \overline{e}_i ^* \mathcal F_i \Bigr)\Bigr) = \\
&\tau^* (\overline{v} \times \text{id})_*  (\text{id} \times \overline{u} )_! \Bigl( \mathcal L_\chi \boxtimes \Bigl( \pi^* \mathcal L_\chi^\vee  \otimes \pi^* \mathcal L_{\not S } \otimes \bigotimes_{i\in S\setminus\{1\} } \overline{e}_i ^* \mathcal F_i \Bigr)\Bigr).  
\end{split}
 \end{equation}
 The first equality requires base change over all four squares of the above commutative diagram. For the top-right and bottom-right squares, we are base-changing a pushforward by a smooth map, and so we may apply the smooth base change theorem. For the top-left and bottom-left, we are base-chaning a compactly supported pushforward (also by a smooth map), and so we may apply the proper base change theorem. The second equality follows from \cref{CombinedExternalDecomposition}. The third equality requires base change along the bottom-left and bottom-right squares, which again uses the smooth and proper base change theorems.

Since the pullbacks $\tau^*$ and $\nu^*$ are compatible with stalks, 
it follows from \cref{MultipleBaseChangeEq} that the stalk of $v_* u_! w^*  \bigotimes_{i=1}^m  e_i^* \mathcal F_i   $ at $(f_0,0)$ is isomorphic to the stalk of 
\begin{equation}
(\overline{v} \times \text{id})_* (\text{id} \times \overline{u} )_! \Bigl( \mathcal L_\chi \boxtimes \Bigl( \pi^* \mathcal L_\chi^\vee  \otimes \pi^* \mathcal L_{\not S } \otimes \bigotimes_{i\in S\setminus\{1\} } \overline{e}_i ^* \mathcal F_i \Bigr)\Bigr)
\end{equation}
at $(f_0,0)$. To show that the latter stalk vanishes, we invoke K\"{u}nneth to get
\begin{equation} 
\begin{split} 
&(\overline{v} \times \text{id})_* (\text{id} \times \overline{u} )_! \Bigl( \mathcal L_\chi \boxtimes \Bigl( \pi^* \mathcal L_\chi^\vee  \otimes \pi^* \mathcal L_{\not S } \otimes \bigotimes_{i\in S\setminus\{1\} } \overline{e}_i ^* \mathcal F_i \Bigr)\Bigr) = \\
&(\overline{v} \times \text{id})_* \Bigl( \mathcal L_\chi \boxtimes \overline{u}_! \Bigl( \pi^* \mathcal L_\chi^\vee  \otimes \pi^* \mathcal L_{\not S } \otimes \bigotimes_{i\in S\setminus\{1\} } \overline{e}_i ^* \mathcal F_i \Bigr) \Bigr) = \\
&\overline{v}_*\mathcal L_\chi \boxtimes \overline{u}_!  \Bigl( \pi^* \mathcal L_\chi^\vee  \otimes \pi^* \mathcal L_{\not S } \otimes \bigotimes_{i\in S\setminus\{1\} } \overline{e}_i ^* \mathcal F_i \Bigr).
\end{split}
\end{equation}
Since we assumed that $\mathcal F_1$ has nontrivial local monodromy at $z$, 
the rank one sheaf $\mathcal L_\chi$ has nontrivial local monodromy at $0$, 
so the stalk of $\overline{v}_ * \mathcal L_\chi$ at $0$ vanishes.
We conclude that the stalk of the external tensor product above vanishes at every point $U$ with $c_1=0$.
In particular, it vanishes at $(f_0:0)$.

\end{proof}

%
%
%
%
%
%

\begin{lem}\label{perversity}  

Keep \cref{geometric-setup}.
The shifted sheaf 
\begin{equation}
\bigotimes_{i=1}^m e_i^* \mathcal F_i [n]
\end{equation}
is a perverse sheaf. 

\end{lem}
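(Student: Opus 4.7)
Perversity is a local condition in the \'{e}tale topology, so the plan is to verify it in an \'{e}tale neighborhood of each point $p \in \mathbb A^n$, following the same local case analysis used in \cref{affine-vanishing-lemma} and \cref{projective-vanishing-lemma}. For each $p$ I define $S(p) = \{1 \leq i \leq m : e_i(p) \text{ is a singular point of } \mathcal F_i\}$ and split the argument according to whether $|S(p)| \leq n$ or $|S(p)| > n$; the latter case is confined to a finite set $Z \subseteq \mathbb A^n$ by \cref{affine-vanishing-lemma}.

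First I would check that each $\mathcal F_i [1]$ is perverse on $\mathbb A^1$: the support inequality is automatic, and on a smooth curve the dual support inequality is equivalent to $\mathcal F_i$ having no subsheaf supported in dimension zero (no skyscraper subsheaves), which is the standing no-finitely-supported-sections hypothesis from \cref{geometric-setup}. Since external tensor products of perverse sheaves are perverse (K\"unneth), it follows that $\boxtimes_{i \in T} \mathcal F_i [n]$ is perverse on $(\mathbb A^1)^n$ for every $T \subseteq \{1, \dots, m\}$ with $|T|=n$.

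Next, for a point $p$ with $|S(p)| \leq n$, I choose $T \supseteq S(p)$ with $|T| = n$. Distinctness of the $x_i$ for $i \in T$ makes the Vandermonde determinant nonzero, so $(e_i)_{i \in T} \colon \mathbb A^n \to (\mathbb A^1)^n$ is a linear isomorphism, through which $\bigotimes_{i \in T} e_i^* \mathcal F_i$ is identified with $\boxtimes_{i \in T} \mathcal F_i$; shifted by $[n]$ this is perverse. The residual factor $\bigotimes_{i \notin T} e_i^* \mathcal F_i$ is lisse in a neighborhood of $p$, since for each $i \notin T$ we have $i \notin S(p)$, hence $\mathcal F_i$ is lisse at $e_i(p)$. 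Tensoring with a lisse sheaf preserves perversity, which gives perversity of $\mathcal G[n]$ throughout the open set $\mathbb A^n \setminus Z$.

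The hard part will be extending this across the finite exceptional locus $Z$. An iterated application of \cref{general-tensor-product}(2) first gives that $\mathcal G = \bigotimes_{i=1}^m e_i^* \mathcal F_i$ itself has no finitely supported sections. Since $Z$ is zero-dimensional, only the dual-support inequalities indexed by $i > 0$ can fail at points of $Z$; these translate, via Verdier duality on smooth $\mathbb A^n$, into vanishing of the stalks of $\mathcal Ext^{k}(\mathcal G, \overline{\mathbb Q_\ell})$ for $k > n$ at each $p \in Z$. Settling these vanishings, presumably through a further local separation-of-variables argument in the spirit of \cref{projective-vanishing-lemma} combined with the no-finitely-supported-sections property of $\mathcal G$, is where I expect the main technical difficulty to lie.
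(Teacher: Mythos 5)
Your argument on $\mathbb A^n \setminus Z$ is correct: each $\mathcal F_i[1]$ is perverse on $\mathbb A^1$ by the no-finitely-supported-sections hypothesis, external products of perverse sheaves are perverse, the Vandermonde observation makes $(e_i)_{i\in T}$ an isomorphism, and twisting by the lisse residual factor preserves perversity. But the gap you flag at the finite exceptional set $Z$ is genuine, and it is not a loose end that the surrounding machinery will tie up. At a point $p\in Z$ more than $n$ of the hyperplanes $e_i^{-1}(\text{sing}\,\mathcal F_i)$ pass through $p$, so no choice of $T$ of size $n$ containing $S(p)$ exists, and the separation-of-variables strategy simply has nothing to offer: there are not enough coordinates to split off all the singular directions. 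Knowing perversity on the complement of $Z$ only gives you that the offending sheaves $\mathcal Ext^{k}(\mathcal G,\overline{\mathbb Q_\ell})$ for $k>n$ are supported on $Z$; you would still have to prove they vanish there, and nothing established about $\mathcal G$ (not even the absence of finitely supported sections, which you correctly deduce from \cref{general-tensor-product}(2)) supplies that. The cosupport condition is not a ``soft'' dimension count at zero-dimensional strata — it genuinely constrains the local cohomology, and it can and does fail for non-perverse sheaves supported away from a finite set.

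The paper's proof avoids the case split entirely by a different local device that applies at \emph{every} $f$, including $f \in Z$. Near an arbitrary $f$, each $\mathcal F_i$ is shown to sit, after passing to a suitable \'{e}tale neighborhood $U_i$ of $e_i(f)$, in a short exact sequence
\begin{equation*}
0 \to \overline{\mathbb Q_\ell}^{\,r_i} \to \mathcal F_i|_{U_i} \to \mathcal Q_i \to 0,
\end{equation*}
where $r_i = \dim (\mathcal F_i)_{e_i(f)}$, the left-hand map is injective precisely because $\mathcal F_i$ has no finitely supported sections, and $\mathcal Q_i$ (after shrinking $U_i$) is the extension by zero of a lisse sheaf from the punctured neighborhood $U_i \setminus \{e_i(f)\}$. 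Expanding the $m$-fold tensor product of these local extensions exhibits $\bigotimes_i e_i^* \mathcal F_i$ locally as an iterated extension of $2^m$ pieces, and the constant factors contribute only direct sums, so perversity reduces to that of $\bigotimes_{i\in S}\widetilde e_i^{\,*}\mathcal Q_i[n]$ for each $S\subseteq\{1,\dots,m\}$. That complex is the shift by $n$ of a lisse sheaf extended by zero along the inclusion of the complement of a finite union of hyperplanes, i.e.\ along an affine open immersion, so it is perverse by \cite[Corollary 4.1.3]{bbdg}. The crucial point is that this argument places no bound on $|S|$ whatsoever, so the distinction between $Z$ and its complement never enters. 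If you want to salvage your approach, you would need an independent proof of $\mathcal Ext$-vanishing at the points of $Z$; it is much cleaner to adopt the local-extension trick, which makes the finite bad locus invisible.
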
 

\begin{proof} 

Perversity is an \'{e}tale-local condition, so it suffices to show that each polynomial $f \in \mathbb A^n$ has an \'{e}tale neighborhood $\nu \colon U \to \mathbb A^n$ such that $\nu ^* \bigotimes_{i=1}^m e_i^* \mathcal F_i [n]$ is perverse.
For each $1 \leq i \leq m$, we will choose a suitable \'{e}tale neighborhood $U_i$ of $e_i(f)$ in $\mathbb A^1$ and then take 
\begin{equation} \label{UfiberedProdEq}
U =(  (( \mathbb A^n \times_{\mathbb A^1} U_1 ) \times_{\mathbb A^1} U_2 ) \dots ) \times_{ \mathbb A^1} U_m ,
\end{equation}
which will be an \'{e}tale neighborhood of $f$. 

Fix $1 \leq i \leq m$.
By definition, every section of the stalk of $\mathcal F_i$ at $e_i(f)$ is defined over an \'{e}tale neighborhood of $e_i(f) \in \mathbb{A}^1$. 
By constructibility,
\begin{equation}
r_i = \dim (\mathcal{F}_i)_{e_i(f)} < \infty
\end{equation}
so there exists an \'{e}tale neighborhood $U_i'$ of $e_i(f)$ in $\mathbb{A}^1$ over which all the sections of $\mathcal{F}_i$
at $e_i(f)$ are defined.
We then have a natural map
\begin{equation}
\psi \colon \overline{\mathbb Q_\ell}^{r_i}  \to \mathcal F_i \mid_{U_i'}
\end{equation}
inducing an isomorphism on the stalks at $e_i(f)$.
We denote the cokernel of $\psi$ by $\mathcal{Q}_i$,
and note that its stalk at $e_i(f)$ vanishes.

Observe that $\psi$ is injective. 
Indeed if any nontrivial section of $\overline{\mathbb Q_\ell}^{r_i} $ has image vanishing on some Zariski open set containing $e_i(f)$, then the corresponding nontrivial section of $\mathcal F_i$ is supported in the complement of that open set, which is finite, contradicting the assumption that $\mathcal F_i$ has no finitely supported sections.
It follows that the restriction of $\mathcal F_i$ to $U_i'$ is the extension of $\mathcal Q_i$ by $\overline{\mathbb Q_\ell}^{r_i}$.

Let $U_i$ be the union with $\{e_i(f)\}$ of the largest open subset of $U_i'$ where $\mathcal Q_i$ is lisse. 
Restricted to $U_i$, the sheaf $\mathcal Q_i$ is lisse on $U_i \setminus \{e_i(f)\}$ extended by zero to $U_i$.
We define $U$ using \cref{UfiberedProdEq}, 
and let $\widetilde{e_i} \colon U \to U_i$ for $1 \leq i \leq m$ and $\nu \colon U \to \mathbb{A}^n$ be the projections. 

As $\mathcal F_i$ restricted to $U_i$ is the extension of $\mathcal Q_i$ by $\overline{\mathbb Q_\ell}^{r_i}$,  
the sheaf $\nu^* \bigotimes_{i=1}^m e_i^* \mathcal F_i$ is the iterated extension of $2^m$ sheaves, each of the form 
\begin{equation}
\bigotimes_{i \not \in S} \widetilde{e_i}^{*} \overline{\mathbb Q_\ell}^{r_i} \otimes 
\bigotimes_{i \in S} \widetilde{e_i}^{*} \mathcal Q_i, 
\quad S \subseteq \{1, \dots, m\}. 
\end{equation}
Since an extension of perverse sheaves is perverse, it suffices to prove that 
\begin{equation} 
\left( \bigotimes_{i \not \in S} \widetilde{e_i}^{*} \overline{\mathbb Q_\ell}^{r_i} \otimes 
\bigotimes_{i \in S} \widetilde{e_i}^{*} \mathcal Q_i \right) [n] 
\end{equation}
is perverse. 
Tensoring with the pullback of $\overline{\mathbb Q_\ell}^{r_i}$ is equivalent to taking a direct sum of $r_i$ copies, 
so it suffices to show that
\begin{equation} \label{LastPerverseSheaf}
\bigotimes_{i \in S} \widetilde{e_i}^{*} \mathcal Q_i[n] 
\end{equation}
is perverse. 
 
Since each $\mathcal Q_i$ is the extension by zero from $U_i \setminus \{e_i(f)\}$  to $U_i$ of a lisse sheaf, 
the sheaf $\widetilde{e_i}^{*} \mathcal Q_i$ is the extension by zero from $U \setminus \widetilde{e_i}^{-1} \{ e_i(f) \}$ to $U$ of a lisse sheaf, 
and thus the sheaf
\begin{equation}
\bigotimes_{i \in S} \widetilde{e_i}^{*} \mathcal Q_i
\end{equation}
is the extension by zero from the complement in $U$ of
\begin{equation}
D = \bigcup_{i \in S} \widetilde{e_i}^{-1} \{e_i(f)\}
\end{equation} 
to $U$ of a lisse sheaf.
 
Because $D$ is a divisor in $U$, 
the inclusion of its complement in $U$ is an affine open immersion. 
Lisse sheaves shifted by $\dim(U) = \dim(\mathbb{A}^n) = n$ are perverse, and by \cite[Corollary 4.1.3]{bbdg}, extensions by zero along affine open immersions of perverse sheaves are perverse, so indeed \cref{LastPerverseSheaf} is perverse.
We can thus conclude that $\bigotimes_{i=1}^m e_i^* \mathcal F_i [n]$ is perverse.
\end{proof}

\begin{cor}\label{geometric-vanishing} 

Keep \cref{geometric-setup}. We have
\begin{equation}
H^j_c \Bigl( \mathbb A^n, \bigotimes_{i=1}^m e_i^* \mathcal F_i \Bigr) =0 
\end{equation}
for every integer $j \notin \{n, n + 1\}$.
\end{cor}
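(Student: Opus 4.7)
My plan is to feed the excision long exact sequence of \cref{excision} with Artin's affine vanishing theorem in its perverse form, using \cref{affine-vanishing-lemma,projective-vanishing-lemma,perversity} to control each of the three terms in that sequence. Set $\mathcal{G} = \bigotimes_{i=1}^m e_i^* \mathcal{F}_i [n]$; by \cref{perversity} this is a perverse sheaf on $\mathbb{A}^n$.

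The first preparatory observation is that each of the open immersions $w$, $u$, $v$ is affine, because in every case the complement is a hyperplane inside an affine or projective space. For an affine open immersion $j$, both $j_!$ and $j_*$ are t-exact for the perverse t-structure. Hence $w^* \mathcal{G}$ is perverse, $u_! w^* \mathcal{G}$ is perverse on $\mathbb{P}^n \setminus H_1 \cong \mathbb{A}^n$, and $v_* u_! w^* \mathcal{G}$ is perverse on $\mathbb{P}^n$.

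Applying the perverse form of Artin vanishing twice now settles most of the statement. Since $\mathcal{G}$ is perverse on the affine variety $\mathbb{A}^n$, the dual form yields $H^j_c(\mathbb{A}^n, \bigotimes_{i=1}^m e_i^* \mathcal{F}_i) = 0$ for $j < n$, which handles the low-degree half of the vanishing directly. Since $u_! w^* \mathcal{G}$ is perverse on the affine variety $\mathbb{P}^n \setminus H_1$, the standard form yields $H^j(\mathbb{P}^n \setminus H_1, u_! w^* \bigotimes_{i=1}^m e_i^* \mathcal{F}_i) = 0$ for $j > n$, so the middle term of the excision sequence of \cref{excision} contributes nothing in that range.

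For the remaining range $j > n+1$, I will control the boundary term. \cref{affine-vanishing-lemma,projective-vanishing-lemma} together show that the complex $\mathcal{C} = d^* v_* u_! w^* \bigotimes_{i=1}^m e_i^* \mathcal{F}_i$ on $H_1$ is supported on the finite set $Z \cap H_1$. Since $v_* u_! \mathcal{G}$ is perverse on $\mathbb{P}^n$ and $d^*$ is right t-exact for the perverse t-structure, $d^* v_* u_! \mathcal{G}$ lies in the non-positive part of the perverse t-structure on $H_1$; because its support is zero-dimensional, this reduces to the ordinary statement that its stalks are concentrated in cohomological degrees $\leq 0$. Shifting by $[-n]$, the stalks of $\mathcal{C}$ are concentrated in ordinary degrees $\leq n$, so $H^i(H_1, \mathcal{C}) = 0$ for $i > n$. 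Feeding this into \cref{excision} alongside the middle-term vanishing gives $H^j_c(\mathbb{A}^n, \bigotimes_{i=1}^m e_i^* \mathcal{F}_i) = 0$ for $j > n+1$, finishing the proof. The main subtlety to keep straight throughout is the bookkeeping of shifts together with the affineness of each open immersion, since the latter is what upgrades $u_!$ and $v_*$ to honest perversity-preserving operations and thereby forces the boundary stalks to live in a single ordinary cohomological range, producing exactly the two potentially nonzero degrees $n$ and $n+1$.
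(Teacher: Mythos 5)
Your proof is correct and follows essentially the same route as the paper: Artin's affine theorem plus perversity for $j<n$, and the excision sequence of \cref{excision} combined with the finite support of the boundary complex (from \cref{affine-vanishing-lemma,projective-vanishing-lemma}) and a semiperversity bound on its stalk degrees for $j>n+1$. The only cosmetic difference is that you invoke full t-exactness of $u_!$ and $v_*$ along affine open immersions, whereas the paper gets by with the weaker observation that a sheaf shifted by $[n]$ on an $n$-dimensional variety is semiperverse and that $v_*$ (for $v$ affine) and $d^*$ preserve ${}^p D^{\leq 0}$.
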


\begin{proof}  

The vanishing for $j < n $ follows from Artin's affine theorem \cite[Exp. XIV, Corollaire 3.2]{art} and the fact, 
from \cref{perversity}, that $\bigotimes_{i=1}^m e_i^* \mathcal F_i[n]$ is perverse.

By the excision long exact sequence
\begin{equation*}
\begin{split} 
\dots \to H^*_c \Bigl(\mathbb P^n \setminus H_1 ,  u_! w^* \bigotimes_{i=1}^m e_i^* \mathcal F_i \Bigr) &\to 
H^*  \Bigl(\mathbb P^n \setminus H_1 ,  u_! w^* \bigotimes_{i=1}^m e_i^* \mathcal F_i \Bigr) \\
&\to H^* \Bigl(H_1  ,   d^* v_* u_! w^* \bigotimes_{i=1}^m e_i^* \mathcal F_i \Bigr) \to \dots
\end{split}
\end{equation*}
from \cref{excision}, 
and \cref{letter-switching}, to prove vanishing for $j >n+1$, it suffices to show that for $j>n$ we have
\begin{equation} \label{TwoCohomologiesVanishingEq}
H^j \Bigl(\mathbb P^n \setminus H_1 ,  u_! w^* \bigotimes_{i=1}^m e_i^* \mathcal F_i \Bigr)= 0, \quad
H^j \Bigl(H_1  ,   d^* v_* u_! w^* \bigotimes_{i=1}^m e_i^* \mathcal F_i \Bigr)=0. 
\end{equation} 
 
For the first, note that $\mathbb P^n \setminus H_1$ is an affine variety of dimension $n$, and 
\begin{equation}
u_! w^* \bigotimes_{i=1}^m e_i^* \mathcal F_i
\end{equation}
is a sheaf, so we can invoke Artin's affine theorem again.

We shall now prove the second vanishing statement in \cref{TwoCohomologiesVanishingEq}.
Since $u_! w^* \bigotimes_{i=1}^m e_i^* \mathcal F_i $ is a sheaf on an $n$-dimensional variety, the complex
\begin{equation}
u_! w^* \bigotimes_{i=1}^m e_i^* \mathcal F_i [n]
\end{equation}
is semiperverse, so $v_* u_! w^* \bigotimes_{i=1}^m e_i^* \mathcal F_i[n]$ is semiperverse as $v$ is affine, and 
\begin{equation}
d^* v_* u_! w^* \bigotimes_{i=1}^m e_i^* \mathcal F_i[n]
\end{equation} 
is semiperverse because $d$ is a closed immersion. 
It follows that the stalks of the complex above are supported in nonpositive degrees, 
hence the stalks of $d^* v_* u_! w^* \bigotimes_{i=1}^m e_i^* \mathcal F_i$ are supported in degrees not exceeding $n$.

We know from \cref{affine-vanishing-lemma} and \cref{projective-vanishing-lemma} that the complex
\begin{equation}
d^* v_* u_! w^* \bigotimes_{i=1}^m e_i^* \mathcal F_i
\end{equation} 
is supported at only finitely many points, 
so its cohomology is simply the direct sum of its stalks.
We have seen that these stalks are supported in degrees not exceeding $n$,
so the cohomology indeed vanishes in degrees greater than $n$.
\end{proof}

\subsection{Betti numbers bound} 

Here we bound the dimension of the cohomology groups that are not known to vanish by our previous arguments. We let $\kappa$ be a perfect field of characteristic $p$.

\begin{defi} \label{ComplexesInvarsDef}

For $K \in D^b_c( \mathbb A^1_{\kappa},\overline{ \mathbb Q_\ell})$ define the \emph{rank}
\begin{equation}
\rank(K) = \sum_{j =-\infty}^{\infty} \dim \mathcal H^j (K)_{\eta} 
\end{equation}
where $\eta$ is the geometric generic point of $\mathbb A^1$, and the \emph{Fourier conductor}  
\begin{equation}
c_F(K) = \sum_{j=-\infty}^{\infty} \dim H^j ( \mathbb A^1_{\overline{\kappa (\alpha)}} , K \otimes \mathcal L_\psi(\alpha x)) 
\end{equation}
where $\kappa(\alpha)$ is the field of rational functions over $\kappa$ in a variable $\alpha$, $\psi$ is an additive character of $\mathbb F_p$, and $\mathcal L_\psi(\alpha x)$ is the Artin-Schreier sheaf. \end{defi}

Note that the rank agrees with the usual notion of the generic rank when $K$ is a sheaf or a perverse sheaf. 
We call $c_F$ the Fourier conductor because it is equal to the rank of the Fourier transform. 
However, we will not use the characterization in terms of Fourier transform here.
The Fourier conductor can also be expressed in terms of local invariants, see \cref{conductor-little-lemmas}(5).
In order to write this expression, we need the following ad-hoc modification of the Swan conductor.

\begin{defi} \label{ModifiedSwan}

As in \cref{SwanDef}, let $V/\overline{\mathbb{Q}_\ell}$ be a representation of an inertia group,
and let $V_1, \dots, V_n$ be the Jordan-H\"{o}lder factors of $V$. We set
\begin{equation*}
\swan'(V) = \sum_{i=1}^n \max \{\swan(V_i)-\dim V_i, 0 \} =  \sum_{i=1}^n \max \{\slope(V_i)-1,0\} \dim V_i.
\end{equation*}
For a sheaf $\mathcal{F}$ on an open subset $C$ of a proper curve $\overline{C}/\kappa$
and a closed point $x$ of $\overline{C}$, put
\begin{equation}
\swan_x'(\mathcal F) = \swan'(\mathcal{F}_\eta)
\end{equation}
with $\mathcal{F}_\eta$ viewed as a representation of $I_x$.

\end{defi}

Recall that every complex $K \in D^b_c( \mathbb A^1_{\kappa},\overline{ \mathbb Q_\ell})$ has a filtration, the perverse filtration, whose associated graded objects are shifts of $ \{ {}^p\mathcal H^j (K) \}_{j \in \mathbb{Z}}$, which are perverse sheaves.

\begin{lem}\label{conductor-little-lemmas} \begin{enumerate}

\item For $K \in D^b_c( \mathbb A^1_{\kappa},\overline{ \mathbb Q_\ell})$ we have 
\begin{equation*}
\rank(K) = \sum_{j=-\infty}^{\infty} \rank ({}^p\mathcal H^j (K)), \quad
c_F(K) = \sum_{j=-\infty}^{\infty} c_F ({}^p\mathcal H^j (K)).
\end{equation*}

\item For a short exact sequence of perverse sheaves 
\[0 \to P_1 \to P_2 \to P_3 \to 0, \] 
on $\mathbb{A}^1_\kappa$, we have 
\[ \rank(P_2) = \rank(P_1) + \rank(P_3),\quad c_F(P_2) = c_F(P_1) + c_F(P_3).\]

\item For a skyscraper sheaf $K$ on $\mathbb{A}^1_\kappa$, we have 
\[\rank(K)=0, \quad c_F(K) = 1.\]

\item For any $\beta \in \overline{\kappa}$, we have 
\[\rank(\mathcal L_\psi(\beta x)) = 1, \quad c_F( \mathcal L_\psi(\beta x))=0.\]

\item For a sheaf $\mathcal F$ on $\mathbb A^1_\kappa$ with no finitely supported sections, 
we have  
\[ c_F (  \mathcal F) = \sum_{ x\in | \mathbb A^1_{\overline{\kappa}} | } \cond_x (\mathcal F)   + \swan'_{\infty} (\mathcal F ). \] 

\item Suppose that $\kappa$ is finite.
Then for an infinitame sheaf $\mathcal{F}$ on $\mathbb A^1_{\kappa}$ we have $c_F(\mathcal{F}) = c(\mathcal{F})$.

\end{enumerate}

\end{lem}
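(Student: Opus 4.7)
The plan is to dispose of (3) and (4) by direct computation, prove (5) by a slope-analysis-at-infinity argument combined with \cref{EP2}, derive (6) from (5), and finally obtain (1) and (2) uniformly through the perverse filtration. For (3), the tensor of a skyscraper sheaf with the lisse rank-one sheaf $\mathcal L_\psi(\alpha x)$ is again a skyscraper with identical stalks, so the generic stalk vanishes and the only cohomology sits in $H^0$ with dimension equal to the total stalk dimension, giving $\rank = 0$ and $c_F = 1$ for a one-dimensional stalk. For (4), the identity $\mathcal L_\psi(\beta x) \otimes \mathcal L_\psi(\alpha x) \cong \mathcal L_\psi((\alpha+\beta)x)$ together with the transcendence of $\alpha$ produces a nontrivial Artin--Schreier sheaf with Swan conductor $1$ at infinity via \cref{AS-sheaf-properties}; then \cref{EP2} gives $\chi = 0$, nontrivial monodromy kills $H^0$, wild ramification at infinity forces $H^i = H^i_c$ because the $I_\infty$-invariants vanish, and Artin vanishing kills $H^2$, squeezing $H^1 = 0$ and hence $c_F = 0$.

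For (5), the plan is to show that for any sheaf $\mathcal F$ on $\mathbb A^1_\kappa$ without finitely supported sections, the cohomology of $\mathcal F \otimes \mathcal L_\psi(\alpha x)$ is concentrated in degree $1$, so that $c_F(\mathcal F) = -\chi(\mathbb A^1, \mathcal F \otimes \mathcal L_\psi(\alpha x))$, and then to evaluate this via \cref{EP2}. For the concentration: at infinity every Jordan--H\"older factor of the twist acquires slope at least $1$, so the $I_\infty$-invariants vanish and hence $H^i = H^i_c$; $H^0_c = 0$ because $\mathcal F \otimes \mathcal L_\psi(\alpha x)$ inherits from $\mathcal F$ the absence of finitely supported sections; and $H^2 = 0$ by Artin vanishing. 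For the local invariants, tensoring with a lisse rank-one sheaf leaves $\cond_x$ unchanged at every finite closed point, while at infinity \cref{slope-tensor-lemmas} yields
\begin{equation*}
\slope(V_i \otimes \mathcal L_\psi(\alpha x)) = \max(\slope(V_i), 1)
\end{equation*}
for each Jordan--H\"older factor $V_i$ of $\mathcal F_\eta|_{I_\infty}$, the exceptional scalar-matching case being excluded by the transcendence of $\alpha$. Summing gives $\swan_\infty(\mathcal F \otimes \mathcal L_\psi(\alpha x)) = \swan'_\infty(\mathcal F) + \rank(\mathcal F)$ by \cref{ModifiedSwan}, so the $\rank(\mathcal F)$ contribution to \cref{EP2} cancels the corresponding piece of the Swan, leaving
\begin{equation*}
c_F(\mathcal F) = \sum_{x \in |\mathbb A^1_{\overline{\kappa}}|} \cond_x(\mathcal F) + \swan'_\infty(\mathcal F).
\end{equation*}

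For (6), infinitameness forces all slopes at infinity to be zero, so $\swan'_\infty(\mathcal F) = 0$, and the remaining geometric sum collapses to the closed-point sum in \cref{InfinitameConductorDefEq} upon grouping $[\kappa(x):\kappa]$ geometric points over each closed point and expanding $\cond_x = \dro_x + \swan_x$. For (1) and (2), the plan is to exploit the perverse filtration on $K$, whose associated graded pieces are the shifted perverse sheaves ${}^p\mathcal H^q(K)[-q]$. Each perverse sheaf on a curve has generic stalk concentrated in cohomological degree $-1$, so after the shift by $-q$ the different $q$ contribute to $\mathcal H^*(K)_\eta$ in distinct degrees, and summing stalk dimensions yields the rank formula in (1). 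For the Fourier conductor formula in (1), the Fourier--Deligne transform preserves perversity, so $R\Gamma_c(\mathbb A^1, P \otimes \mathcal L_\psi(\alpha x))$ is concentrated in a single cohomological degree for every perverse $P$; combined with the $H^i = H^i_c$ identification from (5) this concentration transfers to $R\Gamma$, and the shifts $[-q]$ again separate the contributions into distinct degrees, so their dimensions add. Part (2) follows at once from the long exact cohomology sequence of the twisted short exact sequence, which degenerates into a short exact sequence in the single surviving cohomological degree, yielding the additivity of both $\rank$ and $c_F$. The main technical obstacle I anticipate is the handling of Jordan--H\"older factors of slope exactly $1$ at infinity, where the exceptional scenario in \cref{slope-tensor-lemmas} could invalidate the slope identity; this is averted precisely by taking $\alpha$ transcendental, so that the exceptional scalar-matching condition carves out a proper subscheme avoided by the geometric generic point.
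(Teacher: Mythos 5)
Your proposal is correct and follows essentially the same route as the paper: prove (3)--(5) by direct computation together with the Grothendieck--Ogg--Shafarevich formula and \cref{slope-tensor-lemmas}, deduce (6) by collapsing the geometric sum using tame ramification, and obtain (1) and (2) from the degenerating spectral sequence of the perverse filtration, with the key observation in both cases being that for a perverse $P$ the stalk at $\eta$ sits in a single degree and $R\Gamma(\mathbb A^1, P \otimes \mathcal L_\psi(\alpha x))$ sits in a single degree for $\alpha$ generic. You differ from the paper only in two cosmetic ways: in (4) you squeeze $H^1 = 0$ from $\chi = 0$ together with the vanishing of $H^0$ and $H^2$, whereas the paper simply cites the standard vanishing of cohomology for a nontrivial Artin--Schreier sheaf (cf.\ the base case of \cref{linear-vanishing-lem}); and in (1) you invoke perversity of the Fourier--Deligne transform as a black box to get concentration of $R\Gamma_c(\mathbb A^1, P \otimes \mathcal L_\psi(\alpha x))$, whereas the paper argues directly that $P \otimes \mathcal L_\psi(\alpha x)$ is perverse (so $H^*_c$ is supported in $[0,1]$) and that the degree-one group is coinvariants, which die for transcendental $\alpha$; the latter is somewhat more self-contained since the $t$-exactness of the Fourier transform is itself usually proven by exactly this type of argument. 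One small imprecision: you cite part (5) for the $H^i = H^i_c$ identification, but (5) is stated only for a sheaf $\mathcal F$ with no finitely supported sections, not for an arbitrary perverse complex; the identification still holds for $P \otimes \mathcal L_\psi(\alpha x)$ (the degree-zero part of $P$ is a skyscraper, contributing nothing at $\infty$, and the degree-$(-1)$ part becomes totally wild at $\infty$ after the generic twist, so $Rj_*$ has vanishing stalk at $\infty$), but you should spell this out rather than refer back to (5).
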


\begin{proof} 

For (1) and (2), the key point will be that for a perverse sheaf $P$ on $\mathbb{A}^1_\kappa$,
the stalk $\mathcal H^i(P)_{\eta}$ vanishes for $i \neq -1$ and
\begin{equation} \label{PtensorSchreierVanishEq}
H^i \left( \mathbb A^1_{\overline{\kappa(\alpha)}} , P \otimes \mathcal L_\psi(\alpha x) \right) = 0, \quad i \neq 0.
\end{equation} 

The vanishing of $\mathcal H^i(P)_{\eta}$  is due to the fact that perverse sheaves on a curve are lisse and supported in degree $-1$ on an open set. The vanishing in \cref{PtensorSchreierVanishEq} follows from the fact that $P \otimes \mathcal L_\psi(\alpha x)$ is perverse on a curve, hence has compactly-supported cohomology in degrees $0$ and $1$ only,
and the cohomology in degree $1$ equals the monodromy coinvariants, which vanish for $\alpha$ generic since the representation 
$\mathcal{L}_\psi(\alpha x)^\vee = \mathcal L_\psi(-\alpha x)$ can occur as a quotient of the monodromy representation of $P$ for only finitely many specializations of $\alpha$.

Because of the vanishing above, 
the spectral sequence computing $\mathcal H^i(K)_{\eta}$ from 
\begin{equation}
\mathcal H^i(P_j)_{\eta}, \quad P_j = {}^p \mathcal H^j(K)
\end{equation}
degenerates on the first page, as does the spectral sequence computing $H^i ( \mathbb A^1_{\overline{\kappa(\alpha)}} , K \otimes \mathcal L_\psi(\alpha x)) $ from $H^i ( \mathbb A^1_{\overline{\kappa(\alpha)}} , P_j \otimes \mathcal L_\psi(\alpha x)) $, giving (1).
We also deduce from the vanishing above that the functors $P \mapsto \mathcal H^{-1} (P)_{\eta}$ and $P \mapsto H^0 ( \mathbb A^1_{\overline{\kappa(\alpha)}}, P \otimes \mathcal L_\psi(\alpha x)) $ are exact,
and that composing these functors with dimension gives $\rank(P)$ and $c_F(P)$ respectively.
This proves (2).


For a skyscraper sheaf, its stalk at the generic point vanishes, while its twist by an Artin-Schreier sheaf is again a skyscraper sheaf, so has one-dimensional cohomology in degree zero and no cohomology in all other degrees, verifying (3).

For an Artin-Schreier sheaf, its stalk at the generic point has rank one in degree $0$ and none in all other degrees, while its cohomology twisted by any Artin-Schreier sheaf but its dual vanishes, verifying (4).

Now we check (5). Certainly the stalk of $\mathcal F$ at the generic point has rank $\rank(\mathcal F)$ in degree $0$ and rank zero in other degrees. Since $\mathcal{F}$ has no finitely supported sections, we get that $\mathcal F[1]$ is perverse, 
so $\mathcal F \otimes \mathcal L_\psi(\alpha x)$ has no cohomology in degrees other than $1$, hence
\begin{equation}
c_F( \mathcal F) = -\chi (\mathbb A^1_{\overline{\kappa(\alpha)}},  \mathcal F \otimes \mathcal L_\psi(\alpha x) ).
\end{equation}

From \cref{EP2} we get that
\[ \chi(\mathbb A^1_{\overline{\kappa(\alpha)}}, \mathcal F \otimes \mathcal L_\psi(\alpha x)) =  
\chi(\mathbb A^1) \rank(\mathcal F \otimes \mathcal L_\psi(\alpha x) ) -  \sum_{y \in | \mathbb A^1_{\overline{\kappa(\alpha)}} |} \cond_y(\mathcal F \otimes \mathcal L_\psi(\alpha x) ) - \swan_{\infty} (\mathcal F \otimes \mathcal L_\psi(\alpha x)) \] 
so to establish (5), it suffices to check that for all $y \in  \left| \mathbb A^1_{\overline{\kappa(\alpha)}}\right|$, we have
\begin{equation} \label{firstIdentityLocalConuctor}
\cond_y( \mathcal F \otimes \mathcal L_\psi(\alpha x ))  = \cond_y (\mathcal F) 
\end{equation}
and that  
\begin{equation} \label{IntricateSwanEq}
\swan_{\infty} (\mathcal F \otimes \mathcal L_\psi(\alpha  x )) - \chi(\mathbb A^1) \rank(\mathcal F \otimes \mathcal L_\psi(\alpha x)) = \swan'_\infty (\mathcal F). 
\end{equation}

\cref{firstIdentityLocalConuctor} is straightforward from \cref{LocalCondDefEq} since neither the Swan conductor at $y$ nor the drop at $y$ can be changed by tensoring with a lisse sheaf of rank one in a neighborhood of $y$. 

For \cref{IntricateSwanEq} note that $\chi(\mathbb A^1)=1$ by \cref{AffineEulerChar}, 
and that tensoring with a lisse sheaf of rank one does not affect the rank, so it suffices to prove that
\begin{equation}
\swan_{\infty} (\mathcal F \otimes \mathcal L_\psi(\alpha x))- \rank(\mathcal F) = \swan'_{\infty}(\mathcal F).
\end{equation}
Every term above can be expressed in terms of the representation $V = \mathcal F_{\eta}$ of $I_{\infty}$, 
so it suffices to show that
\begin{equation}
\swan ( V \otimes \mathcal L_\psi(\alpha x) ) - \dim (V) = \swan'(V) 
\end{equation} 
where we have abused notation by using $\mathcal L_\psi(\alpha x)$ for both a sheaf and its inertia representation at $\infty$. 

Since all terms above are additive in extensions of irreducible representations, we may assume $V$ is irreducible, in which case it suffices by \cref{SwanDef} and \cref{ModifiedSwan} to prove that
\begin{equation}
\slope ( V \otimes \mathcal L_\psi(\alpha x)) \dim (V) - \dim(V) = \max \{ \slope(V)-1, 0 \} \dim (V) 
\end{equation}
or, equivalently, that
\begin{equation}
\slope(V \otimes \mathcal L_\psi(\alpha x))=\max \{ \slope(V), 1 \}.
\end{equation}

The above follows from \cref{slope-tensor-lemmas} once we check that $I^1_\infty$ does not act on $V$ by scalars via the character $\mathcal L_\psi(- \alpha x)$. If it were to act by scalars, the character defined by those scalars would be unique,
but $\mathcal L_\psi(- \alpha x)$ gives distinct characters of $I^1_\infty$ for different specializations of $\alpha$, 
so such an action by scalars is impossible for generic $\alpha$.  

At last we deduce (6) from (5).
Since $\mathcal{F}$ has no finitely supported sections and is tamely ramified at infinity,
it follows from (5), \cref{ModifiedSwan}, \cref{SwanDef}, \cref{DropDef}, and the definition in \cref{InfinitameConductorDefEq} that
\begin{equation}
c_F (  \mathcal F) = \sum_{ x\in | \mathbb A^1_{\overline{\kappa}} | } \cond_x (\mathcal F)  =
\sum_{ x\in | \mathbb A^1_{\kappa} | } [\kappa(x) : \kappa] (\dro_x (\mathcal F) + \swan_x(\mathcal{F})) = c(\mathcal{F})
\end{equation} 
as required for (6).
\end{proof}

\subsubsection{Betti bounds for tensor products}

\begin{lem}\label{conductor-prod-lemma} 

For $K_1 \in D^b_c( \mathbb A^1_\kappa,\overline{\mathbb Q_\ell})$ and a sheaf $K_2$ on $\mathbb A^1_\kappa$ with no finitely supported sections, we have 
\begin{equation}\label{eq-conductor-prod-lemma} 
\sum_{j=-\infty}^{\infty} \dim H^j_c( \mathbb A^1_{\overline{\kappa}}, K_1 \otimes K_2 )  \leq c_F( K_1) \rank(K_2) + \rank(K_1) c_F(K_2) + \rank(K_1) \rank(K_2) .
\end{equation} 

\end{lem}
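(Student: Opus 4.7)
The plan is to reduce the statement to the case where $K_1$ is a simple perverse sheaf, then treat two sub-cases. By \cref{conductor-little-lemmas}(1), both $\rank$ and $c_F$ are additive along the perverse filtration $\{{}^p\mathcal{H}^j(K_1)[-j]\}$, whereas the sum $\sum_j \dim H^j_c(K_1 \otimes K_2)$ is subadditive under the distinguished triangles of that filtration via the long exact sequence in cohomology. Combined with \cref{conductor-little-lemmas}(2) and a Jordan--H\"older filtration of each perverse cohomology sheaf, this reduces the statement to $K_1$ being a simple perverse sheaf, up to a cohomological shift that merely reindexes the $H^j_c$.

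A simple perverse sheaf on $\mathbb{A}^1_\kappa$ is either (i) of the form $i_{x,*}V$ for $x$ a closed point of $\mathbb{A}^1_\kappa$ and $V$ a simple $\Gal(\overline{\kappa}/\kappa(x))$-representation, or (ii) an intermediate extension $j_{!*}\mathcal{L}[1]$ of an irreducible lisse sheaf $\mathcal{L}$ on some dense open $j\colon U \hookrightarrow \mathbb{A}^1_\kappa$. In case (i), $K_1 \otimes K_2$ is supported on the Galois orbit of $x$, and its total cohomology equals $[\kappa(x):\kappa]\dim(V)\dim((K_2)_{\overline{x}})$; since $K_2$ has no finitely supported sections, the natural map $(K_2)_{\overline{x}} \hookrightarrow (K_2)_{\overline{\eta}}^{I_x}$ is injective, giving $\dim((K_2)_{\overline{x}}) \leq \rank(K_2)$ and thus the bound $c_F(K_1)\rank(K_2)$, consistent with the right-hand side.

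In case (ii), set $\mathcal{F}_1 := j_{!*}\mathcal{L}$, a sheaf on $\mathbb{A}^1_\kappa$ with no finitely supported sections, so that $K_1 \otimes K_2 = (\mathcal{F}_1 \otimes K_2)[1]$. Because the tensor product of two injections of vector spaces is injective, $\mathcal{F}_1 \otimes K_2$ also has no finitely supported sections, and consequently $H^0_c(\mathbb{A}^1_{\overline{\kappa}}, \mathcal{F}_1 \otimes K_2) = 0$. The group $H^2_c$ is bounded by the generic stalk dimension $\rank(\mathcal{F}_1)\rank(K_2)$, and $H^1_c$ is read off from $\chi_c$ via the Grothendieck--Ogg--Shafarevich formula \cref{EP2}. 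Bounding tensor-product conductors through \cref{SwanTensorProdBoundCor}, together with the estimate $\dro_x(\mathcal{F}_1 \otimes K_2) \leq \dro_x(\mathcal{F}_1)\rank(K_2) + \rank(\mathcal{F}_1)\dro_x(K_2)$ that follows from the multiplicativity of stalk dimensions in tensor products, and re-expressing the result through the identity $c_F(\mathcal{F}) = \sum_x \cond_x(\mathcal{F}) + \swan'_\infty(\mathcal{F})$ of \cref{conductor-little-lemmas}(5), assembles the claimed inequality.

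The main obstacle is obtaining the sharp constant $1$ in front of the $\rank(K_1)\rank(K_2)$ term. The naive conversion $\swan_\infty \leq \swan'_\infty + \rank$ inside the Euler characteristic bound creates an apparent surplus of order $\rank(\mathcal{F}_1)\rank(K_2)$ for each of the two sheaves. The sharp constant emerges from a compensation between the bound on $\dim H^2_c$ and the gap $\swan_\infty - \swan'_\infty$: whenever the generic slopes at infinity are close to or exceed $1$, the wild inertia at infinity acts without nonzero invariants, which forces the coinvariants computing $H^2_c$ to shrink in tandem with the surplus. Making this compensation rigorous (effectively proving $\dim H^2_c + \tfrac{1}{2}(\swan_\infty - \swan'_\infty) \leq \rank$ for $\mathcal{F}_1 \otimes K_2$ by decomposing the generic stalk into its maximal trivial quotient and a wildly ramified remainder) is the technical heart of the argument.
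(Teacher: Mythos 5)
Your reduction to the perverse and irreducible cases, the skyscraper case, and the finite-place estimate all match the paper. The difference is in how you handle the term at infinity, and it is worth discussing because your route, while sound, differs from the paper's.

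The paper proves \eqref{prod-local-infinite} by reducing to $V_1, V_2$ \emph{irreducible} as $I_\infty$-representations, using $\dim \mathrm{Hom}_{I_\infty}(V_1, V_2^\vee) \leq 1$, and then absorbing the resulting $2\dim(V_1\otimes V_2)_{I_\infty}$ into the factor $\max\{\slope(V_1), \slope(V_2), 2\}$. You instead propose the single inequality $\dim H^2_c + \tfrac{1}{2}(\swan_\infty - \swan'_\infty) \leq \rank$ for $\mathcal{F}_1 \otimes K_2$. This inequality is in fact true, and is arguably the cleaner statement; after multiplying by $2$ it recombines with \cref{EP2} to give what you need, provided you also observe that $\swan'$ satisfies tensor-product subadditivity, $\swan'_\infty(K_1\otimes K_2) \leq \swan'_\infty(K_1)\rank(K_2) + \rank(K_1)\swan'_\infty(K_2)$, which you do not state but which follows from \cref{slope-tensor-lemmas} by the same reduction to Jordan--H\"older factors. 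Without it, the $\swan'_\infty(K_1\otimes K_2)$ produced by your step (5) cannot be converted into $c_F(K_1)\rank(K_2) + \rank(K_1)c_F(K_2)$ via \cref{conductor-little-lemmas}(5), so this omission is a genuine step you must fill.

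The more significant issue is that you label the inequality $\dim H^2_c + \tfrac{1}{2}(\swan_\infty - \swan'_\infty) \leq \rank$ ``the technical heart'' and leave it unproven, suggesting you see it as a serious obstacle. It is not: filter the $I_\infty$-representation $(\mathcal{F}_1\otimes K_2)_{\overline{\eta}}$ by Jordan--H\"older factors $W_i$. For trivial $W_i$, the coinvariants equal $\dim W_i$ and $\swan - \swan' = 0$; for nontrivial $W_i$ of slope $s$, the coinvariants vanish and $\swan(W_i) - \swan'(W_i) = \min\{s,1\}\dim W_i \leq \dim W_i$. Since coinvariants are subadditive along filtrations and $\swan, \swan'$ are additive, summing gives $\dim H^2_c + (\swan_\infty - \swan'_\infty) \leq \rank$ (stronger than you need, so the $\tfrac{1}{2}$ is harmless). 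Note, however, that your proposed method of proof --- ``decomposing the generic stalk into its maximal trivial quotient and a wildly ramified remainder'' --- is not quite right: $I_\infty$-representations are not semisimple, so such a direct-sum splitting is unavailable, and you would also be silently discarding tame nontrivial constituents. The Jordan--H\"older filtration (into trivial versus \emph{nontrivial} irreducible pieces, not trivial versus \emph{wild}) is the correct device. With that fix and the $\swan'$ subadditivity added, your argument closes.
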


\begin{proof} 

First, let us reduce to the case when $K_1$ is perverse. The perverse filtration on $K_1$, whose $j$'th associated graded by definition is ${}^p \mathcal H^j(K_1)$, induces a filtration on $K_1 \otimes K_2$ whose $j$'th associated graded is ${}^p \mathcal H^j( K_1) \otimes K_2$ and thus a filtration on  $H^* ( \mathbb A^1_{\overline{\kappa}}, K_1 \otimes K_2)$ whose $j$'th associated graded is $H^*_c( \mathbb A^1_{\overline{\kappa}}, {}^p \mathcal H^j( K_1) \otimes K_2 ) $. The spectral sequence associated to this filtration computes  $ H^n_c( \mathbb A^1_{\overline{\kappa}}, K_1 \otimes K_2 )  $ in terms of all the $ H^i_c( \mathbb A^1_{\overline{\kappa}}, {}^p \mathcal H^j( K_1) \otimes K_2 ) $. This spectral sequence gives the inequality 
\begin{equation}
\dim H^n_c( \mathbb A^1_{\overline{\kappa}}, K_1 \otimes K_2 ) \leq \sum_{j=-\infty}^{\infty} \dim H^{n-j}_c (\mathbb A^1_{\overline{\kappa} }, {}^p \mathcal H^j(K_1) \otimes K_2) 
\end{equation}
which implies
\begin{equation}
\sum_{i=-\infty}^{\infty} \dim H^i_c( \mathbb A^1_{\overline{\kappa}}, K_1 \otimes K_2 )  \leq   \sum_{i=-\infty}^{\infty}\sum_{j=-\infty}^{\infty}  \dim H^i_c( \mathbb A^1_{\overline{\kappa}}, {}^p\mathcal H^j (K_1 )\otimes K_2 ) .
\end{equation} 

Thus the left hand side of \cref{eq-conductor-prod-lemma} is subadditive when we pass to perverse cohomology. 
By \cref{conductor-little-lemmas}(1),  the right hand side of \cref{eq-conductor-prod-lemma} is additive when we pass to perverse cohomology. It is therefore sufficient to handle the case when $K_1$ is perverse.
By the same argument, except using \cref{conductor-little-lemmas}(2), it suffices to handle the case when $K_1$ is an irreducible perverse sheaf.

As $K_1$ is an irreducible perverse sheaf, it is either a skyscraper sheaf or the shift of a middle extension sheaf,
which in particular will have no finitely supported sections. Since both sides of \cref{eq-conductor-prod-lemma} are invariant under shifts, it suffices to handle the case when $K_1$ is either a skyscraper sheaf or a sheaf with no finitely supported sections.

If $K_1 = \delta_x$ for some $x \in \mathbb{A}^1_\kappa$ is a skyscraper sheaf  then by \cref{conductor-little-lemmas}(3) we have $c_F(K_1) =1$ and 
\begin{equation}
H^i_c( \mathbb A^1_{\overline{\kappa} }, K_1 \otimes K_2)= \mathcal H^i ( K_2)_{x} = 
\begin{cases} 
K_{2,x} & i= 0 \\
0 &i \neq 0
\end{cases} 
\end{equation} 
so since $K_2$ has no finitely supported sections, we get
\begin{equation}
\sum_{i=-\infty}^{\infty} H^i_c( \mathbb A^1_{\overline{\kappa} }, K_1 \otimes K_2) = \dim K_{2,x} \leq \rank(K_2) = c_F(K_1) \rank(K_2) 
\end{equation}
so the required inequality in \cref{eq-conductor-prod-lemma} is satisfied.

Finally, we must check the case when $K_1$ and $K_2$ are sheaves with no finitely supported sections. 
Thus $K_1 \otimes K_2$ has no finitely supported sections either, so $H^0_c( \mathbb A^1_{\overline{\kappa}}, K_1 \otimes K_2) =0$.
Hence we have 
\begin{equation*}
\begin{split}
\sum_{i=-\infty}^{\infty} \dim H^i_c( \mathbb A^1_{\overline{\kappa}}, K_1 \otimes K_2 ) &= 
\dim H^1_c( \mathbb A^1_{\overline{\kappa}}, K_1 \otimes K_2 ) + \dim H^2_c( \mathbb A^1_{\overline{\kappa}}, K_1 \otimes K_2 ) \\
&= 2 \dim H^2_c( \mathbb A^1_{\overline{\kappa}}, K_1 \otimes K_2 ) - \chi( \mathbb A^1_{\overline{\kappa}}, K_1 \otimes K_2).
\end{split}
\end{equation*}

By \cite[Proof of Lemma 4.7]{FKM13}, we can identify $H^2_c( \mathbb A^1_{\overline{\kappa}}, K_1 \otimes K_2 )$ with the coinvariants of $K_{1,\eta} \otimes K_{2,\eta}$ under the action of $\pi_1^{\text{\'{e}t}}(U)$, 
for some open $U$ in $\mathbb{A}^1_{\overline{\kappa}}$ where $K_1, K_2$ are lisse. 
By \cref{EP2} we have 
\begin{equation*}
\begin{split}
&\chi(\mathbb A^1_{\overline{\kappa}}, K_1 \otimes K_2) = \\
&\rank(K_1 \otimes K_2)  -  \sum_{ x \in | \mathbb A^1_{\overline{\kappa} } | }  (\rank(K_1 \otimes K_2)  - \dim (K_1 \otimes K_2)_x + \swan_x(K_1 \otimes K_2 ) )   - \swan_{\infty} (K_1\otimes K_2 ).
\end{split}
\end{equation*}

We will check that
\begin{equation}\label{prod-local-finite} 
\rank(K_1 \otimes K_2)  - \dim (K_1 \otimes K_2)_x + \swan_x(K_1 \otimes K_2 ) \leq \cond_x (K_1)  \rank(K_2) + \rank (K_1) \cond_x (K_2)
\end{equation}
for every $x \in |\mathbb{A}^1_{\overline{\kappa}}|$, and that
\begin{equation} \label{prod-local-infinite}  
\begin{split}
&\swan_{\infty} (K_1\otimes K_2 )- \rank(K_1 \otimes K_2)  + 2 \dim ( K_{1,\eta} \otimes K_{2,\eta} )_{\pi_1^{\text{\'{e}t}}(U)} \leq \\
&\swan'_{\infty} (K_1) \rank(K_2) + \rank(K_1) \swan'_{\infty}(K_2) + \rank(K_1) \rank(K_2).
\end{split}
\end{equation}
The bound in \cref{eq-conductor-prod-lemma} will then follow upon summing \cref{prod-local-finite} over all $x \in |\mathbb A^1_{\overline{\kappa}}| $, adding \cref{prod-local-infinite}, and using \cref{conductor-little-lemmas}(5).

For \cref{prod-local-finite}, first observe that 
\begin{equation}
\dim (K_1 \otimes K_2)_x  = \dim(K_{1,x}\otimes K_{2,x})=\dim(K_{1,x})\dim(K_{2,x}) 
\end{equation} 
so that 
\begin{equation*}
\begin{split}
&\rank(K_1\otimes K_2)  -  \dim (K_1 \otimes K_2)_x = \rank(K_1) \rank (K_2) - \dim ( K_{1,x}) \dim(K_{2,x} ) \leq \\
&\rank(K_1) \rank (K_2) - \dim ( K_{1,x}) \dim(K_{2,x} ) + (\rank(K_1) -\dim(K_{1,x}))  (\rank(K_2) -\dim (K_{2,x})) = \\
&\rank(K_1)  (\rank(K_2) -\dim (K_{2,x}))  +  (\rank(K_1) -\dim(K_{1,x})) \rank(K_2).
\end{split}
\end{equation*}

Next we apply \cref{SwanTensorProdBoundCor} to obtain
\begin{equation}
\swan_x(K_1 \otimes K_2) \leq \swan_x(K_1) \rank(K_2) + \rank(K_1) \swan_x (K_2).
\end{equation} 
\cref{prod-local-finite} now follows from the definition of $\cond_x(K_1)$ and  $ \cond_x(K_2)$.

We turn to  \cref{prod-local-infinite}.
As every global monodromy coinvariant is a coinvariant of the local monodromy at $\infty$,
it suffices to prove that
\[  \swan_{\infty} (K_1\otimes K_2 ) + 2 \dim ( K_{1,\eta} \otimes K_{2,\eta} )_{I_\infty} \leq  \swan'_{\infty} (K_1) \rank(K_2) + \rank(K_1) \swan'_{\infty}(K_2) +2  \rank(K_1) \rank(K_2). 
 \] 
Both sides above depend only on $V_1 = K_{1,\eta}$ and $V_2 = K_{2,\eta}$ viewed as representations of $I_{\infty}$. 
Writing $V_1$ and $V_2$ as iterated extensions of irreducible representations, the swan conductor, rank, and $\swan'$ are all additive, while the dimension of the inertia coinvariants is subadditive, so it suffices to handle the case when $V_1$ and $V_2$ are irreducible. 

In case $V_1$ and $V_2$ are irreducible, by \cref{ModifiedSwan}, we must prove that
\begin{equation*}
\begin{split}
&\swan (V_1 \otimes V_2) + 2 \dim (V_1 \otimes V_2)_{I_{\infty}}  \leq \\
&\dim(V_1) \dim(V_2) ( \max \{ \slope(V_1) -1,0\} + \max \{\slope(V_2)-1,0\} + 2)  .
\end{split}
\end{equation*}
Since trivial representations have Swan conductor zero, and
\begin{equation*}
\dim (V_1 \otimes V_2)_{I_\infty} = \dim \mathrm{Hom}_{I_\infty} (V_1 \otimes V_2, \overline{\mathbb{Q}_\ell}) = \dim \mathrm{Hom}_{I_{\infty}} (V_1, V_2^\vee) \leq 1
\end{equation*}
in view of irreducibility, it follows from \cref{slope-tensor-lemmas}(1) that
\begin{equation*}
\begin{split}
\swan (V_1 \otimes V_2) &\leq ( \dim (V_1 ) \dim (V_2) - \dim (V_1 \otimes V_2)_{I_\infty}) \max \{\slope(V_1),\slope(V_2)\} \\ 
&\leq ( \dim (V_1 ) \dim (V_2) - \dim (V_1 \otimes V_2)_{I_\infty}) \max \{\slope(V_1),\slope(V_2),2\} \\
&\leq \dim (V_1 ) \dim(V_2) \max \{\slope(V_1),\slope(V_2),2\} - 2\dim (V_1 \otimes V_2)_{I_\infty}
\end{split}
\end{equation*}
and this is at most 
\[ \dim(V_1) \dim (V_2)   ( \max \{\slope(V_1) -1,0\} + \max\{\slope(V_2)-1,0\} + 2) - 2\dim (V_1 \otimes V_2)_{I_\infty} \]
so \cref{prod-local-infinite} is established.
\end{proof}

We shall need an auxiliary vanishing statement for the cohomology of Artin-Schreier sheaves.

\begin{lem}
\label{linear-vanishing-lem} 
Let $n,m$ be a positive integers, let $\mathbb A^n$ be an affine space over $\kappa$, 
let $e_1,\dots, e_m\colon \mathbb A^n \to \mathbb A^1$ be affine maps, 
and let $\alpha_1, \dots, \alpha_m \in \kappa$ be scalars such that the map 
\begin{equation}
e = \sum_{i=1}^m \alpha_i e_i
\end{equation} 
is nonconstant.
Then
\begin{equation}
H^*_c \Bigg(\mathbb A^n_{\overline{\kappa}}, \bigotimes_{i =1}^m e_i^* \mathcal L_\psi(\alpha_i x) \Bigg)  = 0. 
\end{equation}
\end{lem}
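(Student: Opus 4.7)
The strategy is to collapse the tensor product into a single Artin-Schreier sheaf on $\mathbb{A}^n_\kappa$ attached to the affine function $e$, change coordinates so that $e$ becomes a coordinate, and then reduce to the standard one-variable vanishing via Künneth.

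First, I would invoke the fundamental multiplicativity property of the Artin-Schreier construction: since the Lang cover $Y^{|\kappa|} - Y = w(X)$ is additive in $w$ (if $Y_i^{|\kappa|} - Y_i = w_i$ for $i = 1, 2$ then $(Y_1 + Y_2)^{|\kappa|} - (Y_1 + Y_2) = w_1 + w_2$), one obtains a canonical isomorphism $\mathcal{L}_\psi(w_1) \otimes \mathcal{L}_\psi(w_2) \cong \mathcal{L}_\psi(w_1 + w_2)$, and pullback along any morphism $f$ satisfies $f^* \mathcal{L}_\psi(w) \cong \mathcal{L}_\psi(w \circ f)$. Applied to our setup, each $e_i^* \mathcal{L}_\psi(\alpha_i x) \cong \mathcal{L}_\psi(\alpha_i e_i)$ as sheaves on $\mathbb{A}^n_\kappa$, so
\begin{equation*}
\bigotimes_{i=1}^m e_i^* \mathcal{L}_\psi(\alpha_i x) \cong \mathcal{L}_\psi\Bigl(\sum_{i=1}^m \alpha_i e_i\Bigr) = \mathcal{L}_\psi(e).
\end{equation*}

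Second, since $e$ is a nonconstant affine function on $\mathbb{A}^n$, I would write $e = L + c$ with $L$ a nonzero linear form and $c \in \kappa$. The sheaf $\mathcal{L}_\psi(c)$ is geometrically a constant rank-one lisse sheaf, so after base change to $\overline{\kappa}$ we have $\mathcal{L}_\psi(e) \cong \mathcal{L}_\psi(L) \otimes \overline{\mathbb{Q}_\ell}$, which affects cohomology only by a harmless scalar and so does not change vanishing. A linear change of coordinates on $\mathbb{A}^n_{\overline{\kappa}}$ turns $L$ into the first coordinate, identifying $\mathcal{L}_\psi(L)$ with $\pi^* \mathcal{L}_\psi(x)$ for the projection $\pi \colon \mathbb{A}^n_{\overline{\kappa}} \to \mathbb{A}^1_{\overline{\kappa}}$.

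Third, the Künneth formula for compactly supported cohomology, applied to the decomposition $\mathbb{A}^n_{\overline{\kappa}} \cong \mathbb{A}^1_{\overline{\kappa}} \times \mathbb{A}^{n-1}_{\overline{\kappa}}$ with $\pi^* \mathcal{L}_\psi(x) \cong \mathcal{L}_\psi(x) \boxtimes \overline{\mathbb{Q}_\ell}$, reduces the problem to showing that $H^*_c(\mathbb{A}^1_{\overline{\kappa}}, \mathcal{L}_\psi(x)) = 0$. This is a standard computation: $H^0_c$ vanishes because $\mathcal{L}_\psi(x)$ has no finitely supported sections on the non-proper curve $\mathbb{A}^1$; $H^2_c$ identifies with the coinvariants of the monodromy on the generic stalk, which vanish because $\mathcal{L}_\psi(x)$ has nontrivial local monodromy at infinity (slope one by \cref{AS-sheaf-properties}(3)); and the Grothendieck-Ogg-Shafarevich formula from \cref{EP2} gives $\chi(\mathbb{A}^1_{\overline{\kappa}}, \mathcal{L}_\psi(x)) = 1 - 1 = 0$, forcing $H^1_c = 0$ as well.

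The only step that requires any care is the first one, namely verifying the multiplicativity isomorphism $\mathcal{L}_\psi(w_1) \otimes \mathcal{L}_\psi(w_2) \cong \mathcal{L}_\psi(w_1 + w_2)$ at the level of the sheaves constructed in \cref{AS-sheaf}; this is essentially a direct consequence of the fact that the Lang torsor construction converts addition of regular functions into the tensor product of the associated character sheaves, and is compatible with pullback. Once this formal input is in place, the geometric content of the lemma reduces to the familiar one-dimensional Artin-Schreier vanishing.
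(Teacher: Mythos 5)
Your proposal is correct and follows essentially the same route as the paper: establish that $\bigotimes_i e_i^* \mathcal{L}_\psi(\alpha_i x) \cong e^* \mathcal{L}_\psi(x)$, change coordinates so that $e$ projects to a factor, apply K\"unneth, and invoke the standard one-variable vanishing for $\mathcal{L}_\psi(x)$ via the Grothendieck--Ogg--Shafarevich formula. The only cosmetic difference is that you split $e$ into a linear form plus a constant before changing coordinates, whereas the paper uses the nonconstant affine map $e$ directly as a coordinate (translations being harmless anyway), and you appeal to the general multiplicativity $\mathcal{L}_\psi(w_1) \otimes \mathcal{L}_\psi(w_2) \cong \mathcal{L}_\psi(w_1 + w_2)$ together with pullback-compatibility, while the paper verifies the combined identity in one step by tracking the action of $\pi_1^{\text{\'{e}t}}$ on the generic fibers of the Lang covers; these are the same argument presented in two different orders.
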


\begin{proof} 

We start with the special case $n = m = 1, \ \alpha_1 = 1, \ e_1 = \text{id}$, where we need to show that
\begin{equation} \label{CohomologyASSonLine}
H^0_c(\mathbb A^1_{\overline{\kappa}}, \mathcal L_\psi(x) )= H^1_c(\mathbb A^1_{\overline{\kappa}}, \mathcal L_\psi(x) )=  H^2_c(\mathbb A^1_{\overline{\kappa}}, \mathcal L_\psi(x) ) = 0.
\end{equation}
Vanishing in degree $0$ follows from \cref{AS-sheaf-properties}(5).
For degree $2$ we have
\begin{equation}
H^2_c(\mathbb A^1_{\overline{\kappa}}, \mathcal L_\psi(x) ) = 
(\mathcal L_\psi(x)_{\overline{\eta}})_{\pi_1^{\text{\'{e}t}} (U)}
\end{equation}
for some open $U \subseteq \mathbb{A}^1_{\overline{\kappa}}$ where $\mathcal L_\psi(x)$ is lisse as in \cref{AS-sheaf-properties}(2).
Since $\dim \mathcal L_\psi(x)_{\overline{\eta}} = 1$ by \cref{AS-sheaf-properties}(6) and $\pi_1^{\text{\'{e}t}} (U)$ acts nontrivially,
the dimension of the coinvariants is less than $1$, so we have the desired vanishing of cohomology in degree $2$.
In view of the vanishing in degrees $0$ and $2$, we get from \cref{EP2}, \cref{AS-sheaf-properties}(2,6), and \cref{AffineEulerChar} that
\begin{equation*}
\begin{split}
\dim H^1_c(\mathbb A^1_{\overline{\kappa}}, \mathcal L_\psi(x) ) = -\chi(\mathbb A^1_{\overline{\kappa}}, \mathcal L_\psi(x) )
&= \swan_{\infty} \mathcal{L}_\psi(x) - \chi(\mathbb{A}^1_{\overline{\kappa}}) \rank(\mathcal{L}_\psi(x))  \\ &= 1 \cdot 1 -1 = 0.
\end{split}
\end{equation*}

For the general case, we shall begin by checking that  
\begin{equation}
\bigotimes_{i =1}^m e_i^* \mathcal L_\psi(\alpha_i x)   \cong e^{*} \mathcal L_\psi(x).
\end{equation} 
Both sheaves are lisse of rank $1$ on $\mathbb{A}^n$, 
so it suffices to check that each $\sigma$ in $\pi_1^{\text{\'{e}t}} (\mathbb A^n_{\kappa})$ acts on their generic fibers by the same scalar. 
For each $1 \leq i \leq m$, 
the action of $\sigma$ on the generic fiber of $e_i^* \mathcal L_\psi(\alpha_i x) $ arises from its action on the finite \'{e}tale cover $y_i^p- y_i = \alpha_i e_i$ of $\mathbb{A}^n$ (by translation on $y_i$) composed with $\psi$, 
so the action of $\sigma$ on the generic fiber of the tensor product arises from its action on the product of all these covers, 
composed with $\psi$, and multiplying. That is, $\sigma$ acts by the scalar
\begin{equation*}
\prod_{i=1}^m \psi ( \sigma(y_i)-y_i) = \psi \left( \sum_{i=1}^m (\sigma(y_i)-y_i) \right) = 
\psi \left(  \sigma \left( \sum_{i=1}^m y_i \right) - \sum_{i=1}^m y_i \right).
\end{equation*}

Setting $y = \sum_{i=1}^m  y_i$, 
we see that 
\begin{equation}
y^p-y = \sum_{i=1}^m y_i^p - \sum_{i=1}^m y_i = \sum_{i=1}^m \alpha_i e_i= e,
\end{equation} 
so $\sigma$ acts by the same scalar on the generic fiber of $e^{*} \mathcal L_\psi (x)$.

If $e$ is nonconstant, we can use it as a coordinate of $\mathbb A^n$, 
namely write $\mathbb A^n = \mathbb A^1 \times \mathbb A^{n-1}$ with $e$ projecting onto the first factor. 
From the K\"{u}nneth formula and \cref{CohomologyASSonLine} we get that 
\begin{equation*}
\begin{split} 
H^*_c \Bigg(\mathbb A^n_{\overline{\kappa}}, \bigotimes_{i =1}^m e_i^* \mathcal L_\psi(\alpha_i x)  \Bigg) &=  
H^*_c (\mathbb A^n_{\overline{\kappa}}, e^{*} \mathcal L_\psi(x)  ) = 
H^*_c( \mathbb A^{1}_{\overline{\kappa}} \times \mathbb A^{n-1}_{\overline{\kappa}}, \mathcal L_\psi(x) \boxtimes \overline{\mathbb Q_\ell}) \\ 
&= 
H^*_c(\mathbb A^1_{\overline{\kappa}}, \mathcal L_\psi(x) ) \otimes H^*_c(\mathbb A^{n-1}_{\overline{\kappa}}, \overline{\mathbb Q_\ell}) \\ &= 0 \otimes  H^*_c(\mathbb A^{n-1}_{\overline{\kappa}} , \overline{\mathbb{Q}_\ell}) = 0.
\end{split}
\end{equation*}
\end{proof}

\begin{lem} \label{betti-number-bound} 

Let $\kappa$ be an algebraically closed field of characteristic $p$,
let $x_1,\dots, x_m \in \kappa$ be distinct elements, 
and let $K_1, \dots, K_m$ be sheaves on $\mathbb A^1_\kappa$ with no finitely supported sections.
For a nonnegative integer $n \leq m$, 
view $\mathbb A^n_\kappa$ as the space of polynomials of degree less than $n$, 
and for $1 \leq i \leq m$ let $e_i \colon \mathbb A^n \to \mathbb A^1$ be the map that evaluates a polynomial at $x_i$. 
We then have
\begin{equation}
\sum_{j=-\infty}^{\infty} \dim H^j_c\Bigl(\mathbb A^n, \bigotimes_{i=1}^m e_i^* K_i \Bigr) \leq 
\Bigl( \prod_{i=1}^m( \rank(K_i) (1+Z) + c_F(K_i) Z) \Bigr) [Z^n] 
\end{equation}
where $Z$ is a formal variable and $[Z^n]$ is the operator extracting the coefficient of $Z^n$ from a polynomial. 

\end{lem}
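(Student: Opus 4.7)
The plan is to proceed by induction on $m$. For the base case $m = 0$ (which forces $n = 0$), we verify $\dim H^0_c(\mathbb{A}^0, \overline{\mathbb{Q}_\ell}) = 1 = [Z^0](1)$ directly. For the inductive step with $m \geq 1$, I would first apply the projection formula along the evaluation map $e_m\colon \mathbb{A}^n \to \mathbb{A}^1$ to write
\[
\sum_j \dim H^j_c\Bigl(\mathbb{A}^n, \bigotimes_{i=1}^m e_i^* K_i\Bigr) = \sum_j \dim H^j_c(\mathbb{A}^1, K_m \otimes L), \qquad L \defeq (e_m)_!\bigotimes_{i<m} e_i^* K_i.
\]
Since $K_m$ has no finitely supported sections, applying \cref{conductor-prod-lemma} with the complex $L$ and the sheaf $K_m$ gives
\[
\sum_j \dim H^j_c(\mathbb{A}^1, K_m \otimes L) \leq \rank(K_m)\,c_F(L) + \bigl(\rank(K_m) + c_F(K_m)\bigr)\rank(L).
\]
Setting $Q(Z) = \prod_{i<m}\bigl(\rank(K_i)(1+Z) + c_F(K_i)Z\bigr)$, the identity
\[
\bigl[Z^n\bigr]\bigl(\rank(K_m)(1+Z)+c_F(K_m)Z\bigr)Q(Z) = \rank(K_m)\bigl[Z^n\bigr]Q + \bigl(\rank(K_m)+c_F(K_m)\bigr)\bigl[Z^{n-1}\bigr]Q
\]
then reduces the theorem to the two auxiliary bounds $\rank(L) \leq [Z^{n-1}]Q(Z)$ and $c_F(L) \leq [Z^n]Q(Z)$.

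The first auxiliary bound I would obtain by proper base change. The stalk $\mathcal{H}^j(L)_{\bar\eta}$ at a geometric generic point of $\mathbb{A}^1$ equals $H^j_c(e_m^{-1}(\bar\eta), \bigotimes_{i<m} e_i^* K_i)$. Parametrizing polynomials $f$ of degree less than $n$ with $f(x_m) = t$ (for $t$ the coordinate at $\eta$) as $f(u) = t + (u-x_m)g(u)$ identifies $e_m^{-1}(\bar\eta)$ with $\mathbb{A}^{n-1}_{\overline{\kappa(t)}}$; on this fiber, $e_i$ for $i < m$ becomes the evaluation $\tilde e_i$ at $x_i$ composed with the affine automorphism $y \mapsto t + (x_i - x_m)y$ of $\mathbb{A}^1$. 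Since pullback through an affine automorphism preserves both $\rank$ and $c_F$, the inductive hypothesis applied to these $m-1$ sheaves on $\mathbb{A}^{n-1}_{\overline{\kappa(t)}}$ then gives the required bound.

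The second auxiliary bound is the technical heart. My plan is to use \cref{conductor-little-lemmas}(5) to write
\[
c_F(L) = \sum_{x \in |\mathbb{A}^1_{\kappa}|} \cond_x(L) + \swan'_\infty(L),
\]
and to analyze the local invariants of $L = (e_m)_!\bigotimes_{i<m} e_i^* K_i$ fiber by fiber: the drop and Swan at a finite $x \in \mathbb{A}^1$ record the change in $H^*_c(e_m^{-1}(x), \bigotimes_{i<m} e_i^* K_i)$ as the fiber becomes singular (some $e_i(f)$ meeting the singular locus of $K_i$), while the modified Swan at $\infty$ is controlled via \cref{SwanTensorProdBoundCor} applied to the tensor decomposition together with the Grothendieck--Ogg--Shafarevich formula \cref{EP2}. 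A second application of the projection formula would then reinterpret each local contribution as a Betti number on a lower-dimensional slice of the original setup, and the inductive hypothesis applied to these smaller instances should assemble into exactly $[Z^n]Q(Z)$, with the coefficients tracked correctly because both skyscraper and Artin--Schreier sheaves---the ``elementary replacements'' for $K_m$---have $\rank + c_F = 1$.

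The hardest part will be this second bound: a naive interpretation of $c_F(L)$ via tensoring with a generic Artin--Schreier sheaf and applying the inductive hypothesis yields an excess term of order $\rank(K_m)\cdot[Z^{n-1}]Q(Z)$ once combined with the third term of \cref{conductor-prod-lemma}. Avoiding this loss forces the direct local analysis above, and the modified Swan conductor $\swan'$ from \cref{ModifiedSwan} together with the tight tensor-product Swan bound \cref{SwanTensorProdBoundCor} will be essential for apportioning the ramification at infinity with the correct multiplicities.
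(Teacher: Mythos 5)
Your skeleton --- projection formula along $e_m$, then \cref{conductor-prod-lemma}, then the coefficient identity reducing everything to $\rank(L)\leq [Z^{n-1}]Q$ and $c_F(L)\leq [Z^n]Q$ --- is exactly the paper's engine, and your treatment of $\rank(L)$ via proper base change to the generic fiber $e_m^{-1}(\bar\eta)\cong\mathbb A^{n-1}_{\overline{\kappa(t)}}$ is sound (it is the same as the paper's device of tensoring with a generic skyscraper $e_m^*\delta_{\alpha_m}$). The gap is in the second auxiliary bound, which you yourself flag as the hard part but do not actually prove. Your proposed route --- expanding $c_F(L)$ by \cref{conductor-little-lemmas}(5) into local conductors of $L$ and analyzing them ``fiber by fiber'' --- is not viable with the tools available: \cref{conductor-little-lemmas}(5) applies only to a \emph{sheaf} with no finitely supported sections, whereas $L=(e_m)_!\bigotimes_{i<m}e_i^*K_i$ is a complex whose perverse constituents are not explicitly known; and the drops, Swan conductors at finite points, and $\swan'_\infty$ of this pushforward encode vanishing-cycle data of the degenerating fibers of $e_m$, which none of \cref{EP2}, \cref{SwanTensorProdBoundCor}, or the projection formula compute. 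This is precisely the computation the paper states it cannot do (``it does not seem possible to derive Betti number bounds directly from considering the pushforward'').

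The correct fix is not a local analysis but a \emph{strengthened induction hypothesis}. Note that $c_F(L)$ is by definition already a Betti sum: by the projection formula it equals $\sum_j\dim H^j_c\bigl(\mathbb A^n_{\overline{\kappa(\alpha)}},\ \bigl(\bigotimes_{i<m}e_i^*K_i\bigr)\otimes e_m^*\mathcal L_\psi(\alpha x)\bigr)$ for a transcendental $\alpha$. The excess term you compute arises because the original statement would assign the generic Artin--Schreier factor the weight $\rank(1+Z)+c_FZ=1+Z$, when in fact it should contribute a factor of $1$. The paper therefore proves, by induction on the number $d$ of surviving sheaves $K_i$, the stronger claim that the Betti sum of $\bigl(\bigotimes_{i\leq d}e_i^*K_i\bigr)\otimes\bigl(\bigotimes_{i\in S}e_i^*\delta_{\alpha_i}\bigr)\otimes\bigl(\bigotimes_{i\in S^c}e_i^*\mathcal L_\psi(\alpha_ix)\bigr)$, with independent transcendentals $\alpha_i$, is at most $\bigl(Z^{|S|}\prod_{i\leq d}(\rank(K_i)(1+Z)+c_F(K_i)Z)\bigr)[Z^n]$ --- skyscrapers contribute $Z$, Artin--Schreier factors contribute $1$. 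With this hypothesis both $c_F(L)$ and $\rank(L)$ are literally smaller instances of the claim, and the induction closes with no excess. The price is a nontrivial base case $d=0$: one must show that a tensor product of generic Artin--Schreier pullbacks restricted to the affine-linear locus cut out by the skyscrapers has vanishing cohomology unless that locus is a point, which is \cref{linear-vanishing-lem} and uses the genericity (algebraic independence) of the $\alpha_i$. Without this strengthening, your induction on $m$ does not close.
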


\begin{proof} We will prove this by inductively replacing each $K_i$ with either a skyscraper sheaf $\delta_{\alpha_i}$ or an Artin-Schreier sheaf $\mathcal L_\psi( \alpha_i x)$. To that end, let us formulate a more general statement, depending on a parameter $d$, which we will prove by induction.

Fix $0\leq d\leq m$. 
Let $\kappa(\alpha_{d+1},\dots ,\alpha_m)$ be the field of rational functions in $m-d$ variables. 
Let $S \subseteq \{d+1,\dots, m\}$ be a subset, and denote its complement by $S^c$.
Our more general statement is that
\[ \sum_{j=-\infty}^{\infty} \dim H^j_c\Bigl(\mathbb A^n_{\overline{\kappa (\alpha_{d+1}, \dots, \alpha_m)}}, \Bigl(\bigotimes_{i=1}^d e_i^* K_i\Bigr) \otimes \Bigl(\bigotimes_{i\in S} e_i^* \delta_{\alpha_i}  \Bigr)  \otimes \Bigl( \bigotimes_{i \in S^c} e_i^* \mathcal L_\psi(\alpha_i x)  \Bigr) \Bigr) \]
is at most
\begin{equation} \label{ZcoefficientGeneralizedboundEq}
\Bigl( Z^{|S|}  \prod_{i=1}^d( \rank(K_i) (1+Z) + c_F(K_i) Z) \Bigr) [Z^n].
\end{equation}
Our lemma follows by taking $d=m$.

We prove the above by induction on $d$.
Let us first check the base case, when $d=0$, so no $K_i$ appear, and \cref{ZcoefficientGeneralizedboundEq} is simply $1$ if $|S|=n$ and $0$ otherwise. In this case, observe that $\bigotimes_{i\in S} e_i^* \delta_{\alpha_i}$ is the constant sheaf on 
\begin{equation}
L = \{f \in \mathbb A^n : f(x_i)= \alpha_i, \ i \in S \}.
\end{equation}
As the $\alpha_i$ are independent transcendentals, 
$L$ is empty in case $|S| > n$, 
and then the sheaf $\bigotimes_{i\in S} e_i^* \delta_{\alpha_i}$ is zero,
so the cohomology is vanishing in all degrees hence the zero bound in \cref{ZcoefficientGeneralizedboundEq} is confirmed.
In case $|S| \leq n$, the locus $L$ is an affine space of dimension $n- |S|$. By Lemma \ref{linear-vanishing-lem}, the cohomology of this affine space with coefficients in  $ \bigotimes_{i \in S^c} e_i^* \mathcal L_\psi(\alpha_i x) $ vanishes as long as $\sum_{i \in S^c} \alpha_i  e_i$ is nonconstant. Since the $\alpha_i$ are independent transcendentals this sum is nonconstant as soon as one of the $e_i$ is (i.e. $f \mapsto f(\alpha_i)$) is nonconstant on $L$. 
If $|S| < n$ then all of these forms are nonconstant on $L$, and because $n \leq m$ by assumption,
the set $S^c$ parametrizing these forms is nonempty, so indeed one are nonconstant, and 
the zero bound in \cref{ZcoefficientGeneralizedboundEq} is valid also in case $|S|<n$. 
If $|S|=n$, we are taking the cohomology of a point with coefficients in a (constant) sheaf of rank $1$, 
hence the cohomology is $1$-dimensional (concentrated in degree $j = 0$). 
This verifies the base case.

For the induction step, assume that the statement is known for $d-1$ - we will verify it for $d$. 
By the projection formula, for every integer $j$ we have
\begin{equation*}
\begin{split}
&H^j_c\Bigl(\mathbb A^n_{\overline{\kappa (\alpha_{d+1}, \dots, \alpha_m)}}, \Bigl(\bigotimes_{i=1}^d e_i^* K_i\Bigr) \otimes \Bigl(\bigotimes_{i\in S} e_i^* \delta_{\alpha_i}  \Bigr)  \otimes \Bigl( \bigotimes_{i \in S^c} e_i^* \mathcal L_\psi(\alpha_i x)  \Bigr) \Bigr) = \\
&H^j_c\Bigl(\mathbb A^1_{\overline{\kappa (\alpha_{d+1}, \dots, \alpha_m)}},  K_d \otimes e_{d,!} \Bigl ( \Bigl(\bigotimes_{i=1}^{d-1}  e_i^* K_i\Bigr) \otimes \Bigl(\bigotimes_{i\in S} e_i^* \delta_{\alpha_i}  \Bigr)  \otimes \Bigl( \bigotimes_{i \in S^c} e_i^* \mathcal L_\psi(\alpha_i x)  \Bigr)  \Bigr)\Bigr).
\end{split}
\end{equation*}
It follows from \cref{conductor-prod-lemma} that
\begin{equation*}
\begin{split}
&\sum_{j=-\infty}^{\infty} \dim H^j_c\Bigl(\mathbb A^n_{\overline{\kappa (\alpha_{d+1}, \dots, \alpha_m)}}, \Bigl(\bigotimes_{i=1}^d e_i^* K_i\Bigr) \otimes \Bigl(\bigotimes_{i\in S} e_i^* \delta_{\alpha_i}  \Bigr)  \otimes \Bigl( \bigotimes_{i \in S^c} e_i^* \mathcal L_\psi(\alpha_i x)  \Bigr) \Bigr) \leq \\
&\rank(K_d) c_F \Bigl( e_{d,!} \Bigl ( \Bigl(\bigotimes_{i=1}^{d-1}  e_i^* K_i\Bigr) \otimes \Bigl(\bigotimes_{i\in S} e_i^* \delta_{\alpha_i}  \Bigr)  \otimes \Bigl( \bigotimes_{i \in S^c} e_i^* \mathcal L_\psi(\alpha_i x)  \Bigr)  \Bigr) \Bigr) + \\  
&(\rank(K_d) +  c_F(K_d)) \rank \Bigl ( e_{d,!} \Bigl( \Bigl(\bigotimes_{i=1}^{d-1}  e_i^* K_i\Bigr) \otimes \Bigl(\bigotimes_{i\in S} e_i^* \delta_{\alpha_i}  \Bigr)  \otimes \Bigl( \bigotimes_{i \in S^c} e_i^* \mathcal L_\psi(\alpha_i x)  \Bigr)  \Bigr) \Bigr).
\end{split}
\end{equation*}

Using the fact that the algebraic closure of $\overline{\kappa (\alpha_{d+1}, \dots, \alpha_m)}(\alpha_d)$ is $\overline{\kappa(\alpha_d,\dots ,\alpha_m)}$, 
we get from \cref{ComplexesInvarsDef}, the projection formula, and the inductive hypothesis that
\begin{equation*}
\begin{split}
&c_F \Bigl( e_{d,!} \Bigl ( \Bigl(\bigotimes_{i=1}^{d-1}  e_i^* K_i\Bigr) \otimes \Bigl(\bigotimes_{i\in S} e_i^* \delta_{\alpha_i}  \Bigr)  \otimes \Bigl( \bigotimes_{i \in S^c} e_i^* \mathcal L_\psi(\alpha_i x)  \Bigr)  \Bigr) \Bigr)  = \\
&\sum_{j=-\infty}^{\infty}\dim H^j_c\Bigl(\mathbb A^1_{\overline{\kappa (\alpha_{d}, \dots, \alpha_m)}},  \mathcal L_\psi(\alpha_d x)  \otimes e_{d,!} \Bigl ( \Bigl(\bigotimes_{i=1}^{d-1}  e_i^* K_i\Bigr) \otimes \Bigl(\bigotimes_{i\in S} e_i^* \delta_{\alpha_i}  \Bigr)  \otimes \Bigl( \bigotimes_{i \in S^c} e_i^* \mathcal L_\psi(\alpha_i x)  \Bigr)  \Bigr)\Bigr) = \\
&\sum_{j=-\infty}^{\infty}\dim H^j_c\Bigl(\mathbb A^n_{\overline{\kappa (\alpha_{d}, \dots, \alpha_m)}}, \Bigl(\bigotimes_{i=1}^{d-1} e_i^* K_i\Bigr) \otimes \Bigl(\bigotimes_{i\in S} e_i^* \delta_{\alpha_i}  \Bigr)  \otimes \Bigl( \bigotimes_{i \in S^c \cup \{d\} } e_i^* \mathcal L_\psi(\alpha_i x)  \Bigr) \Bigr) \leq \\
&\Bigl( Z^{|S|}  \prod_{i=1}^{d-1} ( \rank(K_i) (1+Z) +c_F(K_i) Z) \Bigr) [Z^n].
\end{split}
\end{equation*}

We make a similar argument for the rank. 
To do so, observe that taking the stalk at the generic point is equivalent to taking the stalk, over the field extension of the base field adjoining a new variable $\alpha_d$, at the point $\alpha_d$, and this is equivalent to taking the tensor product with the skyscraper sheaf $\delta_{\alpha_d}$ and taking cohomology in degree zero. This gives

\begin{equation*}
\begin{split}
&\rank \Bigl( e_{d,!} \Bigl ( \Bigl(\bigotimes_{i=1}^{d-1}  e_i^* K_i\Bigr) \otimes \Bigl(\bigotimes_{i\in S} e_i^* \delta_{\alpha_i}  \Bigr)  \otimes \Bigl( \bigotimes_{i \in S^c} e_i^* \mathcal L_\psi(\alpha_i x)  \Bigr)  \Bigr) \Bigr) = \\
&\sum_{j=-\infty}^{\infty}\dim H^j_c\Bigl(\mathbb A^1_{\overline{\kappa (\alpha_{d}, \dots, \alpha_m)}},  \delta_{\alpha_d}   \otimes e_{d,!} \Bigl ( \Bigl(\bigotimes_{i=1}^{d-1}  e_i^* K_i\Bigr) \otimes \Bigl(\bigotimes_{i\in S} e_i^* \delta_{\alpha_i}  \Bigr)  \otimes \Bigl( \bigotimes_{i \in S^c} e_i^* \mathcal L_\psi(\alpha_i x)  \Bigr)  \Bigr)\Bigr) = \\
&\sum_{j=-\infty}^{\infty}\dim H^j_c\Bigl(\mathbb A^n_{\overline{\kappa (\alpha_{d}, \dots, \alpha_m)}}, \Bigl(\bigotimes_{i=1}^{d-1} e_i^* K_i\Bigr) \otimes \Bigl(\bigotimes_{i\in S\cup \{d\} } e_i^* \delta_{\alpha_i}  \Bigr)  \otimes \Bigl( \bigotimes_{i \in S^c } e_i^* \mathcal L_\psi(\alpha_i x)  \Bigr) \Bigr) \leq \\
&\Bigl( Z^{|S|+1}  \prod_{i=1}^{d-1} ( \rank(K_i) (1+Z) + c_F(K_i) Z) \Bigr) [Z^n].
\end{split}
\end{equation*}

Combining all the bounds above, we obtain
\begin{equation*}
\begin{split}
&\sum_{j=-\infty}^{\infty}\dim H^j_c\Bigl(\mathbb A^n_{\overline{\kappa (\alpha_{d+1}, \dots, \alpha_m)}}, \Bigl(\bigotimes_{i=1}^d e_i^* K_i\Bigr) \otimes \Bigl(\bigotimes_{i\in S} e_i^* \delta_{\alpha_i}  \Bigr)  \otimes \Bigl( \bigotimes_{i \in S^c} e_i^* \mathcal L_\psi(\alpha_i x)  \Bigr) \Bigr) \leq \\
&\Bigl( Z^{|S|} (\rank(K_d) + (\rank(K_d) + c_F(K_d) )Z )   \prod_{i=1}^{d-1} ( \rank(K_i) (1+Z) + c_F(K_i) Z) \Bigr) [Z^n] = \\
&\Bigl( Z^{|S|}  \prod_{i=1}^{d} ( \rank(K_i) (1+Z) + c_F(K_i) Z) \Bigr) [Z^n],
\end{split}
\end{equation*}
completing the induction step.
\end{proof} 
 

 
%
%
%
%
%
%
%
%
%

\subsubsection{Short trace sum bound}

The following Lemma is a variant of \cite[Lemma 6.12]{Fe20}.

\begin{lem} \label{LinearAlgebraLem}

Let $k \geq 2$ be an integer, let $V_1, \dots, V_k$ be finite-dimensional vector spaces, 
and let 
\begin{equation}
A_1 \colon V_1 \to V_2, \ A_2 \colon V_2 \to V_3, \ \dots, \ A_{k-1} \colon V_{k-1} \to V_k, \ A_k \colon V_k \to V_1
\end{equation}
be linear maps. 
Denote by 
\begin{equation}
R \colon V_2 \otimes V_3 \otimes \dots \otimes V_{k} \otimes V_1 \to V_1 \otimes V_2 \otimes \dots \otimes V_{k-1} \otimes V_k
\end{equation}
the cyclic right shift, and set 
\begin{equation*}
V  = V_1 \otimes V_2 \otimes \dots \otimes V_{k-1} \otimes V_k, 
\quad A = R \circ (A_1 \otimes A_{2} \otimes \dots \otimes A_{k-1}\otimes A_k), 
\ A \colon V \to V.
\end{equation*}
Then
\begin{equation} \label{LinAlgToProveEq}
\textup{tr}(A, V) = \textup{tr}(A_k \circ A_{k-1} \circ \dots \circ A_2 \circ A_1, V_1).
\end{equation}

\end{lem}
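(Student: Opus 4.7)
The plan is to prove this by a direct computation in coordinates, tracking how the cyclic right shift $R$ causes indices to ``close up'' in exactly the pattern that produces the trace of a composition.

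First I would fix a basis $\{e^{(i)}_1, \ldots, e^{(i)}_{n_i}\}$ for each $V_i$ and write $(A_i)^{b}_{a}$ for the matrix entries, so that $A_i e^{(i)}_a = \sum_b (A_i)^{b}_{a} e^{(i+1)}_b$ (with indices $\mod k$, so $A_k$ maps into $V_1$). The basis of $V = V_1 \otimes \cdots \otimes V_k$ is then given by the pure tensors $e^{(1)}_{j_1} \otimes \cdots \otimes e^{(k)}_{j_k}$.

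Next I would apply $A_1 \otimes \cdots \otimes A_k$ to such a basis vector and unwind:
\begin{equation*}
(A_1 \otimes \cdots \otimes A_k)\bigl(e^{(1)}_{j_1} \otimes \cdots \otimes e^{(k)}_{j_k}\bigr) = \sum_{j'_1, \ldots, j'_k} (A_1)^{j'_2}_{j_1} (A_2)^{j'_3}_{j_2} \cdots (A_{k-1})^{j'_k}_{j_{k-1}} (A_k)^{j'_1}_{j_k} \; e^{(2)}_{j'_2} \otimes \cdots \otimes e^{(k)}_{j'_k} \otimes e^{(1)}_{j'_1},
\end{equation*}
and then apply $R$, which by definition rearranges the last tensor factor to the front, giving $e^{(1)}_{j'_1} \otimes e^{(2)}_{j'_2} \otimes \cdots \otimes e^{(k)}_{j'_k}$. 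The coefficient of the ``diagonal'' term, where $j'_i = j_i$ for all $i$, is therefore $(A_1)^{j_2}_{j_1} (A_2)^{j_3}_{j_2} \cdots (A_{k-1})^{j_k}_{j_{k-1}} (A_k)^{j_1}_{j_k}$.

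Summing over all basis indices yields
\begin{equation*}
\operatorname{tr}(A, V) = \sum_{j_1, \ldots, j_k} (A_1)^{j_2}_{j_1} (A_2)^{j_3}_{j_2} \cdots (A_{k-1})^{j_k}_{j_{k-1}} (A_k)^{j_1}_{j_k}.
\end{equation*}
The right-hand side of this formula is exactly $\sum_{j_1} (A_k A_{k-1} \cdots A_1)^{j_1}_{j_1}$ by the standard formula for matrix multiplication iterated $k-1$ times, which is precisely $\operatorname{tr}(A_k \circ \cdots \circ A_1, V_1)$. This completes the verification of \cref{LinAlgToProveEq}.

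There is no real obstacle; the only point requiring a bit of care is bookkeeping the cyclic shift of upper and lower indices. The cyclic shift $R$ is exactly the ingredient that makes the two indices on the final factor $(A_k)^{j'_1}_{j_k}$ pair up ``around the cycle'' with the others, turning the tensor-product trace into the cyclic composition trace. (One could alternatively phrase the same argument via $\operatorname{tr}(XY) = \operatorname{tr}(YX)$ and induction on $k$ using the decomposition $V = V_1 \otimes (V_2 \otimes \cdots \otimes V_k)$, but the direct coordinate computation is the shortest route.)
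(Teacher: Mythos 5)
Your proof is correct, but it takes a genuinely different route from the paper's. You pick bases, write out matrix entries, and chase indices directly: applying $A_1 \otimes \cdots \otimes A_k$ followed by the cyclic shift $R$, reading off the diagonal coefficient $(A_1)^{j_2}_{j_1}(A_2)^{j_3}_{j_2}\cdots(A_k)^{j_1}_{j_k}$, and observing that summing over all $j_1,\ldots,j_k$ recovers the iterated matrix product formula for $\operatorname{tr}(A_k\cdots A_1)$. The paper instead gives a basis-free argument via a commutative diagram whose corners are $\bigotimes_i \operatorname{Hom}(V_i,V_{i+1})$, $\operatorname{Hom}(V_1,V_1)$, and their avatars under the canonical isomorphism $\Gamma^\vee \otimes \Theta \cong \operatorname{Hom}(\Gamma,\Theta)$; the key point there is that the trace on $\operatorname{Hom}(\Gamma,\Gamma)$ corresponds to the evaluation pairing on $\Gamma^\vee\otimes\Gamma$, so one traces the element $A_1\otimes\cdots\otimes A_k$ around the diagram in two ways. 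Your computation is more elementary and self-contained; the paper's formulation is coordinate-free, which fits better with the surrounding sheaf-theoretic machinery (where everything is phrased in terms of $\operatorname{Frob}_q$-equivariant isomorphisms of stalks), and it also makes visible the canonical-isomorphism viewpoint that motivates the cyclic shift $R$ in the first place. Both arguments correctly reduce the claim to the same index-contraction, only expressed in different languages.
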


\begin{proof}

We have the commutative diagram
\begin{equation*}
\begin{tikzpicture}[scale=1.5]
\node (A) at (-2,1) {$\mathrm{Hom}(V_1, V_2) \otimes \dots \otimes \mathrm{Hom}(V_k, V_{k+1})$};
\node (B) at (2.2,1) {$\mathrm{Hom}(V_1, V_{k+1})$};
\node (C) at (-2,0) {$V_1^\vee \otimes V_2 \otimes \dots \otimes V_k^\vee \otimes V_{k+1}$};
\node (D) at (2.2,0) {$V_1^\vee \otimes V_{k+1}$};
\path[->,font=\scriptsize,>=angle 90]
(A) edge node[above]{\text{composition}} (B)
(A) edge node[right]{} (C)
(B) edge node[right]{} (D)
(C) edge node[above]{\text{evaluation}} (D);
\end{tikzpicture}
\end{equation*}
where $V_{k+1}$ is a finite-dimensional vector space, 
the vertical arrows are isomorphisms arising from the canonical isomorphism of vector spaces 
\begin{equation} \label{CanonIsomVecSpEq}
\Gamma^\vee \otimes \Theta \cong \mathrm{Hom}(\Gamma, \Theta), \quad \xi \otimes \theta \mapsto (\gamma \mapsto \xi(\gamma)\theta),
\end{equation}
and the lower horizontal (evaluation) map is given on pure tensors by
\begin{equation}
\xi_1 \otimes v_2 \otimes \xi_2 \otimes v_3 \otimes \dots \otimes \xi_k \otimes v_{k+1} \mapsto 
\xi_2(v_2) \xi_3(v_3) \dots \xi_k(v_k) \xi_1 \otimes v_{k+1}.
\end{equation}

Now we set $V_{k+1} = V_1$.
Note that if in \cref{CanonIsomVecSpEq} we put $\Theta = \Gamma$, 
then under our identification the trace map on $\mathrm{Hom}(\Gamma, \Gamma)$ corresponds to the evaluation map on $\Gamma^\vee \otimes \Gamma$ given on pure tensors by $\xi \otimes \gamma \mapsto \xi(\gamma)$.
Therefore, if we take the element $A_1 \otimes \dots \otimes A_k$ in the upper left corner of our diagram, 
and apply composition followed by the vertical identification and then the evaluation map on $V_1^\vee \otimes V_1$, we get
$
\tr(A_k \circ \dots \circ A_1, V_1)
$
which is the right hand side of \cref{LinAlgToProveEq}.

We can identify the upper left corner of our diagram with $\mathrm{Hom}(V,V)$ by
\begin{equation}
A_1 \otimes \dots \otimes A_k \mapsto A = R \circ (A_1 \otimes \dots \otimes A_k),
\end{equation}
and identify the lower left corner with $V_1^\vee \otimes \dots \otimes V_k^\vee \otimes V_1 \otimes \dots \otimes V_k$ via a reordering of the vectors in a tensor product. 
Under these identifications, 
the left vertical arrow becomes the isomorphism in \cref{CanonIsomVecSpEq} composed with the natural isomorphism
\begin{equation}
V^\vee \cong V_1^\vee \otimes \dots \otimes V_k^\vee.
\end{equation}
Hence if we start with $A_1 \otimes \dots \otimes A_k$ and traverse our diagram counter-clockwise, 
and then apply the evaluation map on $V_1^\vee \otimes V_1$, we get $\tr(A,V)$ which is the left hand side of \cref{LinAlgToProveEq}.
Therefore, the desired equality follows from the commutativity of our diagram.
\end{proof}

\begin{cor} \label{trace-interval-prop} 

Let $g \in \F_q[u]$ be a squarefree polynomial, and let
\begin{equation}
t \colon \F_q[u]/(g) \to \mathbb{C}
\end{equation} 
be an infinitame trace function, such that for some prime factor $\tau$ of $g$,
the function $t_\tau$ is a Dirichlet trace function.
Then for $n < \deg(g)$ we have 
\begin{equation*}
\Bigl| \sum_{\substack{f \in \mathbb F_q[u] \\ \deg (f) < n} }  t(f) \Bigr|  \leq   q^{ \frac{n}{2} + \frac{1}{2} }   
\Bigl( \prod_{\pi \mid g} \left( r(t_{\pi} )(1+Z)  + c(t_{\pi}) Z \right)^{\deg (\pi)}\Bigr ) [Z^n]
\end{equation*}
while for $n \geq \deg(g)$ we have
\begin{equation*}
\sum_{\substack{f \in \F_q[u] \\ \deg(f) < n}} t(f) = 0.
\end{equation*}

\end{cor}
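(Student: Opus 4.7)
The plan is to realize $\sum_{\deg(f)<n}t(f)$ as a trace of $\Frob_q$ on \'{e}tale cohomology and invoke \cref{geometric-vanishing}, \cref{betti-number-bound}, together with Deligne's Riemann hypothesis. Viewing $\{f\in\F_q[u]:\deg(f)<n\}$ as $\mathbb{A}^n(\F_q)$, let $x_1,\dots,x_m\in\overline{\F_q}$ be the roots of $g$, grouped so that each $\Gal(\overline{\F_q}/\F_q)$-orbit corresponds to the roots of one prime factor $\pi\mid g$. For each root $x_i$ of some $\pi$, I base change $\mathcal F_\pi$ along the $\F_q$-embedding $\F_q[u]/(\pi)\hookrightarrow\overline{\F_q}$ sending $u\mapsto x_i$ to obtain a sheaf $\mathcal F_i$ on $\mathbb A^1_{\overline{\F_q}}$; these inherit all infinitame properties of $\mathcal F_\pi$. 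Because $t_\tau$ is a Dirichlet trace function, by \cref{Kummer-sheaf-properties}(7) each $\mathcal F_i$ arising from a root of $\tau$ is the extension by zero of a nontrivial tame one-dimensional representation on $\mathbb A^1\setminus\{z_i\}$, so the hypotheses of \cref{geometric-setup} are satisfied. I then form $\overline{\mathcal F} = \bigotimes_{i=1}^m e_i^*\mathcal F_i$ on $\mathbb A^n_{\overline{\F_q}}$ and descend to a sheaf on $\mathbb A^n_{\F_q}$ using the canonical identifications of $\mathcal F_i$ and $\mathcal F_j$ whenever $x_i,x_j$ lie in the same Galois orbit.

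The critical verification is that the trace of $\Frob_q$ on the stalk $\overline{\mathcal F}_{\overline f}$ agrees with $t(f)$ for $f\in\mathbb A^n(\F_q)$. The stalk is $\bigotimes_i(\mathcal F_i)_{\overline{f(x_i)}}$, and since $f$ has $\F_q$-coefficients, $\Frob_q$ cyclically permutes the evaluations $f(x_i)$ within each Galois orbit while simultaneously transporting the tensor factors through the isomorphisms between the $\mathcal F_i$. For a prime $\pi$ of degree $d$ whose roots form one such orbit, the resulting action on that orbit's subtensor is a cyclic right shift composed with linear maps, and \cref{LinearAlgebraLem} identifies its trace with $\tr(\Frob_{q^d},(\mathcal F_\pi)_{\overline{f\bmod\pi}}) = t_\pi(f\bmod\pi)$. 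Multiplying over orbits yields $\tr(\Frob_q,\overline{\mathcal F}_{\overline f}) = \prod_{\pi\mid g}t_\pi(f\bmod\pi) = t(f)$.

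Grothendieck--Lefschetz now gives $\sum_{\deg(f)<n}t(f) = \sum_j(-1)^j\tr(\Frob_q,H^j_c(\mathbb A^n_{\overline{\F_q}},\overline{\mathcal F}))$. By \cref{geometric-vanishing} only $j\in\{n,n+1\}$ contribute. Each $\mathcal F_\pi$ is mixed of nonpositive weights, so by the pullback and tensor-product analogues of \cref{general-tensor-product}(4) so is $\overline{\mathcal F}$, and Deligne's Riemann hypothesis bounds the Frobenius eigenvalues on $H^j_c$ by $q^{j/2}$. Combined with \cref{betti-number-bound} applied to $K_i=\mathcal F_i$, replacing $c_F(\mathcal F_i)$ by $c(t_\pi)$ via \cref{conductor-little-lemmas}(6), and collecting the $\deg(\pi)$ Galois-conjugate factors into the prescribed powers, this yields the stated estimate with leading factor $q^{n/2+1/2}$. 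For $n\geq\deg(g)$, periodicity and CRT give $\sum_{\deg(f)<n}t(f) = q^{n-\deg(g)}\prod_{\pi\mid g}\sum_{x\in\F_q[u]/(\pi)}t_\pi(x)$, and the factor at $\tau$ is a complete sum of a shifted nontrivial Dirichlet character and hence vanishes.

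The main obstacle is the descent and trace identification step in the second paragraph: the natural Frobenius on $\mathbb A^n_{\F_q}$ is $\Frob_q$, whereas $t_\pi$ records the action of $\Frob_{q^{\deg(\pi)}}$ on stalks of $\mathcal F_\pi$, and matching these two through the cyclic permutation of tensor factors is precisely what \cref{LinearAlgebraLem} is designed to accomplish. Once this identification is in place, the geometric heavy lifting has already been done in \cref{geometric-vanishing} and \cref{betti-number-bound}, and the proof is just a matter of assembling the pieces.
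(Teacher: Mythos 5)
Your proposal is correct and follows essentially the same path as the paper's proof: base-changing each $\mathcal F_\pi$ along the embeddings determined by the roots of $g$, verifying the hypotheses of \cref{geometric-setup} via \cref{Kummer-sheaf-properties}(7), descending the tensor product to $\mathbb A^n_{\F_q}$, matching $t_{\mathcal F}(f)$ with $t(f)$ via \cref{LinearAlgebraLem}, and then combining Grothendieck--Lefschetz, \cref{geometric-vanishing}, \cref{betti-number-bound}, Deligne's Riemann Hypothesis, and \cref{conductor-little-lemmas}(6), with the $n\geq\deg(g)$ case handled by CRT and the vanishing of the complete Dirichlet character sum. The only cosmetic difference is that the paper singles out a single root $x_1$ of $\tau$ as the distinguished factor, whereas you note all roots of $\tau$ would serve; this does not affect the argument.
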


%

\begin{proof} 

Suppose first that $n < \deg(g)$,
set $m = \deg(g)$, let $x_1,\dots, x_m \in \overline{\mathbb F_q}$ be the roots of $g$ ordered in such a way that $\tau(x_1) = 0$, 
and for each prime $\pi$ dividing $g$ let $\mathcal{F}_\pi$ be a sheaf giving rise to the trace function $t_\pi$.
Since $t_\tau$ is a Dirichlet trace function, 
in view of \cref{DirichletTraceFuncEx} and \cref{Kummer-sheaf}, we can take 
\begin{equation}
\mathcal{F}_\tau = \mathcal{L}_\chi(c(T-z))
\end{equation}
where $c \in (\F_q[u]/(\tau))^\times$, $z \in \F_q[u]/(\tau)$, and $\chi \colon (\F_q[u]/(\tau))^\times \to \mathbb{C}^\times$ is a character of order greater than $1$.

Fix $1 \leq i \leq m$. 
Since $g$ is squarefree, there exists a unique prime factor $\pi$ of $g$ such that $\pi(x_i) = 0$.
We define a sheaf $\mathcal{F}_i$ on $\mathbb{A}^1_{\overline{\F_q}}$ to be the base change of $\mathcal{F}_\pi$ along the embedding
$\mathbb F_q[u]/(\pi) \hookrightarrow \overline{\mathbb F_q}$ mapping $u$ to $x_i$.
Since $\tau(x_1) = 0$, for the case $i=1$ we have $\pi = \tau$, hence $\mathcal{F}_1$ is geometrically isomorphic to the Kummer sheaf $\mathcal{L}_\chi(c(T-z))$, so we conclude from \cref{Kummer-sheaf-properties}(7), 
that all the assumptions made in \cref{geometric-setup} are satisfied here.
From that notation we borrow the evaluation maps $e_i \colon \mathbb A^n \to \mathbb A^1$ defined in \cref{EvalEq}. 

Let $\sigma\in S_m$ be the unique permutation with $\Frob_q(x_i) =x_{\sigma(i) } $. Then 
\begin{equation}
\Frob_q \circ \ e_i = e_{\sigma(i)} \circ \Frob_q.
\end{equation}
As the embeddings defining the sheaves $\mathcal F_i$ and $\mathcal F_{\sigma(i)}$ differ by an application of $\Frob_q$,
we have an isomorphism $\Frob_q^*  \mathcal F_{\sigma(i)} \cong \mathcal F_i$ of sheaves. 
Consequently
\begin{equation}
\begin{split}
\Frob_q^* \bigotimes_{i=1}^m e_i^* \mathcal F_i &\cong 
\Frob_q^* \bigotimes_{i=1}^m e_{\sigma(i)}^* \mathcal F_{\sigma(i)} \cong  
\bigotimes_{i=1}^m (e_{\sigma(i)} \circ \Frob_q)^* \mathcal F_{\sigma(i)} \\
&\cong \bigotimes_{i=1}^m e_i^* \Frob_q^* \mathcal F_{\sigma(i)} \ \ \cong 
\bigotimes_{i=1}^m e_i^* \mathcal F_i,
\end{split}
\end{equation}
so we can descend the sheaf 
\begin{equation} \label{GeomSheafDef}
\overline{\mathcal{F}} = \bigotimes_{i=1}^m e_i^* \mathcal F_i
\end{equation}
from $\mathbb A^n_{\overline{\mathbb F_q}}$ to $\mathbb A^n_{\mathbb F_q}$, 
producing a sheaf $\mathcal F$ on $\mathbb A^n_{\mathbb F_q}$. 

Equivalently, we can choose a finite field extension $\mathbb F_{q'}$ of $\F_q$ over which $g$ splits completely, 
observe that $ e_i, $  $ \mathcal F_i$, and thus $\overline{\mathcal F}$  are all defined over $\mathbb F_{q'}$, 
and then descend $\overline{\mathcal F}$ from $\mathbb A^n_{\mathbb F_{q'}}$ to $\mathbb A^n_{\mathbb F_q}$. 
Since the sheaves $\{\mathcal F_i\}_{i=1}^m$ are mixed of nonpositive weights, 
so are their pullbacks and tensor products, 
hence the descent $\mathcal F$ is mixed of nonpositive weights.
A similar descent argument applies to the sheaf
\begin{equation} \label{SmallDescentArgumentEq}
\bigotimes_{\substack{1 \leq i \leq m \\ \pi(x_i) = 0}} e_i^* \mathcal{F}_i
\end{equation}
for every prime $\pi$ dividing $g$.

We claim that for every $f \in \F_q[u]$ with $\deg(f) < n$ we have 
\begin{equation} \label{GeomArithTraceFuncComparisonEq}
t_{\mathcal F } ( f ) = t( f).
\end{equation}
To see this, note that we have the isomorphisms 
\begin{equation} \label{VectorSpacesFrobqIsomEq}
\mathcal{F}_f \cong \overline{\mathcal F}_f \cong \Bigl( \bigotimes_{i=1}^m e_i^* \mathcal F_i \Bigr)_f 
\cong \bigotimes_{\pi \mid g} \Biggl( \bigotimes_{\substack{ 1 \leq i \leq m \\  \pi(x_i) = 0 }} e_i^* \mathcal F_i \Biggr)_f
\end{equation}
of vector spaces with an action of $\Frob_q$.
In view of our descent argument for the sheaf in \cref{SmallDescentArgumentEq}, 
for every $\pi$ dividing $g$ the linear map $\Frob_q$ on the vector space
\begin{equation}
V_\pi = \bigotimes_{i \in I_\pi} (e_i^* \mathcal F_i)_f, \quad I_\pi = \{1 \leq i \leq m : \pi(x_i) = 0\}
\end{equation}
is the tensor product of the linear maps
\begin{equation}
\Frob_q \colon \left( e_i^* \mathcal{F}_i \right)_f \to \left( e_{\sigma(i)}^*\mathcal{F}_{\sigma(i)} \right)_f, \quad i \in I_\pi.
\end{equation}

Since $\sigma$ permutes the sets $I_\pi$ cyclically, each $I_\pi$ contains $\deg(\pi)$ elements,
and for every $i \in I_\pi$ we have an isomorphism 
\begin{equation}
(e_i^*\mathcal{F}_i)_f \cong (\mathcal{F}_i)_{e_i(f)} =  (\mathcal{F}_i)_{f(x_i)} \cong (\mathcal{F}_\pi)_f
\end{equation}
of $\Frob_{q^{\deg(\pi)}}-$modules, we get from \cref{VectorSpacesFrobqIsomEq} and \cref{LinearAlgebraLem} that

\begin{equation*}
\begin{split}
t_{\mathcal{F}}(f) = \tr(\Frob_q, \mathcal{F}_f) &= \prod_{\pi \mid g} \tr(\Frob_q, V_\pi) \\
&= \prod_{\pi \mid g} \tr(\Frob_{q^{\deg(\pi)}}, (\mathcal{F}_{\pi})_{f}) = \prod_{\pi \mid g} t_{\pi}(f) = t(f).
\end{split}
\end{equation*}


It follows from \cref{GeomArithTraceFuncComparisonEq} and the Grothendieck-Lefschetz trace formula that
\begin{equation}
\Bigl| \sum_{\substack{f \in \mathbb F_q[u] \\ \deg f < n} }  t( f) \Bigr| = 
\Bigl| \sum_{x \in \mathbb{A}^n(\F_q) }   t_{\mathcal F} (x)\Bigr|  \leq
\sum_{j=-\infty}^{\infty} \left| \tr (\Frob_q, H^j_c( \mathbb A^n_{\overline{\mathbb F_q}}, \overline{\mathcal F })) \right|.
\end{equation}
Since $\mathcal{F}$ is mixed of nonpositive weights, Deligne's Riemann Hypothesis and \cref{GeomSheafDef} bound the above by
\begin{equation}
\sum_{j=-\infty}^{\infty} q^{j/2} 
\dim H^j_c \Bigl( \mathbb A^n_{\overline{\mathbb F_q}},\bigotimes_{i=1}^m e_i^* \mathcal F_i \Bigr). 
\end{equation} 
\cref{geometric-vanishing} allows us to bound the sum above by
\begin{equation}
q^{\frac{n+1}{2}} \dim H^n_c\Bigl( \mathbb A^n_{\overline{\mathbb F_q}},\bigotimes_{i=1}^m e_i^* \mathcal F_i \Bigr)+ q^{ \frac{n+1}{ 2}} \dim H^{n+1} _c\Bigl( \mathbb A^n_{\overline{\mathbb F_q}},\bigotimes_{i=1}^m e_i^* \mathcal F_i \Bigr).
\end{equation}

Since the sheaves $\mathcal{F}_i$ have no finitely supported sections, and $n \leq m$,
we get from \cref{betti-number-bound} that the above is at most 
\begin{equation}
q^{\frac{n+1}{2}} \left( \prod_{i=1}^m( \rank(\mathcal F_i) (1+Z) + c_F(\mathcal F_i) Z) \right) [Z^n]
\end{equation}
and since each $\mathcal F_{\pi}$ occurs with multiplicity $\deg (\pi)$ among the $\mathcal F_i$, we get
\begin{equation}
q^{ \frac{n}{2} + \frac{1}{2} }   \Biggl( \prod_{\pi \mid g} \left( \rank(\mathcal{F}_{\pi} )(1+Z)  + c_F (\mathcal F_{\pi}) Z \right)^{\deg (\pi)} \Biggr) [Z^n].
\end{equation}
By \cref{SheafFuncDictionaryDef} we have $\rank(\mathcal{F}_\pi) = r(t_\pi)$, 
and since the sheaves $\mathcal{F}_\pi$ are infinitame, we get from \cref{conductor-little-lemmas}(6) and \cref{InfinitameConductorDefEq} that $c_F(\mathcal{F}_\pi) = c(t_\pi)$, so the above equals
\begin{equation}
q^{ \frac{n}{2} + \frac{1}{2} }   \Biggl( \prod_{\pi \mid g} \left( r(t_{\pi} )(1+Z)  + c (t_{\pi}) Z \right)^{\deg (\pi)} \Biggr) [Z^n]
\end{equation}
as required.

Suppose now that $n \geq m$.
By the Chinese Remainder Theorem, and the fact that each residue class mod $g$ contains $q^{n-m}$ polynomials of degree less than $n$,
we have
\begin{equation*}
\sum_{\substack{f \in \F_q[u] \\ \deg(f) < n}} t(f) = 
q^{n-m} \sum_{f \in \F_q[u]/(g)} \prod_{\pi \mid g} t_\pi(f) = 
q^{n-m} \prod_{\pi \mid g} \sum_{f \in \F_q[u]/(\pi)} t_\pi(f).
\end{equation*}
For $\pi = \tau$ we are summing a Dirichlet trace function over all residue classes, so this sum vanishes,
hence the product is zero.
\end{proof}

We deduce \cref{intro-trace-interval-bound}.

\begin{proof}

Applying \cref{trace-interval-prop} with $n = \lceil \log_q(X) \rceil$, and recalling \cref{TraceFuncDefi}, we get the nound
\begin{equation}
\begin{split}
\sum_{ \substack {f \in \mathbb F_q[u] \\ |f| < X}} t(f) &\ll
q^{ \frac{n}{2} + \frac{1}{2} } \left( \prod_{\pi \mid g} \left( \rank(t_{\pi} )(1+Z)  + c_F (t_{\pi}) Z \right)^{\deg (\pi)} \right) [Z^n] \\
&\ll X^{\frac{1}{2}} \left( \prod_{\pi \mid g} \left( r(t)(1+Z)  + c(t) Z \right)^{\deg (\pi)} \right) [Z^n].
\end{split}
\end{equation}
The coefficients of the polynomial above are nonnegative,
so the coefficient of $Z^n$ is bounded by the sum of all the coefficients.
This sum is the value of the polynomial at $Z=1$ which equals
\begin{equation*}
\begin{split}
X^{\frac{1}{2}} \prod_{\pi \mid g} \left( 2r(t)  + c(t) \right)^{\deg (\pi)} &=
X^{\frac{1}{2}} \left( 2r(t)  + c(t) \right)^{\sum_{\pi \mid g} \deg (\pi)} \\
&= X^{\frac{1}{2}} \left( 2r(t)  + c(t) \right)^{\deg(g)} = X^{\frac{1}{2}} |g|^{\log_q( 2r(t)  + c(t))}
\end{split}
\end{equation*}
as required.
\end{proof}

\section{M\"{o}bius, discriminants, resultants}

\begin{notation} \label{IntervalNotation}

Define an interval $\mathcal I$ in $\mathbb F_q[u]$ to be a set of the form
\begin{equation}
\mathcal I_{f,d} = \{ f+ g : g \in \mathbb F_q[u], \deg(g) < d\}
\end{equation}
for any $f \in \mathbb F_q[u]$ and a nonnegative integer $d$. 
Define the dimension, length, and degree of the interval $\mathcal I = \mathcal I_{f,d}$ to be 
\begin{equation}
\dim(\mathcal I) = d, \quad \operatorname{len}(\mathcal I) = q^d, \quad \deg(\mathcal{I}) = \max\{d,\deg(f)\}.
\end{equation}
For instance, the set of monic polynomials of degree $d$ is an interval of dimension $d$, which we can see by taking $f=u^d$. 

Associated to an interval $\mathcal I = \mathcal I_{f, d}$, we have the subset $\mathcal I_{\overline{\mathbb F_q}} $ of $\overline{\mathbb F_q}[u]$, similarly defined as 
\begin{equation}
\mathcal I_{\overline {\mathbb{F}_q}} = \{ f+ g : g \in \overline{\mathbb F_q}[u], \deg(g) < d\}.
\end{equation} 
Writing $f = \tau_0u^0 + \dots + \tau_{\deg(f)}u^{\deg(f)}$, we get that
\begin{equation}
\mathcal I_{\overline {\mathbb{F}_q}} = \left\{ \sum_{i = 0}^{d-1} \theta_iu^i + \sum_{j= d}^{\deg(f)} \tau_ju^j: \theta_i \in \overline{\F_q} \right\},
\end{equation} 
and say that $\{\theta_i\}_{i=0}^{d-1}$ are the coordinates of $\mathcal{I}_{\overline{\F_q}}$, or by abuse of notation,
the coordinates of $g \in \mathcal{I}_{\overline{\F_q}}$.
We call $\theta_0$ the lowest coordinate of $\mathcal I_{\overline{\F_q}}$.
Note that the number of coordinates of $\mathcal I_{\overline{\F_q}}$ is $\dim(\mathcal I)$.

\end{notation}

\subsection{Relating M\"{o}bius to Dirichlet characters}

The main goal of this section, generalizing \cite[Section 3]{SS}, is to prove \cref{formula-for-Mobius}, which gives a formula for the quantity $\mu(F(u,f(u))$ from \cref{NewRes}, when restricted to special subsets of the form $f(u) = r(u)+ s(u)^p$ for fixed $r(u)$ and varying $s(u)$. Later, we will use this to control the average of $\mu(F(u,f(u)))$ by averaging over each special subset separately. 

\subsubsection{Zeuthen's rule}

We recall Zeuthen's rule from \cite[Lemma 4.6]{CCG} in a slightly generalized form.

\begin{notation} \label{CCGNotation}

Let $f_1(u,T)$ and $f_2(u,T)$ be two polynomials in $\mathbb F_q[u,T]$. Set
\begin{equation}
Z_{f_i} = \{(a,b) \in \overline{\F_q}^2 : f_i(a,b) = 0 \}, \quad i \in \{1,2\}.
\end{equation}
In case $Z_{f_1} \cap Z_{f_2}$ is finite, for any  
\begin{equation} \label{xuxtxEq}
x = (u_x, t_x) \in \overline{\F_q}^2
\end{equation}
we denote by 
\begin{equation} \label{DefIntrsctionNbrEq}
i_x( Z_{f_1},Z_{f_2}) = \dim_{\overline{\F_q}} \overline{\F_q}[u,T]_{(u-u_x, T-t_x)} / (f_1,f_2) 
\end{equation}
the intersection number of $Z_{f_1}$ and $Z_{f_2}$ at $x$.
One readily checks that the quantity above is positive if and only if $x \in Z_{f_1} \cap Z_{f_2}$.


%
%
%
%
%
%

Let $d,d'$ be nonnegative integers. As in \cite[Section 3]{CCG}, we denote by
\begin{equation}
R_{d,d'}(\alpha(u), \beta(u))
\end{equation}
the resultant defined by the universal formula for a polynomial $\alpha(u) \in \F_q[u]$ of degree at most $d$,
and a polynomial $\beta(u) \in \F_q[u]$ of degree at most $d'$ in terms of the coefficients of these polynomials. 
Omitting $d,d'$ we set
\begin{equation}
R(\alpha(u), \beta(u)) = R_{\deg(\alpha), \deg(\beta)}(\alpha(u), \beta(u)).
\end{equation}

In this work, every time we write $R_{d,d'}(\alpha(u), \beta(u))$ we will in fact have $d = \deg(\alpha)$, 
in which case \cite[(3.2)]{CCG} says that
\begin{equation} \label{CCGThreePointTwoEq}
R_{d,d'}(\alpha(u), \beta(u)) = \alpha_{d}^{d' - \deg(\beta)} R(\alpha(u), \beta(u))
\end{equation}
where $\alpha_d$ is the coefficient of $u^d$ in $\alpha(u)$.
We conclude that
\begin{equation} \label{CCGThreePointTwoConclusionEq}
R_{d,d''}(\alpha(u), \beta(u)) =
\alpha_d^{d'' - d'} R_{d,d'}(\alpha(u), \beta(u))
\end{equation}
for any integer $d'' \geq \deg(\beta)$.
We also recall from \cite[(3.1)]{CCG} that
\begin{equation} \label{DiscriminantProductOverRootsEq}
R(\alpha(u), \beta(u)) = \alpha_d^{\deg(\beta)} \prod_{\substack{z \in \overline{\F_q} \\ \alpha(z) = 0}} \beta(z).
\end{equation}

Occasionally, we will think of $f_1, f_2 \in \F_q[u][T]$ as polynomials in $T$ with coefficients from $\F_q[u]$.
For example, the leading coefficient of $f_1$ is the coefficient of the highest power of $T$.
Moreover, we use the notation $R(f_1,f_2)$ for the resultant of $f_1$ and $f_2$, 
always to be taken with respect to the variable $T$,
producing a polynomial in $\F_q[u]$.

For $\gamma \in \overline{\F_q}[u]$ and $u_0 \in \overline{\F_q}$, we denote by 
\begin{equation}
\mathrm{ord}_{u = u_0} \gamma(u) = \sup \{m \geq 0 : (u-u_0)^m \mid \gamma(u)\}
\end{equation}
the order of vanishing of $\gamma(u)$ at $u = u_0$.
All of the above is in fact valid for an arbitrary field in place of $\F_q$.

\end{notation}

\begin{lem} \label{modified-Zeuthen} 
Keep \cref{CCGNotation}, and suppose that $Z_{f_1} \cap Z_{f_2}$ is finite.
Then
\begin{equation} \label{mzr} 
\operatorname{ord}_{u=u_0} R (f_1,f_2) \geq \sum_{ c\in \overline{\F_q} } i_{(u_0, c)} (f_1, f_2) 
\end{equation} 
for every $u_0 \in \overline{\F_q}$, 
with equality if the leading coefficient of one of the polynomials $f_1, f_2$ does not vanish at $u_0$.

\end{lem}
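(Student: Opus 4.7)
The plan is to deduce this generalization of \cite[Lemma 4.6]{CCG} by passing to the projective closure in the $T$-direction, so that the resultant captures intersections at the point at infinity on top of the affine intersections. First I would homogenize: let $F_i(u, T, S) = S^{d_i} f_i(u, T/S)$ with $d_i = \deg_T f_i$, so $F_i(u, T, 1) = f_i(u, T)$ and $F_i(u, 1, 0) = a_i(u)$, the leading coefficient of $f_i$ in $T$. Then $R(f_1, f_2) = R_{d_1, d_2}(f_1, f_2)$ agrees with the projective (Sylvester) resultant of the binary forms $F_i(u, \cdot, \cdot)$, and this resultant transforms by a unit in $\overline{\F_q}^\times$ under $GL_2(\overline{\F_q})$-changes of $(T, S)$, while the projective intersection numbers $i_{(u_0, [T:S])}(F_1, F_2)$ are invariant under such changes.

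Next, since each $Z_{F_i}(u_0) := \{[T:S] \in \mathbb{P}^1(\overline{\F_q}) : F_i(u_0, T, S) = 0\}$ has at most $d_i$ points, I would choose $[\alpha : \gamma] \in \mathbb{P}^1(\overline{\F_q})$ outside $Z_{F_1}(u_0) \cup Z_{F_2}(u_0)$, and apply a $GL_2$-change of variables $(T, S) \mapsto (T', S')$ sending $[1:0]$ to $[\alpha : \gamma]$. The dehomogenized polynomials $\tilde f_i(u, T') = F_i(u, \alpha T' + \beta, \gamma T' + \delta)$ then have $\deg_{T'}(\tilde f_i) = d_i$ with leading coefficient $F_i(u, \alpha, \gamma)$ nonvanishing at $u_0$. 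Invoking \cite[Lemma 4.6]{CCG} in these coordinates yields
\[
\operatorname{ord}_{u = u_0} R(\tilde f_1, \tilde f_2) = \sum_{c' \in \overline{\F_q}} i_{(u_0, c')}(\tilde f_1, \tilde f_2),
\]
which by the invariance statements translates to
\[
\operatorname{ord}_{u = u_0} R(f_1, f_2) = \sum_{[T:S] \in \mathbb{P}^1(\overline{\F_q})} i_{(u_0, [T:S])}(F_1, F_2) = \sum_{c \in \overline{\F_q}} i_{(u_0, c)}(f_1, f_2) + i_{(u_0, [1:0])}(F_1, F_2).
\]
The inequality in the lemma follows from the nonnegativity of the last term, and $i_{(u_0, [1:0])}(F_1, F_2) = 0$ precisely when $[1:0] \notin Z_{F_1}(u_0) \cap Z_{F_2}(u_0)$, that is, when $a_1(u_0) \neq 0$ or $a_2(u_0) \neq 0$, giving the equality criterion.

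The main technical obstacle I anticipate is the careful bookkeeping of the invariance: specifically, verifying that the projective resultant of binary forms of degrees $d_1, d_2$ transforms by $(\det M)^{d_1 d_2}$ under a $GL_2$-change of variables $M$, and ensuring that $\deg_{T'}(\tilde f_i) = d_i$ so that \cite[Lemma 4.6]{CCG} applies verbatim to the dehomogenized polynomials. The identities \cref{CCGThreePointTwoEq} and \cref{CCGThreePointTwoConclusionEq} will be useful to handle any discrepancy between the actual $T'$-degree of $\tilde f_i$ and the nominal $d_i$ used in the Sylvester normalization. Granting these classical facts, the geometric content is clean: the resultant counts the total projective intersection number, which splits into a finite-point contribution and a contribution at infinity.
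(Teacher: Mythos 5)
Your argument is correct, and it pursues the same underlying geometric idea as the paper's proof: a M\"obius change of the variable $T$ moves the point at infinity away from the fiber of $Z_{f_1}\cap Z_{f_2}$ over $u_0$, after which the equality case of \cite[Lemma 4.6]{CCG} applies. The difference is mostly in packaging. You homogenize to binary forms, choose a generic point of $\mathbb{P}^1$ outside $Z_{F_1}(u_0)\cup Z_{F_2}(u_0)$, and invoke the classical covariance $R\mapsto(\det M)^{d_1 d_2}R$ of the Sylvester resultant under $GL_2$, thereby identifying $\operatorname{ord}_{u=u_0}R(f_1,f_2)$ with the total projective intersection number above $u_0$ and splitting it into an affine part plus the contribution at $[1:0]$. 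The paper instead picks a specific M\"obius map --- first a translation $T\mapsto T+\lambda$, with $\lambda$ chosen only so that $(u_0,\lambda)\notin Z_{f_1}\cap Z_{f_2}$ (a weaker condition than your choice avoiding the whole union), and then the inversion $T\mapsto T^{-1}$ --- for which the identity $R(f_1',f_2')=(-1)^{d_1 d_2}R(f_1,f_2)$ and the correspondence $i_{(u_0,c^{-1})}(f_1,f_2)=i_{(u_0,c)}(f_1',f_2')$ are verified by hand, bypassing the general $GL_2$-covariance lemma. The paper's route is thus more self-contained; yours is more conceptual and incidentally yields the sharper ``if and only if'' form of the equality criterion.

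One point you should make explicit: the intersection number $i_{(u_0,[1:0])}(F_1,F_2)$ that appears in your decomposition is finite. This follows from the hypothesis, but not trivially: any common factor of $F_1,F_2$ either depends on $T$ (hence descends to a common factor of $f_1,f_2$, making $Z_{f_1}\cap Z_{f_2}$ infinite), or is $S$ itself (impossible since $\deg_T f_i = d_i$ forces $S\nmid F_i$), or depends only on $u$ (again forcing infinite affine intersection). Without this remark, the splitting of the total projective intersection number that your proof hinges on is not obviously well-posed.
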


\begin{proof} 

The case where one leading coefficient does not vanish is \cite[Lemma 4.6]{CCG}, so we only prove the inequality above.

Since $Z_{f_1} \cap Z_{f_2}$ is finite, we can find $\lambda \in \overline{\F_q}$ with $(u_0,\lambda) \notin Z_{f_1} \cap Z_{f_2}$.
Making the change of variable $T \mapsto T + \lambda$, which preserves both sides of the inequality above,
we can assume that $(u_0,0) \notin Z_{f_1} \cap Z_{f_2}$. 
In other words, the constant term of one of the polynomials $f_1, f_2$ does not vanish at $u_0$.

Let $d_1$ and $d_2$ be the degrees of $f_1$ and $f_2$ respectively, and set 
\begin{equation}
f_1'(u, T) = f_1( u , T^{-1}) T^{d_1}, \quad f_2'(u,T)= f_2( u, T^{-1} )T^{d_2}
\end{equation}
exchanging the constant and leading terms.
Then 
\begin{equation} 
R ( f_1', f_2')= (-1)^{d_1 d_2} R( f_1, f_2)
\end{equation} 
so 
\begin{equation}\label{resultant-inverse-equation}
 \operatorname{ord}_{u=u_0} R (f_1, f_2) = \operatorname{ord}_{u=u_0} R  (f_1', f_2').
\end{equation}

Since the leading coefficient of one of the polynomials $f_1', f_2'$ does not vanish at $u_0$, 
by the previous case we have equality in \cref{mzr}, namely
\begin{equation}\label{resultant-previous-case}
\operatorname{ord}_{u = u_0} R(f_1', f_2') = \sum_{ c \in \overline{\F_q} } i_{(u_0, c)} (f_1', f_2' ).
\end{equation}
Removing $c = 0$, we get
\begin{equation} \label{DroppingZeroEq}
\sum_{ c \in \overline{\F_q} } i_{(u_0, c)} (f_1', f_2' ) \geq \sum_{ c \in \overline{\F_q}^\times } i_{(u_0, c)} (f_1', f_2' ).
\end{equation}

For every $c \in \overline{\F_q}^\times$, mapping $T$ to $T^{-1}$ induces an isomorphism
\begin{equation}
\overline{\F_q}[u,T]_{(u-u_0, T- c^{-1})} \cong \overline{\F_q}[u,T]_{(u-u_0, T- c)}
\end{equation}
hence also the isomorphisms
\begin{equation}
\begin{split}
&\overline{\F_q}[u,T]_{(u-u_0, T- c^{-1})}/(f_1(u,T), f_2(u,T)) \cong \\
&\overline{\F_q}[u,T]_{(u-u_0, T- c)}/(f_1(u,T^{-1}), f_2(u, T^{-1})) \cong \\
&\overline{\F_q}[u,T]_{(u-u_0, T- c)}/(f_1'(u,T), f_2'(u, T)).
\end{split}
\end{equation} 
Therefore, by definition of intersection numbers in \cref{DefIntrsctionNbrEq}, we get
\begin{equation}
i_{(u_0, c^{-1})}(f_1, f_2) = i_{(u_0, c)}(f_1', f_2').
\end{equation}

 Inverting $c \in \overline{\F_q}^\times$, we get
\begin{equation}\label{AddingZeroBack}
\sum_{ c \in \overline{\F_q}^\times } i_{(u_0, c)} (f_1', f_2' ). = \sum_{ c \in \overline{\F_q}^\times } i_{(u_0, c)} (f_1, f_2) = \sum_{c \in \overline{\F_q}}  i_{(u_0, c)} (f_1, f_2)  .
\end{equation}
with the last equality because the constant term of one of the polynomials $f_1, f_2$ does not vanish at $u_0$ and so $i_{(u_0, 0)} (f_1, f_2) $ vanishes.  Combining \cref{resultant-inverse-equation,resultant-previous-case,DroppingZeroEq,AddingZeroBack}, we get \cref{mzr}.
\end{proof}


%

\begin{remark}

The proof above is valid for every algebraically closed field in place of $\overline{\F_q}$.

\end{remark}

\subsubsection{Resultant formula}

Using Zeuthen's rule,
we prove a formula for a resultant, which we later apply to the resultant of a polynomial and its derivative, namely the discriminant.
The latter is related to the value of M\"{o}bius by Pellet's formula.
Our formula is a variant of \cite[Theorem 4.5]{CCG}, 
with our condition on the interval $\mathcal I$ replacing the assumption on the degree therein. 

\begin{lem}\label{ccg-like}

Keep \cref{IntervalNotation} and \cref{CCGNotation}.
Suppose that $Z_{f_1} \cap Z_{f_2}$ is finite,
and let $\mathcal I$ be an interval in $\mathbb F_q[u]$. 
Assume that the degree of
$
f_1(u,g(u))
$
is independent of $g(u) \in \mathcal{I}_{\overline{\mathbb F_q}}$ and nonnegative.
Denote this degree by $d$, and let $d'$ be an integer satisfying
\begin{equation} \label{Definingd'condition}
\deg(f_2(u, g(u))) \le d', \quad g(u) \in \mathcal{I}_{\overline{\F_q}}.
\end{equation}
Then there exists $c \in \mathbb F_q^{\times}$ (depending on $\mathcal I, f_1,f_2$) such that
\begin{equation}\label{eq-ccg-like} 
R_{d,d'} ( f_1(u,g(u)), f_2(u,g(u))) = c \prod_{x \in Z_{f_1} \cap Z_{f_2} } (g(u_x)-t_x) ^{i_x \left( Z_{f_1},Z_{f_2} \right)}
\end{equation}
for any $g(u) \in \mathcal{I}_{\overline{\F_q}}$.

\end{lem}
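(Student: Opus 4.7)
My plan is to adapt the argument for \cite[Theorem 4.5]{CCG}, with \cref{modified-Zeuthen} replacing the original Zeuthen's rule, so that the interval hypothesis takes the place of the degree hypothesis there. Throughout, I view both sides of \cref{eq-ccg-like} as functions of $g \in \mathcal I_{\overline{\F_q}}$, regarded as polynomials in the coordinates $\theta_0, \dots, \theta_{\dim \mathcal I - 1}$ of $\mathcal I_{\overline{\F_q}}$. The right-hand side is a product of affine-linear forms $g(u_x) - t_x$ in these coordinates, while the left-hand side is polynomial in the $\theta_i$ by the universal expression of the resultant in terms of the coefficients of $\alpha(u) = f_1(u,g(u))$ and $\beta(u) = f_2(u,g(u))$, which are themselves polynomials in the $\theta_i$.

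First I would observe that the leading coefficient $\alpha_d$ of $\alpha$ in $u$ is a polynomial in the $\theta_i$ that never vanishes on $\mathcal I_{\overline{\F_q}} \cong \overline{\F_q}^{\dim \mathcal I}$ (by the constant-degree hypothesis), hence by the Nullstellensatz it is a nonzero scalar in $\overline{\F_q}^\times$, in fact in $\F_q^\times$ because $f_1$ and $\mathcal I$ are defined over $\F_q$. Using \cref{CCGThreePointTwoEq} and \cref{DiscriminantProductOverRootsEq}, the left-hand side becomes
\begin{equation*}
R_{d,d'}(\alpha, \beta) = \alpha_d^{d'} \prod_{\substack{z \in \overline{\F_q} \\ \alpha(z) = 0}} \beta(z)^{\operatorname{ord}_{u=z} \alpha},
\end{equation*}
reducing the identity to comparing divisors of the two sides on the affine space of $\theta_i$'s.

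Next, for each $x = (u_x, t_x) \in Z_{f_1} \cap Z_{f_2}$, consider the hyperplane $H_x = \{g : g(u_x) = t_x\}$. The RHS vanishes on $H_x$ to exact order $i_x(Z_{f_1}, Z_{f_2})$. I plan to show the LHS vanishes to the same order by applying \cref{modified-Zeuthen} to $f_1(u, T)$ and $f_2(u, T)$: for generic $g \in H_x$, roots of $\alpha$ at $u = u_x$ correspond to points $(u_x, g(u_x)) = (u_x, t_x)$, and because the interval hypothesis makes $\alpha_d$ a nonvanishing constant, we are in the equality case of \cref{modified-Zeuthen}, so $\operatorname{ord}_{u=u_x}R(\alpha, \beta) = \sum_c i_{(u_x, c)}(f_1, f_2)$. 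A small deformation argument comparing this to the behavior of the product $\prod_z \beta(z)^{\mathrm{mult}}$ as $g$ moves transversally to $H_x$ isolates the contribution from $c = t_x$, giving exponent exactly $i_x$.

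Combining these divisor identifications with a degree count — matching the total degree in the $\theta_i$ of both sides via the standard degree formula for the resultant together with a B\'ezout-type bound $\sum_x i_x \le (\deg_T f_1)(\deg_T f_2)$ — the ratio of the two sides is a scalar in $\overline{\F_q}^\times$. Finally, both sides take values in $\F_q$ at $\F_q$-rational $g$, so specializing at such $g$ (available because $\mathcal I$ has $\F_q$-points) pins the scalar in $\F_q^\times$. The main obstacle I expect is the equality, rather than just inequality, in \cref{modified-Zeuthen}: isolating the local contribution of a single intersection point $(u_x, t_x)$ from the total $\sum_c i_{(u_x, c)}(f_1, f_2)$ will require an auxiliary translation $T \mapsto T + \lambda$ (as in the proof of \cref{modified-Zeuthen} itself) arranged so that the leading coefficient of one of $f_1, f_2$ over $\F_q[u]$ does not vanish at $u_x$, while simultaneously preserving the interval-theoretic normalization that forces $\alpha_d$ to be constant.
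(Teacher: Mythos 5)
Your broad strategy is recognizable, but two steps fail as stated. The first, and more serious, is the degree count you invoke to conclude that the ratio of the two sides is a scalar. The a priori degree bound for $R_{d,d'}(\alpha,\beta)$ as a polynomial in the coordinates $\theta_i$ coming from the resultant's quasi-homogeneity is on the order of $d'\deg_T(f_1)+d\deg_T(f_2)$, which greatly exceeds $\sum_x i_x$ in general (e.g.\ already for $f_1=T-u$, $f_2=T+u$, $\mathcal I$ the degree $<n$ polynomials, the naive bound is $2(n-1)$ while the actual degree is $1$). The B\'ezout inequality $\sum_x i_x \le (\deg_T f_1)(\deg_T f_2)$ points the wrong way for this purpose and has nothing to do with $d,d'$. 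So ``matching total degrees'' cannot by itself rule out extraneous irreducible factors of the left-hand side. The paper sidesteps this entirely: it uses the Nullstellensatz directly on the vanishing locus --- the resultant $R_{d,d'}(\alpha,\beta)$ vanishes precisely when $\alpha$ and $\beta$ share a root (the other scenario, both relevant leading coefficients vanishing, is excluded by the constant-degree hypothesis), which happens exactly when $g(u_x)=t_x$ for some $x$. Combined with geometric irreducibility and pairwise non-proportionality of the linear forms $g(u_x)-t_x$ when $\dim\mathcal I\ge 2$, this forces the shape $c\prod_x(g(u_x)-t_x)^{e_x}$; one then computes each $e_y=i_y$ by restricting to a one-parameter family $g_0+z$ threading only $H_y$ and applying \cref{modified-Zeuthen}. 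Your ``deformation argument'' is essentially this local computation, but it only delivers the order of vanishing along the known hyperplanes, not the absence of other components.

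The second gap is that you nowhere distinguish the low-dimensional cases. When $\dim(\mathcal I)=1$, distinct $x\in Z_{f_1}\cap Z_{f_2}$ can determine proportional affine-linear forms $g(u_x)-t_x$ (whenever $t_x-h(u_x)$ coincides for the fixed part $h$ of the interval), so ``the RHS vanishes on $H_x$ to exact order $i_x$'' is false: the exponents must be grouped. When $\dim(\mathcal I)=0$ there is nothing to vary at all, and one falls back on Galois-descent of the constant and the bare vanishing criterion. The paper treats both separately; your argument, as written, applies only when $\dim(\mathcal I)\ge 2$.

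Finally, a minor correction: you do not need (and will not find) the equality statement of \cref{modified-Zeuthen} to hold everywhere; the constant-degree hypothesis gives you the equality only because it makes the leading coefficient of $\alpha$ in $u$ a nonvanishing constant, precisely the hypothesis of the equality case. You noted this correctly, but the subsequent auxiliary translation $T\mapsto T+\lambda$ you envision to preserve both the equality case and the interval normalization is exactly the kind of maneuver the paper makes redundant by fixing $g_0$ first and deforming only transversally.
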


\begin{proof} 

We assume first that $\dim(\mathcal I) \geq 2$, namely that $\mathcal{I}_{\overline{\F_q}}$ has at least two coordinates $\theta_0, \theta_1$.

As in the proof of \cite[Theorem 4.5]{CCG}, the first step is to prove that there exists $c \in \F_q^\times$ and an assignment of a positive integer $e_x$ to each $x \in Z_{f_1} \cap Z_{f_2}$ such that 
\begin{equation}\label{approximate-formula} 
R_{d,d'}( f_1(u,g), f_2(u,g)) = c \prod_{x \in Z_{f_1} \cap Z_{f_2} } (g(u_x)-t_x) ^{e_x} \end{equation}
for every $g \in \mathcal{I}_{\overline{\F_q}}$.
In the proof of this factorization we essentially follow the proof of \cite[Lemma 4.4]{CCG},
and the first paragraph in the proof of \cite[Theorem 4.5]{CCG}, with some modifications to account for the fact that we range over all polynomials $g$ in a base-changed interval rather than over all monic polynomials of a given degree.


Note that the left hand side of \cref{approximate-formula} is a polynomial in the coordinates of $g$,
and that for every $x \in Z_{f_1} \cap Z_{f_2}$, the polynomial $g(u_x) - t_x$ is linear in the coordinates of $g$, and thus geometrically irreducible.
Moreover, using our assumption that $g$ has at least two coordinates, one readily checks that for any $y \in Z_{f_1} \cap Z_{f_2}$ different from $x$, the polynomial $g(u_y) - t_y$ in the coordinates of $g$, is not a multiple of $g(u_x) - t_x$ by a scalar from $\overline{\F_q}$. 
Hence, by the Nullstellensatz, in order to establish \cref{approximate-formula} with some $c \in \overline{\F_q}^\times$,
it suffices to show that our resultant vanishes if and only if 
\begin{equation} \label{RHSvanishEq}
g(u_x) - t_x = 0
\end{equation}
for some $x \in Z_{f_1} \cap Z_{f_2}$. 

Our resultant vanishes if and only if $f_1(u, g(u))$ and $f_2(u, g(u)))$ share a root $u_0 \in \overline{\F_q}$, or the coefficients of $u^d$ in $f_1(u, g(u))$ and of $u^{d'}$ in $f_2(u, g(u))$ both vanish.
The latter possibility is excluded by our definition of $d$,
and the former is equivalent to the existence of an
\begin{equation}
x = (u_0, g(u_0)) \in Z_{f_1} \cap Z_{f_2}
\end{equation}
for which \cref{RHSvanishEq} is satisfied.
Hence, \cref{approximate-formula} is established with $c \in \overline{\F_q}^\times$.

To check that $c$ is in $\F_q^\times$ (and not merely in $\overline{\F_q}^\times$),
we note that (each linear factor, and thus) the product on the right hand side of \cref{approximate-formula} is monic when viewed as a polynomial in the lowest coordinate $\theta_0$ of $g$, so $c$ is a coefficient of the polynomial on the left hand side of \cref{approximate-formula}. The latter is clearly a polynomial over $\F_q$, so indeed $c \in \F_q^\times$.

Next, in order to establish \cref{eq-ccg-like}, we fix $y \in Z_{f_1} \cap Z_{f_2}$, 
and check that $e_y  = i_y( Z_{f_1},Z_{f_2})$.
Since 
$
\dim(\mathcal I) \geq 2, 
$
we can find $g_0 \in \mathcal I_{\overline{\mathbb F_q }} $ such that 
\begin{equation} \label{Constructg0Eq}
\{x \in Z_{f_1} \cap Z_{f_2} : g_0(u_x) = t_x \} = \{y\}.
\end{equation}
This choice of $g_0$ is such that
\begin{equation} \label{OrderOfVanEq}
\mathrm{ord}_{z = 0} (g_0(u_x)+z-t_x)^{e_x} = \begin{cases}
e_y,\  x=y \\
0, \ \ x \neq y
\end{cases}
\end{equation}
for any $x \in Z_{f_1} \cap Z_{f_2}$.
We conclude from \cref{approximate-formula} and \cref{OrderOfVanEq} that
\begin{equation} \label{ResultantBeforeTildesEq}
\mathrm{ord}_{z = 0} R_{d, d'}( f_1(u,g_0+z), f_2(u,g_0+z)) = e_y
\end{equation} 
and set
\begin{equation} \label{DefTwoTildesEq}
\widetilde{f}_1(u,z) = f_1(u,g_0+z), \quad \widetilde{f}_2(u,z) = f_2(u,g_0+z).
\end{equation} 

Since the degree $d$ of $f_1(u,g)$ is independent of $g \in \mathcal{I}_{\overline{\F_q}}$ by assumption, we see that the coefficient of $u^d$ in $\widetilde{f}_1$ does not vanish for any $z \in \overline{\F_q}$, in particular for $z = 0$.
We therefore get from \cref{CCGThreePointTwoEq} that
\begin{equation} \label{ResultantAfterTildesEq}
\mathrm{ord}_{z = 0} R_{d,d'} \left( \widetilde{f}_1(u,z), \widetilde{f}_2(u,z) \right) = 
\mathrm{ord}_{z = 0} R \left( \widetilde{f}_1(u,z), \widetilde{f}_2(u,z) \right). 
\end{equation}

We apply the case of equality in \cref{modified-Zeuthen} to the above. 
This requires checking that 
\begin{equation}
\left| Z_{\widetilde{f_1}} \cap Z_{\widetilde{f_2}} \right| < \infty,
\end{equation} 
and that the coefficients of the highest powers of $u$ in $\widetilde{f}_1$ and $\widetilde{f}_2$ do not have a common zero at $z=0$.
The former follows from our assumption that $Z_{f_1} \cap Z_{f_2}$ is finite,
and the latter was deduced above from our assumption that the degree $f_1(u,g)$ is independent of $g$. 
It then follows from \cref{modified-Zeuthen}, \cref{ResultantBeforeTildesEq}, and \cref{ResultantAfterTildesEq} that 
\begin{equation}
e_y = \sum_{\lambda \in \overline{\mathbb F_q}} i_{(\lambda, 0)}( Z_{ \widetilde{f}_1},Z_{\widetilde{f}_2}).
\end{equation}

Using the definition of $\widetilde{f_1}, \widetilde{f_2}$ in \cref{DefTwoTildesEq} we get from the above that
\begin{equation}
e_y = \sum_{\lambda \in \overline{\mathbb F_q}} i_{(\lambda, g_0(\lambda))} (Z_{f_1},Z_{f_2}).
\end{equation}
By construction of $g_0$ in \cref{Constructg0Eq}, the summands with $(\lambda, g_0(\lambda)) \neq y$ vanish, 
so our sum reduces to $i_{y} (Z_{f_1}, Z_{f_2})$,
and it follows that $e_y = i_y(Z_{f_1}, Z_{f_2})$ as required. 

Assume now that $\dim(\mathcal I) = 1$,
so that there exists some $h \in \F_q[u]$ such that
\begin{equation}
\mathcal{I}_{\overline{\F_q}} = \{h +z : z \in \overline{\mathbb F_q} \}.
\end{equation}
In this case, \cref{eq-ccg-like} can be rewritten as
\begin{equation} \label{SecondCaseApproxEq}
R_{d,d'} ( f_1(u, h(u) + z), f_2(u, h(u) + z)) = c \prod_{\alpha \in \overline{\F_q}} (z - \alpha)^{m_\alpha}
\end{equation}
where
\begin{equation}
m_\alpha = \sum_{\substack{x \in Z_{f_1} \cap Z_{f_2} \\ t_x - h(u_x) = \alpha}} i_{x}(Z_{f_1}, Z_{f_2}).
\end{equation}

As in the previous case, applying \cref{CCGThreePointTwoEq} and \cref{modified-Zeuthen} we get that
\begin{equation}
\mathrm{ord}_{z = \alpha} \ R_{d,d'} ( f_1(u, h + z), f_2(u, h + z))
\end{equation}
equals
\begin{equation*}
\begin{split}
\mathrm{ord}_{z = \alpha} \ R ( f_1(u, h + z), f_2(u, h + z)) &= 
\sum_{\lambda \in \overline{\F_q}} i_{(\lambda, \alpha)}\left(Z_{f_1(u, h + z)}, Z_{f_2(u, h + z)} \right) \\
&= \sum_{\lambda \in \overline{\F_q}} i_{(\lambda, \alpha + h(\lambda))}\left(Z_{f_1}, Z_{f_2} \right).
\end{split}
\end{equation*}

We can restrict the sum above to those $\lambda \in \overline{\F_q}$ with 
\begin{equation}
(\lambda, \alpha + h(\lambda)) = (u_x, t_x)
\end{equation}
for some $x \in Z_{f_1} \cap Z_{f_2}$, since the other terms vanish.
We then see that our sum equals $m_\alpha$, so \cref{SecondCaseApproxEq} holds with some $c \in \overline{\F_q}^\times$.
To show that in fact $c \in \F_q^\times$, one can argue as in the previous case.

Suppose at last that $\dim(\mathcal{I}) = 0$, or equivalently that $\operatorname{len}(\mathcal{I}) = 1$.
Note that the left hand side of \cref{eq-ccg-like} is in $\F_q$,
and by invariance under the action of $\mathrm{Gal}(\overline{\F_q}/\F_q)$,
the same is true for the product on the right hand side of \cref{eq-ccg-like}.
Hence, \cref{eq-ccg-like} boils down to the fact, proven earlier, 
that our resultant vanishes if and only if $g(u_x)-t_x = 0$ for some $x \in Z_{f_1} \cap Z_{f_2}$.
\end{proof}

\begin{remark}
It is possible to extract from the proof an explicit expression for the constant $c$.
\end{remark}

\begin{notation} \label{TvarNotation}

Keep \cref{CCGNotation}.
Define the polynomial
\begin{equation} \label{DefMf1f2Eq}
M(f_1,f_2) = \rad(R(f_1,f_2))= \prod_{\pi \mid R(f_1,f_2)} \pi
\end{equation}
in $\F_q[u]$, and let
\begin{equation} \label{Leq}
L(f_1, f_2) \in \F_q[u]
\end{equation}
be the greatest common divisor of the leading coefficients of $f_1$ and $f_2$.

Assume from now on that $q$ is odd, and denote the unique multiplicative quadratic character of $\F_q$ by $\chi_2$.
For every $x \in \F_q$ we have 
\begin{equation} \label{AbusedCongruenceEq}
\chi_{2}(x) = \begin{cases}
1 & x \in {\F_q^\times}^2\\
-1 & x \in \F_q^\times \setminus {\F_q^\times}^2 \\
0 & x= 0.
\end{cases}
\end{equation}

For $a \in \F_q[u]$ and a nonzero $b \in \F_q[u]$ we denote by 
\begin{equation}
\left( \frac{a}{b} \right) = \left( \frac{a}{b} \right) _2
\end{equation} 
the Jacobi symbol (quadratic residue symbol) in $\F_q[u]$, 
studied for instance in \cite[Chapter 3]{Ros}.
For a nonzero $M \in \F_q[u]$, we denote by
\begin{equation}
N_{\F_q[u]/(M)}^{\F_q} \colon \F_q[u]/(M) \to \F_q
\end{equation}
the norm map defined by 
\begin{equation} \label{NormDefEq}
N_{\F_q[u]/(M)}^{\F_q}(f) = \prod_{\substack{a \in \overline{\F_q} \\ M(a) = 0}} f(a)
\end{equation}
where $f(a)$ stands for the image of $f \in \F_q[u]/(M)$ in $\overline{\F_q}$ under the map sending $u$ to $a$.
This map is surjective, and we have
\begin{equation} \label{NormQuadCharEq}
\chi_2 \left (N^{\F_q}_{\F_q[u]/(M)}(f) \right) = \left( \frac{f}{M} \right).
\end{equation} 

\end{notation}

The following proposition, whose proof builds on \cref{ccg-like}, is the key to deducing \cref{formula-for-Mobius}.
It is the generalization of \cite[Lemma 3.1]{SS} needed here.

\begin{prop}\label{SaSh-like} 
Keep \cref{TvarNotation}, and the assumptions of \cref{ccg-like}.
Suppose that $\deg_T(f_1) \geq 1$.
Then there exists a polynomial 
\begin{equation}
W(u,T) \in (\F_q[u] / (M(f_1,f_2)))[T]
\end{equation}
that satisfies the following two properties.
\begin{itemize}

\item For each root $a \in \overline{\F_q}$ of $M(f_1,f_2)$, 
the image $W(a,T)$ of $W(u,T)$ in $\overline{\F_q}[T]$ under the map sending $u$ to $a$ satisfies
\begin{equation} \label{OrderOfVanishingForWEq}
\mathrm{ord}_{T = b}W(a,T) = i_{(a,b)}(Z_{f_1},Z_{f_2})
\end{equation}
for every $b \in \overline{\F_q}$;

\item for all $g \in \mathcal I$ we have
\begin{equation} \label{Resultant-QuadCharEq}
\chi_2( R_{d,d'} ( f_1(u,g), f_2(u,g) )) = \left( \frac{W(u, g)}{M(f_1,f_2)} \right)
\end{equation}
the right hand side being the Jacobi symbol.

\end{itemize}
\end{prop}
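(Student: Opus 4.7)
The plan is to construct $W$ in two stages: first build a polynomial $W^0(u,T) \in (\F_q[u]/M)[T]$ whose ``root data'' in $T$ at each root of $M = M(f_1,f_2)$ records the intersection multiplicities, and then twist it by a scalar to match the constant ambiguity left over from \cref{ccg-like}. For the first step, I would use Galois descent: at each root $a \in \overline{\F_q}$ of $M$, declare
\begin{equation*}
W^0(a,T) = \prod_{b \in \overline{\F_q}} (T-b)^{i_{(a,b)}(Z_{f_1},Z_{f_2})}.
\end{equation*}
The assignment $a \mapsto W^0(a,T)$ is invariant under $\Gal(\overline{\F_q}/\F_q)$ because $Z_{f_1} \cap Z_{f_2}$ is defined over $\F_q$ and intersection multiplicities are Galois-invariant, so via the CRT isomorphism $\F_q[u]/M \cong \prod_{\pi \mid M} \F_q[u]/(\pi)$ it specifies a unique $W^0(u,T) \in (\F_q[u]/M)[T]$. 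The order of vanishing property \eqref{OrderOfVanishingForWEq} is immediate from this factorization.

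Next I would unwind the Jacobi symbol of $W^0(u,g)$ using \eqref{NormQuadCharEq}--\eqref{NormDefEq}:
\begin{equation*}
\left( \frac{W^0(u,g)}{M} \right) = \chi_2 \Bigl( \prod_{a \in \overline{\F_q} : M(a)=0} W^0(a, g(a)) \Bigr) = \chi_2 \Bigl( \prod_{x \in Z_{f_1} \cap Z_{f_2}} (g(u_x)-t_x)^{i_x(Z_{f_1},Z_{f_2})} \Bigr),
\end{equation*}
where the second equality uses that every $u$-coordinate of a point in $Z_{f_1} \cap Z_{f_2}$ is a root of $M = \rad(R(f_1,f_2))$, so nothing is missed. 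Combining with \cref{ccg-like}, which rewrites the inner product as $c^{-1} R_{d,d'}(f_1(u,g), f_2(u,g))$ for a fixed $c \in \F_q^\times$ independent of $g$, I obtain $\left( \tfrac{W^0(u,g)}{M} \right) = \chi_2(c)\,\chi_2(R_{d,d'}(f_1(u,g),f_2(u,g)))$.

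To absorb the stray factor $\chi_2(c)$, I would pick any $\lambda \in (\F_q[u]/M)^\times$ with $\left( \tfrac{\lambda}{M} \right) = \chi_2(c)$. Such $\lambda$ exists because for odd $q$ and any prime $\pi \in \F_q[u]$, the quadratic residue symbol modulo $\pi$ is a nontrivial character on $(\F_q[u]/(\pi))^\times$; by CRT I can lift a nonsquare at one prime factor of $M$ and use $1$ at the others. Setting $W = \lambda \cdot W^0$ preserves the vanishing data, since $\lambda(a) \in \overline{\F_q}^\times$ for every root $a$ of $M$ and hence does not alter $\mathrm{ord}_{T=b} W(a,T)$, while multiplicativity of the Jacobi symbol upgrades the previous identity to \eqref{Resultant-QuadCharEq}. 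The degenerate case $R_{d,d'} = 0$ needs a brief check: it corresponds to some $g(u_x) = t_x$, which forces $W(a,g(a)) = 0$ at $a = u_x$, so $W(u,g)$ is a zero divisor in $\F_q[u]/M$ and both sides of \eqref{Resultant-QuadCharEq} equal zero.

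The main obstacle is essentially notational: keeping track of the Galois descent carefully enough to see that $W^0$ actually defines an element of $(\F_q[u]/M)[T]$ rather than merely a collection of polynomials over $\overline{\F_q}$ indexed by geometric points. Beyond that, the argument is bookkeeping that rests entirely on \cref{ccg-like} and the multiplicativity of $\chi_2$ and of the Jacobi symbol.
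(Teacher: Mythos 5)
Your proposal is correct and takes essentially the same approach as the paper: define $W^0$ locally at each root of $M$ as the product of $(T-t_x)^{i_x}$, descend via Galois equivariance and CRT to an element of $(\F_q[u]/M)[T]$, compute the Jacobi symbol in terms of the product $\prod_x (g(u_x)-t_x)^{i_x}$ via \eqref{NormDefEq}--\eqref{NormQuadCharEq}, invoke \cref{ccg-like}, and absorb the scalar $c$ by twisting $W^0$ by a unit. The only cosmetic difference is in the final twist: the paper chooses $c'$ with $N_{\F_q[u]/M}^{\F_q}(c')=c$ so that $N(W(u,g))=R_{d,d'}$ exactly, while you pick $\lambda$ with $\bigl(\tfrac{\lambda}{M}\bigr)=\chi_2(c)$, a weaker condition that suffices once you work at the level of Jacobi symbols.
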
 

\begin{remark}

The first property above satisfied by $W(u,T)$ determines it up to multiplication by an element of $(\F_q[u]/(M(f_1,f_2)) )^\times$.  

\end{remark}

\begin{proof} 


Fix a prime $\pi \mid M(f_1,f_2)$. 
For a root $a \in \overline{\mathbb F_q}$ of $\pi$ define the polynomial 
\begin{equation}\label{W-over-one-root} 
W^{(\pi,a)}(T) = \prod_{ \substack{ x \in \overline{\mathbb F_q}^2 \\ u_x = a }}  
(T - t_x)^{ i_{x}(Z_{f_1}, Z_{f_2})} \in \overline{\mathbb F_q}[T].
\end{equation}
That the above is indeed a polynomial follows from the assumption, made in \cref{ccg-like}, that $Z_{f_1} \cap Z_{f_2}$ is finite.

We claim that $W^{(\pi,a)}(T)$ belongs to
$
\mathbb F_{q^{\deg (\pi)} }[T],
$
and that its pullback 
\begin{equation}
W_0^{(\pi,a)}(u,T) \in (\mathbb F_q[u]/(\pi))[T] 
\end{equation}
under the isomorphism from $\mathbb F_q[u]/(\pi)$ to $\mathbb F_{q^{\deg (\pi)} }$ sending $u$ to $a$, 
is independent of the root $a$. 

To prove the claim, note that the function $x \mapsto i_x (Z_{f_1}, Z_{f_2})$ is constant on each orbit of the natural action 
\begin{equation}
\operatorname{Gal}(\overline{\mathbb F_q } /\mathbb F_q) \curvearrowright \overline{\mathbb{F}_q}^2,
\end{equation}
so it is also constant on orbits of the stabilizer of $a$ in $\Gal(\overline{\F_q}/\F_q)$, namely the subgroup
\begin{equation}
\operatorname{Gal}(\overline{\mathbb F_q } /\mathbb F_{q}(a)) = 
\operatorname{Gal}(\overline{\mathbb F_q } /\mathbb F_{q^{\deg (\pi)}}).
\end{equation}
It follows that $W^{(\pi,a)}(T)$ is invariant under $\operatorname{Gal}(\overline{\mathbb F_q } /\mathbb F_{q^{\deg (\pi)}})$,
hence 
\begin{equation}
W^{(\pi,a)}(T) \in \F_{q^{\deg(\pi)}}[T].
\end{equation}
We also conclude that for every $\sigma \in \Gal(\overline{\F_q}/\F_q)$ we have
\begin{equation}
\sigma \left( W^{(\pi,a)}(T) \right) = W^{(\pi,\sigma(a))}(T).
\end{equation}

Since the isomorphism from $\mathbb F_q[u]/(\pi)$ to $\mathbb F_{q^{\deg (\pi)} }$  sending $u$ to $\sigma(a)$ is the composition of $\sigma$ on the isomorphism sending $u$ to $a$, we get that $W_0^{(\pi,a)}(u,T)$ is indeed independent of the chosen root $a$ of $\pi$. We denote this polynomial by $W_0^{(\pi)}(u,T)$,
and use the Chinese remainder theorem to define a polynomial 
\begin{equation}
W_0(u,T) \in (\F_q[u]/(M(f_1,f_2)))[T]
\end{equation}
that reduces mod $\pi$ to $W_0^{(\pi)}(u,T)$ for every $\pi \mid M(f_1, f_2)$.


Next we claim that
\begin{equation} \label{NormIntersectionEq}
N_{ \mathbb F_q[u]/ (M(f_1,f_2))}^{\mathbb F_q} ( W_0(u,g)) = \prod_{x \in Z_{f_1} \cap Z_{f_2} } (g(u_x)-t_x) ^{i_x \left( Z_{f_1},Z_{f_2} \right)}
\end{equation}
for any $g \in \mathcal{I}$.
By definition of the norm map from \cref{NormDefEq}, we have
\begin{equation}
\begin{split}
N_{ \mathbb F_q[u]/ (M(f_1,f_2))}^{\mathbb F_q} ( W_0(u,g)) = 
\prod_{\substack{a \in \overline{\F_q} \\ M(f_1,f_2)(a) = 0}} W_0(a,g(a)).
\end{split}
\end{equation}
By definition of $W_0(u,T)$, independece of $a$, and the fact that reduction mod $\pi$ commutes with plugging $g$ in $T$, 
the above equals
\begin{equation}
\prod_{\pi \mid M(f_1,f_2)} \prod_{\substack{a \in \overline{\F_q} \\ \pi(a) = 0}} W_0^{(\pi,a)}(a, g(a))  = 
\prod_{\pi \mid M(f_1,f_2)} \prod_{\substack{a \in \overline{\F_q} \\ \pi(a) = 0}} W^{(\pi,a)}(g(a)).
\end{equation}
From the definition of $W^{(\pi,a)}(T)$ in \cref{W-over-one-root} we get
\begin{equation} \label{PartialProdEq}
N_{ \mathbb F_q[u]/ (M(f_1,f_2))}^{\mathbb F_q} ( W_0(u,g)) = \prod_{ \substack{ x \in Z_{f_1} \cap Z_{f_2} \\ M(f_1, f_2)(u_x) = 0}}  
(g(u_x) - t_x)^{ i_{x}(Z_{f_1}, Z_{f_2})}.
\end{equation}

For every $x \in Z_{f_1} \cap Z_{f_2}$, the polynomials $f_1(u_x,T), f_2(u_x,T)$ vanish at $t_x$.
Hence $R(f_1(u_x,T), f_2(u_x,T)) = 0$, so $R(f_1, f_2)$ vanishes at $u_x$.
From the definition of $M(f_1,f_2)$ in \cref{DefMf1f2Eq} we conclude that $M(f_1,f_2)(u_x) = 0$,
so \cref{PartialProdEq} coincides with the right hand side of \cref{NormIntersectionEq} as required for our claim. 



Finally, we take $c \in \F_q^\times$ from \cref{ccg-like} that satisfies 
\begin{equation} \label{ReversedCCGLikeEq}
c \prod_{x \in Z_{f_1} \cap Z_{f_2} } (g(u_x)-t_x) ^{i_x \left( Z_{f_1},Z_{f_2} \right)} =
R_{d,d'} ( f_1(u,g(u)), f_2(u,g(u))) 
\end{equation} 
and choose $c' \in (\mathbb F_q[u]/(M(f_1,f_2)))^\times$ such that $N_{ \mathbb F_q[u]/ (M(f_1,f_2))}^{\mathbb F_q} (c') = c$.
Define $W(u,T) = c' W_0(u,T)$, 
so that from \cref{NormIntersectionEq} and \cref{ReversedCCGLikeEq} we get 
\begin{equation}
N_{ \mathbb F_q[u]/ M(f_1,f_2)}^{\mathbb F_q} ( W(u,g)) = R_{d,d'} ( f_1(u,g(u)), f_2(u,g(u))).
\end{equation}
Applying $\chi_2$ to the above, it follows from \cref{NormQuadCharEq} that
\begin{equation*}
\left( \frac{W(u,g)}{M(f_1,f_2)} \right) =  \chi_2 \left(N_{ \mathbb F_q[u]/ M(f_1,f_2)}^{\mathbb F_q} ( W(u,g))\right) = \chi_2( R_{d,d'} ( f_1(u,g), f_2(u,g)) )
\end{equation*}
so \cref{Resultant-QuadCharEq} holds.
For $a,b \in \overline{\F_q}$ we have
\begin{equation*}
\begin{split}
\mathrm{ord}_{T=b} W(a,T) &= \mathrm{ord}_{T=b} W_0(a,T) =  \mathrm{ord}_{T = b} W_0^{(\pi)}(a,T) \\
&= \mathrm{ord}_{T = b} W_0^{(\pi,a)}(a,T) = \mathrm{ord}_{T = b} W^{(\pi,a)}(T) = i_{(a,b)}(Z_{f_1}, Z_{f_2})
\end{split}
\end{equation*}
so \cref{OrderOfVanishingForWEq} holds.
\end{proof}

\subsubsection{M\"{o}bius formula}

We set up much of the notation needed to state, prove, and apply \cref{formula-for-Mobius}.

\begin{notation} \label{DerivativeNotation}

Keep \cref{TvarNotation}.
Let $k \geq 1$ be an integer, let
\begin{equation} \label{FcoeffsEq}
F(u,T)  = \sum_{i=0}^k a_i(u)T^i \in \F_q[u,T], \quad a_k(u) \neq 0,
\end{equation} 
and let $c_1,c_2 \in \mathbb{R}$ such that
\begin{equation} \label{aBound}
c_1 \geq 0, \quad c_2 \leq 0, \quad \deg(a_i(u)) \leq c_1 + c_2 i, \quad 0 \leq i \leq k.
\end{equation}
We introduce the auxiliary function
\begin{equation} \label{AuxFuncEq}
E(c_1,c_2, x) = 2kc_1 + k\max\{0, c_2 + x\} - k + c_2k^2, \quad x \in \mathbb{R}.
\end{equation}

Let $\overline{\F_q(u)}$ be an algebraic closure of $\F_q(u)$,
and let $\alpha_1, \dots, \alpha_k \in \overline{\F_q(u)}$ be such that
\begin{equation} \label{FrootsEq}
F(u,T) = a_k\prod_{i=1}^k (T- \alpha_i). 
\end{equation}
We assume that $F$ is separable as a polynomial in $T$, namely that the roots $\alpha_1, \dots, \alpha_k$ are distinct.
Equivalently, $\frac{\partial F}{\partial T}(u, \alpha_i) \neq 0$ for every $1 \leq i \leq k$.

Set
\begin{equation} \label{DefiningFv}
F_v(u,T) = \frac{\partial F}{\partial u} (u, T) + v\frac{\partial F}{\partial T}  (u, T) \in \F_q[u,T,v]
\end{equation}
and for $r \in \F_q[u]$ put
\begin{equation}
F_{[r]}(u,T) = F_{\frac{dr}{du}}(u,T) = \sum_{j = 0}^k b_j(u)T^j \in \F_q[u,T]
\end{equation}
where
\begin{equation} \label{beq}
b_j = \frac{d a_j}{du} + (j+1) a_{j+1}\frac{dr}{du}, \quad 1 \leq j \leq k-1, \quad b_k = \frac{da_k}{du}.
\end{equation}

Following \cref{Leq}, in case $F_{[r]} \neq 0$, we further set
\begin{equation} \label{DefiningLFr}
k' = \deg_T(F_{[r]}), \ L_{F,r} = L(F, F_{[r]}) = \gcd(a_k,b_{k'}) \in \F_q[u],
\end{equation}
and denote by
\begin{equation}
R(F, F_v) \in \F_q[u,v], \quad R(F, F_{[r]}) \in \F_q[u]
\end{equation} 
the resultants in the variable $T$.
By \cref{DiscriminantProductOverRootsEq} and \cref{DefiningFv}, we have
\begin{equation} \label{RFFvFormula}
\begin{split}
R ( F, F_v) &= a_k^{ \deg_T F_v} \prod_{i=1}^k F_v (u, \alpha_i) \\
&= a_k^{ \deg_T F_v} \prod_{i=1}^k \left( \frac{\partial F}{\partial u} ( u, \alpha_i) + v \frac{\partial F}{\partial T} (u, \alpha_i) \right).
\end{split}
\end{equation}
Following \cref{DefMf1f2Eq}, define 
\begin{equation} \label{DefMFrEq}
M_{F,r} = M(F, F_{[r]}) = \rad (R(F,F_{[r]})) = \prod_{\pi \mid R(F,F_{[r]})} \pi.
\end{equation}

Let $\mathcal{I}$ be an interval in $\F_q[u]$ such that for some $r \in \mathcal{I}$ the assumptions of \cref{SaSh-like} are satisfied for $f_1 = F, \ f_2 = F_{[r]}$, with $d'$ the least even integer satisfying \cref{Definingd'condition}.
We can then fix a polynomial
\begin{equation}
W_{F,r}(u,T) \in (\F_q[u]/(M_{F,r}))[T]
\end{equation}
such that for each root $a \in \overline{\F_q}$ of $M_{F,r}$ and $b \in \overline{\F_q}$ we have
\begin{equation} \label{OrderVanishWFratbEq}
\mathrm{ord}_{T = b}W_{F,r}(a,T) = i_{(a,b)}(Z_{F}, Z_{F_{[r]}}),
\end{equation}
and for all $g \in \mathcal I$ we have
\begin{equation} \label{LoadedJacobiEq}
\chi_2( R_{d,d'} ( F(u,g), F_{[r]}(u,g) )) = \left( \frac{W_{F,r}(u, g)}{M_{F,r}} \right).
\end{equation}
For a prime $\pi \mid M_{F,r}$, we denote by 
\begin{equation} \label{DefWFrEq}
W_{F,r}^{(\pi)}(T) \in (\F_q[u]/(\pi))[T]
\end{equation}
the reduction of $W_{F,r}$ mod $\pi$.

For a polynomial $f \in \F_q[u]$ we define its discriminant, following \cite[(2.3)]{CCG}, to be
\begin{equation}
\Delta(f) = \prod_{i < j} (\gamma_i - \gamma_j)^2.
\end{equation}
where $\gamma_1 , \dots, \gamma_{\deg(f)}$ are the roots of $f$ in $\overline{\F_q}$.
Denoting the leading coefficient of $f$ by $f_0$, and the degree of $f$ by $d$, we learn from \cite[(3.3)]{CCG} that
\begin{equation} \label{FromDiscriminantToResultantEq}
\Delta(f) = \frac{(-1)^{\frac{d(d - 1)}{2}} R_{d,d-1} (f,\frac{df}{du})}{f_0^{2d - 1}}.
\end{equation}
If we want to emphasize that the discriminant is taken with respect to the variable $u$, we write $\Delta_u(f)$.
For instance,
\begin{equation} \label{DiscVDefEq}
\Delta_v(R(F, F_v) ) \in \F_q[u]
\end{equation}
stands for the discriminant with respect to $v$ of the resultant in the variable $T$ of the polynomials $F$ and $F_v$ as above.

\end{notation}

The following corollary is the generalization of \cite[Lemma 3.2]{SS} needed to prove \cref{NewRes}.
The proof mainly rests on \cref{SaSh-like} and Pellet's formula 
\begin{equation} \label{PelletEq}
\mu(f) = (-1)^{\deg(f)} \chi_2(\Delta(f)), \quad f \in \F_q[u]
\end{equation}
as given in \cite[(2.5)]{CCG}.

\begin{cor}\label{formula-for-Mobius} 

Keep \cref{IntervalNotation} and \cref{DerivativeNotation}.
Let $\mathcal{I}$ be an interval in $\F_q[u]$,
and fix $r \in \mathcal{I}$.
Suppose that $Z_F \cap Z_{F_{[r]}}$ is finite,
and that the leading term $au^d$ of $F(u,g(u))$ is independent of $g(u) \in  \mathcal I_{ \overline{\mathbb F_q}}$. 
Then for any $s \in \mathbb F_q[u]$ with $\deg(s) <  \frac{\dim(\mathcal I)}{p}$ we have
\begin{equation} \label{TheMobiusFormulaEq}
\mu( F ( u, r+s^p) )= 
(-1) ^{d}   \chi_2(-1)^{\frac{d(d-1)}{2}}\chi_2(a)^{d} \left(\frac{W_{F,r}(u, r + s^p)}{M_{F,r}} \right).
\end{equation}

\end{cor}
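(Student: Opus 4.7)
The plan is to apply Pellet's formula \cref{PelletEq} to $f(u) = F(u, r(u) + s(u)^p) \in \F_q[u]$ and then convert the resulting discriminant into the Jacobi symbol on the right-hand side of \cref{TheMobiusFormulaEq} via \cref{LoadedJacobiEq}. The whole argument hinges on two small miracles: the chain-rule identity $\frac{d}{du}(s^p) = ps^{p-1}\frac{ds}{du} = 0$ in characteristic $p$, and the evenness of $d'$ chosen in \cref{DerivativeNotation}.

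First I would compute, using the chain rule together with the vanishing of $\frac{d(s^p)}{du}$,
\begin{equation*}
\frac{df}{du} = \frac{\partial F}{\partial u}(u, r+s^p) + \frac{\partial F}{\partial T}(u, r+s^p) \cdot \frac{dr}{du} = F_{[r]}(u, r+s^p).
\end{equation*}
The hypothesis that $F(u, g(u))$ has leading term $au^d$ for all $g \in \mathcal{I}_{\overline{\F_q}}$ guarantees that $\deg(f) = d$ with leading coefficient $a \in \F_q^\times$, so Pellet's formula reads $\mu(f) = (-1)^d \chi_2(\Delta(f))$.

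Next I would substitute into \cref{FromDiscriminantToResultantEq} to obtain
\begin{equation*}
\Delta(f) = \frac{(-1)^{d(d-1)/2} R_{d,d-1}(f, df/du)}{a^{2d-1}},
\end{equation*}
and apply \cref{CCGThreePointTwoConclusionEq} to replace $R_{d,d-1}$ by $R_{d,d'}$ up to a power of $a$. Applying $\chi_2$ and using $\chi_2(a)^2 = 1$, the total exponent of $\chi_2(a)$ coming from the denominator $a^{2d-1}$ and from the conversion $R_{d,d-1} \leadsto R_{d,d'}$ is $(d - 1 - d') - (2d-1) \equiv d + d' \equiv d \pmod{2}$, where the last congruence uses that $d'$ was chosen to be even in \cref{DerivativeNotation}. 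This yields
\begin{equation*}
\chi_2(\Delta(f)) = \chi_2(-1)^{d(d-1)/2} \chi_2(a)^d \, \chi_2\bigl(R_{d,d'}(F(u, r+s^p),\, F_{[r]}(u, r+s^p))\bigr).
\end{equation*}

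Finally, because $\deg(s^p) = p\deg(s) < \dim(\mathcal{I})$, the polynomial $r + s^p$ again lies in $\mathcal{I}$, so \cref{LoadedJacobiEq} applies with $g = r + s^p$ and rewrites the last factor as $\bigl(\frac{W_{F,r}(u, r+s^p)}{M_{F,r}}\bigr)$. Combining with $\mu(f) = (-1)^d \chi_2(\Delta(f))$ delivers \cref{TheMobiusFormulaEq}. No step is a genuine obstacle: the only delicate bookkeeping is the parity calculation for the powers of $\chi_2(a)$, which is exactly what forced the choice of $d'$ even in \cref{DerivativeNotation}, and the verification that $r + s^p \in \mathcal{I}$, which is built into the hypothesis $\deg(s) < \dim(\mathcal{I})/p$.
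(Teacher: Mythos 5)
Your proposal is correct and follows essentially the same route as the paper's proof: Pellet's formula, the discriminant-to-resultant identity \cref{FromDiscriminantToResultantEq}, the chain rule with $\frac{d}{du}(s^p)=0$ to identify $\frac{d}{du}F(u,r+s^p)=F_{[r]}(u,r+s^p)$, the conversion $R_{d,d-1}\leadsto R_{d,d'}$ via \cref{CCGThreePointTwoConclusionEq} with the parity bookkeeping resolved by the evenness of $d'$, and finally \cref{LoadedJacobiEq} applied to $g=r+s^p\in\mathcal{I}$. The only cosmetic difference is that you track the exponent of $\chi_2(a)$ in a single congruence mod $2$ while the paper carries $\chi_2(a)^{d-d'}$ explicitly before reducing; both give $\chi_2(a)^d$.
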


\begin{proof} 

 


By Pellet's formula above, we have
\begin{equation} \label{PelletEq}
\mu(F(u,r+s^p)) =( -1)^{ d } \chi_2 ( \Delta ( F(u,r+s^p)) ),
\end{equation}
so applying \cref{FromDiscriminantToResultantEq}, we see that the above equals
\begin{equation*}
(-1)^d \chi_2(-1)^{\frac{d(d-1)}{2}}\chi_2(a) \chi_2 \left( R_{d,d-1} \left(F (u, r + s^p), \frac{d}{du} F(u, r+ s^p) \right)\right).
\end{equation*}
Using the Leibniz derivative product rule, the chain rule, and the fact that derivatives of $p$-th powers vanish, we arrive at
\begin{equation} 
(-1) ^{d} \chi_2(-1)^{\frac{d(d-1)}{2}}\chi_2(a) 
\chi_2 \left( R_{d, d-1} \left( F(u, r+s^p), F_{[r]}(u, r + s^p) \right) \right).
\end{equation} 

Applying \cref{CCGThreePointTwoConclusionEq}, the above becomes
\begin{equation} \label{MobConvertedToResEq}
(-1) ^{d}   \chi_2(-1)^{\frac{d(d-1)}{2}}\chi_2(a)^{d - d'} 
\chi_2 \left( R_{d, d'} \left( F(u, r+s^p), F_{[r]}(u, r + s^p) \right) \right)
\end{equation} 
with $d'$ defined in \cref{DerivativeNotation}, so using \cref{LoadedJacobiEq} we get
\begin{equation} 
(-1) ^{d}   \chi_2(-1)^{\frac{d(d-1)}{2}}\chi_2(a)^{d - d'} 
\left( \frac{W(u, r + s^p)}{M_{F,r}} \right).  
\end{equation} 
Since $d'$ is even we have $\chi_2(a)^{d-d'} = \chi_2(a)^d$, so we arrive at the right hand side of \cref{TheMobiusFormulaEq}.\end{proof}

The assumption from \cref{formula-for-Mobius} that the leading term of $F(u,g(u))$ is independent of $g(u) \in  \mathcal I_{ \overline{\mathbb F_q}}$ seems to be strictly stronger than the assumption from \cref{ccg-like} that $\deg(F(u,g(u)))$ is independent of $g(u) \in \mathcal{I}_{\overline{\F_q}}$.
However, as the next proposition shows, these are equivalent.

\begin{prop}\label{leading-term-degree} 

Keep \cref{IntervalNotation} and \cref{DerivativeNotation}.
Let $\mathcal I$ be an interval in $\mathbb F_q[u]$.
Assume that the degree of $F(u,g(u))$ is independent of $g(u) \in {\mathcal I}_{\overline{\mathbb F_q}}$ and nonnegative.
Then the leading term of $F(u,g(u))$ is also independent of $g(u) \in {\mathcal I}_{\overline{\mathbb F_q}}$. 

\end{prop}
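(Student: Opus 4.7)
The plan is to reduce the statement to an immediate application of Hilbert's Nullstellensatz. Following \cref{IntervalNotation}, write $\mathcal{I} = \mathcal{I}_{f,d}$ with $d = \dim(\mathcal{I})$, so that every element of $\mathcal{I}_{\overline{\F_q}}$ has the unique form $g(u) = f(u) + \sum_{i=0}^{d-1} \theta_i u^i$ for coordinates $\theta = (\theta_0,\dots,\theta_{d-1}) \in \overline{\F_q}^d$. Expanding
\begin{equation*}
F(u, g(u)) = \sum_{i=0}^k a_i(u) \Bigl(f(u) + \sum_{j=0}^{d-1} \theta_j u^j\Bigr)^i,
\end{equation*}
each coefficient of a power of $u$ is visibly a polynomial in $\theta$ with coefficients in $\F_q$.

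Let $D \geq 0$ be the common value of $\deg F(u, g(u))$, and let $P(\theta) \in \F_q[\theta_0,\dots,\theta_{d-1}]$ denote the coefficient of $u^D$ in $F(u, g(u))$. The hypothesis that $\deg F(u, g(u)) = D$ for every $g \in \mathcal{I}_{\overline{\F_q}}$ is equivalent to the condition $P(\theta) \neq 0$ for all $\theta \in \overline{\F_q}^d$. By Hilbert's Nullstellensatz over the algebraically closed field $\overline{\F_q}$, a polynomial whose vanishing locus in $\overline{\F_q}^d$ is empty must be a nonzero constant, since any non-unit would generate a proper ideal and therefore have a nonempty zero locus. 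Hence $P \in \F_q^{\times}$, and the leading term $P \cdot u^D$ of $F(u, g(u))$ is independent of $g \in \mathcal{I}_{\overline{\F_q}}$.

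There is essentially no obstacle to this argument; the only step that needs to be made carefully is the translation from the hypothesis on $\deg F(u,g(u))$ to the nonvanishing of the single polynomial $P$ on all of $\overline{\F_q}^d$, after which the Nullstellensatz concludes. The edge case $d = 0$ (when $\mathcal{I}_{\overline{\F_q}}$ consists of a single polynomial) is handled uniformly, since a polynomial in zero variables is already an element of $\F_q$ and the conclusion is vacuous.
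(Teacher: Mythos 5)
Your proposal is correct and follows the same strategy as the paper: view the coefficient of $u^D$ in $F(u,g(u))$ as a polynomial $P$ in the coordinates $\theta$ of $g$, observe that the hypothesis forces $P$ to be nonvanishing on $\overline{\F_q}^d$, and conclude by the Nullstellensatz that $P$ is a nonzero constant. The only addition you make is spelling out the $d=0$ edge case, which the paper's phrasing handles implicitly.
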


\begin{proof} 

The coefficient of the highest power $u^d$ of $u$ in $F(u,g(u))$ is a polynomial function $P$ of the coordinates of $g(u) \in {\mathcal I}_{\overline{\mathbb F_q}}$. 
Since the degree $d$ of $F(u,g(u))$ is independent of $g(u) \in {\mathcal I}_{\overline{\mathbb F_q}}$,
this polynomial function $P$ vanishes nowhere, so by the Nullstellensatz, it is constant.
\end{proof}

\subsection{Tools for applying the M\"{o}bius formula}

Here we prove several claims that help verify the hypotheses of \cref{formula-for-Mobius},
deal with the cases when these fail,
make the application of \cref{formula-for-Mobius} more effective, and relate it to the trace function bounds we proved earlier.

\subsubsection{Infinite intersection}

We show that on special subsets for which the finite intersection condition in \cref{formula-for-Mobius} fails, 
the M\"{o}bius function vanishes almost everywhere.

\begin{prop} \label{ScarcityInfiniteIntersectionLem}

Keep \cref{DerivativeNotation}.
Let $r \in \mathbb F_q[u]$ for which $Z_F \cap Z_{F_{[r]}}$ is infinite.
Then 
\begin{equation}
\# \{s \in \F_q[u] : \mu( F ( u, r+s^p)) \neq 0 \} \leq k(q-1).
\end{equation}
\end{prop}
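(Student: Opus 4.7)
The plan is to exhibit a ``hidden $p$-th power'' structure in $F(u, r+s^p)$ that forces non-squarefreeness for almost all $s$. If $Z_F \cap Z_{F_{[r]}}$ is infinite then $F$ and $F_{[r]}$ share a common irreducible factor $G \in \overline{\F_q}[u,T]$ (otherwise B\'ezout would bound the intersection). Let $\alpha \in \overline{\F_q(u)}$ be any root of $G$ viewed as a polynomial in $T$; then $\alpha$ is one of the roots $\alpha_i$ of $F$. Differentiating $F(u,\alpha(u)) \equiv 0$ with respect to $u$ gives $\frac{\partial F}{\partial u}(u,\alpha) + \alpha'(u)\frac{\partial F}{\partial T}(u,\alpha) = 0$, and hence
\begin{equation*}
F_{[r]}(u,\alpha) = \bigl(r'(u) - \alpha'(u)\bigr)\,\tfrac{\partial F}{\partial T}(u,\alpha).
\end{equation*}
Separability of $F$ in $T$ forces $\tfrac{\partial F}{\partial T}(u,\alpha) \neq 0$, so from $F_{[r]}(u,\alpha) = 0$ we conclude $(\alpha - r)'(u) = 0$. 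Since $\overline{\F_q(u)}$ is algebraically closed (hence perfect in characteristic $p$), there is a unique $\beta \in \overline{\F_q(u)}$ with $\beta^p = \alpha - r$. Substituting into the factorization $F(u, r + s^p) = a_k(u)\prod_i \bigl(s^p - (\alpha_i - r)\bigr)$, the factor indexed by $\alpha$ becomes $s^p - \beta^p = (s - \beta)^p$.

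The crucial point is that this formal $p$-th power forces a double zero of $F(u,r+s^p)$ whenever $s$ coincides with $\beta$ at some point. Indeed, if $s(u_0) = \beta(u_0)$ at some $u_0 \in \overline{\F_q}$ in the domain of $\beta$, then $r(u_0) + s(u_0)^p = \alpha(u_0)$, so $F(u_0, r(u_0) + s(u_0)^p) = 0$; moreover, since $G \mid F_{[r]}$,
\begin{equation*}
\left. \tfrac{d}{du}\, F(u, r + s^p) \right|_{u = u_0} = F_{[r]}(u_0, \alpha(u_0)) = 0.
\end{equation*}
Thus $u_0$ is a repeated root of $F(u, r+s^p) \in \F_q[u]$, making it non-squarefree, so $\mu(F(u,r+s^p)) = 0$. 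Equivalently, $\mu(F(u,r+s^p)) \neq 0$ forces the function $s - \beta$ to have no zero in $\overline{\F_q}$.

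It remains to count $s \in \F_q[u]$ with $s - \beta$ having no zero, for a fixed root $\alpha$ of $G$. When $\beta \in \overline{\F_q}[u]$ the condition forces $s - \beta \in \overline{\F_q}^\times$, i.e., $s = \beta + c$ for some nonzero constant $c$, and requiring $s \in \F_q[u]$ cuts this down to at most $q - 1$ choices (with boundary cases giving even fewer). When $\beta$ is non-polynomial, writing $\beta = h/k$ with coprime $h,k \in \overline{\F_q}[u]$ (or more generally using the minimal polynomial of $\beta$) reduces the condition to $sk - h$ being a nonzero constant, giving a bound no worse. A union over the at most $k$ roots of $G$ produces the claimed $k(q-1)$. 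The main technical care is this counting step when $\beta$ is not a polynomial -- where the locus of $\beta$ becomes a more intricate algebraic curve -- but in each such case the per-root bound can be verified by a direct examination.
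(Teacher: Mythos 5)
Your core idea is the right one and matches the paper's: by B\'ezout, $F$ and $F_{[r]}$ share a common irreducible factor, and since $\frac{d}{du}F(u,r+s^p) = F_{[r]}(u,r+s^p)$, any nonconstant common divisor of $F(u,r+s^p)$ and its derivative forces a square factor and hence $\mu = 0$. But there is a genuine gap in your counting step. You route the argument through an individual root $\alpha$ of the common factor and its $p$-th root $\beta$, and then need to count the $s \in \F_q[u]$ for which ``$s-\beta$ has no zero.'' When the common factor has $T$-degree greater than one, $\alpha$ and $\beta$ are genuinely algebraic over $\overline{\F_q}(u)$ -- not polynomials and not rational functions -- so the expression $\beta = h/k$ is unavailable, the condition ``$s-\beta$ has no zero in $\overline{\F_q}$'' is not even well-posed as written (one must speak of points on the curve cut out by the minimal polynomial of $\alpha$), and the bound of $q-1$ per root is asserted rather than proved: ``verified by a direct examination'' is precisely the part that needs an argument. (Two smaller issues: since $\overline{\F_q(u)}$ is perfect, \emph{every} element is a $p$-th power, so your deduction of $(\alpha-r)'=0$ is not what licenses the existence of $\beta$ -- its actual role is only to make the derivative vanish; and the final ``union over the $k$ roots'' should be an intersection of necessary conditions, though that error is in the safe direction.)

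The paper's proof avoids all of this by never factoring into roots. It keeps the common irreducible factor $P(u,T)$ as a bivariate polynomial: $P(u,r+s^p)$ divides both $F(u,r+s^p)$ and its $u$-derivative, so $\mu(F(u,r+s^p))=0$ unless $P(u,r+s^p) \in \F_q^\times$. Then $P(u,r+T)-c$ is, for each $c \in \F_q^\times$, a nonzero polynomial of degree at most $k$ in $T$ with coefficients in the field $\F_q(u)$, hence has at most $k$ roots $T = s^p$ there; summing over the $q-1$ choices of $c$ and using injectivity of $s \mapsto s^p$ gives $k(q-1)$. If you want to salvage your root-based version, you would in effect have to reintroduce the minimal polynomial of $\alpha$ and count zeros of $m(u,r+s^p)$ -- at which point you have reconstructed the paper's argument.
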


\begin{proof} 

Since the two zero loci have infinite intersection,
it follows from Bezout's Theorem that $F$ and $F_{[r]}$ share a common irreducible factor $P(u,T)$.
It follows that both $F(u, r+s^p)$ and
\begin{equation}
F_{[r]}(u, r + s^p) = \frac{d}{du} F(u,r+s^p)
\end{equation}
are divisible by $P(u, r + s^p)$.
We conclude that $\mu(F(u, r+s^p))=0$ once $P(u,r+s^p) \notin \mathbb F_q^\times$. 
Because $P(u,r+s^p)$ is a polynomial in $s^p$ of degree at most $k$, 
there are at most $(q-1) k$ choices of $s^p \in \F_q(u)$ for which 
\begin{equation}
P(u, r + s^p) \in \F_q^\times.
\end{equation} 
The proposition follows since in a field of characteristic $p$, the map $s \mapsto s^p$ is injective.
\end{proof}

\subsubsection{Partitioning an interval}

In order to prove (a generalized form of) \cref{NewRes} we need to control suns of the form $\sum_{ g \in \mathcal I } \mu(F(u,g))$.
For the M\"{o}bius formula from \cref{formula-for-Mobius} to apply, we need the leading term of $F(u,g)$ to be independent of $g$.
Since this is not always the case, we introduce the following lemma partitioning $\mathcal I$ into well-behaved subintervals.
This will allow us not to impose unnecessary monicity conditions and certain inequalities on degrees as in \cite[Theorem 4.5]{SS}. 

\begin{lem} \label{PartitionIntervalLem}

Keep \cref{IntervalNotation} and \cref{DerivativeNotation}.
For every interval $\mathcal I$ in $\mathbb F_q[u]$,
there exists a collection $\mathcal{P}$ of intervals in $\F_q[u]$ such that
\begin{enumerate}

\item every $\mathcal{J} \in \mathcal{P}$ is contained in $\mathcal{I}$.

\item for every $f \in \mathcal{I}$ there exists $\mathcal{J} \in \mathcal{P}$ with $f \in \mathcal{J}$;

\item for every two distinct intervals $\mathcal{J}, \mathcal{K} \in \mathcal{P}$ we have $\mathcal{J} \cap \mathcal{K} = \emptyset$;

\item for every $\mathcal{J} \in \mathcal{P}$, the leading term of $F(u, g(u))$ is independent of $g(u) \in \mathcal{J}_{\overline{\F_q}}$;

\item for each $0 \leq j \leq \dim (\mathcal I) - 1$, we have $\# \{\mathcal{J} \in \mathcal{P} : \dim(\mathcal{J}) = j \} \leq kq$;

\end{enumerate}

\end{lem}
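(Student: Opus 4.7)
The plan is to proceed by induction on $d = \dim(\mathcal{I})$. For the base case $d = 0$, the interval $\mathcal{I}$ is a singleton and $\mathcal{P} = \{\mathcal{I}\}$ satisfies (1)--(4), while (5) is vacuous.

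For the inductive step, write $\mathcal{I} = \mathcal{I}_{f,d}$ and study how the leading term of $F(u, g(u))$ varies with the top coordinate $\theta_{d-1}$ of $g \in \mathcal{I}_{\overline{\F_q}}$. Expanding $g = f + \sum_{i=0}^{d-1} \theta_i u^i$ and $F(u, g) = \sum_{j=0}^k a_j(u) g(u)^j$, I will observe that the coefficient of the maximal possible power $u^{D_{\max}}$ in $F(u, g)$ equals a polynomial $P(\theta_{d-1})$ of degree at most $k$ depending only on $\theta_{d-1}$, because only the leading-in-$u$ contributions of each $a_j g^j$ can reach this degree. If $P$ is a nonzero constant -- which is automatic when $\deg(f) \geq d$ in view of \cref{leading-term-degree} -- then condition (4) already holds on all of $\mathcal{I}$, and I take $\mathcal{P} = \{\mathcal{I}\}$. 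Otherwise I partition $\mathcal{I}$ into the $q$ sub-intervals $\mathcal{I}_\sigma = \mathcal{I}_{f + \sigma u^{d-1}, d-1}$ for $\sigma \in \F_q$, keep in $\mathcal{P}$ those $\mathcal{I}_\sigma$ for which $P(\sigma) \neq 0$ (on each such piece the leading term equals the constant $P(\sigma) u^{D_{\max}}$), and apply the inductive hypothesis to the remaining at most $k$ ``bad'' sub-intervals, adjoining the resulting sub-partitions.

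Conditions (1)--(4) then follow by construction, and at dimension $d - 1$ the count in (5) is at most $q \leq kq$. The delicate part, and the main obstacle I expect, is bounding the contribution from the recursive calls at dimensions $j < d - 1$: a naive application of the inductive hypothesis at the $\leq k$ bad branches yields $k \cdot kq$, which is insufficient. To close this gap I will show that after a single shift to a bad center $f + \sigma u^{d-1}$, the new polynomial $\tilde F(u, h) := F(u, f + \sigma u^{d-1} + h)$ behaves more tamely: its own analog of $P$ has degree at most one in the new top coordinate. The reason is that the Taylor coefficients $\tilde a_l(u) = \tfrac{1}{l!}\partial_T^l F(u, f+\sigma u^{d-1})$ satisfy $\deg(\tilde a_l) \leq D_{\max} - l(d-1)$ (with $\deg(\tilde a_0) \leq D_{\max} - 1$ since $P(\sigma) = 0$), while in the recursed interval of dimension $d - 1$ the new top coordinate occurs at $u$-degree $d - 2$, so the combined bound $\deg(\tilde a_l) + l(d-2) \leq D_{\max} - l$ forces only $l \in \{0, 1\}$ to contribute at the new maximal degree. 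Thus each recursive node beyond the first level produces at most one bad child, and the total count at each dimension $j < d - 1$ is at most $k(q - 1) \leq kq$. The most delicate point will be handling degenerate situations in which the new $\tilde P$ vanishes entirely and one must descend to still lower $u$-powers to recover the leading-term data; I expect this can be closed by a secondary induction on the maximal degree $D_{\max}$, which strictly decreases at each such descent, thereby guaranteeing termination without violating the counting bound.
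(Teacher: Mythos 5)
Your overall strategy (recursive splitting on the top coordinate, keeping the sub-intervals where the leading coefficient polynomial $P$ does not vanish and recursing on the at most $\deg P \leq k$ bad ones) is a legitimate, more hands-on alternative to the paper's argument, and conditions (1)--(4) would indeed follow from it. The genuine gap is in your mechanism for the count (5). Your claim that after one bad descent the new polynomial $\tilde P$ has degree at most one, ``so each recursive node beyond the first level produces at most one bad child,'' is false. Your own degree bound $\deg(\tilde a_l) + l(d-2) \leq D_{\max} - l$ only restricts the contributing indices to $l \in \{0,1\}$ when the new maximal degree equals $D_{\max}-1$; when it drops by $m$, all $l \leq m$ may contribute and $\tilde P$ can have up to $m$ roots. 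Concretely, take $p>3$, $F(u,T) = (T - u^2 - u)(T - u^2 - 2u)$ and $\mathcal I$ the polynomials of degree less than $3$. At the top level $D_{\max}=4$ and $P(\theta_2) = (\theta_2-1)^2$, giving the single bad child centered at $u^2$; but then $\tilde F(u,h) = (h-u)(h-2u)$ has $D'_{\max} = 2 = D_{\max}-2$ and $\tilde P(\theta_1) = (\theta_1-1)(\theta_1-2)$, so this node has \emph{two} bad children. Your per-node bound therefore fails, and with it your derivation of $\#\{\mathcal J : \dim \mathcal J = j\} \leq kq$; iterating the per-node bound $\deg P \leq k$ naively gives exponential growth $k^j q$, as you yourself noted. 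The ``secondary induction on $D_{\max}$'' you invoke for the degenerate descents is also not carried out, and the case $\deg(f) \geq \dim(\mathcal I)$ (where the top coordinate is not the leading coefficient of $g$) needs separate treatment that \cref{leading-term-degree} alone does not supply, since the constant coefficient of $u^{D_{\max}}$ may well be zero.

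The correct accounting is global rather than per-node: writing $F(u,T) = a_k\prod_{i=1}^k(T-\alpha_i)$ with $\alpha_i \in \overline{\F_q(u)}$ and extending the norm of $\F_q[u]$ to $\overline{\F_q(u)}$, a sub-interval is ``bad'' precisely when it lies $\infty$-adically close to some root $\alpha_i$, and since bad sub-intervals at a given level are pairwise disjoint balls, each level of the recursion tree has at most $k$ internal nodes in total (at most one per root per level, with at most $q$ candidates once the root is fixed), even though a single node may have several bad children. In the example above the level-two count is $2 \leq k$, consistent with this. This is exactly how the paper proceeds: it skips the recursion entirely and defines $\mathcal J_f = \{z \in \mathcal I : |z-f| < \min_i |f - \alpha_i|\}$, on which the ultrametric inequality forces $|g - \alpha_i| = |f-\alpha_i|$ for all $i$, hence constant degree of $F(u,g)$ and, by \cref{leading-term-degree}, constant leading term; the bound $kq$ then falls out because an interval of dimension $j$ forces $|f - \alpha_i| \leq q^j$ for some $i$, and for fixed $i$ the interval is determined by the coefficient of $u^j$. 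To repair your proof you would need to replace the local ``one bad child'' claim with this root-based global count.
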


\begin{proof} 

Fix an extension of the norm on $\mathbb F_q[u]$ to $\overline{\mathbb F_q(u)}$,
and denote by 
\begin{equation}
\omega(z) = -\log_q|z|
\end{equation}
the associated valuation of $z \in \overline{\mathbb F_q(u)}$.
Note that if $z \in \overline{\F_q}[u]$, then 
\begin{equation} \label{OmegaDegEq}
\omega(z) = -\deg(z).
\end{equation}

For $f \in \mathcal I$, if $f \notin \{\alpha_1, \dots, \alpha_k\}$, let 
\begin{equation}  \label{fsIntervalEq}
\mathcal{J}_f = \left\{z \in \mathcal I : |z - f| < \min_{1 \leq i \leq k} |f - \alpha_i| \right\},
\end{equation}
and for $f \in \mathcal I \cap \{\alpha_1, \dots, \alpha_k\}$ set
$
\mathcal{J}_f = \{f\}.
$
Put
\begin{equation}
\mathcal{P} = \left\{ \mathcal J_{f} : f \in \mathcal I\right \},
\end{equation}
and note that (1) and (2) above are satisfied.

To check (3), 
suppose that $\mathcal{J}_f \cap \mathcal{J}_g \neq \emptyset$ for some $f,g \in \mathcal{I}$. 
Since our intervals are nonarchimedean, this implies (without loss of generality) that $\mathcal{J}_g \subseteq \mathcal{J}_f$,
so in particular $g \in \mathcal{J}_f$.
If $g \in \{\alpha_1, \dots, \alpha_k\}$, we see from \cref{fsIntervalEq} that $g \in \mathcal{J}_f$ implies $f \in \{\alpha_1, \dots, \alpha_k\}$.
It follows that $\mathrm{len}(\mathcal{J}_g) = \mathrm{len}(\mathcal{J}_f) = 1$ and thus that $\mathcal{J}_g = \mathcal{J}_f$ as required.
If $g \notin \{\alpha_1, \dots, \alpha_k\}$, then this is also the case for $f$,
so from \cref{fsIntervalEq} we get that $|g-f| < |f - \alpha_i|$ for all $1 \leq i \leq k$.
Our norm is nonarchimedean, so $|g - \alpha_i| = |f - \alpha_i|$ by the above.
It follows from \cref{fsIntervalEq} that $\dim \mathcal{J}_g = \dim \mathcal{J}_f$ so $\mathcal{J}_g = \mathcal{J}_f$ as required. 

To check (4) for some $\mathcal{J} \in \mathcal{P}$, by \cref{leading-term-degree}, 
it suffices to check that $\deg(F(u, g(u)))$ is independent of $g(u) \in \mathcal J_{\overline{\mathbb F_q}}$.
Equivalently, by \cref{OmegaDegEq}, we need to check the independence of $\omega(F(u,g))$ on $g \in \mathcal{J}_{\overline{\F_q}}$.  
For that, pick an $f \in \mathcal I \setminus \{\alpha_1, \dots, \alpha_k\}$ with $\mathcal{J} = \mathcal{J}_f$. 
For $g \in \mathcal{J}_{\overline{\F_q}}$ we get as in the above paragraph that $\omega(g - \alpha_i) = \omega(f - \alpha_i)$ so \cref{FrootsEq} implies that
\begin{equation} 
\omega(F(u,g))= \omega(a_k) + \sum_{i=1}^k \omega (g-\alpha_i)  =  \omega(a_k) + \sum_{i=1}^k \omega(f-\alpha_i)
\end{equation}
is indeed independent of $g$.

At last we check (5).
For that, fix $0 \leq j \leq \dim(\mathcal I) - 1$, and let $f \in \mathcal I$ with $\dim(\mathcal{J}_f) = j$.
It follows from our definition of $\mathcal{J}_f$ that there exists some $1 \leq i \leq k$ such that $\omega(f - \alpha_i) \geq -j$.
Therefore, it suffices to check that for a given $i$ we have 
\begin{equation}
\# \{\mathcal{J}_g : g \in \mathcal{I}, \ \dim(\mathcal{J}_g) = j, \ \omega(g - \alpha_i) \geq -j \} \leq q.
\end{equation}

To establish the above inequality we show that $\mathcal{J}_g$ (as above) is determined by the coefficient of $u^j$ in $g$. Let $\mathcal J_g, \mathcal J_{g'}$ be two such intervals. 
We have $\omega(g - \alpha_i), \omega(g' - \alpha_i) \geq -j$,
so we get from \cref{OmegaDegEq} that 
\begin{equation}
\deg(g-g') = -\omega(g-g') \leq j
\end{equation}
since $\omega$ is nonarchimedean.
Hence, if the coefficient of $u^j$ in $g$ coincides with the coefficient of $u^j$ in $g'$, we get that $\deg(g-g') \leq j-1$ and thus $\mathcal{J}_g= \mathcal{J}_{g'}$ since $\dim(\mathcal{J}_g)  = \dim(\mathcal{J}_{g'})= j$.
\end{proof}

%

\subsubsection{Sheaf-theoretic setup}

We set up some of the notation needed to prove \cref{NewRes} and to state its `trace-twisted' variant.

\begin{notation}\label{trace-with-Mobius} 

Keep \cref{DerivativeNotation}, \cref{Kummer-sheaf}, and \cref{ShiftedPexpNotation}.
Let $r \in \F_q[u]$ be a polynomial for which $Z_F \cap Z_{F_{[r]}}$ is finite.
Let $g \in \mathbb F_q[u]$ be a squarefree polynomial, let 
\begin{equation}
t \colon \F_q[u]/(g) \to \mathbb{C} 
\end{equation}
be an infinitame trace function, let $\mathcal{F}_\pi$ be a sheaf giving rise to the trace function $t_\pi$, and set 
\begin{equation} \label{DefgFrEq}
g_{F,r} = \lcm ( g, M_{F,r} ).
\end{equation} 
Fix a prime factor $\pi$ of $g_{F,r}$, 
let $\kappa = \F_q[u]/(\pi)$, and let $\chi \colon \kappa^\times \to {\overline{\mathbb{Q}_\ell}}^\times$ be the unique quadratic character. 
In other words, the character $\chi$ is the Legendre symbol mod $\pi$, that is 
\begin{equation} \label{chiDefQuadEq}
\chi(f) = \left( \frac{f}{\pi} \right), \quad f \in \kappa^\times = (\F_q[u]/(\pi))^\times.
\end{equation}

We reduce $r$ mod $\pi$, and recall from \cref{ShiftedPexpNotation} the map
\begin{equation} \label{ShiftedPexpEq}
E_{r} \colon \mathbb A^1_{\kappa} \to \mathbb A^1_{\kappa}, \quad E_r(x) = r + x^p.
\end{equation}
Using this map, we define a sheaf on $\mathbb A^1_{\kappa}$ by
\begin{equation} \label{ThreeCasesSheafEq}
\mathcal{F}_{F,r,\pi} = E_r^* 
\begin{cases}
\mathcal L_{\chi} \left(W^{(\pi)}_{F,r} \right) & \pi \nmid g, \ \pi \mid M_{F,r}  \\
\mathcal F_{\pi} & \pi \mid g, \ \pi \nmid M_{F,r} \\
\mathcal L_{\chi} \left( W^{(\pi)}_{F,r} \right) \otimes \mathcal F_{\pi} & \pi \mid g, \ \pi \mid M_{F,r}
\end{cases}
\end{equation}
and use the shorthand notation $t_{F,r,\pi}$ for the associated trace function $t_{\mathcal{F}_{F,r,\pi}}$.
At last, define the trace function
\begin{equation} \label{TheBigTraceFunctionEq}
t_{F,r} = \prod_{\pi \mid g_{F,r}} t_{F,r,\pi}.
\end{equation}
By \cref{Kummer-sheaf-properties} and \cref{general-tensor-product}(6), this is an infinitame trace function.

\end{notation}

In the proof of \cref{NewRes} and its variants, we will be tasked with applying \cref{trace-interval-prop} to $g_{F,r}$ and $t_{F,r}$. 
In order to make \cref{trace-interval-prop} a useful bound,
we need to have some control on the rank and conductor.

%
%
%

\subsubsection{Bounding rank and conductor}

%
%
%

In order to control $|M_{F,r}|$ and $|g_{F,r}|$, we recall from \cref{DefMFrEq} that $|M_{F,r}|$ is bounded by $|R(F, F_{[r]})|$, so it suffices to control the latter.
The following is the variant of \cite[(4.24)]{SS} needed here.

\begin{prop} \label{BoundDegResProp}

Keep \cref{DerivativeNotation}. 
For $r \in \F_q[u]$ we have 
\begin{equation}
\deg(R(F, F_{[r]})) \leq E(c_1, c_2, \deg(r)).
\end{equation}

\end{prop}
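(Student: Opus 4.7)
The plan is to bound $\deg R(F, F_{[r]})$ by expanding the resultant as the determinant of the Sylvester matrix of $F$ and $F_{[r]}$ (in the variable $T$) and estimating each term in the permutation expansion using the weighted degree bounds imposed by \cref{aBound}.

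First I will bound the coefficients $b_j$ of $F_{[r]}$. Using the explicit formulas \cref{beq}, the product and chain rules, and the hypothesis \cref{aBound}, a short calculation gives the uniform estimate
\begin{equation*}
\deg(b_j) \leq c_1 - 1 + c_2 j + D, \quad 0 \leq j \leq k,
\end{equation*}
where $D := \max\{0,\, c_2 + \deg(r)\}$. The case $F_{[r]} = 0$ makes the conclusion trivial, so I may assume $k' := \deg_T F_{[r]}$ satisfies $0 \leq k' \leq k$.

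Next, let $S = (S_{ij})$ be the $(k+k') \times (k+k')$ Sylvester matrix, so that $\det S = R(F, F_{[r]})$. Under the standard indexing, $S_{ij} = a_{k+i-j}$ for $1 \leq i \leq k'$ and $S_{ij} = b_{i-j}$ for $k'+1 \leq i \leq k+k'$, with the convention that an entry is zero whenever its subscript lies outside its admissible range. Applying the degree bounds on $a_i$ and $b_j$ term-by-term, for every permutation $\sigma$ of $\{1,\dots,k+k'\}$ I obtain
\begin{equation*}
\sum_{i=1}^{k+k'} \deg(S_{i, \sigma(i)}) \leq (k+k') c_1 + c_2 k k' - k + k D + c_2 \sum_{i=1}^{k+k'} (i - \sigma(i)).
\end{equation*}
The key observation driving the argument is that $\sum_i (i - \sigma(i))$ telescopes to zero for every permutation, so the final correction vanishes and the bound is independent of $\sigma$. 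Maximizing over $\sigma$ then yields $\deg R(F, F_{[r]}) \leq (k+k') c_1 + c_2 k k' - k + k D$.

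To finish, I rewrite this bound as $E(c_1, c_2, \deg(r)) - (k - k')(c_1 + c_2 k)$ and argue that the subtracted quantity is nonnegative. The one delicate point, which I expect to be the main (albeit mild) obstacle, is verifying that $c_1 + c_2 k \geq 0$ despite $c_2$ being allowed to be negative: this follows because $a_k \neq 0$ forces $\deg(a_k) \geq 0$, and \cref{aBound} gives $\deg(a_k) \leq c_1 + c_2 k$. Combined with $k' \leq k$, this establishes $(k-k')(c_1+c_2 k) \geq 0$ and delivers the claimed inequality.
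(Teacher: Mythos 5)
Your proof is correct, and it follows essentially the same route as the paper: both arguments exploit the quasi-homogeneity of the resultant, which you derive explicitly from the permutation expansion of the Sylvester determinant (the telescoping identity $\sum_i (i - \sigma(i)) = 0$ is precisely the quasi-homogeneity the paper cites from Gelfand--Kapranov--Zelevinsky). The one place you are more careful than the paper is the case $k' = \deg_T F_{[r]} < k$: the paper invokes the GKZ statement in a form that implicitly treats a $2k \times 2k$ Sylvester matrix (i.e.\ bounds $R_{k,k}(F, F_{[r]}) = a_k^{k-k'} R(F, F_{[r]})$, which dominates $R(F,F_{[r]})$ in degree), whereas you compute with the actual $(k+k')\times(k+k')$ matrix and verify directly that the slack $(k-k')(c_1 + c_2 k)$ is nonnegative via $\deg a_k \geq 0$. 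Both resolutions are valid and the difference is cosmetic.
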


\begin{proof}

The quasi-homogeneity of the resultant from \cite[p. 399, (1.6)]{GKZ}, and Sylvester's formula as given in \cite[p. 400, (1.12)]{GKZ}, imply that
$R(F,F_{[r]})$ is a linear combination (over $\F_q$) of subproducts of 
\begin{equation}
a_{i_1}(u) \dots a_{i_k}(u) b_{j_1}(u) \dots b_{j_k}(u), \quad i_1 + \dots + i_k + j_1 + \dots + j_k = k^2. 
\end{equation}



By \cref{beq}, we have the bound
\begin{equation} \label{bBound}
\begin{split}
\deg(b_i) &\leq\max\left\{ \deg(a_i)-1 , \deg(r) + \deg(a_{i+1}) - 1 \right\} \\
&\leq \max\{  c_1 -1 + c_2i ,\deg(r) - 1 + c_1 + c_2(i+1)\} \\
&= \max\{0, c_2 + \deg(r)\} + c_1 - 1 + c_2i 
\end{split}
\end{equation}
for the degrees of the coefficients of $F_{[r]}$.
As a result we get that 
\begin{equation}
\begin{split}
\deg(R(F, F_{[r]})) &\leq \max_{i_1 + \dots + i_k + j_1 + \dots + j_k = k^2} \deg(a_{i_1} \dots a_{i_k} b_{j_1} \dots b_{j_k}) \\
&= \max_{i_1 + \dots + i_k + j_1 + \dots + j_k = k^2} \sum_{\ell = 1}^k \deg(a_{i_\ell}) + \deg(b_{j_\ell}).
\end{split}
\end{equation}

Using \cref{aBound} and \cref{bBound} we see that the above is at most 
\begin{equation}
2kc_1 + k\max\{0, c_2 + \deg(r)\} - k + \max_{i_1 + \dots + i_k + j_1 + \dots + j_k = k^2}  \sum_{\ell = 1}^k c_2(i_\ell + j_\ell) 
\end{equation}
which evaluates to
\begin{equation}
2kc_1 + k\max\{0, c_2 + \deg(r)\} - k + c_2k^2 .
\end{equation}
By the notation in \cref{AuxFuncEq}, the above equals $E(c_1, c_2, \deg(r))$.
\end{proof}

\begin{prop} \label{gammaLemma}

Keep \cref{trace-with-Mobius}. 
For any positive $\gamma \in \mathbb R$ we have 
\begin{equation*}
\begin{split} 
\prod_{\pi \mid g_{F,r}} (  r ( t_{F,r,\pi} ) (1+\gamma) + c (t_{F,r,\pi} ) \gamma )^{\deg (\pi)} \leq 
(1+ 2 \gamma)^{ E(c_1,c_2, \deg(r)) }  
\prod_{ \pi \mid g} (  r( t_{\pi} ) (1+\gamma) + c ( t_{\pi} ) \gamma )^{\deg (\pi)}.
\end{split}
\end{equation*}

\end{prop}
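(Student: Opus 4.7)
The plan is to estimate the product on the left factor-by-factor, splitting the primes $\pi \mid g_{F,r}$ into the three cases used in the definition of $\mathcal{F}_{F,r,\pi}$ in \cref{ThreeCasesSheafEq}: (i) $\pi \mid g$ and $\pi \nmid M_{F,r}$, (ii) $\pi \nmid g$ and $\pi \mid M_{F,r}$, and (iii) $\pi$ divides both $g$ and $M_{F,r}$. The surplus factor $(1+2\gamma)^{E(c_1,c_2,\deg(r))}$ will be collected from those $\pi$ dividing $M_{F,r}$, using $\deg(R(F, F_{[r]})) \leq E(c_1, c_2, \deg(r))$ from \cref{BoundDegResProp}.

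Since $E_r^*$ preserves rank and conductor by \cref{ShiftedFrobPullbackLem}(4), it suffices to compute these invariants for the inner sheaves. In case (i), $\mathcal{F}_{F,r,\pi} = E_r^* \mathcal{F}_\pi$, so the local factors on both sides agree. In cases (ii) and (iii), the Kummer sheaf $\mathcal{L}_\chi(W^{(\pi)}_{F,r})$ has rank $1$ and conductor at most $d_\pi := \deg_T W^{(\pi)}_{F,r}$ by \cref{Kummer-sheaf-properties}(6); summing \cref{OrderVanishWFratbEq} over $b \in \overline{\F_q}$ identifies $d_\pi$ with $\sum_b i_{(a,b)}(Z_F, Z_{F_{[r]}})$ for any root $a \in \overline{\F_q}$ of $\pi$. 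In case (iii) the tensor-product bounds \cref{general-tensor-product}(5,6) then give $r(t_{F,r,\pi}) = r(t_\pi)$ and $c(t_{F,r,\pi}) \leq d_\pi\, r(t_\pi) + c(t_\pi)$, with case (ii) being the specialization at $r(t_\pi) = 1$, $c(t_\pi) = 0$.

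Let $e_\pi$ denote the exponent of $\pi$ in $R(F, F_{[r]})$, so that $e_\pi \geq 1$ whenever $\pi \mid M_{F,r}$, and $e_\pi \geq d_\pi$ by \cref{modified-Zeuthen}. Writing $A_\pi := r(t_\pi)(1+\gamma) + c(t_\pi)\gamma$ (and setting $r(t_\pi) = 1$, $c(t_\pi) = 0$ for $\pi \nmid g$), the local factor on the left in cases (ii) and (iii) is at most $(A_\pi + r(t_\pi) d_\pi \gamma)^{\deg(\pi)}$. The key per-prime inequality
\[ A_\pi + r(t_\pi) d_\pi \gamma \;\leq\; A_\pi\, (1 + 2\gamma)^{e_\pi} \]
reduces, after dividing by $A_\pi \geq r(t_\pi)$, to $(1 + 2\gamma)^{e_\pi} \geq 1 + d_\pi \gamma$, which follows from the Bernoulli-type inequality $(1+2\gamma)^{e_\pi} \geq 1 + 2 e_\pi \gamma$ (valid since $e_\pi \geq 1$ and $\gamma > 0$) together with $d_\pi \leq e_\pi$.

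Assembling the local bounds and taking the product yields
\[ \prod_{\pi \mid g_{F,r}} \bigl(r(t_{F,r,\pi})(1+\gamma) + c(t_{F,r,\pi})\gamma\bigr)^{\deg(\pi)} \leq (1+2\gamma)^{\sum_\pi e_\pi \deg(\pi)} \prod_{\pi \mid g} \bigl(r(t_\pi)(1+\gamma) + c(t_\pi)\gamma\bigr)^{\deg(\pi)}, \]
and $\sum_\pi e_\pi \deg(\pi) = \deg(R(F, F_{[r]})) \leq E(c_1, c_2, \deg(r))$ by \cref{BoundDegResProp}. The main bookkeeping step is aligning the sheaf-theoretic conductor estimate $c(\mathcal{L}_\chi(W^{(\pi)}_{F,r})) \leq d_\pi$ with the Zeuthen estimate $d_\pi \leq e_\pi$ so that both feed into a single local exponential bound; with these two inequalities in hand, everything else is a routine calculation.
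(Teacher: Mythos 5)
Your overall strategy coincides with the paper's: split the primes dividing $g_{F,r}$ according to the three cases of \cref{ThreeCasesSheafEq}, compute the rank and conductor of each local sheaf via \cref{ShiftedFrobPullbackLem}(4), \cref{Kummer-sheaf-properties}(6) and \cref{general-tensor-product}(5,6), bound $\deg W^{(\pi)}_{F,r}$ by $e_\pi = v_\pi(R(F,F_{[r]}))$ through \cref{OrderVanishWFratbEq} and \cref{modified-Zeuthen}, and collect the global surplus from $\sum_\pi e_\pi \deg(\pi) = \deg R(F,F_{[r]}) \leq E(c_1,c_2,\deg(r))$ via \cref{BoundDegResProp}. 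All of these local estimates are correct and are exactly the ones the paper uses.

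There is, however, a concrete bookkeeping gap in the assembly for the primes of case (ii), i.e.\ $\pi \mid M_{F,r}$ with $\pi \nmid g$. Your uniform per-prime inequality $A_\pi + r(t_\pi)d_\pi\gamma \leq A_\pi(1+2\gamma)^{e_\pi}$ specializes there (with $r(t_\pi)=1$, $c(t_\pi)=0$, hence $A_\pi = 1+\gamma$) to a local bound of $\bigl((1+\gamma)(1+2\gamma)^{e_\pi}\bigr)^{\deg(\pi)}$. Taking the product over all $\pi \mid g_{F,r}$ therefore leaves a surplus factor $\prod_{\pi}(1+\gamma)^{\deg(\pi)}$ over the case (ii) primes, and since these primes do not divide $g$ there is no corresponding factor in $\prod_{\pi\mid g}(r(t_\pi)(1+\gamma)+c(t_\pi)\gamma)^{\deg(\pi)}$ to absorb it; the displayed assembled inequality does not follow from the per-prime bounds as stated. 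The repair is one line and uses only ingredients you already have: for these primes bound the local factor directly by
\begin{equation*}
1+\gamma+d_\pi\gamma \leq 1+(d_\pi+1)\gamma \leq 1+2e_\pi\gamma \leq (1+2\gamma)^{e_\pi},
\end{equation*}
using $e_\pi \geq \max\{d_\pi,1\}$ and Bernoulli's inequality, which is precisely the computation in \cref{gammaBoundCaseOne}; the division-by-$A_\pi$ trick should be reserved for case (iii), where the factor $A_\pi$ genuinely appears on the right-hand side.
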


\begin{proof} 

Let $\pi$ be a prime dividing $g_{F,r}$.
In case $\pi$ divides $g$ and does not divide $M_{F,r}$, from the definition of $\mathcal{F}_{F,r,\pi}$ in \cref{ThreeCasesSheafEq}(2), and the invariance of rank and conductor in \cref{ShiftedFrobPullbackLem}(4), we get
\begin{equation} \label{gammaBoundCaseTwo}
\begin{split}
\rank ( \mathcal F_{F,r,\pi} ) (1+\gamma) + c ( \mathcal F_{F,r,\pi} ) \gamma &= 
\rank ( E_r^* \mathcal F_{\pi} ) (1+\gamma) + c ( E_r^* \mathcal F_{\pi} ) \gamma \\
&= \rank ( \mathcal F_{\pi} ) (1+\gamma) + c ( \mathcal F_{\pi} ) \gamma.
\end{split}
\end{equation}

In case $\pi$ divides both $g$ and $M_{F,r}$, from \cref{ThreeCasesSheafEq}(3), \cref{ShiftedFrobPullbackLem}(4), and \cref{general-tensor-product}(5) we get
\begin{equation} \label{Case3rboundEq}
\begin{split}
\rank (\mathcal F_{F,r,\pi}) &= \rank \left( E_r^* \left(\mathcal L_{\chi} \left(W^{(\pi )}_{F,r}\right) \otimes \mathcal F_{\pi}\right)\right) = 
\rank \left( \mathcal L_{\chi} \left(W^{(\pi )}_{F,r} \right) \otimes \mathcal F_{\pi} \right) = \rank ( \mathcal F_{\pi } ).
\end{split}
\end{equation} 
Similarly, by \cref{ThreeCasesSheafEq}(3), \cref{ShiftedFrobPullbackLem}(4), \cref{general-tensor-product}(6),
and \cref{Kummer-sheaf-properties}(6), we have
\begin{equation}  \label{Case3cboundEq}
\begin{split}
c (\mathcal F_{F,r,\pi})  &= c \left( E_r^* \left(\mathcal L_{\chi} \left(W^{(\pi )}_{F,r}\right) \otimes \mathcal F_{\pi} \right)\right) =
c \left( \mathcal L_{\chi} \left(W^{(\pi )}_{F,r} \right) \otimes \mathcal F_{\pi} \right) \\
&\leq  c (\mathcal F_{\pi} )+ c \left( \mathcal L_{\chi} \left(W_{F,r}^{(\pi)}\right)\right) \rank ( \mathcal F_{\pi} ) 
\leq c (\mathcal F_{\pi} )+  \deg \left(W_{F,r}^{(\pi)}\right) \rank ( \mathcal F_{\pi} ). 
\end{split}
\end{equation} 

Let $a \in \overline{\F_q}$ be a root of $\pi$.
From the definition of $W_{F,r}^{(\pi)}$ after \cref{DefWFrEq}, 
the information on multiplicities in \cref{OrderVanishWFratbEq}, and \cref{modified-Zeuthen} we obtain
\begin{equation} \label{Case3degboundEq}
\begin{split}
\deg \left(W_{F,r}^{(\pi)}\right) &= \deg(W_{F,r}(a,T)) = \sum_{b \in \overline{\F_q}} \operatorname{ord}_{T=b} W_{F,r}(a,T) \\
&= \sum_{\substack{x \in Z_F \cap Z_{F_{[r]}} \\ u_x = a}} i_x(Z_F, Z_{F_{[r]}}) 
\leq \operatorname{ord}_{u=a} R(F,F_{[r]}) = v_\pi ( R( F, F_{[r]}))
\end{split}
\end{equation}
where $v_\pi$ is the $\pi$-adic valuation on $\F_q[u]$.
From \cref{Case3cboundEq} and \cref{Case3degboundEq} we conclude that 
\begin{equation} \label{Case3cFboundEq}
c (\mathcal F_{F,r,\pi}) \leq c ( \mathcal F_{\pi} ) + v_\pi ( R( F, F_{[r]})) \rank ( \mathcal F_{\pi }).
\end{equation}

Combining \cref{Case3rboundEq} with \cref{Case3cFboundEq}, 
and using Bernoulli's inequality, we get
\begin{equation} \label{gammaBoundCaseThree}
\begin{split} 
\rank ( \mathcal F_{F,r,\pi} ) (1+\gamma) + c ( \mathcal F_{F,r,\pi} ) \gamma &\leq 
\rank(\mathcal F_{\pi} )(1+\gamma) + c ( \mathcal F_{\pi}) \gamma +  v_\pi ( R( F, F_{[r]})) \rank (\mathcal F_{\pi})  \gamma \\
&\leq (1 +v_\pi ( R( F, F_{[r]}))  \gamma)   ( \rank(\mathcal F_{\pi} )(1+\gamma) + c ( \mathcal F_{\pi}) \gamma ) \\
&\leq 
(1 +\gamma) ^{ v_\pi ( R( F, F_{[r]}))  }   ( \rank(\mathcal F_{\pi} )(1+\gamma) + c ( \mathcal F_{\pi}) \gamma ).
\end{split}
\end{equation}
 

In case $\pi$ divides $M_{F,r}$ and does not divide $g$, by \cref{ThreeCasesSheafEq}(1), \cref{ShiftedFrobPullbackLem}(4), 
and \cref{Kummer-sheaf-properties}(6) we have 
\begin{equation} \label{rankCase1Eq}
\rank(\mathcal{F}_{F,r,\pi}) = \rank(E_r^* \mathcal L_{\chi}(W_{F,r}^{(\pi)}) ) =  \rank( \mathcal L_{\chi}(W_{F,r}^{(\pi)}) ) = 1.
\end{equation}
Similarly, from \cref{ThreeCasesSheafEq}(1), \cref{ShiftedFrobPullbackLem}(4), \cref{Kummer-sheaf-properties}(6), 
and \cref{Case3degboundEq} we get
\begin{equation} \label{cFCase1Eq}
\begin{split}
c(\mathcal{F}_{F,r,\pi}) &= c \left( E_r^* \mathcal L_{\chi} \left(W_{F,r}^{(\pi)}\right) \right) \\
&=  c \left( \mathcal L_{\chi} \left(W_{F,r}^{(\pi)} \right) \right) \leq
\deg \left( W_{F,r}^{(\pi)} \right) \leq v_\pi(R(F,F_{[r]})).
\end{split}
\end{equation}

Since $\pi$ divides $M_{F,r}$, it follows from the definition of the latter in \cref{DefMFrEq} that $\pi$ divides $R(F, F_{[r]})$, 
or equivalently $v_\pi(R(F, F_{[r]})) \geq 1$.
Therefore, from \cref{rankCase1Eq}, \cref{cFCase1Eq}, and Bernoulli's inequality we have 
\begin{equation} \label{gammaBoundCaseOne}
\begin{split}
\rank ( \mathcal F_{F,r,\pi} ) (1+\gamma) + c ( \mathcal F_{F,r,\pi} ) \gamma  &\leq 1 + \gamma + v_\pi ( R( F, F_{[r]})) \gamma \\
&\leq 1 +2 v_\pi ( R( F, F_{[r]})) \gamma \leq (1+2\gamma)^ {v_\pi ( R( F, F_{[r]})) }.
\end{split}
\end{equation}

At last, combining \cref{SheafFuncDictionaryDef}, \cref{gammaBoundCaseOne}, \cref{gammaBoundCaseTwo}, \cref{gammaBoundCaseThree}, and \cref{BoundDegResProp} we get
\begin{equation*}
\begin{split}
&\prod_{\pi \mid g_{F,r}} (  r ( t_{F,r,\pi} ) (1+\gamma) + c (t_{F,r,\pi} ) \gamma )^{\deg (\pi)} = \\
&\prod_{\pi \mid g_{F,r}} (  \rank ( \mathcal F_{F,r,\pi} ) (1+\gamma) + c ( \mathcal F_{F,r,\pi} ) \gamma )^{\deg (\pi)} = \\
&\prod_{\substack{\pi \mid M_{F,r} \\ \pi \nmid g}} (  \rank ( \mathcal F_{F,r,\pi} ) (1+\gamma) + c ( \mathcal F_{F,r,\pi} ) \gamma )^{\deg (\pi)}
\prod_{\pi \mid g} (  \rank ( \mathcal F_{F,r,\pi} ) (1+\gamma) + c ( \mathcal F_{F,r,\pi} ) \gamma )^{\deg (\pi)} \leq \\ 
&\prod_{\substack{\pi \mid M_{F,r} \\ \pi \nmid g}} (1+2\gamma)^{\deg(\pi) v_\pi ( R(F, F_{[r]} ))}
\prod_{\pi \mid g} (1+2\gamma)^{ \deg(\pi) v_\pi ( R(F, F_{[r]} ))} (  \rank ( \mathcal F_{\pi} ) (1+\gamma) + c ( \mathcal F_{\pi} ) \gamma )^{\deg(\pi)} = \\
&\prod_{\pi \mid M_{F,r}} (1+2\gamma)^{ \deg(\pi) v_\pi ( R(F, F_{[r]} ))}
\prod_{\pi \mid g} (  \rank ( \mathcal F_{\pi} ) (1+\gamma) + c ( \mathcal F_{\pi} ) \gamma )^{\deg(\pi)} = \\
&(1+2\gamma)^{ \sum_{\pi \mid M_{F,r}} \deg(\pi) v_\pi ( R(F, F_{[r]} ))}
\prod_{\pi \mid g} (  \rank ( \mathcal F_{\pi} ) (1+\gamma) + c ( \mathcal F_{\pi} ) \gamma )^{\deg(\pi)} = \\
&(1+2\gamma)^{ \deg(R(F, F_{[r]}))}
\prod_{\pi \mid g} (  \rank ( \mathcal F_{\pi} ) (1+\gamma) + c ( \mathcal F_{\pi} ) \gamma )^{\deg(\pi)} \leq \\
&(1+2\gamma)^{ E(c_1, c_2, \deg(r))}
\prod_{\pi \mid g} (  \rank ( \mathcal F_{\pi} ) (1+\gamma) + c ( \mathcal F_{\pi} ) \gamma )^{\deg(\pi)} = \\
&(1+ 2 \gamma)^{ E(c_1,c_2, \deg(r)) }  
\prod_{ \pi \mid g} (  r( t_{\pi} ) (1+\gamma) + c ( t_{\pi} ) \gamma )^{\deg (\pi)}.
\end{split}
\end{equation*}
\end{proof}


\subsubsection{Finding a good prime}

Our goal here is to give a sufficient condition for the existence of a prime $\tau$ as in \cref{trace-interval-prop} for the trace function $t_{F,r}$ from \cref{trace-with-Mobius}.

\begin{prop}\label{unique-root} 

Keep \cref{DerivativeNotation}.
Suppose that $a \in \overline{\mathbb F_q}$ is not a root of the polynomial
\begin{equation}
\Delta_v (R (F,F_v))
\end{equation}
introduced in \cref{DiscVDefEq}.
Then for any $r \in \mathbb F_q[u]$ for which $Z_F \cap Z_{F_{[r]}}$ is finite, 
there exists at most  one $b \in \overline{\mathbb F_q}$ such that $(a,b) \in Z_{F} \cap Z_{F_{[r]}}$. 

\end{prop}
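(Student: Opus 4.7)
The plan is to argue by contradiction, using the factorization of $R(F, F_v)$ from \cref{RFFvFormula}. Working over $\overline{\F_q(u)}$, we can rewrite
\begin{equation*}
R(F, F_v) = a_k^{\deg_T F_v} \prod_{i=1}^k \frac{\partial F}{\partial T}(u, \alpha_i) \cdot \prod_{i=1}^k (v - v_i(u)),
\quad v_i(u) = -\frac{\partial F/\partial u(u, \alpha_i)}{\partial F/\partial T(u, \alpha_i)},
\end{equation*}
where each $v_i(u)$ is well-defined by the separability of $F$ in $T$. Geometrically $v_i(u)$ equals the slope $\alpha_i'(u)$ of the $i$-th branch $T = \alpha_i(u)$ of $Z_F$, as one sees by implicit differentiation of $F(u, \alpha_i(u)) = 0$. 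Thus $R(F, F_v)$ is, up to a leading factor, a polynomial in $v$ of degree $k$ over $\F_q[u]$ whose roots are precisely these branch slopes.

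The hypothesis that $\Delta_v(R(F,F_v))$ does not vanish at $a$ says that $R(F, F_v)$, viewed as a degree-$k$ polynomial in $v$ over $\F_q[u]$, specializes at $u = a$ to a degree-$k$ polynomial with $k$ distinct roots. In particular this forces $a_k(a) \neq 0$ so that $F(a, T)$ has exactly $k$ distinct roots $\alpha_1(a), \dots, \alpha_k(a) \in \overline{\F_q}$, that $(\partial F/\partial T)(a, \alpha_i(a)) \neq 0$ for every $i$, and that the specialized slopes $v_1(a), \dots, v_k(a)$ are pairwise distinct.

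Now assume for contradiction that $(a, b_1), (a, b_2) \in Z_F \cap Z_{F_{[r]}}$ with $b_1 \neq b_2$. Writing $b_j = \alpha_{i_j}(a)$ for indices $i_1 \neq i_2$, the identity $F_{[r]}(a, b_j) = (\partial F/\partial u)(a, b_j) + (dr/du)(a) \cdot (\partial F/\partial T)(a, b_j) = 0$ combined with $(\partial F/\partial T)(a, b_j) \neq 0$ yields $(dr/du)(a) = v_{i_j}(a)$ for both $j = 1$ and $j = 2$. Hence $v_{i_1}(a) = v_{i_2}(a)$, contradicting the distinctness established above. The main obstacle is the careful translation in the middle paragraph: one must verify, using the definition in \cref{FromDiscriminantToResultantEq}, that nonvanishing of $\Delta_v(R(F,F_v))$ at $a$ really does simultaneously exclude both degeneration of the degree of $R(F, F_v)$ in $v$ at $a$ and collision of its specialized roots $v_i(a)$.
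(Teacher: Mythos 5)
Your overall strategy is the same as the paper's: factor $R(F,F_v)$ over the roots $\alpha_i$ of $F$ as in \cref{RFFvFormula}, interpret the roots in $v$ as the branch slopes $v_i = -\tfrac{\partial F/\partial u}{\partial F/\partial T}(u,\alpha_i)$, and derive a contradiction from two intersection points over $a$ forcing a coincidence at $v = \tfrac{dr}{du}(a)$. The gap is exactly the step you flag in your last sentence and then assert without proof, and as stated it is false. Nonvanishing of $\Delta_v(R(F,F_v))$ at $u=a$ does \emph{not} force the specialization to remain of degree $k$ in $v$, nor does it force $a_k(a) \neq 0$ or $\tfrac{\partial F}{\partial T}(a,\alpha_i(a)) \neq 0$ for every $i$: the formal degree-$k$ discriminant of a polynomial whose leading coefficient vanishes satisfies $\mathrm{disc}_k(f) = c_{k-1}^2\,\mathrm{disc}_{k-1}(f)$ when $c_k = 0$, so e.g.\ for $k=2$ one has $\mathrm{disc}(c_1v+c_0) = c_1^2 \neq 0$ even though the degree has dropped. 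Thus the leading coefficient of $R(F,F_v)$ in $v$ (which involves $a_k$ and the $\tfrac{\partial F}{\partial T}(u,\alpha_i)$) may well vanish at $a$ while the discriminant does not, and then your identification of $b_1,b_2$ with specializations $\alpha_{i_j}(a)$ of integral branches, and the pairwise distinctness of \emph{all} the $v_i(a)$, both break down.

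The repair is to run the implication in the other direction, which is what the paper does: assume two distinct points $(a,b_1),(a,b_2) \in Z_F \cap Z_{F_{[r]}}$ and show directly that $R(F,F_v)$, reduced modulo the minimal polynomial $\pi$ of $a$, has a root of multiplicity $\geq 2$ at $v_0 = \tfrac{dr}{du} \bmod \pi$, which forces $\pi \mid \Delta_v(R(F,F_v))$. This direction is robust: the degenerate cases ($\pi \mid a_k$, or $\deg_T F_v = 0$) are disposed of separately because there the discriminant already vanishes mod $\pi$, and when $\pi \nmid a_k$ the roots $\alpha_i$ are integral at any place over $\pi$, so the product formula reduces factor by factor; each of $b_1, b_2$ then makes its own factor vanish at $v_0$ (one does not even need $\tfrac{\partial F}{\partial T}(a,b_j) \neq 0$ here, since if both partials vanish the factor is identically zero and still contributes to the multiplicity). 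Your argument needs this case analysis, or at least the degeneration formula for $\mathrm{disc}_k$ together with a verification that $b_1,b_2$ correspond to non-degenerating factors; as written, the middle paragraph does not establish what the final paragraph uses.
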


\begin{proof}

Suppose toward a contradiction that there exist distinct $b_1, b_2 \in \overline{\F_q}$ with 
$
(a,b_1), (a,b_2) \in Z_F \cap Z_{F_{[r]}}
$
and let $\pi \in \F_q[u]$ be the minimal polynomial of $a$ over $\F_q$.
We will arrive at a contradiction to $a$ not being a root of $\Delta_v(R(F,F_v))$ by showing that $\pi$ divides $\Delta_v(R(F,F_v))$ in the ring $\F_q[u]$.

Denote by $v_0$ the residue class of $\frac{dr}{du}$ in $\F_q[u]/(\pi)$. 
To check that 
\begin{equation} \label{PiDivEq}
\Delta_v(R(F, F_v)) \equiv 0 \mod \pi,
\end{equation} 
it suffices to prove that 
\begin{equation} \label{OrdVan2Eq}
\mathrm{ord}_{v = v_0} \left( R(F, F_v) \ \mathrm{mod} \ \pi \right) \geq 2.
\end{equation}
By \cref{RFFvFormula}, we have
\begin{equation} \label{ResultantProdEq}
R ( F, F_v) = 
a_k^{ \deg_T F_v} \prod_{i=1}^k \left( \frac{\partial F}{\partial u} ( u, \alpha_i) + v \frac{\partial F}{\partial T} (u, \alpha_i) \right).
\end{equation}

Suppose first that $\deg_T(F_v) = 0$.
Then 
\begin{equation}
\deg_T(F_{[r]} \ \mathrm{mod} \ \pi) \leq \deg_T(F_{[r]}) \leq \deg_T(F_v) = 0, 
\end{equation}
and since $\pi(a)=0$,
the polynomial $F_{[r]} \ \mathrm{mod} \ \pi$ has a zero, so it is the zero polynomial.
It follows from finiteness of $Z_F \cap Z_{F_{[r]}}$ that $F \ \mathrm{mod} \ \pi$ is not zero,
and since $\pi(a)=0$, we conclude that $F \ \mathrm{mod} \ \pi$ has at least two zeros so
\begin{equation}
k = \deg_T(F) \geq \deg_T(F \ \mathrm{mod} \ \pi) \geq 2.
\end{equation} 

Our assumption that $F_v$ is constant as a polynomial in $T$, the separability of $F$ which implies that $\deg_v(F_v) = 1$, and the fact that $k \geq 2$ established above, imply that
\begin{equation}
\Delta_v(R(F,F_v)) = \Delta_v(F_v^k) = 0
\end{equation}
so \cref{PiDivEq} holds in this case.

Suppose now that $\deg_T(F_v) \geq 1$.
We see from \cref{ResultantProdEq} that if $\pi \mid a_k$ then \cref{PiDivEq} is satisfied, so we assume from now on that $\pi \nmid a_k$.
Since the $\alpha_i$ are roots of a polynomial with leading coefficient not divisible by $\pi$,
we can reduce \cref{ResultantProdEq} mod a prime of $\overline{\F_q(u)}$ lying over $\pi$.
Since $\pi(a)=0$, 
it follows that after the reduction, at least two of the factors on the right hand side of \cref{ResultantProdEq} vanish at $v = v_0$.
Hence, \cref{OrdVan2Eq} holds.
\end{proof} 

The following is the generalization of \cite[Remark 3.3]{SS} needed here. 

\begin{prop}\label{discriminant-definer}  

Keep \cref{trace-with-Mobius}.
Suppose that $R (F, F_{[r]})$ is not of the form $A^2 B$ for any $A \in \F_q[u]$ and any $B \in \F_q[u]$ that divides the polynomial
\begin{equation} \label{TripProdPolEq}
g \cdot a_k \cdot \Delta_v ( R(F, F_v)) \in \F_q[u]. 
\end{equation}
Then there exists a prime $\pi \in \F_q[u]$ dividing $g_{F,r}$ such that $t_{F,r,\pi}$ is a Dirichlet trace function.


\end{prop}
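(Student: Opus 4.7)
The plan is to argue by contrapositive: I will assume that no prime $\pi$ dividing $g_{F,r}$ yields a Dirichlet trace function $t_{F,r,\pi}$, and produce a factorization $R(F, F_{[r]}) = A^2 B$ with $A, B \in \F_q[u]$ and $B$ dividing $g \cdot a_k \cdot \Delta_v(R(F, F_v))$, contradicting the hypothesis.

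The analysis will center on what I call the \emph{good} primes: those $\pi$ dividing $R(F, F_{[r]})$ (equivalently $M_{F,r}$, by \cref{DefMFrEq}) and dividing none of $g$, $a_k$, $\Delta_v(R(F, F_v))$. Since such a $\pi$ satisfies $\pi \nmid g$ and $\pi \mid M_{F,r}$, we land in the first case of \cref{ThreeCasesSheafEq} and obtain $\mathcal{F}_{F,r,\pi} = E_r^* \mathcal{L}_\chi(W^{(\pi)}_{F,r})$. The first key step is to show that at a good prime, $W^{(\pi)}_{F,r}$ reduces to the simple form $c_\pi(T - z_\pi)^{d_\pi}$ in $(\F_q[u]/(\pi))[T]$, with $d_\pi = v_\pi(R(F, F_{[r]}))$. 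The single-factor shape follows because $\pi \nmid \Delta_v(R(F, F_v))$ makes \cref{unique-root} applicable at every $\overline{\F_q}$-root $a$ of $\pi$, so the product in \cref{W-over-one-root} collapses to a single linear factor raised to the intersection multiplicity. Identifying this exponent with $v_\pi(R(F, F_{[r]}))$ comes from the equality case of \cref{modified-Zeuthen}, which is available because $\pi \nmid a_k$ makes the leading $T$-coefficient of $F$ nonvanishing at every root of $\pi$.

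The second key step is to observe that if $d_\pi$ is odd, then $t_{F,r,\pi}$ must be a Dirichlet trace function — producing the desired prime and terminating the argument. Since $\chi$ is quadratic, $\chi^{d_\pi} = \chi$ for odd $d_\pi$, so by \cref{Kummer-sheaf-properties}(1) the Kummer sheaf $\mathcal{L}_\chi(c_\pi(T - z_\pi)^{d_\pi})$ has the same trace function as the sheaf $\mathcal{L}_\chi(c_\pi T - c_\pi z_\pi)$, which is Dirichlet by \cref{DirichletTraceFuncEx}. Pulling back through $E_r$ preserves the Dirichlet property by \cref{ShiftedFrobPullbackLem}(6). Consequently the contrapositive assumption forces $v_\pi(R(F, F_{[r]}))$ to be even at every good prime.

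Finally, I will set $A = \prod_\pi \pi^{\lfloor v_\pi(R(F,F_{[r]}))/2 \rfloor}$ over prime divisors of $R(F, F_{[r]})$, and $B = R(F, F_{[r]})/A^2$. Then $B$ is squarefree, and every prime factor of $B$ has odd $v_\pi(R)$, so by the previous step is not good and must divide $g \cdot a_k \cdot \Delta_v(R(F, F_v))$. Squarefreeness of $B$ upgrades this pointwise divisibility to $B \mid g \cdot a_k \cdot \Delta_v(R(F, F_v))$, yielding the contradiction. The only genuinely nontrivial step is the passage from $W^{(\pi)}_{F,r} = c_\pi(T - z_\pi)^{d_\pi}$ with $d_\pi$ odd to the Dirichlet conclusion, where one must verify that taking an odd power inside a quadratic-character Kummer sheaf does not alter the trace function; the rest is bookkeeping with valuations and direct appeals to the previously established tools.
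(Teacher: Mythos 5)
Your proof is correct and takes essentially the same approach as the paper's: the substantive content — using \cref{unique-root} (enabled by $\pi \nmid \Delta_v(R(F,F_v))$) to get a single-factor form for $W^{(\pi)}_{F,r}$, the equality case of \cref{modified-Zeuthen} (enabled by $\pi \nmid a_k$) to identify the exponent with $v_\pi(R(F,F_{[r]}))$, and the quadratic nature of $\chi$ to drop odd exponents — is identical. The only difference is that you phrase the argument as a contrapositive (assume no Dirichlet prime, manufacture a factorization $R = A^2 B$), whereas the paper begins by noting the hypothesis is equivalent to the existence of a prime $\pi \nmid g a_k \Delta_v(R(F,F_v))$ with $v_\pi(R(F,F_{[r]}))$ odd and works forward from that prime; the two formulations are logically equivalent.
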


\begin{proof} 

Our assumption on $R(F, F_{[r]})$ is equivalent to the existence of a prime $\pi$ not dividing the polynomial in \cref{TripProdPolEq} such that $v_\pi(R(F,F_{[r]}))$ is odd.
Since the latter valuation is nonzero, our prime $\pi$ divides $R(F, F_{[r]})$, so by the definition in \cref{DefMFrEq}, $\pi$ divides $M_{F,r}$. We conclude from the definition of $g_{F,r}$ in \cref{DefgFrEq}, and from \cref{TripProdPolEq}, 
that $\pi$ divides $g_{F,r}$ and does not divide $g$.
By \cref{ThreeCasesSheafEq}(1) we have
\begin{equation}
\mathcal{F}_{F,r,\pi} = E_r^* \mathcal{L}_\chi \Big( W_{F,r}^{(\pi)} \Big).
\end{equation}

In order to show that the associated trace function $t_{F,r,\pi}$ is a Dirichlet trace function, 
by the permanence property in \cref{ShiftedFrobPullbackLem}(6), it suffices to show that the function
\begin{equation}
t_{\mathcal{L}_\chi \left(W_{F,r}^{(\pi)} \right)} (x) = \chi \left(W_{F,r}^{(\pi)} (x) \right), \quad x \in \F_q[u]/(\pi),
\end{equation}
is a Dirichlet trace function.
Recall from \cref{chiDefQuadEq} that the character $\chi$ is quadratic, 
so by the definition in \cref{DefDirTraceFuncEq} it is enough to show that $W_{F,r}^{(\pi)}$ is an odd power of a monic linear polynomial, up to a constant from $(\F_q[u]/(\pi))^\times$.
In other words, we want to show that $W_{F,r}^{(\pi)}$ vanishes at no more than one point in $\overline{\F_q[u]/(\pi)}$, 
and its order of vanishing there is odd.


Fix a root $a \in\overline{\mathbb F_q}$ of $\pi$.
We are tasked with showing that $W_{F,r}(a,T)$ has a unique zero in $\overline{\F_q}$, 
and the multiplicity of this zero is odd.
Since $\pi$ does not divide the polynomial in \cref{TripProdPolEq}, it does not divide $\Delta_v(R(F,F_v))$, so $\Delta_v(R(F,F_v))(a) \neq 0$.
The desired uniqueness of the zero of $W_{F,r}(a,T)$ follows from \cref{OrderVanishWFratbEq} and \cref{unique-root}.
From \cref{OrderVanishWFratbEq} we moreover conclude that the order of vanishing of $W_{F,r}(a,T)$ at its unique vanishing point is
\begin{equation}
\sum_{b \in \overline{\F_q}} i_{(a,b)}(Z_F, Z_{F_{[r]}}).
\end{equation}

Since $\pi$ does not divide the polynomial in \cref{TripProdPolEq}, it does not divide the leading coefficient of $F$. 
In other words, the leading coefficient of $F$ does not vanish at $a$, 
so by \cref{modified-Zeuthen} the sum above equals
\begin{equation}
\operatorname{ord}_{u=a} R(F, F_{[r]}).
\end{equation}
This order of vanishing equals $v_\pi (R(F, F_{[r]}))$ which is odd by assumption. 
\end{proof}



\cref{discriminant-definer} is not useful in case $\Delta_v(R(F, F_v)) = 0$.
The next proposition characterizes the cases in which this vanishing occurs.
The arising condition is our generalization of the `distinct derivatives' assumption in \cite[Proposition 4.3]{SS}.

\begin{prop} \label{DistinctValuesNonvanishingRes}

Keep \cref{DerivativeNotation}.
In $\overline{\F_q(u)}$ we have
\begin{equation} \label{kDistinctEq}
\frac{ \frac{\partial F}{\partial u}}{ \frac{\partial F}{\partial T}}(u, \alpha_i) \neq
\frac{ \frac{\partial F}{\partial u}}{ \frac{\partial F}{\partial T}}(u, \alpha_j), \quad 1 \leq i < j \leq k,
\end{equation}
if and only if $\Delta_v ( R(F, F_v) )$ is not the zero polynomial.

\end{prop}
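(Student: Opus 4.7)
The plan is to read off the statement directly from the product formula \cref{RFFvFormula} for $R(F,F_v)$, viewing this resultant as a polynomial in $v$ over $\overline{\F_q(u)}$ and identifying its roots.

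First, I would rewrite the identity from \cref{RFFvFormula},
\begin{equation*}
R(F, F_v) = a_k^{\deg_T F_v} \prod_{i=1}^k \left( \frac{\partial F}{\partial u}(u, \alpha_i) + v \frac{\partial F}{\partial T}(u, \alpha_i) \right),
\end{equation*}
as an expression in $\overline{\F_q(u)}[v]$. The separability of $F$ as a polynomial in $T$, assumed in \cref{DerivativeNotation}, tells us that $\frac{\partial F}{\partial T}(u,\alpha_i) \neq 0$ in $\overline{\F_q(u)}$ for every $1 \leq i \leq k$. Combined with $a_k \neq 0$, this means the coefficient of $v^k$ in the product above, namely $a_k^{\deg_T F_v} \prod_{i=1}^k \frac{\partial F}{\partial T}(u,\alpha_i)$, is nonzero. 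Hence $R(F,F_v)$ is a polynomial of degree exactly $k$ in $v$ over $\overline{\F_q(u)}$, whose $k$ roots are precisely
\begin{equation*}
v_i \defeq -\frac{\frac{\partial F}{\partial u}(u,\alpha_i)}{\frac{\partial F}{\partial T}(u,\alpha_i)} \in \overline{\F_q(u)}, \quad 1 \leq i \leq k.
\end{equation*}

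Next, I would invoke the definition of the discriminant from \cref{DerivativeNotation} in the variable $v$: up to a nonzero factor arising from the leading coefficient (which is irrelevant for the question of vanishing), we have
\begin{equation*}
\Delta_v(R(F,F_v)) = \prod_{1 \leq i < j \leq k} (v_i - v_j)^2 \in \overline{\F_q(u)}.
\end{equation*}
Since the leading coefficient in $v$ is nonzero in $\overline{\F_q(u)}$, this product vanishes as an element of $\overline{\F_q(u)}$ if and only if $v_i = v_j$ for some $i \neq j$. On the other hand, by Galois invariance $\Delta_v(R(F,F_v))$ lies in $\F_q(u)$, and it is zero in $\F_q(u)$ if and only if it is zero in $\overline{\F_q(u)}$, and also if and only if it is zero as an element of $\F_q[u]$ (under whichever integral normalization is adopted in \cref{DiscVDefEq}).

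Combining the two previous paragraphs, $\Delta_v(R(F,F_v))$ is the zero polynomial in $\F_q[u]$ if and only if there exist $1 \leq i < j \leq k$ with $v_i = v_j$, which is exactly the negation of \cref{kDistinctEq}. There is no real obstacle in this argument; the only small subtlety is checking that the leading coefficient of $R(F,F_v)$ as a polynomial in $v$ is nonzero, which is immediate from the separability hypothesis on $F$.
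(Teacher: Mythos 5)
Your proposal is correct and follows essentially the same route as the paper: both identify the $k$ roots in $v$ of $R(F,F_v)$ via the product formula \cref{RFFvFormula} as $v_i = -\frac{\partial F/\partial u}{\partial F/\partial T}(u,\alpha_i)$ and observe that the discriminant vanishes exactly when two of them coincide. Your extra check that the leading coefficient in $v$ is nonzero (via separability) is a detail the paper records elsewhere (\cref{vDegEq}) but is the same argument.
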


%
%
%
%
%
%
%
%

\begin{proof} 

Our discriminant is nonzero if and only if $R(F,F_v)$
does not have a double root in $\overline{\F_q(u)}$ as a polynomial in $v$.
By \cref{RFFvFormula}, we have
\begin{equation}
R ( F, F_v) = a_k^{ \deg_T F_v} \prod_{i=1}^k \left( \frac{\partial F}{\partial u} ( u, \alpha_i) + v \frac{\partial F}{\partial T} (u, \alpha_i) \right)
\end{equation}
so the nonexistence of a double root among the $k$ roots 
\begin{equation}
v_i = -\frac{ \frac{\partial F}{\partial u} }{  \frac{\partial F}{\partial T} } (u, \alpha_i), \quad 1 \leq i \leq k,
\end{equation}
of $R(F, F_v)$ in $\overline{\F_q(u)}$ is equivalent to our assumption in \cref{kDistinctEq}.
\end{proof}




Using \cref{DistinctValuesNonvanishingRes}, we show in the next proposition that we can always arrive at a situation where $\Delta_v(R(F,F_v)) \neq 0$ by performing a linear change of variable.
This is our generalization of the main argument in the proof of \cite[Theorem 4.5]{SS}.

\begin{prop} \label{ChangeOfVariable}

Keep \cref{DerivativeNotation}.
There exists a (monic) polynomial $P(u) \in \mathbb F_q[u]$ with $|P(u)| \leq  q{k \choose 2} $ such that for all $c(u) \in \F_q[u]$,
the polynomial
\begin{equation}
G(u,T) = F(u, P(u)T + c(u))
\end{equation}
is separable in $T$ and satisfies 
\begin{equation}
\Delta_v ( R(G, G_v) ) \neq 0.
\end{equation}

\end{prop}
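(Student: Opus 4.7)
The plan is to apply \cref{DistinctValuesNonvanishingRes} to $G(u,T)$, which reduces the desired non-vanishing $\Delta_v(R(G,G_v))\ne 0$ to the pairwise distinctness of the quantities
\begin{equation}
w^G_i := \frac{\partial G/\partial u}{\partial G/\partial T}(u,\beta_i), \qquad 1\le i\le k,
\end{equation}
where $\beta_1,\ldots,\beta_k\in\overline{\F_q(u)}$ are the $T$-roots of $G$. A direct chain-rule computation using $\beta_i=(\alpha_i-c(u))/P(u)$ gives
\begin{equation}
w^G_i = \frac{w_i+c'(u)+P'(u)\beta_i}{P(u)}, \qquad w_i := \frac{\partial F/\partial u}{\partial F/\partial T}(u,\alpha_i),
\end{equation}
well-defined since $F$ is separable in $T$. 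Separability of $G$ in $T$ is immediate from $P\ne 0$ and the distinctness of the $\alpha_i$'s. Clearing denominators, the condition $w^G_i\ne w^G_j$ becomes
\begin{equation}\label{plan-cond}
\Psi_{ij}(P) := P(u)(w_i-w_j) + P'(u)(\alpha_i-\alpha_j) \ne 0 \quad\text{in } \overline{\F_q(u)},
\end{equation}
which is manifestly independent of $c(u)$. The task therefore reduces to producing a monic $P\in\F_q[u]$ with $|P|\le q\binom{k}{2}$ satisfying \eqref{plan-cond} for every pair $1\le i<j\le k$.

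I would search among squarefree monic $P$ of degree at most $D := \lfloor\log_q\binom{k}{2}\rfloor+1$; by the standard generating-function identity $\sum_d s_d u^d = (1-qu^2)/(1-qu)$, the total number of such polynomials is $1+q^D$. For pairs $(i,j)$ with $w_i=w_j$, condition \eqref{plan-cond} reduces to $P'(u)\ne 0$, and a squarefree polynomial satisfies $P'=0$ only if $P=1$ (since in characteristic $p$ the equation $P'=0$ forces $P$ to be a $p$-th power of a polynomial, which is squarefree only in the trivial case). For pairs $(i,j)$ with $w_i\ne w_j$, the failure of \eqref{plan-cond} amounts to the first-order linear relation $P'/P = -(w_i-w_j)/(\alpha_i-\alpha_j)$, and I claim this admits at most one squarefree monic polynomial solution in $\F_q[u]$.

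To prove the uniqueness, suppose $P_1,P_2$ are two such solutions. Then $(P_1/P_2)'=0$ in $\F_q(u)$, so $P_1/P_2\in\ker(d/du)=\F_q(u^p)$; writing $P_1/P_2=r(u^p)/s(u^p)$ for coprime $r,s\in\F_q[u]$ and using the identity $s(u^p)=s^{[p]}(u)^p$, where $s^{[p]}$ is obtained from $s$ by applying the inverse Frobenius to each coefficient (well-defined since Frobenius is bijective on $\F_q$), the equality $P_1\,s^{[p]}(u)^p = P_2\,r^{[p]}(u)^p$ combined with the squarefreeness of $P_1,P_2$ and the coprimality of $r^{[p]},s^{[p]}$ forces both $r^{[p]}$ and $s^{[p]}$ to be constants, whence $P_1=P_2$. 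Consequently the total number of bad squarefree monic polynomials of degree at most $D$ is at most $1+\binom{k}{2}$; since our choice of $D$ ensures $\binom{k}{2}<q^D\le q\binom{k}{2}$, good squarefree monic polynomials with $|P|\le q\binom{k}{2}$ exist. The main technical obstacle circumvented by the squarefree ansatz is the characteristic-$p$ pathology that the logarithmic derivative $P'/P$ generally fails to determine $P$, on account of the non-triviality of $\ker(d/du)=\F_q(u^p)$; restricting to squarefree polynomials precisely kills this $p$-th power ambiguity and yields the sharp bookkeeping needed to close the counting within the desired norm bound.
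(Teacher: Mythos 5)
Your proposal is correct and follows essentially the same route as the paper: reducing via \cref{DistinctValuesNonvanishingRes} to pairwise distinctness, performing the identical chain-rule computation, restricting to squarefree monic $P$ so that logarithmic derivation is injective, and closing with the same counting argument to obtain $|P| \leq q\binom{k}{2}$. The only difference is that you spell out the injectivity proof (via the kernel $\F_q(u^p)$ of $d/du$ and the $p$-th-power argument) and handle the $w_i=w_j$ case via $P\neq 1$ explicitly, whereas the paper simply asserts the injectivity on monic squarefree polynomials and absorbs everything into a count of at most $\binom{k}{2}$ forbidden values.
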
 

\begin{proof} 

In view of \cref{FrootsEq}, we have 
\begin{equation}
G \left(u, \frac{\alpha_i - c(u)}{P(u)} \right) = 0, \quad 1 \leq i \leq k,
\end{equation} 
so these are all the roots of  $G$ in $\overline{\F_q(u)}$ since $\deg_T(G) = \deg_T(F) = k$.
By \cref{DistinctValuesNonvanishingRes}, it suffices to choose $P(u)$ in such a way that 
\begin{equation} \label{DistinctValsEq}
\frac{ \frac{\partial G}{\partial u}}{ \frac{\partial G}{\partial T}}\left(u, \frac{\alpha_i-c(u)}{P(u)}\right) \neq
\frac{ \frac{\partial G}{\partial u}}{ \frac{\partial G}{\partial T}}\left(u, \frac{\alpha_j - c(u)}{P(u)}\right), \quad 1 \leq i < j \leq k.
\end{equation}
Using the chain rule, we get
\begin{equation*}
\begin{split}
\frac{ \frac{\partial G }{\partial u}}{ \frac{\partial G}{\partial T}} \left( u, \frac{\alpha_i-c(u)}{P(u)} \right) &=
\frac{  \frac{\partial F}{ \partial u} (u,\alpha_i) + \frac{\alpha_i-c(u)}{P(u)} \frac{d P}{du} \frac{\partial F}{ \partial T} (u, \alpha_i) +  
\frac{dc}{du} \frac{\partial F}{ \partial T} (u, \alpha_i)      }{  P(u)\frac{\partial F }{\partial T}(u, \alpha_i)  } \\
&= \frac{  \frac{\partial F}{ \partial u}(u,\alpha_i) }{  P(u)\frac{\partial F}{\partial T}(u,\alpha_i)  }  + \frac{(\alpha_i -c(u))\frac{dP}{du}}{P(u)^2}  + \frac{   \frac{d c}{du} }{  P(u)}. 
\end{split}
\end{equation*}
Hence, \cref{DistinctValsEq} holds unless for some $1 \leq i < j \leq k$ we have
\begin{equation}
\frac{  \frac{\partial F}{ \partial u} }{  \frac{\partial F}{\partial T}  }(u,\alpha_i)  + \frac{(\alpha_i -c(u))\frac{dP}{du}}{P(u)} =
\frac{  \frac{\partial F}{ \partial u} }{  \frac{\partial F}{\partial T}  }(u,\alpha_j)  + \frac{(\alpha_j -c(u))\frac{dP}{du}}{P(u)}. 
\end{equation}
Since $\alpha_i - \alpha_j \neq 0$ by separability, the above is equivalent to
\begin{equation}
\frac{ \frac{ dP }{ du} } {P(u)} = \frac{  \frac{ \frac{\partial F}{ \partial u}  }{  \frac{\partial F}{\partial T}}(u, \alpha_j)  -    
\frac{ \frac{\partial F}{ \partial u}}{  \frac{\partial F }{\partial T}}(u, \alpha_i)  }{ \alpha_i - \alpha_j} 
\end{equation}
so \cref{DistinctValsEq} holds if (and only if)  
\begin{equation}
\frac{ \frac{ dP }{ du} } {P(u)}
\end{equation}
does not belong to a specific set of at most ${k \choose 2}$ elements of $\overline{\F_q(u)}$.

The `logarithmic derivation' map
\begin{equation}
P \mapsto \frac{ \frac{ dP }{ du} } {P}, \quad P \in \F_q[u],
\end{equation}
sends monic polynomials $P,Q$ to the same rational function if and only if their quotient $\frac{P}{Q}$ is a $p$-th power in $\F_q(u)$.
In particular, the restriction of the logarithmic derivation map to monic squarefree polynomials is injective.
By \cite[Proposition 2.3]{Ros}, the number of such polynomials of degree at most $d$ exceeds $q^{d}$,
so we need that $q^d \geq {k \choose 2}$.
We thus take
\begin{equation}
d = \left\lceil \log_q {k \choose 2} \right\rceil \leq  \log_q {k \choose 2} + 1
\end{equation}
so we can choose $P$ satisfying \cref{DistinctValsEq} with $|P| \leq q^d \leq q {k \choose 2}$.
\end{proof}

In order to bound the number of possible $B$ in \cref{discriminant-definer}, 
we bound the degree of the polynomial $\Delta_v(R(F,F_v))$.

\begin{prop} \label{BoundDegDelta}

Keep \cref{DerivativeNotation}. Then 
\begin{equation} \label{DegBoundGoal}
\deg(\Delta_v(R(F,F_v))) \leq 4 k (k-1)  (c_1 + k \max\{c_2,0\} ).
\end{equation}

\end{prop}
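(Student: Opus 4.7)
The plan is to control $\deg_u R(F,F_v)$ (with $v$ regarded as a free variable) using the Sylvester/quasi-homogeneity argument already employed in \cref{BoundDegResProp}, and then bound $\deg \Delta_v(R(F,F_v))$ by invoking the standard fact that the discriminant of a degree-$k$ polynomial is homogeneous of degree $2k-2$ in its coefficients.

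First I will expand $R(F,F_v)$ as a polynomial in $v$ with coefficients $q_l(u) \in \F_q[u]$. As in the proof of \cref{BoundDegResProp}, the quasi-homogeneity of the resultant combined with Sylvester's formula writes $R(F,F_v)$ as an $\F_q$-linear combination of products $a_{i_1}(u)\cdots a_{i_k}(u)\, b_{j_1}^{(v)}(u)\cdots b_{j_k}^{(v)}(u)$ satisfying $i_1+\cdots+i_k+j_1+\cdots+j_k = k^2$, where $b_j^{(v)} := \frac{d a_j}{du} + (j+1)a_{j+1}v$ is the coefficient of $T^j$ in $F_v = \frac{\partial F}{\partial u} + v\frac{\partial F}{\partial T}$ (with the convention $a_{k+1}=0$). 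Treating $v$ as a scalar, each $b_j^{(v)}$ has $u$-degree at most $\max\{c_1 + c_2 j - 1,\ c_1 + c_2(j+1)\}$, and this maximum is $\le c_1 + c_2 j$ since $c_2 \le 0$ by \cref{aBound}. Summing $u$-degrees over all factors and using $\sum (i+j) = k^2$ yields
\begin{equation*}
\deg_u q_l(u) \le 2kc_1 + c_2 k^2, \qquad 0 \le l \le k.
\end{equation*}

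Next, recall that for any degree-$k$ polynomial $Q(v) = \sum_l q_l v^l$ one has $\Delta_v(\mu Q) = \mu^{2k-2}\Delta_v(Q)$, so $\Delta_v(Q)$ is a homogeneous polynomial of degree $2k-2$ in $(q_0,\dots,q_k)$. Therefore every monomial in the expansion of $\Delta_v(R(F,F_v))$ has $u$-degree at most
\begin{equation*}
(2k-2)(2kc_1 + c_2 k^2) = 4k(k-1)c_1 + 2c_2 k^2(k-1),
\end{equation*}
and since $c_2 \le 0$ the second summand is nonpositive, giving the required bound $\deg \Delta_v(R(F,F_v)) \le 4k(k-1)c_1 = 4k(k-1)\bigl(c_1 + k\max\{c_2,0\}\bigr)$.

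The argument is essentially careful bidegree tracking built on top of \cref{BoundDegResProp}, with no conceptual obstacle; the only point worth flagging is that the assumption $c_2 \le 0$ from \cref{aBound} is what forces $\max\{c_1+c_2j-1,\ c_1+c_2(j+1)\} \le c_1 + c_2 j$, and what makes the target $4k(k-1)(c_1 + k\max\{c_2,0\})$ collapse to the sharper bound $4k(k-1)c_1$ that we actually establish.
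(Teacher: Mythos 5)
Your proof is correct and follows essentially the same route as the paper: bound $\deg_u R(F,F_v)$ by the Sylvester expansion, then multiply by the degree of the discriminant as a form in the coefficients. The only real structural difference is that you import the quasi-homogeneity bookkeeping from \cref{BoundDegResProp} (tracking $\sum i_\ell + j_\ell = k^2$) and so obtain the slightly sharper intermediate estimate $\deg_u q_l \le 2kc_1 + c_2k^2$, whereas the paper simply bounds every coefficient of $F$ and $F_v$ uniformly by $c_1 + k\max\{c_2,0\}$, giving $\deg_u R(F,F_v) \le 2k(c_1 + k\max\{c_2,0\})$. Under the standing assumption $c_2 \le 0$ from \cref{aBound}, $\max\{c_2,0\}=0$ and both intermediate bounds reduce to $2kc_1$, so the extra precision buys nothing for the final bound. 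One small omission worth noting: the paper explicitly verifies $\deg_v R(F,F_v) = k$ using \cref{RFFvFormula} and the separability of $F$, which justifies treating $\Delta_v$ as the discriminant of a degree-$k$ polynomial (homogeneous of degree $2k-2$ in the $k+1$ coefficients). You invoke ``for any degree-$k$ polynomial $Q(v)$'' without checking that $R(F,F_v)$ has $v$-degree exactly $k$; your argument remains valid regardless (if the $v$-degree were $d<k$, the discriminant would be a degree-$2d-2$ form and the bound would only improve), but stating the verification is cleaner since the formula for the discriminant depends on the degree.
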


\begin{proof}  

Since $\deg_T(F), \deg_T(F_v) \leq k$, 
it follows from Sylvester's formula as given in \cite[p. 400, (1.12)]{GKZ} that $R(F, F_v)$ is a linear combination (over $\F_q$) of products of at most $k$ coefficients of $F$ and at most $k$ coefficients of $F_v$.
By \cref{aBound}, the degree of every coefficient $a_i$ of $F$ is at most
\begin{equation}
c_1 + c_2 i \leq c_1 + k \max\{c_2,0\}
\end{equation}
thus the degree in $u$ of (every coefficient of) $F_v$ is also bounded by the right hand side above.
We conclude that
\begin{equation} \label{uDegEq}
\deg_u(R(F, F_v)) \leq 2k (c_1 + k \max\{c_2,0\} ). 
\end{equation}

Since $F$ is separable in $T$, it follows from \cref{RFFvFormula} that
\begin{equation} \label{vDegEq}
\deg_v(R(F, F_v))) = k.
\end{equation}
We then infer from \cite[p. 404]{GKZ} that $\Delta_v(R(F,F_v))$ is a linear combination (over $\F_q$) of products of $2(k-1)$ coefficients of $R(F, F_v)$.
Using the bound on the degree of a coefficient from \cref{uDegEq}, we get that
\begin{equation}
\deg(\Delta_v(R(F,F_v))) \leq 2(k-1) \cdot 2k (c_1 + k \max\{c_2,0\} )
\end{equation}
and the right hand side above matches the right hand side of \cref{DegBoundGoal}.
 \end{proof}

Now that we have control over the number of possible $B$, 
we need to know how often $R(F, F_{[r]}) = A^2 B$ for a particular $B$.
For that, we have the following lemma which is a consequence of Cohen's quantitative Hilbert's irreducibility theorem as stated in \cite[Theorem 2.3]{Coh}.
We refer to \cite{BSE} of Bary-Soroker--Entin extending Cohen's work to function fields.

\begin{lem} \label{CohenLem}

Let $H(u,v) \in \F_q[u][v]$ be a polynomial which is not a perfect square in $\overline{\F_q(u)}[v]$, 
and let $B \in \mathbb F_q[u]$.
Then for $X \geq \max\{\deg_u(H),\deg(B)\}^4$ we have
\begin{equation}
\#\{g \in \F_q[u] : |g| < X, \ H(u,g(u))= B \cdot \square\} \ll \sqrt{X} \log X
\end{equation}
as $X \to \infty$, with the implied constant depending only on $\deg_v(H)$.

\end{lem}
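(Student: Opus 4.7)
The plan is to convert the square-condition into a Hilbert-irreducibility problem and apply the quantitative theorem of S.\ D.\ Cohen \cite{Coh} as adapted to $\F_q[u]$ by Bary-Soroker--Entin in \cite{BSE}. First I would observe that $H(u, g(u)) = B \cdot \square$, with $\square$ denoting the square of an element of $\F_q[u]$, is equivalent (upon multiplying both sides by $B(u)$) to the polynomial $\widetilde H(u, g(u)) := H(u, g(u)) \cdot B(u)$ being a square in $\F_q[u]$. Since $\F_q[u]$ is a UFD, this is in turn equivalent to $Y^2 - \widetilde H(u, g(u)) \in \F_q(u)[Y]$ being reducible. The hypothesis that $H$ is not a perfect square in $\overline{\F_q(u)}[v]$ transfers to $\widetilde H$: multiplication by the $v$-constant $B(u)$ in the algebraically closed field $\overline{\F_q(u)}$ amounts to multiplication by a square there, which preserves non-squareness of a polynomial in $v$.

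Consequently, the polynomial $F(u, v, Y) := Y^2 - \widetilde H(u, v) \in \F_q[u][v, Y]$ is absolutely irreducible over $\overline{\F_q(u)}$: any factorization in $\overline{\F_q(u)}[v, Y]$ must, by inspection of $Y$-degrees, be of the form $(Y - a(v))(Y + a(v))$, forcing $\widetilde H = a(v)^2$, a contradiction. In particular, $F$ is irreducible in $\F_q(u)(v)[Y]$. Our counting problem becomes: bound the number of $g \in \F_q[u]$ with $|g| < X$ such that the specialization $F(u, g(u), Y)$ becomes reducible over $\F_q(u)$.

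The final step is to parametrize $g(u) = c_0 + c_1 u + \cdots + c_{n-1} u^{n-1}$ with $n = \lceil \log_q X \rceil$, so that the family of specializations is indexed by $(c_0, \dots, c_{n-1}) \in \F_q^n$, a set of size at most $qX$. Invoking the function-field quantitative Hilbert irreducibility theorem of \cite{BSE}, which builds on \cite[Theorem 2.3]{Coh}, applied to the absolutely irreducible polynomial $F$, the number of such tuples producing reducible specializations is $\ll \sqrt X \log X$, with implied constant depending only on the $Y$-degree of $F$ (which is $2$) and on $\deg_v F = \deg_v H$. The threshold $X \geq \max\{\deg_u H, \deg B\}^4$ matches the Cohen--BSE requirement that the sample length grow polynomially in the $u$-degrees of the defining polynomial, using $\deg_u \widetilde H \leq \deg_u H + \deg B$.

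The main obstacle is matching the precise form of \cite{BSE} to our setup: one must verify that the quantitative bound applies uniformly to specializations $v \mapsto g(u)$ with $g$ ranging over the full $\F_q$-vector space of polynomials of degree less than $n$, and that the explicit dependence on degrees can be packaged into the stated threshold $X \geq \max\{\deg_u H, \deg B\}^4$ together with the $\deg_v H$-dependence of the implied constant. The absolute irreducibility of $F$ established above is precisely the hypothesis that enables square-root cancellation, as opposed to the weaker estimate one would obtain from only $\F_q(u, v)$-irreducibility.
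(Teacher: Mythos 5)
The paper gives no proof of this lemma; it simply asserts it as a consequence of Cohen's quantitative Hilbert irreducibility theorem \cite[Theorem 2.3]{Coh} as adapted to function fields in \cite{BSE}. Your proposal fleshes out the standard reduction the paper leaves implicit, and it is the correct one: convert $H(u,g(u))=B\cdot\square$ to the reducibility of $Y^2 - B(u)H(u,g(u))$ and then appeal to the quantitative irreducibility count. The verification that $Y^2 - B(u)H(u,v)$ is absolutely irreducible (using that $B$, being a nonzero element of the algebraically closed field $\overline{\F_q(u)}$, is a square there) is exactly the right input.

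One small imprecision: the passage from ``$H(u,g(u))=B\cdot\square$'' to ``$B(u)H(u,g(u))$ is a square in $\F_q[u]$'' is only an implication, not an equivalence, if $\square$ denotes a square of a polynomial. For example $H=1$ and $B=u^2$ has $BH=u^2$ a square but $H\ne B\cdot w^2$ for any $w\in\F_q[u]$. This costs nothing here, since we only need an upper bound and the inclusion goes the right way; alternatively, if $\square$ is read as a square in $\F_q(u)$ (which is how it is used in \cref{SquareScarcity}, where the condition is $R(F,F_{[r]})=A^2B$ with $A$ a polynomial, and thus the forward containment is all that is used), the two conditions do agree. Beyond that, as you acknowledge, matching the precise form of the threshold $X\ge\max\{\deg_u H,\deg B\}^4$ and the dependence of the implied constant to the exact statement in \cite{BSE} is the part that would need to be checked carefully; the paper itself does not carry this out either, so your level of detail is not below the paper's.
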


The lemma above is the generalization of \cite[Proposition 4.2]{SS} needed here.
More specifically, we need the following corollary.

\begin{cor} \label{SquareScarcity}

Keep \cref{IntervalNotation}, \cref{DerivativeNotation}, and suppose that 
\begin{equation}
\Delta_v(R(F, F_v)) \neq 0.
\end{equation}
Let $B \in \mathbb F_q[u]$, and let $\mathcal I$ be an interval in $\F_q[u]$ with 
\begin{equation} \label{TheLengthOfIassumptionEq}
\mathrm{len}(\mathcal{I}) \geq  \max\{E(c_1, c_2, \deg(\mathcal{I})), \deg(a_k^kB) \}^4.
\end{equation}
Take $\mathcal{R} \subseteq \mathcal{I}$ such that for every $f \in \mathcal I$ there exists a unique $r \in \mathcal{R}$ with 
\begin{equation}
\frac{df}{du} = \frac{dr}{du}.
\end{equation}
Then as $\operatorname{len}(\mathcal{I}) \to \infty$ we have
\begin{equation}
\# \{r \in \mathcal{R} : R(F, F_{[r]}) = B \cdot \square \} \ll \sqrt{\mathrm{len}(\mathcal{I})} \log \mathrm{len}(\mathcal{I})
\end{equation}
with the implied constant depending only on $k$.

\end{cor}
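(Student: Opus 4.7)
The plan is to apply Cohen's lemma (\cref{CohenLem}) to the two-variable polynomial
\[
H(u,v) \defeq R(F, F_v) \in \F_q[u,v].
\]
The hypothesis $\Delta_v(R(F,F_v)) \neq 0$ together with the fact that $\deg_v H = k \geq 1$ (from \cref{vDegEq}, since $F$ is separable in $T$) shows that $H$ is squarefree of positive degree in $v$ over $\overline{\F_q(u)}$, hence not a perfect square in $\overline{\F_q(u)}[v]$. This is precisely the hypothesis Cohen's lemma requires.

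The first step is to translate the equation $R(F,F_{[r]}) = B \cdot \square$ into an equation involving $H$. Writing $g_r \defeq dr/du$ and $e_r \defeq \deg_T(F_v) - \deg_T(F_{[r]}) \in \{0,1,\dots,k\}$, the specialization identity \cref{CCGThreePointTwoConclusionEq} gives $H(u, g_r(u)) = a_k(u)^{e_r} R(F, F_{[r]})$. Modulo squares, $a_k^{e_r}$ depends only on the parity of $e_r$, so
\[
\{r \in \mathcal R : R(F,F_{[r]}) = B\cdot\square\} \subseteq \bigcup_{B' \in \{B,\, a_k B\}} \{r \in \mathcal R : H(u,g_r) = B'\cdot\square\}.
\]
It thus suffices to bound each of the two sets on the right by $\ll \sqrt{\operatorname{len}(\mathcal I)} \log \operatorname{len}(\mathcal I)$.

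Next, write $\mathcal I = f_0 + \{h \in \F_q[u] : \deg h < d\}$ with $d = \dim(\mathcal I)$, set $g_0 \defeq df_0/du$, and observe that each $g_r = g_0 + k_r$ with $\deg k_r \leq d - 2$, so $|k_r| < q^{d-1} \leq \operatorname{len}(\mathcal I)$. The defining property of $\mathcal R$ makes the map $r \mapsto k_r$ injective. Introduce the translated polynomial $\tilde H(u,v) \defeq H(u, g_0(u) + v)$, which is still not a perfect square in $\overline{\F_q(u)}[v]$ (a $v$-translation preserves the discriminant up to sign), and has $\deg_v \tilde H = k$. Applying \cref{CohenLem} to $\tilde H$ with $X = q^{d-1}$ and $B'$ as above, one obtains the bound $\ll \sqrt{q^{d-1}} \log q^{d-1} \ll \sqrt{\operatorname{len}(\mathcal I)} \log \operatorname{len}(\mathcal I)$, with implied constant depending only on $\deg_v \tilde H = k$.

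The main technical step, and the most tedious part, is to verify the hypothesis of \cref{CohenLem}: $X \geq \max\{\deg_u \tilde H, \deg B'\}^4$. Here $\deg B' \leq \deg(a_k^k B)$ trivially, and for $\deg_u \tilde H$ one repeats the Sylvester-matrix / quasi-homogeneity argument from the proof of \cref{BoundDegResProp}, using the inequalities $\deg_u H \leq 2kc_1 + c_2 k^2 + k\max\{-1,c_2\}$ and $\deg g_0 \leq \deg \mathcal I - 1$ to conclude $\deg_u \tilde H \leq \deg_u H + k \deg g_0$, which up to an additive constant depending only on $k$ is bounded by $E(c_1, c_2, \deg \mathcal I)$. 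The hypothesis $\operatorname{len}(\mathcal I) \geq \max\{E(c_1,c_2,\deg\mathcal I), \deg(a_k^k B)\}^4$ then yields $X \geq \max\{\deg_u \tilde H, \deg B'\}^4$ after absorbing $k$-dependent constants into the implied constant of $\ll$.
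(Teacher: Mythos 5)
Your proposal follows the same route as the paper's proof: fix a base point $f_0$ of the interval, translate the two-variable resultant $R_{k,j}(F,F_v)$ by $g_0=df_0/du$, check that the result is not a square in $\overline{\F_q(u)}[v]$ using $\Delta_v(R(F,F_v))\neq 0$ (via the distinctness of the roots $v_i$, i.e.\ \cref{DistinctValuesNonvanishingRes}), split into finitely many classes $a_k^{e}B$ modulo squares coming from \cref{CCGThreePointTwoConclusionEq}, and apply \cref{CohenLem}. All of that matches the paper.

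The step that does not close as written is the verification of the degree hypothesis of \cref{CohenLem}. You bound $\deg_u\tilde H\leq \deg_u H+k\deg g_0$ with $\deg_u H\leq 2kc_1+c_2k^2+k\max\{-1,c_2\}$ and assert this is $E(c_1,c_2,\deg\mathcal I)+O_k(1)$. Since $E(c_1,c_2,\deg\mathcal I)=2kc_1+k\max\{0,c_2+\deg\mathcal I\}-k+c_2k^2$ and $\deg g_0\leq\deg\mathcal I-1$, the excess of your bound over $E$ is $k\bigl(\max\{-1,c_2\}-c_2\bigr)=k\max\{-1-c_2,\,0\}$ when $c_2+\deg\mathcal I\geq 0$; for $c_2<-1$ this equals $k(|c_2|-1)$, which is unbounded and depends on $c_2$, not only on $k$. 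The corollary permits arbitrary $c_2\leq 0$, and the paper invokes it with large negative $c_2$ (e.g.\ in \cref{MobiusBeatsKloostermanCor}), so \cref{TheLengthOfIassumptionEq} does not yield $X\geq(\deg_u\tilde H)^4$ through your chain of inequalities. The loss comes from charging every $v$-coefficient of $H$ with the full $\deg_u H$ \emph{and} the full $k\deg g_0$, whereas the high $v$-coefficients of $H$ have strictly smaller $u$-degree. Two routine fixes: run the Sylvester/quasi-homogeneity bound directly on $R_{k,j}(F,F_{g_0+v})$, whose coefficient of $T^j$ has $u$-degree at most $c_1+c_2j+\max\{-1,\,c_2+\deg g_0\}$, which gives $\deg_u\tilde H\leq E(c_1,c_2,\deg\mathcal I)$ exactly; or, as the paper does, choose $\lambda\in\overline{\F_q}$ generic so that $\deg_u\tilde H=\deg R(F,F_{[f_0+\lambda u]})$ and quote \cref{BoundDegResProp} with $r=f_0+\lambda u$. (A second, smaller slip of the same kind: you apply \cref{CohenLem} with $X=q^{d-1}$, but the hypothesis only guarantees $\operatorname{len}(\mathcal I)=q^{d}\geq\max\{\cdot\}^4$; take $X=\operatorname{len}(\mathcal I)$ as the paper does.)
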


\begin{proof}

Fix $f \in \mathcal I$, and note that
\begin{equation*}
\begin{split}
\#\{r \in \mathcal{R} : R(F, F_{[r]}) = B \cdot \square \} &= 
\#\{g \in \F_q[u] : |g| < \mathrm{len}(\mathcal I), \ R(F, F_{\frac{d(f+g)}{du}}) = B \cdot \square \} \\
&\leq \#\{g \in \F_q[u] : |g| < \mathrm{len}(\mathcal I), \ R(F, F_{\frac{df}{du} + g}) = B \cdot \square \}. \\
\end{split}
\end{equation*}
By \cref{CCGThreePointTwoEq}, the above is at most
\begin{equation*}
\# \bigcup_{i = 0}^j \{g \in \F_q[u]: |g| < \mathrm{len}(\mathcal I), \ R_{k,j}(F, F_{\frac{df}{du} + g}) = a_k^{i}B \cdot \square \}, \ 
j = \deg_T \left(F_{\frac{df}{du} + v} \right)
\end{equation*}
so setting 
\begin{equation}
H(u,v) = R_{k,j}(F, F_{\frac{df}{du} + v}),
\end{equation} 
and noting that $j \leq k$, we get the bound
\begin{equation}
\sum_{i=0}^k \#\{g \in \F_q[u] : |g| < \mathrm{len}(\mathcal I), \ H(u,g) = a_k^i B \cdot \square \}.
\end{equation}

Therefore, in order to conclude by applying \cref{CohenLem}, one thing we need to check is that
$\mathrm{len}(\mathcal I) \geq \max\{\deg_u(H),\deg(a_k^kB)\}^4$.
By our assumption in \cref{TheLengthOfIassumptionEq}, this amounts to showing that 
\begin{equation} \label{AmountsToForDegHEq}
\deg_u(H) \leq E(c_1, c_2, \deg(\mathcal{I})).
\end{equation}
We claim that there exists $\lambda \in \overline{\F_q}$ for which
\begin{equation} \label{ClaimOnDegHEq}
\deg_u(H) = \deg R(F, F_{[f + \lambda u]}).
\end{equation}

Since $F$ is separable, the coefficient of the highest power of $u$ in $H(u,v)$ is a nonzero polynomial $P \in \F_q[v]$,
and the coefficient of the highest power of $T$ in $F_{\frac{df}{du} + v}$ is a nonzero polynomial $Q \in \F_q[u,v]$.
Hence, there exists $\lambda \in \overline{\F_q}$ such that $P(\lambda) \neq 0$ and $Q(u, \lambda) \neq 0$.
It follows that 
\begin{equation*}
\deg_u(H(u,v)) = \deg(H(u, \lambda)) = \deg(R_{k,j}(F,F_{\frac{df}{du} + \lambda})) =  \deg R(F, F_{[f + \lambda u]})
\end{equation*} 
so our claim from \cref{ClaimOnDegHEq} is established.
From \cref{BoundDegResProp}, \cref{IntervalNotation}, and the fact that $\mathrm{len}(\mathcal{I}) > 1$, we get that 
\begin{equation}
\deg(R(F, F_{[f + \lambda u]})) \leq E(c_1, c_2, \deg(f + \lambda u)) \leq  E(c_1, c_2, \deg(\mathcal{I}) )
\end{equation}
so the two equations above imply \cref{AmountsToForDegHEq}.

The other thing we need to check is that $R_{k,j}(F, F_{\frac{df}{du} + v})$ is not a perfect square in $\overline{\F_q(u)}[v]$.
For that we use \cref{RFFvFormula} to write
\begin{equation*}
R_{k,j} ( F, F_{\frac{df}{du} + v}) = 
a_k^{j} \prod_{i=1}^k \left( \frac{\partial F}{\partial u} ( u, \alpha_i) + \left(\frac{df}{du} + v \right) \frac{\partial F}{\partial T} (u, \alpha_i) \right).
\end{equation*}
Viewed as a polynomial in $v$, the roots in $\overline{\F_q(u)}$ of the polynomial above are
\begin{equation}
v_i = -\frac{ \frac{\partial F}{\partial u} }{  \frac{\partial F}{\partial T} } (u, \alpha_i) - \frac{df}{du}, \quad 1 \leq i \leq k.
\end{equation}

Since $\Delta_v(R(F, F_v)) \neq 0$ by assumption, it follows from \cref{DistinctValuesNonvanishingRes} that the roots above are pairwise distinct,
so $R_{k,j}(F, F_{\frac{df}{du} + v})$ is not a square of a polynomial in $v$ over $\overline{\F_q(u)}$.

\end{proof}

\section{Trace functions vs M\"{o}bius}

This section is devoted to proving \cref{NewRes} and its twisted variants.
The most general form is the following theorem.
We give an essentially self-contained statement, recalling some of \cref{DerivativeNotation}. 

\begin{thm} \label{MobiusVStraceThm}

Fix an odd prime $p$, a power $q$ of $p$, and a positive integer $k$. 
Let $0 < \gamma \leq 1$ and
\begin{equation} \label{alpha-gamma-assumption}
0 < \alpha< \frac{1}{2p} +  \frac{ \log_q  \gamma}{p}- k \log_q (1 + 2 \gamma)  
\end{equation} 
be real numbers, and set $\beta = (1+2\gamma)^k$.
Take a separable polynomial 
\begin{equation}
F(u,T)= \sum_{i=0}^k a_i(u) T^i \in \mathbb F_q[u,T]
\end{equation}
of degree $k$ in $T$. 
Pick $c_1,c_2 \in \mathbb R$ with $c_1 \geq 0 \geq c_2$  such that 
\begin{equation} \label{CoeffaBoundinTheorem}
\deg (a_i(u)) \leq c_1 + c_2 i, \quad 0 \leq i \leq k.
\end{equation}
Let $g \in \F_q[u]$ be a squarefree polynomial, let $t$ be an infinitame $g$-periodic trace function, 
and let $\mathcal I$ be an interval in $\F_q[u]$ as in \cref{IntervalNotation}.
Then
\begin{equation} \label{TheScaryBoundEq}
\begin{split}
&\sum_{ f \in \mathcal I } \mu(F(u,f)) t(f) \ll \\
& q^{\dim(\mathcal I) (1 - \alpha ) }  \beta^{ 2c_1 + (k+1)c_2}  
\left( \beta^{ -c_2 - \dim(\mathcal I) } + \beta^{\deg(\mathcal I) - \dim(\mathcal I)} \right)   
\prod_{ \pi \mid g} (  r ( t_{\pi} ) (1+\gamma) + c ( t_{\pi} ) \gamma )^{\deg (\pi)} 
\end{split} 
\end{equation} 
as $\dim(\mathcal I) \to \infty$, with the implied constant depending only on $q, k, \alpha, \gamma$. 

\end{thm}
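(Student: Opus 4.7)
The plan is to follow the strategy explained in the introduction, combining the resultant--M\"obius formula from \cref{formula-for-Mobius} with the short trace sum bound from \cref{trace-interval-prop}. First, invoking \cref{ChangeOfVariable}, we perform a linear change of variable $T \mapsto P(u)T + c(u)$ with $|P(u)| \leq q\binom{k}{2}$ if needed to ensure $\Delta_v(R(F, F_v)) \neq 0$; this perturbs $c_1$ by at most a constant depending on $k$ and $q$, which is absorbed into the implied constant. We then apply \cref{PartitionIntervalLem} to partition $\mathcal{I}$ into at most $kq\dim(\mathcal I)$ subintervals $\mathcal J$, on each of which the leading term of $F(u, g(u))$ is constant in $g \in \mathcal J_{\overline{\F_q}}$. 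Within a fixed $\mathcal J$, we further partition the elements by derivative class: since in characteristic $p$ the kernel of $d/du$ on $\F_q[u]$ consists of $p$-th powers, each $f \in \mathcal J$ is uniquely of the form $r + s^p$ with $r$ a class representative and $\deg(s) < \dim(\mathcal J)/p$.

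For each representative $r$, three cases arise. If $Z_F \cap Z_{F_{[r]}}$ is infinite, \cref{ScarcityInfiniteIntersectionLem} shows $\mu(F(u,f))=0$ for all but $\leq k(q-1)$ of the $f=r+s^p$, giving a negligible contribution. If $Z_F \cap Z_{F_{[r]}}$ is finite but $R(F,F_{[r]}) = A^2 B$ for some $A\in\F_q[u]$ and some $B \in \F_q[u]$ dividing $g \cdot a_k \cdot \Delta_v(R(F,F_v))$, we bound the inner sum trivially by $q^{\dim(\mathcal J)/p}$ and invoke \cref{SquareScarcity}, together with the degree bound \cref{BoundDegDelta}, to count admissible $r$ as $\ll \sqrt{\operatorname{len}(\mathcal I)}\log\operatorname{len}(\mathcal I)$ per divisor $B$. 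In the remaining generic case, \cref{discriminant-definer} produces a prime $\pi \mid g_{F,r}$ making $t_{F,r,\pi}$ a Dirichlet trace function; \cref{formula-for-Mobius} then rewrites $\mu(F(u, r+s^p))\,t(r+s^p)$ as a constant sign (depending only on $\mathcal J$, thanks to the leading-term control from \cref{PartitionIntervalLem}) times $t_{F,r}(s)$, so the inner sum becomes $\sum_{\deg(s)<\dim(\mathcal J)/p} t_{F,r}(s)$. We apply \cref{trace-interval-prop} with $n = \lfloor \dim(\mathcal J)/p\rfloor$ to bound this by $q^{(n+1)/2}$ times the coefficient of $Z^n$ in $\prod_{\pi \mid g_{F,r}}(r(t_{F,r,\pi})(1+Z) + c(t_{F,r,\pi})Z)^{\deg(\pi)}$; since this polynomial has nonnegative coefficients, the extraction is at most $\gamma^{-n}$ times its value at $Z=\gamma$, and \cref{gammaLemma} combined with \cref{BoundDegResProp} converts the value into $(1+2\gamma)^{E(c_1,c_2,\deg r)}\prod_{\pi \mid g}(r(t_\pi)(1+\gamma)+c(t_\pi)\gamma)^{\deg(\pi)}$.

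Summing over the $\approx q^{\dim(\mathcal J)(1-1/p)}$ derivative classes in each $\mathcal J$ and over the subintervals of $\mathcal I$, the per-dimension factor from the trace bound is $q^{-1/(2p)}\gamma^{-1/p}$ against the trivial count, yielding a saving of $q^{1/(2p)}\gamma^{1/p}$; multiplied by $\beta^{-1} = (1+2\gamma)^{-k}$ from the $\deg(r)$-dependent growth of $(1+2\gamma)^{E(c_1,c_2,\deg r)}$, this saving is at least $q^{\alpha}$ by the hypothesis \cref{alpha-gamma-assumption}. The two regimes $\deg(r)+c_2 \leq 0$ and $\deg(r)+c_2 > 0$ in the definition of $E(c_1,c_2,\cdot)$ from \cref{AuxFuncEq} produce respectively the terms $\beta^{-c_2-\dim(\mathcal I)}$ and $\beta^{\deg(\mathcal I)-\dim(\mathcal I)}$ in \cref{TheScaryBoundEq}, while the $r$-independent part of $E$ contributes the prefactor $\beta^{2c_1+(k+1)c_2}$. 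The main obstacle will be case (b): even with the Dirichlet prime unavailable, the product of the number of offending representatives $r$ (controlled by Cohen's quantitative Hilbert irreducibility theorem \cref{CohenLem} via \cref{SquareScarcity}) with the trivial count $q^{\dim(\mathcal J)/p}$ of each derivative class must still fit within the target, and this is what ultimately constrains the range of validity and dictates the explicit arithmetic in \cref{alpha-gamma-assumption}.
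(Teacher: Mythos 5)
Your proposal follows essentially the same route as the paper's proof: the change of variable via \cref{ChangeOfVariable}, the partition from \cref{PartitionIntervalLem}, the split into derivative classes $f=r+s^p$, the three cases (infinite intersection, square resultant, generic), and the final extraction of the $Z^n$ coefficient via the value at $Z=\gamma$ together with \cref{gammaLemma}. The only detail you gloss over is that subintervals $\mathcal J$ of dimension below a threshold $\xi \asymp \log_q \max\{E(c_1,c_2,\deg\mathcal I), \deg(g\,a_k^{k+1}\Delta_v(R(F,F_v)))\}^4$ must be discarded and bounded trivially so that the length hypothesis of \cref{SquareScarcity} is met on the remaining ones, but this is exactly how the paper handles it and the resulting error is absorbed into \cref{TheScaryBoundEq}.
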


The trivial bound here is $q^{ \dim(\mathcal I ) } \prod_{ \pi \mid g} r ( t_\pi)$, 
where $q^{ \dim(\mathcal I ) } = \operatorname{len}(\mathcal I)$ is the length of the sum and $\prod_{ \pi \mid g} r ( t_\pi)$ is a bound for each term. 
If we think of $c_1, c_2, t,$ and $\deg(\mathcal I) - \dim(\mathcal I)$ as fixed, then the bound in the theorem describes a power savings of $\alpha$, with the other terms describing the quality of the uniformity in $F,t$ and $\mathcal{I}$.
Our proof builds on the strategy of proving \cite[Proposition 4.3]{SS}.

\begin{proof}

We first reduce to the case of a polynomial $F$ with 
\begin{equation} \label{ReduceToCase}
\Delta_v(R(F, F_v)) \neq 0.
\end{equation}
By \cref{ChangeOfVariable}, there exists (a nonzero) polynomial $P \in \F_q[u]$ with $|P| \ll 1$ such that for every $c \in \F_q[u]$ with $\deg(c) < \deg(P)$ the polynomial 
$
G(u,T) = F(u, PT + c)
$
is separable in $T$, and satisfies 
\begin{equation} 
\Delta_v ( R(G, G_v) ) \neq 0.
\end{equation}

Define the intervals
\begin{equation}
\mathcal{I}_c = \left\{\frac{f-c}{P} : f \in \mathcal{I}, \ f \equiv c \ \mathrm{mod} \ P \right\}, \quad c \in \F_q[u], \ \deg(c) < \deg(P),
\end{equation}
and note that 
\begin{equation}
\begin{split}
\sum_{ f \in \mathcal I } \mu(F(u,f)) t(f) &= 
\sum_{\substack{c \in \F_q[u] \\ \deg(c) < \deg(P)}} \sum_{h \in \mathcal{I}_c} \mu(G(u,h)) t(Ph + c) \\
&\ll |P| \sum_{h \in \mathcal{J}} \mu(G(u,h)) t(Ph+c)
\end{split}
\end{equation}
where $\mathcal{J} = \mathcal{I}_c$ for some $c$ as above. 
In view of \cref{LinearCOVtraceFuncProp}, the change of the trace function does not increase neither $r(t_\pi)$ nor $c(t_\pi)$,
and the change of the polynomial $F$ can be handled by increasing $c_1$ by $k\deg(P)$.
The overall loss in the change of variable $T \mapsto PT + c$ is therefore a factor of $O(1)$,
so we can assume throughout that \cref{ReduceToCase} is satisfied. 

By \cref{PartitionIntervalLem} there exists a partition $\mathcal P$ of $\mathcal{I}$ into subintervals such that the leading term of $F(u, f(u))$ is independent of $f(u) \in \mathcal{J}_{\overline{\F_q}}$ for every $\mathcal J \in \mathcal P$,
and the number of $\mathcal J \in \mathcal P$ of any given dimension is $O(1)$. 
As a result, for 
\begin{equation} \label{TheChoiceOfParameterXiEq} 
\xi = 4 \log_q \max\{ E(c_1, c_2, \deg(\mathcal{I})), \deg(g \cdot a_k^{k+1} \cdot \Delta_v(R(F, F_v))) \}
\end{equation}
we have
\begin{equation} \label{TheXiErrorTerm}
\begin{split}
\sum_{f \in \mathcal{I}} \mu(F(u,f)) t(f) &=  
\sum_{\mathcal{J} \in \mathcal{P}} \sum_{f \in \mathcal J} \mu(F(u,f)) t(f) \\
&= \sum_{\substack{\mathcal{J} \in \mathcal{P} \\ \dim(\mathcal J) \geq \xi}} \sum_{f \in \mathcal J} \mu(F(u,f))t(f) + 
O \left(q^\xi \prod_{\pi \mid g} r(t_\pi) \right).
\end{split}
\end{equation}

Fix an interval $\mathcal{J}$ as above, and set $n = \frac{\dim(\mathcal J)}{p}$.
Pick a subset $\mathcal{R} \subseteq \mathcal{J}$ in a way that for every $f \in \mathcal J$ there exists a unique $r \in \mathcal{R}$ and a unique $s \in \F_q[u]$ with $\deg(s) < n$ such that $f = r + s^p$.
We can then write
\begin{equation} \label{SameDerivativeEq}
\sum_{f \in \mathcal J} \mu(F(u,f))t(f) = 
\sum_{r \in \mathcal{R}} \sum_{\substack{s \in \F_q[u] \\ \deg(s) < n}} \mu(F(u, r+s^p))t(r + s^p).
\end{equation}
Our choice of $\mathcal{R}$ is such that for each $f \in \mathcal J$ there is a unique $r \in \mathcal{R}$ with 
\begin{equation}
\frac{df}{du} = \frac{dr}{du}.
\end{equation} 


Fix $r \in \mathcal{R}$, and suppose first that $Z_F \cap Z_{F_{[r]}}$ is infinite.
Then from \cref{ScarcityInfiniteIntersectionLem} we get that
\begin{equation} \label{ContributionFromrInfiniteIntersectionEq}
\sum_{\substack{s \in \F_q[u] \\ \deg(s) < n}} \mu(F(u, r+s^p)) t(r + s^p) \ll \prod_{\pi \mid g} r(t_\pi).
\end{equation}
Therefore, the contribution of such $r$ to \cref{SameDerivativeEq} is $\ll$
\begin{equation}
|\mathcal{R}| \prod_{\pi \mid g} r(t_\pi) \ll q^{\dim(\mathcal{J})(1 - \frac{1}{p})} \prod_{\pi \mid g} r(t_\pi).
\end{equation}
Summing over the intervals $\mathcal{J}$ that partition $\mathcal{I}$, we get a contribution of $\ll$
\begin{equation} \label{TotalContributionFromrInfiniteIntersectionEq}
q^{\dim(\mathcal{I})(1 - \frac{1}{p})} \prod_{\pi \mid g} r(t_\pi)
\end{equation}
to \cref{TheXiErrorTerm}.

We shall now see that \cref{TotalContributionFromrInfiniteIntersectionEq} is dominated by our final bound from \cref{TheScaryBoundEq}. 
Using our assumption that $0 < \gamma \leq 1$, we get that 
\begin{equation}
\alpha <  \frac{1}{2p} +  \frac{ \log_q  \gamma}{p}- k \log_q (1 + 2 \gamma)  \leq \frac{1}{2p} < \frac{1}{p}
\end{equation}
and that $\beta = (1+2\gamma)^k \geq 1$.
From \cref{IntervalNotation} we recall that $\deg(\mathcal{I})$ is at least $\dim(\mathcal{I})$,
and from \cref{CoeffaBoundinTheorem} we deduce that
\begin{equation}\label{SoftPositivityByTwoCoeffEq}
2c_1 + (k+1)c_2 = (c_1 + c_2) + (c_1 + kc_2) \geq \deg(a_1) + \deg(a_k) \geq 0.
\end{equation}
It is now visible that \cref{TotalContributionFromrInfiniteIntersectionEq} is smaller than \cref{TheScaryBoundEq}.


From now on we assume that $Z_F \cap Z_{F_{[r]}}$ is finite, so that we can use \cref{trace-with-Mobius}.
By \cref{formula-for-Mobius} we have
\begin{equation*}
\sum_{\substack{s \in \F_q[u] \\ \deg(s) < n}} \mu(F(u, r+s^p))t(r + s^p) \ll 
\sum_{\substack{s \in \F_q[u] \\ \deg(s) < n}}  \left(\frac{W_{F,r}(u, r + s^p)}{M_{F,r}} \right) t(r + s^p).
\end{equation*}
By definition of the Jacobi symbol, and the definition of a trace function in \cref{TraceFunctionsProdEq}, the above equals
\begin{equation} 
\sum_{\substack{s \in \F_q[u] \\ \deg(s) < n}}  \prod_{\pi \mid M_{F,r}} \left(\frac{W_{F,r}^{(\pi)}(r + s^p)}{\pi} \right) 
\prod_{\pi' \mid g} t_{\pi'}(r + s^p).
\end{equation}
Using \cref{ShiftedFrobPullbackLem}(5) and \cref{Kummer-sheaf-properties}(1) we can rewrite the above as
\begin{equation} 
\sum_{\substack{s \in \F_q[u] \\ \deg(s) < n}}  \prod_{\pi \mid M_{F,r}} t_{E_r^*\mathcal{L}_{\chi} \left(W_{F,r}^{(\pi)} \right)}(s) 
\prod_{\pi' \mid g} t_{E_r^*\mathcal{F}_{\pi'}}(s).
\end{equation}
With \cref{general-tensor-product}(1), the notation of \cref{ThreeCasesSheafEq}, and \cref{TheBigTraceFunctionEq} we arrive at
\begin{equation} \label{UltimateTraceSum} 
\sum_{\substack{s \in \F_q[u] \\ \deg(s) < n}} \prod_{\pi \mid g_{F,r}} t_{F,r,\pi}(s) = 
\sum_{\substack{s \in \F_q[u] \\ \deg(s) < n}} t_{F,r}(s).
\end{equation}


For all those $r \in \mathcal{R}$ for which $t_{F,r,\pi}$ is not a Dirichlet trace function for any $\pi \mid g_{F,r}$, we bound the sum above trivially.
Since $Z_F \cap Z_{F_{[r]}}$ is finite, \cref{discriminant-definer} tells us that for every such $r$ there exist $A,B \in \F_q[u]$ such that
\begin{equation} \label{BdividesThisEq}
R(F,F_{[r]}) = A^2B, \quad B \mid g \cdot a_k \cdot \Delta_v(R(F, F_v)).
\end{equation}
Let us now check that \cref{SquareScarcity} applies here.

First, recall that we have $\Delta_v(R(F, F_v)) \neq 0$. 
Second, we use \cref{IntervalNotation}, \cref{TheChoiceOfParameterXiEq}, 
and \cref{BdividesThisEq} to get that that
\begin{equation*}
\begin{split}
\mathrm{len}(\mathcal{J}) = q^{\dim(\mathcal{J})}\geq q^\xi &= 
\max\{ E(c_1, c_2, \deg(\mathcal{I})), \deg(a_k^k \cdot g \cdot a_k \cdot \Delta_v(R(F, F_v))) \}^4 \\
&\geq \max\{ E(c_1, c_2, \deg(\mathcal{J})), \deg(a_k^k B)\}^4. 
\end{split}
\end{equation*}
This verifies the assumption made in \cref{TheLengthOfIassumptionEq}, so we can indeed invoke \cref{SquareScarcity}.

It follows from \cref{SquareScarcity} applied to each $B$ in \cref{BdividesThisEq}, and the function field version of the divisor bound in \cite[Eq. (1.81)]{IK} that the number of $r \in \mathcal{R}$ for which $t_{F,r,\pi}$ is not a Dirichlet trace function for any $\pi \mid g_{F,r}$ is $\ll$
\begin{equation}
\operatorname{len}(\mathcal{J})^{\frac{1}{2} + \epsilon} | g \cdot a_k \cdot \Delta_v(R(F, F_v))|^{\epsilon}
\end{equation}
for any $\epsilon > 0$.
Now we use \cref{BoundDegDelta}, and conclude that the contribution of these $r$ to \cref{UltimateTraceSum} is $\ll$
\begin{equation} \label{NoTauBound}
\operatorname{len}(\mathcal{J})^{\frac{1}{2} + \epsilon} |g|^{\epsilon} \cdot |a_k|^{\epsilon} \cdot 
 q^{4 \epsilon k (k-1)  (c_1 + k \max \{c_2,0 \}) + n} \cdot \prod_{\pi \mid g_{F,r}} r(t_{F,r,\pi}).
\end{equation}


For all those $r \in \mathcal{R}$ for which there exists a prime $\tau$ dividing $g_{F,r}$ such that $t_{F,r,\tau}$ is a Dirichlet trace function, 
we bound the sum on the right hand side of \cref{UltimateTraceSum} by invoking \cref{trace-interval-prop} 
and get
\begin{equation*}
\sum_{\substack{s \in \F_q[u] \\ \deg(s) < n}} t_{F,r}(s) \ll
q^{ \frac{n}{2}}   
\left( \prod_{\pi \mid g_{F,r}} \left( r(t_{F,r,\pi} )(1+Z)  + c (t_{F,r,\pi}) Z \right)^{\deg (\pi)} \right) [Z^n].
\end{equation*}
As the coefficients of powers of $Z$ in the polynomial above are nonnegative, 
for any $ \gamma > 0$ the coefficient of $Z^n$ is at most
\begin{equation}
q^{ \frac{n}{2}} \gamma^{-n}   
\left( \prod_{\pi \mid g_{F,r}} \left( r(t_{F,r,\pi} )(1+\gamma)  + c(t_{F,r,\pi}) \gamma \right)^{\deg (\pi)} \right).
\end{equation}
Applying \cref{gammaLemma} we get that the above is at most
\begin{equation} \label{BoundForDirichletablerEq}
q^{\frac{n}{2}} \gamma^{-n} (1+ 2 \gamma)^{ E(c_1,c_2, \deg(r)) }  
\prod_{ \pi \mid g} (  r(t_{\pi} ) (1+\gamma) + c (t_{\pi} ) \gamma )^{\deg (\pi)} .
\end{equation}

Using the definition of $E$ in \cref{AuxFuncEq}, 
and the inequalities 
\begin{equation}
\deg(r) \leq \deg (\mathcal J)\leq \deg(\mathcal I),
\end{equation} 
we see that \cref{BoundForDirichletablerEq} is at most
\begin{equation*}
q^{\frac{n}{2}} \gamma^{-n} (1+ 2 \gamma)^{ 2 k c_1 +k \max\{0, c_2+ \deg (\mathcal I) \}  -k + c_2 k^2 }  
\prod_{ \pi \mid g} (  r ( t_{\pi} ) (1+\gamma) + c ( t_{\pi} ) \gamma )^{\deg (\pi)} .
\end{equation*}
We can ignore the factor $(1+2\gamma)^{-k}$ as it is at most $1$. 
Summing trivially over $\mathcal{R}$ in \cref{SameDerivativeEq}, 
the above is multiplied by $|\mathcal{R}| \ll q^{\dim(\mathcal{J}) - n}$,
so recalling that $n = \frac{ \dim(\mathcal J)}{p}$, we obtain
\begin{equation}\label{more-interesting-contribution}
q^{\dim(\mathcal J) \left(1 - \frac{1}{2p} \right) } \gamma^{- \frac{ \dim(\mathcal J)}{ p} } (1+ 2 \gamma)^{ 2 k c_1 +k \max\{0, c_2+ \deg (\mathcal I) \}  + c_2 k^2 }  
\prod_{ \pi \mid g} (  r ( t_{\pi} ) (1+\gamma) + c( t_{\pi} ) \gamma )^{\deg (\pi)} .
\end{equation}


Let us now check that the contribution from \cref{NoTauBound} is smaller than that of \cref{more-interesting-contribution}, 
and can thus be neglected. To do this, observe that \cref{more-interesting-contribution} is at least
\begin{equation}
q^{\dim(\mathcal J) \left(1 - \frac{1}{2p} \right) } (1+ 2 \gamma)^{ 2 k c_1 +k ( c_2+ \deg (\mathcal I) )  + c_2 k^2 }  
\prod_{ \pi \mid g} ( r( t_{\pi} ) (1+\gamma)  )^{\deg (\pi)} 
\end{equation}
since conductors are nonnegative, and $\gamma \leq 1$.
As $g$ is squarefree, its degree is the sum of the degrees of its prime factors, so the above is at least
\begin{equation*}
q^{\dim(\mathcal J) \left(1 - \frac{1}{2p} \right) } (1+ 2 \gamma)^{ 2 k c_1 +\left( k-\frac{1}{p} \right)  \deg (\mathcal I) + c_2 k (k+1) }  (1+\gamma)^{\deg (g)}   
\prod_{ \pi \mid g}  r( t_{\pi} )^{\deg (\pi)}.
\end{equation*}
Since $\deg(\pi) \geq 1$ and $n = \frac{\dim(\mathcal{J})}{p}$, the above is at least 
\begin{equation*} 
\begin{split}
\left( q^{\frac{1}{2} \dim (\mathcal J) + n }  \prod_{ \pi \mid g}  r(t_{\pi})   \right)  \cdot \left ( q^{\dim(\mathcal J) \left( \frac{p-3}{2p} \right) } (1+ 2 \gamma)^{ 2 k c_1 +\left( k-\frac{1}{p} \right)  \deg (\mathcal I) + c_2 k (k+1) }  (1+\gamma)^{\deg (g)}   
 \right). 
\end{split}
\end{equation*} 

By \cref{ThreeCasesSheafEq}, \cref{Kummer-sheaf-properties}(6), \cref{general-tensor-product}(5), and \cref{ShiftedFrobPullbackLem}(4),
for primes $\pi$ dividing $g_{F,r}$ but not dividing $g$, we have $r(t_{F,r,\pi}) = 1$, 
and for primes $\pi \mid g$ we have $r(t_{F,r,\pi}) = r(t_\pi)$.
Therefore, \cref{NoTauBound} equals 
\begin{equation} \label{AlternateNoTauBound}
\left( q^{ \frac{1}{2}\dim (\mathcal J)  + n}  \prod_{ \pi \mid g}  r(t_{\pi} )  \right)  \cdot  \left( \operatorname{len}(\mathcal{J}) |g|  |a_k|  q^{4  k (k-1)  (c_1 + k \max \{c_2,0 \}) } \right)^{\epsilon}  
\end{equation} 
so it suffices to show that
\begin{equation*} 
\left( \operatorname{len}(\mathcal{J}) |g|  |a_k|  q^{4  k (k-1)  (c_1 + k \max \{c_2,0 \}) } \right)^{\epsilon} \leq  q^{\dim(\mathcal J) \left( \frac{p-3}{2p} \right) } (1+ 2 \gamma)^{ 2 k c_1 +\left( k-\frac{1}{p} \right)  \deg (\mathcal I) + c_2 k (k+1) }  (1+\gamma)^{\deg (g)}.   
\end{equation*}

By assumption, $\gamma>0$ and $p \geq 3$, so by taking logarithms to base $q$, we see that the above reduces to
\begin{equation} \label{OnOurWayToComparingErrorTermsEq}
\begin{split}  
&\dim(\mathcal{J}) + \deg(g) + \deg(a_k)+ 4 k (k-1) (c_1 + k \max \{c_2,0\} ) \ll \\
&2 k c_1 +\left( k-\frac{1}{p} \right)  \deg (\mathcal I) + c_2 k (k+1)  + \deg(g).
\end{split}
\end{equation}
By \cref{IntervalNotation} and \cref{CoeffaBoundinTheorem} we have
\begin{equation}
\dim (\mathcal J ) \leq \dim(\mathcal I) \leq  \deg( \mathcal I), \quad \deg(a_k) \leq c_1 + c_2k
\end{equation}
so \cref{OnOurWayToComparingErrorTermsEq} would follow once we check that 
\begin{equation} \label{WhatWeNeedFromcandkEq}
c_1 + kc_2 + 4 k (k-1) (c_1 + k \max \{c_2,0\} ) \ll 2k  c_1 + k (k+1) c_2.
\end{equation} 

If $k=1$ the above is obvious. 
Otherwise, because 
\begin{equation*}
c_1 \geq 0, \ c_1 + k c_2 \geq \deg a_k \geq 0, \ 2k  c_1 + k (k+1) c_2 = (k+1) (c_1+ kc_2) + (k-1) c_1,
\end{equation*} 
we have
\begin{equation}
0 \leq c_1 +k c_2 \leq \frac{ 2k  c_1 + k (k+1) c_2}{k+1}, \quad
0 \leq c_1 \leq \frac{ 2k  c_1 + k (k+1) c_2}{k-1},
\end{equation} 
so any linear combination of $c_1$ and $c_2$ is $O( 2k  c_1 + k (k+1) c_2)$, 
which establishes \cref{WhatWeNeedFromcandkEq} and thus concludes the argument that \cref{NoTauBound} is smaller than \cref{more-interesting-contribution}.


Since \cref{more-interesting-contribution} is exponential in $\dim (\mathcal J)$,
and there are $O(1)$ intervals $\mathcal J$ of any given dimension in our partition of $\mathcal{I}$,
summing \cref{more-interesting-contribution} over the intervals $\mathcal J$ that make up $\mathcal I$,
we get a bound for the sum in \cref{TheXiErrorTerm} of
 \begin{equation}\label{eq-main-thm-almost-final-bound}
q^{\dim(\mathcal I) \left(1 - \frac{1}{2p} \right) } \gamma^{- \frac{ \dim(\mathcal I)}{ p} } 
(1+ 2 \gamma)^{ 2 k c_1 +k \max\{0, c_2+ \deg (\mathcal I) \}  + c_2 k^2 }  
\prod_{ \pi \mid g} (  r( t_{\pi} ) (1+\gamma) + c( t_{\pi} ) \gamma )^{\deg (\pi)} 
\end{equation}
because the highest possible value of $\dim(\mathcal J)$ is $\dim(\mathcal J) = \dim(\mathcal I)$.

After exponentiating, the second inequality in \cref{alpha-gamma-assumption} translates to 
\begin{equation}
q^\alpha < q^{ \frac{1}{2p}} \gamma^{ \frac{1}{p} } (1+2 \gamma)^{-k} 
\end{equation} 
so multiplying both sides by $q$ and rearranging we get
\begin{equation}
q^{1 - \frac{1}{2p}} \gamma^{- \frac{1}{ p} } <  q^{ 1- \alpha}  (1+ 2 \gamma) ^{ -k   }.
\end{equation}
Raising to power $\dim(\mathcal{I})$ gives
\begin{equation*}
q^{\dim(\mathcal I) \left(1 - \frac{1}{2p} \right) } \gamma^{- \frac{ \dim(\mathcal I)}{ p} } \leq  q^{\dim(\mathcal I) (1- \alpha) }  (1+ 2 \gamma) ^{ -k  \dim(\mathcal I )  }    = q^{\dim(\mathcal I) (1- \alpha) }  (1+ 2 \gamma) ^{ -k (c_2 + \dim(\mathcal I ) ) + c_2 k } \end{equation*}
which implies that \cref{eq-main-thm-almost-final-bound} is $\ll$ 
\begin{equation*}
q^{\dim(\mathcal I) (1- \alpha) }  (1+ 2 \gamma)^{ 2 k c_1 +k \max\{-c_2 - \dim(\mathcal I) ,  \deg (\mathcal I)  - \dim(\mathcal I) \}  + c_2 k(k+1) }  
\prod_{ \pi \mid g} (  r( t_{\pi} ) (1+\gamma) + c( t_{\pi} ) \gamma )^{\deg(\pi)}.
\end{equation*}
Recalling that $\beta = (1 + 2\gamma)^k$, and bounding the maximum of powers of $\beta$ by thier sum, 
we arrive at \cref{TheScaryBoundEq}.


All that remains is to control the error term in \cref{TheXiErrorTerm}, which is
\begin{equation}
\max\{ E(c_1, c_2, \deg(\mathcal{I})), \deg(g \cdot a_k^{k+1} \cdot \Delta_v(R(F, F_v))) \}^4
\prod_{\pi \mid g} r(t_\pi).
\end{equation}
For every $\epsilon > 0$ we have
\begin{equation*}
\deg(g \cdot a_k^{k+1} \cdot \Delta_v(R(F, F_v)))^4 \prod_{\pi \mid g} r(t_\pi) \ll 
|g|^\epsilon |a_k|^\epsilon |\Delta_v(R(F, F_v))|^\epsilon \prod_{\pi \mid g} r(t_\pi)
\end{equation*}
so by \cref{BoundDegDelta}, the above is bounded by \cref{AlternateNoTauBound}.
We have seen that the latter is bounded by \cref{more-interesting-contribution} which led us to \cref{TheScaryBoundEq}, so this term is controlled.


By the definition of $E$ in \cref{AuxFuncEq} we have
\begin{equation} \label{TwoSummandExpBoundForEEq}
E(c_1, c_2, \deg(\mathcal{I}))^4 \prod_{\pi \mid g} r(t_\pi) \ll 
q^{\epsilon(2c_1 + \max\{0, c_2 + \deg(\mathcal{I})\} + c_2k)} \prod_{\pi \mid g} r(t_\pi)
\end{equation}
for every $\epsilon > 0$.
In case the maximum is attained at $0$, we have
\begin{equation}
c_2 + \dim(\mathcal{I}) \leq c_2 + \deg(\mathcal{I}) \leq 0
\end{equation}
so by \cref{SoftPositivityByTwoCoeffEq}, the right hand side of \cref{TwoSummandExpBoundForEEq} is
\begin{equation*}
\begin{split}
q^{ \epsilon (2 c_1 + (k+1) c_2)} q^{ - \epsilon c_2}  \prod_{\pi \mid g} r(t_\pi) &= 
q^{ \epsilon \dim (\mathcal{I}) }  q^{ \epsilon ( 2c_1 + (k+1) c_2 )} q^{ \epsilon (-c_2 - \dim(\mathcal{I} )) }  \prod_{\pi \mid g} r(t_\pi) \\
&\ll q^{ (1 - \alpha) \dim (\mathcal{I}) }  \beta^{ 2c_1 + (k+1) c_2} \beta^{ -c_2 - \dim(\mathcal{I} ) }  \prod_{\pi \mid g} r(t_\pi)
\end{split}
\end{equation*}
which is bounded by \cref{TheScaryBoundEq}.

If the maximum in \cref{TwoSummandExpBoundForEEq} is attained at $c_2 + \deg(\mathcal{I})$,
then the right hand side of \cref{TwoSummandExpBoundForEEq} is
\begin{equation}
\begin{split}
&q^{\epsilon \dim (\mathcal{I})} q^{\epsilon(2c_1 + c_2(k+1))} q^{\epsilon(\deg(\mathcal{I}) - \dim(\mathcal{I}) )} \prod_{\pi \mid g} r(t_\pi) \ll \\
&q^{(1 - \alpha) \dim (\mathcal{I})} \beta^{2c_1 + c_2(k+1)} \beta^{\deg(\mathcal{I}) - \dim(\mathcal{I})} \prod_{\pi \mid g} r(t_\pi)
\end{split}
\end{equation}
again bounded by \cref{TheScaryBoundEq}.
\end{proof}

\begin{cor} \label{MobiusVStraceCor}

Let $p$ be an odd prime, let $k$ be a positive integer, and let 
\begin{equation} \label{qAssumption}
q > 4e^2k^2p^2
\end{equation}
be a power of $p$.
Take a nonnegative integer $n$, a scalar $\lambda \in \F_q$, and define the interval
\begin{equation}
\mathcal{I} = \{f_n u^n + f_{n-1}u^{n-1} + \dots + f_0 u^0 \in \F_q[u] : f_n = \lambda \}.
\end{equation}
Then for a separable polynomial $F(T) \in \F_q[u][T]$ with $\deg_T(F) = k$, 
and an infinitame trace function $t$ to a squarefree modulus $g \in \F_q[u]$ we have
\begin{equation*}
\sum_{f \in \mathcal{I}} \mu(F(f))t(f) \ll 
q^{n\left(1 - \frac{1}{2p} + \frac{\log_q(2ekp)}{p} \right)}
\prod_{\pi \mid g}\left( r(t_\pi) \left(1 + \frac{1}{2kp} \right) + \frac{c(t_\pi)}{2kp} \right)^{\deg(\pi)}
\end{equation*}
as $n \to \infty$, with the implied constant depending only on $q$ and $F$.

\end{cor}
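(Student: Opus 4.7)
The plan is to deduce the corollary from \cref{MobiusVStraceThm} by specializing the free parameter $\gamma$ and observing that for fixed $F$ several factors in the bound become absolute constants.

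First I would identify $\mathcal{I}$ with the interval $\mathcal{I}_{\lambda u^n, n}$ in the sense of \cref{IntervalNotation}, so that $\dim(\mathcal{I}) = n$ and $\deg(\mathcal{I}) = n$ (the case $\lambda = 0$ being the interval $\mathcal{I}_{0,n}$, which also has dimension and degree $n$). In particular $\deg(\mathcal{I}) - \dim(\mathcal{I}) = 0$. Since $F$ is fixed, the constants $c_1, c_2$ bounding its coefficients as in \cref{CoeffaBoundinTheorem} are absolute constants.

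Next I would choose $\gamma = \frac{1}{2kp}$ and $\alpha = \frac{1}{2p} - \frac{\log_q(2ekp)}{p}$ and verify that these satisfy the hypotheses of \cref{MobiusVStraceThm}. The positivity $\alpha > 0$ is equivalent to $\log_q(2ekp) < 1/2$, i.e.\ to $q > 4e^2k^2p^2$, which is our assumption. The strict upper bound for $\alpha$ imposed in \cref{alpha-gamma-assumption} reduces, after substituting $\gamma = \frac{1}{2kp}$, to $k\log_q(1 + \frac{1}{kp}) < \frac{\log_q e}{p}$, which in turn follows from the elementary estimate $\ln(1+x) < x$ for $x>0$ applied with $x = \frac{1}{kp}$.

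Finally I would substitute these choices into \cref{TheScaryBoundEq}. With $\gamma = \frac{1}{2kp}$, the quantity $\beta = (1+\frac{1}{kp})^k$ lies in $[1, e^{1/p}]$, so $\beta^{2c_1 + (k+1)c_2}$ is bounded by a constant depending only on $F$. The prefactor $\beta^{-c_2 - n} + \beta^{\deg(\mathcal{I}) - n} = \beta^{-c_2 - n} + 1$ is likewise bounded, since $-c_2 \geq 0$ makes $\beta^{-c_2}$ a constant while $\beta^{-n} \leq 1$. The exponent becomes $1 - \alpha = 1 - \frac{1}{2p} + \frac{\log_q(2ekp)}{p}$, and the Euler factors $(r(t_\pi)(1+\gamma) + c(t_\pi)\gamma)^{\deg(\pi)}$ specialize to exactly the product appearing in the corollary. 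This last step is pure bookkeeping on top of \cref{MobiusVStraceThm}, and no additional obstacle arises; the real content is already contained in that theorem.
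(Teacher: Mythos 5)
Your proposal is correct and follows essentially the same route as the paper: both invoke \cref{MobiusVStraceThm} with $\gamma = \frac{1}{2kp}$ and $\alpha = \frac{1}{2p} - \frac{\log_q(2ekp)}{p}$, verify \cref{alpha-gamma-assumption} via $(1+\tfrac{1}{kp})^{kp} < e$, and absorb the $F$-dependent factors of \cref{TheScaryBoundEq} into the implied constant using $\dim(\mathcal I) = \deg(\mathcal I) = n$.
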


\begin{proof}

We invoke \cref{MobiusVStraceThm} with
\begin{equation}
\gamma = \frac{1}{2kp}, \quad \alpha = \frac{1}{2p} - \frac{\log_q(2ekp)}{p}, \quad c_1 = \deg_u(F), \quad c_2 = 0,
\end{equation}
and note that the positivity of $\alpha$ follows from \cref{qAssumption} by taking logarithms and dividing by $2p$.
Moreover we have
\begin{equation}
\begin{split}
\alpha = \frac{1}{2p} + \frac{\log_q(\gamma)}{p} - \frac{\log_q(e)}{p} &<
\frac{1}{2p} + \frac{\log_q(\gamma)}{p} -  \frac{\log_q \left(1 + \frac{1}{kp} \right)^{kp}}{p} \\
&= \frac{1}{2p} + \frac{\log_q(\gamma)}{p} -  k\log_q \left(1 + 2\gamma \right)
\end{split}
\end{equation}
so the assumptions on $\gamma$ and $\alpha$ in \cref{MobiusVStraceThm} hold.
The result follows by absorbing into the implied constant all the factors in \cref{TheScaryBoundEq} that depend only on $q, F$, and checking that $\dim(\mathcal{I}) = n = \deg(\mathcal{I})$.  
\end{proof}

Now we deduce \cref{NewRes}.

\begin{proof}

We invoke \cref{MobiusVStraceCor} with
\begin{equation}
n = \lfloor \log_q(X) \rfloor + 1, \quad \lambda = 0, \quad g=1, \quad t=1, 
\end{equation}
and since $q > 4e^2k^2p^2$, get that
\begin{equation}
\sum_{\substack{f \in \F_q[u] \\ |f| \leq X}} \mu(F(f)) \ll X^{1 - \frac{1}{2p} + \frac{\log_q(2ekp)}{p} } = o(X)
\end{equation}
as required.
\end{proof}

We similarly deduce \cref{LinearMobiusVStraceFunctionThm}.

\begin{proof}

We invoke \cref{MobiusVStraceCor} with
\begin{equation}
k=1, \quad \lambda = 1, \quad F(u,T) = T, \quad g=\pi,
\end{equation}
and get that 
\begin{equation*}
\begin{split}
\sum_{f \in \mathcal{M}_n} \mu(f)t(f) &\ll 
|\mathcal{M}_n|^{1 - \frac{1}{2p} + \frac{\log_q(2ep)}{p}}
\left( r(t) \left(1 + \frac{1}{2p} \right) + \frac{c(t)}{2p} \right)^{\deg(\pi)}\\
&= 
|\mathcal{M}_n|^{1 - \frac{1}{2p} + \frac{\log_q(2ep)}{p}}
|\pi|^{\log_q \left( r(t) \left( 1 + \frac{1}{2p} \right) + \frac{c(t)}{2p} \right)}.
\end{split}
\end{equation*}

\end{proof}

We will need the following consequence of \cref{MobiusVStraceThm} in the proof of \cref{MainRes}.

\begin{cor} \label{MobiusBeatsKloostermanCor}
Keep \cref{TheAnalyticBarNotation} and \cref{IntervalNotation}.
Fix an odd prime $p$, and a power $q$ of $p$. 
Let $0 < \gamma \leq 1$  and $\alpha$ be real numbers satisfying
\begin{equation} \label{weird-alpha-gamma-assumption} 
0 < \alpha < \min \left\{\frac{1}{2} - 10\log_q ( 1+ 2 \gamma)  + \log_q (1+ 3 \gamma), 
\frac{1}{2p} +  \frac{ \log_q  \gamma}{p}- 2 \log_q (1 + 2 \gamma)  \right\}.
\end{equation}
Set $\beta = (1+2\gamma)^2$. 

Let $n$ be a nonnegative integer, pick $c_1,c_2, c_3 \in \mathbb R$ with $c_1 \geq 0 \geq c_2$, 
and let $a,b,c \in \F_q[u]$ be polynomials satisfying
\begin{equation*}
b^2 - 4ac \neq 0, \quad \deg(a) \leq c_1 + 2c_2, \ \deg(b) \leq c_1 + c_2 - n, \ \deg(c) \leq c_1 - 2n.
\end{equation*}
For every nonzero polynomial $y \in \F_q[u]$ of degree at most $n$ put 
\begin{equation}
F_y(T) = aT^2 + byT + cy^2 \in \F_q[u][T] 
\end{equation}
and let $\mathcal{I}_y$ be an interval in $\F_q[u]$ of degree at most $c_3$.
Then for $h \in \F_q[u]$ we have
\begin{equation} \label{TheScaryBoundEq2}
\begin{split} \sum_{\substack{y \in \F_q[u] \setminus \{0\} \\ \deg(y) \leq n }} \left| \sum_{\substack{x \in \mathcal I_y \\ \gcd(x,y) = 1}} 
\mu (F_y(x)) e \left( \frac{ h \overline{x} }{y} \right)  \right| \ll  q^{n + c_3  (1 - \alpha ) }  \beta^{ 2c_1 + 3c_2}  
\left( \beta^{ -c_2 -  c_3} + 1 \right)   (1+3\gamma)^n  \end{split}  \end{equation} 
as $n  \to \infty$, with the implied constant depending only on $q, \alpha, \gamma$. 

\end{cor}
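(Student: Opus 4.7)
The plan is to apply \cref{MobiusVStraceThm} to each nonzero $y\in\F_q[u]$ of degree at most $n$ individually, and sum the resulting bounds; since there are $O(q^n)$ choices of $y$, this produces the leading $q^n$ factor in the claimed estimate. For each such $y$ the polynomial $F_y(T)=aT^2+byT+cy^2$ has degree $k=2$ in $T$ (the degenerate case $a=0$ reduces to $k=1$ and can be handled analogously). Its discriminant in $T$ is $y^2(b^2-4ac)$, which is nonzero by hypothesis, so $F_y$ is separable. The coefficients $a_0=cy^2,\ a_1=by,\ a_2=a$ satisfy $\deg(a_i)\le c_1+c_2 i$ for $i=0,1,2$ from the assumed bounds on $a,b,c$ and $\deg(y)\le n$. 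The second inequality in \cref{weird-alpha-gamma-assumption} is precisely the hypothesis \cref{alpha-gamma-assumption} of \cref{MobiusVStraceThm} for $k=2$.

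First I would treat $y$ squarefree. By \cref{ExpsToTracesProp} there is an infinitame $y$-periodic trace function $t$ with $r(t_\pi)\le 1$ and $c(t_\pi)\le 2$ for every prime $\pi\mid y$, agreeing with $e(h\bar x/y)$ on $\gcd(x,y)=1$ and vanishing elsewhere, which exactly matches the support of the inner sum. Applying \cref{MobiusVStraceThm} with $F=F_y$, this $t$, the interval $\mathcal I_y$, and the parameters $c_1,c_2,\gamma,\alpha$, and using $r(t_\pi)(1+\gamma)+c(t_\pi)\gamma\le 1+3\gamma$ so that
\[
\prod_{\pi\mid y}\bigl(r(t_\pi)(1+\gamma)+c(t_\pi)\gamma\bigr)^{\deg\pi}\le(1+3\gamma)^{\deg y}\le(1+3\gamma)^n,
\]
I obtain the per-$y$ bound $q^{c_3(1-\alpha)}\beta^{2c_1+3c_2}\bigl(\beta^{-c_2-c_3}+1\bigr)(1+3\gamma)^n$.

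For $y$ not squarefree I would use a simple sieve to pass to the squarefree case. Using CRT to factor $e(h\bar x/y)$ across the prime-power decomposition of $y$, at every prime $p$ with $p^{e_p}\|y$ and $e_p\ge 2$ I write $x=x_0+p^{e_p-1}z$ with $x_0$ ranging over coprime residues modulo $p^{e_p-1}$; expanding $\bar x\equiv\bar x_0-\bar x_0^{\,2}\,p^{e_p-1}z\pmod{p^{e_p}}$ converts $e(h_p\bar x/p^{e_p})$ into a constant phase in $x_0$ times an additive character in $z$ of modulus $p$. Iterating this over the higher prime powers stratifies the inner sum into $|y/\mathrm{rad}(y)|$ residue-class subsums, each of which, after an affine change of variables, is a M\"obius-twisted trace sum whose trace function has squarefree period dividing $\mathrm{rad}(y)$ and still satisfies $r(t_\pi)\le 1,\ c(t_\pi)\le 2$. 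The first inequality in \cref{weird-alpha-gamma-assumption}, with its $\tfrac12$ and $-10\log_q(1+2\gamma)$ terms, is what accommodates the cost of this sieve after summation over $y$.

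Summing the per-$y$ bound over all nonzero $y$ with $\deg y\le n$ produces the claimed estimate in \cref{TheScaryBoundEq2}. The principal obstacle will be the sieve step for non-squarefree $y$: one must carefully verify, after the iterated affine change of variables, that the polynomial appearing under $\mu$ is still of the form required by \cref{MobiusVStraceThm}, namely a quadratic with coefficient-degree bounds of the shape $\deg(a_i)\le c_1'+c_2' i$ (with $c_1'$ absorbing a mild additive loss proportional to $\deg(y/\mathrm{rad}(y))\le n$), and that the residual trace function retains rank and conductor bounded by small constants so that the product bound $(1+3\gamma)^n$ still holds. Once this bookkeeping is arranged, the per-$y$ bound is essentially uniform in $y$, and the outer sum over $y$ yields \cref{TheScaryBoundEq2} directly.
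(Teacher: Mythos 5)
Your overall plan — stratify the inner sum into residue classes to replace the non-squarefree modulus $y$ by a squarefree one, then apply \cref{MobiusVStraceThm} to each stratum and sum — is the same strategy as the paper's, but the specific sieve you propose is different and more elaborate than what is actually needed.

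The paper factors $y = y_1 y_2$ uniquely with $y_1$ \emph{squareful}, $y_2$ squarefree, and $\gcd(y_1,y_2)=1$, and then stratifies $x$ by its residue $r$ modulo $y_1$, writing $x = y_1 z + r$. The crucial observation is that after splitting $e(h\overline{x}/y) = e(Bh\overline{x}/y_1)\, e(Ah\overline{x}/y_2)$ via B\'{e}zout, the first factor is already \emph{constant} on each residue class: $\overline{x}$ modulo $y_1$ depends only on $x$ modulo $y_1$, i.e.\ only on $r$. No Taylor expansion of $\overline{x}$ is needed. The remaining factor $e(Ah\overline{x}/y_2)$ in the variable $z$ has squarefree modulus $y_2$ and is handled by \cref{ExpsToTracesProp}. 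Your Postnikov-type linearization ($\overline{x}\equiv\overline{x}_0 - \overline{x}_0^2\,\pi^{e-1}z\bmod \pi^e$) is valid, but it introduces complexity (verifying the expansion at each prime power, combining them coherently through one common change of variable $x=x_0 + (y/\mathrm{rad}(y))z$) that the simpler observation above avoids entirely.

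There are two more substantive gaps. First, your sieve stratifies by residues modulo $y/\mathrm{rad}(y)$, which only makes sense when the interval $\mathcal{I}_y$ is large enough, i.e.\ $\dim(\mathcal{I}_y)\geq \deg(y/\mathrm{rad}(y))$. You never address the complementary regime. The paper handles the analogous case $\deg(y_1)>\dim(\mathcal{I}_{y_1y_2})$ by a trivial bound, and the count $|\mathcal{S}_m|\ll q^{m/2}$ of squareful polynomials of degree $m$ is exactly what makes both the trivial-bound regime and the final geometric series $\sum_m\bigl(q^{\alpha-1/2}\beta^5/(1+3\gamma)\bigr)^m$ converge; this is precisely where the $\tfrac12$ and $-10\log_q(1+2\gamma)$ in the first condition of \cref{weird-alpha-gamma-assumption} come from. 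You correctly sense that the first condition is tied to the sieve cost, but since your stratification is by $y/\mathrm{rad}(y)$ rather than by $y_1$, your cost analysis would produce a \emph{different} geometric series in a \emph{different} parameter, so it is not clear that your argument recovers the specific constants in \cref{weird-alpha-gamma-assumption} as stated. Second, you defer entirely the verification that $F_{y_1y_2}(y_1T+r)$ (in the paper's notation) satisfies coefficient-degree bounds of the form $\deg(a_i)\leq c_1'+c_2'i$; the paper works these out explicitly, and they are necessary input to \cref{MobiusVStraceThm}. These are not trivial-bookkeeping items — they are where the assumption \cref{weird-alpha-gamma-assumption} is actually used — so the proposal, as written, does not close the argument.
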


%
%

\begin{proof}

Every monic polynomial $y \in \F_q[u]$ can be decomposed uniquely as $y = y_1y_2$ with $y_1$ a squareful monic polynomial, 
and $y_2$ a squarefree monic polynomial coprime to $y_1$. 
Explicitly, the polynomial $y_2$ is the product of all primes $\pi \in \F_q[u]$ with $\pi$, but not $\pi^2$. dividing $y$.
We can therefore bound our sum by
\begin{equation}
\sum_{m=0}^n
\sum_{y_1 \in \mathcal{S}_{m}}
\sum_{\substack{y_2 \in \mathcal{H}_{n-m} \\ \gcd(y_1,y_2) = 1}}
\left| \sum_{\substack{x \in \mathcal{I}_{y_1y_2} \\ \gcd(x, y_1y_2) = 1}} \mu(F_{y_1y_2}(x)) e \left( \frac{h \overline{x}}{y_1y_2} \right)  \right|
\end{equation}
where $\mathcal{S}_m, \mathcal{H}_{n-m} \subseteq \F_q[u]$ are the sets of squareful polynomials of degree $m$ and squarefree polynomials of degree $n-m$ respectively. 
Since every polynomial in $\mathcal{S}_m$ is the product of a square and a cube, we have
\begin{equation} \label{SquarefulCountEq}
|\mathcal{S}_m| \ll q^{\frac{m}{2}},
\end{equation}
see \cite[(2.7)]{RG}.

For every two coprime polynomials $y_1,y_2 \in \F_q[u]$, we can find polynomials $A,B \in \F_q[u]$ with $Ay_1 + By_2 = 1$, 
so we can rewrite the above as
\begin{equation} \label{Sumy1y2Eq}
\sum_{m=0}^n
\sum_{y_1 \in \mathcal{S}_{m}}
\sum_{\substack{y_2 \in \mathcal{H}_{n-m}  \\ \gcd(y_1, y_2) = 1}}
\left| \sum_{\substack{x \in \mathcal{I}_{y_1y_2} \\ \gcd(x, y_1y_2) = 1}} \mu(F_{y_1y_2}(x)) e \left( \frac{Bh \overline{x}}{y_1} \right) e \left( \frac{Ah \overline{x}}{y_2} \right)  \right|.
\end{equation}
We use the trivial bound for those pairs $(y_1, y_2)$ with $\deg(y_1) > \dim(\mathcal{I}_{y_1y_2})$, which is 
\begin{equation*}
\begin{split}
&\sum_{m=0}^n \sum_{y_1 \in \mathcal S_m}\sum_{\substack{y_2 \in \mathcal{H}_{n-m}  \\ \gcd(y_1, y_2) = 1}} 
q^{\dim(\mathcal{I}_{y_1y_2})} \leq
\sum_{m=0}^n \sum_{y_1 \in \mathcal S_m}\sum_{\substack{y_2 \in \mathcal{H}_{n-m}}} 
q^{\min\{\deg(\mathcal{I}_{y_1y_2}), \deg(y_1)\}} \leq \\
&\sum_{m=0}^n \sum_{y_1 \in \mathcal S_m}\sum_{\substack{y_2 \in \mathcal{H}_{n-m}}}
q^{\frac{c_3 + m}{2}} \ll \sum_{m=0}^n q^{\frac{m}{2}}q^{n-m} q^{\frac{c_3 + m}{2}} \ll nq^{n+\frac{c_3}{2}}
\ll (1 + 3 \gamma)^n q^{n + (1 - \alpha)c_3}
\end{split}
\end{equation*}
and that is bounded by the right hand side of \cref{TheScaryBoundEq2}.

For the other pairs $(y_1, y_2)$ in \cref{Sumy1y2Eq}, those with $\dim(\mathcal{I}_{y_1y_2}) \geq \deg(y_1)$, we define the intervals
\begin{equation*}
\mathcal{I}_{y_1y_2}^r = \left\{\frac{f-r}{y_1} : f \in \mathcal{I}_{y_1y_2}, \ f \equiv r \ \mathrm{mod} \ y_1 \right\}, \quad r \in \F_q[u], \ \deg(r) < \deg(y_1),
\end{equation*}
so that our sum can be bounded, using the triangle inequality, by
\begin{equation*}
\sum_{m=0}^n
\sum_{y_1 \in \mathcal{S}_{m}}
\sum_{\substack{r \in \F_q[u] \\ \deg(r) < m \\ \gcd(r, y_1) = 1}} 
\sum_{\substack{y_2 \in \mathcal{H}_{n-m} \\ \gcd(y_1, y_2) = 1 \\ \dim(\mathcal{I}_{y_1y_2} ) \geq m}}
\left| \sum_{\substack{z \in \mathcal{I}^r_{y_1y_2} \\ \gcd(y_1z+r, y_2) = 1}} \mu(F_{y_1y_2}(y_1z + r)) e \left( \frac{Ah \overline{(y_1z + r)}}{y_2} \right)  \right|
\end{equation*}
where $x = y_1z + r$.

Since $y_2$ is squarefree, from \cref{ExpsToTracesProp} we get that the above is at most
\begin{equation*}
\sum_{m=0}^n
\sum_{y_1 \in \mathcal{S}_{m}}
\sum_{\substack{r \in \F_q[u] \\ \deg(r) < m}} 
\sum_{\substack{y_2 \in \mathcal{H}_{n-m} \\ \dim(\mathcal{I}_{y_1y_2} ) \geq m}}
\left| \sum_{\substack{z \in \mathcal{I}^r_{y_1y_2} \\ \gcd(y_1z+r, y_2) = 1}} \mu(F_{y_1y_2}(y_1z + r)) 
t(y_1z + r)  \right|
\end{equation*}
where $t$ is an infinitame trace function with 
\begin{equation} \label{RankOneConductorTwoBoundEq}
r(t) \leq 1, \quad c(t) \leq 2.
\end{equation}
We have
\begin{equation*}
\begin{split}
F_{y_1y_2}(y_1T + r) &= a(y_1T+r)^2 + by_1y_2(y_1T+r) + cy_1^2y_2^2 \\
&= ay_1^2T^2 + (2ary_1 + by_1^2y_2)T + ar^2 + bry_1y_2+ cy_1^2 y_2^2. 
\end{split}
\end{equation*} 

It follows from our initial assumptions on $a,b,c,$ and $c_2$ that the degrees of the coefficients of $F_{y_1y_2}(y_1T + r)$ satisfy 
\begin{equation*}
\deg(ar^2 + bry_1y_2 + cy_1^2y_2^2) \leq \max\{c_1 + 2c_2 + 2m, c_1 + c_2 - n + m + n, c_1 - 2n + 2n\} \leq c_1 + 2m,
\end{equation*}\begin{equation*}
\deg(2ary_1 + by_1^2y_2) \leq \max\{c_1 + 2c_2 + m + m, c_1 + c_2 - n + 2m + n - m\} \leq c_1 + c_2 + 2m,
\end{equation*}
and
\begin{equation*}
\deg(ay_1^2) \leq c_1 + 2c_2 + 2m.
\end{equation*}

We can drop the condition $\gcd(y_1z + r, y_2) = 1$ in the sum above since for any nonconstant common divisor $D \in \F_q[u]$ of $y_1z+r$ and $y_2$,
we see that $D^2$ divides $F_{y_1y_2}(y_1z + r)$ so $\mu(F_{y_1y_2}(y_1z + r)) = 0$.
Since $b^2 - 4ac \neq 0$, the polynomial  $F_{y_1y_2}(y_1T + r)$ is separable,
so we can invoke \cref{MobiusVStraceThm} with 
\begin{equation}
p, \ q, \ k=2, \ \gamma, \ \alpha, \ c_1 + 2m, \ c_2, \ g=y_2, \ t,
\end{equation} 
and get from \cref{RankOneConductorTwoBoundEq} that the sum above is $\ll$
\begin{equation}\label{sieve-sum}\begin{split} & 
\sum_{m=0}^n
\sum_{y_1 \in \mathcal{S}_{m}}
\sum_{\substack{r \in \F_q[u] \\ \deg(r) < m}} 
\sum_{\substack{y_2 \in \mathcal{H}_{n-m} \\ \dim(\mathcal{I}_{y_1y_2}) \geq m}} \\ &  q^{\dim(\mathcal I^r_{y_1y_2} ) (1 - \alpha ) }  
\beta^{ 2c_1 + 4 m + 3c_2}  
\left( \beta^{ -c_2 - \dim(\mathcal I^r_{y_1y_2}) } + \beta^{\deg(\mathcal I^r_{y_1y_2}) - \dim(\mathcal I^r_{y_1y_2})} \right)   
\prod_{ \pi \mid  y_2 } (  1+ 3  \gamma )^{\deg (\pi)}.  
\end{split}  
\end{equation} 

Since $y_2$ is squarefree we have
\begin{equation} \label{DegrrePolDegreeFactorsSquareFreeEq}
\prod_{ \pi \mid  y_2 } (  1+ 3  \gamma )^{\deg (\pi)} = (  1+ 3  \gamma )^{\sum_{ \pi \mid  y_2 } \deg (\pi)} = (1 + 3\gamma)^{\deg(y_2)}.  
\end{equation}
By our assumptions we have $\gamma \leq 1$ hence
\begin{equation}
\alpha < \frac{1}{2p} +  \frac{ \log_q  \gamma}{p}- 2 \log_q (1 + 2 \gamma) \leq 1 - 2 \log_q (1 + 2 \gamma)
\end{equation}
so from our choice of $\beta$ we get
\begin{equation}
\log_q \beta = 2 \log_q (1 + 2\gamma) \leq 1 - \alpha
\end{equation}
or equivalently $\beta \leq q^{1 - \alpha}$.

Since $\dim(\mathcal{I}_{y_1y_2} ) \geq m$ we have
\begin{equation} 
\dim(\mathcal I^r_{y_1y_2} ) = \dim ( \mathcal I_{y_1 y_2}  ) - m  \leq \deg ( \mathcal I_{y_1y_2})-m \leq c_3 -m 
\end{equation}
so
\begin{equation} \label{QbetaInequalityTakeOne}
q^{\dim(\mathcal I^r_{y_1y_2} ) (1 - \alpha ) } \beta^{ -c_2 - \dim(\mathcal I^r_{y_1y_2}) } \leq 
q^{(c_3 - m)(1-\alpha)} \beta^{-c_2 - (c_3-m)}
\end{equation}
because $\beta \leq q^{1 - \alpha}$, and similarly
\begin{equation} \label{QbetaInequalityTakeTwo}
q^{\dim(\mathcal I^r_{y_1y_2} ) (1 - \alpha ) }\beta^{\deg(\mathcal I^r_{y_1y_2}) - \dim(\mathcal I^r_{y_1y_2})} \leq 
q^{(c_3 - m)(1-\alpha)} \beta^{c_3 - (c_3 - m)}.
\end{equation}

It follows from \cref{SquarefulCountEq}, \cref{DegrrePolDegreeFactorsSquareFreeEq}, \cref{QbetaInequalityTakeOne}, 
and \cref{QbetaInequalityTakeTwo} that \eqref{sieve-sum} is $\ll$
\begin{equation}
\sum_{m=0}^n q^{ \frac{m}{2} } q^m q^{n-m} q^{(c_3-m)(1 - \alpha)}\beta^{2c_1 + 4m + 3c_2}
(\beta^{m-c_2-c_3} + \beta^m)(1 + 3\gamma)^{n-m}
\end{equation}
which simplifies to
\begin{equation}
q^{n + c_3(1 - \alpha)} \beta^{2c_1 + 3c_2}(\beta^{-c_2-c_3} + 1) (1 + 3\gamma)^{n}
\sum_{m=0}^n \left(\frac{q^{\alpha - \frac{1}{2}}  \beta^{5}}{1+3\gamma} \right)^m
\end{equation}
so to obtain the bound in \cref{TheScaryBoundEq2}, 
it suffices to check that 
\begin{equation}  
\frac{q^{\alpha - \frac{1}{2}}  \beta^{5}}{1+3\gamma} < 1.
\end{equation} 

After taking logarithms in the above, rearranging, and recalling that by definition $\beta = (1 + 2\gamma)^2$, the above becomes
\begin{equation}
\alpha  < \frac{1}{2} - 10\log_q (1 + 2\gamma) + \log_q(1 + 3\gamma)
\end{equation}
which is part of our initial assumptions.
\end{proof}

\section{Quadratic congruences}

\begin{notation}

Let $N \in \F_q[u]$ be a nonzero polynomial, and set $n = \deg(N)$.
We identify $\F_q[u]/(N)$ with the set of representatives
\begin{equation}
\mathcal{P}_{<n} = \{f \in \F_q[u] : \deg(f) < n \}
\end{equation}
for the residue classes.
As in \cref{REAC}, for a polynomial $M \in \F_q[u]$ we denote by $\widetilde{M}$ the unique representative of its residue class in $\mathcal{P}_{<n}$.

\end{notation}

\begin{prop} \label{IndicatorAdditiveCharsProp}

For an integer $0 \leq d \leq n$,
the indicator function of the degree of the reduction of $M$ mod $N$ being less than $d$ can be expressed as
\begin{equation}
{\bf{1}}_{\deg(\widetilde{M}) < d} = q^{d-n}\sum_{\substack{ h \in \F_q[u] \\ \deg(h) < n-d}} e \left(\frac{hM}{N} \right).
\end{equation}

\end{prop}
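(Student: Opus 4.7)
The plan is to interpret the inner sum as a character sum on the finite abelian group $\F_q[u]/(N)$ and apply orthogonality. First, since $e(\cdot)$ is trivial on $\F_q[u]$, writing $M = qN + \widetilde{M}$ gives $e(hM/N) = e(hq)\, e(h\widetilde{M}/N) = e(h\widetilde{M}/N)$, so the sum depends only on $\widetilde{M}$ and the identity reduces to showing
\[
S := \sum_{\substack{h \in \F_q[u] \\ \deg(h) < n-d}} e(h\widetilde{M}/N) = q^{n-d}\, \mathbf{1}_{\deg(\widetilde{M}) < d}.
\]

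Second, the set $V := \{h \in \F_q[u] : \deg(h) < n-d\}$ is an $\F_q$-subspace of $\F_q[u]/(N)$ of cardinality $q^{n-d}$, and $\psi(h) := e(h\widetilde{M}/N)$ defines an additive character on $V$. A standard shift argument (substituting $h + h_0$ for $h$ gives $S = \psi(h_0) S$ for any $h_0 \in V$) then yields $S = q^{n-d}$ when $\psi|_V$ is trivial and $S = 0$ otherwise.

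Third, I would show that triviality of $\psi|_V$ is equivalent to $\deg(\widetilde{M}) < d$. Using the explicit formula for $a_1$ from \cref{REAC} and the nondegeneracy of the trace pairing $\F_q \to \F_p$, and using that $V$ is $\F_q$-stable, triviality is equivalent to the vanishing of the $u^{n-1}$-coefficient of $h\widetilde{M} \bmod N$ for every $h \in V$. If $\deg(\widetilde{M}) < d$, then for every $h \in V$ we have $\deg(h\widetilde{M}) \leq (n-d-1) + (d-1) = n-2$, so no reduction modulo $N$ is needed and the desired coefficient is $0$. Conversely, if $e := \deg(\widetilde{M}) \geq d$, then $h_0 := u^{n-1-e}$ lies in $V$ (since $n-1-e < n-d$), the product $h_0 \widetilde{M}$ has degree $n-1 < n$, and its $u^{n-1}$-coefficient is the leading coefficient of $\widetilde{M}$, which is nonzero; rescaling $h_0$ by a suitable element of $\F_q$ then produces an element of $V$ at which $\psi$ takes a nontrivial value.

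The argument is routine character theory, and I do not anticipate any substantial obstacle. The only step requiring a concrete construction is the converse direction of the triviality equivalence, for which the monomial choice $h_0 = u^{n-1-e}$ suffices.
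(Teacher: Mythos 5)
Your proof is correct and is essentially the same orthogonality argument the paper uses, just viewed from the dual side: the paper sums over characters of $\F_q[u]/(N)$ that are trivial on $\mathcal{P}_{<d}$ and applies orthogonality on the quotient $Q_d = (\F_q[u]/(N))/\mathcal{P}_{<d}$, whereas you fix the character $\psi(h) = e(h\widetilde{M}/N)$ and apply the shift argument to the subspace $V$. These are the same calculation; your version makes the triviality equivalence (nonvanishing of a single coefficient, using nondegeneracy of $\mathrm{Tr}_{\F_q/\F_p}$ and the $\F_q$-stability of $V$) a bit more explicit, and the paper instead relies on a dimension count of the trivial-on-$\mathcal{P}_{<d}$ characters.
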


\begin{proof}

We claim first that the indicator function of the $\F_p$-subspace $\mathcal{P}_{<d}$ of $\F_q[u]/(N)$ equals the average over all additive characters of $\F_q[u]/(N)$ that are identically $1$ on $\mathcal{P}_{<d}$.
Clearly, this average is $1$ on $\mathcal{P}_{<d}$, so the claim follows in case $d = n$.
In case $d < n$, we restrict to the (nonempty) complementof $\mathcal{P}_{<d}$ in $\F_q[u]/(N)$, 
and view our average as the average over all characters of the nontrivial quotient group
\begin{equation}
Q_d = \frac{\F_q[u]/(N)}{\mathcal{P}_{<d}}.
\end{equation}
By orthogonality of characters, this average vanishes, so our claim is verified.

The number of characters we are averaging over is
\begin{equation}
|Q_d| = \left| \frac{\F_q[u]/(N)}{\mathcal{P}_{<d}} \right| =  \frac{\left| \F_q[u]/(N) \right|}{\left| \mathcal{P}_{<d} \right|}  =
 \frac{q^n}{q^d} = q^{n-d} 
\end{equation}
so by \cref{REAC}, these characters are
\begin{equation}
\psi_h(M) = e \left( \frac{hM}{N} \right), \quad h \in \F_q[u], \quad \deg(h) < n-d,
\end{equation}
hence the proposition follows.
\end{proof}

\begin{notation} \label{QuadraticCongNot}

Let $d,k$ be nonnegative integers, let $A \in \mathcal{M}_k$, and let $D$ in $\F_q[u]$ be a polynomial for which 
the polynomial 
\begin{equation}
F(T) = T^2 + D \in \F_q[u][T]
\end{equation}
is irreducible over $\F_q[u]$.
We set
\begin{equation}
\rho_d(A;F) = \# \{f \in \mathcal{M}_d : F(f) \equiv 0 \ \mathrm{mod} \ A \}, \quad \rho(A;F) = \rho_k(A;F).
\end{equation}
In case $d \geq k$ we clearly have
\begin{equation} \label{RhoFormula}
\rho_d(A;F) = q^{d-k}\rho(A;F).
\end{equation}

\end{notation}

\begin{cor} \label{QuadCongCor}

Suppose that $d < k$. Then
\begin{equation*}
\rho_d(A;F) = q^{d-k}\rho(A;F) + q^{d-k} \sum_{\substack{h \in \F_q[u] \setminus\{0\} \\ \deg(h) <k-d }} e \left( \frac{-hu^d}{A} \right)
\sum_{\substack{f \in \F_q[u]/(A) \\ F(f) \equiv 0 \ \mathrm{mod} \ A}} e \left( \frac{hf}{A} \right).
\end{equation*}  
  
\end{cor}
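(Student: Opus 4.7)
The plan is to derive the identity by parametrizing $\mathcal{M}_d$ inside $\F_q[u]/(A)$ and then applying \cref{IndicatorAdditiveCharsProp} to detect the monic-of-degree-$d$ condition.

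First I would observe that since $d < k = \deg(A)$, every $f \in \mathcal{M}_d$ has $\deg(f) < k$, so $f$ equals its own reduction $\widetilde{f}$ in $\mathcal{P}_{<k}$. A residue class $f \in \F_q[u]/(A)$ contains a monic polynomial of degree exactly $d$ (and, necessarily, only one, since $d<k$) if and only if $\widetilde{f - u^d}$ has degree strictly less than $d$. Consequently,
\begin{equation*}
\rho_d(A;F) = \sum_{\substack{f \in \F_q[u]/(A) \\ F(f) \equiv 0 \ \mathrm{mod} \ A }} \mathbf{1}_{\deg \widetilde{f - u^d} < d}.
\end{equation*}

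Next I would invoke \cref{IndicatorAdditiveCharsProp}, with $N = A$ (so that $n = k$), the fixed integer $d$, and $M = f - u^d$, to rewrite the indicator function as
\begin{equation*}
\mathbf{1}_{\deg \widetilde{f - u^d} < d} = q^{d-k} \sum_{\substack{h \in \F_q[u] \\ \deg(h) < k-d}} e\!\left(\frac{h(f - u^d)}{A}\right).
\end{equation*}
Substituting this back and using multiplicativity of $e$, namely $e(h(f-u^d)/A) = e(hf/A)\, e(-hu^d/A)$, I would swap the order of summation (the outer sum over $h$, the inner sum over $f$ with $F(f)\equiv 0 \pmod A$). This is legitimate because both sums are finite.

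Finally, I would isolate the contribution of $h = 0$, which gives the term $q^{d-k} \cdot 1 \cdot \rho(A;F) = q^{d-k}\rho(A;F)$ since $e(0) = 1$ and the inner sum then counts the roots of $F$ modulo $A$. The remaining terms, indexed by nonzero $h \in \F_q[u]$ with $\deg(h) < k-d$, produce the second summand in the claimed identity. There is no real obstacle here; the only delicate point is the correct identification of the monic-of-degree-$d$ condition with $\deg \widetilde{f - u^d} < d$, which is an immediate consequence of $d<k$.
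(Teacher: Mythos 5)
Your proof is correct and follows essentially the same route as the paper's: rewrite $\rho_d(A;F)$ as a sum of the indicator $\mathbf{1}_{\deg \widetilde{f-u^d}<d}$ over roots of $F$ modulo $A$, expand the indicator via \cref{IndicatorAdditiveCharsProp}, and separate the $h=0$ term. No issues.
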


\begin{proof}
We have
\begin{equation}
\rho_d(A;F) = \sum_{\substack{f \in \F_q[u]/(A) \\ F(f) \equiv 0 \ \mathrm{mod} \ A}} {\bf{1}}_{f \in \mathcal{M}_d} = 
\sum_{\substack{f \in \F_q[u]/(A) \\ F(f) \equiv 0 \ \mathrm{mod} \ A}} {\bf{1}}_{\deg(f - u^d) < d}
\end{equation}  
which by \cref{IndicatorAdditiveCharsProp} equals
\begin{equation}
q^{d-k} \sum_{\substack{h \in \F_q[u] \\ \deg(h) <k-d }} e \left( \frac{-hu^d}{A} \right) \sum_{\substack{f \in \F_q[u]/(A) \\ F(f) \equiv 0 \ \mathrm{mod} \ A}} e \left( \frac{hf}{A} \right).
\end{equation}  
Separating the contribution of $h=0$ gives the corollary.
\end{proof}


\begin{prop} \label{AverageNumberOfSolsQuadCongProp}

For every positive integer $k$ we have
\begin{equation}
\sum_{A \in \mathcal{M}_k} \rho(A;F) \ll  |\mathcal{M}_k| \cdot |D|^{\epsilon} 
\end{equation}
with the implied constant depending only on $q$ and $\epsilon$.

\end{prop}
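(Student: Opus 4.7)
The strategy is to exploit the multiplicativity of $\rho(\cdot;F)$ and bound its local factors. By the Chinese Remainder Theorem, $\rho(A;F)=\prod_{\pi^e\|A}\rho(\pi^e;F)$. For a prime $\pi\nmid D$, any $f\in\F_q[u]/(\pi^e)$ with $F(f)\equiv 0\pmod{\pi^e}$ must be coprime to $\pi$ (since $f^2\equiv -D\not\equiv0\pmod\pi$), so $F'(T)=2T$ is a unit at $f$ (this uses the standing assumption that $p$ is odd), and Hensel's lemma gives $\rho(\pi^e;F)=\rho(\pi;F)\leq 2$ for every $e\geq 1$. For a ramified prime $\pi\mid D$ with $v=v_\pi(D)$, a short case analysis---split on $e\leq v$ (where $F(T)\equiv T^2\pmod{\pi^e}$ forces $v_\pi(T)\geq\lceil e/2\rceil$, producing exactly $|\pi|^{\lfloor e/2\rfloor}$ solutions) versus $e>v$ (which has no solutions when $v$ is odd, while for $v$ even the substitution $T=\pi^{v/2}S$ reduces matters to the unramified situation modulo $\pi^{e-v}$ and yields at most $2|\pi|^{v/2}$ solutions)---gives the uniform bound $\rho(\pi^e;F)\leq 2|\pi|^{\lfloor e/2\rfloor}$. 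Combining the two, letting $\omega(A)$ denote the number of distinct prime factors of $A$, we obtain $\rho(A;F)\leq 2^{\omega(A)}\prod_{\pi^e\|A,\,\pi\mid D}|\pi|^{\lfloor e/2\rfloor}$.

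I will then write each $A\in\mathcal{M}_k$ uniquely as $A=A_1A_2$ with $\gcd(A_1,D)=1$ and every prime factor of $A_2$ dividing $D$, and bound
\[
\sum_{A\in\mathcal{M}_k}\rho(A;F)\leq \sum_{A_2}2^{\omega(A_2)}\prod_{\pi^e\|A_2}|\pi|^{\lfloor e/2\rfloor}\cdot\sum_{\substack{A_1\in\mathcal{M}_{k-\deg A_2}\\\gcd(A_1,D)=1}}2^{\omega(A_1)}.
\]
The inner sum is controlled by $\sum_{A_1\in\mathcal{M}_{k-\deg A_2}}2^{\omega(A_1)}\leq 2q^{k-\deg A_2}$, which follows from the Euler product identity $\sum_{A\text{ monic}}2^{\omega(A)}u^{\deg A}=(1-qu^2)/(1-qu)^2$ by reading off the $u^m$ coefficient. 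Extending the outer sum to all monic $A_2$ whose prime factors divide $D$ gives an upper bound that factors as an Euler product; using $1+2\sum_{e\geq 1}|\pi|^{-\lceil e/2\rceil}=1+4/(|\pi|-1)$, this becomes
\[
\sum_{A\in\mathcal{M}_k}\rho(A;F)\ll q^k\prod_{\pi\mid D}\left(1+\frac{4}{|\pi|-1}\right).
\]

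The main obstacle is then to bound this Euler product by $O_\epsilon(|D|^\epsilon)$ for arbitrary $\epsilon>0$; a naive estimate like $\prod_{\pi\mid D}5\leq 5^{\omega(D)}$ only produces a fixed power of $|D|$ and is insufficient. The fix is a standard Rankin-style device: fix $\epsilon>0$ and choose $C_\epsilon$ large enough that $1+4/(x-1)\leq x^\epsilon$ for all real $x\geq C_\epsilon$, then split
\[
\prod_{\pi\mid D}\left(1+\frac{4}{|\pi|-1}\right)\leq \prod_{\substack{\pi\mid D\\|\pi|<C_\epsilon}}\left(1+\frac{4}{|\pi|-1}\right)\cdot\prod_{\substack{\pi\mid D\\|\pi|\geq C_\epsilon}}|\pi|^\epsilon.
\]
The first factor is $O_{q,\epsilon}(1)$ because only finitely many monic irreducibles over $\F_q$ have norm below $C_\epsilon$, and the second factor is at most $|D|^\epsilon$. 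Together with $|\mathcal{M}_k|=q^k$, this completes the argument.
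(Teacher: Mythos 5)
Your local computations are fine (the Hensel step at primes not dividing $D$, the bound $\rho(\pi^e;F)\leq 2|\pi|^{\lfloor e/2\rfloor}$ at ramified primes, and the Rankin-type treatment of the Euler product over $\pi\mid D$), but the proof has a fatal arithmetic error at the coefficient-extraction step, and this error is not a slip that can be patched: it hides the one genuinely analytic input the proposition requires. You claim
\begin{equation*}
\sum_{A_1\in\mathcal{M}_{m}}2^{\omega(A_1)}\leq 2q^{m},
\end{equation*}
citing the identity $\sum_{A}2^{\omega(A)}u^{\deg A}=(1-qu^2)/(1-qu)^2$. That identity is correct, but it has a \emph{double} pole at $u=1/q$, so reading off the coefficient of $u^m$ gives $(m+1)q^m-(m-1)q^{m-1}\asymp mq^m$, not $O(q^m)$. (This matches the classical fact that $\sum_{n\leq x}2^{\omega(n)}\asymp x\log x$.) Your argument therefore only yields $\sum_{A\in\mathcal{M}_k}\rho(A;F)\ll k\,q^k|D|^{\epsilon}$, which is strictly weaker than the stated bound: the proposition asserts an implied constant depending only on $q$ and $\epsilon$, uniformly in $k$ and $D$, and the extra factor $k$ cannot be absorbed into $|D|^{\epsilon}$ (e.g.\ it is unbounded for fixed $D$ as $k\to\infty$, which is exactly the regime used in \cref{NumberStandardDefiniteFormsBound}).

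The missing idea is that one must not discard the sign of $\chi_F(\pi)=\bigl(\tfrac{-D}{\pi}\bigr)$. Replacing $\rho(\pi;F)=1+\chi_F(\pi)$ by the upper bound $2$ at every unramified prime is what converts a quantity of average size $O(L(1;\chi_F))=O(|D|^{\epsilon})$ into a divisor-type sum of average size $\asymp k$. The paper's proof keeps the signs: it writes the generating function $H(t)=\sum_k t^k\sum_{A\in\mathcal{M}_k}\rho(A;F)$ as $L(t;\chi_F)\,\zeta_{\F_q[u]}(t)$ times an Euler product converging in $|t|\leq q^{-3/4}$ and bounded there by $|D|^{\epsilon}$, then bounds $L(t;\chi_F)\ll|D|^{\epsilon}$ on that disc using Weil's Riemann Hypothesis for the character sums $\sum_{\deg\pi=\ell}\chi_F(\pi)$ (treating separately the degenerate case $D=\lambda D_0^2$), so that only the simple pole of $\zeta_{\F_q[u]}$ at $t=q^{-1}$ contributes and Cauchy's theorem gives $\ll q^k|D|^{\epsilon}$. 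Some such cancellation over the unramified primes is indispensable; without it the proposition as stated cannot be recovered.
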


\begin{proof} 

We can decompose uniquely $D = D_1 D_2^2$ where 
\begin{equation} \label{DOneDefEq}
D_1 = \prod_{\substack{\pi \mid D \\ v_\pi(D) \equiv 1 \ \mathrm{mod} \ 2}} \pi
\end{equation} 
is squarefree. We define a character on $\F_q[u]$ by
\begin{equation}
\chi(f) = \left( \frac{-D_1}{f}\right), \quad f \in \F_q[u] \setminus \{0\}.
\end{equation}

If $\pi \in \F_q[u]$ is a prime that does not divide $D$, for every positive integer $r$ we can use Legendre symbols to write
\begin{equation} \label{CasepiDoesntDivideDs}
\rho( \pi^r; F) = 1 + \left( \frac{-D}{\pi} \right) = 1 + \left( \frac{- D_1D_2^2}{\pi} \right)  = 1 + \chi(\pi) 
\end{equation}
in view of Hensel's Lemma.

If $\pi \in \F_q[u]$ is a prime that divides $D_1$, for every integer $r \geq 1$ we have
\begin{equation} \label{CasepidividesDone}
\rho (\pi^r; F ) = \begin{cases} |\pi|^{\lfloor \frac{r}{2 } \rfloor} &  r \leq  v_\pi (D) \\ 0 & r > v_\pi (D).
\end{cases}
\end{equation}
Indeed, when $D \equiv 0$ mod $\pi^r$, we are counting the elements in $\F_q[u]/(\pi^r)$ which square to zero, 
or equivalently are zero mod $\pi^{\lceil{\frac{r}{2}} \rceil}$, so their number is
\begin{equation}
\frac{|\pi|^r}{|\pi|^{\lceil{\frac{r}{2}} \rceil}} = |\pi|^{r - \lceil{\frac{r}{2}} \rceil} = |\pi|^{\lfloor{\frac{r}{2}} \rfloor}.
\end{equation}
Since $v_\pi(D)$ is odd by our definition of $D_1$ in \cref{DOneDefEq}, nothing squares to $-D$ mod $\pi^r$ for $r > v_{\pi}(D)$. 

Finally, if $\pi$ divides $D_2$ and does not divide $D_1$, for $r \geq 1$ we have
\begin{equation} \label{CasepiDividesDtwo}
\rho (\pi^r; F ) = \begin{cases} |\pi|^{\lfloor \frac{r}{2 } \rfloor} &  r \leq v_\pi (D) \\  (1 + \chi(\pi)) |\pi|^{ v_\pi(D_2) } & r >  v_\pi (D). \end{cases}
\end{equation}
Indeed, the first case is established as in \cref{CasepidividesDone}.
For the second case we note that every element in $\F_q[u]/(\pi^r)$ is of the form $\pi^i \alpha$ for a unique choice of $0 \leq i \leq r$ and $\alpha \in (\F_q[u]/(\pi^{r-i}))^\times$.
The elements whose square is $-D$ are those that have 
\begin{equation}
i = \frac{v_{\pi}(D)}{2} = \frac{v_{\pi}(D_1 D_2^2)}{2} = \frac{v_{\pi}(D_1) + 2v_{\pi}(D_2)}{2} = v_{\pi}(D_2)
\end{equation} 
and 
\begin{equation}
\alpha^2 \equiv -D \pi^{-v_{\pi}(D)} \equiv -D_1D_2^2 \pi^{-2v_{\pi}(D_2)} \mod \pi^{r - 2v_{\pi}(D_2)}.
\end{equation}
The number of such $\alpha \in (\F_q[u]/(\pi^{r - v_{\pi}(D_2)}))^\times$ is
\begin{equation}
\left(1 + \left( \frac{-D_1D_2^2 \pi^{-2v_{\pi}(D_2)}}{\pi} \right)\right) \frac{|\pi|^{r - v_{\pi}(D_2)}}{|\pi|^{r - 2v_{\pi}(D_2)}} = 
(1 + \chi(\pi))|\pi|^{v_{\pi}(D_2)}
\end{equation}
as stated in the second case of \cref{CasepiDividesDtwo}.

The function $A \mapsto \rho(A,F)$ is multiplicative so in view of \cref{CasepiDoesntDivideDs}, \cref{CasepidividesDone}, and \cref{CasepiDividesDtwo}, 
we have the Euler product
\begin{equation*}
\begin{split}
&H(t) = \sum_{k=0}^\infty t^{k} \sum_{A \in \mathcal{M}_k} \rho(A;F) = 
\prod_{\pi} \left( \sum_{r=0}^{\infty} t^{r \deg(\pi)}\rho(\pi^r; F) \right) = \\
&\prod_{\pi \nmid D}\left( 1 + (1 + \chi(\pi))\sum_{r=1}^\infty t^{r \deg(\pi)} \right) \cdot
\prod_{\pi \mid D_1 } \left( \sum_{r=0}^{v_\pi(D)} |\pi|^{\lfloor \frac{r}{2} \rfloor} t^{r \deg(\pi)} \right) \cdot \\ &\prod_{\substack{ \pi \mid D_2 \\ \pi \nmid D_1}}
\left( (1 + \chi(\pi)) |\pi|^{v_\pi (D_2)}\sum_{r > 2v_\pi(D_2)} t^{r \deg(\pi)}+
 \sum_{r=0}^{2v_\pi(D_2)} |\pi|^ {\lfloor \frac{r}{2} \rfloor} t^{r \deg(\pi)} \right) .
\end{split}
\end{equation*}

We will now express the above as the product of
\begin{equation} 
L(t; \chi) \zeta_{\mathbb F_q[u]} (t) = 
\prod_{ \pi \nmid D_1} \frac{1}{ 1 - \chi(\pi) t^{\deg(\pi)}   } \cdot \prod_{\pi}  \frac{1}{ 1 - t^{\deg(\pi)} }
\end{equation} 
with a rapidly converging Euler product.
To do this, note that for primes $\pi$ not dividing $D$ we have
\begin{equation*} 
\begin{split}
& ( 1 - \chi(\pi) t^{\deg(\pi)}) (1- t^{\deg(\pi)} ) \left( 1 + (1 + \chi(\pi))\sum_{r=1}^\infty t^{r\deg(\pi)} \right) = \\ 
&( 1 - \chi(\pi) t^{\deg(\pi)}) (1+  \chi(\pi) t^{\deg(\pi)} ) = 1 - \chi^2(\pi) t^{2\deg(\pi)} = 1 - t^{2\deg(\pi)}.
\end{split} 
\end{equation*} 
Similarly, for primes $\pi$ dividing $D_2$ but not $D_1$ we get
\begin{equation*} 
\begin{split} 
& ( 1 - \chi(\pi) t^{\deg(\pi)}) (1- t^{\deg(\pi)}  )\left( 
(1 + \chi(\pi))|\pi|^{v_\pi (D_2)}\sum_{r > 2v_\pi(D_2)} t^{r\deg(\pi)} +
\sum_{r=0}^{2v_\pi(D_2)} |\pi|^ {\lfloor \frac{r}{2} \rfloor}t^{r\deg(\pi)}
\right) 
=\\ &|\pi|^{v_{\pi}(D_2)}t^{2\deg(\pi)v_{\pi}(D_2)}(1 - t^{2\deg(\pi)} )  + 
( 1 - \chi(\pi) t^{\deg(\pi)}) (1- t^{\deg(\pi)}  )\sum_{r=0}^{2v_\pi(D_2)-1 } |\pi|^ {\lfloor \frac{r}{2} \rfloor} t^{r\deg(\pi)}.
\end{split} 
\end{equation*} 
Combining these, we obtain
\begin{equation} \label{LdecompositionEq}
\begin{split}
&H(t) = 
L(t;\chi) \zeta_{\mathbb F_q[u]} (t) \prod_{\pi \nmid D}\left( 1 - t^{2\deg(\pi)}   \right)  \prod_{\pi \mid D_1 }\left(   (1- t^{\deg(\pi)}  ) \sum_{r=0}^{v_\pi(D)} |\pi|^ {\lfloor \frac{r}{2} \rfloor}t^{r\deg(\pi)}  \right)   \\ 
&\prod_{\substack{ \pi \mid D_2 \\ \pi \nmid D_1}}  
\left(  
|\pi|^{v_{\pi}(D_2)}t^{2\deg(\pi)v_{\pi}(D_2)}(1 - t^{2\deg(\pi)} )  +
( 1 - \chi(\pi) t^{\deg(\pi)}) (1- t^{\deg(\pi)}  )\sum_{r=0}^{2v_\pi(D_2)-1 } |\pi|^ {\lfloor \frac{r}{2} \rfloor}t^{r\deg(\pi)}
\right). 
\end{split}
\end{equation}

Let us now show that the Euler product terms are $\ll |D|^{\epsilon} $ for $|t| \leq q^{-\frac{3}{4}}$
(any cutoff strictly between $\frac{1}{2}$ and $1$ in place of $\frac{3}{4}$ would work equally well for our purposes here). 
We have
\begin{equation*} 
\left|\prod_{\pi \nmid D}\left( 1 - t^{2\deg(\pi)}   \right) \right| \leq \prod_{\pi} \left( 1 + |\pi|^{ - 2 \cdot \frac{3}{4} } \right) \leq \prod_\pi \frac{1}{ 1 - |\pi|^{ - \frac{3}{2}} } = \frac{1}{1- q^{ - \frac{1}{2} }} \ll 1. 
\end{equation*}
The contribution of each prime $\pi$ dividing $D_1$ is 
\begin{equation*} 
\left|  (1- t^{\deg(\pi)}  ) \sum_{r=0}^{v_\pi(D)} |\pi|^ {\lfloor \frac{r}{2} \rfloor}t^{r\deg(\pi)}  \right| \leq  (1 + |\pi|^{-\frac{3}{4}} ) \sum_{r=0}^{\infty} |\pi|^{ \frac{r}{2} - \frac{3r}{4} } = (1 + |\pi|^{- \frac{3}{4}}) \frac{1}{ 1 - |\pi|^{ - \frac{1}{4} } } \leq |\pi|^{\epsilon} \end{equation*} 
for all but finitely many primes $\pi$ in $\F_q[u]$. 
Similarly, every prime $\pi$ dividing $D_2$ but not $D_1$ gives
\begin{equation*} 
\begin{split} 
&\left| 
|\pi|^{v_{\pi}(D_2)}t^{2\deg(\pi)v_{\pi}(D_2)} (1 - t^{2\deg(\pi)} ) + ( 1 - \chi(\pi) t^{\deg(\pi)}) (1- t^{\deg(\pi)}  )\sum_{r=0}^{2v_\pi(D_2)-1 } |\pi|^ {\lfloor \frac{r}{2} \rfloor}t^{r\deg(\pi)} \right| \leq\\ &|\pi|^{ - \frac{1}{2} v_\pi(D_2) } (1+ |\pi|^{- \frac{3}{2} } ) + (1 + |\pi|^{ - \frac{3}{4} } )^2 \sum_{r=0}^{\infty} |\pi|^{ \frac{r}{2} - \frac{3r}{4}} \leq \\ &|\pi|^{ - \frac{1}{2} }  (1+ |\pi|^{- \frac{3}{2} } )  + (1 + |\pi|^{ - \frac{3}{4} } )^2 \frac{1}{ 1 - |\pi|^{-\frac{1}{4} }}  \leq |\pi|^{\epsilon}\end{split} \end{equation*} 
for all but finitely many $\pi$. Combining these, we obtain 
\begin{equation}   
\frac{ H(t)} {  L(t;\chi) \zeta_{\mathbb F_q[u]} (t) }  \ll |D|^\epsilon, \quad |t| \leq q^{-\frac{3}{4}},
\end{equation} 
and recall that $\zeta_{\mathbb F_q[u]}(t) = (1 - qt)^{-1}$.

We now split into two cases. 
In the first case, $D_1$ is nonconstant, so $L(t; \chi)$ is a polynomial in $t$ which satisfies 
\begin{equation}
L(t;\chi) \ll |D|^\epsilon, \quad |t| \leq q^{-\frac{3}{4}},
\end{equation}
by Weil's Riemann Hypothesis.
Therefore, the only pole of $H$ with $|t| \leq  q^{-\frac{3}{4}} $ is at $t = q^{-1} $, and this pole is simple.  
By Cauchy's residue theorem, we thus have
\begin{equation*} 
\begin{split} 
\sum_{A \in \mathcal{M}_k} \rho(A;F) &\ll   
\left| \oint_{ |t| =q^{-\frac{3}{4}} } \frac{ H(t)  } {t^{k+1} } \right| + \left| q^{k+1}  \operatorname{Res}_{ t= q^{-1} }  H(t) \right| \\ &  \ll  \oint_{ |t| =q^{-\frac{3}{4}} } \frac{ |D|^\epsilon \left| 1- qt  \right|^{-1}      } {|t| ^{k+1} }  + q^{k+1}  |D|^\epsilon \left| \operatorname{Res}_{t=q^{-1}} (1-qt)^{-1} \right|  \\ &  \ll  |D|^\epsilon \left|1 - q^{ \frac{1}{4} } \right|^{-1}  q^{ \frac{3k}{4}}  + q^k |D|^\epsilon \ll  q^k |D|^\epsilon.
\end{split} 
\end{equation*} 

In the second case $D_1$ is constant, so 
\begin{equation}
L(t;\chi) = \sum_{k=0}^{\infty} t^k \sum_{f \in \mathcal{M}_k} \chi(f) = 
\sum_{k=0}^{\infty} t^k q^k (-1)^k = \frac{1}{1 +qt}. 
\end{equation}
Therefore, the only poles of $H$ with $|t|  \leq q^{- \frac{3}{4}}$ are $t = \pm q^{-1}$, and these poles are simple.
From Cauchy's residue theorem we similarly get
\begin{equation*} 
\begin{split} 
\sum_{A \in \mathcal{M}_k} \rho(A;F) &\ll   
\left| \oint_{ |t| =q^{-\frac{3}{4}} } \frac{ H(t)  } {t^{k+1} } \right| + \left| q^{k+1}  \operatorname{Res}_{ t= \pm q^{-1} }  H(t) \right| \\ &  
\ll  \oint_{ |t| =q^{-\frac{3}{4}} } \frac{ |D|^\epsilon \left| 1- qt  \right|^{-1}   \left| 1 + qt \right|^{-1}   } {|t| ^{k+1} }  + q^{k+1}  |D|^\epsilon \left| \operatorname{Res}_{t= \pm q^{-1}} (1 \mp qt)^{-1} \right|  \\ 
&\ll  |D|^\epsilon \left|1 - q^{ \frac{1}{2} } \right|^{-1}  q^{ \frac{3k}{4}}  + q^k |D|^\epsilon \ll  q^k |D|^\epsilon.
\end{split} 
\end{equation*}

\end{proof}

\begin{notation}

Keep \cref{QuadraticCongNot}.
For a prime $\pi \in \F_q[u]$ set
\begin{equation*}
\chi_F(\pi) = \rho(\pi ; F) - 1 = \#\{f \in \F_q[u]/(\pi) : F(f) \equiv 0 \ \mathrm{mod} \ \pi \} - 1 = \left( \frac{-D}{\pi} \right)
\end{equation*} 
and define the singular series
\begin{equation} \label{SingSerDef}
\mathfrak{S}_q(F) = \prod_{\pi} \left( 1 - \left(1 + \chi_F(\pi) \right)|\pi|^{-1} \right) \left(1 -|\pi|^{-1} \right)^{-1}.
\end{equation}
Define also the $L$-function of $\chi_F$ (in the variable $t = q^{-s}$) to be
\begin{equation}
L(t; \chi_F) = \prod_{\pi} \frac{1}{1 - \chi_F(\pi)t^{\deg(\pi)}}.
\end{equation}

\end{notation}

\begin{prop} \label{SingularSeriesProp}

For a positive integer $n$ we have
\begin{equation} \label{SingularSeriesEq}
\sum_{k=1}^n kq^{-k}\sum_{\substack{A \in \mathcal{M}_k}} \mu(A) \rho(A;F) = 
-\mathfrak{S}_q(F) + q^{-\frac{n}{2} + o(n)}, \quad n \to \infty,
\end{equation}
as soon as $\deg_u(F) \ll n$.
\end{prop}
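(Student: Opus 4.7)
The plan is to recognize the sum as the truncation at level $n$ of the derivative of a Dirichlet-type generating function, identify the full series with $-\mathfrak{S}_q(F)$ via an $L$-function factorization, and control the tail via Weil's Riemann hypothesis.

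Writing $a_k=\sum_{A\in\mathcal{M}_k}\mu(A)\rho(A;F)$, the multiplicativity of $\rho(\cdot;F)$ gives the generating function identity
\begin{equation*}
G(t):=\sum_{k\ge 0}a_k t^k=\prod_{\pi}\bigl(1-(1+\chi_F(\pi))t^{\deg\pi}\bigr).
\end{equation*}
Using the identity $(1-t^{\deg\pi})(1-\chi_F(\pi)t^{\deg\pi})=1-(1+\chi_F(\pi))t^{\deg\pi}+\chi_F(\pi)t^{2\deg\pi}$, I would factor $G(t)=(1-qt)\,L(t;\chi_F)^{-1}\,K(t)$, where
\begin{equation*}
K(t)=\prod_\pi\frac{1-(1+\chi_F(\pi))t^{\deg\pi}}{1-(1+\chi_F(\pi))t^{\deg\pi}+\chi_F(\pi)t^{2\deg\pi}}
\end{equation*}
has $\pi$-factor $1+O(t^{2\deg\pi})$ and so converges absolutely on a neighborhood of $|t|\le q^{-1/2}$. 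Since $F=T^2+D$ is irreducible, $\chi_F$ is nontrivial and $L(t;\chi_F)$ is a polynomial of degree $d=O(\deg_u F)$ with all zeros on $|t|=q^{-1/2}$ by Weil. The vanishing of $(1-qt)$ at $t=q^{-1}$ then gives
\begin{equation*}
q^{-1}G'(q^{-1})=-L(q^{-1};\chi_F)^{-1}K(q^{-1})=-\prod_\pi\frac{1-(1+\chi_F(\pi))|\pi|^{-1}}{1-|\pi|^{-1}}=-\mathfrak{S}_q(F),
\end{equation*}
where the last step is a direct recombination of the Euler factors for $L^{-1}$ and $K$. Since formally $\sum_{k\ge 0}kq^{-k}a_k=q^{-1}G'(q^{-1})=-\mathfrak{S}_q(F)$, the proposition reduces to the tail bound $\sum_{k>n}kq^{-k}a_k=q^{-n/2+o(n)}$.

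For the tail, I would apply Cauchy's formula on the shrinking circle $|t|=q^{-1/2}(1-1/k)$. Combined with the bound $|L(t;\chi_F)^{-1}|\le k^d$ coming from the elementary lower bound $|1-\alpha_j t|\ge 1/k$ for the $d$ roots $\alpha_j$ of $L$, and the uniform bound on $|K|$ from absolute convergence, this yields $|a_k|\ll q^{k/2}k^{d+O(1)}$, so
\begin{equation*}
\sum_{k>n}kq^{-k}|a_k|\ll n^{d+O(1)}q^{-n/2},
\end{equation*}
which is $q^{-n/2+o(n)}$ provided the implicit constant in $\deg_u F\ll n$ is taken small enough compared to $\log q$. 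The delicate point—and the main obstacle—is this trade-off between the polynomial factor $n^d$ generated by the poles of $L(t;\chi_F)^{-1}$ accumulating on the critical circle and the geometric decay $q^{-n/2}$ from Weil; the hypothesis $\deg_u F\ll n$ is chosen precisely to keep the former subpolynomial in $q^n$.
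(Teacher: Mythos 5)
Your overall architecture matches the paper's: write the generating function as $(1-qt)$ times an Euler product, observe that the logarithmic derivative at $t=q^{-1}$ picks out $-\mathfrak{S}_q(F)$, and bound the tail by Cauchy's formula on circles inside $|t|=q^{-1/2}$. The identification of the main term is correct. The genuine gap is in the tail bound. Your estimate $|a_k|\ll q^{k/2}k^{d+O(1)}$ with $d\asymp\deg(D)$ gives a tail of size $q^{-n/2}n^{d+O(1)}=q^{-n/2}e^{(d+O(1))\log n}$, and the hypothesis only provides $\deg_u(F)=O(n)$ with an absolute implied constant (the application in \cref{UniformBHstatement} needs $\deg(D)$ as large as $2d(1-\delta)\approx n$). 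With $d\asymp n$ the factor $n^{d}$ is $e^{cn\log n}$, which is \emph{not} $q^{o(n)}$ for any $c>0$, and shrinking the implied constant in $\deg_u F\ll n$ does not help: $n^{cn}$ eventually exceeds $q^{Kn}$ for every $K$. Concretely, the bound $|1-\alpha_j t|\geq 1/k$ applied to $\sim n$ inverse zeros on the critical circle loses $\Theta(\log k)$ per zero, hence $\Theta(n\log n)$ in the exponent, where only $o(n\log q)$ is affordable. The paper avoids this by bounding $\log|G(t)|$ directly at radius $q^{-1/2-\epsilon}$ with $\epsilon=\epsilon(n)\to 0$, $\epsilon^{-1}=o(\log n)$, using the prime character sums $\sum_{\deg\pi=\ell}\chi_F(\pi)$: the trivial bound $q^{\ell}$ for $\ell\leq 2\log_q\deg(D)$ (where it beats Weil's $\deg(D)q^{\ell/2}$) and Weil only for larger $\ell$, yielding $\log_q|G|\ll\epsilon^{-1}\deg(D)^{1-2\epsilon}=o(n)$. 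Some such hybrid bound is indispensable; the pure zero-counting estimate cannot be pushed through in the regime the proposition must cover.

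Two smaller points. First, when $D=\lambda D_0^2$ with $-\lambda$ a nonsquare in $\F_q^\times$ (so $F$ is still irreducible), $\chi_F$ is essentially the constant-field character and $L(t;\chi_F)=\frac{1}{1+qt}\prod_{\pi\mid D}(1-(-t)^{\deg\pi})$ is not a polynomial with zeros on $|t|=q^{-1/2}$; your zero-location step does not apply, and the paper treats this case separately (it is in fact easier, since there $L^{-1}$ is holomorphic and of size $|D|^{o(1)}$ on the closed disc $|t|\leq q^{-1/2}$). Second, $K(t)$ does not converge absolutely on a neighborhood of $|t|\leq q^{-1/2}$: the relevant majorant $\sum_\pi|t|^{2\deg\pi}$ diverges logarithmically at $|t|=q^{-1/2}$, so on your circles $|K|$ grows like $k^{O(1)}$ rather than being uniformly bounded — this slippage is harmless, as it is absorbed by the $k^{O(1)}$ you already carry.
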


\begin{proof}
We define
\begin{equation}
Z(t) = t\frac{d}{dt} \sum_{k=0}^\infty t^k \sum_{A \in \mathcal{M}_k} \mu(A)\rho(A; F)  
\end{equation}
and use the multiplicativity in $A$ of $\mu(A)$ and $\rho(A;F)$ to write the above as an Euler product, getting
\begin{equation}
Z(t) = 
t \frac{d}{dt} \prod_{\pi} \left( 1 - \left(1 + \chi_F(\pi) \right)t^{\deg(\pi)}\right) 
= t\frac{d}{dt} \left( (1 - qt)G(t) \right)
\end{equation}
where
\begin{equation} \label{DefGEq}
G(t) = \prod_{\pi} \left( 1 - \left(1 + \chi_F(\pi)  \right)t^{\deg(\pi)}\right) \left(1 - t^{\deg(\pi)}\right)^{-1}.
\end{equation}
An alternative expression for $G(t)$ is
\begin{equation}\label{G-L-equation}
G(t) = L(t; \chi_F)^{-1} \prod_{\pi} 
\left( 1 - \frac{ t^{2 \deg (\pi)} \chi_F( \pi) } { (1 - t^{\deg(\pi)}) (1- \chi_F( \pi)  t^{ \deg (\pi) } ) } \right).
\end{equation}

By the derivative product rule and \cref{SingSerDef}, we have
\begin{equation}
Z \left( \frac{1}{q} \right) = -G\left( \frac{1}{q} \right) = - \mathfrak{S}_q(F).
\end{equation}
By Cauchy's differentiation formula, the left hand side of \cref{SingularSeriesEq} differs from the above by $\ll$ 
\begin{equation} \label{CauchyEq} 
\sum_{k > n} kq^{-k} \ \left| \ \oint\limits_{|t| = r} \frac{G(t)}{t^{k+1}} \ \right|, 
\quad r = q^{-\frac{1}{2} - \epsilon} < \frac{1}{\sqrt{q}}
\end{equation} 
where we take $\epsilon = \epsilon(n) > 0$ to satisfy
\begin{equation} \label{epsilonDefEq}
\epsilon = o(1), \quad \epsilon^{-1} = o(\log n).
\end{equation}

To bound the integral in \cref{CauchyEq}, we prove a pointwise bound on $G$.  
We first handle the case where $D$ is not a constant (an element of $\F_q^\times$) times a square in $\F_q[u]$. 
To do that, (assuming none of the factors in \cref{DefGEq} is zero) we write
\begin{equation}
\log |G(t)| = \sum_\pi \log \left|  1 -  \frac{\chi_F(\pi)t^{\deg (\pi)}}{1 - t^{\deg (\pi)}}  \right| 
\end{equation}
and using the bound $\log |1- z| = \frac{1}{2}\log |1-z|^2 \ll \log(1 - z - \overline{z} + |z|^2)$ get
\begin{equation*}
\sum_\pi \log \left( 1 + \frac{(\chi_F^2(\pi) + 2\chi_F(\pi)) |t|^{2\deg(\pi)} -\chi_F(\pi)\left(t^{\deg(\pi)} + \overline{t^{\deg (\pi)}}\right)}{|1 - t^{\deg(\pi)}|^2} \right).
\end{equation*}

Since $\log(1+x) \le x$ for any real $x > -1$, and $|1 - t^{\deg(\pi)}|^{-2} \ll 1$, the above is $\ll$
\begin{equation}
\sum_\pi r^{2\deg(\pi)}+ \left| \sum_\pi \frac{\chi_F(\pi)t^{\deg(\pi)}}{|1 - t^{\deg(\pi)}|^2} \right| 
\end{equation}
so summing separately over each degree we get at most
\begin{equation} \label{TwoCaseCharSum}
\sum_{\ell=1}^\infty q^\ell r^{2\ell} + 
\left| \sum_{\ell = 1}^{\infty} \sum_{\deg(\pi) = \ell} \frac{\chi_F(\pi)t^{\ell}}{|1 - t^{\ell}|^2} \right| = 
\frac{1}{1 - qr^2} + 
\left| \sum_{\ell = 1}^{\infty} \sum_{\deg(\pi) = \ell} \frac{\chi_F(\pi)t^{\ell}}{|1 - t^{\ell}|^2} \right|.
\end{equation}

%

Using the triangle inequality, and the bound $|1 - t^{\ell}|^{-2} \ll 1$, we arrive at
\begin{equation}
\sum_{\ell=1}^\infty q^\ell r^{2\ell} + 
\sum_{\ell = 1}^{\infty}  r^{\ell} \left|  \sum_{\deg(\pi) = \ell}  \chi_F(\pi) \right|.
\end{equation}
For the second sum we use a trivial bound for $\ell \leq 2 \log_q \deg(D)$,
and invoke Weil's Riemann Hypothesis (see \cite[(2.5)]{Rud}) for all other $\ell$ to get $\ll$
\begin{equation}
\frac{1}{1 - qr^2} + \sum_{\ell \leq 2 \log_q \deg(D) } r^\ell q^\ell  + \sum_{\ell > 2\log_q \deg(D)} \deg(D)r^\ell q^{\frac{\ell}{2}}. 
\end{equation}
Evaluating the geometric series, and using the bound $(1- q^{-2\epsilon})^{-1} \ll \epsilon^{-1}$
we finally get 
\begin{equation} \label{PwiseBoundEq}
\log |G(t)| \ll \epsilon^{-1} \deg(D)^{1 -2\epsilon}.
\end{equation}

It follows from our assumption that $\deg(D) \ll n$, \cref{CauchyEq} and \cref{PwiseBoundEq} that our error term is 
\begin{equation}
e^{O(\deg(D)^{1 - 2\epsilon} \epsilon^{-1})} \sum_{k > n} kq^{-k}r^{-k} \ll 
q^{O(n^{1 - 2\epsilon} \epsilon^{-1})} nq^{-\frac{n}{2} + \epsilon n}.
\end{equation}
In view of  \cref{epsilonDefEq} the above is $\ll q^{-\frac{n}{2} + o(n)}$ as required.

Now we handle the case where $D = \lambda D_0^2$, with $\lambda \in \F_q$ and $D_0 \in \F_q[u]$. 
Since $F$ is irreducible by assumption, we get that $-\lambda \in \F_q^\times \setminus {\F_q^\times}^2$ and $D_0 \neq 0$ so
\begin{equation}
L (t, \chi_F) = \frac{1}{ 1 + q t } \prod_{ \pi \mid D}  (1- (-t )^{\deg (\pi) }).
\end{equation}
Therefore, for any $t \in \mathbb{C}$ with $|t| \leq q^{-\frac{1}{2}}$ we have
\begin{equation} 
|L (t, \chi_F)|^{-1} = 
(1 +qt) \prod_{ \pi \mid D}  |1- (-t )^{\deg (\pi)} |^{-1}   \ll \prod_{\pi \mid D} \frac{1}{1 - |\pi|^{-\frac{1}{2}}}
\ll |D|^{\epsilon}.
\end{equation}  

In order to obtain a pointwise bound for $G$ in this case,
we shall bound the Euler product in \eqref{G-L-equation}. 
Setting $r = |t|$ as in \cref{CauchyEq}, we have

\begin{equation*} 
\begin{split}
&\prod_{\pi} \left| 1 - \frac{ t^{2 \deg (\pi)} \chi_F( \pi) } { (1 - t^{\deg(\pi)}) (1- \chi_F( \pi)  t^{ \deg (\pi) } ) } \right| \leq \prod_{\pi}\left ( 1 + \frac{ r^{ 2 \deg (\pi) }  }{ (1- r^{\deg (\pi)} )^2} \right) = \\ 
&\prod_{\pi}\left (1+ \frac{ 2 r^{ 3\deg (\pi)} }{ 1 - r^{ \deg (\pi)}  } \right) \frac{ 1}{1 - r^{ 2\deg (\pi)} }= 
\frac{1}{1 - q r^2 } \prod_{\pi} \left(1+ \frac{ 2 r^{ 3\deg (\pi)} }{ 1 - r^{ \deg (\pi)}  } \right) \end{split}  \end{equation*} 
in which the final Euler product converges for $r < q^{-\frac{1}{3}}$ and is uniformly bounded for $r \leq q^{-1/2}$, 
so \cref{G-L-equation} defines a holomorphic function in this disc.

By \cref{CauchyEq} we have 
\begin{equation}
\frac{1}{ 1 - q r^{2}} = \frac{1}{ 1 - q^{ - 2\epsilon} } \ll \epsilon^{-1}
\end{equation}
so the error term is bounded by
\begin{equation}
\sum_{k > n } k q^{-k} \frac{ \epsilon^{-1}}{ r^{k+1} } = 
\epsilon^{-1}   \sum_{k>n} k q^{ (\epsilon -\frac{1}{2}) (k+1) -1 }  \ll \epsilon^{-1}  q^{ - \frac{n}{2} + \epsilon n} 
\end{equation} 
which is $\ll q^{ -\frac{n}{2} + o(n) }$ in view of  \cref{epsilonDefEq}. 
\end{proof}

\section{Quadratic forms}

We establish here analogs over $\F_q[u]$ of several facts mentioned in \cite{Hoo}.

\begin{prop} \label{UniqueMatrixProp}

For every (binary) quadratic form 
\begin{equation}
Q(X,Y) = aX^2 + bXY + cY^2
\end{equation}
over $\F_q[u]$ there exists a unique symmetric $2 \times 2$ matrix $K$ over $\F_q[u]$ with
\begin{equation} \label{MatrixFormCorr}
Q(X,Y) = (X,Y)K(X,Y)^T.
\end{equation}

\end{prop}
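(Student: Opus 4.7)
The plan is to exhibit an explicit symmetric matrix for existence and then show uniqueness by direct comparison of coefficients, using crucially that $p$ (and hence $q$) is odd so that $2$ is invertible in $\F_q[u]$.

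For existence, I will take
\begin{equation*}
K = \begin{pmatrix} a & b/2 \\ b/2 & c \end{pmatrix}
\end{equation*}
which makes sense because $2 \in \F_q^\times \subseteq \F_q[u]^\times$ by the standing assumption that $p$ is odd. A direct computation gives
\begin{equation*}
(X,Y)K(X,Y)^T = aX^2 + (b/2)XY + (b/2)YX + cY^2 = aX^2 + bXY + cY^2 = Q(X,Y),
\end{equation*}
and clearly $K$ is symmetric with entries in $\F_q[u]$.

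For uniqueness, suppose $K' = \begin{pmatrix} \alpha & \beta \\ \beta & \gamma \end{pmatrix}$ is another symmetric matrix over $\F_q[u]$ satisfying \cref{MatrixFormCorr}. Expanding,
\begin{equation*}
(X,Y)K'(X,Y)^T = \alpha X^2 + 2\beta XY + \gamma Y^2,
\end{equation*}
and comparing this with $aX^2 + bXY + cY^2$ in the polynomial ring $\F_q[u][X,Y]$ (where the monomials $X^2, XY, Y^2$ are linearly independent over $\F_q[u]$) yields $\alpha = a$, $\gamma = c$, and $2\beta = b$. Since $2$ is a unit in $\F_q[u]$, the last equation forces $\beta = b/2$, so $K' = K$.

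There is no real obstacle here; the only subtlety is the invocation of the oddness of $q$ to divide by $2$, both to define the off-diagonal entry $b/2$ in the existence part and to recover $\beta$ uniquely from $2\beta = b$ in the uniqueness part. Without this hypothesis the statement would fail, since in characteristic $2$ the cross term $bXY$ cannot be symmetrically split.
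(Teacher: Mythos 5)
Your proof is correct and takes essentially the same approach as the paper: both construct the explicit matrix $K = \begin{pmatrix} a & b/2 \\ b/2 & c \end{pmatrix}$ using that $2$ is a unit, and both recover uniqueness from the same three relations. The only cosmetic difference is that the paper extracts the entries of $K$ by evaluating $Q$ at the points $(1,0)$, $(0,1)$, $(1,1)$, whereas you compare coefficients of $X^2$, $XY$, $Y^2$ directly; these are equivalent.
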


\begin{proof}

For the existence of $K$ as above, just note that
\begin{equation*} \label{CorrMat}
(X,Y)
\begin{pmatrix} 
a & \frac{b}{2} \\ \frac{b}{2} & c 
\end{pmatrix}
(X,Y)^T = \left( aX + \frac{bY}{2}, \frac{bX}{2} + cY \right) (X,Y)^T = Q(X,Y).
\end{equation*}

For uniqueness, let $K$ be a symmetric matrix satisfying \cref{MatrixFormCorr}. Then
\begin{equation}
K_{11} = (1,0)K(1,0)^T = Q(1,0), \ K_{22} = (0,1)K(0,1)^T = Q(0,1)
\end{equation}
and since $K$ is symmetric, we get from \cref{MatrixFormCorr} that
\begin{equation*}
\begin{split}
2K_{12} &= K_{12} + K_{21} = (1,1)K(1,1)^T - (1,0)K(1,0)^T - (0,1)K(0,1)^T \\
&= Q(1,1) - Q(1,0) - Q(0,1)
\end{split}
\end{equation*}
so $K$ is indeed uniquely determined by $Q$.
\end{proof}

\begin{defi} \label{CorrMatrixDef}

Keep the notation of the above proposition. 
We say that the symmetric $2 \times 2$ matrix $K$ is the corresponding matrix to the quadratic form $Q$,
and define the discriminant $D$ of $Q$ to be the determinant
\begin{equation} \label{DiscrimDefEq}
D =  \det(K) = ac - \frac{b^2}{4}. 
\end{equation} 
In case the polynomial 
\begin{equation} \label{TheQuadraticPolEq}
F(T) = T^2 + D \in \F_q[u][T]
\end{equation}
is reducible over $\F_q[u]$,
that is, negative $D$ is a square in $\F_q[u]$,
we say that $Q$ is degenerate, and otherwise we say that it is nondegenerate.

\end{defi}

\begin{remark} \label{NonZeroARem}

For a nondegenerate form $Q(X,Y) = aX^2 + bXY + cY^2$, the polynomial $a = Q(1,0)$ is nonzero.
For if $a = 0$ then
\begin{equation}
-D = \frac{b^2}{4} - ac = \left( \frac{b}{2} \right)^2 - 0 \cdot c = \left( \frac{b}{2} \right)^2
\end{equation}
contrary to our assumption that $Q$ is nondegenerate.

\end{remark}

\begin{defi} \label{QuadFormNot}


Let the group $\mathrm{SL}_2(\mathbb F_q[u])$ act from the right on row vectors in $\F_q[u]^2$ via the dual of the usual action by multiplication.
This means that for a matrix of polynomials
\begin{equation}
M \in \mathrm{SL}_2(\mathbb F_q[u]), \ M = 
\begin{pmatrix} 
M_{11} & M_{12} \\ M_{21} & M_{22} 
\end{pmatrix}, \
M_{11}M_{22} - M_{12}M_{21} = 1,
\end{equation}
and $(x,y) \in \F_q[u]^2$, the action is given by
\begin{equation} \label{ActionOnVectors}
\begin{split}
(x,y) \star M &= (x,y) M^{-T} = (x,y)
\begin{pmatrix} 
M_{22} & -M_{21} \\ -M_{12} & M_{11} 
\end{pmatrix} \\
&= (M_{22}x - M_{12}y, -M_{21}x + M_{11}y).
\end{split}
\end{equation}

\end{defi}

It is straightforward to check that the stabilizer of the vector $(1, 0)$ is
\begin{equation} \label{UnipotentEq}
\{M \in \mathrm{SL}_2(\mathbb F_q[u]) : (1,0)M^{-T} = (1,0) \} = \left\{
\begin{pmatrix} 
1 & g \\ 0 & 1 
\end{pmatrix}
: g \in \F_q[u] \right\}. 
\end{equation}

\begin{notation} \label{BarredNotation}

A vector $(x,y) \in \F_q[u]^2$ is called primitive if $\gcd(x,y) = 1$, or equivalently,
if the ideal of $\F_q[u]$ generated by $x$ and $y$ contains $1$.
For such a vector, we denote by $\overline{x} \in \F_q[u]$ the polynomial of least degree for which
\begin{equation}
\overline{x}x  = 1 \mod y,
\end{equation}
and let $y_x \in \F_q[u]$ be the polynomial of least degree satisfying
\begin{equation} \label{invxmody}
\overline{x}x -  y_xy = 1.
\end{equation}
Put
\begin{equation} \label{BarredMatrix}
M_{(x,y)} = \begin{pmatrix} 
x & y_x \\ y & \overline{x} 
\end{pmatrix} \in \mathrm{SL}_2(\mathbb F_q[u])
\end{equation}
and note that 
\begin{equation} \label{MovingToOneZeroEq}
(1,0) = (x,y)M_{(x,y)}^{-T}.
\end{equation}
In particular, the primitive vectors form an orbit under the action of $\mathrm{SL}_2(\mathbb F_q[u])$.

\end{notation}

\begin{defi} \label{ActionOnFormsDef}

The group $\mathrm{SL}_2(\mathbb F_q[u])$ also acts from the right on quadratic forms by
\begin{equation} \label{ActionOnForms}
Q(X,Y) \star M = Q((X,Y)M^T) = Q( M_{11}X +M_{12} Y, M_{21} X + M_{22} Y).
\end{equation}
We say that two quadratic forms are equivalent if they belong to the same orbit in this action.

\end{defi}

For instance, if
\begin{equation}
Q(X,Y) = aX^2 + bXY + cY^2, \quad M = 
\begin{pmatrix} 
1 & g \\ 0 & 1 
\end{pmatrix}
\in \mathrm{SL}_2(\mathbb F_q[u])
\end{equation}
then we have
\begin{equation} \label{UnipExpAction}
\begin{split}
Q((X,Y)M^T) &= Q(X + gY, Y) = a(X + gY)^2 + b(X + gY)Y + cY^2 \\
&= aX^2 + (b + 2ag)XY + (ag^2 + bg + c)Y^2.
\end{split}
\end{equation}


We show that equivalent quadratic forms have the same discriminant.

\begin{prop} \label{DiscPreserver}

Let $Q$ be a quadratic form,
and let $M \in\mathrm{SL}_2(\mathbb F_q[u])$.
Then the discriminant of the quadratic form
$
Q'(X,Y) = Q((X,Y) M^T)
$
equals the discriminant of $Q$.

\end{prop}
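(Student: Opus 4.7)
The plan is to transport the transformation of the quadratic form to a transformation of its corresponding symmetric matrix from \cref{UniqueMatrixProp}, and then use multiplicativity of the determinant together with $\det(M) = 1$.

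First, I would write $Q(X,Y) = (X,Y) K (X,Y)^T$ where $K$ is the unique symmetric matrix corresponding to $Q$ as provided by \cref{UniqueMatrixProp}, so that $\det(K) = D$ by the definition of the discriminant in \cref{DiscrimDefEq}. Then applying the definition of the action in \cref{ActionOnForms}, we compute
\begin{equation*}
Q'(X,Y) = Q((X,Y)M^T) = (X,Y) M^T K \bigl((X,Y)M^T\bigr)^T = (X,Y) \bigl(M^T K M\bigr) (X,Y)^T.
\end{equation*}
The matrix $M^T K M$ is symmetric because $K$ is symmetric and $(M^T K M)^T = M^T K^T M = M^T K M$, so by the uniqueness part of \cref{UniqueMatrixProp} it must equal the corresponding matrix $K'$ of $Q'$.

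Next I would invoke multiplicativity of the determinant together with the fact that $M \in \mathrm{SL}_2(\mathbb{F}_q[u])$, which gives $\det(M) = \det(M^T) = 1$. Hence
\begin{equation*}
\det(K') = \det(M^T)\det(K)\det(M) = \det(K) = D,
\end{equation*}
and by \cref{DiscrimDefEq} applied to $Q'$, the discriminant of $Q'$ equals $\det(K') = D$, as required.

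There is no real obstacle here: the proof is essentially the observation that the action on forms corresponds to the congruence action $K \mapsto M^T K M$ on symmetric matrices, which preserves determinant when $\det(M) = 1$. The only thing to be a little careful about is confirming the symmetry of $M^T K M$ so that we are entitled to identify it with the corresponding matrix of $Q'$ (and thus read off the discriminant as its determinant); this is immediate from the symmetry of $K$.
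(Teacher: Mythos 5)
Your proof is correct and matches the paper's argument exactly: both write $Q(X,Y) = (X,Y)K(X,Y)^T$, observe that $Q'$ corresponds to the symmetric matrix $M^T K M$ by the uniqueness in \cref{UniqueMatrixProp}, and conclude by multiplicativity of the determinant together with $\det(M) = 1$.
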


\begin{proof}

For the symmetric $2 \times 2$ matrix $K$ corresponding to $Q$ in \cref{CorrMatrixDef} we have
\begin{equation}
Q'(X,Y) = Q((X,Y)M^T) = (X,Y)M^T K M (X, Y)^T
\end{equation}
so $Q'$ and $M^T K M$ satisfy the assumptions of \cref{UniqueMatrixProp} as the latter matrix is symmetric. 
We conclude that $M^T K M$ is the matrix corresponding to $Q'$, so the discriminant of $Q'$ is
\begin{equation}
\det(M^T K M) = \det(M^T) \det(K) \det(M) = 1 \cdot \det(K) \cdot 1 = \det(K)
\end{equation}
which is the discriminant of $Q$.
\end{proof}

%
%
%
%
%
%
%
%
%
%
%
%
%
%
%

\begin{defi} \label{ActionsonRepsDef}

A representation of a polynomial $A \in \F_q[u]$ by a quadratic form is an ordered pair $(Q,(x,y)) $ where $(x,y) \in \F_q[u]^2$, 
$Q$ is a quadratic form, and $Q(x,y)=A$. 
The representation is said to be primitive if $(x,y)$ is primitive and $Q$ is nondegenerate.  
One checks that the actions defined in \cref{QuadFormNot} and \cref{ActionOnFormsDef} combine to a coordinatewise action of $\mathrm{SL}_2(\mathbb F_q[u])$ from the right on (primitive) representations of $A$.
We call two representations equivalent if they belong to the same orbit in this action.


\end{defi}

Next we show that the action of $\mathrm{SL}_2(\mathbb F_q[u])$ on primitive representations is free,
namely that the stabilizer of any primitive representation is trivial.

\begin{prop} \label{FreeActionProp}

Let $Q$ be a nondegenerate quadratic form over $\F_q[u]$, and let $v \in \F_q[u]^2$ be a primitive vector.
Then the only matrix $M \in \mathrm{SL}_2(\mathbb F_q[u])$ that satisfies
\begin{equation} \label{StabilizingRepAssum}
Q((X,Y)M^T) = Q(X,Y), \quad vM^{-T} = v,
\end{equation}
is the identity matrix.

\end{prop}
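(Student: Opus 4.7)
The plan is to reduce to the case $v = (1,0)$ by using the transitivity of the $\mathrm{SL}_2(\mathbb F_q[u])$-action on primitive vectors, after which the assumptions will force $M$ to be upper-triangular unipotent, and a direct comparison of coefficients (using the nondegeneracy of $Q$ and that $p$ is odd) will finish the argument.

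Concretely, I would first set $N = M_v \in \mathrm{SL}_2(\F_q[u])$ as in \cref{BarredMatrix}, so that by \cref{MovingToOneZeroEq} we have $(1,0) = v N^{-T}$, equivalently $v = (1,0) N^T$. Define the conjugate $M' = N^{-1} M N \in \mathrm{SL}_2(\F_q[u])$ and the transported form $Q'(X,Y) = Q((X,Y)N^T)$. By \cref{DiscPreserver}, $Q'$ has the same discriminant as $Q$ and is therefore nondegenerate. I would then verify that the pair $(Q', (1,0))$ is stabilized by $M'$: for the vector, one computes
\begin{equation*}
(1,0)(M')^{-T} = (1,0) N^T M^{-T} N^{-T} = v M^{-T} N^{-T} = v N^{-T} = (1,0),
\end{equation*}
using \cref{StabilizingRepAssum}; for the form, writing $(X',Y') = (X,Y)N^T$ and using $N M' = MN$, one gets
\begin{equation*}
Q'((X,Y)(M')^T) = Q((X,Y)(MN)^T) = Q((X',Y') M^T) = Q(X',Y') = Q'(X,Y),
\end{equation*}
again by \cref{StabilizingRepAssum}.

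Next, since $M'$ fixes $(1,0)$ under the $\star$-action, \cref{UnipotentEq} gives
\begin{equation*}
M' = \begin{pmatrix} 1 & g \\ 0 & 1 \end{pmatrix}
\end{equation*}
for some $g \in \F_q[u]$. Writing $Q'(X,Y) = a'X^2 + b'XY + c'Y^2$, the computation in \cref{UnipExpAction} shows
\begin{equation*}
Q'((X,Y)(M')^T) = a' X^2 + (b' + 2a' g) XY + (a' g^2 + b' g + c') Y^2,
\end{equation*}
and equality with $Q'(X,Y)$ forces $2a'g = 0$. Since $p$ is odd this gives $a' g = 0$, and by \cref{NonZeroARem} the nondegeneracy of $Q'$ implies $a' = Q'(1,0) \neq 0$, so $g = 0$. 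Hence $M' = I$, which gives $M = N M' N^{-1} = I$.

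The main step is really only bookkeeping: the conjugation trick $M \mapsto N^{-1} M N$ transports the problem to the already-understood case $v = (1,0)$, and the remaining content is the observation that stabilizing both $(1,0)$ and a nondegenerate form cuts the unipotent group \cref{UnipotentEq} down to the identity. The only place where the hypotheses $p$ odd and $Q$ nondegenerate both enter is in the final cancellation $2a'g = 0 \Rightarrow g = 0$, so that is where the argument would fail without them; in principle this is also the only potential subtlety, but given the explicit formula in \cref{UnipExpAction} it is immediate.
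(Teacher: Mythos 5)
Your proof is correct and follows essentially the same route as the paper: conjugate by $M_v$ to transport the problem to the vector $(1,0)$, identify the stabilizer of $(1,0)$ as the unipotent subgroup via \cref{UnipotentEq}, and kill $g$ by comparing coefficients in \cref{UnipExpAction}. The only cosmetic differences are the order of the two steps and that you invoke \cref{NonZeroARem} to get $a' \neq 0$ directly, whereas the paper splits into the cases $a \neq 0$ and $b \neq 0$; both are valid.
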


\begin{proof}

Write $Q(X,Y) = aX^2 + bXY + cY^2$, and suppose first that $v = (1,0)$.
In this case, we know that $(1,0)M^{-T} = (1,0)$ so by \cref{UnipotentEq}, there exists a polynomial $g \in \F_q[u]$ such that
\begin{equation}
M = 
\begin{pmatrix} 
1 & g \\ 0 & 1 
\end{pmatrix}.
\end{equation}
Using our assumption that $M$ stabilizes $Q$ and \cref{UnipExpAction} we get
\begin{equation}
\begin{split}
aX^2 + bXY + cY^2 &= Q(X,Y) = Q((X,Y)M^T) \\
&=  aX^2 + (b + 2ag)XY + (ag^2 + bg + c)Y^2.
\end{split}
\end{equation}

Since $Q$ is nondegenerate, we get from \cref{CorrMatrixDef} that $4ac - b^2 \neq 0$.
It follows that either $a \neq 0$ or $b \neq 0$ (or both).
In case $a \neq 0$, from comparing the coefficients of $XY$ above, we get that $2ag = 0$ so $g = 0$.
If $a = 0$ then $b \neq 0$, so from equating the coefficients of $Y^2$ above, we deduce that $bg = 0$ hence $g = 0$.
We have thus shown that $M$ is indeed the identity matrix in case $v = (1,0)$.

Assume now that $v$ is an arbitrary primitive vector.
In \cref{BarredMatrix} we have written a matrix $M_v \in \mathrm{SL}_2(\mathbb F_q[u])$ such that $ v = (1,0)M_v^{T}$.
We set
\begin{equation}
Q'(X,Y) = Q((X,Y) M_v^T),
\end{equation}
infer from \cref{DiscPreserver} that $Q'$ is a nondegenerate quadratic form,
and note that $Q(X,Y) = Q'((X,Y)M_v^{-T})$.
Now if $M \in \mathrm{SL}_2(\mathbb F_q[u])$ is a matrix satisfying \cref{StabilizingRepAssum} then 
\begin{equation*}
Q'((X,Y) M^T M_v^{-T}) = Q'((X,Y)M_v^{-T}), \quad (1,0)M_v^T M^{-T} = (1,0)M_v^T
\end{equation*}
so
\begin{equation*}
Q'((X,Y) (M_v^{-1} M M_v)^T ) = Q'((X,Y)), \quad (1,0) \left( M_v^{-1} M M_v \right)^{-T} = (1,0).
\end{equation*}
From the previous special case where the primitive vector was $(1,0)$ we conclude that $M_v^{-1}MM_v$ is the identity, so $M$ is the identity as well.
\end{proof}

\begin{defi} \label{OneZeroAssocSolDef}

If $Q$ is a nondegenerate quadratic form of discriminant $D$ as in \cref{CorrMatrixDef}, and $(Q, (1,0))$ represents a polynomial $A$, 
then we can write 
\begin{equation}
Q(X,Y) = AX^2 + bXY + cY^2
\end{equation}
so from the definition of the discriminant in \cref{DiscrimDefEq} we get
\begin{equation} \label{AssociatorEq}
\left( \frac{b}{2} \right)^2 + D = \frac{b^2}{4} + Ac - \frac{b^2}{4} = Ac \equiv 0 \mod A.
\end{equation}
We say that $f = \frac{b}{2}$ is the solution of the equation
\begin{equation} \label{QuadCongEq}
F(T) = T^2 + D \equiv 0 \mod A
\end{equation}
associated to the primitive representation $(Q, (1,0))$ of $A$. 
Note that $A \neq 0$ by \cref{NonZeroARem}.

\end{defi}

\begin{prop} \label{UnipotentEquivalence}

Let $(Q, (1,0))$ and $(Q', (1,0))$ be representations of a polynomial $A \in \F_q[u]$ by nondegenerate quadratic forms of discriminant $D$. Then the solutions to \cref{QuadCongEq} associated to the representations $(Q, (1,0))$ and $(Q', (1,0))$ coincide if and only if these representations are equivalent.

\end{prop}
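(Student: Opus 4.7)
The plan is to work through the two implications by explicitly writing the quadratic forms in the coordinates suggested by the representation and exploiting the simple structure of the stabilizer of $(1,0)$ in $\mathrm{SL}_2(\F_q[u])$. Write
\begin{equation*}
Q(X,Y) = AX^2 + bXY + cY^2, \qquad Q'(X,Y) = AX^2 + b'XY + c'Y^2,
\end{equation*}
with $A \neq 0$ by \cref{NonZeroARem}, and with associated solutions $f = b/2$ and $f' = b'/2$ as in \cref{OneZeroAssocSolDef}. Both solutions satisfy $f^2 + D \equiv 0 \mod A$ and the question is whether $f \equiv f' \mod A$.

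For the forward direction, suppose $(Q',(1,0)) = (Q,(1,0)) \star M$ for some $M \in \mathrm{SL}_2(\F_q[u])$. Since $M$ stabilizes the vector $(1,0)$ under the action in \cref{ActionOnVectors}, the explicit description in \cref{UnipotentEq} forces
\begin{equation*}
M = \begin{pmatrix} 1 & g \\ 0 & 1 \end{pmatrix}
\end{equation*}
for some $g \in \F_q[u]$. Applying the computation in \cref{UnipExpAction}, the action on $Q$ transforms the coefficient of $XY$ by $b \mapsto b + 2Ag$. Hence $f' = b'/2 = f + Ag$, so $f \equiv f' \mod A$.

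For the reverse direction, suppose $f \equiv f' \mod A$. Then $b' - b \in 2A\cdot\F_q[u]$ (here we use that $p$ is odd, so division by $2$ is allowed), so we may choose $g \in \F_q[u]$ with $b' = b + 2Ag$ and form the unipotent matrix $M$ above. By \cref{UnipExpAction}, the form $Q \star M$ has leading coefficient $A$, middle coefficient $b'$, and some third coefficient $c''$. To conclude $Q \star M = Q'$ it remains to match the $Y^2$-coefficients. Here I would invoke \cref{DiscPreserver}, which says $Q \star M$ has discriminant $D$, and so does $Q'$; equating $Ac'' - (b')^2/4 = D = Ac' - (b')^2/4$ and using $A \neq 0$ yields $c'' = c'$. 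Therefore $Q' = Q \star M$, and the representations are equivalent.

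The main (very mild) obstacle is simply bookkeeping: one has to be careful that the solution $f$ is being compared as a class modulo $A$, not as a polynomial in $\F_q[u]$ — otherwise the ``only if'' direction fails, since $M$ can shift $b$ by any multiple of $2A$. The essential content is that the stabilizer of $(1,0)$ is parametrized by a single polynomial $g$, the action of this stabilizer on $b$ is the affine translation $b \mapsto b+2Ag$, and discriminant preservation ensures the $Y^2$ coefficient follows along automatically.
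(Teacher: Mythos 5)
Your proof is correct and follows essentially the same route as the paper: both directions reduce to the explicit description of the stabilizer of $(1,0)$ from \cref{UnipotentEq} and the transformation rule $b \mapsto b + 2Ag$ from \cref{UnipExpAction}. The only difference is cosmetic: for the reverse direction the paper verifies $Q\star M = Q'$ by an explicit computation of the $Y^2$-coefficient after writing $c = \frac{b^2+4D}{4A}$, whereas you shortcut this by citing \cref{DiscPreserver} and solving $Ac'' - (b')^2/4 = D$ for $c''$; both are valid and equivalent in content.
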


\begin{proof}

Suppose first that the representations $(Q, (1,0))$ and $(Q', (1,0))$ are equivalent.
By the definitions in \cref{ActionOnVectors} and \cref{ActionOnForms},
equivalence means that there exists a matrix $M \in \mathrm{SL}_2(\mathbb F_q[u])$ such that
\begin{equation} \label{OneZeroFixedByM}
(1,0)M^{-T} = (1,0), \quad Q'(X,Y) = Q((X,Y)M^T).
\end{equation}
We have checked in \cref{UnipotentEq} that the first equality above implies 
\begin{equation}
M = 
\begin{pmatrix} 
1 & g \\ 0 & 1 
\end{pmatrix}
\end{equation}
for some $g \in \F_q[u]$.

Since $Q(1,0) = A$, we can write
\begin{equation}
Q(X,Y) = AX^2 + bXY + cY^2
\end{equation}
and get from the second equality in \cref{OneZeroFixedByM} and \cref{UnipExpAction} that
\begin{equation}
Q'(X,Y) = Q((X,Y)M^T) = AX^2 + (b + 2gA)XY + (Ag^2 + bg + c)Y^2
\end{equation}
so the coefficient of $XY$ in $Q'$ is
\begin{equation}
2gA + b \equiv b \mod A. 
\end{equation}
Hence the solution of \cref{QuadCongEq} associated to $(Q', (1,0))$ via \cref{AssociatorEq} is $f = \frac{b}{2}$,
which is also the solution associated to $(Q, (1,0))$.

Suppose now that the representations $(Q,(1,0))$ and $(Q', (1,0))$ of $A$ give rise to the same solution of \cref{QuadCongEq}.
We can therefore write
\begin{equation} \label{TwoFormsOneEq}
Q(X,Y) = AX^2 + bXY + cY^2, \quad Q'(X,Y) = AX^2 + b'XY + c'Y^2
\end{equation}
with $b'$ congruent to $b$ mod $A$.
Since $Q$ and $Q'$ are of discriminant $D$,
using \cref{CorrMatrixDef} we see that $4Ac = b^2 + 4D$ and that $4Ac' = b'^2 + 4D$.
By \cref{NonZeroARem}, $A$ is nonzero so we can rewrite our forms as
\begin{equation} \label{TwoFormsTakeTwo}
Q(X,Y) = AX^2 + bXY + \frac{b^2 + 4D}{4A}Y^2, \ Q'(X,Y) = AX^2 + b'XY + \frac{b'^2 + 4D}{4A}Y^2.
\end{equation}

In order to exhibit the equivalence of our representations, we take
\begin{equation}
M = 
\begin{pmatrix} 
1 & \frac{b' - b}{2A} \\ 0 & 1 
\end{pmatrix}
\in \mathrm{SL}_2(\mathbb F_q[u])
\end{equation}
and note that $(1,0) = (1,0)M^{-T}$.
Using \cref{TwoFormsTakeTwo} and \cref{UnipExpAction} we get
\begin{equation*}
\begin{split}
&Q((X,Y)M^T) = \\
&AX^2 + \left( b + 2A \cdot \frac{b'-b}{2A} \right)XY + \left(A \left(\frac{b'-b}{2A} \right)^2 + b \cdot \frac{b'-b}{2A} + \frac{b^2 + 4D}{4A} \right)Y^2 = \\
&AX^2 + b'XY + \left( \frac{b'^2-2bb' + b^2}{4A} +  \frac{2bb'-2b^2}{4A} + \frac{b^2 + 4D}{4A} \right)Y^2 = \\
&AX^2 + b'XY + \left( \frac{b'^2 + 4D}{4A} \right)Y^2 = Q'(X,Y)
\end{split}
\end{equation*}
so our representations are indeed equivalents as \cref{OneZeroFixedByM} holds.
\end{proof}

%

\begin{defi} \label{GenAssocDef}

To a primitive representation $(Q,(x,y))$ of a polynomial $A \in \F_q[u]$ by a quadratic form
\begin{equation}
Q(X,Y) = aX^2 + bXY + cY^2
\end{equation}
of discriminant $D$,
using \cref{BarredNotation} we associate the solution
\begin{equation} \label{GeneralAsscoiation}
f = a y_x x + \frac{b}{2}(\overline{x}x + y_x y) + c \overline{x}y
\end{equation}
of the congruence $T^2 \equiv -D \mod A$ from \cref{QuadCongEq}.
Note that this agrees with our previous definition $f = \frac{b}{2}$ for the case $(x,y) = (1,0)$.

Now we check that \cref{GeneralAsscoiation} is indeed a solution for \cref{QuadCongEq}.
We use \cref{BarredNotation} to associate with $(Q,(x,y))$ the quadratic form
\begin{equation}
\begin{split}
Q_{x,y}(X,Y) &= Q \left((X,Y) M^{T}_{(x,y)} \right) = Q(xX + y_xY, yX + \overline{x}Y) \\ 
&= a(xX + y_xY)^2 + b(xX + y_xY)(yX + \overline{x}Y) + c(yX + \overline{x}Y)^2
\end{split}
\end{equation}
and use \cref{MovingToOneZeroEq} to conclude that $(Q, (x,y))$ is equivalent to $(Q_{x,y}, (1,0))$.
One readily checks that dividing the coefficient of $XY$ above by $2$ gives the right hand side of \cref{GeneralAsscoiation},
so the latter is indeed a solution of \cref{QuadCongEq}. 
In other words, the solution of \cref{QuadCongEq} associated to $(Q,(x,y))$ is the solution of \cref{QuadCongEq} associated to the equivalent representation $(Q_{x,y}, (1,0))$ in \cref{OneZeroAssocSolDef}.
At last note that by \cref{NonZeroARem} we have
\begin{equation} \label{RepValueNonZeroEq}
A = Q(x,y) = Q_{x,y}(1,0) \neq 0.
\end{equation}

\end{defi}

\begin{cor} \label{GenAssocSolsEquivaCor}

Primitive representations $(Q,(x,y))$ and $(Q', (x',y'))$ of a polynomial $A$ by quadratic forms of discriminant $D$
are equivalent if and only if their associated solutions to the equation $T^2 + D \equiv 0 \mod A$ coincide. 

\end{cor}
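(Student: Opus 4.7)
The plan is to reduce the claim to the already-established case of representations with vector $(1,0)$, namely \cref{UnipotentEquivalence}, via the equivalences constructed in \cref{GenAssocDef}.

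First I would observe that, by the very construction in \cref{GenAssocDef}, the primitive representation $(Q,(x,y))$ is equivalent to $(Q_{x,y}, (1,0))$ via the matrix $M_{(x,y)} \in \mathrm{SL}_2(\F_q[u])$, and similarly $(Q',(x',y'))$ is equivalent to $(Q'_{x',y'},(1,0))$. Moreover, by design, the solutions of $T^2 + D \equiv 0 \pmod A$ associated to $(Q,(x,y))$ and $(Q_{x,y},(1,0))$ are identical, and likewise for the primed pair. Since equivalence of representations (\cref{ActionsonRepsDef}) is an equivalence relation (it is the orbit equivalence for a group action), $(Q,(x,y)) \sim (Q',(x',y'))$ if and only if $(Q_{x,y},(1,0)) \sim (Q'_{x',y'},(1,0))$.

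It therefore suffices to verify the hypotheses of \cref{UnipotentEquivalence} for the pair $(Q_{x,y},(1,0))$ and $(Q'_{x',y'},(1,0))$. These forms are nondegenerate of discriminant $D$ by \cref{DiscPreserver}, since they arise from $Q$ and $Q'$ by the action of elements of $\mathrm{SL}_2(\F_q[u])$ and thus have the same discriminant as the original forms. By \cref{RepValueNonZeroEq}, both $Q_{x,y}$ and $Q'_{x',y'}$ take the value $A$ at $(1,0)$, so they are indeed representations of $A$. \cref{UnipotentEquivalence} then gives equivalence of these $(1,0)$-representations if and only if they have the same associated solution.

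Chaining these three equivalences together yields the corollary:
\begin{equation*}
(Q,(x,y)) \sim (Q',(x',y')) \ \Longleftrightarrow \ (Q_{x,y},(1,0)) \sim (Q'_{x',y'},(1,0)) \ \Longleftrightarrow \ f = f',
\end{equation*}
where $f, f'$ are the associated solutions. Since no serious calculation is required beyond invoking previously established facts, there is no real obstacle; the only point that requires a moment's care is making sure the association of solutions under the reduction to $(1,0)$ is compatible on both sides, which is immediate from how the association in \cref{GenAssocDef} was defined.
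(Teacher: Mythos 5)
Your proof is correct and follows essentially the same route as the paper: reduce both representations to the form $(Q_{x,y},(1,0))$ and $(Q'_{x',y'},(1,0))$ via the matrices from \cref{GenAssocDef}, note that the associated solutions are preserved by construction, and conclude by \cref{UnipotentEquivalence}. The extra verification of the hypotheses of \cref{UnipotentEquivalence} (nondegeneracy via \cref{DiscPreserver} and the value $A$ at $(1,0)$) is a harmless addition that the paper leaves implicit.
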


\begin{proof}

The representations $(Q,(x,y))$ and $(Q', (x',y'))$ are equivalent if and only if $(Q_{x,y},(1,0))$ and $(Q'_{x',y'}, (1,0))$ are equivalent.
From \cref{UnipotentEquivalence} applied to $Q_{x,y}$ and $Q'_{x',y'}$,
we get that $(Q_{x,y},(1,0))$ and $(Q'_{x',y'}, (1,0))$ are equivalent if and only if they give rise to the same solution
for the congruence $T^2 + D \equiv 0 \mod A$.
Our corollary now follows because the solution associated to $(Q,(x,y))$ is the one associated to $(Q_{x,y},(1,0))$,
and the solution associated to $(Q',(x',y'))$ is also associated to $(Q'_{x',y'},(1,0))$.
\end{proof}

\begin{prop} \label{RepCorrToGivenSolProp}

Let $A \in \F_q[u]$ be a nonzero polynomial, and let $D \in \F_q[u]$ be a polynomial with $-D$ not a square.
Then every solution $f$ of
\begin{equation}
T^2 + D \equiv 0 \mod A
\end{equation}
arises from a primitive representation of $A$ by a quadratic form of discriminant $D$.

\end{prop}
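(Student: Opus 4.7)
The plan is to construct the representation explicitly from the data of the solution $f$. Starting from $f \in \F_q[u]$ with $f^2 + D \equiv 0 \mod A$, I will first rewrite this congruence as an equation. Since $A \neq 0$ and $A$ divides $f^2 + D$, there exists a unique $c \in \F_q[u]$ such that $f^2 + D = Ac$. With this $c$ in hand, the natural candidate is the quadratic form
\begin{equation*}
Q(X,Y) = A X^2 + 2f XY + c Y^2
\end{equation*}
together with the vector $(1,0)$. Note that $2$ is invertible in $\F_q[u]$ because $p$ is odd, so the coefficient $2f$ is unambiguous.

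Next I will verify each of the properties demanded by the statement. The evaluation $Q(1,0) = A$ is immediate, so the pair $(Q,(1,0))$ is a representation of $A$. The vector $(1,0)$ is patently primitive (since $\gcd(1,0) = 1$). By the discriminant formula in \cref{DiscrimDefEq}, the discriminant of $Q$ is $A \cdot c - f^2 = D$, using the defining relation $Ac = f^2 + D$. For nondegeneracy in the sense of \cref{CorrMatrixDef}, I must check that $T^2 + D$ is irreducible over $\F_q[u]$, i.e. that $-D$ is not a square in $\F_q[u]$, which is precisely our hypothesis.

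Finally, I will check that the associated solution recovered from $(Q,(1,0))$ is our original $f$. By \cref{OneZeroAssocSolDef}, the solution associated to a primitive representation of the form $(Q,(1,0))$ with $Q(X,Y) = AX^2 + bXY + cY^2$ is $b/2$; in our case $b = 2f$, so the associated solution is exactly $f$. This completes the construction, showing that $f$ does arise from a primitive representation of $A$ by a quadratic form of discriminant $D$.

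There is no serious obstacle here; the proof is essentially a direct construction. The only subtlety is remembering to use that $p$ is odd so that the coefficient $2f$ makes sense, and to invoke the hypothesis on $-D$ not being a square at the exact point where nondegeneracy is needed, rather than earlier.
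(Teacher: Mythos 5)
Your construction is exactly the one the paper uses: the form $Q(X,Y) = AX^2 + 2fXY + \frac{f^2+D}{A}Y^2$ paired with the vector $(1,0)$, with the same discriminant computation and the same identification of the associated solution via \cref{OneZeroAssocSolDef}. The proof is correct and takes essentially the same approach as the paper.
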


\begin{proof}

Consider the quadratic form
\begin{equation}
Q(X,Y) = AX^2 + 2fXY + \frac{f^2 + D}{A}Y^2 \in \F_q[u][X,Y]
\end{equation}
that satisfies $Q(1,0) = A$. By \cref{CorrMatrixDef}, the discriminant of $Q$ is
\begin{equation}
A \cdot \frac{f^2 + D}{A} - \frac{(2f)^2}{4}   = f^2 + D - f^2 = D.
\end{equation}
By \cref{AssociatorEq}, $f = \frac{2f}{2}$ is associated to the primitive representation $(Q, (1,0))$ of $A$, as required.
\end{proof}

\begin{prop} \label{ExponentiationChangeProp}

As in \cref{GenAssocDef}, let $(Q, (x,y))$ be a primitive representation of a polynomial $A \in \F_q[u]$ by a quadratic form 
\begin{equation}
Q(X,Y) = aX^2 + bXY + cY^2
\end{equation}
of discriminant $D$, and let 
\begin{equation} \label{ExpsAssocSol}
f = a y_x x + \frac{b}{2}(\overline{x}x + y_x y) + c \overline{x}y
\end{equation}
be the associated solution to the equation $T^2 + D \equiv 0 \mod A$. 
Suppose that $y \neq 0$. Then
\begin{equation}
e \left( \frac{hf}{A} \right) = e \left( \frac{h\overline{x}}{y} \right)
\end{equation}
for any polynomial $h \in \F_q[u]$ satisfying
\begin{equation} \label{FirstDeghIneq}
\deg(h)  < \deg(A) - \deg(b) - 1
\end{equation}
and
\begin{equation} \label{SecondDeghIneq}
\deg(h) < \deg(A) + \deg(y) - \deg(a) - \deg(x) - 1.
\end{equation}

\end{prop}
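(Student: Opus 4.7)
The plan is to reduce the claimed identity of exponentials to a direct algebraic manipulation, using the relation $\overline{x}x - y_x y = 1$ from \cref{invxmody}, and then to check that the resulting rational function has no $u^{-1}$ term in its Laurent expansion at infinity under the given degree hypotheses.

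First, I compute the difference $hf/A - h\overline{x}/y = h(fy - A\overline{x})/(Ay)$ by substituting $f = a y_x x + \frac{b}{2}(\overline{x}x + y_x y) + c \overline{x}y$ from \cref{ExpsAssocSol} and $A = ax^2 + bxy + cy^2$. The terms involving $c$ cancel immediately, the $a$-terms combine into $ax(y_x y - \overline{x}x) = -ax$ using \cref{invxmody}, and the $b$-terms combine into $\frac{b}{2}y(y_x y - \overline{x}x) = -\frac{b}{2} y$. Thus
\begin{equation}
fy - A\overline{x} = -ax - \frac{by}{2},
\end{equation}
so that
\begin{equation}
\frac{hf}{A} - \frac{h\overline{x}}{y} = -\frac{hax}{Ay} - \frac{hb}{2A}.
\end{equation}

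Next, I verify using the hypotheses \cref{FirstDeghIneq} and \cref{SecondDeghIneq} that each of these two rational functions has degree (as a Laurent series in $1/u$) strictly less than $-1$. The second term has degree $\deg(h) + \deg(b) - \deg(A) < -1$ by \cref{FirstDeghIneq}, and the first term has degree $\deg(h) + \deg(a) + \deg(x) - \deg(A) - \deg(y) < -1$ by \cref{SecondDeghIneq}. In both cases, the Laurent expansion at infinity therefore starts at $u^{-2}$ or lower, so the coefficient of $u^{-1}$ (the negative of the residue at infinity) vanishes for each term, and hence for their sum.

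Finally, since $e$ depends only on this coefficient (as recalled in \cref{REAC}) and is a homomorphism from $\F_q(u)$ to $\mathbb{C}^\times$ with respect to addition, we conclude
\begin{equation}
e\!\left(\frac{hf}{A} - \frac{h\overline{x}}{y}\right) = 1,
\end{equation}
which gives the desired identity $e(hf/A) = e(h\overline{x}/y)$. The only real computation is Step~1; the only real subtlety is that one needs strict inequalities with the extra $-1$, reflecting the fact that vanishing of the $u^{-1}$ Laurent coefficient requires degree at most $-2$, not merely $-1$.
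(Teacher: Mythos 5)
Your proposal is correct and follows essentially the same route as the paper: both hinge on the identity $fy - A\overline{x} = -ax - \tfrac{b}{2}y$ (the paper phrases it as $\tfrac{f}{A} + \tfrac{ax+\frac{b}{2}y}{yA} = \tfrac{\overline{x}}{y}$) derived from $\overline{x}x - y_x y = 1$, followed by the observation that the degree hypotheses force the $u^{-1}$ Laurent coefficient of the discrepancy to vanish. The only difference is presentational (you split the error into two fractions and bound each separately, while the paper bounds the combined numerator), so no further comment is needed.
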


\begin{proof}

We have an equality of rational functions
\begin{equation} \label{RatFuncEq}
\begin{split}
&\frac{c \overline{x} y^2 + ax(\overline{x} x -1) + \frac{b}{2}y(2\overline{x} x-1)}{yA} + \frac{ax+\frac{b}{2}y}{yA} = \\ 
&\frac{\overline{x}(ax^2 + bxy + cy^2)}{yA} = \frac{\overline{x}Q(x,y)}{yA} = \frac{\overline{x} A}{yA} = \frac{\overline{x}}{y}. 
\end{split}
\end{equation}
By \cref{invxmody}, we have $\overline{x} x - 1 = y_x y$ and thus $2\overline{x}x - 1 = \overline{x} x + y_x y$,
so plugging these in the first term of \cref{RatFuncEq} we get
\begin{equation} 
\frac{c \overline{x} y^2 + axy_x y + \frac{b}{2}y(\overline{x} x + y_x y)}{yA} = \frac{f}{A} 
\end{equation}
from the definition of the associated solution $f$ in \cref{ExpsAssocSol}.
We conclude from the above and \cref{RatFuncEq} that
\begin{equation}
\frac{f}{A} + \frac{ax + \frac{b}{2}y}{yA} = \frac{\overline{x}}{y}.
\end{equation}

Multiplying the above by a polynomial $h$ and exponentiating, we obtain
\begin{equation}
e \left( \frac{h\overline{x}}{y} \right) = e \left( \frac{hf}{A} \right) e \left( \frac{hax + \frac{b}{2}hy}{yA} \right).
\end{equation}
The second factor in the right hand side above equals $1$ as soon as 
\begin{equation}
\deg \left( hax + \frac{b}{2}hy \right) < \deg(yA) - 1.
\end{equation}
The latter holds in case the two inequalities
\begin{equation*}
\deg(h) < \deg(A) + \deg(y) - \deg(a) - \deg(x) - 1, \ \deg(h)  < \deg(A) - \deg(b) - 1
\end{equation*} 
are satisfied.
\end{proof}

\begin{defi}

Let $D \in \F_q[u]$ be a polynomial for which 
\begin{equation}
F(T) = T^2 + D \in \F_q(u)[T]
\end{equation}
is irreducible over $\F_q(u)$, that is negative $D$ is not a square of a polynomial.
We say that $D$ is indefinite if the infinite place of $\F_q(u)$ splits in the splitting field of $F$.
By \cite[Proposition 14.6]{Ros}, this is equivalent to the degree of $D$ being even and the leading coefficient of $-D$ being a square in $\F_q^\times$. 
Otherwise (if the infinite place of $\F_q(u)$ is ramified or inert in the splitting field of $F$) we say that $D$ is definite. 
A nondegenerate quadratic form is called definite (respectively, indefinite) if its discriminant is definite (respectively, indefinite).


\end{defi}

\subsection{Definite quadratic forms}

\begin{defi} \label{StandardDefiniteDef}

We say that a definite quadratic form 
\begin{equation}
Q(X,Y) = aX^2 + bXY + cY^2 \in \F_q[u][X,Y]
\end{equation}
is standard if $\deg c \geq \deg a > \deg b$. 

\end{defi}

\begin{remark} \label{LowerBoundDegCRem}

For the discriminant $D$ of $Q$ we have
\begin{equation}
\deg(a) + \deg(c) = \deg(D).
\end{equation}
Indeed, otherwise
$
\deg(ac) = \deg(a) + \deg(c) \neq \deg(D),
$
and thus 
\begin{equation}
\begin{split}
2\deg(b) &= \deg(b^2) = \deg(4ac - 4D) = \max\{\deg(ac), \deg(D)\} \\
&\geq \deg(ac) = \deg(a) + \deg(c) > \deg(b) + \deg(b) = 2\deg(b),
\end{split}
\end{equation} 
a contradiction.
We infer that
\begin{equation} \label{DegaDefiniteUpperBound}
\deg(a) = \frac{\deg(a) + \deg(a)}{2} \leq \frac{\deg(a) + \deg(c)}{2} = \frac{\deg(D)}{2}.
\end{equation}

\end{remark}

\begin{prop} \label{DefiniteDeg} 
For a standard definite quadratic form as above, and $x,y \in \F_q[u]$ we have
\begin{equation*}
\deg(Q(x,y)) = \max ( \deg (a) + 2 \deg (x), \deg (c) + 2 \deg (y)) > \deg(bxy).
\end{equation*}
\end{prop}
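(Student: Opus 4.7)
The plan is to verify the two halves of the statement separately: first the strict inequality $\deg(bxy) < \max(\deg(ax^2), \deg(cy^2))$, and then the fact that no cancellation occurs among the highest-degree terms of $ax^2 + bxy + cy^2$. The degenerate cases $x=0$ or $y=0$ are immediate with the convention $\deg(0) = -\infty$, so I may assume $x,y$ are both nonzero throughout.

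For the strict inequality, I would only use the standardness assumption $\deg(c) \geq \deg(a) > \deg(b)$, which forces $2\deg(b) < \deg(a) + \deg(c)$. Adding $2\deg(x) + 2\deg(y)$ to both sides gives
\begin{equation*}
2\deg(bxy) < \bigl(\deg(a) + 2\deg(x)\bigr) + \bigl(\deg(c) + 2\deg(y)\bigr) = \deg(ax^2) + \deg(cy^2),
\end{equation*}
so $\deg(bxy)$ is strictly smaller than at least one of the two summands on the right, hence smaller than their maximum.

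The subtle step is to rule out that the leading terms of $ax^2$ and $cy^2$ cancel when their degrees coincide; here is where the definite hypothesis enters and where essentially all the work lies. Suppose for contradiction that $\deg(a) + 2\deg(x) = \deg(c) + 2\deg(y)$ and that $\mathrm{LC}(a)\mathrm{LC}(x)^2 + \mathrm{LC}(c)\mathrm{LC}(y)^2 = 0$, so
\begin{equation*}
\frac{\mathrm{LC}(c)}{\mathrm{LC}(a)} = -\left(\frac{\mathrm{LC}(x)}{\mathrm{LC}(y)}\right)^2 .
\end{equation*}
From \cref{LowerBoundDegCRem} and $2\deg(b) < \deg(a) + \deg(c)$, the polynomial $D = ac - b^2/4$ satisfies $\deg(D) = \deg(a) + \deg(c)$ and $\mathrm{LC}(D) = \mathrm{LC}(a)\mathrm{LC}(c)$. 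The displayed equation then yields
\begin{equation*}
\mathrm{LC}(-D) = -\mathrm{LC}(a)\mathrm{LC}(c) = \mathrm{LC}(a)^2 \left(\frac{\mathrm{LC}(x)}{\mathrm{LC}(y)}\right)^2 = \left(\frac{\mathrm{LC}(a)\mathrm{LC}(x)}{\mathrm{LC}(y)}\right)^2 \in (\F_q^\times)^2 .
\end{equation*}
The degree relation also gives $\deg(c) - \deg(a) = 2(\deg(x) - \deg(y))$, so $\deg(D) = \deg(a) + \deg(c)$ is even. The combined conditions—$\deg(D)$ even and $\mathrm{LC}(-D)$ a square in $\F_q^\times$—are precisely the criterion for $D$ to be indefinite, contradicting the hypothesis that $Q$ is definite.

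Combining the two halves, $\deg(ax^2 + cy^2) = \max(\deg(ax^2), \deg(cy^2))$ by the no-cancellation conclusion, and adjoining the $bxy$ term leaves the degree unchanged since $\deg(bxy)$ is strictly smaller, yielding the claim. The main obstacle is purely the no-cancellation step; everything else is elementary degree arithmetic driven by the standardness inequality $\deg(c)\geq \deg(a) > \deg(b)$.
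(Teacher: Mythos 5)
Your proof is correct and takes essentially the same route as the paper's: the standardness inequality $\deg(c)\geq\deg(a)>\deg(b)$ disposes of the $bxy$ term, and definiteness rules out cancellation between the leading terms of $ax^2$ and $cy^2$ by showing that such cancellation would force the leading term of $-D$ to be a square of even degree. Your version merely makes the leading-coefficient computation and the appeal to the indefiniteness criterion (even degree plus square leading coefficient of $-D$) more explicit than the paper does.
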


\begin{proof} 

We have 
\begin{equation}
\deg( ax^2 ) = \deg (a) + 2\deg (x), \quad \deg(cy^2) = \deg (c) + 2 \deg (y), 
\end{equation}
so since $\deg(c), \deg(a) > \deg(b)$ we get
\begin{equation} \label{Degbxy}
\begin{split} 
\deg ( bxy) &= \deg (b)+ \deg (x) + \deg (y) < \frac{\deg (a) + \deg (c)}{2} +\deg (x) + \deg (y) \\
&= \frac{ \deg (ax^2) + \deg( cy^2)}{2} \leq \max ( \deg (ax^2), \deg (cy^2)).
\end{split}
\end{equation}

Suppose toward a contradiction that the leading terms of $ax^2$ and $-cy^2$ are equal.
Then the leading terms of $a$ and $-c$ are equal up to multiplication by the square of a monomial, 
so the leading term of $-ac$ is a square. Since $\deg (ac) > \deg(b^2)$, we conclude that the leading term of the negated discriminant
\begin{equation}
-D = \frac{b^2}{4} - ac
\end{equation} 
of $Q$ is a square, which contradicts the assumption that $Q$ is definite.  

We infer that the leading terms of $ax^2 $ and $cy^2$ do not cancel each other out, 
so \cref{Degbxy} implies that $\deg (bxy) < \deg ( ax^2 + cy^2)$ and thus the desired statement. 
\end{proof}

\begin{defi} \label{ShortVecDef}

We say that a primitive vector $(x,y) \in \F_q[u]^2$ is a short vector of a definite quadratic form $Q$ if 
\begin{equation} \label{DefShortVecEq}
\deg (Q(x,y)) = \min \{ \deg (Q(v)) \mid v \in \F_q[u]^2, \ v \ \text{is primitive} \}.
\end{equation}
By \cref{RepValueNonZeroEq}, the degrees of polynomials primitively represented by $Q$ form a (nonempty) subset of the nonnegative integers.
Such a subset necessarily has a least element, so $Q$ admits short vectors.

\end{defi}

\begin{prop} \label{ShortVectorsStandardFormProp}

Let $Q(X,Y) = aX^2 + bXY + cY^2$ be a standard definite quadratic form over $\F_q[u]$. 
Then the short vectors of $Q$ are
\begin{equation}
\begin{cases}
\F_q^\times \times \{0\} & \deg(a) < \deg(c) \\
\F_q \times \F_q \setminus \{(0,0)\} & \deg(a) = \deg(c).
\end{cases}
\end{equation}


\end{prop}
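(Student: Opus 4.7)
The plan is to apply \cref{DefiniteDeg} directly, with a short case analysis based on the degrees of the coordinates of a primitive vector $(x,y)$. By that proposition, for any $x,y \in \F_q[u]$,
\begin{equation*}
\deg(Q(x,y)) = \max\{\deg(a) + 2\deg(x),\ \deg(c) + 2\deg(y)\},
\end{equation*}
with the convention $\deg(0) = -\infty$. The task is therefore to minimize this maximum over all primitive $(x,y)$, and to identify the vectors attaining the minimum.

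First I would handle the case $\deg(a) < \deg(c)$. For $(x,y) = (\lambda, 0)$ with $\lambda \in \F_q^\times$, primitivity holds and the formula gives $\deg(Q(\lambda, 0)) = \deg(a)$. Conversely, for any primitive $(x,y)$ with $y \neq 0$ one has $\deg(y) \geq 0$, hence $\deg(c) + 2\deg(y) \geq \deg(c) > \deg(a)$, so $\deg(Q(x,y)) > \deg(a)$. If $y = 0$ then primitivity forces $x \in \F_q^\times$, so the minimum is exactly $\deg(a)$ and it is attained precisely on $\F_q^\times \times \{0\}$.

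Next I would treat the case $\deg(a) = \deg(c)$. Any $(x,y) \in \F_q \times \F_q \setminus \{(0,0)\}$ is primitive (its gcd lies in $\F_q$ and is nonzero), and plugging in $\deg(x), \deg(y) \leq 0$ into the formula above yields $\deg(Q(x,y)) = \deg(a) = \deg(c)$. Conversely, if $(x,y)$ is primitive and one of $\deg(x), \deg(y)$ exceeds $0$, then the corresponding term $\deg(a) + 2\deg(x)$ or $\deg(c) + 2\deg(y)$ already exceeds $\deg(a)$, so $\deg(Q(x,y)) > \deg(a)$. This shows that the minimum is $\deg(a)$ and is attained exactly on $\F_q \times \F_q \setminus \{(0,0)\}$.

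There is no serious obstacle here: the proof is a direct consequence of \cref{DefiniteDeg} combined with the definition of a standard form in \cref{StandardDefiniteDef}. The only point requiring mild care is tracking the convention $\deg(0) = -\infty$ when one of the coordinates vanishes, and observing that primitivity in $\F_q[u]$ reduces in these extremal cases to the statement that $(x,y) \in \F_q^2$ is not the zero vector.
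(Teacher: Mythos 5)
Your proof is correct and takes essentially the same approach as the paper: both arguments invoke \cref{DefiniteDeg} to reduce to minimizing $\max\{\deg(a)+2\deg(x),\deg(c)+2\deg(y)\}$ over primitive $(x,y)$, use the standardness condition $\deg(a)\leq\deg(c)$ to identify $\deg(a)$ as the minimum, and then characterize the minimizers by the same elementary degree considerations. Your version is marginally more explicit in spelling out the reverse direction in each case, but the ideas are identical.
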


\begin{proof} 

We first determine the minimum in the right hand side of \cref{DefShortVecEq}.
Invoke \cref{DefiniteDeg}, and note that the minimum value of 
\begin{equation}
\deg(Q(x,y)) = \max( \deg(a)+ 2\deg(x), \deg(c) + 2\deg(y))
\end{equation}
over all primitive vectors $(x,y) \in \F_q[u]^2$ is attained (at least) whenever $\deg(x)$ and $\deg(y)$ are as small as possible,
subject to $gcd(x,y) = 1$.
That is, the minimum occurs (at least) in case
\begin{equation}
\{\deg(x), \deg(y)\} = \{0, -\infty\},
\end{equation}
so this minimum is $\min (\deg(a),\deg(c))$ which is $\deg(a)$ since the form $Q$ is standard. 

To determine all short vectors, let $(x,y) \in \F_q[u]^2$ be a primitive vector. If $\deg(c) > \deg(a)$, then we have
\begin{equation} \label{MinimalityCondEq}
\max( \deg(a)+ 2\deg(x), \deg(c) + 2\deg(y)) = \deg(a)
\end{equation}
if and only if $y=0$ and $x$ is a nonzero constant polynomial.
Otherwise, since $Q$ is standard we have $\deg(c) = \deg(a)$ so \cref{MinimalityCondEq} is satisfied if and only if $x$ and $y$ are both constant polynomials (but not both zero).
\end{proof}

The following proposition shows that the action of $\mathrm{SL}_2(\mathbb F_q[u])$ on representations restricts to an action on representations by short vectors of definite quadratic forms.

\begin{prop} \label{ShortVecAction}

Let $v$ be a short vector of a definite quadratic form $Q$ over $\F_q[u]$,
and let $M \in \mathrm{SL}_2(\mathbb F_q[u])$.
Then the primitive vector $v' = vM^{-T}$ is a short vector of the definite quadratic form $Q'(X,Y) = Q((X,Y) M^T)$. 

\end{prop}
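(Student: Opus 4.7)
The plan is to prove the proposition by three short observations. First I would verify that the action preserves the represented value: by the very definition of $Q'$ in \cref{ActionOnFormsDef}, we have
\begin{equation*}
Q'(v') = Q'(vM^{-T}) = Q((vM^{-T})M^T) = Q(v),
\end{equation*}
so in particular $\deg(Q'(v')) = \deg(Q(v))$.

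Second, I would check that the map $w \mapsto wM^{-T}$ is a bijection on the set of primitive vectors in $\F_q[u]^2$. Since $M \in \mathrm{SL}_2(\F_q[u])$, the inverse transpose $M^{-T}$ also lies in $\mathrm{SL}_2(\F_q[u])$, so the map has an inverse (given by multiplication by $M^T$) that also sends $\F_q[u]^2$ to itself. Primitivity of $w = (w_1, w_2)$ is the condition that the ideal $(w_1, w_2)$ equals $\F_q[u]$; acting by a matrix of determinant $1$ in $\mathrm{SL}_2(\F_q[u])$ sends this ideal to the ideal generated by the entries of $wM^{-T}$, which must therefore also be the unit ideal. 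Thus primitivity is preserved in both directions. In particular $v'$ is primitive, and the set of primitive vectors is invariant under this action.

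Third, I would invoke \cref{DiscPreserver} to conclude that $Q'$ has the same discriminant as $Q$, hence $Q'$ is again definite (so that the notion of ``short vector'' makes sense for $Q'$), and it is nondegenerate. Combining these three points, the set of degrees attained by $Q'$ on primitive vectors equals
\begin{equation*}
\{\deg(Q'(wM^{-T})) : w \in \F_q[u]^2 \text{ primitive}\} = \{\deg(Q(w)) : w \in \F_q[u]^2 \text{ primitive}\},
\end{equation*}
so the minimum in the defining condition \cref{DefShortVecEq} is the same for $Q$ and for $Q'$. Since $\deg(Q'(v')) = \deg(Q(v))$ achieves the minimum for $Q$ (as $v$ is short for $Q$), it also achieves the minimum for $Q'$, and $v'$ is a short vector of $Q'$.

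There is no real obstacle here; the entire argument is formal and relies only on earlier definitions together with \cref{DiscPreserver}. The only point that requires a moment's care is the preservation of primitivity under the $\mathrm{SL}_2(\F_q[u])$-action, but this is straightforward because both $M$ and $M^{-T}$ have entries in $\F_q[u]$ and determinant a unit.
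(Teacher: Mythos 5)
Your proof is correct and follows essentially the same route as the paper's: both arguments rest on \cref{DiscPreserver} to see that $Q'$ is definite, on the identity $Q'(wM^{-T}) = Q(w)$, and on the fact that the $\mathrm{SL}_2(\F_q[u])$-action permutes primitive vectors, so the minimum in \cref{DefShortVecEq} is unchanged. The paper simply writes this as a one-line chain of inequalities for an arbitrary primitive $v_0$, leaving the primitivity preservation implicit, whereas you spell it out; no substantive difference.
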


\begin{proof}

The form $Q'$ is definite since its discriminant is the discriminant of $Q$ by \cref{DiscPreserver}.
Since $Q(X,Y) = Q'((X,Y)M^{-T})$, and $v$ is a short vector of $Q$,
for any primitive vector $v_0 \in \F_q[u]^2$ we have
\begin{equation}
\begin{split}
\deg(Q'(v_0)) &= \deg(Q(v_0M^{T}))
\geq \deg(Q(v)) \\ &= \deg(Q'(vM^{-T})) = \deg(Q'(v'))
\end{split}
\end{equation}
so $v'$ is indeed a short vector of $Q'$.
\end{proof}

\begin{defi}

Let $Q$ be a definite quadratic form over $\F_q[u]$, and let $v$ be a short vector of $Q$.
A standardizing matrix of $Q$ at $v$ is a matrix $M \in \mathrm{SL}_2(\mathbb F_q[u])$ for which $vM^{-T} = (1,0)$ and
$
Q((X,Y) M^T)
$
is a standard definite quadratic form.

\end{defi}

\begin{prop}

There exists a unique standardizing matrix of $Q$ at $v$.
  
\end{prop}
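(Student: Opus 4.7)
The plan is to parametrize the candidate matrices $M$ satisfying $vM^{-T}=(1,0)$ as a single coset and then select a unique member by polynomial division. First, the identity $vM^{-T}=(1,0)$ is equivalent to $v$ being the first column of $M$: given $M=\begin{pmatrix}\alpha&\beta\\\gamma&\delta\end{pmatrix}\in\mathrm{SL}_2(\F_q[u])$ and $v=(v_1,v_2)$, direct computation yields $vM^{-T}=(v_1\delta-v_2\beta,\,v_2\alpha-v_1\gamma)$, which equals $(1,0)$ precisely when $(\alpha,\gamma)$ is proportional to $(v_1,v_2)$, the proportionality constant being forced to be $1$ by $\det M=1$. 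Since two $\mathrm{SL}_2$-matrices with the same first column differ by right multiplication by an upper unipotent matrix, the set of such $M$ is exactly $\{M_v U_g:g\in\F_q[u]\}$, where $M_v$ is the matrix from \cref{BarredMatrix} and
\begin{equation*}
U_g=\begin{pmatrix}1&g\\0&1\end{pmatrix}.
\end{equation*}

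Next, I would transport $Q$ along a candidate $M_vU_g$. By associativity of the right action from \cref{ActionOnFormsDef}, one has $Q\star(M_vU_g)=(Q\star M_v)\star U_g$. Write $Q_1=Q\star M_v=a_1X^2+b_1XY+c_1Y^2$; this form is definite by \cref{DiscPreserver}, the vector $(1,0)=v\star M_v$ is a short vector of $Q_1$ by \cref{ShortVecAction}, and the coefficient $a_1=Q_1(1,0)$ is nonzero by \cref{NonZeroARem}. Applying the formula in \cref{UnipExpAction} gives
\begin{equation*}
Q\star(M_vU_g)=a_1X^2+(b_1+2a_1g)XY+(a_1g^2+b_1g+c_1)Y^2.
\end{equation*}

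Finally, the standardness condition from \cref{StandardDefiniteDef} requires $\deg(b_1+2a_1g)<\deg a_1\le\deg(a_1g^2+b_1g+c_1)$. Since $q$ is odd, $2a_1$ is a nonzero element of $\F_q[u]$, so polynomial division produces a unique $g\in\F_q[u]$ fulfilling the first inequality; this simultaneously establishes existence and uniqueness of $M=M_vU_g$. The second inequality then comes for free: the expression $a_1g^2+b_1g+c_1$ equals $Q_1(g,1)$, the vector $(g,1)$ is primitive (its second coordinate is $1$), and shortness of $(1,0)$ in $Q_1$ forces $\deg Q_1(g,1)\ge\deg Q_1(1,0)=\deg a_1$. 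The one mildly subtle point, and essentially the only thing to watch, is that the condition $\deg c\ge\deg a$ in the definition of standard is automatic from shortness; otherwise a single parameter $g$ could not be expected to achieve both inequalities at once.
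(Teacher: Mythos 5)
Your proof is correct and follows essentially the same route as the paper: reduce to the vector $(1,0)$ via $M_v$, observe that the remaining freedom is an upper unipotent parameter $g$, pin down $g$ uniquely by division with remainder on the middle coefficient, and deduce $\deg(c)\ge\deg(a)$ from shortness of $(1,0)$. The only difference is organizational — you fold existence and uniqueness into a single coset parametrization, while the paper treats them separately — but the underlying steps coincide.
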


\begin{proof}

We start by proving existence.
By \cref{ShortVecDef}, the vector $v$ is primitive, 
so as in \cref{MovingToOneZeroEq} there exists a matrix $M_{v} \in \mathrm{SL}_2(\mathbb F_q[u])$ with $vM_v^{-T} = (1,0)$.
By \cref{ShortVecAction}, the vector $(1,0)$ is then a short vector of the definite quadratic form 
\begin{equation}
Q'(X,Y) = Q((X,Y) M_v^T) = aX^2 + bXY + cY^2.
\end{equation}

By \cref{NonZeroARem}, $a$ is nonzero so division with remainder provides us with a polynomial $g \in \F_q[u]$ for which $\deg(b - ag) < \deg(a)$.
Setting
\begin{equation}
N = 
\begin{pmatrix}
1 & -\frac{g}{2} \\
0 & 1
\end{pmatrix}
\in \mathrm{SL}_2(\mathbb F_q[u])
\end{equation}
and applying \cref{UnipExpAction}, we get the quadratic form
\begin{equation}
S(X,Y) = Q'((X,Y) N^T) = aX^2 + (b - ag)XY + c_0Y^2
\end{equation}
for some $c_0 \in \F_q[u]$.

Applying \cref{ShortVecAction} again, we find that the vector $(1,0) = (1,0)N^{-T}$ is a short vector of the definite quadratic form $S$.
Therefore
\begin{equation}
\deg(c_0) = \deg(S(0,1)) \geq \deg(S(1,0)) = \deg(a)
\end{equation}
so $S$ is standard by \cref{StandardDefiniteDef}.
The existence part of our proposition then follows by taking $M = M_vN$ as
\begin{equation}
vM^{-T} = v(M_vN)^{-T} = vM_v^{-T}N^{-T} = (1,0)N^{-T} = (1,0)
\end{equation}
and
\begin{equation}
Q((X,Y)M^T) = Q((X,Y) N^T M_v^T) = Q'((X,Y)N^T) = S(X,Y).
\end{equation}

To demonstrate uniqueness, let $M_1, M_2 \in \mathrm{SL}_2(\mathbb F_q[u])$ be standardizing matrices of $Q$ at $v$.
Then we have $vM_1^{-T} = (1,0) = vM_2^{-T}$ so 
\begin{equation} \label{InvCompositionFixingOneZeroEq}
(1,0)(M_1^{-1} M_2)^{-T} = (1,0)
\end{equation}
and the quadratic forms 
\begin{equation} \label{TwoStandardFormEq}
S_1(X,Y) = Q((X,Y) M_1^T) = aX^2 + bXY + cY^2, \ S_2(X,Y) = Q((X,Y) M_2^T)
\end{equation}
are standard definite.
From \cref{InvCompositionFixingOneZeroEq} and \cref{UnipotentEq} we get that 
\begin{equation} \label{M1InverseM2Eq}
M_1^{-1}M_2 = 
\begin{pmatrix}
1 & g \\
0 & 1
\end{pmatrix}
\end{equation}
for some $g \in \F_q[u]$,
so from \cref{TwoStandardFormEq} and \cref{UnipExpAction} we get that
\begin{equation*}
\begin{split}
S_2(X,Y) &= Q((X,Y)M_2^T) = S_1((X,Y) M_2^T M_1^{-T}) 
= S_1((X,Y)  (M_1^{-1}M_2)^T  ) \\ &=
aX^2 + (b + 2ag)XY + (ag^2 + bg + c)Y^2.
\end{split}
\end{equation*}

Suppose toward a contradiction that $g \neq 0$.
As the form $S_1$ is standard definite we know that $\deg(a) > \deg(b)$,
so since $S_2$ is also standard definite, we get from the above that
\begin{equation}
\deg(a) > \deg(b + 2ag) = \deg(2ag) \geq \deg(a)
\end{equation}
which is an absurdity.
We conclude that $g = 0$ and thus $M_1^{-1}M_2$ is the identity in view of \cref{M1InverseM2Eq}, 
so $M_1 = M_2$ as required.
\end{proof}

\begin{defi}

Let $Q$ be a definite quadratic form, let $v$ be a short vector of $Q$, let $M$ be the standardizing matrix of $Q$ at $v$, and let $w \in \F_q[u]^2$ be a primitive vector. 
We say that the standard quadratic form $Q((X,Y) M^T)$ is the standardization of $Q$ at $v$,
and that the representation
\begin{equation}
(Q((X,Y)M^T), wM^{-T})
\end{equation}
is the standardization at $v$ of the representation $(Q,w)$.
A primitive representation $(S,w)$ by a definite quadratic form is called standard if $S$ is standard. 

\end{defi}

\begin{thm} \label{StndrdzThm}

Let $(Q,w)$ be a primitive representation of a polynomial $A$ by a quadratic form $Q$ of definite discriminant $D$.
Then the function that maps a short vector $v$ of $Q$ to the standardization of $(Q,w)$ at $v$ is a bijection between the set of short vectors of $Q$ and the set of those standard representations of $A$ that are equivalent to $(Q,w)$.

\end{thm}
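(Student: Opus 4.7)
The plan is to prove well-definedness, injectivity, and surjectivity of the assignment $v \mapsto$ standardization of $(Q,w)$ at $v$, using the uniqueness of the standardizing matrix together with the freeness of the $\mathrm{SL}_2(\F_q[u])$-action on primitive representations (\cref{FreeActionProp}) and the permanence of short vectors under the action (\cref{ShortVecAction}).

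Well-definedness is essentially built into the definitions: for each short vector $v$ of $Q$ the unique standardizing matrix $M$ of $Q$ at $v$ produces a standard representation $(Q \star M,\ w \star M)$, which is equivalent to $(Q,w)$ by \cref{ActionsonRepsDef}.

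For injectivity, suppose that short vectors $v_1,v_2$ of $Q$ yield the same standardization via standardizing matrices $M_1,M_2$. Setting $N := M_2 M_1^{-1} \in \mathrm{SL}_2(\F_q[u])$, the right-action identities $Q \star M_1 = Q \star M_2$ and $w \star M_1 = w \star M_2$ give $Q \star N = Q$ and $w \star N = w$. Since $(Q,w)$ is a primitive representation, \cref{FreeActionProp} forces $N = I$, whence $M_1 = M_2$ and therefore $v_1 = (1,0) M_1^{T} = (1,0) M_2^{T} = v_2$.

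For surjectivity, let $(S, w')$ be a standard representation equivalent to $(Q, w)$ and pick $N \in \mathrm{SL}_2(\F_q[u])$ realizing the equivalence, so $S = Q \star N$ and $w' = w N^{-T}$. Since $S$ is standard, \cref{ShortVectorsStandardFormProp} gives that $(1,0)$ is a short vector of $S$; applying \cref{ShortVecAction} to the inverse equivalence shows that $v := (1,0) N^{T}$ is a short vector of $Q$. As $v N^{-T} = (1,0)$ and $Q \star N = S$ is standard, uniqueness of the standardizing matrix identifies $N$ as the standardizing matrix of $Q$ at $v$, and the standardization of $(Q, w)$ at $v$ is thereby $(S, w')$. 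The main point to organize cleanly is the interplay between the dual right action on vectors and the substitution action on forms, which makes the same matrix $N$ enter symmetrically in both directions; once this bookkeeping is in place, both nontrivial directions reduce to \cref{FreeActionProp} and the short-vector permanence of \cref{ShortVecAction}.
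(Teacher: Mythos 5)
Your proof is correct and takes essentially the same route as the paper's: both reduce injectivity to the freeness of the $\mathrm{SL}_2(\F_q[u])$-action on primitive representations (\cref{FreeActionProp}), and both establish surjectivity by noting $(1,0)$ is a short vector of the standard form, transporting it to a short vector of $Q$ via \cref{ShortVecAction}, and then invoking uniqueness of the standardizing matrix. The only cosmetic difference is the direction in which the equivalence matrix is written (your $N$ is the paper's $M^{-1}$).
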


\begin{proof}

To show that our function is injective, let $v_1,v_2$ be short vectors of $Q$, let $M_1,M_2 \in \mathrm{SL}_2(\mathbb F_q[u])$ be the standardizing matrices of $Q$ at $v_1$ and $v_2$ respectively, and suppose that the standardization of $(Q,w)$ at $v_1$ coincides with the standardization of $(Q,w)$ at $v_2$, namely
\begin{equation}
w M_1^{-T} = w M_2^{-T}, \quad Q((X,Y) M_1^T) = Q((X,Y) M_2^T).
\end{equation}
We can rewrite the above as
\begin{equation}
w  = w (M_2 M_1^{-1})^{-T}, \quad Q(X,Y) = Q((X,Y) (M_ 2 M_1^{-1} )^T)
\end{equation}
so from the freeness of the action of $\mathrm{SL}_2(\mathbb F_q[u])$ on primitive representations established in \cref{FreeActionProp},
we conclude that $M_2 M_1^{-1}$ is the identity matrix.
We therefore have $M_1 = M_2$ and thus
\begin{equation}
v_1 = (1,0)M_1^T = (1,0)M_2^T = v_2
\end{equation}
so injectivity is proven.

To obtain surjectivity, let $(S,z)$ be a standard representation of $A$ which is equivalent to $(Q,w)$.
We can therefore find a matrix $M \in \mathrm{SL}_2(\mathbb F_q[u])$ with
\begin{equation}
S((X,Y) M^{T}) = Q(X,Y), \quad z M^{-T} = w.
\end{equation}
The qudratic form $S$ is standard, so by \cref{ShortVectorsStandardFormProp}, the vector $(1,0)$ is a short vector of $S$.
We conclude from \cref{ShortVecAction} that $v = (1,0)M^{-T}$ is a short vector of $Q$,
hence $M^{-1}$ is the standardizing matrix of $Q$ at $v$,
and $(S,z)$ is the standardization of $(Q,w)$ at $v$.
\end{proof}

\begin{cor} \label{QuadraticCor}

Let $D \in \F_q[u]$ be definite.
Consider the function
\begin{equation*}
(aX^2 + bXY + cY^2, (x,y)) \mapsto (ax^2 + bxy + cy^2, a y_x x + \frac{b}{2}(\overline{x}x + y_x y) + c \overline{x}y)
\end{equation*}
which maps a standard representation $(S,w)$ by a quadratic form of discriminant $D$ to the represented polynomial $A = S(w)$ and the associated solution $f$ to the congruence $T^2 + D \equiv 0 \mod A$. 
Then the image of this function is
\begin{equation} \label{DoubleFuncImageEq}
\{(A, f) : A \in \F_q[u] \setminus \{0\}, \ f \in \F_q[u]/(A), \ f^2 + D \equiv 0 \mod A  \}.
\end{equation}
Moreover, the preimage of any $(f,A)$ as above is either a set of $q-1$ representations, all satisfying $\deg(a) < \deg(c)$,
or a set of $q^2 - 1$ representations, all satisfying $\deg(a) = \deg(c)$.

\end{cor}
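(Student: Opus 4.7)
The plan is to reduce the counting question to the bijection between short vectors and standard representations in an equivalence class, established in \cref{StndrdzThm}. The key observation is that by \cref{GenAssocSolsEquivaCor}, two primitive representations of the same $A$ produce the same solution of $T^2+D\equiv 0 \pmod A$ if and only if they lie in the same $\mathrm{SL}_2(\F_q[u])$-orbit. Consequently, the preimage of any pair $(A,f)$ in the map is exactly the set of standard representations within one equivalence class, and \cref{StndrdzThm} turns this set into the set of short vectors of any representative.

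To verify that the image is contained in \cref{DoubleFuncImageEq}, I would observe that $A = ax^2 + bxy + cy^2$ is nonzero for a standard definite form by \cref{DefiniteDeg}, and that $f$ is a solution of the congruence by construction in \cref{GenAssocDef}. For surjectivity, given $(A,f)$ with $f^2+D\equiv 0 \pmod A$, I would use \cref{RepCorrToGivenSolProp} (applicable because $-D$ is not a square when $D$ is definite) to produce a primitive representation $(Q_0,(1,0))$ of $A$ by a form of discriminant $D$ whose associated solution is $f$; standardizing $(Q_0,(1,0))$ at any short vector of $Q_0$ (which exists by \cref{ShortVecDef}) then yields an element of the preimage via \cref{StndrdzThm} and \cref{GenAssocSolsEquivaCor}.

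For the fiber count, I would fix $(A,f)$ and let $\mathcal E$ denote the equivalence class of primitive representations giving associated solution $f$; this is a single class by \cref{GenAssocSolsEquivaCor}. The preimage is the set of standard representations in $\mathcal E$, and \cref{StndrdzThm} identifies it with the set of short vectors of any representative of $\mathcal E$. Choosing that representative to be a standard form $S_0$ (obtained by standardizing $Q_0$ from the previous step), \cref{ShortVectorsStandardFormProp} yields exactly $q-1$ short vectors when $\deg(a_{S_0}) < \deg(c_{S_0})$ and $q^2-1$ when $\deg(a_{S_0}) = \deg(c_{S_0})$.

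Finally, for the uniformity assertion that all preimage elements share the same degree pattern, I would invoke \cref{ShortVecAction}: the number of short vectors is an invariant of the equivalence class, and by \cref{ShortVectorsStandardFormProp} this number is $q-1$ or $q^2-1$ precisely according to whether a standard form in the class has $\deg(a)<\deg(c)$ or $\deg(a)=\deg(c)$. Since every standard representation in the preimage lies in the same class $\mathcal E$, the dichotomy holds uniformly. The only mildly subtle point, and the one requiring most care, is keeping track of which invariants of a primitive representation really are invariants of its $\mathrm{SL}_2(\F_q[u])$-orbit — specifically, that the count of short vectors is such an invariant — but this is immediate from \cref{ShortVecAction}.
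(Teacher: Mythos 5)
Your proposal is correct and follows essentially the same route as the paper's own proof: surjectivity via \cref{RepCorrToGivenSolProp} followed by standardization, the fiber identification via \cref{GenAssocSolsEquivaCor} and \cref{StndrdzThm}, the count from \cref{ShortVectorsStandardFormProp}, and the uniform dichotomy from the $\mathrm{SL}_2(\F_q[u])$-invariance of the short-vector count guaranteed by \cref{ShortVecAction}. The only minor variation is that you justify $A \neq 0$ via \cref{DefiniteDeg} rather than via \cref{NonZeroARem} as the paper does in \cref{GenAssocDef}, but both are valid.
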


\begin{proof}

The fact that the image of our function is contained in \cref{DoubleFuncImageEq} is immediate from \cref{GenAssocDef}.
Taking $(A,f)$ from the latter set, 
\cref{RepCorrToGivenSolProp} provides us with a primitive representation $(Q,w)$ of $A$ by a quadratic form of discriminant $D$ such that $f$ is the associated solution to this representation.
Standardizing $(Q,w)$ at a short vector of $Q$, 
we obtain a standard representation 
\begin{equation}
(S(X,Y) = aX^2 + bXY + cY^2, w')
\end{equation}
of $A$, which is equivalent to $(Q,w)$.
It follows from \cref{GenAssocSolsEquivaCor} that $f$ is also the solution associated to $(S,w')$, 
so our function maps $(S,w')$ to $(A,f)$,
hence its image is indeed given by \cref{DoubleFuncImageEq}.

By the other implication in \cref{GenAssocSolsEquivaCor}, the preimage of $(A,f)$ under our function consists of all those standard representations of $A$ that are equivalent to $(S,w')$.
These representation are in bijection with the short vectors of $S$ in view of \cref{StndrdzThm}.
Since $S$ is a standard definite quadratic form, 
it follows from \cref{ShortVectorsStandardFormProp} that this set has $q-1$ elements in case $\deg(a) < \deg(c)$ and $q^2-1$ elements in case $\deg(a) = \deg(c)$.

Now if 
\begin{equation}
(S'(X,Y) = a'X^2 + b'XY + c'Y^2, w'')
\end{equation}
is any other representation (of $A$) in the preimage of $(A,f)$, 
then it is equivalent to $(S,w')$. 
It follows from \cref{ShortVecAction} that the number of short vectors of $(S', w'')$ equals the number of short vectors of $(S,w')$,
so since $q -1 \neq q^2 - 1$, 
we conclude from \cref{ShortVectorsStandardFormProp} that $\deg(a') < \deg(c')$ in case $\deg(a) < \deg(c)$ and that $\deg(a') = \deg(c')$ if $\deg(a) = \deg(c)$.
\end{proof}


\begin{notation}

For a definite polynomial $D \in \F_q[u]$ we denote by 
\begin{equation*}
S(D) = \left\{(a,b,c) \in \F_q[u]^3 : \deg(c) \geq \deg(a) > \deg(b), \ ac - \frac{b^2}{4} = D \right\}
\end{equation*}
the set of all standard quadratic forms of discriminant $D$.

\end{notation}

\begin{cor} \label{NumberStandardDefiniteFormsBound}

For every $\epsilon > 0$ we have $|S(D)| \ll |D|^{\frac{1}{2} + \epsilon}$.

\end{cor}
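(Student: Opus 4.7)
The plan is to parametrize a standard form $(a,b,c) \in S(D)$ by the pair $(a,b)$, since the relation $ac - b^2/4 = D$ then determines $c$ uniquely (recall $a \neq 0$ by \cref{NonZeroARem}). This will reduce the problem to counting solutions of a quadratic congruence, a problem we have already handled.

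First I would record the consequences of the standardness conditions. The condition $\deg(a) \leq \deg(c)$ together with the identity $\deg(a) + \deg(c) = \deg(D)$ from \cref{LowerBoundDegCRem} forces $\deg(a) \leq \deg(D)/2$, see \cref{DegaDefiniteUpperBound}. Conversely, once $\deg(a) \leq \deg(D)/2$ and $\deg(b) < \deg(a)$, one has $\deg(b^2/4) < 2\deg(a) \leq \deg(D)$, so $c = (b^2/4 + D)/a$ automatically satisfies $\deg(c) = \deg(D) - \deg(a) \geq \deg(a)$. Thus the standardness degree conditions translate precisely to: $a \neq 0$ with $\deg(a) \leq \deg(D)/2$, and $b$ ranging over the unique representative of degree less than $\deg(a)$ in each residue class mod $a$ satisfying $b^2 \equiv -4D \pmod{a}$.

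Setting $g = b/2$ (valid since $p$ is odd), the congruence becomes $g^2 + D \equiv 0 \pmod{a}$, that is $F(g) \equiv 0 \pmod{a}$ with $F(T) = T^2 + D$ as in \cref{QuadraticCongNot}. The polynomial $F$ is irreducible precisely because $D$ is definite (hence not a negative of a square). Therefore the number of admissible $b$ for a fixed $a$ equals $\rho(a;F)$, and decomposing $a = \lambda a'$ with $\lambda \in \F_q^\times$ and $a'$ monic gives
\begin{equation*}
|S(D)| = (q-1) \sum_{k=0}^{\lfloor \deg(D)/2 \rfloor} \sum_{a' \in \mathcal{M}_k} \rho(a'; F).
\end{equation*}

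The main step is then to apply \cref{AverageNumberOfSolsQuadCongProp}, which gives $\sum_{a' \in \mathcal M_k} \rho(a';F) \ll q^k |D|^\epsilon$ with implied constant depending only on $q$ and $\epsilon$. Substituting and summing the geometric series in $k$ yields
\begin{equation*}
|S(D)| \ll |D|^\epsilon \sum_{k=0}^{\lfloor \deg(D)/2 \rfloor} q^k \ll q^{\deg(D)/2} |D|^\epsilon = |D|^{1/2 + \epsilon},
\end{equation*}
as required. There is no real obstacle here since the hard analytic input, namely the $L$-function estimate controlling $\sum_{A \in \mathcal{M}_k} \rho(A;F)$, has already been established; the only care needed is in handling the scalar factor $\lambda \in \F_q^\times$ and in checking that the standardness inequality on $\deg(c)$ is automatic given $\deg(a) \leq \deg(D)/2$.
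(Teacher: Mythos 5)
Your proof is correct, but it takes a genuinely different route from the paper. The paper never counts $S(D)$ directly: it splits $S(D)$ by the parity of $\deg(c)$, lower-bounds the number of standard representations of degree-$n$ polynomials by $|S(D;i)|\,q^{n-\deg(D)/2}$ via the count of coprime pairs $(x,y)$ of prescribed degrees, upper-bounds the same quantity by $\sum_{A\in\mathcal{M}_n}\rho(A;F)$ using the correspondence of \cref{QuadraticCor}, and then invokes \cref{AverageNumberOfSolsQuadCongProp}. You instead parametrize $S(D)$ directly by $(a,b)$, observing that $c$ is determined, that $\deg(c)\geq\deg(a)$ is automatic once $\deg(a)\leq\deg(D)/2$ and $\deg(b)<\deg(a)$ (your verification of this, using \cref{NonZeroARem} and \cref{LowerBoundDegCRem}, is sound), and that the admissible $b$ for fixed $a=\lambda a'$ are counted by $\rho(a';F)$; the same analytic input \cref{AverageNumberOfSolsQuadCongProp} then finishes, now summed over the leading coefficient $a$ rather than over the represented values $A$. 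Your argument is shorter and bypasses the representation-theoretic machinery (short vectors, standardization, \cref{QuadraticCor}) entirely, which is a genuine simplification for this corollary. What the paper's double-counting buys is uniformity: essentially the same argument is reused verbatim for the weighted indefinite count in \cref{NumberStandardIndefiniteFormsBound}, where the degree constraints involve the extra parameter $s$ and a direct parametrization by $(a,b)$ would be considerably messier; it also makes transparent the remark following the corollary relating the count to a class number formula.
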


\begin{proof}

For $i \in \{0,1\}$ set
\begin{equation}
S(D;i) = \{(a,b,c) \in S(D) : \deg(c) \equiv i \mod 2\}
\end{equation}
and note that $S(D) = S(D;0) \cup S(D;1)$ so it suffices to show that 
\begin{equation}
|S(D;i)| \ll |D|^{\frac{1}{2} + \epsilon}, \quad i \in \{0,1\}.
\end{equation}

Fix $i \in \{0,1\}$, let $(a,b,c) \in S(D;i)$, and let $n \geq \deg(D)$ be an integer with 
\begin{equation}
n \equiv i \ \mathrm{mod} \ 2.
\end{equation}
It follows from \cref{DefiniteDeg} that for corpime polynomials $x,y \in \F_q[u]$ with
\begin{equation}
\deg(x) < \frac{n-\deg(a)}{2}, \quad \deg(y) = \frac{n - \deg(c)}{2},
\end{equation} 
the polynomial
$
A = ax^2 + bxy + cy^2
$
has degree $n$.
From the count of pairs of coprime polynomials $(x,y)$ in \cite[Proof of Lemma 7.3]{ABR}, and \cref{LowerBoundDegCRem}, it follows that the number of standard representations of degree $n$ polynomials by quadratic forms of discriminant $D$ is $\gg$ 
\begin{equation}
|S(D;i)| q^{\frac{n - \deg(a)}{2} + \frac{n-\deg(c)}{2}} = |S(D;i)| q^{n - \frac{\deg(D)}{2}}.
\end{equation}

On the other hand, using \cref{QuadraticCongNot}, this number is $\ll$
\begin{equation}
\sum_{A \in \mathcal{M}_n} \rho(A;F)
\end{equation}
in view of \cref{QuadraticCor}.
By \cref{AverageNumberOfSolsQuadCongProp} the above is $\ll q^n |D|^{\epsilon}$ so 
\begin{equation*}
|S(D;i) | \ll q^{\frac{\deg(D)}{2}} |D|^{\epsilon} = |D|^{\frac{1}{2} + \epsilon} \end{equation*} 
as desired.
\end{proof}

\begin{remark} Keeping track of all the constants in the proofs of \cref{NumberStandardDefiniteFormsBound} and \cref{AverageNumberOfSolsQuadCongProp} would give a precise estimate for the number of standard quadratic forms, weighted by the inverse of their number of short vectors, in terms of a special value of the $L$-function. This would be an analogue of a classical proof of the class number formula, though we have here avoided the relationship between quadratic forms and ideal classes. \end{remark}

\subsection{Indefinite quadratic forms}


\begin{defi} \label{StandardIndefiniteDef}

We say that a primitive representation $(Q,(x,y))$ of a polynomial $A \in \F_q[u]$ by a quadratic form $Q(X,Y) = aX^2 + bXY + cY^2$ of indefinite discriminant $D$ is standard if there exists a nonnegative integer $s$ such that 
\begin{equation} \label{StandardabcIndefEq}
\deg(a) \leq \frac{\deg(D)}{2}-s, \quad \deg(b) \leq \frac{\deg(D)}{2}, \quad \deg(c) \leq \frac{\deg(D)}{2} + s
\end{equation}
and
\begin{equation} \label{StandardxyIndefEq}
\deg(x) \leq \frac{ \deg (A)}{2} - \frac{\deg (D)}{4} + \frac{s}{2}, \quad \deg (y) \leq \frac{\deg (A)}{2} - \frac{\deg (D)}{4} - \frac{s}{2}.
\end{equation}
Define the weight of a standard representation as above to be 
\begin{equation}
\omega(Q,(x,y)) = \omega_s =  
\begin{cases}
\frac{1}{q^3-q} &s=0 \\
\frac{1}{(q-1) q^{s+1}} &s>0.
\end{cases}
\end{equation}

%

\end{defi}

We show that the weight is well-defined.

\begin{prop} \label{ParityofsProp}

At least one of the inequalities in \cref{StandardabcIndefEq} is an equality, 
and (at least) one of the inequalities in \cref{StandardxyIndefEq} is an equality,
so the integer $s$ is uniquely determined by the standard representation $(Q,(x,y))$ of the polynomial $A$
and satisfies
\begin{equation} \label{aModTwoCongruenceForS}
s \equiv \frac{\deg(D)}{2} - \deg(A) \mod 2, \quad s \leq \frac{\deg(D)}{2}.
\end{equation}

\end{prop}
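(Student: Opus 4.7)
The plan is to derive every assertion by elementary degree arithmetic applied to the two defining identities $4D = 4ac - b^2$ (from \cref{DiscrimDefEq}) and $A = ax^2 + bxy + cy^2$ (from the representation), together with nondegeneracy of $Q$. No deep ingredients are needed, and I do not expect any serious obstacle; the only place that demands a little care is ruling out the possibility that two different values of $s$ witness the same standard representation, which I handle at the end.

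First I will show that at least one inequality in \cref{StandardabcIndefEq} is attained with equality. If all three were strict we would have $\deg(ac) \leq \deg(a) + \deg(c) < \deg(D)$ and $\deg(b^2) = 2\deg(b) < \deg(D)$, forcing $\deg(D) = \deg(4ac - b^2) < \deg(D)$, which is absurd. Next I will show that at least one inequality in \cref{StandardxyIndefEq} is attained with equality: if both were strict, then using the (wide) bounds from \cref{StandardabcIndefEq} I compute
\begin{equation*}
\deg(ax^2) \leq (\tfrac{\deg(D)}{2} - s) + 2\deg(x) < \deg(A),
\end{equation*}
and similarly $\deg(cy^2) < \deg(A)$ and $\deg(bxy) < \deg(A)$, which contradicts $A = ax^2 + bxy + cy^2$.

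From the second step, whichever of the $x$- or $y$-inequalities is an equality expresses $s$ as $2\deg(x) + \frac{\deg(D)}{2} - \deg(A)$ or $\deg(A) - \frac{\deg(D)}{2} - 2\deg(y)$; both give $s \equiv \frac{\deg(D)}{2} - \deg(A) \pmod 2$. For the bound $s \leq \frac{\deg(D)}{2}$, I will observe that nondegeneracy of $Q$ forces $a \neq 0$: if $a = 0$ then $D = -b^2/4$, making $-D = (b/2)^2$ a square, contradicting that $D$ is indefinite. Hence $\deg(a) \geq 0$, and $s \leq \frac{\deg(D)}{2} - \deg(a) \leq \frac{\deg(D)}{2}$.

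The only remaining point is uniqueness of $s$. Suppose toward a contradiction that the representation is standard for both $s_1 < s_2$. Taking the tighter of the two bounds in each case yields
\begin{equation*}
\deg(a) \leq \tfrac{\deg(D)}{2} - s_2,\quad \deg(c) \leq \tfrac{\deg(D)}{2} + s_1,\quad \deg(b) \leq \tfrac{\deg(D)}{2},
\end{equation*}
\begin{equation*}
\deg(x) \leq \tfrac{\deg(A)}{2} - \tfrac{\deg(D)}{4} + \tfrac{s_1}{2},\quad \deg(y) \leq \tfrac{\deg(A)}{2} - \tfrac{\deg(D)}{4} - \tfrac{s_2}{2}.
\end{equation*}
Adding these pairwise shows $\deg(ax^2),\deg(cy^2) \leq \deg(A) + s_1 - s_2 < \deg(A)$ and $\deg(bxy) \leq \deg(A) + (s_1 - s_2)/2 < \deg(A)$, again contradicting the representation identity and completing the proof.
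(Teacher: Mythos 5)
Your proof is correct. Each of its four moves is valid and I could not find a gap; let me briefly compare the organization to the paper's.

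For the statement that at least one $(x,y)$-inequality is an equality, the paper takes a case-by-case route: it asks which of the three quantities $\deg(a)+2\deg(x)$, $\deg(b)+\deg(x)+\deg(y)$, $\deg(c)+2\deg(y)$ attains the maximum, and in each case shows that the chain of inequalities must collapse into equalities, which simultaneously pins down the exact value of $\deg(x)$ or $\deg(y)$. Your version avoids the case split: you directly show that if both $(x,y)$-inequalities are strict then all three terms $\deg(ax^2)$, $\deg(bxy)$, $\deg(cy^2)$ are strictly less than $\deg(A)$, contradicting $Q(x,y)=A$. This is leaner, since the proposition only requires one equality and does not need the extra information about which coordinate achieves it.

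For uniqueness of $s$, the paper is somewhat implicit: it deduces from the case analysis that $\deg(x)$ or $\deg(y)$ is given by an explicit formula in $s$, and since these degrees are fixed, so is $s$. Your version gives a self-contained contradiction: if $s_1 < s_2$ both make $(Q,(x,y))$ standard, take the tighter of the two bounds in each slot, and every term of $Q(x,y)$ then has degree at most $\deg(A)+s_1-s_2 < \deg(A)$ (or $\deg(A) + (s_1-s_2)/2$ for the middle term), contradicting $\deg(Q(x,y))=\deg(A)$. This is a clean independent check that does not rely on the collapsing-to-equality analysis at all.

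Two very small remarks on presentation. First, for the claim $a\neq 0$, you appeal to "$D$ indefinite," while the paper's \cref{NonZeroARem} appeals to "$Q$ nondegenerate"; these agree because indefiniteness already presupposes that $T^2+D$ is irreducible, i.e.\ $-D$ is not a square, which is exactly nondegeneracy of $Q$, but it is worth making the link explicit since the two terms are defined separately. Second, in your "both strict implies contradiction" step, note that $\deg(x)=-\infty$ (i.e.\ $x=0$) also counts as strict; the argument still goes through because $\deg(ax^2)=-\infty<\deg(A)$, so this edge case is harmless, but it may be worth a word given that $(x,y)$ primitive only forces one of $x,y$ to be nonzero.
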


\begin{proof}

For the first claim note that if all inequalities in \cref{StandardabcIndefEq} were strict, we would have
\begin{equation*}
\deg(D) = \deg \left( ac - \frac{b^2}{4} \right) \leq \max\{\deg(a) + \deg(c), 2\deg(b)\} < \deg(D).
\end{equation*}
which is contradictory.

We turn to the second claim. 
Since $Q(x,y) = A$ we have
\begin{equation} \label{TripleMaxEq}
\begin{split}
\deg(A) &= \deg(Q(x,y)) = \deg(ax^2 + bxy + cy^2)\\
&\leq \max\{\deg(a) + 2\deg(x), \deg(b) + \deg(x) + \deg(y), \deg(c) + 2\deg(y)\}
\end{split}
\end{equation}
and we assume that the maximum is attained by the first element above.
Then from \cref{StandardabcIndefEq} and \cref{StandardxyIndefEq} we get
\begin{equation} \label{LestEq}
\deg(a) + 2\deg(x) \leq \frac{\deg(D)}{2} - s + \deg(A) - \frac{\deg(D)}{2} + s = \deg(A)
\end{equation}
so lest we arrive using \cref{TripleMaxEq} and the above at $\deg(A) < \deg(A)$, all our inequalities must be equalities.
In particular 
\begin{equation} \label{CaseOfEqualityForDegxEq}
\deg(x) = \frac{ \deg (A)}{2} - \frac{\deg (D)}{4} + \frac{s}{2}.
\end{equation}

A calculation similar to that in \cref{LestEq}, using \cref{StandardabcIndefEq} and \cref{StandardxyIndefEq}, shows that in case the maximum in \cref{TripleMaxEq} is attained by the second element, \cref{CaseOfEqualityForDegxEq} still holds.
In case the maximum is attained by the third element in \cref{TripleMaxEq}, 
we get that
\begin{equation}
\deg(y) = \frac{\deg (A)}{2} - \frac{\deg (D)}{4} - \frac{s}{2}.
\end{equation} 

In all three cases $s$ is uniquely determined by $(Q,(x,y))$ and
\begin{equation}
\pm s + \deg(A) - \frac{\deg(D)}{2} \in \{2 \deg(x), 2 \deg(y)\} \subseteq 2\mathbb{Z}
\end{equation}
so the congruence in \cref{aModTwoCongruenceForS} holds.
At last note that
\begin{equation}
0 \leq \deg(a) \leq \frac{\deg(D)}{2} - s
\end{equation}
in view of \cref{StandardabcIndefEq} and \cref{NonZeroARem}, so the inequality in \cref{aModTwoCongruenceForS} holds.
\end{proof}


\begin{defi} \label{PlaneValuationDef}

A valuation on $\mathbb F_q[u]^2$ is a function 
\begin{equation}
v \colon \mathbb F_q[u]^2 \to \mathbb Z \cup \{-\infty\}
\end{equation}
satisfying the following three conditions.
\begin{enumerate}

\item For every $a,x,y \in \F_q[u]$ we have
\begin{equation*}
v(ax,ay) = \deg (a) + v(x,y);
\end{equation*}

\item For all $x_1, x_2, y_1, y_2 \in \F_q[u]$ we have 
\begin{equation*}
v(x_1+x_2, y_1+y_2) \leq \max \{ v(x_1,y_1), v(x_2,y_2) \};
\end{equation*}

\item The values of $v$ on nonzero vectors are bounded below, i.e. 
\begin{equation*}
\inf_{(x,y) \neq (0,0)} v(x,y) >-\infty.
\end{equation*}

\end{enumerate}

\end{defi}
For $v$ a valuation, let  \begin{equation*} m_v = \inf_{(x,y) \neq (0,0)} v(x,y) .\end{equation*} Because $m_v$ is the infimum of a set of integers bounded below, $m_v$ is attained by some $x,y$.

\begin{notation}

For integers $\gamma$ and $\delta$ one readily checks that the function 
\begin{equation}
v_{\gamma, \delta}(x,y) = \max\{\gamma + \deg(x), \delta + \deg(y)\}
\end{equation}
is a valuation.
We say that a valuation $v$ is standard if there exist integers $\gamma \leq \delta$ such that $v = v_{\gamma, \delta}$.
In this case $m_v = v(1,0) = \gamma$.

\end{notation}

\begin{defi} \label{ActionOnValuationsDef}

We have an action of $\mathrm{SL}_2(\mathbb F_q[u])$ from the right on valuations by
\begin{equation}
(v \star M)(x,y) = v((x,y)M^T), \quad M \in \mathrm{SL}_2(\mathbb F_q[u]), \ x,y \in \F_q[u].
\end{equation}
Valuations in the same orbit will be called equivalent.

\end{defi}

We calculate the order of the stabilizer of a standard valuation.

\begin{prop} \label{FiniteStabValProp}

For a standard valuation $v_{\gamma, \delta}$ define the subgroup
\begin{equation}
G_{\gamma, \delta} = \{M \in \mathrm{SL}_2(\mathbb F_q[u]) : v_{\gamma, \delta} \star M = v_{\gamma, \delta}\}.
\end{equation}
Then
\begin{equation}
|G_{\gamma, \delta}| = 
\begin{cases}
q^3 - q &\gamma = \delta \\
(q-1)q^{\delta - \gamma + 1} &\gamma < \delta.
\end{cases}
\end{equation}

\end{prop}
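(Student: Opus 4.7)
The plan is to determine $G_{\gamma,\delta}$ by first deriving necessary constraints on the entries of a stabilizing matrix $M$ by evaluating $v_{\gamma,\delta} \star M$ at the two coordinate vectors, and then verifying these conditions are also sufficient.

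First I would observe that for $M = \begin{pmatrix} M_{11} & M_{12} \\ M_{21} & M_{22} \end{pmatrix} \in \mathrm{SL}_2(\F_q[u])$, applying the definition in \cref{ActionOnValuationsDef} gives
\begin{equation*}
(v_{\gamma,\delta} \star M)(x,y) = \max\{\gamma + \deg(M_{11}x + M_{12}y),\ \delta + \deg(M_{21}x + M_{22}y)\}.
\end{equation*}
Setting $(x,y) = (1,0)$ and using $v_{\gamma,\delta}(1,0) = \gamma$ yields $\deg(M_{11}) \leq 0$ and $\deg(M_{21}) \leq \gamma - \delta$; setting $(x,y) = (0,1)$ and using $v_{\gamma,\delta}(0,1) = \delta$ yields $\deg(M_{12}) \leq \delta - \gamma$ and $\deg(M_{22}) \leq 0$. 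These are the necessary conditions.

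For sufficiency I would show directly that any $M$ obeying the degree constraints above satisfies $(v_{\gamma,\delta} \star M)(x,y) \leq v_{\gamma,\delta}(x,y)$ for all $(x,y)$ (each of the two arguments of the outer maximum is dominated by $\max\{\gamma + \deg(x), \delta + \deg(y)\}$ by the ultrametric inequality applied to the relevant entries), and then observe that the inverse matrix $M^{-1}$ satisfies the same degree constraints (the subgroup defined by the inequalities is stable under inversion since $M_{21}$ has negative degree when $\gamma<\delta$, making $M$ upper triangular, and the inversion is trivial when $\gamma=\delta$). Applying the inequality to both $M$ and $M^{-1}$ forces equality, so the necessary conditions are sufficient.

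Finally I would count. In the case $\gamma = \delta$, all four entries are forced to be constants, so $G_{\gamma,\delta} = \mathrm{SL}_2(\F_q)$, which has $q^3 - q$ elements by the standard formula. In the case $\gamma < \delta$, the condition $\deg(M_{21}) \leq \gamma - \delta < 0$ forces $M_{21} = 0$; then $M_{11}, M_{22} \in \F_q$ with $M_{11}M_{22} = 1$ giving $q-1$ choices, while $M_{12}$ is a free polynomial of degree at most $\delta - \gamma$, giving $q^{\delta - \gamma + 1}$ choices. Multiplying yields $(q-1)q^{\delta - \gamma + 1}$.

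The only subtlety will be the sufficiency verification, and specifically making sure the argument handles potential cancellation of leading terms in $M_{11}x + M_{12}y$ when $\deg(M_{12}y) = \deg(M_{11}x)$; this is handled by noting that in such a case the second argument $\delta + \deg(M_{22}y) = \delta + \deg(y)$ already attains the required value $v_{\gamma,\delta}(x,y) = \gamma + \deg(x)$, so no loss occurs. Everything else is a direct computation.
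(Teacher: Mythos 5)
Your proof is correct and follows essentially the same route as the paper: derive the degree constraints from $(1,0)$ and $(0,1)$, show these constraints force $(v_{\gamma,\delta}\star M)(x,y)\le v_{\gamma,\delta}(x,y)$ via the ultrametric inequality, use $M^{-1}$ (which satisfies the same constraints since inversion in $\mathrm{SL}_2$ swaps diagonal entries and negates off-diagonal ones) to upgrade to equality, then count. One small remark: the worry in your last paragraph about cancellation of leading terms in $M_{11}x+M_{12}y$ is unfounded — the ultrametric inequality $\deg(M_{11}x+M_{12}y)\le\max\{\deg(M_{11}x),\deg(M_{12}y)\}$ already goes in the direction you need, and cancellation only makes the left side smaller, so no patching is required.
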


\begin{proof}

For each  integer $s \geq 0$, we define a subgroup of  $\mathrm{SL}_2(\mathbb F_q[u])$ by
\begin{equation}
H_s = 
\begin{cases}
\mathrm{SL}_2(\mathbb F_q) &s=0 \\
\left\{
\begin{pmatrix}
\lambda &f \\
0 &\lambda^{-1}
\end{pmatrix}
: \lambda \in \F_q^\times, \ f \in \F_q[u], \ \deg(f) \leq s \right\} &s \geq 1
\end{cases}
\end{equation}
and claim that $G_{\gamma, \delta} = H_{\delta - \gamma}$. 
The asserted number of elements in $G_{\gamma, \delta}$ is immediate from this claim.


To prove one inclusion let $M \in G_{\gamma, \delta}$. We have
\begin{equation}
\begin{split}
\gamma = v_{\gamma,\delta}(1,0) &= v_{\gamma, \delta}((1,0)M^T) = v_{\gamma, \delta}(M_{11}, M_{21}) \\ 
&= \max\{\gamma + \deg(M_{11}), \delta + \deg(M_{21}) \}.
\end{split}
\end{equation}
We deduce that $\deg(M_{11})$ and $\deg(M_{21})$ are nonpositive, and in case $\gamma < \delta$ we can moreover say that $M_{21} = 0$. 
Similarly we have
\begin{equation}
\begin{split}
\delta = v_{\gamma,\delta}(0,1) &= v_{\gamma, \delta}((0,1) M^T) = v_{\gamma, \delta}(M_{12}, M_{22}) \\ 
&= \max\{\gamma + \deg(M_{12}), \delta + \deg(M_{22}) \}
\end{split}
\end{equation}
so $\deg(M_{22}) \leq 0$.
In case $\gamma = \delta$ we infer that $\deg(M_{12}) \leq 0$ as well,
while in case $\gamma < \delta$ we get that $\deg(M_{12}) \leq \delta - \gamma$.
Since our matrices have determinant $1$, this establishes the inclusion $G_{\gamma, \delta} \leq H_{\delta - \gamma}$ towards our claim.

For the other inclusion pick $M \in H_{\delta - \gamma}$.
In case $\gamma = \delta$ this is a matrix of polynomials of nonpositive degree, so we have
\begin{equation*}
\begin{split}
v_{\gamma, \delta}((x,y)M^T) &= v_{\gamma, \delta}(M_{11}x + M_{12}y, M_{21}x + M_{22}y) \\
&= \gamma + \max\{\deg(M_{11}x + M_{12}y), \deg(M_{21}x + M_{22}y) \} \\
&\leq \gamma + \max\{\deg{x},\deg{y}\} = v_{\gamma, \delta}(x,y).
\end{split}
\end{equation*}

In case $\gamma < \delta$ we have 
\begin{equation}
\deg(M_{11}) =0, \ \deg(M_{12}) \leq \delta - \gamma, \ \deg(M_{21}) = -\infty, \ \deg(M_{22}) = 0
\end{equation}
so in this case we have the similar inequality
\begin{equation*}
\begin{split}
v_{\gamma, \delta}((x,y)M^T) &= v_{\gamma, \delta}(M_{11}x + M_{12}y, M_{21}x + M_{22}y) \\
&= \max\{\gamma + \deg(M_{11}x + M_{12}y), \delta + \deg(M_{21}x + M_{22}y) \} \\
&\leq \max\{\gamma + \max\{\deg(M_{11}x), \deg(M_{12}y)\}, \delta + \deg(y) \} \\
&\leq \max\{\gamma + \deg{x}, \delta + \deg{y}\} = v_{\gamma, \delta}(x,y).
\end{split}
\end{equation*}

Since $M^{-1} \in H_{\delta - \gamma}$, we can plug $M^{-1}$ in place of $M$ and then plug $(x,y)M^{T}$ in place of $(x,y)$,
getting the inequality $v_{\gamma, \delta}(x,y) \leq v_{\gamma, \delta}((x,y)M^T)$. 
In conjunction with the above we have $v_{\gamma, \delta}((x,y)M^T) = v_{\gamma, \delta}(x,y)$ so $M$ is in $G_{\gamma, \delta}$ and thus $H_{\delta - \gamma} \leq G_{\gamma, \delta}$. This concludes the proof that $G_{\gamma,\delta} = H_{\delta - \gamma}$. 
\end{proof} 



\begin{prop} \label{StandardMembersProp}

Every valuation $v$ on $\F_q[u]^2$ is equivalent to a unique standard valuation.



\end{prop}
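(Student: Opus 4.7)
The plan is to prove existence by algorithmically constructing a basis of $\F_q[u]^2$ with respect to which $v$ takes standard form, and to deduce uniqueness from the stabilizer computation of \cref{FiniteStabValProp}.

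For existence, I will first choose a primitive vector $e_1 \in \F_q[u]^2$ attaining the minimum $\gamma := m_v$; such a vector exists because property (3) of \cref{PlaneValuationDef} ensures that the integer set $\{v(x,y) : (x,y) \neq 0\}$ has a minimum, and property (1) lets us divide out by $\gcd(x,y)$ to realize it on a primitive vector. Since $\F_q[u]$ is a principal ideal domain, primitivity of $e_1$ allows us to complete it to a basis $(e_1, f)$ with $\det(e_1, f) = 1$; every other such completion has the form $(e_1, f + \beta e_1)$ with $\beta \in \F_q[u]$. I will then set $\delta$ to be the minimum of $v(f + \beta e_1)$ over all $\beta \in \F_q[u]$ (attained since the values are integers bounded below by $\gamma$), fix some $\beta_0$ achieving it, and put $e_2 = f + \beta_0 e_1$. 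The matrix $M \in \mathrm{SL}_2(\F_q[u])$ with columns $e_1, e_2$ then satisfies $(v \star M)(x,y) = v(xe_1 + ye_2)$, so it suffices to show $v(xe_1 + ye_2) = \max\{\gamma + \deg(x), \delta + \deg(y)\}$ for all $x, y \in \F_q[u]$. Note $\delta \geq \gamma$ since $e_2$ is itself primitive, so $v_{\gamma,\delta}$ is a standard valuation.

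The inequality $\leq$ is immediate from the ultrametric condition together with property (1). For $\geq$, when $\gamma + \deg(x) \neq \delta + \deg(y)$ the usual reverse-triangle argument applies: if say $v(xe_1) > v(ye_2)$, then $v(xe_1) = v((xe_1 + ye_2) - ye_2) \leq \max\{v(xe_1 + ye_2), v(ye_2)\}$ forces $v(xe_1 + ye_2) \geq v(xe_1)$. I expect the main obstacle to be the remaining case $\gamma + \deg(x) = \delta + \deg(y)$ with $x, y$ both nonzero, where ultrametric cancellation gives no direct information. The key trick will be the division algorithm: writing $x = qy + r$ with $\deg(r) < \deg(y)$ yields $xe_1 + ye_2 = re_1 + y(e_2 + qe_1)$, and since $(e_1, e_2 + qe_1)$ is another basis extending $e_1$ (the change of basis from $(e_1, e_2)$ is by the unipotent $\begin{pmatrix} 1 & q \\ 0 & 1 \end{pmatrix} \in \mathrm{SL}_2(\F_q[u])$), the minimality of $\delta$ forces $v(e_2 + qe_1) \geq \delta$. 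Hence $v(y(e_2 + qe_1)) \geq \delta + \deg(y) = \gamma + \deg(x) > \gamma + \deg(r) = v(re_1)$, and the two summands now have strictly different heights, reducing to the previous case and yielding $v(xe_1 + ye_2) \geq \gamma + \deg(x)$, as needed.

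For uniqueness, suppose $v$ is equivalent to both $v_{\gamma_1, \delta_1}$ and $v_{\gamma_2, \delta_2}$. The minimum $m_v$ is an invariant of the orbit, and $m_{v_{\gamma, \delta}} = \gamma$, so $\gamma_1 = \gamma_2$. The stabilizers of equivalent valuations are conjugate and hence equinumerous, so by \cref{FiniteStabValProp} their common order is $q^3 - q$ when $\delta = \gamma$ and $(q-1) q^{\delta - \gamma + 1}$ when $\delta > \gamma$. A direct check confirms that these values are pairwise distinct as $\delta - \gamma$ ranges over the nonnegative integers (the cases $\delta - \gamma = 0$ and $\delta - \gamma \geq 1$ are separated by comparing $q(q-1)(q+1)$ with $(q-1)q^{k+1}$, which would require $q+1 = q^k$, impossible for $k \geq 1$; and distinct $k \geq 1$ give distinct powers of $q$). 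This forces $\delta_1 - \gamma_1 = \delta_2 - \gamma_2$, hence $\delta_1 = \delta_2$.
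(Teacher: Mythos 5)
Your proof is correct and follows essentially the same strategy as the paper's: pick a primitive vector realizing $m_v$, complete to an $\mathrm{SL}_2$-basis, minimize the valuation of the second basis vector over unipotent translates, and verify the valuation takes the standard form with the equal-heights case handled by division with remainder; the uniqueness argument via $m_v$ and the stabilizer count of \cref{FiniteStabValProp} is identical. The only cosmetic differences are that you phrase the existence step by writing $v$ in a chosen basis rather than applying two successive $\mathrm{SL}_2$ transformations to $v$, and you prove the lower bound $v(xe_1 + ye_2) \geq v_{\gamma,\delta}(x,y)$ directly rather than by contradiction — both are sound.
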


\begin{proof} 

Let $(x,y) \neq (0,0)$ be a vector attaining the minimal valuation, namely
\begin{equation}
v(x,y) = m_v.
\end{equation}
From \cref{PlaneValuationDef}(1) and the minimality of $(x,y)$ we get that
\begin{equation}
\begin{split}
v(x,y) &= v \left(\gcd(x,y)\frac{x}{\gcd(x,y)}, \gcd(x,y)\frac{y}{\gcd(x,y)} \right) \\
&= \deg(\gcd(x,y)) + v \left(\frac{x}{\gcd(x,y)},\frac{y}{\gcd(x,y)} \right) \\
&\geq \deg(\gcd(x,y)) + v(x,y) 
\end{split}
\end{equation}
so $\deg(\gcd(x,y)) \leq 0$, hence $x$ and $y$ are coprime.

We can therefore take $M_{(x,y)} \in \mathrm{SL}_2(\mathbb F_q[u]) $ to be the matrix from \cref{BarredMatrix} satisfying 
$
(1,0) = (x,y)M_{(x,y)}^{-T}.
$
For the valuation $v' = v \star M_{(x,y)}$ we then have 
\begin{equation*}
v'(z,w) = v((z,w)M_{(x,y)}^T) \geq m_v, \ (z,w) \neq (0,0), \quad v'(1,0) = v(x,y) = m_v 
\end{equation*}
so $v'(1,0) = m_{v'}$.

Let $(z,1) \in \F_q[u]^2$ be a vector with
\begin{equation}
v'(z,1) = \min\{v'(x,1) : x \in \F_q[u]\}.
\end{equation}
Let 
\begin{equation}
M = 
\begin{pmatrix}
1 & z \\
0 & 1
\end{pmatrix}
\in \mathrm{SL}_2(\mathbb F_q[u])
\end{equation}
and note that $(1,0)M^T = (1,0)$.
For the valuation $v'' = v' \star M$ we then have
\begin{equation*}
v''(t,w) = v'((t,w)M^T) \geq m_{v'}, \ (t,w) \neq (0,0), \quad v''(1,0) = v'(1,0) = m_{v'}
\end{equation*}
so $v''(1,0) = m_{v''}$.
Moreover
\begin{equation*}
\begin{split}
\min\{v''(x,1) : x \in \F_q[u]\} &= \min\{v'((x,1)M^T) : x \in \F_q[u]\} \\
&= \min\{v'(x+z, 1) : x \in \F_q[u]\} \\
&= \min\{v'(x, 1) : x \in \F_q[u]\} = v'(z,1) = v''(0,1).
\end{split}
\end{equation*}

Set $\gamma = v''(1,0) = m_{v''}$ and $\delta = v''(0,1)$ so that $\gamma \leq \delta$. We claim that
\begin{equation}
v'' = v_{\gamma, \delta}
\end{equation}
and thus $v$ is equivalent to the standard valuation $v''$. 
To prove the claim, note first that by \cref{PlaneValuationDef}(2) and \cref{PlaneValuationDef}(1) we have
\begin{equation}
\begin{split}
v''(x,y) &= v''(x(1,0) + y(0,1)) \leq \max\{v''(x(1,0)), v''(y(0,1))\} \\
&= \max\{\deg(x) + v''(1,0), \deg(y) + v''(0,1)\} = v_{\gamma, \delta}(x,y).
\end{split}
\end{equation}

Suppose toward a contradiction that there exists a vector $(x,y)$ with
\begin{equation}
v''(x,y) < v_{\gamma, \delta}(x,y)
\end{equation}
so in particular $(x,y) \neq (0,0)$.
If $\gamma + \deg (x) > \delta + \deg (y)$ then we have
\begin{equation*}
\begin{split}
\gamma + \deg (x) &= v''(1,0) + \deg(x) = v''(x(1,0))  = v''((x,y) + (0,-y))  \\
&\leq \max \{ v''(x,y), v''(-y(0,1)) \} = \max \{v''(x,y), v''(0,1) + \deg (y) \}  \\
&= \max\{v''(x,y), \delta + \deg(y)\} < \max\{v_{\gamma, \delta}(x,y), \gamma + \deg (x)\} = \gamma + \deg(x),
\end{split}
\end{equation*}
a contradiction. 
Similarly, if $\delta + \deg y > \gamma + \deg x$, we have
\begin{equation*}
\begin{split}
\delta + \deg (y) &= v''(y(0,1)) \leq \max \{ v''(x,y), v''(-x(1,0)) \} \\
&= \max \{ v''(x,y), \gamma + \deg (x)\} < \delta + \deg (y),
\end{split}
\end{equation*}
a contradiction. 

Finally, if $\gamma + \deg (x) = \delta + \deg (y)$, so in particular 
\begin{equation}
\deg x  = \delta - \gamma + \deg(y) \geq \deg (y) \geq 0, 
\end{equation}
we can use division with remainder in $\F_q[u]$ to write 
\begin{equation}
x = wy + r, \quad \deg (r) < \deg (y) \leq \deg (x), \ \deg (w) = \deg (x) - \deg (y).
\end{equation} 
Since $v''(0,1) = \min \{v''(z,1) : z \in \F_q[u]\}$ we have
\begin{equation*}
\begin{split}
\delta + \deg (y) &= v''(0,1) + \deg(y) \leq v''(w,1) + \deg (y) = v''(wy,y) = v''(x-r,y) \\
&\leq \max\{ v''(x,y), v''(-r(1,0)) \} = \max \{ v''(x,y) , v'''(1,0) + \deg (r) \} \\
&= \max \{ v''(x,y) , \gamma + \deg (r) \} < \max \{ v_{\gamma, \delta}(x,y) , \gamma + \deg (x) \} = \delta + \deg(y),
\end{split}
\end{equation*} 
the final contradiction.

We have seen that $v$ is equivalent to the standard valuation $v_{\gamma, \delta}$.
To prove uniqueness, assume that $v$ is also equivalent to some standard valuation $v_{\gamma', \delta'}$.
We conclude that $v_{\gamma, \delta}$ is equivalent to $v_{\gamma', \delta'}$ so
\begin{equation}
\gamma = v_{\gamma, \delta}(1,0) = m_{v_{\gamma, \delta}} = m_{v_{\gamma', \delta'}} = v_{\gamma', \delta'}(1,0) = \gamma'.
\end{equation}
Since the valuations $v_{\gamma, \delta}$ and $v_{\gamma', \delta'}$ belong to the same orbit under the action of $\mathrm{SL}_2(\mathbb F_q[u])$,
their stabilizers are conjugate subgroups of $\mathrm{SL}_2(\mathbb F_q[u])$, so they have the same cardinality.
We conclude from \cref{FiniteStabValProp} that
\begin{equation}
\begin{cases}
q^3 - q &\gamma = \delta \\
(q-1)q^{\delta - \gamma + 1} &\gamma < \delta
\end{cases} = 
\begin{cases}
q^3 - q &\gamma = \delta' \\
(q-1)q^{\delta' - \gamma + 1} &\gamma < \delta'.
\end{cases}
\end{equation}
As $q^3 - q$ is not equal to $q-1$ times a power of $q$, we infer that $\delta' = \delta$ and thus 
$v_{\gamma, \delta} = v_{\gamma', \delta'}$ as required for uniqueness.
\end{proof}


\begin{prop} \label{IndefFormFactorizationProp}

For every indefinite quadratic form $Q(X,Y)$ over $\F_q[u]$ there exist linear forms $L_1(X,Y)$ and $L_2(X,Y)$ over $\F_q((u^{-1}))$ such that
\begin{equation}
Q(X,Y) = L_1(X,Y)L_2(X,Y).
\end{equation} 
This factorization is unique up to scaling $L_1$ by an element of $\F_q((u^{-1}))^\times$ and $L_2$ by its inverse, 
and up to changing the order of the factors.

\end{prop}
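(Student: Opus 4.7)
The plan is to exploit the arithmetic characterization of indefiniteness. By the citation to Proposition 14.6 of \cite{Ros} preceding the statement, $D$ indefinite means that $\deg(D)$ is even and the leading coefficient of $-D$ is a square in $\F_q^\times$. This is equivalent to the existence of a square root $\sqrt{-D} \in \F_q((u^{-1}))$: writing $-D = \lambda u^{2m}(1 + \text{lower})$ with $\lambda \in (\F_q^\times)^2$, one extracts $u^m\sqrt{\lambda}$ and then applies Hensel's lemma in $\F_q[[u^{-1}]]$ to the unit factor, whose reduction modulo $u^{-1}$ is a nonzero square in $\F_q^\times$. Once this square root is available, the factorization is obtained by the quadratic formula.

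For existence, I fix $\sqrt{-D} \in \F_q((u^{-1}))$ and write $Q(X,Y) = aX^2 + bXY + cY^2$. Since $Q$ is indefinite it is in particular nondegenerate, so by \cref{NonZeroARem} we have $a \neq 0$. The two roots of $aT^2 + bT + c$ in $\F_q((u^{-1}))$ are then
\begin{equation*}
\alpha = \frac{-b + 2\sqrt{-D}}{2a}, \qquad \beta = \frac{-b - 2\sqrt{-D}}{2a},
\end{equation*}
which yields
\begin{equation*}
Q(X,Y) = a(X - \alpha Y)(X - \beta Y).
\end{equation*}
Setting $L_1(X,Y) = a(X - \alpha Y)$ and $L_2(X,Y) = X - \beta Y$ gives the required factorization.

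For uniqueness, I note that $\F_q((u^{-1}))[X,Y]$ is a unique factorization domain (polynomial ring in two variables over a field), and any linear form $\lambda X + \mu Y$ with $(\lambda,\mu) \neq (0,0)$ is irreducible there. The two linear factors $L_1$ and $L_2$ constructed above are non-proportional: if $L_2 = \gamma L_1$ for some $\gamma \in \F_q((u^{-1}))^\times$, then $Q$ would be a scalar multiple of $L_1^2$, forcing $-D$ to be a square (in fact $-D = 0$ up to a scalar square) and contradicting nondegeneracy. Therefore, given any two factorizations $Q = L_1 L_2 = L_1' L_2'$ into linear forms over $\F_q((u^{-1}))$, the UFD property forces the multisets $\{L_1, L_2\}$ and $\{L_1', L_2'\}$ to agree up to the unit group $\F_q((u^{-1}))^\times$; because the product is the same, the two unit scalars appearing must be inverse to each other, giving exactly the stated ambiguity.

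The main point requiring real argument is the existence of $\sqrt{-D}$ in $\F_q((u^{-1}))$, which is the content of the cited criterion for indefiniteness; everything after that is a direct application of the quadratic formula and of unique factorization in $\F_q((u^{-1}))[X,Y]$.
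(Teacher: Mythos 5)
Your proof is correct and follows essentially the same route as the paper: in both cases one observes that indefiniteness means $-D$ is a square in $\F_q((u^{-1}))$, and then factors the quadratic by its roots. You fill in a bit more detail than the paper does (Hensel's lemma for extracting $\sqrt{-D}$, and the UFD argument for uniqueness where the paper just asserts the pair of scalars is unique), but the underlying idea is the same.
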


\begin{proof} 

The discriminant $D = ac - b^2/4$ of $Q(X,Y) = aX^2 + bXY + cY^2$ is indefinite,
namely the infinite place of $\F_q(u)$ splits in the splitting field of $F(T) = T^2 + D$ over $\F_q(u)$,
or equivalently $-D$ is a square in the completion $\F_q((u^{-1}))$ of $\F_q(u)$ at infinity.
This means that there exists a unique (unordered) pair of scalars $\lambda_1, \lambda_2 \in \F_q((u^{-1}))$ such that
\begin{equation*}
\begin{split}
Q(X,Y) &= Y^2 \left( a\left(\frac{X}{Y}\right)^2 + b\frac{X}{Y} + c \right) = 
Y^2 a \left( \frac{X}{Y} - \lambda_1 \right) \left( \frac{X}{Y} - \lambda_2 \right) \\
&= a(X - \lambda_1Y)(X - \lambda_2Y).
\end{split}
\end{equation*}

\end{proof}

\begin{notation} 

Using the notation of \cref{IndefFormFactorizationProp}, for a primitive representation $(Q,(x,y))$ we define a function on $\F_q[u]^2$ by
\begin{equation*}
v_{ (x,y)}^Q (z,w) = \max \{ \deg (L_1(z,w)) - \deg (L_1(x,y)), \deg (L_2(z,w)) - \deg (L_2(x,y)) \}
\end{equation*}
where the degree of a nonzero element of $\mathbb F_q((u^{-1}))$ is the degree of its highest-order term in $u$. 
It follows from the uniqueness part of \cref{IndefFormFactorizationProp} that the function $v_{(x,y)}^Q$ is well-defined.

\end{notation} 

\begin{lem} \label{valuation-quadratic-form}

The function $v_{(x,y)}^Q(z,w)$ is a valuation, and it satisfies
\begin{equation} \label{valuation-quadratic-form-lower-bound}
v_{(x,y)}^Q(z,w) \geq \frac{\deg(Q(z,w)) - \deg(Q(x,y))}{2}.
\end{equation}

\end{lem}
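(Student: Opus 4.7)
The plan is to verify the three axioms of \cref{PlaneValuationDef} in turn, and then derive the lower bound \cref{valuation-quadratic-form-lower-bound} from the multiplicativity of the degree under the factorization $Q = L_1 L_2$ of \cref{IndefFormFactorizationProp}.

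Axioms (1) and (2) are immediate from the $\F_q((u^{-1}))$-linearity of $L_1, L_2$ and the non-archimedean property of the degree on $\F_q((u^{-1}))$: for any $a \in \F_q[u]$ we have $\deg L_i(az, aw) = \deg(a) + \deg L_i(z,w)$, giving (1) upon shifting and taking the max in $i$; and $\deg L_i(z_1+z_2, w_1+w_2) \leq \max\{\deg L_i(z_1,w_1), \deg L_i(z_2,w_2)\}$, giving (2) similarly. The key step is axiom (3), which rests on the fact that $L_1$ and $L_2$ are linearly independent as $\F_q((u^{-1}))$-linear forms. This follows from the factorization $Q(X,Y) = a(X - \lambda_1 Y)(X - \lambda_2 Y)$: the roots $\lambda_1, \lambda_2$ solve $at^2 + bt + c = 0$, and they are distinct because the discriminant $b^2 - 4ac = -4D$ is nonzero by nondegeneracy of $Q$. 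Hence $(L_1, L_2)$ is a basis of the space of $\F_q((u^{-1}))$-linear forms on $\F_q((u^{-1}))^2$, so the coordinate functions $(z,w) \mapsto z$ and $(z,w) \mapsto w$ can be expressed as $\F_q((u^{-1}))$-linear combinations of $L_1, L_2$. This yields a constant $C$ depending only on $Q$ such that
\begin{equation*}
\max\{\deg(z), \deg(w)\} \leq C + \max\{\deg L_1(z,w), \deg L_2(z,w)\}
\end{equation*}
for every $(z,w) \in \F_q((u^{-1}))^2$. For $(z,w) \in \F_q[u]^2 \setminus \{(0,0)\}$ the left hand side is nonnegative, so $\max_i \deg L_i(z,w) \geq -C$, and consequently $v^Q_{(x,y)}(z,w) \geq -C - \max_i \deg L_i(x,y)$, which is the required uniform lower bound.

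For the inequality \cref{valuation-quadratic-form-lower-bound}, multiplicativity of the degree under the factorization $Q = L_1 L_2$ gives
\begin{equation*}
\deg Q(z,w) - \deg Q(x,y) = \bigl(\deg L_1(z,w) - \deg L_1(x,y)\bigr) + \bigl(\deg L_2(z,w) - \deg L_2(x,y)\bigr),
\end{equation*}
and the maximum of two real numbers is always at least their average; the case when one of $L_i(z,w)$ vanishes is handled by the standard convention that $\deg 0 = -\infty$, under which both sides of the desired inequality remain consistent. The only non-routine step is the linear independence of $L_1, L_2$, which is precisely where nondegeneracy (and, via \cref{IndefFormFactorizationProp}, indefiniteness) of $Q$ enters.
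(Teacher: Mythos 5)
Your proof is correct. Axioms (1) and (2) are handled exactly as in the paper, and your derivation of \cref{valuation-quadratic-form-lower-bound} (additivity of degree under the factorization $Q=L_1L_2$ plus ``max is at least the average'') is the paper's argument verbatim. The one place you diverge is axiom (3): you prove boundedness below by noting that $L_1,L_2$ are linearly independent over $\F_q((u^{-1}))$ (distinct roots since $-4D\neq 0$, and $a\neq 0$ by \cref{NonZeroARem}), expressing the coordinate functions in terms of $L_1,L_2$, and extracting a uniform constant $C$. That works, but the paper gets (3) for free from the inequality you already proved: for $(z,w)\neq(0,0)$ the value $Q(z,w)$ is a nonzero polynomial (by homogeneity and \cref{RepValueNonZeroEq}), so $\deg(Q(z,w))\geq 0$ and hence $v_{(x,y)}^Q(z,w)\geq -\tfrac{1}{2}\deg(Q(x,y))$, a bound independent of $(z,w)$. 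So the more economical order is to prove \cref{valuation-quadratic-form-lower-bound} first and read off axiom (3) as a corollary; your linear-independence detour is sound but unnecessary.
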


\begin{proof} 

To check \cref{PlaneValuationDef}(1), we just need to note that for every polynomial $a \in \F_q[u]$ we have
\begin{equation*}
\deg(L_i(az,aw)) = \deg(a) + \deg(L_i(z,w)), \quad i \in \{1,2\}.
\end{equation*}
For \cref{PlaneValuationDef}(2), one has to observe that for $i \in {1,2}$ we have
\begin{equation*}
\deg(L_i(z + r, w + s)) = \deg(L_i(z,w) + L_i(r,s)) \leq \max\{ \deg(L_i(z,w)), \deg(L_i(r,s)) \}.
\end{equation*}

To verify \cref{PlaneValuationDef}(3) note that twice the value of the function equals
\begin{equation*}  
\begin{split} 
&2 \max \{ \deg (L_1(z,w)) - \deg (L_1(x,y)), \deg (L_2(z,w)) - \deg (L_2(x,y)) \} \geq \\
&\deg (L_1(z,w)) - \deg (L_1(x,y)) + \deg (L_2(z,w)) - \deg (L_2(x,y)) = \\
&\deg(L_1(z,w)L_2(z,w)) - \deg(L_1(x,y)L_2(x,y))= \deg (Q(z,w)) - \deg (Q(x,y)) 
\end{split}
\end{equation*}
and $Q(z,w) \neq 0$ for $(z,w) \neq (0,0)$ by the homogeneity of $Q$ and \cref{RepValueNonZeroEq}, 
so the above is at least $-\deg(Q(x,y))$.
This concludes the verification of \cref{valuation-quadratic-form-lower-bound} and of all the conditions a valuation must satisfy. 
\end{proof}

Associating a valuation to a representation is an $\mathrm{SL}_2(\F_q[u])$-equivariant operation, as we shall now see.

\begin{prop} \label{FormValuationEquivarianceProp}

Let $(Q,(x,y))$ be a representation by an indefinite quadratic form, and let $M \in \mathrm{SL}_2(\F_q[u])$ be a matrix.
Then
\begin{equation}
v_{(x,y) \star M}^{Q \star M} = v_{(x,y)}^Q \star M.
\end{equation}

\end{prop}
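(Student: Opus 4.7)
The plan is to unwind both sides using the factorization of the indefinite form $Q$ into linear factors from Proposition~\ref{IndefFormFactorizationProp}, and then observe that the two expressions reduce to the same max.

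First, write $Q(X,Y) = L_1(X,Y) L_2(X,Y)$ with $L_1, L_2$ linear forms over $\F_q((u^{-1}))$ as provided by Proposition~\ref{IndefFormFactorizationProp}. For the matrix $M \in \mathrm{SL}_2(\F_q[u])$, define the twisted linear forms
\begin{equation*}
L_i'(X,Y) = L_i((X,Y)M^T), \quad i \in \{1,2\}.
\end{equation*}
Then by the definition of the action on forms in \cref{ActionOnFormsDef}, we have $(Q \star M)(X,Y) = Q((X,Y)M^T) = L_1'(X,Y) L_2'(X,Y)$, exhibiting a factorization of $Q \star M$ into linear forms over $\F_q((u^{-1}))$. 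By the uniqueness clause of Proposition~\ref{IndefFormFactorizationProp}, the $L_i'$ are the linear factors of $Q \star M$ up to rescaling by an element of $\F_q((u^{-1}))^\times$ (applied to $L_1'$ and its inverse to $L_2'$) and up to reordering.

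Next, I compute both sides of the claimed identity using these factors. On the left-hand side, using the action $(x,y) \star M = (x,y)M^{-T}$ from \cref{QuadFormNot}, the value $L_i'((x,y) \star M) = L_i((x,y)M^{-T}M^T) = L_i(x,y)$, so
\begin{equation*}
v_{(x,y)\star M}^{Q \star M}(z,w) = \max_{i=1,2}\bigl\{\deg L_i((z,w)M^T) - \deg L_i(x,y)\bigr\}.
\end{equation*}
On the right-hand side, the definition of the action on valuations in \cref{ActionOnValuationsDef} gives
\begin{equation*}
(v_{(x,y)}^Q \star M)(z,w) = v_{(x,y)}^Q((z,w)M^T) = \max_{i=1,2}\bigl\{\deg L_i((z,w)M^T) - \deg L_i(x,y)\bigr\},
\end{equation*}
which is the same expression.

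The only subtlety I expect is checking that the non-uniqueness in Proposition~\ref{IndefFormFactorizationProp} does not affect the computation. Rescaling $L_1 \mapsto \lambda L_1$ and $L_2 \mapsto \lambda^{-1} L_2$ for $\lambda \in \F_q((u^{-1}))^\times$ shifts $\deg L_i(z,w)$ and $\deg L_i(x,y)$ by the same amount, so the differences $\deg L_i(z,w) - \deg L_i(x,y)$ are invariant; similarly, swapping $L_1$ and $L_2$ merely swaps the two entries of the max. Hence $v_{(x,y)}^Q$ and $v_{(x,y) \star M}^{Q \star M}$ are well-defined and the displayed equality holds for every $(z,w) \in \F_q[u]^2$, completing the proof.
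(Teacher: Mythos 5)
Your proof is correct and follows essentially the same route as the paper: factor $Q = L_1L_2$, note that $(Q\star M)(X,Y) = L_1((X,Y)M^T)L_2((X,Y)M^T)$ gives the factorization of $Q\star M$, and observe that both sides reduce to $\max_i\{\deg L_i((z,w)M^T) - \deg L_i(x,y)\}$. Your additional remark about the non-uniqueness of the factorization not affecting the valuation is harmless but already covered when the paper establishes that $v_{(x,y)}^Q$ is well-defined in the first place.
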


\begin{proof}

Since $Q(X,Y) = L_1(X,Y)L_2(X,Y)$, we get from \cref{ActionOnFormsDef} that
\begin{equation}
(Q \star M)(X,Y) = Q( (X,Y)M^T) = L_1((X,Y)M^T)  L_2((X,Y)M^T)
\end{equation}
and from \cref{QuadFormNot} that $(x,y) \star M = (x,y)M^{-T}$.
Therefore
\begin{equation*}
\begin{split}
v_{ (x,y) \star M}^{Q \star M} (z,w) &= \max_{i \in \{1,2\}} \{ \deg (L_i((z,w)M^T)) - \deg (L_i((x,y)M^{-T}M^T)) \} \\
&= \max_{i \in \{1,2\}} \{ \deg (L_i((z,w)M^T)) - \deg (L_i(x,y)) \} = (v_{(x,y)}^Q \star M) (z,w)
\end{split}
\end{equation*}
for every vector $(z,w) \in \F_q[u]^2$, in view of \cref{ActionOnValuationsDef}.
\end{proof}

\begin{remark} 

We can think of the set of valuations on $\mathbb F_q[u]^2$ as an analogue of the upper half-plane, 
on which $\mathrm{SL}_2(\mathbb F_q[u])$ acts, 
and our set of (standard) representatives of each $\mathrm{SL}_2(\mathbb F_q[u])$-orbit as an analog of the usual fundamental domain for the action of $\mathrm{SL}_2(\mathbb{Z})$ on the upper half-plane. 
To each indefinite quadratic form $Q$ one associates a geodesic in the upper half-plane, 
which for us consists of the valuations $v_{(x,y)}^Q$ for the various vectors $(x,y)$. 
We will show that the standard representations correspond to points on this geodesic that lie in the fundamental domain.

\end{remark}

\begin{lem} \label{StandardValuationImpliesStandardRepresentation}

Let $(Q, (x,y))$ be a primitive representation of $A \in \F_q[u]$ by an indefinite quadratic form.
If the associated valuation is standard, namely
\begin{equation} 
v_{(x,y)}^Q = v_{\gamma, \delta}, \quad \gamma \leq \delta,
\end{equation}
then $(Q,(x,y))$ is a standard representation of weight
\begin{equation}
\omega(Q,(x,y)) = \omega_{\delta - \gamma} = 
\begin{cases}
\frac{1}{q^3-q} &\gamma = \delta \\
\frac{1}{(q-1) q^{\delta - \gamma +1}} &\gamma<\delta.
\end{cases}
\end{equation}

\end{lem}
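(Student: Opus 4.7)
The plan is to set $s = \delta - \gamma \geq 0$ and verify directly that the degree inequalities of \cref{StandardIndefiniteDef} hold with this value of $s$; the assertion on the weight will then follow from the uniqueness of $s$ established in \cref{ParityofsProp}, combined with the count of $|G_{\gamma,\delta}|$ in \cref{FiniteStabValProp}.

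I would begin by factoring $Q(X,Y) = L_1(X,Y) L_2(X,Y)$ with $L_i(X,Y) = \alpha_i X + \beta_i Y$ via \cref{IndefFormFactorizationProp}, and introduce the invariants $r_i = \deg L_i(x,y)$, $p_i = \deg \alpha_i - r_i$, $q_i = \deg \beta_i - r_i$. These satisfy the bookkeeping identities $r_1 + r_2 = \deg A$, $\deg a = \deg A + p_1 + p_2$, $\deg c = \deg A + q_1 + q_2$, and $\deg(\alpha_1\beta_2 - \alpha_2\beta_1) = \deg(D)/2$. Evaluating the hypothesis $v_{(x,y)}^Q = v_{\gamma,\delta}$ at $(1,0)$, $(0,1)$, and $(x,y)$ gives $\max(p_1, p_2) = \gamma$, $\max(q_1, q_2) = \delta$, and the bounds $\deg x \leq -\gamma$, $\deg y \leq -\delta$. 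Next I introduce the shadow functions $\psi_i(z,w) = \max(p_i + \deg z, q_i + \deg w)$; the functions $\phi_i(z,w) = \deg L_i(z,w) - r_i$ satisfy $\phi_i \leq \psi_i$ everywhere, with strict inequality exactly on the cancellation locus $C_i$ where the leading terms of $\alpha_i z$ and $\beta_i w$ annihilate each other, and one has $\max(\psi_1,\psi_2) = v_{\gamma,\delta}$ identically.

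The crux of the proof is the identity $\deg(D)/2 = \deg A + \gamma + \delta$, from which all required inequalities follow by direct algebra (for instance $\deg a = \deg A + p_1 + p_2 \leq \deg A + 2\gamma = \deg(D)/2 - s$, $\deg b \leq \max(\deg(\alpha_1\beta_2), \deg(\alpha_2\beta_1)) \leq \deg(D)/2$, and $-\gamma = \deg A/2 - \deg(D)/4 + s/2$). The upper bound $\deg(D)/2 \leq \deg A + \gamma + \delta$ is automatic from $p_i + q_j \leq \gamma + \delta$. For the matching lower bound I would swap $L_1$ and $L_2$ if necessary so that $p_1 = \gamma$, and argue by cases on where $\max(q_1, q_2) = \delta$ is attained. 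In the configurations where one of $\deg(\alpha_1\beta_2)$, $\deg(\alpha_2\beta_1)$ strictly dominates the other and equals $\deg A + \gamma + \delta$, the identity follows at once from $\alpha_1\beta_2 - \alpha_2\beta_1 = \pm\sqrt{-D}$. There are exactly two problematic configurations: (i) ``case A strict'', with $p_1 = \gamma > p_2$ and $q_1 = \delta > q_2$, in which both products have degree strictly less than $\deg A + \gamma + \delta$; and (ii) the fully degenerate case $p_1 = p_2 = \gamma$ and $q_1 = q_2 = \delta$, where both products have degree exactly $\deg A + \gamma + \delta$ but leading-coefficient cancellation could reduce that of their difference.

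The main obstacle, and the main content of the proof, is ruling out these two configurations by the same cancellation device. Let $\lambda_i, \mu_i \in \F_q^\times$ denote the leading coefficients of $\alpha_i, \beta_i$ as elements of $\F_q((u^{-1}))$, and consider the explicit vector $(z,w) = (-\mu_1 \lambda_1^{-1} u^s, 1) \in \F_q[u]^2$; by construction $(z,w) \in C_1$. In case (i), the hypotheses $p_2 < \gamma$ and $q_2 < \delta$ force $\psi_2 < v_{\gamma,\delta}$ pointwise on nonzero vectors, so $\phi_2(z,w) < v_{\gamma,\delta}(z,w)$, while also $\phi_1(z,w) < \psi_1(z,w) = v_{\gamma,\delta}(z,w)$, contradicting $v_{(x,y)}^Q = v_{\gamma,\delta}$. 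In the bad sub-case of (ii), where $\lambda_1 \mu_2 = \lambda_2 \mu_1$ (the only possible way for $\deg(\alpha_1\beta_2 - \alpha_2\beta_1)$ to drop below $\deg A + \gamma + \delta$), a direct check shows that the same vector $(z,w)$ lies in $C_2$ as well, so once again $\max(\phi_1, \phi_2)(z,w) < v_{\gamma,\delta}(z,w)$, the desired contradiction. This establishes $\deg(D)/2 = \deg A + \gamma + \delta$, the degree inequalities of \cref{StandardIndefiniteDef} follow with $s = \delta - \gamma$, and the weight of the resulting standard representation is $\omega_{\delta - \gamma}$ as claimed.
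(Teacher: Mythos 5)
Your proof is correct and follows essentially the same strategy as the paper's: derive the degree bounds on $a,b,c,x,y$ by evaluating the valuation at $(1,0)$, $(0,1)$, $(x,y)$, reduce to the identity $\deg(\alpha_1\beta_2-\alpha_2\beta_1)=\gamma+\delta+\deg(A)$, and close the lower-bound gap by exhibiting a test vector at which the valuation would have to drop below $v_{\gamma,\delta}$. The paper merely streamlines your case analysis by observing that your witness $(-\mu_1\lambda_1^{-1}u^s,1)$ is (up to scalar) a kernel vector of the $2\times 2$ matrix of leading coefficients $\begin{pmatrix}\alpha_1^{(0)}&\alpha_2^{(0)}\\ \beta_1^{(0)}&\beta_2^{(0)}\end{pmatrix}$, whose singularity is exactly what a hypothetical strict inequality forces, so cases (i) and (ii) are handled uniformly and the appeal to \cref{FiniteStabValProp} is unnecessary once $s=\delta-\gamma$ is pinned down.
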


\begin{proof} It follows from \cref{valuation-quadratic-form} and our assumptions that 
\begin{equation} \label{zwQuadraticFormInequality}
\begin{split} 
&\deg(a z^2 + bzw + c w^2) =  \deg (Q(z,w)) \leq 2 v_{(x,y)}^Q(z,w) + \deg (Q(x,y)) = \\
&2v_{\gamma, \delta}(z,w) + \deg(A) = \max \{ 2\gamma + 2\deg(z), 2\delta + 2\deg (w) \}  + \deg (A).
\end{split}
\end{equation}
Taking $w=0, \ z=1$ above, we see that 
\begin{equation} \label{QFormaboundEq}
\deg (a) \leq 2\gamma + \deg(A),
\end{equation}
taking $z=0, \ w=1$, we see that 
\begin{equation} \label{QFormcboundEq}
\deg (c) \leq 2 \delta + \deg (A),
\end{equation}
taking $z = u^{\delta - \gamma}, \ w=1$ in \cref{zwQuadraticFormInequality}, and using \cref{QFormaboundEq}, \cref{QFormcboundEq} we see that
\begin{equation*}
\begin{split}
&\deg(b) + \delta - \gamma = \deg(bu^{\delta - \gamma}) = \deg(Q(u^{\delta - \gamma},1) - au^{2\delta - 2\gamma} - c) \leq\\
&\max\{\deg(Q(u^{\delta - \gamma},1)), \deg(au^{2\delta - 2\gamma}), \deg(c)\} \leq 2\delta + \deg(A)
\end{split}
\end{equation*}
so
\begin{equation} \label{QFormbboundEq}
\deg (b) \leq \gamma + \delta + \deg (A).
\end{equation}

From \cref{RepValueNonZeroEq} we get that $Q(x,y) \neq 0$ so for the linear forms $L_1$ and $L_2$ from \cref{IndefFormFactorizationProp} we have that
$\deg(L_1(x,y))$ and $\deg(L_2(x,y))$ are finite, hence
\begin{equation} 
0 = v_{(x,y)}^Q (x,y) = v_{\gamma, \delta}(x,y) = \max \{ \gamma + \deg (x), \delta + \deg (y)\}.
\end{equation}
Therefore
\begin{equation} \label{QFormxyboundEq}
\deg (x) \leq - \gamma, \quad \deg (y) \leq - \delta.
\end{equation}

Set $s = \delta - \gamma$. Our proposition reduces to showing that the discriminant $D = ac - b^2/4$ of $Q$ satisfies
\begin{equation} \label{discriminant-valuation-identity} 
\deg (D) = 2\gamma + 2\delta + 2 \deg (A).
\end{equation}
Indeed it follows from $s=\delta-\gamma$ and \cref{discriminant-valuation-identity} that
\begin{equation} \label{TheGammaDeltaEq}
\gamma = \frac{\deg(D)}{4} - \frac{\deg(A)}{2} -\frac{s}{2}, \quad \delta = \frac{\deg(D)}{4} - \frac{\deg(A)}{2} + \frac{s}{2}
\end{equation}
so \cref{StandardabcIndefEq} and \cref{StandardxyIndefEq} follow from \cref{QFormaboundEq}, \cref{QFormbboundEq}, \cref{QFormcboundEq}, and \cref{QFormxyboundEq}.

To check \cref{discriminant-valuation-identity}, write 
\begin{equation*}
L_1 (X,Y) =\alpha_1 X + \beta_1 Y, \ L_2(X,Y) = \alpha_2 X + \beta_2 Y, \ \alpha_1, \beta_1, \alpha_2, \beta_2 \in \F_q((u^{-1}))
\end{equation*} 
so that 
\[Q(X,Y) = (\alpha_1 X + \beta_1 Y) (\alpha_2 X + \beta_2 Y)= \alpha_1 \alpha_2 X^2 + (\alpha_1 \beta_2 + \beta_1 \alpha_2) XY + \beta_1 \beta_2 Y^2 \] 
and thus
\begin{equation} \label{NegativeDinTermsOfLinearForms}
-D= \frac{ (\alpha_1 \beta_2 + \beta_1 \alpha_2)^2 }{4} - \alpha_1 \alpha_2 \beta_1 \beta_2 = \frac{  (\alpha_1 \beta_2 - \beta_1 \alpha_2)^2 }{4}.
\end{equation}
Therefore, it suffices to show that
\begin{equation} \label{determinant-valuation-identity} 
\deg( \alpha_1 \beta_2 - \beta_1 \alpha_2) = \gamma + \delta + \deg (A).
\end{equation}

For $i \in \{1,2\}$ we have
\begin{equation}
\begin{split}
\gamma = v_{\gamma, \delta}(1,0) = v_{(x,y)}^Q(1,0) &\geq \deg(L_i(1,0)) - \deg(L_i(x,y)) \\ &= \deg(\alpha_i) - \deg(L_i(x,y))
\end{split}
\end{equation}
and similarly 
\begin{equation}
\delta = v_{\gamma, \delta}(0,1) = v_{(x,y)}^Q(0,1) \geq \deg(\beta_i) - \deg(L_i(x,y))
\end{equation}
so
\begin{equation}
\deg(\alpha_i) \leq \gamma + \deg(L_i(x,y)), \quad \deg(\beta_i) \leq \delta + \deg(L_i(x,y)).
\end{equation}
We conclude that
\begin{equation*}
\begin{split}
&\deg(\alpha_1\beta_2 - \beta_1\alpha_2) \leq \max\{\deg(\alpha_1) + \deg(\beta_2), \deg(\beta_1) + \deg(\alpha_2)\} \leq \\
&\gamma + \delta + \deg(L_1(x,y) L_2(x,y)) = \gamma + \delta + \deg(Q(x,y)) = \gamma + \delta + \deg(A)
\end{split}
\end{equation*}
so we have established one inequality towards \cref{determinant-valuation-identity}.

Assume for contradiction that the inequality above is strict. 
Denoting by $\alpha_i^{(0)}$ the coefficient of $\alpha_i$ in degree $\gamma + \deg(L_{i}(x,y))$ for $i \in \{1,2\}$,
and by $\beta_i^{(0)}$ the coefficient of $\beta_i$ in degree $\delta + \deg(L_{i}(x,y))$,
we can interpret our assumption for contradiction as
\begin{equation}
\det
\begin{pmatrix}
\alpha_1^{(0)} &\alpha_2^{(0)} \\
\beta_1^{(0)} &\beta_2^{(0)}
\end{pmatrix}
= \alpha_1^{(0)}\beta_2^{(0)} - \beta_1^{(0)}\alpha_2^{(0)} = 0.
\end{equation}

Let $(r,t) \in \F_q^2$ be a nonzero vector in the kernel of the matrix above. 
By examining the coefficients in degree $\delta + \deg(L_i(x,y))$ we see that
\begin{equation*}
\deg (L_i(ru^{\delta - \gamma},t)) =\deg ( r  u^{\delta - \gamma}\alpha_i + t \beta_i ) <  \delta + \deg (L_i(x,y)), \quad i \in \{1,2\}.
\end{equation*}
Since (at least) one of the scalars $r,t$ is nonzero, we get that 
\begin{equation*}
\begin{split}
\delta &= \max\{\deg(r) + \delta, \deg(t) + \delta \} = v_{\gamma, \delta}(ru^{\delta - \gamma},t) \\
&= v_{(x,y)}^Q (ru^{\delta - \gamma},t) = \max_{i \in \{1,2\}} \{\deg (L_i(ru^{\delta - \gamma},t)) - \deg (L_i(x,y))\} < \delta
\end{split}
\end{equation*}
which is a contradiction.
This verifies \cref{determinant-valuation-identity}, completing the proof. 
\end{proof}

\begin{lem} \label{StandardVRepresentationImpliesStandardValuation}

For every standard representation $(Q,(x,y))$ of a polynomial $A$ by an indefinite quadratic form, there exist integers $\gamma \leq \delta$ such that
\begin{equation} 
v_{(x,y)}^Q = v_{\gamma, \delta}.
\end{equation}

\end{lem}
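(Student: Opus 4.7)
I take $\gamma = \deg(D)/4 - \deg(A)/2 - s/2$ and $\delta = \gamma + s$, where $s$ is the standardness parameter, and will show $v_{(x,y)}^Q = v_{\gamma, \delta}$. That $\gamma, \delta$ are integers with $\gamma \leq \delta$ follows from $\deg D$ being even (since $D$ is indefinite), the parity condition $s \equiv \deg(D)/2 - \deg(A) \pmod{2}$ in \cref{ParityofsProp}, and $s \geq 0$.

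I factor $Q = L_1 L_2$ with $L_i(X, Y) = \alpha_i X + \beta_i Y$ as in \cref{IndefFormFactorizationProp}, setting $r_i = \deg \alpha_i$, $\sigma_i = \deg \beta_i$, and $\ell_i = \deg L_i(x, y)$. The proof of \cref{StandardValuationImpliesStandardRepresentation} supplies the identities $r_1 + r_2 = \deg a$, $\sigma_1 + \sigma_2 = \deg c$, $\ell_1 + \ell_2 = \deg A$, and $\deg(\alpha_1 \beta_2 - \alpha_2 \beta_1) = \deg(D)/2 = \gamma + \delta + \deg A$. A case analysis on the signs of $p_i - q_i$ for $p_i = r_i + s/2, q_i = \sigma_i - s/2$, using the standardness bounds $\deg a \leq \deg D/2 - s$ and $\deg c \leq \deg D/2 + s$ together with the formula $\lambda_i = \beta_i/\alpha_i = (-b \pm 2\sqrt{-D})/(2a)$, lets me assume (after possibly swapping $L_1 \leftrightarrow L_2$) that $\sigma_2 - r_2 \geq s \geq \sigma_1 - r_1$ and $r_1 + \sigma_2 = \deg(D)/2$.

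For the inequality $v_{(x,y)}^Q \leq v_{\gamma, \delta}$, I combine the ultrametric property of valuations with an explicit computation of $v_{(x,y)}^Q(1, 0)$ and $v_{(x,y)}^Q(0, 1)$. Using the standardness bounds $\deg x \leq -\gamma$ and $\deg y \leq -\delta$, one gets $\ell_i \leq \max(r_i - \gamma, \sigma_i - \delta)$, which under the dichotomy above refines to $\ell_1 \leq r_1 - \gamma$ and $\ell_2 \leq \sigma_2 - \delta$. Summing these and using $\ell_1 + \ell_2 = \deg A = (r_1 + \sigma_2) - (\gamma + \delta)$ forces the equalities $\ell_1 = r_1 - \gamma$ and $\ell_2 = \sigma_2 - \delta$, yielding $v_{(x,y)}^Q(1, 0) = \max\{\gamma, r_2 - \sigma_2 + \delta\} = \gamma$ and $v_{(x,y)}^Q(0, 1) = \max\{\sigma_1 - r_1 + \gamma, \delta\} = \delta$.

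For the reverse inequality $v_{(x,y)}^Q(z, w) \geq v_{\gamma, \delta}(z, w)$, I substitute the explicit values of $\ell_i$ to rewrite $v_{(x,y)}^Q(z, w) = \max(\deg L_1(z, w) + \gamma - r_1, \deg L_2(z, w) + \delta - \sigma_2)$ and split into the two cases depending on which of $\gamma + \deg z$ or $\delta + \deg w$ dominates $v_{\gamma, \delta}(z, w)$. The statement then reduces to proving that for each $(z, w)$ at least one of the factors $L_i(z, w) = \alpha_i z + \beta_i w$ attains its naive top degree $\max(r_i + \deg z, \sigma_i + \deg w)$ without leading-term cancellation. The main obstacle will be ruling out simultaneous cancellation in both factors at the critical degree: such cancellation would require the leading vector $(z^{(0)}, w^{(0)})$ to lie in the kernel of $\begin{pmatrix} \alpha_1^{(0)} & \beta_1^{(0)} \\ \alpha_2^{(0)} & \beta_2^{(0)} \end{pmatrix}$, whose determinant is exactly the coefficient of $\alpha_1 \beta_2 - \alpha_2 \beta_1 = \pm 2\sqrt{-D}$ at degree $r_1 + \sigma_2 = \deg(D)/2$, which is the nonzero leading coefficient of $\pm 2\sqrt{-D}$, giving the required contradiction.
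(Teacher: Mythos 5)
Your proposal is correct and follows essentially the same route as the paper: the same choice of $\gamma,\delta$, the factorization $Q=L_1L_2$ with degree bookkeeping for $\alpha_i,\beta_i$ against $\deg L_i(x,y)$ to get $v^Q_{(x,y)}\le v_{\gamma,\delta}$, and for the reverse inequality the observation that simultaneous vanishing of the two critical coefficients would put the nonzero leading vector $(z^{(0)},w^{(0)})$ in the kernel of a matrix whose determinant is the (nonzero) leading coefficient of $\alpha_1\beta_2-\alpha_2\beta_1=\pm 2\sqrt{-D}$. The only differences are cosmetic --- your normalization $r_1+\sigma_2=\deg(D)/2$ pins down the $\deg L_i(x,y)$ exactly where the paper settles for one-sided bounds, and your intermediate phrasing that it suffices for \emph{one} factor to attain its naive top degree is not quite the right reduction (what matters is nonvanishing at the critical degree $\deg L_i(x,y)+v_{\gamma,\delta}(z,w)$, which is what your determinant argument actually establishes).
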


\begin{proof} 

Following \cref{StandardIndefiniteDef}, and \cref{TheGammaDeltaEq} we define
\begin{equation} 
\gamma = \frac{\deg(D)}{4} - \frac{\deg(A)}{2} -\frac{s}{2}, \quad \delta = \frac{\deg(D)}{4} - \frac{\deg(A)}{2} + \frac{s}{2}
\end{equation}
and note that $\gamma, \delta$ are indeed integers by \cref{ParityofsProp}.
Our assumption that $(Q,(x,y))$ is standard then gives
\begin{equation} \label{xyInequalitiesTwoEqualities}
\deg(x) \leq -\gamma, \ \deg (y) \leq -\delta, \ \delta - \gamma = s, \ \gamma + \delta =\frac{\deg (D)}{2} - \deg (A).
\end{equation}

By \cref{IndefFormFactorizationProp}, for $i \in \{1,2\}$ there exist linear forms
\begin{equation}
L_i (X,Y) =\alpha_i X + \beta_i Y, \quad \alpha_i, \beta_i \in \F_q((u^{-1}))
\end{equation} 
such that
\begin{equation*}
Q(X,Y) = L_1(X,Y)L_2(X,Y) = \alpha_1 \alpha_2 X^2 + (\alpha_1 \beta_2 + \beta_1 \alpha_2) XY + \beta_1 \beta_2 Y^2
\end{equation*}
and 
\begin{equation} \label{SquareExpressionForNegativeD}
-D= \frac{  (\alpha_1 \beta_2 - \beta_1 \alpha_2)^2 }{4}
\end{equation}
as in \cref{NegativeDinTermsOfLinearForms}.

It follows from our expression for $Q$, \cref{StandardabcIndefEq}, and \cref{SquareExpressionForNegativeD} that
\begin{equation}
\deg (\alpha_1 \beta_2 + \beta_1 \alpha_2) , \deg (\alpha_1 \beta_2 -\beta_1 \alpha_2) \leq \frac{\deg (D)}{2} 
\end{equation}
and therefore that
\begin{equation}
\deg (\alpha_1 \beta_2), \deg (\beta_1 \alpha_2) \leq \frac{\deg (D)}{2}.
\end{equation}
We further infer from our expression for $Q$, \cref{StandardabcIndefEq}, and \cref{xyInequalitiesTwoEqualities} that
\begin{equation}
\deg (\alpha_1\alpha_2) \leq \frac{\deg (D)}{2} - \delta + \gamma, \quad 
\deg (\beta_1 \beta_2 ) \leq \frac{\deg (D)}{2} + \delta - \gamma.
\end{equation}

By \cref{xyInequalitiesTwoEqualities}, for $i \in \{1,2\}$ we have
\begin{equation}
\begin{split}
\deg (L_i(x,y) ) &\leq \max \{ \deg (\alpha_i) + \deg (x), \deg (\beta_i) + \deg (y) \} \\
&\leq \max \{ \deg (\alpha_i) - \gamma , \deg (\beta_i) - \delta \}
\end{split}
\end{equation} 
which either gives a lower bound on the degree of $\alpha_i$ or a lower bound on the degree of $\beta_i$ (or both). 
Combined with the upper bounds on $\deg(\alpha_i \alpha_{3-i})$ and $\deg (\beta_{i} \alpha_{3-i})$ in the first case, or $\deg (\alpha_{i} \beta_{3-i}) $ and $\deg(\beta_i \beta_{3-i}) $ in the second case, 
we obtain using \cref{xyInequalitiesTwoEqualities} that
\begin{equation*}
\begin{split}
\deg(\alpha_i) &\leq 
\begin{cases}
\frac{\deg (D)}{2} - \delta + \gamma - \deg(\alpha_{3-i}) \\
\frac{\deg(D)}{2} - \deg(\beta_{3-i})
\end{cases}
\leq \frac{\deg (D)}{2} - \delta - \deg(L_{3-i}(x,y))
\\
&= \deg (A) + \gamma - \deg(L_{3-i}(x,y)) = \deg(L_i(x,y)) + \gamma
\end{split}
\end{equation*}
and
\begin{equation*}
\begin{split}
\deg(\beta_i) &\leq 
\begin{cases}
\frac{\deg (D)}{2} - \deg(\alpha_{3-i}) \\
\frac{\deg(D)}{2} + \delta - \gamma - \deg(\beta_{3-i})
\end{cases}
\leq \frac{\deg (D)}{2} - \gamma - \deg(L_{3-i}(x,y))
\\
&= \deg (A) + \delta - \deg(L_{3-i}(x,y)) = \deg(L_i(x,y)) + \delta.
\end{split}
\end{equation*}

The bounds on $\deg(\alpha_i), \deg(\beta_i)$ imply that for $(z,w) \in \F_q[u]^2$ we have
\begin{equation*}
\begin{split}
v_{(x,y)}^Q(z,w) &= \max_{i \in \{1,2\}} \{ \deg (\alpha_i z + \beta_i w) - \deg (L_i(x,y)) \} \\
&\leq \max_{i \in \{1,2\}} \{ \max\{\deg(\alpha_i) + \deg(z), \deg(\beta_i) + \deg(w)\} - \deg (L_i(x,y)) \} \\
&\leq \max \{ \gamma + \deg (z), \delta + \deg (w)\} = v_{\gamma, \delta}(z,w).
\end{split}
\end{equation*}
We must prove that this inequality is in fact an equality.

Assume toward a contradiction that for some (nonzero) vector $(z,w)$ the inequality above is strict,
namely
\begin{equation} \label{SmallerDegreeAssumToContrEq}
\max_{i \in \{1,2\}} \{ \deg (\alpha_i z + \beta_i w) - \deg (L_i(x,y)) \} < v_{\gamma, \delta}(z,w).
\end{equation}
For $i \in \{1,2\}$ let $\alpha_i^{(0)}$ be the coefficient of $\alpha_i$ in degree $\deg(L_i(x,y)) + \gamma$,
and let $\beta_i^{(0)}$ be the coefficient of $\beta_i$ in degree $\deg(L_i(x,y)) + \delta$.
Let moreover $z^{(0)}$ be the coefficient of $z$ in degree $v_{\gamma, \delta}(z,w) - \gamma$, 
and let $w^{(0)}$ be the coefficient of $w$ in degree $v_{\gamma, \delta}(z,w) - \delta$.
Note that all the coefficients in degrees higher than these are necessarily zero, 
and that $(z^{(0)}, w^{(0)}) \neq (0,0)$ because $(z,w) \neq (0,0)$.

In the notation above, \cref{SmallerDegreeAssumToContrEq} translates to
\begin{equation}
\begin{pmatrix}
z^{(0)} &w^{(0)}
\end{pmatrix}
\begin{pmatrix}
\alpha_1^{(0)} &\alpha_2^{(0)} \\
\beta_1^{(0)} &\beta_2^{(0)}
\end{pmatrix}
=
\begin{pmatrix}
0 &0
\end{pmatrix}
\end{equation}
so the determinant of the matrix above vanishes, that is
\begin{equation}
\alpha_1^{(0)} \beta_2^{(0)} - \alpha_2^{(0)} \beta_1^{(0)} = 0.
\end{equation}
We conclude, using \cref{xyInequalitiesTwoEqualities}, that 
\begin{equation*}
\begin{split}
\deg(\alpha_1\beta_2 - \alpha_2\beta_1) &< \deg(L_1(x,y)) + \deg(L_2(x,y)) + \gamma + \delta = \deg(A) + \gamma + \delta \\
&= \frac{\deg(D)}{2}
\end{split}
\end{equation*}
which contradicts \cref{SquareExpressionForNegativeD}.
\end{proof}

\begin{cor} \label{IndefQuadraticCor}

Let $D \in \F_q[u]$ be indefinite.
Consider the function
\begin{equation*}
(aX^2 + bXY + cY^2, (x,y)) \mapsto (ax^2 + bxy + cy^2, a y_x x + \frac{b}{2}(\overline{x}x + y_x y) + c \overline{x}y)
\end{equation*}
which maps a standard representation $(Q,v)$ by a quadratic form of discriminant $D$ to the represented polynomial $A = Q(v)$ and the associated solution $f$ to the congruence $T^2 + D \equiv 0 \mod A$. 
Then the image of this function is
\begin{equation} \label{IndefDoubleFuncImageEq}
\{(A, f) : A \in \F_q[u] \setminus \{0\}, \ f \in \F_q[u]/(A), \ f^2 + D \equiv 0 \mod A  \}.
\end{equation}
Moreover the number of elements in the preimage of any $(A,f)$ from \cref{IndefDoubleFuncImageEq} equals the inverse of the weight of any representation in this preimage.

\end{cor}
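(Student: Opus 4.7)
The plan is to mirror the proof of \cref{QuadraticCor}, but with the role of \emph{short vectors} of definite quadratic forms replaced by \emph{standard valuations} of indefinite ones. The machinery of Lemmas \ref{StandardMembersProp}, \ref{FiniteStabValProp}, \ref{StandardValuationImpliesStandardRepresentation}, \ref{StandardVRepresentationImpliesStandardValuation}, together with \cref{FormValuationEquivarianceProp}, \cref{GenAssocSolsEquivaCor}, and \cref{FreeActionProp}, provides all the ingredients.

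For the image, one inclusion is immediate: by construction (see \cref{GenAssocDef}) the associated solution $f$ satisfies $f^2 + D \equiv 0 \mod A$, and $A \neq 0$ by \cref{RepValueNonZeroEq}. For the reverse inclusion, given $(A,f)$ in the target set, I would invoke \cref{RepCorrToGivenSolProp} to produce a primitive representation $(Q_0,(1,0))$ of $A$ with associated solution $f$. Apply \cref{StandardMembersProp} to the valuation $v^{Q_0}_{(1,0)}$: there exists $M \in \SL_2(\F_q[u])$ with $v^{Q_0}_{(1,0)} \star M = v_{\gamma,\delta}$ a standard valuation. By \cref{FormValuationEquivarianceProp} this equals $v^{Q_0 \star M}_{(1,0)\star M}$, so \cref{StandardValuationImpliesStandardRepresentation} shows that $(Q_0 \star M, (1,0)\star M)$ is a standard representation. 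It still represents $A$ (since $\SL_2$ acts on representations coordinatewise, preserving the value) and by \cref{GenAssocSolsEquivaCor} still has associated solution $f$.

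For the fiber count, fix $(A,f)$ in the image and any standard representation $(Q_0,v_0)$ in the preimage. By \cref{GenAssocSolsEquivaCor} the preimage consists exactly of the standard representations equivalent to $(Q_0,v_0)$. By \cref{FreeActionProp} this $\SL_2(\F_q[u])$-orbit is in bijection with $\SL_2(\F_q[u])$ via $M \mapsto (Q_0 \star M, v_0 \star M)$. Under this bijection, by \cref{FormValuationEquivarianceProp} combined with Lemmas \ref{StandardValuationImpliesStandardRepresentation} and \ref{StandardVRepresentationImpliesStandardValuation}, the standard representations correspond exactly to those $M$ such that $v^{Q_0}_{v_0} \star M$ is standard. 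By \cref{StandardMembersProp} there is a unique standard valuation $v_{\gamma,\delta}$ in the orbit of $v^{Q_0}_{v_0}$, so these $M$ form a single left coset of its stabilizer $G_{\gamma,\delta}$. Hence the preimage has exactly $|G_{\gamma,\delta}|$ elements, which by \cref{FiniteStabValProp} is $q^3-q$ when $\gamma = \delta$ and $(q-1)q^{\delta-\gamma+1}$ when $\gamma < \delta$; by the weight formula in \cref{StandardValuationImpliesStandardRepresentation} this is $1/\omega(Q_0,v_0)$. Since every representation in the preimage produces the same standard valuation up to the $\SL_2(\F_q[u])$-orbit, hence the same parameter $s = \delta-\gamma$, all have the same weight.

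The main (small) obstacle is simply the bookkeeping: verifying that the $\SL_2(\F_q[u])$-action, the equivariance of $(Q,v) \mapsto v^Q_v$, the uniqueness in \cref{StandardMembersProp}, and the weight convention in \cref{StandardIndefiniteDef} line up so that the stabilizer order of \cref{FiniteStabValProp} reproduces exactly $1/\omega_s$. No new geometric input beyond the lemmas already proved is required.
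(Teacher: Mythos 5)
Your proposal is correct and follows essentially the same route as the paper's proof: surjectivity via \cref{RepCorrToGivenSolProp}, \cref{StandardMembersProp}, \cref{FormValuationEquivarianceProp}, \cref{StandardValuationImpliesStandardRepresentation}, and \cref{GenAssocSolsEquivaCor}; fiber count by transferring the "standard representation" condition to "standard valuation" via equivariance and the two lemmas, then identifying the relevant set of matrices with the stabilizer $G_{\gamma,\delta}$ from \cref{FiniteStabValProp}. Your phrase "a single left coset of the stabilizer" would be sharper as "exactly the stabilizer" (the identity lies in the set, so the coset is $G_{\gamma,\delta}$ itself), but this does not change the count, and your explicit invocation of \cref{FreeActionProp} to justify the orbit parametrization is a welcome clarification of a step the paper leaves implicit.
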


\begin{proof}

It is immediate from \cref{GenAssocDef} that the image of our function is contained in \cref{IndefDoubleFuncImageEq}. 
For $(A,f)$ in this set, 
\cref{RepCorrToGivenSolProp} gives us a primitive representation $(Q,(x,y))$ of $A$ by a quadratic form of discriminant $D$ such that the associated solution is $f$.
By \cref{StandardMembersProp} there exist integers $\gamma \leq \delta$ and a matrix $M \in \mathrm{SL}_2(\F_q[u])$ such that
\begin{equation}
v_{(x,y)}^Q \star M = v_{\gamma, \delta}.
\end{equation}
\cref{FormValuationEquivarianceProp} then implies that
\begin{equation} \label{MstandardizedValuationEq}
v_{(x,y) \star M}^{Q \star M} = v_{\gamma, \delta}.
\end{equation}
\cref{StandardValuationImpliesStandardRepresentation} tells us that the representation $(Q', (x',y')) = (Q \star M, (x,y) \star M)$ is a standard representation of $A$ with weight
\begin{equation} \label{InverseWeightEq}
\omega(Q', (x',y')) = 
\begin{cases}
\frac{1}{q^3-q} &\gamma = \delta \\
\frac{1}{(q-1) q^{\delta - \gamma +1}} &\gamma<\delta.
\end{cases}
\end{equation}
It follows from \cref{GenAssocSolsEquivaCor} that our function maps $(Q', (x',y'))$ to $(A,f)$ so its image is indeed given by \cref{IndefDoubleFuncImageEq}.

By \cref{GenAssocSolsEquivaCor}, 
the preimage of $(A,f)$ under our function consists of the standard representations of $A$ that are equivalent to $(Q', (x',y'))$.
These are parametrized by matrices $N \in \mathrm{SL}_2(\F_q[u])$ for which the representation
\begin{equation}
(Q' \star N, (x',y') \star N)
\end{equation}
is standard.
By \cref{StandardValuationImpliesStandardRepresentation} and \cref{StandardVRepresentationImpliesStandardValuation} this is equivalent to the valuation
\begin{equation}
v_{(x',y') \star N}^{Q' \star N}
\end{equation}
being standard.
In view of \cref{FormValuationEquivarianceProp} and \cref{MstandardizedValuationEq}, 
we are looking for the set of all $N \in \mathrm{SL}_2(\F_q[u])$ for which the valuation
\begin{equation}
v_{(x',y')}^{Q'} \star N = v_{\gamma, \delta} \star N
\end{equation}
is standard.

Using the uniqueness part of \cref{StandardMembersProp}, we see that the valuation above is standard if and only if
\begin{equation}
v_{\gamma, \delta} \star N = v_{\gamma, \delta}.
\end{equation}
We conclude from \cref{FiniteStabValProp} that the number of elements in the preimage of $(A,f)$ is 
\begin{equation}
\#\{N \in \mathrm{SL}_2(\mathbb F_q[u]) : v_{\gamma, \delta} \star N = v_{\gamma, \delta}\} = 
\begin{cases}
q^3 - q &\gamma = \delta \\
(q-1)q^{\delta - \gamma + 1} &\gamma < \delta.
\end{cases}
\end{equation}
At last, note that the above is the inverse of the weight of the representation $(Q',(x',y'))$ given in \cref{InverseWeightEq}.
\end{proof}

\begin{notation} \label{LowerNotsNotation}

Following \cref{StandardIndefiniteDef}, for an indefinite polynomial $D$ in $\F_q[u]$ we set
\begin{equation*}
\begin{split}
S(D) = \Bigg\{(s,a,b,c) \in \mathbb{Z} \times \F_q[u]^3 : &\deg(a) \leq \frac{\deg(D)}{2} - s, \ \deg(b) \leq \frac{\deg(D)}{2} \\
&\deg(c) \leq \frac{\deg(D)}{2} + s, \ ac - \frac{b^2}{4} = D, \ s \geq 0 \Bigg\}.
\end{split}
\end{equation*}
For $(s,a,b,c) \in S(D)$ let $a_{(0)}, b_{(0)}, c_{(0)}$ be the coefficients of $a,b,c$ in degrees
\begin{equation}
\frac{\deg(D)}{2} - s, \quad \frac{\deg(D)}{2}, \quad \frac{\deg(D)}{2} + s.
\end{equation} 
For a standard representation $ax^2 + bxy + cy^2$ of a degree $n$ polynomial, 
we denote by $x_{(0)}, y_{(0)}$ the coefficients of $x,y$ in degrees 
\begin{equation}
\frac{n}{2} - \frac{\deg(D)}{4} + \frac{s}{2}, \quad \frac{n}{2} - \frac{\deg(D)}{4} - \frac{s}{2}.
\end{equation}

\end{notation}

\begin{cor} \label{NumberStandardIndefiniteFormsBound}

For every $\epsilon > 0$ we have 
\begin{equation}
\sum_{(s,a,b,c) \in S(D)} \omega_s \ll |D|^{\frac{1}{2} + \epsilon}.
\end{equation}

\end{cor}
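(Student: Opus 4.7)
The plan is to adapt the argument from the proof of \cref{NumberStandardDefiniteFormsBound}, replacing \cref{QuadraticCor} with \cref{IndefQuadraticCor} and carefully tracking the weights $\omega_s$. For each integer $s$ with $0 \leq s \leq \deg(D)/2$, let $S_s(D) = \{(a,b,c) : (s,a,b,c) \in S(D)\}$, so the quantity to bound is $\sum_s |S_s(D)|\, \omega_s$. I will establish the per-$s$ estimate $|S_s(D)|\, \omega_s \ll |D|^{1/2+\epsilon}$ and then sum over the at most $\deg(D)/2 + 1 \ll \log_q|D|$ relevant values of $s$.

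Fix $s$ and choose a sufficiently large positive integer $n$ with $n \equiv \deg(D)/2 - s \pmod{2}$, which is possible because $\deg(D)$ is even for indefinite $D$. For the upper bound, \cref{IndefQuadraticCor} shows that every standard representation with parameter $s$ of a polynomial in $\mathcal{M}_n$ lies in a fiber of size $\omega_s^{-1}$ over some pair $(A,f)$ with $A \in \mathcal{M}_n$ and $f^2 \equiv -D \pmod{A}$. Hence the total number of such representations is at most
\begin{equation*}
\omega_s^{-1} \sum_{A \in \mathcal{M}_n} \rho(A;F) \ll \omega_s^{-1}\, q^n |D|^{\epsilon}
\end{equation*}
by \cref{AverageNumberOfSolsQuadCongProp}.

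For the lower bound, fix $(a,b,c) \in S_s(D)$ and count coprime pairs $(x,y) \in \F_q[u]^2$ with $\deg x = n/2 + s/2 - \deg(D)/4$ (strict equality) and $\deg y \leq n/2 - s/2 - \deg(D)/4$ such that $Q(x,y) = ax^2 + bxy + cy^2$ is monic of degree $n$. The degree bounds of \cref{StandardIndefiniteDef} are satisfied for this $s$, and by \cref{ParityofsProp} the parameter is uniquely determined, so every such pair yields a standard representation with parameter exactly $s$. The coefficient of $u^n$ in $Q(x,y)$ equals $a_{(0)} x_{(0)}^2 + b_{(0)} x_{(0)} y_{(0)} + c_{(0)} y_{(0)}^2$, where the subscripts $(0)$ denote leading coefficients in the degrees prescribed in \cref{LowerNotsNotation}. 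The classical discriminant $b_{(0)}^2 - 4 a_{(0)} c_{(0)} = -4 D_{(0)}$ of this binary form over $\F_q$ is a nonzero square: $D$ indefinite forces $-D_{(0)}$ to be a nonzero square in $\F_q^\times$, and multiplication by $4 = 2^2$ preserves squareness. Hence the form is split and nondegenerate, so the equation $a_{(0)} X^2 + b_{(0)} XY + c_{(0)} Y^2 = 1$ has exactly $q-1$ solutions in $\F_q^2$, of which at least $q-3$ have $X \neq 0$. For each such $(x_{(0)}, y_{(0)})$ the lower-degree coefficients of $x$ and $y$ are free, and a standard Mobius-inversion density estimate in $\F_q[u]$ (as used in the proof of \cref{NumberStandardDefiniteFormsBound} via \cite[Proof of Lemma 7.3]{ABR}) shows a positive proportion of the resulting pairs are coprime. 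This gives at least $\gg q^{n - \deg(D)/2}$ standard representations per $(a,b,c) \in S_s(D)$, and hence $|S_s(D)| q^{n - \deg(D)/2} \ll \omega_s^{-1} q^n |D|^{\epsilon}$, so $|S_s(D)|\, \omega_s \ll q^{\deg(D)/2} |D|^{\epsilon} = |D|^{1/2+\epsilon}$.

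The main obstacle is the lower bound: one must confirm that the constructed pairs genuinely correspond to standard representations with the intended parameter $s$ (which is where the equality case of \cref{ParityofsProp} is activated through the choice $\deg x = n/2 + s/2 - \deg(D)/4$), and that indefiniteness of $D$ supplies a split leading conic with the expected number of $\F_q$-points so that the monicity condition on $Q(x,y)$ costs only a factor of $q$. Once these are in hand, summing $\sum_s |S_s(D)|\, \omega_s$ over $s \in [0, \deg(D)/2]$ yields an extra factor of $\log_q|D|$, absorbed into $|D|^{\epsilon}$, which produces the claimed bound $\ll |D|^{1/2+\epsilon}$.
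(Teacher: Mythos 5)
Your proof follows essentially the same double-counting strategy as the paper's: lower-bound the weighted number of standard representations of degree-$n$ polynomials by $q^{n-\deg(D)/2}\sum_s \omega_s |S_s(D)|$ via an explicit construction of coprime pairs $(x,y)$ with prescribed leading coefficients, upper-bound that same quantity by $\sum_{A\in\mathcal M_n}\rho(A;F)\ll q^n|D|^{\epsilon}$ using \cref{IndefQuadraticCor} and \cref{AverageNumberOfSolsQuadCongProp}, and divide; the only structural difference is that the paper groups the values of $s$ by parity rather than treating each $s$ separately, which avoids the (harmless) logarithmic factor you absorb into $|D|^{\epsilon}$. One minor caveat: your insistence that $\deg x$ equal $n/2-\deg(D)/4+s/2$ exactly (hence $x_{(0)}\neq 0$) is unnecessary, since the inequalities of \cref{StandardIndefiniteDef} together with the uniqueness in \cref{ParityofsProp} already force the parameter to be $s$, and for $q=3$ your count of ``at least $q-3$ solutions with $X\neq 0$'' is vacuous — so you should instead allow $x_{(0)}=0$ and simply choose any $(x_{(0)},y_{(0)})$ at which the leading conic does not vanish, as the paper does.
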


\begin{proof}

For $i \in \mathbb{Z}/2\mathbb{Z}$ set $S(D;i) = \{(s,a,b,c) \in S(D) : s \equiv i \ \mathrm{mod} \ 2\}$, 
and note that it is enough to obtain the bound
\begin{equation}
\sum_{(s,a,b,c) \in S(D;i)} \omega_s \ll |D|^{\frac{1}{2} + \epsilon}.
\end{equation}

Fix $i \in \mathbb{Z}/2\mathbb{Z}$, and let $(s,a,b,c) \in S(D;i)$.
Arguing as in the proof of the first claim in \cref{ParityofsProp}, we find that (at least) one of the coefficients $a_{(0)}, b_{(0)}, c_{(0)}$ is nonzero.
Therefore choosing $x_{0}, y_{0} \in \F_q$ via
\begin{equation}
\left(x_{0},y_{0}\right) = 
\begin{cases}
(1,0) &a_{(0)} \neq 0 \\
(0,1) &a_{(0)} = 0, \ c_{(0)} \neq 0 \\
(1,1) &a_{(0)} = 0, \ c_{(0)} = 0, \ b_{(0)} \neq 0
\end{cases}
\end{equation}
we see that
\begin{equation}
a_{(0)}x_{0}^2 + b_{(0)}x_{0}y_{0} + c_{(0)}y_{0}^2 \neq 0.
\end{equation}

Let $n \geq \frac{\deg(D)}{2} + s$ be an integer with
\begin{equation}
n \equiv \frac{\deg(D)}{2} + i \mod 2,
\end{equation} 
and let $x,y \in \F_q[u]$ be coprime polynomials with  
\begin{equation*}
\deg(x) \leq \frac{n}{2} - \frac{\deg(D)}{4} + \frac{s}{2}, 
\ \deg(y) \leq \frac{n}{2} - \frac{\deg(D)}{4} - \frac{s}{2}, \
x_{(0)} = x_0, \ y_{(0)} = y_0.
\end{equation*} 
It follows from our choices that the polynomial
$
A = ax^2 + bxy + cy^2
$
has degree $n$, so this representation is standard.
From the count of coprime pairs $(x,y)$ in \cite[Proof of Lemma 7.3]{ABR}, it follows that the weighted number of standard representations of degree $n$ polynomials by forms of discriminant $D$ is $\gg$ 
\begin{equation*}
\sum_{(s,a,b,c) \in S(D;i)} \omega_{s} q^{\frac{n}{2} - \frac{\deg(D)}{4} + \frac{s}{2}} q^{\frac{n}{2} - \frac{\deg(D)}{4} - \frac{s}{2}} =
q^{n - \frac{\deg(D)}{2}} \sum_{(s,a,b,c) \in S(D;i)} \omega_{s}.
\end{equation*}

On the other hand, using \cref{QuadraticCongNot}, the number of such representations is $\ll$
\begin{equation}
\sum_{A \in \mathcal{M}_n} \rho(A;F)
\end{equation}
in view of \cref{QuadraticCor}.
By \cref{AverageNumberOfSolsQuadCongProp} the above is $\ll q^n |D|^{\epsilon}$ so 
\begin{equation*}
\sum_{(s,a,b,c) \in S(D;i)} \omega_{s} \ll q^{\frac{\deg(D)}{2}} |D|^{\epsilon} = |D|^{\frac{1}{2} + \epsilon} 
\end{equation*} 
as desired.
\end{proof}

\section{Primes in quadratic sequences}

We state and prove a uniform version of \cref{MainRes}.

\begin{thm} \label{UniformBHstatement}

Fix $0 < \delta \leq 1$. 
Let $p$ be an odd prime, let 
\begin{equation} \label{uniform-q-assumption}  
q > \max \left\{ \left( 16 p e \delta^{-1}  \right)^3 ,  \left (  96 e p^2 \delta^{-1}   \right )^2   \right\}
\end{equation} 
be a power of $p$, and let  
\begin{equation} \label{gammaPowerSaving}
\gamma = 1 - \frac{\delta}{12p-1}\min \left\{ 2 - 6  \log_q \left( 16 p e \delta^{-1}  \right)   , \frac{ 1}{2p} - \frac{\log_q  \left (  96 e p^2 \delta^{-1}   \right )}{p} \right\}. 
\end{equation}

Let $d$ be a nonnegative integer and $D$ a polynomial in $\mathbb F_q[u]$ with 
\begin{equation}
\deg(D) \leq 2d(1-\delta).
\end{equation}
Let $F(T)=T^2+D$, and assume that $F$ is irreducible. Then
\begin{equation} \label{UniformBHsavings}
\sum_{f \in \mathcal{M}_d} \Lambda(f^2+D) = \mathfrak{S}_q(F )q^d + O(q^{\gamma d}), \quad d \to \infty,
\end{equation}
with the implied constant depending only on $\delta$ and $q$.
\end{thm}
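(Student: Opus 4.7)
The plan is to apply the function field von Mangoldt convolution identity $\Lambda(N) = -\sum_{ab = N} \mu(a)\deg(b)$, valid for any non-unit $N \in \F_q[u]$, with $N = f^2 + D$, giving
\begin{equation*}
\sum_{f \in \mathcal{M}_d} \Lambda(f^2 + D) = -\sum_{f \in \mathcal{M}_d} \sum_{ab = f^2+D} \mu(a)\deg(b),
\end{equation*}
and to split the inner sum according to $\deg(a)$ into three subsums demarcated by an auxiliary parameter $U > d$: Range~I with $\deg(a) \leq d$, Range~II with $d < \deg(a) \leq U$, and Range~III with $\deg(a) > U$. Since $\deg(F(f)) = 2d$ for monic $f$ of degree $d$, the split of $\deg(a)$ is mirrored by a corresponding split of $\deg(b)$.

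In Range~I I would swap the order of summation and use $\rho_d(a; F) = q^{d - \deg(a)} \rho(a; F)$ from \cref{RhoFormula} (applicable since $\deg(a) \leq d$) to express the subsum as a weighted Dirichlet convolution of $\mu(a)\rho(a;F)$, with the weight $\deg(b) = 2d - \deg(a)$ decoupling into $2d$-times the plain $\mu\rho$ sum plus a $\deg(a)$-weighted correction. Both pieces are controlled by \cref{SingularSeriesProp}, producing the main term $\mathfrak{S}_q(F)\,q^d$ plus an error of size $O(q^{d/2 + o(d)})$, comfortably within the target $O(q^{\gamma d})$ since $\gamma$ in \eqref{gammaPowerSaving} lies in $(1/2, 1)$ under \eqref{uniform-q-assumption}.

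In Range~II I would swap the order of summation in the opposite direction, fixing $b$ with $2d - U \leq \deg(b) < d$, and parametrize $f = bg + c$ where $c$ ranges over the $\rho(b; F)$ residues mod $b$ satisfying $c^2 + D \equiv 0$ and $g$ ranges over monic polynomials of degree $d - \deg(b)$. Since $F(f)/b = bg^2 + 2cg + (c^2+D)/b =: F_{b,c}(g)$ is a separable quadratic polynomial in $g$ with coefficients in $\F_q[u]$ (separability coming from $\mathrm{disc}_g(F_{b,c}) = -4D \neq 0$), the inner sum becomes $\sum_g \mu(F_{b,c}(g))$, and \cref{MobiusVStraceThm}, applied with trivial trace function $t \equiv 1$ and with coefficient-size parameters $c_1, c_2$ read off from $\deg(D)$ and $\deg(b)$, delivers a power-saving bound. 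Summing over the $O(|D|^\epsilon)$ residues $c$ for each $b$ (bounded via \cref{AverageNumberOfSolsQuadCongProp}) and then over $b$ gives a total contribution $O(q^{\gamma d})$ for a suitable exponent depending on $U$, $\delta$ and $p$.

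Range~III, where $\deg(b)$ is very small so that either the $g$-sum in Range~II becomes too short or the uniformity in $\deg(D)$ renders \cref{MobiusVStraceThm} trivial, is the main obstacle. For this range I would dualize on the $f$-variable: apply \cref{QuadCongCor} to the congruence count $\rho_d(a;F)$ (now with $\deg(a) > d$), producing a main term $q^{d - \deg(a)}\rho(a; F)$ (which recombines with the tail of Range~I and is absorbed by \cref{SingularSeriesProp}) plus a completion term of shape
\begin{equation*}
q^{d-\deg(a)} \sum_{\substack{h \neq 0 \\ \deg(h) < \deg(a) - d}} e\!\left(\tfrac{-h u^d}{a}\right) \sum_{\substack{f \bmod a \\ F(f) \equiv 0 \,(\mathrm{mod}\, a)}} e\!\left(\tfrac{hf}{a}\right).
\end{equation*}
The inner character sum over congruence solutions is analyzed via binary quadratic form theory: by \cref{QuadraticCor} (definite case) and \cref{IndefQuadraticCor} (indefinite case), each pair $(a, f \bmod a)$ corresponds, with multiplicity equal to the inverse of a suitable weight, to a standard primitive representation of $a$ by a binary quadratic form $Q(X,Y) = \alpha X^2 + \beta XY + \gamma_0 Y^2$ of discriminant $D$, and \cref{ExponentiationChangeProp} converts the additive character $e(hf/a)$ into the Kloosterman fraction $e(h\bar x/y)$ for the representing pair $(x,y)$. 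Reassembling the sum, bringing $\mu(a) = \mu(Q(x,y))$ back inside, and swapping the sums over the form $Q$ and the representing vector $(x,y)$ identifies the Range~III contribution with a sum of exactly the shape controlled by \cref{MobiusBeatsKloostermanCor}; the uniform bounds \cref{NumberStandardDefiniteFormsBound} and \cref{NumberStandardIndefiniteFormsBound} ensure that the outer sum over equivalence classes of $Q$ loses only a factor $|D|^{1/2+\epsilon}$. The final step is calibration: the exponent \eqref{gammaPowerSaving} emerges by optimizing the auxiliary parameter $U$ together with the inner parameters (also denoted $\gamma$) of \cref{MobiusVStraceThm} and \cref{MobiusBeatsKloostermanCor} to balance the savings from Ranges~II and III against the coefficient-size penalties coming from $\deg(D)$ up to $2d(1-\delta)$; the threshold \eqref{uniform-q-assumption} on $q$ is precisely what is required for both theorems to admit parameter choices making their power-saving exponents positive.
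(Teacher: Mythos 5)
Your decomposition and your inventory of tools are essentially the paper's, but you have assigned the two key methods to the \emph{wrong} ranges, and with that assignment both non-main ranges fail. In your Range~II (with $d < \deg(a) \leq U$) you fix the cofactor $b$, write $f = bg + c$, and apply \cref{MobiusVStraceThm} to $\sum_g \mu(F_{b,c}(g))$. But the $g$-sum has length $q^{\deg(g)} = q^{\deg(a)-d}$, so at the lower end of the range ($\deg(a) = d+1, d+2, \dots$) it has $O(1)$ terms and no cancellation is possible: even granting square-root cancellation you get $q^{(\deg(a)-d)/2}$ per pair $(b,c)$, and after the triangle inequality over the $\asymp q^{2d-\deg(a)}|D|^{o(1)}$ pairs the contribution of $\deg(a)=d+1$ alone is $\asymp q^{d-1/2}$, which is not $O(q^{\gamma d})$ for any $\gamma<1$. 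This sub-range is precisely where one must exploit cancellation over the moduli $b$ themselves, which your triangle inequality over $(b,c)$ forfeits. Conversely, in your Range~III ($\deg(a) > U$) the completion via \cref{QuadCongCor} introduces $q^{\deg(a)-d}$ frequencies $h$; after the normalization $q^{d-\deg(a)}$ this forces you to bound $\sup_h |\sum_{A\in\mathcal M_{\deg(a)}}\mu(A)\sum_{f}e(hf/A)|$ by $q^{\gamma d}$, a sum with $\asymp q^{\deg(a)}$ terms from which \cref{MobiusBeatsKloostermanCor} can save at most a factor $q^{-\alpha_2\deg(a)/2}$ with $\alpha_2<\tfrac12$, leaving $\gg q^{3\deg(a)/4}\geq q^{3U/4}$ — far too large unless $U$ is barely above $d$ (and the hypotheses \cref{FirstDeghIneq}, \cref{SecondDeghIneq} of \cref{ExponentiationChangeProp} are only verified when $\deg(h)<\deg(a)-d<\epsilon d$ with $\epsilon<\delta/4$). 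There is no choice of $U$ that rescues the scheme, since Range~II always contains the fatal terms with $\deg(a)$ near $d$.

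The fix is to swap the two treatments, which is what the paper does: the completion/quadratic-form/Kloosterman machinery is used on the \emph{narrow} range $d<\deg(a)<(1+\epsilon)d$, where the number of dual frequencies $q^{\deg(a)-d}\leq q^{\epsilon d}$ is harmless and the conditions of \cref{ExponentiationChangeProp} hold; the direct method (fix $b$, sum the M\"obius of the separable quadratic in $g$) is used on the \emph{wide} range $\deg(a)\geq(1+\epsilon)d$, where the $g$-sum has length at least $q^{\epsilon d}$ and \cref{MobiusVStraceThm} produces a genuine power saving that, after optimizing the internal parameters $\gamma_1,\gamma_2$ against the coefficient penalties of size $\deg(D)\leq 2d(1-\delta)$, yields the exponent \eqref{gammaPowerSaving} under the hypothesis \eqref{uniform-q-assumption}. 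Your intuition that small $\deg(b)$ makes the $g$-sum ``too short'' is backwards — small $\deg(b)$ makes $\deg(g)=d-\deg(b)$ large — and this inversion is what led you to misidentify the hard range.
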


Note that $\gamma<1$ in view of \cref{uniform-q-assumption}, so \cref{UniformBHsavings} always gives a power saving. 
If $q >(96 ep^2 )^2 =  \max \{ (16pe)^3, (96 e p^2)^2 \} $ we can choose $\delta<1$ satisfying \cref{uniform-q-assumption},
and in this way obtain a power saving bound for $d$ sufficiently large depending on $\deg (D)$.  
Specifically, we obtain a power savings of  $\frac{ 1}{2p (12p-1) } \left ( 1- 2 \log_q \left( 96 e p^2 \right) \right) $ as this term always dominates in \cref{gammaPowerSaving}.

\begin{proof}

The identity $\Lambda = -1 * (\mu \cdot \deg)$ expressing von Mangoldt in terms of M\"{o}bius gives
\begin{equation} \label{ConvId}
\Lambda(F(f)) =  
- \sum_{k=1}^{2d} k \mathop{\sum_{A \in \mathcal{M}_k} \sum_{B \in \mathcal{M}_{2d-k}}}_{AB = F(f)} \mu(A)
\end{equation}
for any polynomial $f \in \mathcal{M}_d$. 
Summing \cref{ConvId} over all degree $d$ monic polynomials $f \in \F_q[u]$ we get
\begin{equation} \label{InitialBatemanHornSum}
\sum_{f \in \mathcal{M}_d} \Lambda(F(f)) = 
- \sum_{k=1}^{2d} k \sum_{f \in \mathcal{M}_d} 
\mathop{\sum_{A \in \mathcal{M}_k} \sum_{B \in \mathcal{M}_{2d-k}}}_{AB = F(f)} \mu(A).
\end{equation}

Fix $\epsilon = \epsilon(q) \in (0,\delta/4)$. The contribution of the range $(1 + \epsilon)d \leq k \leq 2d$ is 
\begin{equation} \label{QuadrapleMobSumEq}
- \sum_{(1+ \epsilon)d \leq k \leq 2d} k
\sum_{B \in \mathcal{M}_{2d-k}} \sum_{\substack{g \in \F_q[u] \\ \deg(g) < 2d-k \\ F(g) \equiv 0 \ \mathrm{mod} \ B}} \
\sum_{\substack{f \in \mathcal{M}_d \\ f \equiv g \ \mathrm{mod} \ B}}
\mu \left(\frac{f^2 + D}{B} \right).
\end{equation}
Writing $f = g + CB$ we get
\begin{equation*}
- \sum_{(1+ \epsilon)d \leq k \leq 2d} k
\sum_{B \in \mathcal{M}_{2d-k}} \sum_{\substack{g \in \F_q[u] \\ \deg(g) < 2d-k \\ F(g) \equiv 0 \ \mathrm{mod} \ B}} \  
\sum_{\substack{C \in \mathcal{M}_{k-d}}}
\mu \left( BC^2 + 2gC + \frac{g^2 + D}{B} \right).
\end{equation*}
We note that the quadratic polynomials 
\begin{equation}
G(T) = BT^2 + 2gT + \frac{g^2 + D}{B} \in \F_q[u][T]
\end{equation}
in the M\"{o}bius above are separable in the variable $T$ for every $B,g$.
Indeed the discriminant of $G$ in $T$ is
\begin{equation}
(2g)^2 - 4B\frac{g^2+D}{B} = 4g^2 - 4(g^2 + D) = -4D \neq 0.
\end{equation}

We can therefore apply \cref{MobiusVStraceThm} with
\begin{equation*}
c_1 = \max\{2d-k, \deg(D) - (2 d - k ) \}, \ c_2=0, \ g=1, \ \mathcal{I} = \mathcal{M}_{k-d}, \ \beta_1 = (1 + 2\gamma_1)^2
\end{equation*}
obtaining
\begin{equation*}
\begin{split} 
\sum_{\substack{C \in \mathcal{M}_{k-d}}}
\mu \left( BC^2 + 2gC + \frac{g^2 + D}{B} \right) &\ll  q^{ (k-d) (1-\alpha_1) }  \beta_1^{ 2  \max \{2d-k, \deg (D) - (2 d - k ) \} }  \\
&\leq q^{ (k-d) (1-\alpha_1) }\left(   \beta_1^{ 2 (2d-k)}+ \beta_1^{ 2 \deg (D) -2 (2d-k)} \right)  \\ &= q^{ (k-d) (1-\alpha_1) } \left(  (1+ 2\gamma_1)^ { 4 (2d-k)}  + (1+2\gamma_1)^{ 4 \deg (D) - 4 (2d-k)} \right) 
\end{split} 
\end{equation*}
for any $\alpha_1$ and $0 < \gamma_1 \leq 1$ satisfying \eqref{alpha-gamma-assumption}, namely
\begin{equation} \label{alpha-gamma-assumptionONE}
0 < \alpha_1 < \frac{1}{2p} + \frac{\log_q \gamma_1}{p} - 2\log_q(1 + 2\gamma_1).
\end{equation}

By \cref{AverageNumberOfSolsQuadCongProp}, the contribution of each $(1 + \epsilon)d \leq k \leq 2d$ to \cref{QuadrapleMobSumEq} is then 
\begin{equation*} 
\begin{split}
&\ll  q^{ (k-d) (1-\alpha_1) }  \left(  (1+ 2\gamma_1)^ { 4 (2d-k)}  + (1+2\gamma_1)^{ 4 \deg (D) - 4 (2d-k)} \right)  \sum_{B \in \mathcal{M}_{2d-k}} \sum_{\substack{g \in \F_q[u] \\ \deg(g) < 2d-k \\ F(g) \equiv 0 \ \mathrm{mod} \ B}}  1
\\ &  \ll q^{ (k-d) (1-\alpha_1) }  \left(  (1+ 2\gamma_1)^ { 4 (2d-k)}  + (1+2\gamma_1)^{ 4 \deg (D) - 4 (2d-k)} \right) \cdot|D|^{o(1)}  q^{ 2d-k}  \\
& =|D|^{o(1)} q^d  q^{ - \alpha_1 (k-d) } (1+ 2\gamma_1)^{ 4 (2d-k)} +|D|^{o(1)} q^d  q^{ - \alpha_1 (k-d) } (1+ 2\gamma_1)^{ 4 \deg (D) - 4 (2d-k)}. 
\end{split} 
\end{equation*}
The first term is exponentially decreasing as a function of $k$. Hence for $k \geq (1+\epsilon)d$ it is $\ll$
\begin{equation}\label{alpha-1-error-equation}
|D|^{o(1)}  q^{d} q^{ - \alpha_1 \epsilon d} (1+ 2 \gamma_1)^ { 4 (1-\epsilon)d}. 
\end{equation} 

Therefore, in order to obtain power savings, we need 
\begin{equation}\label{alpha-1-savings-equation} 
q^{ - \alpha_1 \epsilon } (1 + 2\gamma_1)^{ 4 (1-\epsilon)} < 1. 
\end{equation} 
If we assume for the moment that \cref{alpha-1-savings-equation} holds, 
using our assumptions that $ \epsilon \leq \frac{\delta}{4} \leq  \frac{1}{2} $, we get 
\begin{equation} 
q^{ - \alpha_1  } (1 + 2\gamma_1)^{ 4} < 1 
\end{equation}
so the second term $|D|^{o(1)} q^d  q^{ - \alpha_1 (k-d) } (1+ 2\gamma_1)^{ 4 \deg (D) - 4 (2d-k)}$
is exponentially decreasing as a function of $k$, and thus is $\ll$
\begin{equation}
|D|^{o(1)}  q^{d} q^{ - \alpha_1 \epsilon d} (1+ 2 \gamma_1)^ { 4 \deg (D) - 4  (1-\epsilon)d}
\end{equation} 
which is also bounded by \eqref{alpha-1-error-equation} since 
\begin{equation}
\deg (D) \leq 2 (1-\delta) d \leq 2 (1-\epsilon ) d.
\end{equation} 

Consequently, the contribution of the range $(1+\epsilon)d \leq k \leq 2d$ to \cref{InitialBatemanHornSum} is $\ll$
\begin{equation}\label{alpha-1-nice-equation} 
d^2 |D|^{o(1)}  q^{d} q^{ - \alpha_1 \epsilon d} (1+ 2 \gamma_1)^ { 4 (1-\epsilon)d} 
\end{equation} 
as long as we have \eqref{alpha-gamma-assumptionONE} and  \eqref{alpha-1-savings-equation}.  
We now specialize 
\begin{equation*} 
\epsilon = \frac{\delta}{12p-1}, \gamma_1 =\frac{ \epsilon}{ 2 (4 - 2 \epsilon) p }  =  \frac{\delta}{ 2 p (48 p-4 -2\delta) } , \alpha_1 = \frac{1}{2p} + \frac{\log_q \gamma_1}{p} - \frac{ 4 \gamma_1}{\log q} 
\end{equation*}
which satisfies the second inequality in \eqref{alpha-gamma-assumptionONE} because  
\begin{equation} 
\alpha_1 =  \frac{1}{2p} + \frac{\log_q \gamma_1}{p} - \frac{ 4 \gamma_1}{\log q} <  \frac{1}{2p} + \frac{\log_q \gamma_1}{p} -  2\log_q (1 +2 \gamma_1 ) 
\end{equation} 
and satisfies \eqref{alpha-1-savings-equation}, and thus the first inequality in \cref{alpha-gamma-assumptionONE}, because 
\begin{equation} \label{finding-alpha-1-savings} 
\begin{split}
q^{ - \alpha_1 \epsilon } (1 + 2\gamma_1)^{ 4 (1-\epsilon)} &<  q^{ - \alpha_1 \epsilon} e ^{ 8 \gamma_1 (1-\epsilon)} = 
q^{ - \frac{\epsilon}{2p} } \gamma_1 ^{ -\frac{\epsilon}{p}}  e^{ 4 \epsilon  \gamma_1  + 8 (1-\epsilon) \gamma_1} \\
&= q^{ - \frac{\epsilon}{2p} } \gamma_1 ^{ -\frac{\epsilon}{p}}  e^{ \frac{\epsilon}{p} } = \left( \frac{q^{\frac{1}{2}} \delta  }{ 2p e (48p-4-2\delta)} \right)^{-\frac{\epsilon}{p}}  
\end{split}
\end{equation}
 which is $<1$ since $q> (96 e p^2   \delta^{-1}) ^2 >  (2 pe (48p-4-2\delta) \delta^{-1} )^2$ by \cref{uniform-q-assumption}.
 
Applying \eqref{finding-alpha-1-savings}, and using our assumption $|D| \leq q^{2 d (1-\delta)}$,  
which guarantees that $d^2 |D|^{o(1)}$ is bounded by any exponential in $d$, 
we conclude that the total contribution of the range $(1+\epsilon)d \leq k \leq 2d$ to \cref{InitialBatemanHornSum} is $\ll$
\begin{equation} 
q^d  \left( \frac{ 2p e (48p-4-2\delta)}{q^{\frac{1}{2} } \delta  } \right)^{d \frac{\epsilon}{p} }  = \left( q^d \right)^{ 1 - \frac{ \delta} { 2 p (12p-1) } ( 1 - 2 \log_q (  \frac{ 2 p e (48p -4 -2\delta)}{\delta})) }.
\end{equation}  
This is bounded by $q^{ \gamma d}$, for our choice of $\gamma$ in \cref{gammaPowerSaving}.

The contribution of the range $k < (1 + \epsilon)d$ to \cref{InitialBatemanHornSum} is 
\begin{equation}
-\sum_{1 \leq k < (1 + \epsilon)d} k\sum_{\substack{A \in \mathcal{M}_k}} \mu(A) \rho_d(A; F)
\end{equation}
so by \cref{RhoFormula} from $k \leq d$ we get 
\begin{equation} \label{FirstMainTermEq}
-\sum_{k=1}^d kq^{d-k}\sum_{\substack{A \in \mathcal{M}_k}} \mu(A) \rho(A;F).
\end{equation}
By \cref{QuadCongCor}, from $d < k < (1 + \epsilon)d$ we have
\begin{equation} \label{TwoTermsMiddleRangeEq}
\begin{split}
&\sum_{d < k < (1 + \epsilon)d} -kq^{d-k} \sum_{A \in \mathcal{M}_k} \mu(A) \rho(A;F) + \\
&\sum_{d < k < (1 + \epsilon)d} -kq^{d-k} \sum_{\substack{h \in \F_q[u] \setminus \{0\} \\ \deg(h) < k-d}} e \left( \frac{-hu^d}{A} \right) 
\sum_{A \in \mathcal{M}_k} \mu(A) 
\sum_{\substack{f \in \F_q[u]/(A) \\ F(f) \equiv 0 \ \mathrm{mod} \ A }} e \left( \frac{hf}{A} \right). 
\end{split}
\end{equation}

Uniting the first term in \cref{TwoTermsMiddleRangeEq} with \cref{FirstMainTermEq}, and applying \cref{SingularSeriesProp}, we get 
\begin{equation} \label{TotalMainTermEq}
-\sum_{1 \leq k < (1 + \epsilon)d} kq^{d-k}\sum_{\substack{A \in \mathcal{M}_k}} \mu(A) \rho(A; F) = \mathfrak{S}_q(F)q^d + 
o \left( q^{\frac{d}{2}} \right)
\end{equation}
which gives us our main term and an admissible error term.
The second term in \cref{TwoTermsMiddleRangeEq} is $\ll$
\begin{equation} \label{SumBeforeQuadraticTransform}
d^2 \sup_{d < k < (1 + \epsilon)d} \ \sup_{\substack{h \in \F_q[u]}} \left|
\sum_{A \in \mathcal{M}_k} \mu(A) 
\sum_{\substack{f \in \F_q[u]/(A) \\ F(f) \equiv 0 \ \mathrm{mod} \ A}} e \left( \frac{hf}{A} \right) \right|.
\end{equation}

By \cref{QuadraticCor}, and \cref{ExponentiationChangeProp}, the sum in absolute value above, in the definite case, equals
\begin{equation} \label{DefiniteQuadraticTransform}
\sum_{\substack{a,b,c \in \F_q[u] \\ \deg (c) \geq \deg (a) > \deg (b) \\ 4ac - b^2 = 4D}}
\frac{1 + q \cdot \mathbf{1}_{\deg(c) > \deg(a)} }{q^2-1} 
\sum_{\substack{x,y \in \F_q[u] \\ \gcd(x,y) =1, \ y \neq 0 \\ ax^2 + bxy + cy^2 \in \mathcal{M}_k}} 
\mu(ax^2 + bxy + cy^2) e \left( \frac{h \overline{x}}{y} \right) 
\end{equation}
where we have excluded $y=0$ because then we have a factor of $\mu(ax^2)$ which is zero.
Indeed if it were nonzero, then $x$ would be a nonzero constant,
so from \cref{DegaDefiniteUpperBound} and our initial assumption on $\deg_u(F)$ we would get
\begin{equation} \label{yiszerocontreq}
\begin{split}
k &= \deg(A) = \deg(ax^2 + bxy + cy^2) = \deg(ax^2) \\
&= \deg(a) \leq \frac{\deg(D)}{2} = \frac{\deg_u(F)}{2} \leq \frac{2d(1 - \delta)}{2} < d
\end{split}
\end{equation}
which is impossible because we are in the range $d < k < (1 + \epsilon)d$.


Let us now check that the assumptions of \cref{ExponentiationChangeProp} are indeed met here,
namely that \cref{FirstDeghIneq} and \cref{SecondDeghIneq} hold.
Using the second line in \cref{yiszerocontreq}, and the fact that $\deg(b) < \deg(a)$ we get
\begin{equation}
\deg(h) < k - d = \deg(A) - d \leq \deg(A) - \deg(a) \leq \deg(A) - \deg(b) - 1
\end{equation}
so \cref{FirstDeghIneq} is verified.
By \cref{DefiniteDeg} and \cref{yiszerocontreq} we have
\begin{equation*}
\begin{split}
&\deg(A) + \deg(y) - \deg(a) - \deg(x) = \\ 
&\max\{\deg(a) + 2\deg(x), \deg(c) + 2\deg(y)\} + \deg(y) - \deg(a) - \deg(x) \geq \\ 
&\max\{\deg(x) + \deg(y), \deg(c) - d(1 - \delta) - \deg(x) \}.
\end{split}
\end{equation*}

If toward a contradiction \cref{SecondDeghIneq} fails, then the above is at most $\deg(h) + 1$ which is bounded by $k-d$.
The latter does not exceed $\epsilon d$, so
\begin{equation}
\deg(x),\deg(y) \leq \epsilon d, \ \deg(c) \leq d(1 - \delta) + \deg(x) + \epsilon d
\end{equation}
and thus
\begin{equation*}
\begin{split}
k &= \deg(A) = \deg(ax^2 + bxy + cy^2) \\ &= \max\{\deg(a) + 2\deg(x), \deg(c) + 2\deg(y)\}
\\ &\leq  \max\{d(1 - \delta) + 2\epsilon d, d(1 - \delta) + 4\epsilon d \} = d(1 - \delta + 4\epsilon) \leq k (1 - \delta + 4\epsilon)
\end{split}
\end{equation*}
a contradiction since $\epsilon < \delta/4$.
Our invocation of \cref{ExponentiationChangeProp} is thus justified.


We then apply the triangle inequality to the sum over $a,b,c,y$ in \cref{DefiniteQuadraticTransform}, to get $\ll$
\begin{equation} \label{abcxyDefiniteSumEq}
\sum_{\substack{a,b,c \in \F_q[u] \\ \deg (c) \geq \deg (a) > \deg (b) \\ 4ac - b^2 = 4D}} 
\sum_{\substack{y \in \F_q[u] \\ y \neq 0 \\ \deg(y) \leq \frac{k - \deg(c)}{2}}}
\left| \sum_{\substack{ x \in S_{a,b,c,y} \\ \gcd(x,y) = 1}} \mu(ax^2 + bxy + cy^2) e \left( \frac{h \overline{x}}{y} \right)  \right|
\end{equation}
where
\begin{equation}
S_{a,b,c,y} = \{ x \in \F_q[u] : ax^2 + bxy + cy^2 \in \mathcal{M}_k \}.
\end{equation}

We claim that for $a,b,c,y$ as above, the set $S_{a,b,c,y}$ is a disjoint union of at most two intervals in $\F_q[u]$, the degree of which is at most $(k-\deg(a))/2$.
To show this, recall from \cref{DefiniteDeg} that since $D = ac - b^2/4$ is definite, we have
\begin{equation}
\begin{split}
k &=  \deg(ax^2 + bxy + cy^2) \\
&= \max \{ \deg (a) + 2 \deg (x), \deg (c) + 2 \deg (y)\} > \deg(bxy).
\end{split}
\end{equation}

In case $\deg(c) + 2\deg(y) < k$, the leading coefficient $1$ of the monic polynomial $ax^2 + bxy + cy^2$ is the leading coefficient of $ax^2$, that is the leading coefficient of $a$ times the square of the leading coefficient of $x$.
Hence, if the leading coefficient of $a$ is not a square in $\F_q^\times$, the set $S_{a,b,c,y}$ is empty.
Otherwise, if the leading coefficient of $a$ is $\lambda^2$, for some $\lambda \in \F_q^\times$, then
\begin{equation}
S_{a,b,c,y} = \lambda^{-1} \cdot \mathcal{M}_{\frac{k - \deg(a)}{2}} \cup (-\lambda^{-1}) \cdot \mathcal M_{\frac{k - \deg(a)}{2} } .
\end{equation}

Suppose now $\deg(c) + 2\deg(y) = k$. 
If $cy^2$ is monic, we have
\begin{equation}
S_{a,b,c,y} = \left\{x \in \F_q[u] : \deg(x) < \frac{k-\deg(a)}{2} \right\}.
\end{equation}
If $cy^2$ is not monic, then the set $S_{a,b,c,y}$ is empty in case $k \not\equiv \deg(a) \ \mathrm{mod} \ 2$,
while in case $k \equiv \deg(a) \ \mathrm{mod} \ 2$, denoting by $f_0$ the leading coefficient of a polynomial $f \in \F_q[u]$, we get
\begin{equation}
S_{a,b,c,y} = \left\{x \in \F_q[u] : \deg(x) = \frac{k - \deg(a)}{2}, \ x_0^2 = \frac{1 -  c_0 y_0^2}{a_0} \right\}
\end{equation}
which is a disjoint union of two (possibly empty) intervals corresponding to polynomials with leading coefficient equal to one of the square roots of $(1-c_0y_0^2)/a_0$ in $\F_q^\times$. 
This concludes the verification of our claim in all cases. 

We use \cref{NumberStandardDefiniteFormsBound} to bound the number of triples $(a,b,c)$ in the outer sum of \cref{abcxyDefiniteSumEq}, and recall from \cref{LowerBoundDegCRem} that $\deg(c) \leq \deg(D)$, so it suffices to control
\begin{equation} \label{CoreSumEq}
|D|^{\frac{1}{2} + o(1)}
\max_{\substack{a,b,c \in \F_q[u] \\ 4ac - b^2 = 4D \\ \deg(b) < \deg(a) \leq \deg(c) \leq \deg(D)}}
\sum_{\substack{y \in \F_q[u] \\ y \neq 0 \\  \deg(y) \leq \frac{k - \deg(c)}{2} }}
\left| \sum_{\substack{ x \in \mathcal{I}_{a,b,c,y} \\ \gcd(x,y) = 1}} \mu(ax^2 + bxy + cy^2) e \left( \frac{h \overline{x}}{y} \right)  \right|
\end{equation}
where $\mathcal{I}_{a,b,c,y}$ is an interval in $\F_q[u]$ with
\begin{equation}
  \deg(\mathcal{I}_{a,b,c,y})  \leq \frac{k - \deg(a) }{2}
\end{equation}
for all $a,b,c,y$.

Fixing $a,b,c$, we define the polynomial
\begin{equation}
F_y(T) = aT^2 + byT + cy^2 \in \F_q[u][T]
\end{equation}
for any $y \in \F_q[u] \setminus \{0\}$, and note that its discriminant in the variable $T$ is
\begin{equation}
(by)^2 - 4acy^2 = y^2(b^2 - 4ac) = (2y)^2 \cdot (-D) \neq 0
\end{equation}
so $F_y$ is a separable polynomial in $T$. 

Setting 
\begin{equation} 
n = \frac{k - \deg(c)}{2} , \hspace{10pt}  c_1 = k ,  \hspace{10pt}  c_2 = - \frac{ k- \deg(a)}{2} , \hspace{10pt}  c_3 =\frac{k - \deg(a) }{2} 
\end{equation}  we see that the coefficient of $T^i$ in $F_y(T)$ has degree at most $c_1 + i c_2$ for $i \in \{0,1,2\}$.  
Therefore, \cref{MobiusBeatsKloostermanCor} allows us to bound \cref{CoreSumEq} by 
\begin{equation*} 
|D|^{\frac{1}{2} + o(1)}
\max_{\substack{a,b,c \in \F_q[u] \\ 4ac - b^2 = 4D \\ \deg(b) < \deg(a) \leq \deg(c) \leq \deg(D)}}
 q^{\frac{k - \deg(c)}{2} }  q^{\frac{k - \deg(a) }{2}   (1 - \alpha_2 ) }  \beta_2^{ 2k - 3  \frac{ k- \deg(a)}{2} }     (1+3\gamma_2)^{ \frac{k - \deg(c)}{2}   } \end{equation*}
where $\alpha_2$ and $0 < \gamma_2 \leq 1$ satisfy \cref{weird-alpha-gamma-assumption}, namely
\begin{equation} \label{weird-alpha-gamma-assumptionTWO}
0 < \alpha_2 < \min \left\{\frac{1}{2} - 10\log_q(1 + 2\gamma_2) + \log_q(1 + 3\gamma_2), \frac{1}{2p} +  \frac{\log_q \gamma_2}{p} - 2\log_q(1 + 2\gamma_2) \right\}
\end{equation}
and $ \beta_2 = (1+2\gamma_2)^2$.


We can separate the terms involving $k$ from those involving $a,b,c$, rewriting things as
\begin{equation*} 
\begin{split}
  &  |D|^{\frac{1}{2} + o(1)}
\max_{\substack{a,b,c \in \F_q[u] \\ 4ac - b^2 = 4D \\ \deg(b) < \deg(a) \leq \deg(c) \leq \deg(D)}} q^{ - \frac{ \deg (D)}{2} + \frac{ \deg (a)}{2} \alpha_2 }   \beta_2^{ 3 \frac{ \deg(a)}{2} }  (1 + 3 \gamma_2)^{ - \frac{ \deg (c)}{2 } }  
   q^{ k (1 - \frac{\alpha_2}{2} ) }  \beta_2^{ \frac{k }{2} }  
   (1+3\gamma_2)^{ \frac{k }{2} }. 
\end{split} 
\end{equation*}
Observe that the terms depending on $\deg (a)$ are increasing, 
and those depending on $\deg (c)$ are decreasing. 
We may therefore replace $\deg (a)$ by its upper bound $\frac{\deg (D)}{2}$ and $\deg (c)$ by its lower bound $\frac{\deg (D)}{2} $, obtaining
\begin{equation} \label{FiveLineEq}
\begin{split} 
&\ll q^{ (\frac{1}{2} + o(1)) \deg (D) } q ^{ - \frac{ \deg (D)}{2} +  \frac{ \deg (D)}{4} \alpha_2 }   \beta_2^{  \frac{ 3 \deg(D)}{4} }  (1 + 3 \gamma_2)^{ - \frac{ \deg (D)}{4} }     q^{ k (1 - \frac{\alpha_2}{2} ) }  \beta_2^{ \frac{k }{2} }  
   (1+3\gamma_2)^{ \frac{k }{2} } \\
   &=   \left( q^{    \frac{\alpha_2}{4} + o(1)}  \beta_2^{ \frac{3}{4}}  (1+3 \gamma_2)^{- \frac{1}{4}} \right)^{ \deg (D)}  \left( q^{  1 - \frac{\alpha_2}{2} } \beta_2^{ \frac{1}{2} } (1+3\gamma_2)^{ \frac{1}{2}} \right)^{ k }  \\
   &= \left( q^{   \frac{\alpha_2}{4} + o(1)}  (1+ 2 \gamma_2) ^{ \frac{3}{2}}  (1+3 \gamma_2)^{- \frac{1}{4}} \right)^{ \deg (D)}  \left( q^{  1 - \frac{\alpha_2}{2} } (1+ 2 \gamma_2) (1+3\gamma_2)^{ \frac{1}{2}} \right)^{ k }\\
     &\leq  \left( q^{  \frac{\alpha_2}{4} + o(1)}  (1+ 2 \gamma_2) ^{ \frac{3}{2}}  (1+3 \gamma_2)^{- \frac{1}{4}} \right)^{ 2 d (1-\delta) }  \left( q^{  1 - \frac{\alpha_2}{2} } (1+ 2 \gamma_2) (1+3\gamma_2)^{ \frac{1}{2}} \right)^{ d (1+\epsilon)  }\\
& =\left( q^{ 1+   \epsilon - \frac{ ( \delta+ \epsilon)  \alpha_2}{2} + o(1)   }  (1 + 2 \gamma_2)^{ (4 + \epsilon- 3 \delta) }  (1+3 \gamma_2)^{ \frac{ \delta+ \epsilon}{2} } \right)^d.
\end{split}  
\end{equation} 

We now specialize to 
\begin{equation*} 
\epsilon = \frac{ \delta}{ 12p-1}, \gamma_2 = \frac{ 3  \delta} { 4 (12p-1) -  (15p-4) \delta } , \alpha_2 = \frac{1}{2p} + \frac{ \log_q \gamma_2  }{p} -  \frac{ 4 \gamma_2}{\log q}    
\end{equation*} 
which satisfies the third inequality in \cref{weird-alpha-gamma-assumptionTWO} because
\begin{equation*}  
\alpha_2 =   \frac{1}{2p} +  \frac{ \log_q  \gamma_2}{p}- \frac{ 4\gamma_2}{\log q} 
 < \frac{1}{2p} +  \frac{ \log_q  \gamma_2}{p}- 2 \log_q (1 + 2 \gamma_2).
\end{equation*} 

To check that the second inequality in \cref{weird-alpha-gamma-assumptionTWO} holds, we first note that
\begin{equation}
q>  3.57\ldots =  e^{ \frac{84}{66 } }  \geq e ^{ \frac{84}{ 33 (p-1) }} = e^{ \frac{42}{ 4 (12p-1) -(15p-4)  }  \frac{2p} {p-1} }.
\end{equation}
As a result, since $0 < \gamma_2 \leq 1$ we have
\begin{equation*}
\gamma_2^{\frac{1}{p} } \frac{(1+2 \gamma_2)^8}{ (1+3 \gamma_2) } \leq  (1+2 \gamma_2)^7 \leq e^{ 14 \gamma_2} = e^{ \frac{ 42  \delta} { 4 (12p-1) - (15p-4) \delta } } \leq e^{ \frac{42}{ 4 (12p-1) -(15p-4)  }} <   q^{ \frac{p-1}{2p} }. 
\end{equation*}
Taking logarithms to base $q$ gives
\begin{equation}
\frac{ \log_q  \gamma_2}{p}+ 8 \log_q (1 + 2 \gamma_2)  -  \log_q (1+ 3 \gamma_2 )    \leq  \frac{p-1}{2p}
\end{equation}
or equivalently
\begin{equation*} 
\begin{split}
& \frac{1}{2p} +  \frac{ \log_q  \gamma_2}{p}- 2 \log_q (1 + 2 \gamma_2) \leq  \frac{1}{2} - 10\log_q ( 1+ 2 \gamma_2)  + \log_q (1+ 3 \gamma_2 )
\end{split} 
\end{equation*} 
which implies that the second inequality in \cref{weird-alpha-gamma-assumptionTWO} holds as the third does.

Multiplying \cref{FiveLineEq} by the factor $d^2$ from \cref{SumBeforeQuadraticTransform}, 
we absorb it, together with $q^{o(d)}$, into an exponential savings in $d$, so we get
\begin{equation*} 
\begin{split} d^2  &\left( q^{ 1+   \frac{\delta}{12p-1}  - \frac{ 6p \delta   \alpha_2}{12p-1} + o(1)  }  (1 + 2 \gamma_2)^{ (4 - \frac{ 36p-4}{ 12p-1}  \delta) }  (1+3 \gamma_2)^{ \frac{6p \delta }{12p-1}} \right)^d\\
\ll &\left( q^{ 1+   \frac{\delta}{12p-1}  - \frac{ 6p \delta   \alpha_2}{12p-1}   }   e ^{ 2\gamma_2  \left( 4 - \frac{ 36p-4}{ 12p-1}  \delta \right) + 3\gamma_2 \frac{6p \delta }{12p-1}  }  \right)^d\\ 
  = &\left( q^{ 1+   \frac{\delta}{12p-1}  - \frac{ 3 \delta }{12p-1}    }  \gamma_2^{ - \frac{6 \delta}{12p-1}}  e ^{  \frac{ 24 p \gamma_2 \delta}{12p-1} + 2\gamma_2  \left( 4 - \frac{ 36p-4}{ 12p-1}  \delta \right) + 3\gamma_2 \frac{6p \delta }{12p-1}  }  \right)^d\\ 
= &\left( q^{ 1 - \frac{2 \delta}{12p-1} } \gamma_2^{ - \frac{6\delta}{12p-1}}  e^{ \gamma_2   \frac{  8 (12p-1) - (30p-  8) \delta   }{ 12p-1} } \right)^d  \\
= &\left( q^{ 1 - \frac{2 \delta}{12p-1} } \gamma_2^{ - \frac{6\delta}{12p-1}} e^{ \frac{6 \delta}{12p-1}}  \right)^d \\
= &\left(q^d \right)^{ 1 - \frac{2 \delta }{12p-1} \left( 1-    3 \log_q ( e/\gamma_2 ) \right) } \\
= &\left(q^d \right)^{ 1 - \frac{2 \delta }{12p-1} \left( 1-    3 \log_q ( e \frac{   4 (12p-1)-  (15p-4)  \delta }{ 3  \delta} ) \right) }  .
\end{split}   
\end{equation*}  
In particular, by our definition of $\gamma$ in \cref{gammaPowerSaving}, this is bounded by $q^{ \gamma d}$.
This also verifies the first inequality in \cref{weird-alpha-gamma-assumptionTWO}.

In case $D$ is indefinite, we get from \cref{LowerNotsNotation}, \cref{IndefQuadraticCor} and \cref{ExponentiationChangeProp} that the sum in absolute value in \cref{SumBeforeQuadraticTransform} equals
\begin{equation}
\sum_{(s,a,b,c) \in S_D} \omega_s
\sum_{\substack{x,y \in \F_q[u] \\ \gcd(x,y)=1, y \neq 0 \\ \deg(x) \leq \frac{k}{2} - \frac{\deg(D)}{4} + \frac{s}{2} \\ \deg(y) \leq \frac{k}{2} - \frac{\deg(D)}{4} - \frac{s}{2} \\ ax^2 + bxy + cy^2 \in \mathcal{M}_k}}
\mu(ax^2 + bxy + cy^2) e \left( \frac{h\overline{x}}{y} \right).
\end{equation}
The condition $y \neq 0$ is justified here in the same way as in the definite case, only that here we need to refer to \cref{StandardabcIndefEq} instead of \cref{DegaDefiniteUpperBound}.

We check that the assumptions of \cref{ExponentiationChangeProp} are satisfied in this case.
From our initial assumption on $\deg_u(F)$ we get that
\begin{equation*}
\deg(h) \leq k-d-1 = \deg(A) - d-1 < \deg(A) - \frac{\deg(D)}{2} -1 \leq \deg(A) - \deg(b) - 1
\end{equation*}
so \cref{FirstDeghIneq} is satisfied.
Moreover we have
\begin{equation*}
\begin{split}
&\deg(h) + \deg(a) + \deg(x) - \deg(A) < k-d + \frac{\deg(D)}{2} - s + \frac{k}{2} - \frac{\deg(D)}{4} + \frac{s}{2} - k = \\
&\frac{\deg(D)}{4} - \frac{s}{2} - d + \frac{k}{2} \leq \frac{d(1 - \delta)}{2} - d + \frac{(1 + \epsilon)d}{2} = 
\frac{d(\epsilon - \delta)}{2} \leq -1 \leq \deg(y) - 1
\end{split}
\end{equation*}
so \cref{SecondDeghIneq} is satisfied as well.

Arguing as in the definite case, we arrive at
\begin{equation}
\sum_{(s,a,b,c) \in S_D} \omega_s
\sum_{\substack{y \in \F_q[u] \setminus \{0\} \\ \deg(y) \leq \frac{k}{2} - \frac{\deg(D)}{4} - \frac{s}{2}}}
\left| \sum_{\substack{x \in S_{s,a,b,c,y} \\ \gcd(x,y)=1}}
\mu(ax^2 + bxy + cy^2) e \left( \frac{h\overline{x}}{y} \right) \right|
\end{equation}
where
\begin{equation*}
S_{s,a,b,c,y} = \{x \in \F_q[u] : \deg(x) \leq \frac{k}{2} - \frac{\deg(D)}{4} + \frac{s}{2}, \ ax^2 + bxy + cy^2 \in \mathcal{M}_k \}.
\end{equation*}
We can rewrite the set above as
\begin{equation*}
S_{s,a,b,c,y} = \{x \in \F_q[u] : \deg(x) \leq \frac{k}{2} - \frac{\deg(D)}{4} + \frac{s}{2}, \
a_{(0)}x_{(0)}^2 + b_{(0)}x_{(0)}y_{(0)} + c_{(0)}y_{(0)}^2 = 1\}
\end{equation*}
where (for instance) $x_{(0)}, y_{(0)}$ are the coefficients of $x,y$ in degrees
\begin{equation}
\frac{k}{2} - \frac{\deg(D)}{4} + \frac{s}{2}, \quad \frac{k}{2} - \frac{\deg(D)}{4} - \frac{s}{2}
\end{equation}
as in \cref{LowerNotsNotation}. 
Therefore the set $S_{a,b,c,y}$ is a disjoint union of at most two intervals in $\F_q[u]$, 
corresponding to the solutions of the (possibly degenerate) quadratic equation in $x_{(0)}$.

As in the definite case it is thus enough to control
\begin{equation} \label{IndefiniteAbsoluteValuedSumEq}
\sum_{(s,a,b,c) \in S_D} \omega_s
\sum_{\substack{y \in \F_q[u] \setminus \{0\} \\ \deg(y) \leq \frac{k}{2} - \frac{\deg(D)}{4} - \frac{s}{2}}}
\left| \sum_{\substack{x \in \mathcal{I}_{s,a,b,c,y} \\ \gcd(x,y) = 1}}
\mu(F_y(x)) e \left( \frac{h\overline{x}}{y} \right) \right|
\end{equation}
where $F_y(T)$ is the separable polynomial  $aT^2 + byT + cy^2$, 
and $\mathcal{I}_{s,a,b,c,y}$ is an interval in $\F_q[u]$ with
\begin{equation}
\deg(\mathcal{I}_{s,a,b,c,y}) \leq \frac{k}{2} - \frac{\deg(D)}{4} + \frac{s}{2}.
\end{equation}

Applying \cref{MobiusBeatsKloostermanCor} with
\begin{equation*}
n = \frac{k}{2} - \frac{\deg(D)}{4} - \frac{s}{2}, \quad c_1 =k  , \quad c_2 = \frac{ \deg (D)}{4} - \frac{s}{2}  - \frac{k}{2} , \quad c_3 = \frac{k}{2} - \frac{\deg(D)}{4} + \frac{s}{2}
\end{equation*}
we get a bound of 
\begin{equation*}
q^{  \left( \frac{k}{2} - \frac{\deg(D)}{4} - \frac{s}{2}\right) +  \left( \frac{k}{2} - \frac{\deg(D)}{4} + \frac{s}{2} \right) (1- \alpha_2 ) } \beta_2^{ 2 k+3 \left(  \frac{ \deg (D)}{4} - \frac{s}{2}  - \frac{k}{2}\right) }  (\beta_2^{ 0} +1 )  (1+3\gamma_2)^{  \frac{k}{2} - \frac{\deg(D)}{4} - \frac{s}{2} }
\end{equation*}
for the sum over $y$ in \cref{IndefiniteAbsoluteValuedSumEq}. 
The above can be rewritten as
\begin{equation*} 
\begin{split}   
2 \left( q^{ \frac{ 2- \alpha_2 }{2}}  \beta_2^{ \frac{1}{2}}  (1+ 3 \gamma_2)^{\frac{1}{2} }  \right)^k  \left( q^{ - \frac{ 2 - \alpha_2}{4}  }     \beta_2^{ \frac{3}{4}  }  (1+ 3 \gamma_2) ^{ - \frac{1}{4} } \right)^{\deg (D)}  \left( q^{ - \frac{\alpha_2}{2} } \beta_2^{ - \frac{3}{2} } (1+ 3\gamma_2)^{ - \frac{1}{2} } \right)^s \\ \ll     \left( q^{ \frac{ 2- \alpha_2 }{2}}  \beta_2^{ \frac{1}{2}}  (1+ 3 \gamma_2)^{\frac{1}{2} }  \right)^k  \left( q^{ - \frac{ 2 - \alpha_2}{4} }      \beta_2^{ \frac{3}{4}  }  (1+ 3 \gamma_2) ^{ - \frac{1}{4} } \right)^{\deg (D)}
\end{split} 
\end{equation*}
since $s\geq 0$ and the term being raised to the power $s$ is a product of factors that are individually at most $1$, hence is bounded by $1$.  

Summing over $S_D$ we get from \cref{NumberStandardIndefiniteFormsBound} that
\begin{equation*}   
\begin{split} 
&\left( q^{ \frac{ 2- \alpha_2 }{2}}  \beta_2^{ \frac{1}{2}}  (1+ 3 \gamma_2)^{\frac{1}{2} }  \right)^k  \left( q^{ - \frac{ 2 - \alpha_2}{4}   }   \beta_2^{ \frac{3}{4}  }  (1+ 3 \gamma_2) ^{ - \frac{1}{4}}  \right)^{\deg (D)}  \sum_{(s,a,b,c) \in S_D} \omega_s = \\
&\left( q^{ 1 - \frac{\alpha_2 }{2}}  \beta_2^{ \frac{1}{2}}  (1+ 3 \gamma_2)^{\frac{1}{2} }  \right)^k  \left( q^{ \frac{  \alpha_2}{4} +  o(1)  }  \beta_2^{ \frac{3}{4}  }  (1+ 3 \gamma_2) ^{ - \frac{1}{4} } \right)^{\deg (D)}.
\end{split} 
\end{equation*}
This is identical to the bound obtained in the definite case, 
more specifically on the second line of \cref{FiveLineEq}.
We may thus give the same argument (choosing the same $\epsilon, \alpha_2, \gamma_2$), and again obtain a bound which is $\ll q^{ \gamma d}$.
\end{proof}

\begin{remark} 

The optimal value of $\epsilon$ depends on $q,p,\delta$. 
As $\delta$ becomes smaller, the contribution of the range $d < k < (1+\epsilon)d$ becomes more difficult to bound, forcing us to lower $\epsilon$. As $q$ grows, this contribution becomes easier to bound (even compared to the contribution from $k \geq (1+\epsilon ) d$), allowing us to raise $\epsilon$. 

There is likely no closed-form formula for the exact optimal value of $\epsilon$, and if there was it would make our formulas distressingly complicated, so we have chosen to approximate. 
Specifically, we have chosen $\epsilon$ to roughly optimize the range of $q,\delta$ in which we have some savings, rather than to optimize the amount of savings when $q$ is large and $\delta \sim 1$. (This would require a much larger value of $\epsilon$, close to $\frac{1}{4p+1}$, obtaining power savings tending to $\frac{1}{ 8p^2 +4p}$ as $q\to \infty$ and $\delta \to 1$ with $p$ fixed. )

The specific nature of our choice of $\epsilon$ is that it makes the first lower bound in \eqref{uniform-q-assumption} proportional to $\delta^{-3}$. We have chosen $\epsilon$ this way because making that lower bound proportional to $\delta^{-2}$ is impossible, 
requiring $\epsilon=0$. We could choose an intermediate growth rate (the optimum should be roughly $\delta^{-2} \log (\delta^{-1})$), but this would again give a messier formula, for a mild gain. 
\end{remark}

\section{Trace functions vs Primes}

\begin{lem} \label{MobiusProgressionFormula}

For a prime $\pi \in \F_q[u]$ and an integer $k \geq \deg(\pi)$ we have
\begin{equation} \label{TheThreeMobCasesEq}
\sum_{\substack{A \in \mathcal{M}_k \\ \pi \mid A}} \mu(A) =
\begin{cases}
-1 &k \equiv 0 \mod \deg(\pi) \\
q &k \equiv 1 \mod \deg(\pi) \\
0 &\text{otherwise}.
\end{cases}
\end{equation}

\end{lem}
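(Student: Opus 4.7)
The plan is to reduce the sum to a standard M\"obius sum constrained to be coprime to $\pi$, then evaluate this via the identity $\frac{1}{\zeta_{\F_q[u]}(s)} \cdot \frac{1}{1-|\pi|^{-s}} \cdot (1-|\pi|^{-s}) = \prod_{\pi'} (1-|\pi'|^{-s})$ restricted to primes other than $\pi$.

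First I would parametrize the sum. Any $A \in \mathcal{M}_k$ divisible by $\pi$ can be written uniquely as $A = \pi B$ with $B \in \mathcal{M}_{k - \deg(\pi)}$ (well-defined since $k \geq \deg(\pi)$). Multiplicativity of $\mu$ gives $\mu(\pi B) = \mu(\pi) \mu(B) = -\mu(B)$ when $\gcd(B,\pi)=1$, and $\mu(\pi B) = 0$ when $\pi \mid B$. Therefore
\begin{equation*}
\sum_{\substack{A \in \mathcal{M}_k \\ \pi \mid A}} \mu(A) \;=\; -\sum_{\substack{B \in \mathcal{M}_{k-\deg(\pi)} \\ \gcd(B,\pi)=1}} \mu(B).
\end{equation*}

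Next I would evaluate the inner sum via generating functions. Setting $t = q^{-s}$ and using the Euler product for $\zeta_{\F_q[u]}(s) = (1-qt)^{-1}$, the Dirichlet series for the $\pi$-coprime M\"obius sum equals
\begin{equation*}
\sum_{\substack{B \text{ monic}\\ \gcd(B,\pi)=1}} \mu(B)\, t^{\deg(B)} \;=\; \prod_{\pi' \neq \pi}(1 - t^{\deg(\pi')}) \;=\; \frac{1 - qt}{1 - t^{\deg(\pi)}} \;=\; (1-qt)\sum_{j \geq 0} t^{j\deg(\pi)}.
\end{equation*}
Extracting the coefficient of $t^{n}$ with $n = k - \deg(\pi)$, one sees that $\sum_{\substack{B \in \mathcal{M}_n \\ \gcd(B,\pi)=1}} \mu(B)$ equals $1$ if $n \equiv 0 \pmod{\deg(\pi)}$ (contribution from the $1$ in $(1-qt)$), equals $-q$ if $n \equiv 1 \pmod{\deg(\pi)}$ (contribution from the $-qt$), and vanishes otherwise.

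Finally I would translate residues: $n \equiv 0 \pmod{\deg(\pi)}$ iff $k \equiv 0 \pmod{\deg(\pi)}$, and $n \equiv 1 \pmod{\deg(\pi)}$ iff $k \equiv 1 \pmod{\deg(\pi)}$. Multiplying by the minus sign from the parametrization step yields exactly the three cases in \cref{TheThreeMobCasesEq}. There is no real obstacle here; the only point requiring care is the boundary case $k = \deg(\pi)$, where $n = 0$ and the sum collapses to $-\mu(1) = -1$, which is consistent with the first case ($k \equiv 0 \pmod{\deg(\pi)}$).
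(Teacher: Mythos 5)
Your proof is correct, and it takes a genuinely different route from the paper after the shared initial step. Both proofs begin by writing $A = \pi B$ and using $\mu(\pi B) = -\mu(B)$ for $\pi \nmid B$ (and $0$ otherwise) to reduce to $-\sum_{B \in \mathcal{M}_{k-\deg\pi},\,\gcd(B,\pi)=1}\mu(B)$. From there, the paper rewrites the $\pi$-coprime sum by adding and subtracting the $\pi$-divisible part, producing the recursion
\[
\sum_{\substack{A \in \mathcal{M}_k \\ \pi \mid A}} \mu(A) \;=\; \sum_{\substack{B \in \mathcal{M}_{k-\deg\pi} \\ \pi \mid B}} \mu(B) \;-\; \sum_{C \in \mathcal{M}_{k-\deg\pi}} \mu(C),
\]
and inducts on $k$, invoking the known evaluation of $\sum_{C\in\mathcal{M}_n}\mu(C)$ from Rosen as the input. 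You instead evaluate the $\pi$-coprime M\"obius sum directly through the Euler product $(1-qt)/(1-t^{\deg\pi})$ and extract the coefficient of $t^{k-\deg\pi}$. Your generating-function argument is more self-contained (it derives the needed coefficient values from $\zeta_{\F_q[u]}(s)^{-1} = 1-qt$ rather than citing Rosen), and yields a closed formula in one pass rather than an induction; the paper's approach trades that for avoiding formal power series manipulations. One small point worth flagging, which applies equally to the lemma as stated and to both proofs: the two congruence cases $k \equiv 0$ and $k \equiv 1 \pmod{\deg\pi}$ are mutually exclusive only when $\deg\pi \geq 2$; for $\deg\pi = 1$ the two residues from $(1-qt)$ and the geometric series combine, and the sum works out to $q-1$ for $k \geq 2$. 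This does not affect the way the lemma is used (only an $O(q)$ bound is needed), but it is good that you identified coefficient extraction overlaps as the delicate point.
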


\begin{proof}

We rewrite our sum as
\begin{equation}
\begin{split}
\sum_{B \in \mathcal{M}_{k- \deg(\pi)}} \mu(B\pi) &=
\sum_{\substack{B \in \mathcal{M}_{k- \deg(\pi)} \\ \pi \nmid B}} \mu(B\pi) = 
\mu(\pi) \sum_{\substack{B \in \mathcal{M}_{k- \deg(\pi)} \\ \pi \nmid B}} \mu(B) \\
&= \sum_{\substack{B \in \mathcal{M}_{k - \deg(\pi)} \\ \pi \mid B}} \mu(B) - \sum_{C \in \mathcal{M}_{k - \deg(\pi)}} \mu(C)
\end{split}
\end{equation}
and induct on $k$.
For the base case $k < 2\deg(\pi)$ the last sum over $B$ above is empty, 
so we are only left with minus the sum over $C$ which equals $-1$ in case $k = \deg(\pi)$,
equals $q$ in case $k = \deg(\pi) + 1$, and otherwise vanishes by \cite[Exercise 2.12]{Ros}.
This matches the right hand side of \cref{TheThreeMobCasesEq}, so the base case is established.
If $k \geq 2\deg(\pi)$ then the sum over $C$ vanishes, and the lemma follows from the induction hypothesis.
\end{proof}

We shall now deduce \cref{ShiftedCharacterVsPrimesCor} from \cref{MobiusProgressionFormula}, \cref{intro-trace-interval-bound}, and \cref{LinearMobiusVStraceFunctionThm}.

\begin{proof}

The identity $\Lambda = (\mu \cdot \deg) * (-1)$ gives
\begin{equation} \label{ConvolutedMangoldtCharacterEq}
\sum_{f \in \mathcal{M}_n} \chi(f+h) \Lambda(f) =
-\sum_{k=1}^n k \sum_{A \in \mathcal{M}_k} \mu(A) \sum_{B \in \mathcal{M}_{n-k}} \chi(AB + h).
\end{equation} 

For any $k \leq \zeta n$ and any $A \in \mathcal{M}_k$ that is not divisible by $\pi$, the contribution to \cref{ConvolutedMangoldtCharacterEq} is $\ll$
\begin{equation}
n \sum_{\substack{C \in \F_q[u] \\ \deg(C) < n-k}} \chi(AC + AT^{n-k} + h)
\end{equation}
where $C = B - T^{n-k}$.
Since $\pi \nmid A$, we are in the situation of \cref{DirichletTraceFuncEx},
so we can invoke \cref{intro-trace-interval-bound} and get that the above is $\ll nq^{\frac{n-k}{2}}|\pi|^{\log_q(3)}$.
The contribution from all such $k$ and $A$ is thus $\ll$ 
\begin{equation} \label{LowKrangeBoundEq}
\max_{k \leq \zeta n} n^2 |\mathcal{M}_k| q^{\frac{n-k}{2}} |\pi|^{\log_q(3)} \leq n^2 q^{\frac{n(1 + \zeta)}{2}} |\pi|^{\log_q(3)} \ll q^{\frac{n(1 + \zeta + 2\epsilon)}{2}} |\pi|^{\log_q(3)}
\end{equation}
for any $\epsilon > 0$.

The contribution to \cref{ConvolutedMangoldtCharacterEq} of all $\deg(\pi) \leq k \leq \zeta n$ and all $A \in \mathcal{M}_k$ that are divisible by $\pi$ is $\ll$
\begin{equation*}
\begin{split}
\max_{k \geq \deg(\pi)} n^2 \left| \sum_{\substack{A \in \mathcal{M}_k \\ \pi \mid A}} \mu(A) \sum_{B \in \mathcal{M}_{n-k}} \chi(h)\right| &=
\max_{k \geq \deg(\pi)} n^2 q^{n-k} |\chi(h)| \left| \sum_{\substack{A \in \mathcal{M}_k \\ \pi \mid A}} \mu(A) \right| \\
&\ll \max_{k \geq \deg(\pi)} n^2q^{n-k} \ll n^2q^{n-\deg(\pi)}
\end{split}
\end{equation*} 
in view of \cref{MobiusProgressionFormula}.

The contribution of all $k \geq \zeta n$ to \cref{ConvolutedMangoldtCharacterEq} is $\ll$
\begin{equation*}
\max_{k \geq \zeta n} n^2 \left| \sum_{B \in \mathcal{M}_{n-k}} \sum_{A \in \mathcal{M}_k} \mu(A)  \chi(AB + h) \right| \ll
\max_{\substack{k \geq \zeta n \\ B \in \mathcal{M}_{n-k}}} n^2q^{n-k} \left| \sum_{A \in \mathcal{M}_k} \mu(A)  \chi(AB + h) \right|
\end{equation*} 
and by \cref{LinearMobiusVStraceFunctionThm} this is $\ll$
\begin{equation*}
\max_{k \geq \zeta n} n^2q^{n-k}|\mathcal{M}_k|^{1 - \frac{1}{2p} + \frac{\log_q(2ep)}{p}}|\pi|^{\log_q(3)} \ll
\max_{k \geq \zeta n} q^{n - \frac{k}{2p} + \frac{k\log_q(2ep)}{p}+ \epsilon n}|\pi|^{\log_q(3)}
\end{equation*}
for any $\epsilon > 0$. 
Since $q > 4e^2p^2$ by assumption, the above is largest once $k$ is as small as possible, so we put $k = \zeta n$ and get
\begin{equation} \label{HighKrangeBoundEq}
q^{n(1 - \frac{\zeta}{2p} + \frac{\zeta \log_q(2ep)}{p} + \epsilon)}|\pi|^{\log_q(3)}.
\end{equation}

One readily checks that our choice of $\zeta$ in \cref{TheZetaConstantEq} is such that the bounds in \cref{LowKrangeBoundEq} and \cref{HighKrangeBoundEq} coincide, giving the final bound
\begin{equation}
q^{\frac{n(1 + \zeta)}{2} + \epsilon n} |\pi|^{\log_q(3)} + q^{n(1 + \epsilon) - \deg(\pi)}.
\end{equation}

\end{proof}

\begin{prop} \label{CompleteExponentialInfinitameSum}

Let $\pi \in \F_q[u]$ be a prime, and let $t \colon \F_q[u]/(\pi) \to \mathbb{C}$ be an infinitame trace function arising from a sheaf $\mathcal F$ whose geometric monodromy representation does not admit the trivial representation $\overline{\mathbb{Q}_\ell}$ as a quotient.
For an integer $n \geq \deg(\pi)$ and a polynomial $h \in \F_q[u]$ we then have 
\begin{equation}
\left| \sum_{\substack{f \in \F_q[u] \\ \deg(f) < n}} t(f) e\left( \frac{hf}{\pi}\right) \right| \leq c(t)q^{n}|\pi|^{-\frac{1}{2}}.
\end{equation}

\end{prop}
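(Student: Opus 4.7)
The plan is to reduce the incomplete sum to a complete sum over $\F_q[u]/(\pi)$ using periodicity, recognize the resulting sum as a Grothendieck-Lefschetz trace of an appropriate tensor product sheaf, show that all cohomology except $H^1_c$ vanishes, and bound $\dim H^1_c$ by the Euler-Poincar\'e formula, concluding via Deligne's Riemann Hypothesis.

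First, observe that both $t(f)$ and $e(hf/\pi)$ depend only on the residue class of $f$ mod $\pi$: for $t$ this is built into the definition, while for the exponential we note that if $f' - f = \pi g$ then $e(h(f'-f)/\pi) = e(hg) = 1$ since $hg$ is a polynomial. Because $n \geq \deg(\pi)$, each residue class mod $\pi$ contains exactly $q^{n-\deg(\pi)}$ polynomials of degree less than $n$, so
\begin{equation*}
\sum_{\substack{f \in \F_q[u] \\ \deg(f) < n}} t(f) e\!\left( \frac{hf}{\pi}\right) = q^{n-\deg(\pi)} \sum_{x \in \F_q[u]/(\pi)} t(x)\,e\!\left(\frac{hx}{\pi}\right).
\end{equation*}

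Next, set $\kappa = \F_q[u]/(\pi)$ and identify $x \mapsto e(hx/\pi)$ as an additive character $\psi_h$ of $\kappa$, which is trivial when $h \equiv 0 \bmod \pi$ and nontrivial otherwise. In the nontrivial case $\psi_h$ is the trace function of an Artin-Schreier sheaf $\mathcal L_\psi(\alpha T)$ for some $\alpha \in \kappa^\times$ (in the trivial case take the constant sheaf). By \cref{general-tensor-product}(1) the product $t(x)\psi_h(x)$ is the trace function of $\mathcal G = \mathcal F \otimes \mathcal L_\psi(\alpha T)$, and the Grothendieck-Lefschetz trace formula yields
\begin{equation*}
\sum_{x \in \kappa} t(x)\psi_h(x) = \sum_{i=0}^{2} (-1)^i \tr\!\left(\Frob_{|\kappa|},\, H^i_c(\mathbb A^1_{\overline\kappa}, \mathcal G)\right).
\end{equation*}

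I claim $H^0_c$ and $H^2_c$ both vanish. The sheaf $\mathcal F$ has no finitely supported sections by the infinitame hypothesis; tensoring with a rank-one sheaf lisse on all of $\mathbb A^1$ preserves this, so $H^0_c(\mathbb A^1_{\overline\kappa}, \mathcal G) = 0$. For $H^2_c$, identify it with the geometric monodromy coinvariants of $\mathcal G_{\overline\eta}$. When $h \equiv 0 \bmod \pi$ this is $(\mathcal F_{\overline \eta})_{\pi_1^{\mathrm{geom}}}$, which vanishes by hypothesis on $\mathcal F$. When $h \not\equiv 0 \bmod \pi$, a nonzero quotient of $\mathcal G_{\overline\eta}$ to the trivial representation would give a surjection $\mathcal F_{\overline\eta} \twoheadrightarrow \mathcal L_\psi(-\alpha T)$; but $\mathcal F$ is tamely ramified at $\infty$ (part of the infinitame assumption), so every geometric quotient of $\mathcal F_{\overline \eta}$ must be tame at $\infty$, contradicting \cref{AS-sheaf-properties}(3) which says $\mathcal L_\psi(-\alpha T)$ has slope $1$ there.

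Therefore the trace is just $-\tr(\Frob_{|\kappa|}, H^1_c)$, so it suffices to bound $\dim H^1_c$ and the Frobenius weights. By \cref{EP2},
\begin{equation*}
\dim H^1_c = -\chi(\mathbb A^1_{\overline\kappa}, \mathcal G) = -r(t) + \sum_{x \in |\mathbb A^1_{\overline\kappa}|}\cond_x(\mathcal G) + \swan_\infty(\mathcal G).
\end{equation*}
Tensoring with a lisse rank-one sheaf preserves drop and Swan conductor at every finite point, so $\cond_x(\mathcal G) = \cond_x(\mathcal F)$ there, and summing over geometric points gives $c(t)$. At infinity, $\mathcal F$ is tame and $\mathcal L_\psi(\alpha T)$ has slope at most $1$, so \cref{SwanTensorProdBoundCor} yields $\swan_\infty(\mathcal G) \leq r(t)$. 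Hence $\dim H^1_c \leq -r(t) + c(t) + r(t) = c(t)$. Since $\mathcal F$ is mixed of nonpositive weights and the Artin-Schreier sheaf is punctually pure of weight zero (\cref{AS-sheaf-properties}(4)), $\mathcal G$ is mixed of nonpositive weights, and Deligne's Riemann Hypothesis bounds each Frobenius eigenvalue on $H^1_c$ by $|\pi|^{1/2}$. Combining all estimates,
\begin{equation*}
\left| \sum_{\deg(f)<n} t(f)e\!\left(\tfrac{hf}{\pi}\right) \right| \leq q^{n-\deg(\pi)}\cdot c(t)\cdot |\pi|^{1/2} = c(t)\, q^n |\pi|^{-1/2}.
\end{equation*}
There is no serious obstacle here; the only subtlety worth double-checking is that the tameness-vs-wildness argument correctly handles the case $h \not\equiv 0 \bmod \pi$ without needing an extra assumption on $\mathcal F$.
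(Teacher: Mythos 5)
Your proof is correct and follows essentially the same route as the paper's: reduce to the complete sum, apply Grothendieck--Lefschetz, show $H^0_c$ and $H^2_c$ vanish (the $h=0$ case via the no-trivial-quotient hypothesis, the $h\neq 0$ case via the tame-vs-wild ramification at $\infty$), and then bound $\dim H^1_c \leq c(t)$ using \cref{EP2}. The only cosmetic difference is that you bound $\swan_\infty(\mathcal G)$ directly via \cref{SwanTensorProdBoundCor} rather than routing through the Fourier conductor identities of \cref{conductor-little-lemmas}, which gives $\dim H^1_c \leq c(t)$ as an inequality rather than the paper's exact equality; either suffices for the stated bound.
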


\begin{proof}

In every residue class mod $\pi$ there are $q^{n - \deg(\pi)}$ polynomials of degree less than $n$, so
\begin{equation} \label{EquidistributionInResidueClassesEq}
\left| \sum_{\substack{f \in \F_q[u] \\ \deg(f) < n}} t(f) e\left( \frac{hf}{\pi}\right) \right| =
\frac{q^n}{|\pi|}\left| \sum_{f \in \F_q[u]/(\pi)} t(f) e\left( \frac{hf}{\pi}\right) \right|.
\end{equation}
Setting $\mathcal{F}_{h} = \mathcal{F} \otimes \mathcal{L}_\psi(hx)$, and using the Grothendieck-Lefschetz trace formula, we get
\begin{equation} \label{TriangleGrothendieckLefschetzEq}
\left| \sum_{f \in \F_q[u]/(\pi)} t(f) e \left( \frac{hf}{\pi} \right) \right| = 
\left| \sum_{y \in \mathbb{A}^1(\F_q[u]/(\pi))} t_{\mathcal{F}_h}(y) \right| \leq
\sum_{i=0}^2 \left| \text{tr}(\mathrm{Frob}_{|\pi|} , H^i_c(\mathbb{A}^1_{\overline{\F_q[u]/(\pi)}}, \overline{\mathcal{F}_h}) ) \right|
\end{equation}
where $\overline{\mathcal{F}_h}$ is the base change of $\mathcal{F}_h$ to the algebraic closure of $\F_q[u]/(\pi)$.

For $i = 0$ there is no cohomology by the fact that $\mathcal{F}$ has no finitely supported sections, \cref{AS-sheaf-properties}(5), and \cref{general-tensor-product}(2).
For $i = 2$ the cohomology equals the geometric monodromy coinvariants of $\mathcal{F}_h$. 
These vanish for $h=0$ in view of our assumption that the geometric monodromy representation of $\mathcal{F} = \mathcal{F}_0$ does not admit trivial quotients, and also vanish for $h \neq 0$ because $\mathcal{F}$ is infinitame hence its geometric monodromy representation does not have Aritn-Schreier quotients.

Consequently, using \cref{EP2} and \cref{AffineEulerChar} we get
\[\dim H^1_c(\mathbb{A}^1_{\overline{\F_q[u]/(\pi)}}, \overline{\mathcal{F}_h}) = 
-\chi(\mathbb{A}^1_{\overline{\F_q[u]/(\pi)}}, \overline{\mathcal{F}_h}) = 
\swan_{\infty}(\mathcal{F}_h) - \rank(\mathcal{F}_h) + \sum_{x \in | \mathbb{A}^1 |}\cond_x(\mathcal{F}_h)  .\]
From \cref{general-tensor-product}(5) and \cref{conductor-little-lemmas}(5) we get that the above equals 
\begin{equation} \label{FourTermsCounductorEq}
\swan_\infty(\mathcal{F}_h) - \rank(\mathcal F) +
c_F(\mathcal{F}_h)  - \swan'_\infty(\mathcal{F}_h).
\end{equation}

In case $h = 0$ the above reduces to $c(t) - r(t)$ becuase $\mathcal{F}$ is infinitame.
In case $h \neq 0$ we still have $c_F(\mathcal{F}_h) = c_F(\mathcal{F})$ in view of \cref{ComplexesInvarsDef},
and since $\mathcal{F}$ is infinitame, the local monodromy at $\infty$ of $\mathcal{F}_h$ is a direct sum of $\rank(\mathcal F)$ copies of the local monodromy of $\mathcal{L}_{\psi}(hx)$, so \cref{FourTermsCounductorEq} equals
\begin{equation*}
c_F(\mathcal{F}) + \rank(\mathcal{F})\mathrm{slope}_\infty(\mathcal{L}_\psi(hx)) - \rank(\mathcal{F})\max\{\mathrm{slope}_\infty(\mathcal{L}_\psi(hx))-1,0\} - \rank(\mathcal F)
\end{equation*}
where the slopes are taken with respect to the representation of the inertia group $I_\infty$ on the generic fiber.
Since $\mathrm{slope}_\infty(\mathcal{L}_\psi(hx)) = 1$ by \cref{AS-sheaf-properties}(3), the above equals $c(t)$.

Since $\mathcal{F}_h$ is mixed of nonpositive weights by \cref{general-tensor-product}(4), 
each eigenvalue of $\mathrm{Frob}_{|\pi|}$ acting on $ H^1_c(\mathbb{A}^1_{\overline{\F_q[u]/(\pi)}}, \overline{\mathcal{F}_h})$ is of absolute value at most $|\pi|^{\frac{1}{2}}$ by Deligne's bound,
so \cref{TriangleGrothendieckLefschetzEq} is bounded by
\begin{equation}
\dim H^1_c(\mathbb{A}^1_{\overline{\F_q[u]/(\pi)}}, \overline{\mathcal{F}_h}) |\pi|^{\frac{1}{2}} \leq c(t) |\pi|^{\frac{1}{2}}.
\end{equation}
It follows from \cref{EquidistributionInResidueClassesEq} that our original sum is bounded by
\begin{equation}
q^n|\pi|^{-1}c(t)|\pi|^{\frac{1}{2}} = c(t)q^n|\pi|^{-\frac{1}{2}}
\end{equation}
as required.
\end{proof}

\begin{cor} \label{PolyaVinogradovLem}

With assumptions as above, for an integer $0 \leq d < \deg(\pi)$ we have 
\begin{equation}
\left| \sum_{\substack{f \in \F_q[u] \\ \deg(f) < d}} t(f) \right| \leq c(t)|\pi|^{\frac{1}{2}}.
\end{equation}

\end{cor}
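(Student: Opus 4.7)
The plan is to deduce this short-sum bound from the complete exponential sum estimate established inside the proof of \cref{CompleteExponentialInfinitameSum}, via the standard completion trick (a function-field P\'{o}lya--Vinogradov argument). Since $d < \deg(\pi)$, each residue class modulo $\pi$ contains at most one polynomial of degree less than $d$, so the target sum can be rewritten as $\sum_{f \in \F_q[u]/(\pi)} t(f) \mathbf{1}_{\deg(\widetilde f) < d}$. Applying \cref{IndicatorAdditiveCharsProp} with $N = \pi$ expands the indicator as an average of additive characters of ``low conductor'' mod $\pi$, and after interchanging the order of summation one obtains the identity
\begin{equation*}
\sum_{\substack{f \in \F_q[u] \\ \deg(f) < d}} t(f) \;=\; q^{d-\deg(\pi)} \sum_{\substack{h \in \F_q[u] \\ \deg(h) < \deg(\pi) - d}} \; \sum_{f \in \F_q[u]/(\pi)} t(f)\, e\!\left(\frac{hf}{\pi}\right).
\end{equation*}

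The key input is the per-character complete sum bound
\begin{equation*}
\left| \sum_{f \in \F_q[u]/(\pi)} t(f)\, e\!\left(\frac{hf}{\pi}\right) \right| \;\leq\; c(t)\,|\pi|^{\frac{1}{2}}, \qquad h \in \F_q[u].
\end{equation*}
This is exactly what the proof of \cref{CompleteExponentialInfinitameSum} establishes (before the final equidistribution-in-residue-classes step): it is the Grothendieck--Lefschetz trace formula applied to $\mathcal{F} \otimes \mathcal{L}_\psi(hx)$, using the vanishing of $H^0_c$ (from $\mathcal F$ having no finitely supported sections) and of $H^2_c$ (from the hypothesis that the geometric monodromy has no trivial quotient, combined with infinitameness ruling out Artin--Schreier quotients for $h \neq 0$), the computation of $\dim H^1_c \leq c(t)$ via \cref{EP2} and \cref{conductor-little-lemmas}(5), and Deligne's weight bound on the Frobenius eigenvalues. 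The factor $q^n |\pi|^{-1/2}$ appearing in the statement of \cref{CompleteExponentialInfinitameSum} is simply this per-character bound multiplied by the equidistribution factor $q^n/|\pi|$; stripping it off yields the uniform complete-sum bound I need here.

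Inserting this bound into the completion identity and using that the number of $h \in \F_q[u]$ with $\deg(h) < \deg(\pi) - d$ is exactly $q^{\deg(\pi) - d}$ gives
\begin{equation*}
\left| \sum_{\substack{f \in \F_q[u] \\ \deg(f) < d}} t(f) \right| \;\leq\; q^{d-\deg(\pi)} \cdot q^{\deg(\pi)-d} \cdot c(t)\,|\pi|^{\frac{1}{2}} \;=\; c(t)\,|\pi|^{\frac{1}{2}},
\end{equation*}
which is the desired inequality. There is no real obstacle in the argument: all of the cohomological content has been carried out in the proof of \cref{CompleteExponentialInfinitameSum}, and the only remaining ingredient is the orthogonality-based completion formula supplied by \cref{IndicatorAdditiveCharsProp}. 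The mild bookkeeping subtlety is to extract the per-character bound cleanly from the proof of \cref{CompleteExponentialInfinitameSum} rather than from its stated form, which has been normalized for sums over a range $n \geq \deg(\pi)$ rather than $d < \deg(\pi)$.
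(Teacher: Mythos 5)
Your argument is correct and follows the same route as the paper's own proof: apply \cref{IndicatorAdditiveCharsProp} with $N = \pi$, interchange sums, bound trivially in the $h$-variable, and invoke the complete exponential sum estimate. The only inefficiency is the closing remark about needing to "extract the per-character bound from the proof of \cref{CompleteExponentialInfinitameSum} rather than from its stated form'': in fact there is nothing to extract, since the statement of \cref{CompleteExponentialInfinitameSum} allows $n = \deg(\pi)$, in which case the sum over $\deg(f) < \deg(\pi)$ is literally the complete sum over $\F_q[u]/(\pi)$, and the bound $c(t)q^n|\pi|^{-1/2}$ simplifies to exactly the $c(t)|\pi|^{1/2}$ you want. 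The paper's proof applies the statement directly with $n = \deg(\pi)$ and avoids the detour you flag as a subtlety.
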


\begin{proof}

By \cref{IndicatorAdditiveCharsProp} and \cref{CompleteExponentialInfinitameSum} we have
\begin{equation*}
\begin{split}
\left| \sum_{\substack{f \in \F_q[u] \\ \deg(f) < d}} t(f) \right| &= 
\frac{q^d}{|\pi|} \left| \sum_{\substack{h \in \F_q[u] \\ \deg(h) <\deg(\pi)-d }} \ \sum_{\substack{f \in \F_q[u] \\ \deg(f) < \deg(\pi)}} t(f) e \left( \frac{hf}{\pi} \right) \right| \\
&\leq \sup_{\substack{h \in \F_q[u] \\ \deg(h) <\deg(\pi)-d }} \left|  \sum_{\substack{f \in \F_q[u] \\ \deg(f) < \deg(\pi)}} t(f) e \left( \frac{hf}{\pi} \right) \right| \leq c(t)q^{\deg(\pi)}|\pi|^{-\frac{1}{2}}
= c(t)|\pi|^{\frac{1}{2}}.
\end{split}
\end{equation*}

\end{proof}

We shall now deduce \cref{InfinitameVSprimesCor}.

\begin{proof}

The identity $\Lambda = (\mu \cdot \deg) * (-1)$ gives
\begin{equation} \label{ConvolutedMangoldtTraceEq}
\sum_{f \in \mathcal{M}_n} t(f) \Lambda(f) =
-\sum_{k=1}^n k \sum_{A \in \mathcal{M}_k} \mu(A) \sum_{B \in \mathcal{M}_{n-k}} t(AB).
\end{equation} 

For any $k \leq \zeta n$ and any $A \in \mathcal{M}_k$ that is not divisible by $\pi$, the contribution to \cref{ConvolutedMangoldtTraceEq} is $\ll$
\begin{equation}
n \sum_{\substack{C \in \F_q[u] \\ \deg(C) < n-k}} t(AC + AT^{n-k})
\end{equation}
where $C = B - T^{n-k}$.
Since $\pi \nmid A$, the function $C \mapsto t(AC + AT^{n-k})$ satisfies the hypothesis of \cref{CompleteExponentialInfinitameSum} and \cref{PolyaVinogradovLem},
so the above is bounded by
\begin{equation}
n c(t) |\pi|^{\frac{1}{2}} \left(1 + \frac{q^{n-k}}{|\pi|} \right).
\end{equation}
The contribution from all such $k$ and $A$ is thus $\ll$ 
\begin{equation} \label{LowKrangeBoundTraceEq}
\max_{k \leq \zeta n} n^2 c(t)   |\pi|^{\frac{1}{2}} |\mathcal{M}_k| \left(1 + \frac{q^{n-k}}{|\pi|} \right) \ll
c(t)   |\pi|^{\frac{1}{2}} \left(q^{n(\zeta + \epsilon)} + \frac{q^{n(1 + \epsilon)}}{|\pi|} \right) 
\end{equation}
for any $\epsilon > 0$.

The contribution to \cref{ConvolutedMangoldtTraceEq} of all $\deg(\pi) \leq k \leq \zeta n$ and all $A \in \mathcal{M}_k$ that are divisible by $\pi$ is $\ll$
\begin{equation*}
\begin{split}
\max_{k \geq \deg(\pi)} n^2 \left| \sum_{\substack{A \in \mathcal{M}_k \\ \pi \mid A}} \mu(A) \sum_{B \in \mathcal{M}_{n-k}} t(0)\right| &=
\max_{k \geq \deg(\pi)} n^2 q^{n-k} |t(0)| \left| \sum_{\substack{A \in \mathcal{M}_k \\ \pi \mid A}} \mu(A) \right| \\
&\ll \max_{k \geq \deg(\pi)} n^2q^{n-k}r(t) \ll n^2q^{n-\deg(\pi)}r(t)
\end{split}
\end{equation*} 
in view of \cref{MobiusProgressionFormula}.

The contribution of all $k \geq \zeta n$ to \cref{ConvolutedMangoldtTraceEq} is $\ll$
\begin{equation*}
\max_{k \geq \zeta n} n^2 \left| \sum_{B \in \mathcal{M}_{n-k}} \sum_{A \in \mathcal{M}_k} \mu(A)  t(AB) \right| \ll
\max_{\substack{k \geq \zeta n \\ B \in \mathcal{M}_{n-k}}} n^2q^{n-k} \left| \sum_{A \in \mathcal{M}_k} \mu(A) t(AB) \right|
\end{equation*} 
and by \cref{LinearMobiusVStraceFunctionThm} this is $\ll$
\begin{equation}
\begin{split}
&n^2 \max_{k \geq \zeta n} q^{n-k} |\mathcal{M}_k|^{1 - \frac{1}{2p} + \frac{\log_q(2ep)}{p}}|\pi|^{\log_q \left( r(t) \left( 1 + \frac{1}{2p} \right) + \frac{c(t)}{2p} \right)} \ll \\
&\max_{k \geq \zeta n} q^{n - \frac{k}{2p} + \frac{k\log_q(2ep)}{p}+ \epsilon n}|\pi|^{\log_q \left( r(t) \left( 1 + \frac{1}{2p} \right) + \frac{c(t)}{2p} \right)}
\end{split}
\end{equation}
for any $\epsilon > 0$. 
The above is largest once $k = \zeta n$ so we have the bound
\begin{equation} \label{HighKrangeBoundTraceEq}
q^{n - \frac{\zeta n}{2p} + \frac{\zeta n \log_q(2ep)}{p}+ \epsilon n}|\pi|^{\log_q \left( r(t) \left( 1 + \frac{1}{2p} \right) + \frac{c(t)}{2p} \right)}
\end{equation}

It follows from the choice of $\zeta$ in \cref{TheZetaConstantTraceEq}, and our assumption on $n$ that
\begin{equation}
\begin{split}
c(t)|\pi|^{\frac{1}{2}}q^{n(\zeta + \epsilon)} &= c(t) q^{n(\zeta + \epsilon) + \frac{1}{2}\deg(\pi)} \leq 
c(t) q^{n(\frac{1}{1 + 2\delta} + \zeta + \epsilon)} \\
&\ll q^{n(1 - \frac{\zeta}{2p} + \frac{\zeta  \log_q(2ep)}{p}+ \epsilon )}|\pi|^{\log_q \left( r(t) \left( 1 + \frac{1}{2p} \right) + \frac{c(t)}{2p} \right)}
\end{split}
\end{equation}
so the bound in \cref{HighKrangeBoundTraceEq} dominates the first summand in \cref{LowKrangeBoundTraceEq}, 
hence we can use
\begin{equation}
q^{n(1 - \frac{\zeta}{2p} + \frac{\zeta  \log_q(2ep)}{p}+ \epsilon )}|\pi|^{\log_q \left( r(t) \left( 1 + \frac{1}{2p} \right) + \frac{c(t)}{2p} \right)} + (c(t) + r(t))\frac{q^{n(1 + \epsilon)}}{|\pi|^{\frac{1}{2}}}
\end{equation}
as a final bound.
\end{proof}

\section{Acknowledgments} 

We are grateful to de Jong for a very helpful discussion of the proof of \cref{ccg-like}.

\end{document}